\title{On the Farrell--Jones conjecture for localising invariants}
\author{
Ulrich Bunke\thanks{Fakult{\"a}t f{\"u}r Mathematik,
Universit{\"a}t Regensburg,
93040 Regensburg,
ulrich.bunke@mathematik.uni-regensburg.de} 
\and
Daniel Kasprowski\thanks{{Rheinische Friedrich-Wilhelms-Universit\"at Bonn, Mathematisches Institut, Endenicher Allee 60, 53115 Bonn,
kasprowski@uni-bonn.de}}
\and
Christoph Winges\thanks{Fakult\"at f\"ur Mathematik,
Universit\"at Regensburg,
93040 Regensburg,
christoph.winges@ur.de}
}
\date{}
\numberwithin{equation}{section}
\newtheorem{theorem}{Theorem}[section] 
\newtheorem{prop}[theorem]{Proposition}
\newtheorem{lem}[theorem]{Lemma}
\newtheorem{kor}[theorem]{Corollary}
\theoremstyle{remark}
\theoremstyle{definition}
\newtheorem{construction-alt}[theorem]{Construction}
\newtheorem{ddd-alt}[theorem]{Definition}
\newtheorem{ex-alt}[theorem]{Example}
\newtheorem{rem-alt}[theorem]{Remark}
\newtheorem{standing-alt}[theorem]{Standing Assumption}
\newenvironment{ddd}    
{%
	\pushQED{\qed}\begin{ddd-alt}}
	{\popQED\end{ddd-alt}}
\newenvironment{construction}    
{%
	\pushQED{\qed}\begin{construction-alt}}
	{\popQED\end{construction-alt}}
\newenvironment{ex}    
{%
	\pushQED{\qed}\begin{ex-alt}}
	{\popQED\end{ex-alt}}
\newenvironment{rem}    
{%
	\pushQED{\qed}\begin{rem-alt}}
	{\popQED\end{rem-alt}}
\popQED\end{standing-alt}}
\crefname{lem}{Lemma}{Lemmas}
\crefname{prop}{Proposition}{Propositions}
\crefname{ddd-alt}{Definition}{Definitions}
\crefname{theorem}{Theorem}{Theorems}
\newcommand{\ev}{\mathrm{ev}}
\newcommand{\wt}{\widetilde}
\newcommand{\CAT}{\mathbf{CAT}}
\newcommand{\hfd}{\mathrm{fd}}
\newcommand{\CW}{\mathbf{CW}}
\newcommand{\cofib}{\mathrm{cofib}}
\newcommand{\Homol}{{H}}
\newcommand{\cJ}{\mathcal{J}}
\DeclareMathOperator{\yo}{yo}
\newcommand{\Res}{\mathrm{Res}}
\newcommand{\Orb}{\mathbf{Orb}}
\newcommand{\cR}{\mathcal{R}}
\newcommand{\BC}{\mathbf{BC}}
\newcommand{\Pro}{\mathrm{Pro}}
\newcommand{\cN}{\mathcal{N}}
\newcommand{\Fin}{\mathbf{Fin}}
\newcommand{\Sing}{\mathrm{Sing}}
\newcommand{\bM}{\mathbf{M}}
\newcommand{\cP}{\mathcal{P}}
\newcommand{\cM}{\mathcal{M}}
\newcommand{\cK}{\mathcal{K}}
\newcommand{\CAlg}{{\mathbf{CAlg}}}
\newcommand{\cL}{{\mathcal{L}}}
\newcommand{\cW}{{\mathcal{W}}}
\newcommand{\PSh}{{\mathbf{PSh}}}
\newcommand{\Add}{{\mathtt{Add}}}
\newcommand{\bA}{{\mathbf{A}}}
\newcommand{\bV}{{\mathbf{V}}}
\newcommand{\bC}{{\mathbf{C}}}
\newcommand{\bE}{{\mathbf{E}}}
\newcommand{\bI}{{\mathbf{I}}}
\newcommand{\bN}{{\mathbf{N}}}
\newcommand{\bR}{{\mathbf{R}}}
\newcommand{\beins}{{\mathbf{1}}}
\DeclareMathOperator{\const}{const}
\newcommand{\Alg}{{\mathbf{Alg}}}
\newcommand{\cO}{{\mathcal{O}}}
\newcommand{\cC}{{\mathcal{C}}}
\newcommand{\cU}{{\mathcal{U}}}
\newcommand{\cY}{{\mathcal{Y}}}
\newcommand{\cX}{{\mathcal{X}}}
\newcommand{\cD}{{\mathcal{D}}}
\newcommand{\cF}{{\mathcal{F}}}
\newcommand{\cB}{{\mathcal{B}}}
\newcommand{\cV}{{\mathcal{V}}}
\newcommand{\cT}{{\mathcal{T}}}
\newcommand{\cH}{{\mathcal{H}}}
 \newcommand{\Cat}{{\mathbf{Cat}}}
\DeclareMathOperator{\sk}{sk}
\newcommand{\bD}{\mathbf{D}}
\newcommand{\Fun}{\mathbf{Fun}}
\newcommand{\Idem}{\mathrm{Idem}}
\newcommand{\Coarse}{\mathbf{Coarse}}
\newcommand{\Spc}{\mathbf{Spc}}
\newcommand{\IZ}{\mathbb{Z}}
\newcommand{\IC}{\mathbb{C}}
\newcommand{\IQ}{\mathbb{Q}}
\renewcommand{\Add}{\mathbf{Add}}
\newcommand{\Catex}{\mathbf{Cat}_{\infty}^{\mathrm{ex}}}
\newcommand{\rex}{\mathrm{Rex}}
\newcommand{\lex}{\mathrm{Lex}}
\newcommand{\perf}{\mathrm{perf}}
\newcommand{\Cle}{{\Cat^{\lex}_{\infty,*}}}
\newcommand{\Cre}{\Cat^{\rex}_{\infty,*}}
\newcommand{\Crp}{\Cat^{\rex,\perf}_{\infty,*}}
\newcommand{\CL}{\Cat^{\mathrm{LEX}}_{\infty,*}}
\newcommand{\CLL}{\mathbf{CAT}^{\lex}_{\infty,*}}
\newcommand{\CRR}{\mathbf{CAT}^{\rex}_{\infty,*}}
\newcommand{\CLLL}{\mathbf{CAT}^{\mathrm{cplt}}_{\infty,*}}
\newcommand{\CRRR}{\mathbf{CAT}^{\mathrm{cocplt}}_{\infty,*}}
\newcommand{\bd}{\mathrm{bd}}
\newcommand{\op}{\mathrm{op}}
\newcommand{\CATi}{\mathbf{CAT}_{\infty}}
\newcommand{\Cati}{\mathbf{Cat}_{\infty}}
\newcommand{\Clep}{\Cat^{\lex,\perf}_{\infty,*}}
\newcommand{\Prl}{\mathbf{Pr}^{\mathrm{L}}_{\omega}}
\newcommand{\Prlp}{\mathbf{Pr}^{\mathrm{L}}_{\omega,*}}
\newcommand{\Prlpo}{\mathbf{Pr}^{\mathrm{L},\otimes}_{\omega,*}}
\newcommand{\Catrex}{\mathbf{Cat}^{\rex}_{\infty}}
\newcommand{\CATCocplt}{\CRRR}
\newcommand{\CATCplt}{\CLLL}
\newcommand{\smalll}{\mathrm{small}}
\newcommand{\fin}{\mathrm{fin}}
\newcommand{\mbn}{\mathrm{mb}}
\newcommand{\nat}{\mathbb{N}}
\newcommand{\Set}{\mathbf{Set}}
\newcommand{\Sp}{\mathbf{Sp}}
\newcommand{\Sh}{\mathbf{Sh}}
\newcommand{\Ch}{\mathbf{Ch}}
\newcommand{\Mod}{\mathbf{Mod}}
\newcommand{\Top}{\mathbf{Top}}
\newcommand{\RelCat}{\mathbf{RelCat}}
\newcommand{\cp}{\mathrm{cp}}
\DeclareMathOperator{\colim}{colim}
\DeclareMathOperator{\As}{A}
\DeclareMathOperator{\map}{Map}
\DeclareMathOperator{\Map}{\map}
\DeclareMathOperator{\Ind}{Ind}
\DeclareMathOperator{\ind}{ind}
\DeclareMathOperator{\res}{res}
\DeclareMathOperator{\diag}{diag}
\DeclareMathOperator{\pr}{pr}
\DeclareMathOperator{\id}{id}
\DeclareMathOperator{\Nerve}{N}
\DeclareMathOperator{\inc}{incl}
\DeclareMathOperator{\incl}{incl}
\DeclareMathOperator{\Hom}{Hom}
\DeclareMathOperator{\Mul}{Mul}
\DeclareMathOperator{\Stab}{Stab}
\newcommand{\exten}[1]{#1^+}
\newcommand{\extenp}[1]{#1^{\prod}}
\newcommand{\abs}[1]{\lvert #1 \rvert}
\newcommand{\hG}{\mathrm{\mathrm{h}G}}
\newcommand{\Grp}{\mathbf{Grp}}
\newcommand{\prodsum}{\nicefrac{\prod}{\bigoplus}}
\newcommand{\extenps}[1]{#1^{\prodsum}}
\newcommand{\Assoc}{\mathcal{A}}
\newcommand{\cElm}{\mathcal{E}lm}
\renewcommand{\emptyset}{\varnothing}
\newcommand*\cocolon{%
	\nobreak
	\mskip6mu plus1mu
	\mathpunct{}%
	\nonscript
	\mkern-\thinmuskip
	{:}%
	\mskip2mu
	\relax
}
\newcommand{\rp}{r^{\perf}}
\newcommand{\rc}{\mathrm{rc}}
\newcommand{\Equiv}{\mathrm{Equiv}}
\begin{document}

	\maketitle
	\begin{abstract}
We show the Farrell--Jones conjecture with coefficients in left-exact $\infty$-categories 
for finitely $\cF$-amenable groups and, more generally, Dress-Farrell-Hsiang-Jones groups.
Our result subsumes and unifies arguments for the K-theory of additive categories and spherical group rings and extends it for example to categories of perfect modules over $\mathbb{E}_{1}$-ring spectra.
 \end{abstract}

\tableofcontents
\setcounter{tocdepth}{5}

 \section{Introduction}\label{sec:intro}
 
 Let $G$ be a group, let $\cF$ be a family of subgroups of $G$, and let $F \colon G\Orb\to \bM$ be a functor from the orbit category of {$G$} to a cocomplete $\infty$-category.
 
\begin{ddd}\label{wriohgjrotggerergw}
 The assembly map associated to $G$, $\cF$ and $F$ is defined as the canonical morphism
\begin{equation}\label{fevuihiuqhvvsad}
 \As_{\cF,F}\colon \mathop{\colim}\limits_{G_{\cF}\Orb} F\to F(*)\ . \qedhere
\end{equation}
\end{ddd}
Here $G_{\cF}\Orb$ is the full subcategory of $G\Orb$ of $G$-orbits with stabilisers in $\cF$, and $*$ is the final object of $G\Orb$.

 The Farrell--Jones conjecture states that the assembly map $\As_{\cF,F}$ is an equivalence for the family $\cF = \cV\cC yc$ of virtually cyclic subgroups in the case where $F$ is the equivariant K- (or L-) theory functor associated to an additive category with $G$-action (and involution) \cite{FJ}.
 Since we will consider the conjecture for more general coefficients, we refer to this version of the Farrell--Jones conjecture as the linear Farrell--Jones conjecture.

 Via surgery theory, the {linear} Farrell--Jones conjecture has interesting consequences to manifold topology. For example, it implies the Borel and the Novikov conjecture.
 For more background information on the {linear} Farrell--Jones conjecture we refer to the surveys \cite{LR,surveyLueck,surveyRV} as well as L\"uck's ongoing book project \cite{ICbook}. 
 
  While the emphasis of the present paper lies in extending the class of functors and coefficients for which the conjecture holds, most of the literature on the {linear} Farrell--Jones conjecture is devoted to understanding and extending the class of groups for which the conjecture is known to be true.
 Due to extensive work of Bartels, Lück, Reich and many other authors the {linear} conjecture is known for all hyperbolic or CAT(0)-groups, for mapping class groups, and for many linear groups \cite{blr,BL-borel,wegner:cat0,BB,BLRR}.  
 
 Building on the definition of wide covers from \cite{blr}, Bartels introduced the axiomatic condition of finite $\cF$-amenability  \cite{bartels-relhyp}. A more general, but also more technical, condition is that of a {Dress--Farrell--Hsiang--Jones group relative $\cF$} \cite[Def.~2.2]{KUWW}, see also \cref{def:dfhj}, {which we subsequently abbreviate to DFHJ group.}
 The combination of {this condition} with
  the known inheritance properties of the {linear} Farrell--Jones conjecture gives the list of groups for which the conjecture is currently known. We refer to \cref{thm:fullfjc} and its proof for more details.

 In order to indicate that we study the assembly map $\As_{\cF,F}$ for more general functors $F$, we will drop the word ``{linear}''.
 In \cite{UW,Enkelmann:2018aa, KUWW} 
 the results about the {linear} Farrell--Jones conjecture mentioned above 
 where extended to equivariant versions of Waldhausen's A-theory functor.
 The goal of the present paper is to generalise the class of functors $F$ even further. For example, our results generalise the verification of the {linear} K-theoretic 
  Farrell--Jones conjecture from ordinary rings to ring spectra.
Our general setting in fact subsumes the cases of the {linear} Farrell--Jones conjecture for the K-theory of an additive category as well as for A-theory  as special cases. This fact will be explained in some detail further below in this introduction.

In \cref{weoirgjwegwergwerg9}, we will describe the class of functors
$F=\Homol \bC_{G}$ that go into 
 our main  results  \cref{thm:main} and {\cref{thm:fullfjc}} below. 
 We first introduce the notation which is necessary to state this definition.  
 
 A left-exact $\infty$-category is a   pointed $\infty$-category which admits all finite limits.
By $\Cle$ we denote 
  the large $\infty$-category of small left-exact $\infty$-categories and   finite limit-preserving functors.
   It contains the large $\infty$-category of small stable $\infty$-categories $\Catex$ as a full subcategory.
  
 Let $\bM$ be a   cocomplete stable $\infty$-category and consider a functor
 \[ \Homol \colon \Cle\to \bM\ .\]
 \begin{ddd}[{\cite[Def.~6.7]{unik}}]\label{wtoigwgreerf}
 The functor  $H$ is called a finitary localising invariant if it preserves zero objects  and filtered colimits, sends excisive squares to pushout squares, and inverts Morita equivalences.
 \end{ddd}
 
 This definition generalises \cite{MR3070515} from stable to left-exact $\infty$-categories.
 In fact, any finitary localising invariant on left-exact $\infty$-categories arises from a localising invariant on stable $\infty$-categories in the sense of \cite{MR3070515} by precomposing with the stabilisation functor \cite[Lem.~6.9]{unik}.
 In the present paper we will further {assume} that $\Homol$ is lax monoidal with respect to the symmetric monoidal structure on $\Cle$ explained in \cref{sec:prelims} and some stably monoidal structure on $\bM$, and that $\bM$   admits countable products.
   
The data for the construction of the functor $H\bC_{G} {\colon G\Orb \to \bM}$ in \cref{weoirgjwegwergwerg9}   consists of a 
  left-exact $\infty$-category with $G$-action $\bC$, i.e., an object of $\Fun(BG,\Cle)$,  and a functor $\Homol \colon \Cle\to \bM$.
   There is a canonical inclusion $j^{G} \colon BG \to G\Orb$ sending the unique object of $BG$ to the transitive $G$-set $G$. We define the functor
\begin{equation}\label{vfoijiorvefvfdsv}
\bC_{G} := j_{!}^{G}(\bC) \colon G\Orb\to \Cle
\end{equation}
 as {the} left Kan extension of $\bC$ along this inclusion.
For a subgroup $H$ of $G$ we can calculate the value of $\bC_{G}$ on the orbit $G/H$ using the  pointwise formula for the left Kan extension:
\begin{equation}\label{foij1oiwefqfeqewfef}
 \bC_{G}(G/H)\simeq \mathop{\colim}\limits_{BH}\Res^{G}_{H}(\bC)\ .
\end{equation}

\begin{ddd}\label{weoirgjwegwergwerg9}
We define the functor
$\Homol\bC_{G}:=\Homol\circ \bC_{G}:G\Orb\to \bM$.
\end{ddd}
  
 We consider a group $G$ and a family of subgroups $\cF$. 
 The notion of a DFHJ group is discussed in \cref{sec:dfhj}, see in particular  \cref{def:dfhj}.
 The notion of a phantom equivalence in $\bM$ will be introduced in \cref{ethigoewggergwgerg}.  
 It is a weakening of the notion of an equivalence,
but if $\bM$ is compactly generated, then every phantom equivalence is an equivalence.
Recall that $\bM$ is assumed to be a stably monoidal and cocomplete stable $\infty$-category which admits countable products.
The following is the main theorem of the present paper.

 \begin{theorem}\label{thm:main}
If $G$ is a DFHJ group relative $\cF$ and $H$ is a lax monoidal, finitary localising invariant, then the assembly map
 \begin{equation}\label{viijevoiqrevoqrvqvqrv}
 \As_{\cF,\Homol\bC_{G}} \colon \mathop{\colim}\limits_{G_{\cF}\Orb}\Homol\bC_{G}\to
 \Homol (\mathop{\colim}\limits_{BG} \bC)
\ . \end{equation}
is a phantom equivalence.
\end{theorem}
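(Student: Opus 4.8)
The plan is to transport the assembly map into equivariant controlled algebra and then to run the Dress--Farrell--Hsiang--Jones induction scheme over the coefficient category $\bC$, with the localising invariant $\Homol$ in the role of algebraic $K$-theory. Since $\Homol$ is localising, the assignment $X\mapsto\Homol(\cV^{G}_{\bC}(X))$ -- where $\cV^{G}_{\bC}$ sends a $G$-space to a suitable small left-exact $\infty$-category of $\bC$-controlled objects parametrised over it -- is a $G$-equivariant homology theory, and since $\Homol$ is finitary it commutes with filtered colimits of $G$-CW complexes. Hence its value on $\EFG$ -- the filtered colimit of its finite $\cF$-subcomplexes -- is $\mathop{\colim}\limits_{G_{\cF}\Orb}\Homol\bC_{G}$, its value on $*$ is $\Homol(\colim_{BG}\bC)$, and the map induced by $\EFG\to *$ is the assembly map \eqref{viijevoiqrevoqrvqvqrv}. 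It therefore suffices to show that $\Homol$ sends the forgetful functor $\cV^{G}_{\bC}(\EFG)\to\cV^{G}_{\bC}(*)$ to a phantom equivalence. I would do this by identifying the cofibre of that functor with $\Homol$ of a germ-at-infinity category built on the open cone over $\EFG$ -- the construction that brings in the countable products of $\bM$, the cone being modelled on sequences of control data -- and then showing that this cofibre is annihilated by $\Homol$ up to phantom maps.

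To kill the cofibre I would feed in the DFHJ hypothesis. By \cref{def:dfhj} it provides, for every $n\in\IN$, a finite family of finite-index subgroups $G_{j}\le G$, $G$-CW complexes of uniformly bounded dimension with isotropy in $\cF$, and $G_{j}$-equivariant maps into them that are $n$-contracting for the control at infinity. Two mechanisms are needed. \emph{Contraction}: an $n$-contracting map replaces, on germs at infinity, the relevant term by one that is controlled over a bounded-dimensional $\cF$-complex, and such a term is annihilated by $\Homol$ via an induction over skeleta using excision and homotopy invariance of $\cV^{G}_{\bC}$. \emph{Transfer}: using that $\Homol$ factors through the stabilisation functor $\Cle\to\Catex$ \cite[Lem.~6.9]{unik} one first reduces to the case that $\bC$ is stable, which makes induction along a finite-index subgroup $G_{j}$ available as a two-sided adjoint of restriction; the projection formula then identifies induction-followed-by-restriction with tensoring by the finite permutation module on $G/G_{j}$, and applying the \emph{lax monoidal} $\Homol$ turns this into an action of a representation ring on $\Homol(\cV^{G}_{\bC}(*))$ through which the transfers and their bookkeeping become available.

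The two mechanisms are combined as in the linear and $A$-theoretic proofs \cite{bartels-relhyp,KUWW}: choosing a finite quotient $Q$ of $G$ so that the $G_{j}$ are the preimages of the hyperelementary subgroups of $Q$, Swan's induction theorem decomposes the class of the trivial $\IZ[Q]$-module, hence -- after pull-back and lifting to the spherical coefficients -- the identity of $\Homol(\cV^{G}_{\bC}(*))$, as an integral combination of transfers from the $G_{j}$; for each $G_{j}$ the $n$-contracting map kills the matching summand on germs at infinity up to control of order $n$, and letting $n$ range over $\IN$ forces the cofibre to be annihilated. The passage to this limit is precisely the source of the weakening from ``equivalence'' to ``phantom equivalence'': the open-cone construction over the sequence of control data involves a countable product of left-exact $\infty$-categories, with which the finitary invariant $\Homol$ commutes only up to a phantom map. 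This is where the hypothesis that $\bM$ admits countable products enters, and it explains why the statement upgrades to an honest equivalence as soon as $\bM$ is compactly generated.

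The main obstacle, I expect, is the transfer-and-induction package for an arbitrary lax monoidal finitary localising invariant. In the additive and $A$-theoretic settings it rests on concrete manipulations of bimodules and of suspension spectra of finite complexes; here it must be built intrinsically from the symmetric monoidal structure on $\Cle$ together with the dualisability of finite permutation modules, and one must verify that after applying $\Homol$ the resulting category-level data satisfy exactly the identities -- the projection formula, and the compatibilities between induction, restriction and the monoidal product -- needed to run Dress induction over $\bC$. A secondary, more bookkeeping-type difficulty is to prevent the phantom ambiguity introduced by the countable products from accumulating across the infinitely many control stages $n$, which should be controlled by working in the stable setting, where phantom maps are closed under composition and the formation of cofibres.
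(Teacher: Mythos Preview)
Your plan follows the classical open-cone/germs-at-infinity template (as in \cite{blr,KUWW}) and proposes to build the transfer from induction/restriction adjunctions together with Swan's decomposition of the trivial $\IZ[Q]$-module. The paper takes a substantially different route. One genuine difficulty your sketch faces is that Swan (or Dress) induction is a relation in $K_0$ or the Burnside ring, not among actual objects; lifting it to an arbitrary lax monoidal localising invariant $H$ is exactly the obstacle you flag, and the projection formula alone does not resolve it. The paper sidesteps this by working geometrically: it uses finite \emph{contractible} $F_n$-simplicial complexes with Dress stabilisers \cite[Cor.~2.10]{winges}, so that the required identity already holds at the level of objects in $U(*)\simeq\Fun(BG,\Spc^{\op,\omega}_*)$, and additivity of $H$ over the skeletal filtration then transports it to the transfer space (\cref{lem:dfh-transfer-project}).

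Concretely, the paper axiomatises the argument via \emph{transfer classes} (\cref{def:transfer-class}, \cref{ergiooegergergwergergergw}): take $U=\bV^{c,\perf,G}_{\Spc^\op_*}$ (homotopy $G$-fixed points of controlled pointed-space-valued sheaves on $G$-bornological coarse spaces) and $V=\bV^{c,\perf}_{\bC,G}$ ($G$-orbits of controlled $\bC$-valued sheaves); the weak module action of $U$ on $V$ is obtained from a lax symmetric monoidal refinement of the controlled-objects functor (\cref{thm:fix-orbit-wmodule}), not from an induction/restriction package. A transfer class is a $G$-bornological coarse space $\cX$ over $\nat_{min,min}$ mapping to an $(HV,\cF)$-proper simplicial complex (\cref{rgiorgergegergergerg3232424}), together with a class $t\colon\beins_\bM\to HU(\cX)$ represented by an explicit controlled CW-complex built from the DFHJ data; the homotopy-coherent actions are handled by strictification and a shift-category model for homotopy fixed points (\cref{qroihqiurgerggegwegweg}, \cref{lem:product-mrho}). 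Finally, the phantom is not explained as a failure of $H$ to commute with a product of control categories: it comes from the elementary criterion \cref{veiowefwwfeewfewf} that $M$ is phantom whenever $M\to\prodsum(M)$ vanishes, and the transfer class produces exactly such a factorisation for the cofibre of the assembly map through the $(HV,\cF)$-proper object.
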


 Note that up to the identification of its target using \eqref{foij1oiwefqfeqewfef}, the map   
  $\As_{\cF,\Homol\bC_{G}}$ in \eqref{viijevoiqrevoqrvqvqrv} is  the same as the one in \eqref{fevuihiuqhvvsad}.
  We prefer to state the theorem in this form since it explicitly mentions the object $\Homol (\colim_{BG} \bC)$ that the assembly map tries to calculate.
 
 \begin{rem}
  As observed by Reis \cite{reis}, \cref{thm:main} still holds if we drop the assumption that $\Homol$ is lax monoidal.
 \end{rem}
 
 As explained in \cite[Sec.~6.2]{unik}, finitary localising invariants arise by precomposing finitary, localising invariants on $\Catex$ with the stabilisation functor $\Stab \colon \Cle \to \Catex$.
 Since $\Stab$ is a symmetric monoidal functor, finitary localising invariants on $\Catex$ {that} are in addition  lax monoidal provide examples for the functor $\Homol$ in  \cref{thm:main}.
 Our main example of a finitary and  lax monoidal  localising invariant is the  nonconnective algebraic K-theory functor, see \cite[Sec.~9]{MR3070515} and \cite[Prop.~5.9]{bgt-2}.
 Other possible choices include topological Hochschild homology \cite[Cor.~6.9]{bgt-2} and related functors like $TC^n$ \cite[Cor.~6.15]{bgt-2} or topological cyclic homology regarded as a pro-spectrum-valued functor.
However,  in case of the latter functors the Farrell--Jones conjecture is known to hold for all groups, see \cite[Thm.~6.1]{LRRV} and  \cite[Thm.~1.3 and its proof]{LRRV2}.

 In the following, we need the notion of a wreath product.
 Let $G,F$  be groups.  Then we consider the group $G^{F}$ of maps from $F$ to $G$ with the pointwise structure and  the action of $F$ by automorphisms induced from the left multiplication on itself. This action is used to define the wreath product
 $G \wr F: = G^{F} \rtimes F$.
 Note that $G^{F}$ is
 a subgroup of $G\wr F$ in a canonical way.
 
 Following \cite[Sec.~14.2]{ICbook}, our results can be combined into a single, comparatively concise statement as follows.  
Let $G$ be a group, and let $\Homol \colon \Cle \to \bM$ be a lax monoidal, finitary localising invariant.
\begin{ddd}\label{def:fullfjc}
 We say that the group $G$ satisfies the Full Farrell--Jones conjecture for $\Homol$ if the assembly map $\As_{\cV\cC yc,\Homol\bC_{G \wr F}}$ is {a phantom} equivalence for every finite group $F$ and left-exact $\infty$-category $\bC$ with $G \wr F$-action.
\end{ddd}

By allowing left-exact $\infty$-categories with $G$-action as coefficients, we retain all inheritance properties typically known for the linear Farrell--Jones conjecture.

Given the results of \cref{sec:famenablegroups,qrgqiorgegegergwegre,sec:dfhj,sec:inheritance}, the proof of the following  theorem is a combination of arguments scattered throughout the literature (see also \cite[Thm.~14.1]{ICbook}).
Its new aspect  is the additional  flexibility 
in the choice of the functor $\Homol$ and the coefficients $\bC$.

Let $\Homol \colon \Cle \to \bM$ be a lax monoidal, finitary localising invariant with values in a stably monoidal and cocomplete stable $\infty$-category which admits countable products.
We denote the class of groups that satisfy the full Farrell--Jones conjecture for $\Homol$ by $\cF\cJ_\Homol$.

\begin{theorem}\label{thm:fullfjc}
 The class $\cF\cJ_\Homol$
 has the following properties:
 \begin{enumerate}
   \item\label{it:fullfjc13} If $K$ is a subgroup of a group $G$ in $\cF\cJ_\Homol$, then $K$ belongs to $\cF\cJ_\Homol$.
  \item\label{it:fullfjc1} Groups that act isometrically, properly and cocompactly on a finite-dimensional $\mathrm{CAT}(0)$-space belong to $\cF\cJ_\Homol$.  
  \item\label{it:fullfjc2} Hyperbolic groups belong to $\cF\cJ_\Homol$.
  \item\label{it:fullfjc3} Virtually solvable groups belong to $\cF\cJ_\Homol$.
  \item\label{it:fullfjc4} All subgroups of $\mathrm{GL}_n(\IQ)$ and $\mathrm{GL}_n(k(t))$ belong to $\cF\cJ_\Homol$, where $k(t)$ is the function field over a finite field $k$.
  \item\label{it:fullfjc7} If $L$ is a lattice in a locally compact, second countable Hausdorff group $G$ such that $\pi_0(G)$ is discrete and belongs to $\cF\cJ_\Homol$, then $L$ belongs to $\cF\cJ_\Homol$.
  \item\label{it:fullfjc8} Fundamental groups of connected manifolds of dimension at most $3$ belong to $\cF\cJ_\Homol$.
  \item\label{it:fullfjc10} Fundamental groups of graphs of virtually cyclic or of graphs of abelian groups belong to $\cF\cJ_\Homol$.
  \item\label{it:fullfjc9} The mapping class group of any closed, orientable surface with a finite number of punctures belongs to $\cF\cJ_\Homol$. 
  \item\label{it:fullfjc14} If $G$ belongs to $\cF\cJ_\Homol$ and $G'$ is a group which contains $G$ as a subgroup of finite index, then $G'$ belongs to $\cF\cJ_\Homol$.
  \item\label{it:fullfjc15} If $G_1$ and $G_2$ belong to $\cF\cJ_\Homol$, then $1 \times G_2$ belongs to $\cF\cJ_\Homol$.
  \item\label{it:fullfjc16} If $\Gamma \colon I \to \Grp$ is a filtered diagram of groups and $\Gamma_i$ belongs to $\cF\cJ_\Homol$ for all $i$ in $I$, then $\colim_I \Gamma$ belongs to $\cF\cJ_\Homol$.
  \item\label{it:fullfjc17} If $\pi \colon G \to Q$ is an epimorphism such that $Q$ belongs to $\cF\cJ_\Homol$ and $\pi^{-1}(C)$ belongs to $\cF\cJ_\Homol$ for every cyclic subgroup $C$ of $Q$, then $G$ belongs to $\cF\cJ_\Homol$.
  \item\label{it:fullfjc18} If $(G_i)_{i \in I}$ is a family of groups in $\cF\cJ_\Homol$, then the free product $*_{i \in I} G_i$ belongs to $\cF\cJ_\Homol$.
 \end{enumerate}
\end{theorem}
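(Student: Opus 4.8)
The theorem is a bookkeeping exercise built on \cref{thm:main}, the inheritance results of \cref{sec:inheritance}, and known identifications of the groups in question as DFHJ groups, organised as in \cite[Sec.~14.2]{ICbook}. I would split the statements into the structural closure properties \eqref{it:fullfjc13}, \eqref{it:fullfjc14}, \eqref{it:fullfjc15}, \eqref{it:fullfjc16}, \eqref{it:fullfjc17}, \eqref{it:fullfjc18}, to be obtained from general properties of the class $\cF\cJ_\Homol$, and the geometric statements \eqref{it:fullfjc1}, \eqref{it:fullfjc2}, \eqref{it:fullfjc3}, \eqref{it:fullfjc8}, \eqref{it:fullfjc9}, \eqref{it:fullfjc10} (together with \eqref{it:fullfjc4} and \eqref{it:fullfjc7}, which additionally invoke the structural ones), to be reduced to \cref{thm:main}.

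For the geometric statements the input is \cref{thm:main}: a DFHJ group relative $\cF$ satisfies the Farrell--Jones conjecture for $\Homol$ with coefficients in any left-exact $\infty$-category with action, relative $\cF$. Membership in $\cF\cJ_\Homol$ means the same with $\cF = \cV\cC yc$ for all wreath products $G \wr F$ by finite groups $F$. The point is that for each geometric class the relevant theorem from the literature --- Bartels--L\"uck for $\mathrm{CAT}(0)$ and hyperbolic groups, Wegner's Farrell--Hsiang argument for virtually solvable groups, Bartels--Farrell--L\"uck and Kammeyer--L\"uck--R\"uping for lattices, and likewise for $3$-manifold groups, mapping class groups, and graphs of (virtually) cyclic or abelian groups --- already establishes, or via the inheritance of finite $\cF$-amenability from \cite{bartels-relhyp} (see \cref{sec:famenablegroups,sec:dfhj}) implies, that every $G \wr F$ in the class is DFHJ relative $\cV\cC yc$, so that \cref{thm:main} applies verbatim. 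The linear case \eqref{it:fullfjc4} in addition writes $\GL_n(\IQ)$ and $\GL_n(k(t))$ as filtered colimits of $S$-arithmetic groups acting properly, isometrically and cocompactly on products of symmetric spaces and Bruhat--Tits buildings --- hence on $\mathrm{CAT}(0)$ spaces --- and \eqref{it:fullfjc7} reduces a general lattice to $\mathrm{CAT}(0)$, hyperbolic and virtually solvable pieces; both then use \eqref{it:fullfjc16} and \eqref{it:fullfjc17}.

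The structural statements follow from closure properties of $\cF\cJ_\Homol$ parallel to the classical inheritance properties. Restriction of a left-exact $\infty$-category with $G$-action along $K \hookrightarrow G$, compatibly with left Kan extension along $BK \to BG$ and $K\Orb \to G\Orb$, gives \eqref{it:fullfjc13} once one observes $K \wr F \le G \wr F$; the finite-index statement \eqref{it:fullfjc14} follows by embedding $G'$ into $G \wr S_n$ with $n = [G':G]$ and applying \eqref{it:fullfjc13}. Products \eqref{it:fullfjc15} and filtered colimits \eqref{it:fullfjc16} are obtained by commuting the orbit-category colimit and $\Homol$, which is finitary, past the appropriate (co)limits of groups. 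The extension statement \eqref{it:fullfjc17} is an instance of the Transitivity Principle for assembly maps with these coefficients, which I would take from \cref{sec:inheritance}: pulling back the cyclic subgroups of $Q$ along $\pi$ gives a family $\cF$ of subgroups of $G$ for which the conjecture holds relative $\cF$ by the hypotheses on $Q$ and on the $\pi^{-1}(C)$, and transitivity then descends it to $\cV\cC yc$. Finally \eqref{it:fullfjc18} follows from \eqref{it:fullfjc17} applied to the action of a free product on its Bass--Serre tree, whose edge and vertex stabilisers are trivial or the factors.

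Since all the genuinely new work sits in \cref{thm:main} and the earlier sections, the only real difficulty here is to make sure this classical bookkeeping survives the two generalisations of the present paper: arbitrary lax monoidal finitary localising invariants $\Homol$ with left-exact coefficients, and \emph{phantom} equivalences instead of honest equivalences. In practice this means checking that phantom equivalences are stable under the pushouts, filtered colimits, and colimits over $G_{\cF}\Orb$ used in the inheritance lemmas and the Transitivity Principle, so that the iterated reductions required for \eqref{it:fullfjc4} and \eqref{it:fullfjc7} still output a phantom equivalence. I expect these stability properties to be exactly what \cref{sec:inheritance} is set up to provide, after which each remaining argument is the established one and only needs to be quoted in the right form.
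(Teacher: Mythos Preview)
Your overall plan is right in spirit, but two of the reductions you describe do not actually go through as stated.

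First, for the geometric classes you assert that ``every $G \wr F$ in the class is DFHJ relative $\cV\cC yc$''. This is only literally true for $\mathrm{CAT}(0)$-groups, where $G \wr F$ is again $\mathrm{CAT}(0)$. For hyperbolic $G$, the wreath product $G \wr F$ is typically not hyperbolic; what \cite[Lem.~3.4]{KUWW} gives you is that $G \wr F$ is DFHJ relative a family built from products of virtually cyclic subgroups of $G$, i.e.\ relative a family of virtually abelian groups, not relative $\cV\cC yc$. The paper's argument therefore proceeds in two stages: apply \cref{thm:main} to $G \wr F$ relative this larger family, and then use the Transitivity Principle (\cref{cor:transitivity}) together with the already-established case of $\mathrm{CAT}(0)$-groups to descend to $\cV\cC yc$. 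The same two-step pattern recurs for solvable groups, mapping class groups, and $S$-arithmetic groups. You need this mechanism explicitly; without it, the hyperbolic case is not covered by a single application of \cref{thm:main}.

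Second, your claim that $\mathrm{GL}_n(\IZ[S^{-1}])$ ``acts properly, isometrically and cocompactly on a product of symmetric spaces and Bruhat--Tits buildings, hence on a $\mathrm{CAT}(0)$ space'' is false: already $\mathrm{GL}_n(\IZ)$ for $n \geq 3$ is not a $\mathrm{CAT}(0)$-group, the action on the symmetric space not being cocompact. This is precisely why the paper cites R\"uping \cite{rueping-sarithmetic}, who shows these groups are DFHJ relative a carefully chosen family (not $\cV\cC yc$); one then invokes the transitivity argument above. Similarly, your sketch for \eqref{it:fullfjc18} (``\eqref{it:fullfjc17} applied to the Bass--Serre tree'') is garbled: \eqref{it:fullfjc17} concerns epimorphisms, not tree actions. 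The paper applies \eqref{it:fullfjc17} to the abelianisation-type map $G_1 * G_2 \to G_1 \times G_2$ and then analyses the preimages of cyclic subgroups via Kurosh's theorem, exhibiting them as filtered colimits of hyperbolic groups.
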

The proof of this theorem will be given in \cref{hieruhqwefiwfewf}.

\begin{rem}
By \cref{thm:fullfjc}.\ref{it:fullfjc14}, a group $G$ satisfies the Full Farrell--Jones conjecture if and only if the assembly map $\As_{\cV\cC yc,\Homol\bC_{G'}}$ is a phantom equivalence for every group $G'$ which contains $G$ as a subgroup of finite index and left-exact $\infty$-category $\bC$ with $G$-action.
\end{rem}

In the following we explain why the functors $\Homol \bC_{G}$ introduced in \cref{weoirgjwegwergwerg9} generalise {and unify} the functors arising from algebraic K-theory previously considered by other authors.

First assume that $\bA$ is an additive category with a strict $G$-action, i.e., an object of $\Fun(BG,\Add)$. We form   a stable $\infty$-category $\Ch_{\infty}^{b}(\bA)$ with $G$-action by localising the category $\Ch^{b}(\bA)$ at the collection of chain homotopy equivalences.
 As explained in \cite[Ex.~1.2 and Ex.~1.6]{unik}, {the functor $K\Ch_{\infty}^{b}(\bA)_G$} is equivalent to the $G\Orb$-spectrum
defined in \cite[Sec.~2]{davis_lueck}, respectively \cite[Sec.~3]{BR-coefficients}.

For a ring $R$, we could take the additive category $\bA=\Mod(R)^{\mathrm{fg},\mathrm{proj}}$
of finitely generated projective $R$-modules with the trivial $G$-action. 
The generalisation from discrete rings to associative ring spectra $\cR$ is obtained by replacing $\Mod(R)^{\mathrm{fg},\mathrm{proj}}$ with the left-exact $\infty$-category $\Mod(\cR)^{\mathrm{perf},\op}$, the opposite of the $\infty$-category of perfect $R$-modules, see \cite[Ex.~1.4]{unik}.

Finally, we refer to \cite[Ex.~1.9 and Cor.~7.71]{unik} for an argument that the functors appearing in the A-theoretic Farrell--Jones conjecture \cite{UW,Enkelmann:2018aa, KUWW} are special instances of \cref{weoirgjwegwergwerg9}.

We conclude the introduction with an overview of the structure of this article.
The first part of \cref{sec:phantoms} formulates an abstract criterion to decide that an assembly map is a phantom equivalence.
\cref{sec:assembly} contains some recollections about assembly maps and the notion of phantom equivalence.
\cref{sec:borncoarse} provides some basic vocabulary concerning $G$-bornological coarse spaces which is necessary for the following sections.
\cref{rfoiqrejgoireggregwrgregwregwergregr} introduces the notion of properness which in turn is used to define the notion of a transfer class in \cref{sec:phantom-transfer}.
The key point about transfer classes is that they enable us to
show that the vertical transformation 
\[\xymatrix@C=5em{
 \mathop{\colim}\limits_{G_\cF\Orb} \Homol\bC_G\ar[r]^-{\As_{\cF,\Homol\bC_G}}\ar[d]_-{\Delta} & \Homol\bC_G(*)\ar[d]^-{\Delta} \\
 \prodsum \mathop{\colim}\limits_{G_\cF\Orb} \Homol\bC_G\ar[r]^-{\prodsum \As_{\cF,\Homol\bC_G}} & \prodsum \Homol\bC_G(*)
}\]
induced by the diagonals induces the zero map on the cofibres of the horizontal morphisms. Hence $\As_{\cF,\Homol\bC_G}$ is a phantom equivalence.
\cref{sec:squeezing} gives examples of proper objects. For the verification of properness we will use {the concept of} equivariant coarse homology theories \cite{equicoarse}.

Almost the entirety of the remaining article deals with the construction of transfer classes.
To apply \cref{ergiooegergergwergergergw}, one must in particular show that the functor $\Homol\bC_G$ arises from a functor on $G$-bornological coarse spaces; see \cref{wtgklpwergrewfwref} for a precise definition.
Such functors on $G$-bornological coarse spaces, which generalise the concepts of controlled algebra to left-exact $\infty$-categories, have been previously defined in \cite{unik}.
We recall their construction in \cref{sec:controlled-objects}.
For our purposes, we need to equip these functors with additional multiplicative structure.
The impatient reader will find a summary of the necessary multiplicative structure in \cref{thm:fix-orbit-wmodule}.
\cref{sec:prelims,sec:controlled-objects-monoidal} show that our categories of controlled objects in fact form a lax monoidal functor, which is used in \cref{sec:orbits-and-fixed-points} to prove a highly structured version of \cref{thm:fix-orbit-wmodule}.

The construction of transfer classes relies on point-set input data which need to be imported into our setting.
\cref{sec:controlledCW} recalls the notion of controlled CW-complexes over a bornological coarse space from \cite{BKW:coarseA}.
\cref{sec:realisation,sec:finiteness} then construct a natural transformation from these categories of controlled CW-complexes to the categories of controlled objects introduced in \cref{sec:control-monoidal}.
 
\cref{sec:famenablegroups} establishes the first of our main theorems, namely that the assembly map for finitely $\cF$-amenable groups is a phantom equivalence.
In \cref{qrgqiorgegegergwegre}, we prove that the condition of being a Dress--Farrell--Hsiang group also implies that the assembly map is a phantom equivalence.
\cref{sec:dfhj} combines the methods of the two preceding sections to prove that the assembly map $\As_{\cF,\Homol\bC_G}$ is a phantom equivalence if $G$ is a DFHJ-group with respect to the family $\cF$.
In all three cases, we construct appropriate transfer classes which allow us to apply \cref{ergiooegergergwergergergw}.

The final \cref{sec:inheritance} collects various inheritance properties of the assembly map in an abstract setting.
 
 {{\em Conventions.}} In order to address size issues, we fix a sequence of four increasing Grothendieck universes whose sets will be called very small, small, large, and very large. The groups, bornological coarse spaces, CW-complexes etc will be very small. The categories of these objects are small, but locally very small.
  The objects of $\Cati$ are small, locally very small $\infty$-categories, and this category itself is large, but locally small. By $\CATi$ we denote the very large, locally large $\infty$-category of large, locally small $\infty$-categories. It contains the subcategory $\Prl$ of presentable $\infty$-categories and left adjoint functors.

{{\em Acknowledgements.} We thank Arthur Bartels for various discussions concerning the proofs of the linear Farrell--Jones conjecture. In particular, the idea that the transfer should arise from an action of controlled Swan theory on controlled K-theory is due to him. We also profited from dicussions with Thomas Nikolaus and Fabian Hebestreit regarding the contents  of  \cref{sec:control-monoidal}.}
 
{U.~Bunke and C.~Winges\ were supported by CRC 1085 Higher Invariants (Universit\"at Regensburg, funded by the DFG). C.~Winges was supported by the Max Planck Society and Wolfgang L\"uck's ERC Advanced Grant ``KL2MG-interactions" (no.~662400).
D.~Kasprowski and C.~Winges were funded by the Deutsche Forschungsgemeinschaft (DFG, German Research Foundation) under Germany's Excellence Strategy - GZ 2047/1, Projekt-ID 390685813.}

\section{Phantom equivalences and transfer classes}\label{sec:phantoms}
 
\subsection{Assembly maps and phantom equivalences}\label{sec:assembly}

In this section we first explain in \cref{qgoiegjqefewfqfewfqewf} that our definition of the assembly map in \eqref{fevuihiuqhvvsad} coincides with the classical definition of  the assembly map in terms of (unreduced) homology theories \cite{davis_lueck}.
Then we introduce the notion of a phantom equivalence {and prove \cref{veiowefwwfeewfewf} which provides} a criterion for detecting phantom equivalences.

 Let $\Spc$ denote the $\infty$-category of spaces.
 We use the notation $\PSh(\bC)$ for the $\infty$-category of $\Spc$-valued presheaves on an $\infty$-category $\bC$.
  
 Let $G$ be a group and let
 \[ \yo \colon G\Orb \to \PSh(G\Orb) \]
 denote the Yoneda embedding of the orbit category of $G$ into the category of $\Spc$-valued presheaves.
 By the universal property of the Yoneda embedding \cite[Thm.~5.1.5.6]{htt}, the pullback along $\yo$ induces an equivalence of $\infty$-categories
 \begin{equation*}\label{iqhgioqrjeoiedqfqewf}
  \Fun^{\colim}(\PSh(G\Orb),\bM)\to \Fun(G\Orb,\bM)
 \end{equation*}
 for any cocomplete $\infty$-category $\bM$.
 Here the domain is the full subcategory of the functor category  $\Fun(\PSh(G\Orb),\bM)$ of colimit-preserving functors{.}
 The inverse of this equivalence is given by the left Kan extension functor along the Yoneda embedding.
 For a functor $F \colon G\Orb \to \bM$, we will also use
 the notation $F$ for its colimit-preserving extension to presheaves.  
 
 We have a functor
 $\cElm \colon G\Top \to \PSh(G\Orb)$ which sends a topological $G$-space $X$ to the presheaf on $G\Orb$ sending $S$ to the mapping space $\Map_{G\Top}(S,X)$ considered as an object in  $\Spc$.
  By Elmendorf's theorem, the functor $\cElm$  presents the $\infty$-category $\PSh(G\Orb)$  as the localisation $G\Top[W_{G}^{-1}]$ of the category $G\Top$ at the class $W_{G}$
  of equivariant weak equivalences.
  Under this equivalence, the colimit-preserving extension of a functor $F$ in $\Fun(G\Orb,\bM)$ to presheaves
    corresponds to the equivariant homology theory $H^{G}(-;F) \colon G\Top\to \bM$  associated to $F$ by Davis and L\"uck.
 
 We now consider a family $\cF$ of subgroups of $G$.
\begin{ddd}\label{wrtoihgjwtgergwerfew}
We define the classifying space of the family $\cF$ to be the  object
\[ E_\cF G:= \mathop{\colim}\limits_{G_\cF\Orb} \yo \]
in $\PSh(G\Orb)$.
\end{ddd}
Under the equivalence $\PSh(G\Orb)\simeq G\Top[W_{G}^{-1}]$ given by Elmendorf's theorem, the presheaf $E_{\cF} G$ corresponds to the homotopy type of a $G$-CW-complex  $E_{\cF}G^{\CW}$ in $G\Top$ which is also called the classifying space for the family $\cF$ in equivariant homotopy theory.

\begin{lem}\label{qgoiegjqefewfqfewfqewf}
The following maps in $\bM$ are equivalent:
\begin{enumerate} 
\item  \label{erpogwergwerferf0} the assembly map $\As_{\cF,F}\colon \mathop{\colim}\limits_{G_{\cF}\Orb}F \to F(*)$ in \eqref{fevuihiuqhvvsad};
\item  \label{erpogwergwerferf} the map
$F(E_\cF G) \to F(*)$ induced by the projection $E_\cF G \to *$;
\item \label{erpogwergwerferf1} the Davis--L\"uck assembly map $H^{G}(E_{\cF}G^{\CW};F)\to H^{G}(*;F)$ induced by the projection $E_{\cF}G^{\CW}\to *$.
  \end{enumerate}
\end{lem}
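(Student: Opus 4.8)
The statement asserts that three a priori different maps — the colimit-form assembly map $\As_{\cF,F}$, the map $F(E_\cF G)\to F(*)$ obtained by applying the colimit-preserving extension of $F$ to the projection $E_\cF G\to *$, and the classical Davis--Lück assembly map — are all equivalent in $\bM$. The key observation is that all three are instances of ``apply the colimit-preserving extension of $F$ to a single map in $\PSh(G\Orb)$'', so the proof is essentially a bookkeeping argument identifying that map in each case.

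\textbf{Equivalence of \eqref{erpogwergwerferf0} and \eqref{erpogwergwerferf}.} First I would recall that, under the equivalence $\Fun^{\colim}(\PSh(G\Orb),\bM)\simeq\Fun(G\Orb,\bM)$, the colimit-preserving extension of $F$ (still denoted $F$) satisfies $F\circ\yo\simeq F$ and preserves colimits. Now apply $F$ to the canonical map $E_\cF G\to *$ in $\PSh(G\Orb)$, where $E_\cF G=\colim_{G_\cF\Orb}\yo$ by \cref{wrtoihgjwtgergwerfew} and $*$ is the terminal presheaf, i.e.\ $\yo$ of the final orbit. Since $F$ preserves colimits,
\[
 F(E_\cF G)\simeq F\bigl(\mathop{\colim}\limits_{G_\cF\Orb}\yo\bigr)\simeq\mathop{\colim}\limits_{G_\cF\Orb}(F\circ\yo)\simeq\mathop{\colim}\limits_{G_\cF\Orb}F\ ,
\]
and one must check that the map $F(E_\cF G\to *)$ is identified under this chain of equivalences with the canonical morphism $\colim_{G_\cF\Orb}F\to F(*)$ from \eqref{fevuihiuqhvvsad}. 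This is immediate from naturality: the map $E_\cF G\to *$ is, by the universal property of the colimit, the one induced by the cocone $\yo(S)\to *$ (for $S$ in $G_\cF\Orb$), and applying $F$ and using that $F(*)$ receives the cocone $F(\yo(S))\to F(*)$ exhibits $F(E_\cF G\to *)$ as precisely the canonical map out of $\colim_{G_\cF\Orb}F$. So \eqref{erpogwergwerferf0} $\simeq$ \eqref{erpogwergwerferf}.

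\textbf{Equivalence of \eqref{erpogwergwerferf} and \eqref{erpogwergwerferf1}.} Here I would invoke the discussion preceding the lemma: by Elmendorf's theorem $\cElm$ exhibits $\PSh(G\Orb)\simeq G\Top[W_G^{-1}]$, and under this equivalence the colimit-preserving extension of $F$ to presheaves corresponds to the Davis--Lück equivariant homology theory $H^G(-;F)\colon G\Top\to\bM$. Moreover, again by the paragraph following \cref{wrtoihgjwtgergwerfew}, the presheaf $E_\cF G$ corresponds to the $G$-CW-complex $E_\cF G^{\CW}$, and the projection $E_\cF G\to *$ corresponds to $E_\cF G^{\CW}\to *$ (the terminal presheaf being the image of the one-point $G$-space). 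Transporting the map $F(E_\cF G\to *)$ of \eqref{erpogwergwerferf} across the equivalence $\PSh(G\Orb)\simeq G\Top[W_G^{-1}]$ and along the identification of extended-$F$ with $H^G(-;F)$ therefore yields exactly the Davis--Lück assembly map $H^G(E_\cF G^{\CW};F)\to H^G(*;F)$ of \eqref{erpogwergwerferf1}. This finishes the chain \eqref{erpogwergwerferf0} $\simeq$ \eqref{erpogwergwerferf} $\simeq$ \eqref{erpogwergwerferf1}.

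\textbf{Main obstacle.} None of the three steps is deep; the work is entirely in making the identifications precise and natural rather than merely objectwise. The one point requiring genuine care is the compatibility of the \emph{maps} (not just the objects) under the equivalences — in particular checking that the terminal presheaf, the final orbit $*$, and the one-point $G$-space all correspond to one another compatibly, and that the cocone structure defining $E_\cF G$ as a colimit is the one transported by Elmendorf's theorem to the standard $G$-CW structure on $E_\cF G^{\CW}$. Once the relevant universal properties (of the Yoneda extension, of the colimit defining $E_\cF G$, and of Elmendorf's localisation) are invoked in the right order, these compatibilities are forced, and the proof is complete.
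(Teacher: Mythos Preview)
Your proposal is correct and follows essentially the same approach as the paper's proof: both identify \eqref{erpogwergwerferf} with \eqref{erpogwergwerferf1} via Elmendorf's theorem and the identification of $H^G(-;F)$ with the colimit-preserving extension, and both identify \eqref{erpogwergwerferf0} with \eqref{erpogwergwerferf} by using that $F$ preserves colimits and $E_\cF G=\colim_{G_\cF\Orb}\yo$. The only differences are cosmetic: the paper treats \eqref{erpogwergwerferf}$\simeq$\eqref{erpogwergwerferf1} first and packages the second step via an explicit commutative diagram (also writing $*\simeq\colim_{G\Orb}\yo$), whereas you argue directly from the universal property of the colimit.
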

\begin{proof}
The equivalence of \ref{erpogwergwerferf} and \ref{erpogwergwerferf1} is an immediate consequence of Elmendorf's theorem and of the identification of $H^G(-;F)$ with the colimit-preserving extension of $F$ to presheaves.
We now show the equivalence of \ref{erpogwergwerferf0} and \ref{erpogwergwerferf}.
Note that $* \simeq \colim_{G\Orb} {\yo}$. Hence we have a commutative diagram
 \[\xymatrix{
  E_\cF G\ar[d] & {\mathop{\colim}\limits_{G_\cF\Orb} {\yo}}\ar[d]\ar[l]_-{\simeq} \\
  {*} & {\mathop{\colim}\limits_{G\Orb} {\yo}}\ar[l]_-{\simeq} \\
 }\]
 in $\PSh(G\Orb)$. We now apply $F$ to obtain the left half of the following diagram:
 \begin{equation}\label{fsvvascasdcdc}
\xymatrix{
  F(E_\cF G)\ar[d] & {F(\mathop{\colim}\limits_{G_\cF\Orb}  {\yo})}\ar[d]\ar[l]_-{\simeq} & {\mathop{\colim}\limits_{G_\cF\Orb} {F }}\ar[d]^{\As_{\cF,F}}\ar[l]_-{\simeq} \\
  {F(*)} & {F(\mathop{\colim}\limits_{G\Orb}  {\yo})}\ar[l]_-{\simeq} & {\mathop{\colim}\limits_{G\Orb} {F }}\ar[l]_-{\simeq} \\
 }
\end{equation} 
 The right horizontal maps are equivalences since the extension of $F$ to $\PSh(G\Orb)$ commutes with colimits by definition.
The outer square in \eqref{fsvvascasdcdc} is the desired equivalence between the maps in  \ref{erpogwergwerferf0} and \ref{erpogwergwerferf}.
\end{proof}

\begin{rem}\label{egiowgregwergwerg}
 The equivalence  between the maps in 
 \ref{qgoiegjqefewfqfewfqewf}.\ref{erpogwergwerferf0} and
 \ref{qgoiegjqefewfqfewfqewf}.\ref{erpogwergwerferf1} is also shown in 
 \cite[Sec.~5]{davis_lueck}.
\end{rem}

 If $\bM$ is stable, then the assembly map is an equivalence if and only if its cofibre is trivial. 
 While we would like to prove that the cofibre is trivial, our arguments will only show a weaker condition, namely that it is a phantom object.
 Therefore, in the following we recall the notions of phantom objects and phantom equivalences.

Let $\bM$ be a cocomplete $\infty$-category.
 Recall that an object $K$ in $\bM$ is called compact if the functor
$\Map_{\bM}(K,-) \colon \bM\to \Spc$ preserves filtered colimits. 
Let $M$ be an object of $\bM$.

\begin{ddd} $M$ is called a phantom object if $\Map_{\bM}(K,M)\simeq *$ for every compact object $K$ of $\bM$. 
\end{ddd}
Let $m:M\to M^{\prime}$ be a morphism in $\bM$.
\begin{ddd}\label{ethigoewggergwgerg} The morphism $m  $ is called a phantom  equivalence if $\Map_{\bM}(K,m)$ is an equivalence of spaces  for every compact object $K$ of $\bM$. 
\end{ddd}

\begin{rem}\label{rem:phantoms}
A final object is a phantom object, and an equivalence is a phantom equivalence.
If $\bM$ is compactly generated, the converses of these assertions are true.
Indeed, in a compactly generated $\infty$-category a morphism $m$ is an equivalence in $\bM$ if and only if $\Map_{\bM}({K},m)$ is an equivalence in $\Spc$ for all compact objects $K$ of $\bM$.
If $M$ is not compactly generated, then the class of phantom equivalences is strictly bigger than the class of equivalences.

In a stable $\infty$-category, a morphism is a phantom equivalence if and only if its cofibre is a phantom object.
\end{rem}

 In the following we describe a simple criterion to recognise that
 an object $M$ in $\bM$ is a phantom object.
 It will be used in the proof of
 \cref{thm:main}.
 Recall that an $\infty$-category $\bM$ is called semi-additive if it is pointed, admits finite coproducts and products, and the canonical comparison morphism from the coproduct to the product
 is an equivalence.
 
Let $\bM$ be a semi-additive and cocomplete $\infty$-category which in addition admits countable products. 
Then for every object $M$ in $\bM$ we have a canonical morphism $\bigoplus_\nat M \to \prod_\nat M$.

\begin{ddd}
 For an  object $M$ in $\bM$ we define the object 
 \[ \prodsum(M) := \cofib\big( \bigoplus_\nat M \to \prod_\nat M \big)\]
 of $\bM$.
\end{ddd}

\begin{lem}
\label{veiowefwwfeewfewf}
If the morphism
\[ M \xrightarrow{\diag} \prod_{\nat} M \to \prodsum(M) \]
is trivial, then $M$ is a phantom object.
\end{lem}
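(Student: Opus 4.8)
The plan is to verify the definition of a phantom object head on: for every compact object $K$ of $\bM$ one must show $\Map_{\bM}(K,M)\simeq\ast$. I would prove this in two steps — first that every morphism $K\to M$ out of a compact object is nullhomotopic, and then upgrade this to contractibility of the entire mapping space by applying the first step to the suspensions $\Sigma^{n}K$.

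Write $\iota\colon\bigoplus_{\nat}M\to\prod_{\nat}M$ for the canonical map and $q\colon\prod_{\nat}M\to\prodsum(M)$ for the cofibre projection. The nullhomotopy of $q\circ\diag$ provided by the hypothesis factors $\diag$ through $\Fibre(q)$, and since the cofibre sequence $\bigoplus_{\nat}M\xrightarrow{\iota}\prod_{\nat}M\xrightarrow{q}\prodsum(M)$ is also a fibre sequence the canonical comparison $\bigoplus_{\nat}M\to\Fibre(q)$ is an equivalence; composing yields a morphism $\delta\colon M\to\bigoplus_{\nat}M$ with $\iota\circ\delta\simeq\diag$. Now let $K$ be compact and $f\colon K\to M$ arbitrary. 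Because $\bigoplus_{\nat}M\simeq\colim_{n}M^{\oplus n}$ is a filtered colimit, compactness of $K$ lets us factor $\delta\circ f$ up to homotopy through the inclusion $M^{\oplus N}\hookrightarrow\bigoplus_{\nat}M$ of some finite stage. Hence $\diag\circ f\simeq\iota\circ\delta\circ f$ factors through the tautological map $M^{\oplus N}\to\prod_{\nat}M$, whose component at every sufficiently large coordinate vanishes; postcomposing with such a projection $\pr_{k}$ and using $\pr_{k}\circ\diag=\id_{M}$ gives $f=\pr_{k}\circ\diag\circ f\simeq 0$.

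For the second step, $\Sigma^{n}K$ is again compact, and $\Map_{\bM}(\Sigma^{n}K,M)\simeq\Omega^{n}\Map_{\bM}(K,M)$ because $\Omega^{n}$ is built from finite limits and $\Map_{\bM}(K,-)$ preserves limits. Applying the first step to $\Sigma^{n}K$ shows $\pi_{0}\Map_{\bM}(\Sigma^{n}K,M)=\ast$, hence $\pi_{n}\Map_{\bM}(K,M)=\ast$ for all $n\ge 0$, so $\Map_{\bM}(K,M)$ is contractible and $M$ is a phantom object.

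The one ingredient that is not formal manipulation with coordinate projections and compactness is the identification $\bigoplus_{\nat}M\simeq\Fibre(q)$ used to produce the lift $\delta$; this — together with the closure of compact objects under suspension used in the second step — is exactly where one uses that $\bM$ is stable, i.e.\ that cofibre sequences are fibre sequences, which is the situation in which \cref{thm:main} and hence this lemma get applied. I expect this to be the only genuine point of the argument; once the lift $\delta$ is in hand, the rest is bookkeeping.
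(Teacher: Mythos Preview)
Your argument is correct and shares the paper's core mechanism: produce a lift $\delta\colon M\to\bigoplus_{\nat}M$ of $\diag$, use compactness to factor through a finite partial sum, and project past it. The only difference is in how the conclusion is upgraded from $\pi_0\Map_\bM(K,M)=\ast$ to contractibility. You replace $K$ by $\Sigma^n K$, which needs stability of $\bM$ for $\Sigma^n K$ to remain compact. The paper instead probes $\Map_\bM(K,M)$ from the $\Spc$ side: for each compact $A$ in $\Spc$ and each map $A\to\Map_\bM(K,M)$, compactness of $K$ identifies $\Map_\bM(K,\bigoplus_\nat M)\simeq\colim_n\Map_\bM(K,\bigoplus_{i\le n}M)$, and then compactness of $A$ in $\Spc$ lands the composite in some finite stage; projecting past that stage shows the original map $A\to\Map_\bM(K,M)$ is null, and compact generation of $\Spc$ finishes. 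This second half therefore runs in the semi-additive generality of the lemma's stated hypotheses, whereas your version invokes stability a second time. As you rightly observe, however, the lifting step already appears to need that the cofibre sequence is a fibre sequence, so the extra generality is largely cosmetic; the paper simply asserts the factorisation without further comment.
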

\begin{proof}
The assumption implies that the morphism $\diag \colon M \to \prod_\nat M$ factors through $\bigoplus_\nat M$. Let $K$ be a compact object in $\bM$ and let $A$ be a compact object in $\Spc$. Then we consider for every morphism $A \to \Map_\bM(K,M)$ and every $n$ in $\nat$ the following {commutative} diagram:
	\[\xymatrix{
A\ar[r]\ar@/_1pc/@{-->}[rrd] & \Map_\bM(K,M) \ar[r]\ar@/^2pc/[rrr]^{\id} & \Map_\bM(K,\bigoplus_{\nat}M)\ar[r] &\Map_\bM(K,\prod_{\nat}M) \ar[r]_-{\pr_{n+1,*}} & \Map_\bM(K,M)\\
& & \Map_\bM(K,\bigoplus_{i=0}^n M)\ar@/_1pc/[rru]^{0}\ar[u]&&	
}\]
 Since the sum over $\nat$ is a filtered colimit of its finite partial sums and $K$ is compact, $\Map_\bM(K,\bigoplus_\nat M)$ is also a filtered colimit of the sequence of spaces $(\Map_\bM(K,\bigoplus_{i=0}^n M))_{n\in \nat}$. As $A$ is also compact, the dashed arrow exists for a suitable choice of $n$ in $\nat$. Hence $A \to \Map_\bM(K,M)$ is zero. As $\Spc$ is compactly generated, it follows that $\Map_\bM(K,M) \simeq *$.
 \end{proof}

 \subsection{Bornological coarse spaces}\label{sec:borncoarse}
 
  Throughout this article, we will make use of the language of bornological coarse spaces introduced in \cite{equicoarse}. See \cite[Sec.~3]{injectivity} for a concise summary of the key notions.
  
  For most of our purposes, it is sufficient to be familiar with the following:
  \begin{itemize}
  	\item the definition of the category $G\BC$ of $G$-bornological coarse spaces \cite[Def.~3.6 \& 3.7]{injectivity};
  	\item the notion of a $\pi_0$-excisive functor \cite[Def.~3.33]{unik}.
  \end{itemize}
  For the complete proof, we also require the concept of an equivariant coarse homology theory \cite[Def.~3.13]{injectivity}, but this will only be used in \cref{sec:squeezing}.

 For the reader's convenience, we give a quick outline of the basic notions mentioned above.
 Let $X$ be a set. An entourage on $X$ is a subset of $X \times X$.
 Regarding entourages as relations on $X$, it makes sense to speak of the inverse $U^{-1}$ of an entourage $U$ and the composition $U \circ V$ of two entourages $U$ and $V$.
 Given an entourage $U$ on $X$ and a subset $A$ of $X$, the $U$-thickening $U[A]$ of $A$ is given by
 \begin{equation}\label{qwefoiheiuohfiqwefewfewfqeefedq}
  U[A] := \{ x \in X \mid (\exists\,a \in A \colon (x,a) \in U) \}\ .
 \end{equation}
 A $G$-bornological coarse space is a triple $(X,\cB,\cC)$, where $X$ is a $G$-set, $\cB$ is a collection of subsets of $X$ called the bornology and $\cC$ is a collection of entourages on $X$ called the coarse structure, subject to the following conditions:
 \begin{enumerate}
  \item $\cB$ is invariant under the $G$-action on the power set of $X$, contains all singleton sets, and is closed under taking finite unions and subsets;
  \item $\cC$ contains the diagonal, is closed under forming subsets, finite unions, inverses and compositions;
  \item the subposet $\cC^G$ of $G$-invariant entourages is cofinal in $\cC$;
  \item for every $B$ in $\cB$ and $U$ in $\cC$, the thickening $U[B]$ is also in $\cB$.
 \end{enumerate}
 The members of $\cB$ are called bounded, and members of $\cC$ are called coarse entourages.
 A morphism $(X,\cB,\cC) \to (X',\cB',\cC')$ of $G$-bornological coarse spaces is an equivariant map $f$ between the underlying sets which is proper and controlled, meaning that $f^{-1}(\cB') \subseteq \cB$ and $(f \times f)(\cC) \subseteq \cC'$.
 
 Two subsets $Y$, $Z$ of a $G$-bornological coarse space are called coarsely disjoint if for all $y$ in $Y$ and $z$ in $Z$ the set $\{(y,z)\}$ is not a coarse entourage.
 
 Let $E \colon G\BC \to \bM$ be a functor to a semi-additive $\infty$-category.
 \begin{ddd}\label{wetiogwergrgwr}
 The functor $E$ is called $\pi_0$-excisive if for every partition $(Y,Z)$ of an object  $X$ in $G\BC$ into coarsely disjoint invariant subsets the canonical map $E(Y) \oplus E(Z) \to E(X)$ is an equivalence.
 \end{ddd} 
  If $E$ is $\pi_0$-excisive and  $(Y,Z)$ is a partition of $X$ into  coarsely disjoint invariant subsets, then   we have canonical  projections $E(X)\to E(Y)$ and $E(X)\to E(Z)$.
 
 In the following construction we introduce the construction and the notation for  analogous projections associated to a countable partition of $X$.
 
  \begin{construction}\label{regiouweroigegergewgreg}
	Assume that $E$ is a $\pi_{0}$-excisive functor from $G\BC$ to a semi-additive $\infty$-category.
	Let $Y$ be a $G$-bornological coarse space, and let $(Y_{n})_{n\in \nat}$ be a collection of $G$-invariant, pairwise coarsely disjoint subsets of $Y$ such that $Y = \bigcup_n Y_n$.  	
	
	For every $k$ in $\nat$ we consider the partition $(Y_{k},\bigcup_{n \neq k} Y_{n})$ of $Y$.
	We then let $q^{E}_{k} \colon E(Y)\to E(Y_{k})$ denote the corresponding projection map.
 \end{construction}

 While the assembly map $\As_{\cF,F}$  in \eqref{fevuihiuqhvvsad} 
   depends on a functor $F\colon G\Orb\to \bM$, our argument that it is a phantom equivalence  requires us to extend this functor
to a functor $E\colon G\BC\to \bM$. This will be made precise in \cref{wtgklpwergrewfwref} below.
Via this extension, the techniques of coarse homotopy theory become applicable to study the assembly map.

 If $S$ is a $G$-set, the following two bornological coarse structures are primarily of interest: $S_{min,min}$ denotes the $G$-bornological space given by $S$ equipped with the minimal coarse structure {(which contains only subsets of the diagonal)} and the minimal bornology  {(which consists precisely of the finite subsets of $S$)}, while $S_{min,max}$ denotes $S$ equipped with the minimal coarse structure and the maximal bornology (which consists of all subsets of $S$).  
 The second case leads to a fully faithful functor
   \begin{equation}\label{eq:gorb-embed}
   i \colon G\Orb\to G\BC\ , \quad S\mapsto S_{min,max}\ .
  \end{equation}
  
\begin{rem}
 For the reader familiar with controlled algebra, the meaning of the bornological coarse structures $S_{min,min}$ and $S_{min,max}$ may become clearer by thinking about the associated categories of controlled objects. Controlled objects over $S_{min,max}$ correspond to the $S$-indexed direct sum of the coefficient category, while controlled objects over $S_{min,min}$ correspond to the $S$-indexed direct product \cite[Rem.~10.8]{be:coarseassembly}.
\end{rem}  
  
  Let $F \colon G\Orb\to \bM$ and $E \colon G\BC \to \bM$ be functors.
    \begin{ddd}\label{wtgklpwergrewfwref}
  	$E$ extends $F$ if there exists an equivalence $F\simeq E\circ i$.
  \end{ddd}

Our proof of \cref{thm:main} will use the idea that if  a functor $E$ extends $F$ in the sense of \cref{wtgklpwergrewfwref}, then  $F$ can be  twisted   by arbitrary $G$-bornological coarse spaces.  We can also  twist $
E$ by arbitrary objects from $\PSh(G\Orb)$. 
In the following we explain the details.  

 The category $G\BC$ has a symmetric monoidal structure  \cite[Ex.~2.17]{equicoarse} which will be denoted by $-\otimes-$.
 If $X,Y$ are in $G\BC$, then
 the underlying $G$-coarse space of $X\otimes Y$ is the cartesian product of the underlying $G$-coarse spaces of $X$ and $Y$, but the bornology  of $X\otimes Y$ differs from the cartesian one and is generated by the subsets $A\times B$ for all bounded subsets $A$ of $X$ and $B$ of $Y$.

 Let $Y$ be in $G\BC$
 and let $E \colon G\BC\to \bM$ be any functor.
 
 \begin{ddd}\label{def:twist}
 	The functor
 	\[ E_Y := E(Y \otimes -) \colon G\BC \to \bM \]
 	is called the twist of $E$ by $Y$.
 \end{ddd}
 
\begin{construction}
	\label{constr:exten}
	Combining the  embedding $i$ from \eqref{eq:gorb-embed}
	with the symmetric monoidal structure, we define the functor
 \begin{equation}\label{fdvwdfvdfsv}
 \wt i \colon G\Orb \times G\BC \to G\BC\ , \quad (S,X) \mapsto i(S) \otimes X = S_{min,max} \otimes X\ .
\end{equation} 
 For any functor $E \colon G\BC \to \bM$, define \begin{equation}\label{ervwevkjnkjwvevwevw}
 \exten{E} \colon \PSh(G\Orb) \times G\BC \to \bM
\end{equation}
   as the essentially unique functor which is colimit-preserving in the first argument 
   and fits into the commutative diagram
   \[\xymatrix{
    G\Orb \times G\BC\ar[r]^-{\wt i}\ar[d]_-{\yo \times \id_{G\BC}} & G\BC\ar[r]^-{E} & \bM \\
    \PSh(G\Orb) \times G\BC\ar[urr]_-{\exten{E}} & & }\ .\]
  Note that there is an equivalence $\exten{E}(\yo(S),-) \simeq E_{S_{min,max}}(-)$ for any transitive $G$-set $S$.  
 
  This extension process is furthermore compatible with adding twists in the sense that $\exten{E}(A,-)_{Y} \simeq \exten{E}_{Y}(A,-)$ for every $Y$ in $G\BC$ and $A$ in $\PSh(G\Orb)$.  
\end{construction} 
 
 \begin{rem}\label{eroiguowergfwefwefre91}
 Let $F \colon G\Orb\to \bM$ be a functor and assume that $E \colon {G\BC} \to \bM$ extends $F$ in the sense of \cref{wtgklpwergrewfwref}.
For $X$ in $G\BC$  the functor
 \[ \exten{E}(-,X) \colon \PSh(G\Orb) \to \bM \] 
 preserves colimits and therefore corresponds, via Elmendorf's theorem, to an equivariant homology theory on  $G\Top$.
 We consider $\exten{E}(-,X)$ as the twist by $X$ of the equivariant homology theory determined by $F$. The latter is recovered by inserting 
 $X=*$.
  \end{rem}

 \begin{rem}\label{eroiguowergfwefwefre9}
 In this remark we assume that $E\colon G\BC\to \bM$ is an equivariant coarse homology theory.
  Then for $Y$ in $G\BC$  the twist $E_{Y}\colon G\BC\to \bM$ by $Y$ is again an equivariant coarse homology theory \cite[Lem.~3.16]{injectivity}. 
 
 For $A$ in $\PSh(G\Orb)$ we have an equivalence
 \[ A \simeq \mathop{\colim}\limits_{(\yo(S)\to A) \in {\yo_{/A}}}
  \yo(S)  \]
  and therefore an equivalence 
  \[ \exten{E}(A,-) \simeq \mathop{\colim}\limits_{(\yo(S)\to A)\in {\yo_{/A}}}
  E_{S_{min,max}}(-)\ \ .\]
  Since the axioms of an equivariant coarse homology theory are compatible with forming colimits,
 the functor $\exten{E}(A,-)$ is again an equivariant coarse homology theory. 
 We consider $\exten{E}(A,-)$ as a twist  by $A$ of the equivariant coarse homology theory $E$. The latter is recovered by inserting $A=*$.
  \end{rem}

\subsection{\texorpdfstring{$(E,\cF)$-proper objects}{(E,F)-proper objects}}\label{rfoiqrejgoireggregwrgregwregwergregr}

The goal of this subsection is the formulation of the notion of $(E,\cF)$-properness
in \cref{ergioerwgwergwergregregwergwregwergwre}. This requires some preparations which we start to explain now.
 
 We need to extend \cref{constr:exten} {in a way that allows us to twist}
$\nat$-indexed families of presheaves on $G\Set$ in the first argument.
For the second argument, the side of $G$-bornological spaces, we insert $\nat$-indexed  families $(X_{n})_{n\in \nat}$ of bounded $G$-bornological coarse spaces.
  Such families can naturally be considered as objects  $p \colon X \to \nat_{min,min}$ in  the slice category
 $G\BC_{/\nat_{min,min}}$  of $G$-bornological coarse spaces over $\nat_{min,min}$.
By abuse of notation, we will denote objects $p \colon X \to \nat_{min,min}$ in  $G\BC_{/\nat_{min,min}}$ by their domain $X$.
 In the following we use the abbreviation $(-)_{n}$ for $(-)_{n\in \nat}$ to denote families indexed by the set $\nat$.

In the first step, we extend the functor in \eqref{fdvwdfvdfsv}.
 \begin{ddd}\label{def:fibrewise-tensor}\ 
We define the functor 
 \[ - \otimes_\nat - \colon \prod_{\nat} G\Set \times G\BC_{/\nat_{min,min}}  \to G\BC \] such that it sends a family $(T_n)_n$ in $\prod_\nat G\Set$ and an object $X$ in $G\BC_{/\nat_{min,min}}$ to the object $(T_n)_n \otimes_\nat X$ in $G\BC$  given as follows:  
 \begin{enumerate}
  \item its underlying set is $\coprod_n (T_n \times X_n)$;
  \item its bornology is generated by sets of the form $\coprod_{n \leq N}  (T_n \times X_n)$ for $N$ in $\nat$;
  \item its coarse structure is generated by entourages of the form
  \[ \coprod_n (\diag(T_n) \times (U\cap (X_{n}\times X_{n}))) \]
  for all entourages $U$ of $X$.
  \end{enumerate}
  The definition of  the functor $-\otimes_\nat-$ on morphisms is the obvious one.
\end{ddd}

\begin{rem}\label{rem:tensor-const}
 {Let $S$ be in $G\Set$ and $X$ be in $G\BC_{/\nat_{min,min}}$.
 Consider the constant family $(S)_{n}$ as an object in $\prod_\nat G\Set$.
 Unwinding definitions, one checks that there is a natural isomorphism
 \[ (S)_{n \in \nat} \otimes_\nat X \cong S_{min,max} \otimes X\ .\]
 of $G$-bornological coarse spaces, where the right hand side uses only the underlying $G$-bornological coarse space of $X$ and forgets the reference map to $\nat_{min,min}$.}
\end{rem}

In the second step, we construct the analogue of the functor in \eqref{ervwevkjnkjwvevwevw} for families. To this end we
consider the functor \begin{equation}\label{fwqefqwefewqdwed}
\ell \colon G\Set \xrightarrow {\yo}  \PSh(G\Set) \xrightarrow{\Res} \PSh(G\Orb)
\end{equation}
and form
\begin{equation}\label{svasdvdscdscadcacac}
\extenp{\ell} := (\prod_\nat \ell) \times \id \colon (\prod_\nat G\Set) \times G\BC_{/\nat_{min,min}} \to (\prod_\nat \PSh(G\Orb)) \times G\BC_{/\nat_{min,min}}\ .
\end{equation}   
Let $E \colon G\BC \to \bM$ be a functor to a cocomplete $\infty$-category.
\begin{ddd}\label{def:extenp}
 We define the functor
 \[ \extenp{E} \colon (\prod_{\nat} \PSh(G\Orb)) \times G\BC_{/\nat_{min,min}} \to \bM \] as the left Kan extension as indicated in the following diagram:
 \[\xymatrix{
  (\prod_\nat G\Set) \times G\BC_{/\nat_{min,min}} \ar[r]^-{\otimes_\nat}\ar[d]_-{\extenp{\ell}} & G\BC \ar[r]^-{E} & \bM \\
  (\prod_\nat \PSh(G\Orb)) \times G\BC_{/\nat_{min,min}}\ar[rru]_-{\extenp{E}}
  }\]
\end{ddd}

Since  $\ell$, and therefore also $\extenp{\ell}$, are fully faithful, the structural transformation of the Kan extension provides an equivalence
of functors $\extenp{E} \circ \extenp{\ell}\simeq E \circ \otimes_\nat$.

In the following we relate the functor $\extenp{E}$  defined above to the functor $\exten{E}$  in \eqref{ervwevkjnkjwvevwevw}. 
The precise statement will be given in \cref{lem:projections} below.
The construction of $\extenp{E}$, which is analogous to that of $\exten{E}$, involves terms like $E(W_{min,max}\otimes Y)$ for $G$-sets $W$. For bounded $Y$ such terms can be calculated using the values $E(S_{min,max}\otimes Y)$ for $G$-orbits $S$ if $E$ is hyperexcisive. We explain this notion next.

Let $E \colon G\BC \to \bM$ be a functor to a cocomplete $\infty$-category. 
\begin{ddd}[{\cite[Def.~4.16]{unik}}]\label{qiojerfgfrqefwf}
	The functor $E$ is hyperexcisive if for every $G$-set $W$ and every bounded $G$-bornological coarse space $Y$ the canonical morphism
	\[ \mathop{\colim}\limits_{(S \to W) \in G\Orb_{/W}} E_{S_{min,max}}(Y) \to E_{W_{min,max}}(Y) \]
	is an equivalence.
\end{ddd}

Hyperexcisiveness generalises $\pi_{0}$-excisiveness to certain infinite partitions.
Let $G\BC_\bd$ be the full subcategory of $G\BC$ of bounded $G$-bornological coarse spaces.
Restricting $\exten{E}$ from \eqref{ervwevkjnkjwvevwevw} in the second argument to $G\BC_\bd$, we obtain a functor $\exten{E}_\bd \colon \PSh(G\Orb) \times G\BC_\bd \to \bM$.
In the following lemma we provide another
characterisation of hyperexcisiveness of $E$ in terms of the functor $\exten{E}_\bd$.

\begin{lem}\label{lem:hyperexc}
 The functor $E$ is hyperexcisive if and only if $\exten{E}_\bd$ is a left Kan extension of
 \[ G\Set \times G\BC_\bd \xrightarrow{(-)_{min,max} \otimes -} G\BC \xrightarrow{E} \bM \]
 along $\ell \times \id \colon G\Set \times G\BC_\bd \to \PSh(G\Orb) \times G\BC_\bd$.
\end{lem}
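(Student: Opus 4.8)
The plan is to compare two colimit formulas by "decoupling" the Kan extension in the first variable from the one in the second. Fix an object $Y$ in $G\BC_\bd$ and consider the functor $\exten{E}_\bd(-,Y)\colon \PSh(G\Orb)\to\bM$. By \cref{constr:exten}, this is the colimit-preserving extension of $S\mapsto E_{S_{min,max}}(Y)$ along the Yoneda embedding $\yo\colon G\Orb\to\PSh(G\Orb)$; equivalently, for any presheaf $A$ we have $\exten{E}_\bd(A,Y)\simeq\colim_{(\yo(S)\to A)\in\yo_{/A}}E_{S_{min,max}}(Y)$. On the other hand, being a left Kan extension of $(-)_{min,max}\otimes-$ composed with $E$ along $\ell\times\id$ is, by the pointwise formula, the statement that for every $G$-set $W$ and every bounded $Y$,
\[
 \exten{E}_\bd(\ell(W),Y)\simeq\mathop{\colim}\limits_{(\ell(S)\to\ell(W))\in(\ell\times\id)_{/(\ell(W),Y)}}E_{S_{min,max}}(Y)\ .
\]
Since $\ell$ is fully faithful (as noted after \cref{def:extenp}, it factors the Yoneda embedding $G\Set\to\PSh(G\Set)$ through the fully faithful restriction $\Res$, and $G\Orb\hookrightarrow G\Set$ is fully faithful), this comma category is equivalent to $G\Orb_{/W}$, so the right-hand side is $\colim_{(S\to W)\in G\Orb_{/W}}E_{S_{min,max}}(Y)$. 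Comparing with the definition of hyperexcisiveness (\cref{qiojerfgfrqefwf}), which asks precisely that the canonical map $\colim_{(S\to W)\in G\Orb_{/W}}E_{S_{min,max}}(Y)\to E_{W_{min,max}}(Y)$ be an equivalence, one sees that the Kan extension condition for $\exten{E}_\bd$ at the object $(\ell(W),Y)$ is equivalent to hyperexcisiveness of $E$ at $(W,Y)$, provided one knows that $\exten{E}_\bd(\ell(W),Y)\simeq E_{W_{min,max}}(Y)$.

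So the heart of the proof is the identification $\exten{E}_\bd(\ell(W),Y)\simeq E_{W_{min,max}}(Y)$ naturally in $W$ and $Y$. First I would observe that $\ell(W)=\Res(\yo_{G\Set}(W))$, and that on presheaves $\Res$ is the left Kan extension along $G\Orb\hookrightarrow G\Set$, so $\ell(W)\simeq\colim_{(S\to W)\in G\Orb_{/W}}\yo(S)$ in $\PSh(G\Orb)$ — this is the standard presentation of a representable presheaf on $G\Set$ restricted to orbits as a colimit of orbit-representables. Since $\exten{E}_\bd(-,Y)$ preserves colimits, this gives $\exten{E}_\bd(\ell(W),Y)\simeq\colim_{(S\to W)\in G\Orb_{/W}}E_{S_{min,max}}(Y)$, which is exactly the left-hand side of the Kan extension formula; in other words, the identification of the comma category already shows that the two colimit expressions agree, and the only remaining content is whether this common value computes $E_{W_{min,max}}(Y)$. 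That last statement is, by definition of $E_Y$ as the twist (\cref{def:twist}) and of $\wt i$ (equation \eqref{fdvwdfvdfsv}), literally the hyperexcisiveness condition, so the "if and only if" falls out by running the argument in both directions: if $E$ is hyperexcisive then the canonical comparison map from the colimit to $E_{W_{min,max}}(Y)$ is an equivalence, which exhibits $\exten{E}_\bd$ as the asserted Kan extension; conversely, if $\exten{E}_\bd$ is that Kan extension, then the pointwise formula at $(\ell(W),Y)$ together with the equivalence $\exten{E}_\bd(\ell(W),Y)\simeq E_{W_{min,max}}(Y)$ — which one must still supply — gives back hyperexcisiveness.

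The main obstacle, therefore, is precisely establishing $\exten{E}_\bd(\ell(W),Y)\simeq E_{W_{min,max}}(Y)$ in a way that does not secretly presuppose hyperexcisiveness. The clean route is: the canonical map $\wt i(W,Y)=W_{min,max}\otimes Y$ receives maps from $S_{min,max}\otimes Y$ for all $S\to W$ in $G\Orb_{/W}$, hence there is a canonical comparison morphism $\colim_{G\Orb_{/W}}E_{S_{min,max}}(Y)\to E_{W_{min,max}}(Y)$, and \emph{this same morphism} is the one appearing in \cref{qiojerfgfrqefwf}. Thus "$\exten{E}_\bd$ is a Kan extension along $\ell\times\id$" unwinds, via fully faithfulness of $\ell\times\id$ and the pointwise formula, to "for all $W,Y$ this canonical map is an equivalence," which is verbatim the definition of hyperexcisiveness; no separate computation of $\exten{E}_\bd(\ell(W),Y)$ is needed beyond recognizing that its defining colimit and the comparison map coincide with those in \cref{qiojerfgfrqefwf}. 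I would also remark (using \cref{rem:tensor-const} and the final compatibility statement of \cref{constr:exten}, $\exten{E}(\yo(S),-)\simeq E_{S_{min,max}}(-)$) that restricting $\extenp{E}$ to constant families recovers $\exten{E}_\bd$, which makes the notation $\exten{E}_\bd$ legitimate; this is bookkeeping rather than a genuine difficulty.
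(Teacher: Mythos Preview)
Your approach has a concrete error and a structural gap.

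The error: you identify the comma category $(\ell\times\id)_{/(\ell(W),Y)}$ with $G\Orb_{/W}$. But the domain of this Kan extension is $G\Set\times G\BC_\bd$, so by full faithfulness of $\ell$ the comma category is (after discarding the $G\BC_\bd$-factor, which has terminal object $\id_Y$) equivalent to $G\Set_{/W}$, which itself has terminal object $\id_W$. Hence the pointwise value $E'(\ell(W),Y)$ of the Kan extension is simply $E_{W_{min,max}}(Y)$, not the orbit-indexed colimit. Comparing this with $\exten{E}_\bd(\ell(W),Y)\simeq\colim_{G\Orb_{/W}}E_{S_{min,max}}(Y)$ is therefore already the hyperexcisiveness condition, not a tautology as your middle paragraphs suggest.

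The gap appears in the direction ``hyperexcisive $\Rightarrow$ LKE''. You only check agreement of $\exten{E}_\bd$ with $E'$ at points of the form $(\ell(W),Y)$; but $\ell$ is not essentially surjective, and full faithfulness alone does not reduce the global comparison to these points (indeed $E'$ is not a priori colimit-preserving in the first variable). What is missing is transitivity of Kan extensions along the factorisation $G\Orb\times G\BC_\bd\to G\Set\times G\BC_\bd\to\PSh(G\Orb)\times G\BC_\bd$: since $\exten{E}_\bd$ is by definition a Kan extension along the composite, it is automatically a Kan extension along $\ell\times\id$ of its restriction to $G\Set\times G\BC_\bd$, and the question becomes whether that restriction agrees with $E\circ((-)_{min,max}\otimes-)$. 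That agreement is verbatim \cref{qiojerfgfrqefwf}. This is exactly the paper's argument: it draws the two-step diagram, notes that hyperexcisiveness is the statement that the first step is a Kan extension, and then both implications follow in one line from transitivity (for $\Rightarrow$) and from restricting along the fully faithful $\ell\times\id$ (for $\Leftarrow$).
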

\begin{proof}
 Consider the diagram
 \[\xymatrix@C=6em{
  G\Orb \times G\BC_\bd\ar@/^15pt/[dr]^-*+{\labelstyle E \circ ((-)_{min,max} \otimes -)}\ar[d]\ar@/_2.5cm/[dd]_-{\yo \times \id} & \\
  G\Set \times G\BC_\bd\ar[r]^-{E \circ ((-)_{min,max} \otimes -)}\ar[d]_-{\ell \times \id} & \bM \\
  \PSh(G\Orb) \times G\BC_\bd\ar@/_15pt/[ur]_-{E'} & 
 }\]
 where the lower right triangle is defined to exhibit $E'$ as a left Kan extension.
  By \cref{def:extenp}, $E$ is hyperexcisive if and only if the upper right triangle in the diagram exhibits the horizontal arrow as a left Kan extension.
 If $E$ is hyperexcisive, then $E'$ is an iterated Kan extension.
 Since $\exten{E}_\bd$ is defined to be the left Kan extension along $\yo \times \id$, we have $E' \simeq \exten{E}_\bd$.
 
 Conversely, assume $E' \simeq \exten{E}_\bd$. The lower triangle commutes since $\ell \times \id$ is fully faithful. From this we conclude that the upper triangle is also a left Kan extension. Hence $E$ is hyperexcisive.
\end{proof}

Given a $G$-set $T$, we can consider the presheaf $\ell(T)$ in $\PSh(G\Orb)$, using $\ell$ from  \eqref{fwqefqwefewqdwed}, and the $G$-bornological coarse space  $i(T)=T_{min,max}$ in $G\BC$, using  $i$ from  \eqref{eq:gorb-embed}.
The following corollary compares the twist of $\exten{E}_\bd$ by $\ell(T)$ as in \cref{eroiguowergfwefwefre9} with the twist by $T_{min,max}$ as in \cref{eroiguowergfwefwefre91}.

\begin{kor}\label{ergioergergegergergergerg}
	If $E$ is hyperexcisive, then there is a natural equivalence of functors
	\[ \exten{E}_{\bd,T_{min,max}}(-,-) \simeq \exten{E}_\bd(-\times \ell(T),-) \colon \PSh(G\Orb) \times G\BC_\bd \to \bM\ .\]
\end{kor}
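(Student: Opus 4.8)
The plan is to reduce the claimed equivalence to the case of representable presheaves in the first variable, where it becomes an elementary identification of $G$-bornological coarse spaces; hyperexcisiveness enters only through \cref{lem:hyperexc}.

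First I would observe that both functors preserve colimits in the first variable. For $\exten{E}_{\bd,T_{min,max}}(-,-)$ this follows from the compatibility of $\exten{(-)}$ with twists recorded in \cref{constr:exten}, which identifies it with $\exten{E}(-,T_{min,max}\otimes-)$, together with the fact that $\exten{E}(-,Z)$ is colimit-preserving for every $Z$ in $G\BC$ (note that $T_{min,max}\otimes X$ lies in $G\BC_\bd$ whenever $X$ does). For $\exten{E}_\bd(-\times\ell(T),-)$ one uses in addition that in the $\infty$-topos $\PSh(G\Orb)$ the functor $-\times\ell(T)$ preserves colimits, so that $A\mapsto\exten{E}_\bd(A\times\ell(T),X)$ is a composite of colimit-preserving functors. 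Viewing both functors as functors $G\BC_\bd\to\Fun^{\colim}(\PSh(G\Orb),\bM)$ and using the universal property of presheaves — the restriction along $\yo$ to $\Fun(G\Orb,\bM)$ being an equivalence, with colimits in functor categories computed pointwise — it then suffices to construct a natural equivalence between the restrictions of the two functors to $G\Orb\times G\BC_\bd$.

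Next I would compute these restrictions. Fix $S$ in $G\Orb$ and $X$ in $G\BC_\bd$. For the left-hand side, \cref{constr:exten} gives $\exten{E}_{\bd,T_{min,max}}(\yo(S),X)\simeq\exten{E}(\yo(S),T_{min,max}\otimes X)\simeq E(S_{min,max}\otimes T_{min,max}\otimes X)$. For the right-hand side, note that $\yo(S)\simeq\ell(S)$ since $S$ is transitive, and that $\ell=\Res\circ\yo_{G\Set}$ preserves finite products (the Yoneda embedding preserves all limits, and restriction of presheaves preserves limits); hence $\yo(S)\times\ell(T)\simeq\ell(S\times T)$. Since $E$ is hyperexcisive, \cref{lem:hyperexc} together with full faithfulness of $\ell\times\id$ identifies $\exten{E}_\bd(\ell(W),Y)$ with $E(W_{min,max}\otimes Y)$ for $W$ in $G\Set$ and $Y$ in $G\BC_\bd$, so $\exten{E}_\bd(\ell(S\times T),X)\simeq E\big((S\times T)_{min,max}\otimes X\big)$. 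It remains to record the point-set isomorphism $S_{min,max}\otimes T_{min,max}\cong(S\times T)_{min,max}$: the tensor of two minimal coarse structures is again minimal, and the bornology generated by the subsets $A\times B$ with $A\subseteq S$ and $B\subseteq T$ already contains $S\times T$, hence is maximal. Combined with associativity of the symmetric monoidal structure on $G\BC$, this produces the natural equivalence $E(S_{min,max}\otimes T_{min,max}\otimes X)\simeq E\big((S\times T)_{min,max}\otimes X\big)$, and thus the desired equivalence of restrictions, which is manifestly natural in $(S,X)$.

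Finally I would assemble these steps: since both functors are left Kan extensions along $\yo\times\id_{G\BC_\bd}$ of their (now identified) restrictions to $G\Orb\times G\BC_\bd$, the natural equivalence of restrictions extends uniquely to the asserted natural equivalence on all of $\PSh(G\Orb)\times G\BC_\bd$. The only point requiring mild care is the parametrised form of the universal property of presheaves in the first step — one wants a genuine natural equivalence rather than an objectwise one — but once the two functors are recognised as colimit-preserving extensions of equivalent functors on representables, this is automatic; the rest is bookkeeping plus the elementary computation of $S_{min,max}\otimes T_{min,max}$.
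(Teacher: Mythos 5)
Your proof is correct and takes essentially the same approach as the paper: the paper proves the equivalence on $G\Set \times G\BC_\bd$ via the same chain (Lemma~\ref{lem:hyperexc}, the isomorphism $S_{min,max}\otimes T_{min,max}\cong(S\times T)_{min,max}$, product-preservation of $\ell$) and then left Kan extends along $\ell\times\id$, using that $-\times\ell(T)$ is colimit-preserving; you instead restrict further to $G\Orb\times G\BC_\bd$ and Kan extend along $\yo\times\id$, which requires the extra (straightforward) observation that both sides preserve colimits in the first variable. The difference is cosmetic.
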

\begin{proof}
	Since $\ell \times \id$ is fully faithful, \cref{lem:hyperexc} gives the marked equivalences in the following {chain}:
	\begin{align}\label{eq:juggling}
	 	\nonumber  \exten{E}_{\bd,T_{min,max}}(\ell(-),-)& \overset{!}{\simeq} E((-)_{min,max} \otimes T_{min,max} \otimes -) \simeq E((- \times T)_{min,max} \otimes -) \\
	 	&\overset{!}{\simeq} \exten{E}_\bd(\ell(- \times T), -) \simeq \exten{E}_\bd(\ell(-) \times \ell(T), -)
	 \end{align}
	 The functor $\exten{E}_{\bd,T_{min,max}}$ is the left Kan extension of the first term  in \eqref{eq:juggling} along $\ell \times \id$. 
	 In order to understand the corresponding left Kan extension of the last term in 	 \eqref{eq:juggling}, note that
 colimits in $\PSh(G\Orb)$ are universal. Therefore, $- \times \ell(T)$ is a colimit-preserving endofunctor of $\PSh(G\Orb)$. We conclude 
 that the left Kan extension of the last term in \eqref{eq:juggling} is equivalent to
$\exten{E}_\bd(- \times \ell(T),-)$. 
 So the desired equivalence is obtained by Kan extending equivalence~\eqref{eq:juggling}.
\end{proof}

After this discussion of hyperexcisiveness, we continue with the comparison of the functors   $\extenp{E}$  and $\exten{E}$.  In order to state \cref{lem:projections} below, we introduce more notation. Let $P$ be a subset of $\nat$.
\begin{ddd}\label{def:oiwjgoirfgrefwerf}
 We define the following functor associated to $P$:
 \begin{enumerate}
 \item $\pr_n \colon \prod_\nat \PSh(G\Orb) \to \PSh(G\Orb)$ denotes the projection onto the $n$-th component;
 \item $f_n \colon G\BC_{/\nat_{min,min}} \to G\BC_\bd$ denotes the functor that sends $X$ to $X_n$.
  \item \label{oiwjgoirfgrefwerf} $c_P$ denotes the endofunctor of $G\BC_{/\nat_{min,min}}$ that removes the preimage of $P$, thus sending $X$ to $X \setminus \bigcup_{n \in P} X_n$;
  \item  $u_P$ denotes the endofunctor $\id \times c_P$ {of $(\prod_\nat \PSh(G\Orb)) \times G\BC_{/\nat_{min,min}}$}.\qedhere
  \end{enumerate}
\end{ddd}

Let $E \colon G\BC \to \bM$ be a functor to a semi-additive and cocomplete $\infty$-category.

\begin{lem}\label{lem:projections}
  If $E$ is $\pi_0$-excisive and hyperexcisive, there exists a natural equivalence
  \[ \bigoplus_{n \in P} \exten{E} \circ (\pr_n \times f_n)\:\: \oplus \:\:  \extenp{E} \circ u_P \:\:\:\simeq\:\: \:\extenp{E} \]
  of functors $(\prod_\nat \PSh(G\Orb)) \times G\BC_{/\nat_{min,min}} \to   \bM$.  
  In particular, there are inclusion and projection maps
  \begin{equation}\label{eq:projections0}
   \exten{E} \circ (\pr_n \times f_n) \to \extenp{E} \quad\text{and}\quad \extenp{E} \to \exten{E} \circ (\pr_n \times f_n)
   \end{equation}
  for every $n$ in $\nat$.
\end{lem}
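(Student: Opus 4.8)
The plan is to establish the splitting by an induction on the (finite or infinite) set $P$, reducing everything to the case $P = \{n\}$ of a single element, which is then handled by combining $\pi_0$-excisiveness with the computation already performed in \cref{lem:hyperexc}. First I would reduce to the case where $P$ is finite: both sides of the claimed equivalence are, in the first variable, left Kan extensions along $\extenp{\ell}$ of functors defined on $(\prod_\nat G\Set) \times G\BC_{/\nat_{min,min}}$, so it suffices to produce the equivalence after precomposition with $\extenp{\ell}$, i.e. for families $(T_n)_n$ of $G$-sets; and for such an input, each bounded piece $f_k(X) = X_k$ together with the fact that the bornology of $(T_n)_n \otimes_\nat X$ is generated by the finite sub-coproducts $\coprod_{n \le N}(T_n \times X_n)$ means that only finitely many indices contribute to any bounded subset. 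Once $P$ is finite, the claim follows by iterating the two-element case, so really the content is: for a single $n$, one has $\extenp{E} \simeq \exten{E}\circ(\pr_n \times f_n) \oplus \extenp{E}\circ u_{\{n\}}$.

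For the single-element case I would argue as follows. Given $X$ in $G\BC_{/\nat_{min,min}}$ and a family $(T_m)_m$, the space $(T_m)_m \otimes_\nat X$ decomposes as a disjoint union $(T_n \times X_n) \sqcup \coprod_{m \neq n}(T_m \times X_m)$, and by inspection of the coarse structure in \cref{def:fibrewise-tensor} (entourages are block-diagonal, mixing no two distinct fibres) this is a partition into coarsely disjoint invariant subsets. The second summand is exactly $(T_m)_m \otimes_\nat c_{\{n\}}(X)$, while the first is $T_n \times X_n$ with its product coarse structure and the bornology in which $T_n \times X_n$ itself is bounded — i.e., using \cref{rem:tensor-const}, it is $(T_n)_{m \in \nat} \otimes_\nat (X_n \to \{0\}) \cong (T_n)_{min,max} \otimes X_n$. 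Applying the $\pi_0$-excisive functor $E$ to this partition yields a natural equivalence
\[ E((T_m)_m \otimes_\nat X) \simeq E((T_n)_{min,max} \otimes X_n) \oplus E\big((T_m)_m \otimes_\nat c_{\{n\}}(X)\big)\ . \]
The first term is, by the defining commutative triangle for $\exten{E}$ in \cref{constr:exten} together with the identification $\exten{E}(\ell(T_n),-) \simeq E_{(T_n)_{min,max}}(-)$ valid for all $G$-sets once $E$ is hyperexcisive (this is precisely \cref{lem:hyperexc}), equivalent to $\exten{E}(\pr_n(\extenp{\ell}(T_\bullet)), f_n(X))$; the second term is $\extenp{E}(u_{\{n\}}(\extenp{\ell}(T_\bullet), X))$ precomposed with $\extenp{\ell}$. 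Left Kan extending this equivalence along $\extenp{\ell}$ — using that $\pr_n$, $f_n$, $c_{\{n\}}$ are the obvious functors and that $-\times\ell(T)$ type constructions preserve colimits, as in the proof of \cref{ergioergergegergergergerg} — gives the two-element, hence via induction the finite, and hence the general statement. The inclusion and projection maps in \eqref{eq:projections0} are then the structural maps of a (possibly infinite) direct sum, well defined because in $\bM$ semi-additivity plus the finiteness of bounded subsets makes the relevant colimit a genuine coproduct summand.

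I expect the main obstacle to be the bookkeeping for infinite $P$: one must check that the infinite direct sum $\bigoplus_{n \in P}$ is correctly produced by the Kan extension and that the hyperexcisiveness hypothesis is genuinely needed and correctly invoked to identify the single-fibre term $E((T_n)_{min,max} \otimes X_n)$ with $\exten{E}\circ(\pr_n \times f_n)$ even when $T_n$ is not a single orbit (this is the precise content of \cref{lem:hyperexc}, which is why it was proved first). The verification that the fibrewise tensor product really does partition coarsely-disjointly — immediate from the block-diagonal form of the generating entourages — and the matching of bornologies via \cref{rem:tensor-const} are routine but must be stated carefully, since the whole argument rests on being able to apply $\pi_0$-excisiveness fibrewise.
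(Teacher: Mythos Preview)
Your overall strategy matches the paper's: restrict to families of $G$-sets via $\extenp{\ell}$, split using $\pi_0$-excision, identify the single-fibre term via hyperexcisiveness, then left Kan extend. The paper does not induct on $\lvert P\rvert$ but handles all of $P$ at once; that difference is inessential.

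The real gap is in the Kan extension step. You assert that ``both sides of the claimed equivalence are, in the first variable, left Kan extensions along $\extenp{\ell}$'', but for the left-hand side this is precisely what has to be shown: while $\extenp{E}$ is a left Kan extension along $\extenp{\ell}$ by definition, it is not automatic that $\exten{E}\circ(\pr_n\times f_n)$ or $\extenp{E}\circ u_P$ are. Your appeal to ``$-\times\ell(T)$ type constructions preserve colimits, as in the proof of \cref{ergioergergegergergergerg}'' does not fit the situation---here you are precomposing with $\pr_n$, not postcomposing with a colimit-preserving endofunctor of $\PSh(G\Orb)$. Concretely, you must show that
\[
 \exten{E}(A_n, X_n)\;\simeq\;\mathop{\colim}\limits_{(S_m\to A_m)_m\in \prod_m G\Set_{/A_m}} \exten{E}(\ell(S_n),X_n)\ ,
\]
and the indexing category on the right is a product over \emph{all} $m$, not just the $n$-th factor, so colimit-preservation of $\exten{E}(-,X_n)$ alone does not suffice. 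The paper's device is to observe that $(\pr_n\times f_n)_{n\in P}$ admits an explicit left adjoint $i_P$ (extend by $\emptyset$ in the complementary slots and take the coproduct of the bounded pieces), and that restriction along a right adjoint coincides with left Kan extension along its left adjoint. A commutative square intertwining $i_P$ with $\extenp{\ell}$ then lets one compute the iterated Kan extension in two ways and obtain the identification; the same trick handles $\extenp{E}\circ u_P$. Your sketch is missing this mechanism.

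A secondary point: the ``reduction to finite $P$'' via bornology does not work as written. The bornology of $(T_n)_n\otimes_\nat X$ being generated by finite sub-coproducts says nothing about the number of nonzero summands in $\bigoplus_{n\in P}\exten{E}(\ell(T_n),X_n)$; each summand depends only on the single fibre $X_n$ and can be nonzero for infinitely many $n$.
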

\begin{proof}
 The canonical inclusions induce for every sequence of $G$-sets $(T_n)_n$ a natural map
 \begin{equation}\label{eq:projections}
  \bigoplus_{n \in P} E(T_{n,min,max} \otimes X_n) \:\:\oplus \:\:E((T_n)_n \otimes_\nat c_P(X))  \:\:\: \to \:\:\: E((T_n)_n \otimes_\nat X)\ .
  \end{equation}
 This map is an equivalence since $E$ is $\pi_0$-excisive.
 As $E$ is hyperexcisive, \cref{ergioergergegergergergerg} gives a natural equivalence 
 \[ E(T_{n,min,max} \otimes X_n) \:\: \simeq  \:\:\exten{E}_{\bd,T_{n,min,max}}(*,X_n)  \:\:\simeq  \:\:\exten{E}(\ell(T_{n}), X_n)\ \]
 for every $n$ in $\nat$. 
 This allows us to identify \eqref{eq:projections} with a natural equivalence
 \[ \big( \bigoplus_{n \in P} (\exten{E} \circ (\pr_n \times f_n)) \:\: \oplus \:\: \extenp{E} \circ u_P \big) \circ \extenp{\ell}\:\: \:  \simeq\:\:\:  \extenp{E} \circ \extenp{\ell} \] 
 of functors $(\prod_\nat G\Set) \times G\BC_{/\nat_{min,min}}  \to \bM$. 
 Taking left Kan extensions along $\extenp{\ell}$ yields a natural equivalence of functors on $(\prod_\nat \PSh(G\Orb)) \times G\BC_{/\nat_{min,min}}$. Note that the left Kan extension of the right hand side is by definition $\extenp{E}$.
 
 We need to identify the left Kan extension on the left hand side.
 The functor
 \[ (\pr_n \times f_n)_{n \in P} \colon (\prod_\nat G\Set) \times G\BC_{/\nat_{min,min}} \to \prod_{n \in P} \big(G\Set \times G\BC_\bd\big) \]
 has a left adjoint $i_P$ which sends $(A_n,Y_n)_{n \in P}$ to
 \[ \Big( \big( \begin{cases} A_n & n \in P \\ \emptyset & n \notin P \end{cases} \big)_{n}, \coprod_{n \in P} Y_n \Big) \ .\]
 Since the restriction of a functor along a right adjoint is the left Kan extension along the left adjoint, the diagram
 \begin{equation}\label{eq:projections2}\xymatrix@C=6em{
  \prod_{n \in P} \big(G\Set \times G\BC_\bd\big)\ar[r]^-{\bigoplus_{n \in P} (\exten{E}_\bd \circ (\ell \times \id))}\ar[d]_-{i_P} & \bM \\
  (\prod_\nat G\Set) \times G\BC_{/\nat_{min,min}}\ar[ur]_-*+++{\labelstyle \bigoplus_{n \in P} (\exten{E} \circ (\ell \times \id) \circ (\pr_n \times f_n))}
 }\end{equation}
 exhibits $\bigoplus_{n \in P} (\exten{E} \circ (\ell \times \id) \circ (\pr_n \times f_n))$ as the left Kan extension of $\bigoplus_{n \in P} (\exten{E}_\bd \circ (\ell \times \id))$ along $i_P$.
 Moreover, $i_P$ fits into the commutative diagram
 \[\xymatrix@C=4em{
 \prod_{n \in P} \big(G\Set \times G\BC_\bd \big)\ar[d]_-{\extenp{\ell}_P}\ar[r]^-{i_P} & (\prod_\nat G\Set) \times G\BC_{/\nat_{min,min}}\ar[d]^-{\extenp{\ell}} \\
 \prod_{n \in P}  \big(\PSh(G\Orb) \times G\BC_\bd \big)\ar[r]^-{\widetilde i_P} & (\prod_\nat \PSh(G\Orb)) \times G\BC_{/\nat_{min,min}}
 }\]
 where $\widetilde i_P$ is left adjoint to $(\pr_n \times f_n)_{n \in P}$.
 Consequently, the left Kan extensions of $\bigoplus_{n \in P} (\exten{E}_\bd \circ (\ell \times \id))$ along $\extenp{\ell} \circ i_P$ and along $\widetilde i_P \circ \extenp{\ell}_P$ coincide.
 We now determine these two Kan extensions.
 
 The first iterated Kan extension is, in view of the Kan extension presented by \eqref{eq:projections2}, the left Kan extension of $\bigoplus_{n \in P} (\exten{E} \circ (\ell \times \id) \circ (\pr_n \times f_n))$ along $\extenp{\ell}$.
 
 For the second, note that the left Kan extension of $\bigoplus_{n \in P} (\exten{E}_\bd \circ (\ell \times \id))$ along $\extenp{\ell}_P$ is $\bigoplus_{n \in P} \exten{E}_\bd$. So the second iterated Kan extension is equivalent to $\bigoplus_{n \in P} (\exten{E} \circ (\pr_n \times f_n))$.
 
 A completely analogous argument applies to $\extenp{E} \circ u_P$ since $u_P$, considered as a functor to its essential image, has a left adjoint given by the inclusion of the essential image. 
\end{proof}

 We are finally ready to define
 the notion of $(E,\cF)$-properness. Denote by
\[ \diag \colon \PSh(G\Orb) \to \prod_\nat \PSh(G\Orb) \]
the diagonal functor. 
Let $E \colon G\BC \to \bM$ be a $\pi_{0}$-excisive and hyperexcisive functor with values in a cocomplete stable $\infty$-category.  
 Finally, fix    an object $X$ in $G\BC_{/\nat_{min,min}}$.
Then the inclusions in \eqref{eq:projections0} induce  a natural transformation
\[ \bigoplus_{n \in \nat} \exten{E}(-,X_n) \to \extenp{E}(\diag(-), X) ) \colon \PSh(G\Orb)\to \bM\ .\]
We are interested in its cofibre.
 \begin{ddd}\label{def:extenps}
We define the functor
 \[ \extenps{E}(-,X) := \cofib \Big(  \bigoplus_{n \in \nat} \exten{E}(-,X_n) \to \extenp{E}(\diag(-), X) \Big)\colon  \PSh(G\Orb)\to \bM\ .\qedhere\] 
\end{ddd}
Let $\cF$ be a family of subgroups of $G$ and recall \cref{wrtoihgjwtgergwerfew} of the classifying space $E_{\cF}G$.

\begin{ddd}\label{ergioerwgwergwergregregwergwregwergwre}
	The object $X$ in $G\BC_{/\nat_{min,min}}$ is called $(E,\cF)$-proper if the map
	\[ \extenps{E}(E_\cF G,X) \to \extenps{E}(*,X) \]
	induced by the projection $E_\cF G \to *$ is an equivalence.
\end{ddd}

\subsection{Phantom equivalences from transfer classes}\label{sec:phantom-transfer}

The  crucial part of our proof of \cref{thm:main} consists of the construction of a transfer class. In this subsection we introduce this  notion in an axiomatic way. In 
\cref{ergiooegergergwergergergw} we show that the existence of such a transfer class implies that the assembly map is a phantom equivalence.
Thus, \cref{ergiooegergergwergergergw} will eventually yield
the final step in the proof of \cref{thm:main}.

We start with explaining the symmetric monoidal structure on the $\infty$-category of  idempotent complete left-exact $\infty$-categories. Recall  that  a left-exact $\infty$-category $\bC$ is called idempotent complete if it is closed under retracts in its pro-completion $\Pro_{\omega}(\bC)$, or equivalently, if the canonical functor
$\bC\to \Pro_{\omega}(\bC)$ induces  an equivalence of $\bC$ with the  full subcategory $\Pro_{\omega}(\bC)^{\omega}$ of cocompact objects in $\Pro_{\omega}(\bC)$.
We let $\Clep$ be the full subcategory of $\Cle$ of idempotent complete left-exact $\infty$-categories.

We let $\Spc^{\op,\omega}_*$ in $\Clep$ denote the $\infty$-category of cocompact objects in the opposite of the category of pointed spaces. Equivalently, $\Spc^{\op,\omega}_* =(\Spc_{*}^{\cp})^{\op}$. The $\infty$-category  $\Spc^{\op,\omega}_*$ has the universal property
that the evaluation at the object $S^{0}$ of $\Spc^{\op,\omega}_*$  induces  an equivalence $\Fun^{\lex}(\Spc^{\op,\omega}_*,\bC)\to \bC$. In other words, specifying a left-exact  functor 
$\Spc^{\op,\omega}_*\to \bC$ is equivalent to specifying an object of $\bC$.  

The $\infty$-category $\Clep$ has a symmetric monoidal structure $\otimes$ with tensor unit $\Spc^{\op,\omega}_*$. 
 {This means that} for $\bC$ and $\bD$ in $\Clep$, the tensor product $\bC\otimes\bD$  is characterised by the following universal property:
 there is a functor $\bC\times\bD \to \bC \otimes\bD$ that is initial among all  functors from $\bC\times \bD$ to some idempotent complete left-exact $\infty$-category  which preserve finite limits in each variable separately. The  full details will be explained in \cref{sec:prelims}.

On the next categorial level, the functor
\[ - \otimes - \colon \Clep \times \Clep \to \Clep \]
preserves colimits in both variables separately.

The definition of a transfer class depends on a choice of pentuple
\begin{equation}\label{qwefoiwhfoiqwfqewfqewfqf}
 (U,\eta,V,H,\cF)
\end{equation} 
whose members we will describe in the following.
The first two {components} are a functor
\begin{equation}\label{eliwjbioebwebrebewbe1}
 U \colon G\BC\to \Clep
\end{equation}
and a {left-exact functor}
\begin{equation}\label{eliwjbioebwebrebewbe}
 \eta \colon \Spc^{\op,\omega}_{*} \to U(*)\ .
\end{equation}
By the universal property of $\Spc^{\op,\omega}_*$ explained above, specifying $\eta$ is equivalent to specifying an object in $U(*)$.
\begin{rem} \label{fbiowbervervewbv}
	Recall from \cref{sec:borncoarse} that $G\BC$ carries a symmetric monoidal structure which we also denote by $\otimes$.
	If $U$ has a lax monoidal structure, then $U$ preserves algebra objects.
	Since $*$ is an algebra in $G\BC$, we get an algebra $U(*)$ in $\Clep$. 
	In this case the unit of this algebra provides a canonical morphism $\Spc^{\op,\omega}_{*} \to U(*)$ which will usually be our candidate for $\eta$.
\end{rem}

The third entry in the list \eqref{qwefoiwhfoiqwfqewfqewfqf}   is another  functor  \begin{equation}\label{ejkhvnijevfvdfvsfdv}
V \colon G\BC \to \Clep\ .
\end{equation}

\begin{ddd}\label{giooergrefwerfwrevwerfv}
	A weak $U$-module structure on $V$ is given by the following data:
	\begin{enumerate}
		\item a natural transformation
		\[ \mu \colon U(-) \otimes  V(-)\to V(-\otimes-)\ ; \]
		\item\label{it:giooergrefwerfwrevwerfv} a commutative diagram
		\begin{equation}\label{trfsdbqewzfgzkwuqefwear}
		\xymatrix{
			\Spc^{\op,\omega}_* \otimes V(-)\ar[r]^-{\eta \otimes \id}\ar[d]_-{\simeq}&U(*)\otimes  V(-)\ar[d]^-{\mu} \\
			V(-)\ar[r]^-{\simeq}&V(*\otimes -)}
		\end{equation}
		of functors $G\BC \to \Clep$. \qedhere
	\end{enumerate}
\end{ddd}

\begin{rem}
	We use the word {\em weak} since \cref{giooergrefwerfwrevwerfv} requires only the minimal amount of structure to make the proof of \cref{ergiooegergergwergergergw} below work.
	In the situations we will actually consider (see \cref{sec:orbits-and-fixed-points}), the functor $U$ is a lax symmetric monoidal functor, and $V$ is a module functor {over $U$}.
\end{rem}

\begin{rem}
 In our applications below, we will only make use of weak modules which are canonically derived from a given left-exact $\infty$-category with $G$-action $\bC$.
 To provide some intuition, recall that, for any ring $R$, the tensor product over $\IZ$ induces an action of the symmetric monoidal category of finitely generated free $\IZ$-modules with $G$-action on the category of finitely generated projective $R[G]$-modules.
 Generalising to the left-exact setting, the idempotent completion  of $\colim_{BG} \bC$ canonically refines to a module over $\Fun(BG,\Spc^{\op,\omega}_*)$, and the examples for $U$ and $V$ in \cref{sec:famenablegroups,qrgqiorgegegergwegre,sec:dfhj} generalise this observation further to provide an action of ``controlled Swan theory'' on controlled K-theory. See \cref{sec:orbits-and-fixed-points} for the explicit construction.
\end{rem}

The {component} $H$ in the list \eqref{qwefoiwhfoiqwfqewfqewfqf} is a  lax monoidal functor $H \colon \Clep \to \bM$, where the target 
 $\bM$ is assumed to be a monoidal, semi-additive and cocomplete $\infty$-category which admits countable products.
  Given $\Homol$, the morphism $\eta$ in \eqref{qwefoiwhfoiqwfqewfqewfqf} induces a morphism
\begin{equation}\label{eq:etaH}
 \eta_H \colon \beins_\bM \to H(\Spc^{\op,\omega}_*) \xrightarrow{H(\eta)} HU(*)\ .
\end{equation}
The final entry $\cF$ of the list \eqref{qwefoiwhfoiqwfqewfqewfqf} is a family of subgroups of $G$.

We now fix a list {as in} \eqref{qwefoiwhfoiqwfqewfqewfqf}.
Furthermore, we assume that $U$ and $HV$ are $\pi_0$-excisive (see \cref{wetiogwergrgwr}), and that the functor $HV$ is  hyperexcisive (see \cref{qiojerfgfrqefwf}).
Recall the morphisms $q^U_n \colon U(\nat_{min,min}) \to U(\{n\}) \simeq U(*)$ {introduced in} \cref{regiouweroigegergewgreg}.
Recall \cref{ergioerwgwergwergregregwergwregwergwre} of $(HV,\cF)$-proper objects.

\begin{ddd}\label{def:transfer-class}
	A transfer class $(X,t)$ for $(U,\eta,V,H,\cF)$ consists of:
	\begin{enumerate}
		\item  \label{qwoifgoqergqfeqewfq} {an object} $X$ in $G\BC_{/\nat_{min,min}}$, {called the transfer space}, which admits a morphism to an $(HV,\cF)$-proper object;
		\item  \label{qwoifgoqergqfeqewfq1} a morphism $t \colon \beins_\bM \to HU(X)$
	\end{enumerate}
	such that there exists a commutative diagram
	\begin{equation}\label{wefiewofefwefewfewfewfwe111}
	\xymatrix@C=4em{\beins_\bM \ar[rr]^-{\eta_H}\ar[d]_-{t}&&HU(*)\ar[d]^-{\diag} \\
	HU(X)\ar[r]^-{p}& HU(\nat_{min,min}) \ar[r]^{(H(q^U_n))_n}&\prod_{n\in \nat} HU(*) } \end{equation}
	where $p \colon X\to \nat_{min,min}$ denotes the structure morphism of $X$.\qedhere
\end{ddd}

\begin{rem}\label{faklolergergwergwegwergwreg}
	Note that we only require the existence of a commutative diagram like \eqref{wefiewofefwefewfewfewfwe111}, not a preferred choice of a filler.
	In order to construct such  fillers we will encounter the situation where 
	 the morphism $t$ is obtained by specifying an object $t_0$ in $U(X)$.
	 {In more detail,} this object induces a left-exact functor
	$\widehat t_{0} \colon \Spc_{*}^{\op,\omega}\to U(X)$
	by the universal property of $\Spc_{*}^{\op,\omega}$.
	This functor in turn yields $t$ as the composition
	\[ t\colon \beins_{\bM} \to H(\Spc_{*}^{\op,\omega}) \xrightarrow{H(\widehat t_{0})} HU(X)\ .\]
	If $t$ arises in this manner {and} $\eta$ is given by the object $\eta_0$ of $U(X)$,
	a sequence of equivalences $((q^U_n \circ p)(t_0) \simeq \eta_0)_{n \in \nat}$ in $U(*)$ provides a filler for \eqref{wefiewofefwefewfewfewfwe111}.
\end{rem}

Let $\bM$ be a stably monoidal and cocomplete stable   $\infty$-category which admits countable products.
Let $H \colon \Clep \to \bM$ be a lax monoidal functor which preserves sums, and let $\cF$ be a family of subgroups of $G$. Let $\bC$ be in $\Fun(BG, \Clep)$ and recall \cref{weoirgjwegwergwerg9}  of $H\bC_G$.

\begin{prop}\label{ergiooegergergwergergergw}
	Assume that we are given:
	\begin{enumerate}
		\item a pair $(U,\eta)$ as in \eqref{eliwjbioebwebrebewbe1} and \eqref{eliwjbioebwebrebewbe} such that $U$ is $\pi_{0}$-excisive (see \cref{wetiogwergrgwr});
		\item a functor $V$ as in \eqref{ejkhvnijevfvdfvsfdv} having a weak $U$-module  structure (see \cref{giooergrefwerfwrevwerfv}) {such that}
		 \begin{enumerate}
		  \item $HV$ extends $H\bC_G$ in the sense of \cref{wtgklpwergrewfwref};
		  \item $HV$ is $\pi_0$-excisive and hyperexcisive (see \cref{qiojerfgfrqefwf});
		 \end{enumerate}
		\item a transfer class $(X,t)$ for $(U,\eta,V,H,\cF)$ (see \cref{def:transfer-class}).
	\end{enumerate}
	Then ${\As_{\cF,H\bC_G} \colon \mathop{\colim}\limits_{G_\cF\Orb} H\bC_G \to H(\mathop{\colim}\limits_{BG} \bC)}$ is a phantom equivalence.
\end{prop}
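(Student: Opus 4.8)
The plan is to run a ``Dirac--dual Dirac'' argument in the style of Bartels, but phrased entirely in terms of the functors $\exten{E}$, $\extenp{E}$ and $\extenps{E}$ of \cref{rfoiqrejgoireggregwrgregwregwergregr}, together with the phantom-detection criterion \cref{veiowefwwfeewfewf}. Since $\bM$ is stable, by \cref{rem:phantoms} it suffices to show that $\cofib(\As_{\cF,H\bC_G})$ is a phantom object, and by \cref{veiowefwwfeewfewf} (applied after passing to cofibres, using that cofibres commute with $\bigoplus$ and that the relevant diagonal is the one appearing there) it is enough to exhibit the diagonal transformation on $\cofib(\As_{\cF,H\bC_G})$ into $\prodsum \cofib(\As_{\cF,H\bC_G})$ as the zero map. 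Concretely, I would contemplate the commutative square displayed in the introduction,
\[\xymatrix@C=5em{
 \mathop{\colim}\limits_{G_\cF\Orb} H\bC_G\ar[r]^-{\As_{\cF,H\bC_G}}\ar[d]_-{\Delta} & H\bC_G(*)\ar[d]^-{\Delta} \\
 \prodsum \mathop{\colim}\limits_{G_\cF\Orb} H\bC_G\ar[r]^-{\prodsum \As_{\cF,H\bC_G}} & \prodsum H\bC_G(*)
}\]
and show that it induces the zero map on horizontal cofibres; applying \cref{veiowefwwfeewfewf} then finishes the proof.

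To produce this square I would use the extension hypothesis: since $HV$ extends $H\bC_G$, \cref{eroiguowergfwefwefre91} identifies $\exten{HV}(-,*)$ (as a functor on $\PSh(G\Orb)$) with the Davis--L\"uck homology theory attached to $H\bC_G$, so $\exten{HV}(E_\cF G,*)\simeq \colim_{G_\cF\Orb} H\bC_G$ and $\exten{HV}(*,*)\simeq H\bC_G(*)$, and the map between them is $\As_{\cF,H\bC_G}$ by \cref{qgoiegjqefewfqfewfqewf}. More generally the transfer space $X\in G\BC_{/\nat_{min,min}}$ gives, via \cref{def:extenps}, a functor $\extenps{HV}(-,X)$ on $\PSh(G\Orb)$, and by construction there is a natural cofibre sequence $\bigoplus_n \exten{HV}(-,X_n)\to \extenp{HV}(\diag(-),X)\to \extenps{HV}(-,X)$. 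Evaluating at $E_\cF G\to *$ and using that $X$ maps to an $(HV,\cF)$-proper object (so by \cref{ergioerwgwergwergregregwergwregwergwre} the $\extenps{HV}$-term does not see the difference between $E_\cF G$ and $*$), I get that the map $\extenp{HV}(\diag(E_\cF G),X)\to \extenp{HV}(\diag(*),X)$ has the same cofibre as $\bigoplus_n \As_{\cF, (H\bC_G)_{X_n}}$ up to the identification $\exten{HV}(-,X_n)_? \simeq$ a twist of $H\bC_G$. This is the step that turns ``the transfer space is proper'' into ``after applying $\prodsum$, the assembly map is split''.

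The actual splitting comes from the transfer class $t\colon \beins_\bM\to HU(X)$ together with the weak $U$-module structure $\mu\colon U(-)\otimes V(-)\to V(-\otimes-)$. Applying $H$, which is lax monoidal, and using $t$, one gets for each $Y\in G\BC$ a natural ``cap with $t$'' map $H V(Y)\to HU(X)\otimes_{\bM} HV(Y)\to H(U(X)\otimes V(Y))\to \extenp{HV}(\ldots, X\otimes Y)$ or rather, after the bookkeeping of \cref{lem:projections} and the projections $q^U_n$, a factorisation of the diagonal $HV(Y)\to \prod_n HV(Y)$ through $\extenp{HV}$ applied to the transfer space. The compatibility condition \eqref{trfsdbqewzfgzkwuqefwear} of the weak module structure, combined with the commuting square \eqref{wefiewofefwefewfewfewfwe111} defining a transfer class (which says $(H(q^U_n)\circ p)\circ t = \diag\circ\,\eta_H$), is exactly what guarantees that under the projection back to the $n$-th summand $\exten{HV}(-,X_n)$ the composite recovers the identity-up-to-the-unit on $HV$. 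Spelling this out naturally in $Y$, and then Kan-extending along the Yoneda embedding to get a statement about $\PSh(G\Orb)$-valued functors, produces a commuting diagram showing that $\Delta\colon \cofib(\As_{\cF,H\bC_G})\to \prodsum\cofib(\As_{\cF,H\bC_G})$ factors through the cofibre of $\bigoplus_n\exten{HV}(E_\cF G,X_n)\to\bigoplus_n\exten{HV}(*,X_n)$ composed with a map that kills it, hence is zero.

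I expect the main obstacle to be precisely this last bookkeeping step: assembling the transfer $t$, the module multiplication $\mu$, the lax monoidal structure on $H$, the projections from \cref{lem:projections}, and the properness of $X$ into one genuinely commuting diagram of $\PSh(G\Orb)$-indexed functors, and verifying that the two sides of the square in the introduction agree after passing to cofibres. Each ingredient is available, but making the diagram commute (rather than merely commute up to an unspecified homotopy) is where the hypotheses of \cref{def:transfer-class} — especially that we only ask for the \emph{existence} of the filler \eqref{wefiewofefwefewfewfewfwe111} and the diagram \eqref{trfsdbqewzfgzkwuqefwear} in \cref{giooergrefwerfwrevwerfv} — have to be used carefully; all the ``weak'' structure is exactly calibrated so that this works. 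Once the square is set up and $\Delta$ is seen to be null on cofibres, \cref{veiowefwwfeewfewf} and \cref{rem:phantoms} close the argument immediately.
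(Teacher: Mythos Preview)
Your overall strategy is the same as the paper's: reduce to showing the cofibre $C$ is a phantom object via \cref{veiowefwwfeewfewf}, identify the assembly map with $\exten{HV}(E_\cF G,*)\to\exten{HV}(*,*)$, use the transfer class and the weak module structure to build a natural transformation $\exten{HV}(-,*)\to\extenp{HV}(\diag(-),X)$ whose composite with the projection to $\prod_\nat\exten{HV}(-,*)$ is the diagonal, and then pass to $\extenps{HV}$ to see that $C\to\prodsum(C)$ vanishes.

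There is, however, a genuine slip in how you use properness. You write that ``the $\extenps{HV}$-term does not see the difference between $E_\cF G$ and $*$'' and conclude that $\extenp{HV}(\diag(E_\cF G),X)\to\extenp{HV}(\diag(*),X)$ has the same cofibre as $\bigoplus_n\exten{HV}(E_\cF G,X_n)\to\bigoplus_n\exten{HV}(*,X_n)$. That would require $X$ itself to be $(HV,\cF)$-proper, which is \emph{not} part of \cref{def:transfer-class}: the definition only asks that $X$ admits a morphism to some $(HV,\cF)$-proper object $W$. The correct argument does not claim the cofibre of $\extenps{HV}(E_\cF G,X)\to\extenps{HV}(*,X)$ is zero; instead one factors the map $C\to\prodsum(C)$ as
\[
C\;\to\;\cofib\big(\extenps{HV}(E_\cF G,X)\to\extenps{HV}(*,X)\big)\;\to\;\cofib\big(\extenps{HV}(E_\cF G,W)\to\extenps{HV}(*,W)\big)\;\to\;\prodsum(C)
\]
using the morphism $X\to W$ in $G\BC_{/\nat_{min,min}}$, and it is the \emph{middle} object that vanishes by \cref{ergioerwgwergwergregregwergwregwergwre}. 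Your final paragraph also suggests the factorisation passes through the $\bigoplus_n$ side; in fact it is the $\extenps{HV}$ side one routes through, precisely because that is where properness of $W$ gives the zero object. Once this is corrected, the rest of your outline matches the paper's proof.
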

\begin{proof}
We let $HV^{+}$ denote the functor obtained from $HV$ as an instance of \eqref{ervwevkjnkjwvevwevw}.
Since $HV$ extends $H\bC_G$, we have a natural equivalence $\exten{HV}(-,*) \simeq H\bC_G(-)$ of functors $\PSh(G\Orb)\to \bM$.
By \cref{qgoiegjqefewfqfewfqewf}, the assembly map $\As_{\cF,H\bC_G}$ is equivalent to the map $\exten{HV}(E_\cF G,*) \to \exten{HV}(*,*)$ induced by the projection $E_{\cF}G\to *$. 
Let $C$ be the cofibre of the assembly map.
We want to show that $C$ is a phantom object.
By \cref{veiowefwwfeewfewf}, it is sufficient to produce a factorisation of $C \to \prodsum(C)$ over the zero object.

	 We let $\diag \colon \PSh(G\Orb) \to \prod_\nat \PSh(G\Orb)$ denote the diagonal functor.
	As a first step we will define a natural transformation
	\[ \tau \colon \exten{HV}(-,*) \to \extenp{HV}(\diag(-),X)\ ,\]
	where $\extenp{HV}$ is as in \cref{def:extenp}.
	Using the fact that the functor $\extenp{\ell}$ in \eqref{svasdvdscdscadcacac} is fully faithful {and that $\yo \simeq \ell \circ j_G$} for the first equivalence,
	and the identification $\diag(-) \otimes_\nat X \cong (-)_{min,max} \otimes X$ of functors $G\Orb\to G\BC$ {from \cref{rem:tensor-const}} for the second, we get an equivalence 	
\[ \extenp{HV}(\diag(\yo(-)),X) \simeq  HV(\diag(-) \otimes_\nat X)\simeq  HV((-)_{min,max} \otimes X)\ .\]
	By \cref{constr:exten}, the functor $\exten{HV}(-,*)$ is the left Kan extension of $HV ((-)_{min,max})$  along the Yoneda embedding $\yo \colon G\Orb\to \PSh(G\Orb)$.
	Therefore, by the universal property of the left Kan extension,
	 in order to define $\tau$ it suffices to give a natural transformation 	
	 \begin{equation}\label{svasvdcacwqcsadcadsc}
HV((-)_{min,max}) \to HV((-)_{min,max} \otimes X) \overset{\text{def}}{=}HV_X((-)_{min,max})
	\end{equation}
	of functors $G\Orb \to \bM$, where $HV_{X}$ denotes the twist of $HV$ by $X$ (see \cref{def:twist}).

	In fact, the transfer class gives rise to a natural transformation $\tau' \colon HV \to HV_{X}$ of functors $G\BC \to \bM$ that is defined using the lax monoidal structure of $H$ and the weak $U$-module structure $\mu$ on $V$ as the composition
	\begin{equation*}\label{vevwervewrvfrefwrfwerfwerf}
	 \tau' \colon HV \xrightarrow{t \otimes \id} HU(X)\otimes_{\bM}  HV \to H(U(X)\otimes V) \xrightarrow{H(\mu)} HV_{X} \ .
	\end{equation*}
	The restriction of $\tau'$ along the inclusion $ (-)_{min,max}\colon G\Orb \to G\BC$  from \eqref{eq:gorb-embed} is the desired transformation \eqref{svasvdcacwqcsadcadsc}.
	
	We consider the following diagram of functors $\PSh(G\Orb)\to \bM$, in which the top vertical transformations are induced by the canonical morphism $p \colon X \to \nat_{min,min}$ in $G\BC_{/\nat_{min,min}}$, and the bottom vertical transformations are induced by the projections \eqref{eq:projections0} from \cref{lem:projections}:
	\begin{equation}\label{eq:transfer}\xymatrix{
	 \exten{HV}(-,*)\ar[r]^-{\tau}\ar@/_15pt/[rdd]_-{\diag} & \extenp{HV}(\diag(-),X)\ar[r]\ar[d] & \extenps{HV}(-,X)\ar[d] \\
	 & \extenp{HV}(\diag(
	 -),\nat_{min,min})\ar[r]\ar[d] & \extenps{HV}(-,\nat_{min,min})\ar[d] \\
	 & \prod_\nat \exten{HV}(-,*)\ar[r] & \prodsum(\exten{HV}(-,*))
	}\end{equation}
	In the following we argue that it  commutes.
	The two squares on the right hand side obviously commute. Commutativity of the triangle on the left  can be checked after restriction  to the orbit category.
	Therefore, we must show that 
	the diagram of functors $G\Orb\to \bM$
	\[\xymatrix@C=4em{
	 HV((-)_{min,max})\ar[r]^-{\tau'}\ar@/_15pt/[rdd]_-{\diag} & HV_X((-)_{min,max})\ar[d]^-{HV_p} \\
	 & HV_{\nat_{min,min}}((-)_{min,max})\ar[d]^-{(q^V_n)_n} \\
	 & \prod_\nat HV((-)_{min,max}) \\
	 }\]
	commutes. Here $HV_p$ is induced by $p$ and the morphisms $q^V_n$ arise from $\pi_0$-excision for $HV$ by \cref{regiouweroigegergewgreg}.
	To see that this diagram commutes, consider the following diagram in which the morphisms $q_n^U$ also arise from \cref{regiouweroigegergewgreg} by $\pi_0$-excision for $U$, and in which all unlabelled arrows are induced by the lax monoidal structure of $H$:
	\begin{equation}\label{eq:transfer2}
	\xymatrix@C=4em{
	 & & HV_{\nat_{min,min}}\ar[r]^-{(q^V_n)_n} & \prod_\nat HV \\
	 HV_{X}\ar@/^15pt/[urr]^-{HV_p} & H(U(X) \otimes V)\ar[l]_-{H(\mu)}\ar[r]^-{H(U(p) \otimes \id)} & H(U(\nat_{min,min}) \otimes V)\ar[u]_-{H(\mu)}\ar[r]^-{(H(q^U_n \otimes \id))_n} & \prod_\nat H(U(*) \otimes V)\ar[u]_-{\prod_\nat H(\mu)} \\
	 HV\ar[u]^-{\tau'}\ar[r]^-{t \otimes \id} & HU(X) \otimes HV\ar[u]\ar[r]^-{HU(p) \otimes \id} & HU(\nat_{min,min}) \otimes HV\ar[u]\ar[r]^-{(H(q^U_n) \otimes \id)_n} & \prod_\nat ( HU(*) \otimes HV )\ar[u]
	}\end{equation}
	The triangle involving the curved arrow $HV_p$ commutes by naturality of $\mu$.
	The bottom left square commutes by definition of $\tau'$, and both the bottom centre square and bottom right square commute since $H$ is a lax monoidal functor.
	
	It remains to check that the top right square commutes.
	For $n$ in $\nat$, let $i_n \colon \{n\} \to \nat$ and $j_n \colon \nat \setminus \{n\} \to \nat$ denote the inclusion maps. Consider the following diagram:
	\[\xymatrix@C=5em{
	 \hspace{-5em}H(U(\nat_{min,min}) \otimes V)\ar[dd]_-{H(\mu)} & H((U(\{n\}) \oplus U(\nat_{min,min} \setminus \{n\})) \otimes V)\ar[l]^-{\sim}_-{H((U(i_n) + U(j_n)) \otimes \id)}\ar[r]^-{\pr \otimes \id} & H(U(\{n\}) \otimes V)\ar[dd]^-{H(\mu)} \\
	 & H(U(\{n\}) \otimes V) \oplus H(U(\nat_{min,min} \setminus \{n\}) \otimes V)\ar[ul]^(.6)*++{\labelstyle H(U(i_n) \otimes \id) + H(U(j_n) \otimes \id)}\ar[d]^-{H(\mu) \oplus H(\mu)}\ar[ur]_-{\pr}\ar[u]^-{\sim}
	 & \\
	 H(V_{\nat_{min,min}}) & H(V(\{n\})) \oplus H(V(\nat_{min,min} \setminus \{n\}))\ar[r]^-{\pr}\ar[l]_-{H(V(i_n)) + H(V(j_n))}^-{\sim} & H(V)
	}\]
	Note that commutativity of the upper left triangle expresses the additivity of $H(- \otimes V)$. The vertical morphism marked by $\sim$ is an equivalence by additivity of $H$. The horizontal mophisms marked by $\sim$ are equivalences by $\pi_0$-excision for $U$ and $HV$, respectively, using again that $H$ is additive to see that $HU$ is also $\pi_0$-excisive. By naturality of $\mu$, the lower part of the diagram also commutes. It follows that the large outer square commutes, which settles the commutativity of the top right square in \eqref{eq:transfer2}.
	
	Tensoring \eqref{wefiewofefwefewfewfewfwe111} from the definition of a transfer class (\cref{def:transfer-class}) with $V$ yields an equivalence between the composition of the bottom horizontal morphisms in \eqref{eq:transfer2} and the morphism
	\[ HV \xrightarrow{\diag \circ (\eta_H \otimes \id)} \prod_\nat (HU(*) \otimes HV)\ .\]
	Since $V$ is a weak $U$-module, condition \eqref{it:giooergrefwerfwrevwerfv} from \cref{giooergrefwerfwrevwerfv} implies that the entire composition
	$HV \to \prod_\nat HV$ along the bottom right corner in \eqref{eq:transfer2}
	is equivalent to the diagonal map. We finally conclude that the left part of \eqref{eq:transfer}, and therefore the entire diagram, commutes.
	
	By \cref{def:transfer-class}, $X$ admits a morphism to an $(HV,\cF)$-proper object that we will denote by $W$. 
	
	Inserting $E_{\cF}G \to *$ into \eqref{eq:transfer}, we obtain a commutative diagram 
	\[\xymatrix@C=2em@R=1.5em{
	 \exten{HV}(E_\cF G, *)\ar[r]\ar[d] & \exten{HV}(*,*)\ar[d]\ar[r] & C\ar[d] \\
	 \extenps{HV}(E_\cF G,X)\ar[r]\ar[d] & \extenps{HV}(*,X)\ar[d]\ar[r] & \cofib\big(\extenps{HV}(E_\cF G,X) \to  \extenps{HV}(*,X)\big)\ar[d] \\
	 \extenps{HV}(E_\cF G,W)\ar[r]\ar[d] & \extenps{HV}(*,W)\ar[d]\ar[r] & \cofib\big(\extenps{HV}(E_\cF G,W) \to  \extenps{HV}(*,W)\big)\ar[d] \\
	 \prodsum(\exten{HV}(E_\cF G,*))\ar[r] & \prodsum(\exten{HV}(*,*))\ar[r] & \prodsum(C)
	 }\]
	in which the composition of the right vertical maps is equivalent to the canonical map $C \to \prodsum(C)$ (recall that $C$ was the cofibre of the assembly map).
	Since $W$ is $(HV,\cF)$-proper, $\cofib\big(\extenps{HV}(E_\cF G,W) \to  \extenps{HV}(*,W)\big) \simeq 0$.
	Hence the right vertical composition $C \to \prodsum(C)$ vanishes, and $C$ is a phantom object by \cref{veiowefwwfeewfewf}.
\end{proof}

 \subsection{Examples of \texorpdfstring{$(E,\cF)$}{(E,F)}-proper objects}\label{sec:squeezing}

 In order to check that a candidate $(X,t)$  for a transfer class for $(U,\eta,V,H,\cF)$
 satisfies \cref{def:transfer-class}.\eqref{qwoifgoqergqfeqewfq} we must  find a morphism from $X$ to some $(HV,\cF)$-proper object  in $G\BC_{/\nat_{min,min}}$.
 \cref{rgiorgergegergergerg3232424}, which is the main result of this section, provides a sufficient supply of candidates for such $(HV,\cF)$-proper objects.
 It is an analogue of \cite[Thm.~7.2]{blr} in our setting.

We consider a $\pi_{0}$-excisive functor  $E \colon G\BC \to \bM$ which plays the role of $HV$ above. For certain statements in this section we will need the much stronger assumption that $E$ is an equivariant coarse homology theory \cite[Def.~3.10]{equicoarse}, \cite[Def.~3.13]{injectivity}. We  assume that $\bM$ is cocomplete.

As before, $\cF$ denotes any family of subgroups of $G$. Furthermore, 
  $\nat$ is considered as a discrete $G$-topological space with trivial $G$-action.

\begin{construction}\label{fewuifzhifweffewfwefwefwefwe} 
 	We consider a metric space $W$ with an isometric $G$-action together with a $G$-map $p \colon W \to \nat$ and set $W_{n}:=p^{-1}(\{n\})$.
 	We further assume that $W$ is equipped with a $G$-bornological coarse structure such that $p$ defines a morphism $W \to \nat_{min,min}$.
 	We call the corresponding coarse structure $\cC_{W}$ the original coarse structure on $W$ in order to distinguish it from the new coarse structure defined below. 
   We assume that the original coarse structure is compatible with the metric in the sense  that
   there exists some $r$ in $(0,\infty)$ with  $U_{r}\in \cC_{W}$, where
   \[ U_{r}:=\{ (w,w') \in W\times W  \mid d(w,w') \leq r \} \]
   is the metric entourage of width $r$.
Using the data described above, we construct
	an object in $G\BC_{/\nat_{min,min}}$ which we will denote by $W_{h}$. It has the same underlying set and bornological structure as $W$. But the new coarse structure 
	defined by \begin{equation}\label{eq:vanishing-control}
	\cC_{h}:= \left\{U\in \cC_{W} \mid \sup \big\{ d(w,w') \mid (w,w') \in U \cap (W_n \times W_n) \big\} \xrightarrow{n \to \infty} 0 \right\}
	\end{equation}
	  is in general smaller than the original coarse structure $\cC_{W}$.
	  		
	The structure map to $\nat_{min,min}$ is the original map $p$ which is still a morphism of $G$-bornological coarse spaces.
\end{construction}

\begin{rem}\label{rem:hybrid}
	In the language of \cite[Sec. 5.1]{buen} or  \cite[Sec.~9]{equicoarse}, \cref{fewuifzhifweffewfwefwefwefwe} can be phrased as follows: the metric on the original bornological coarse space $W$ induces a  compatible  uniform structure \cite[Def. 5.4]{buen}. The big family $\cW:=( p^{-1}([0,n]))_{n\in \nat}$ provides a hybrid datum $(W,\cW)$. The new coarse structure $\cC_{h}$ defined above is the hybrid structure associated to these data. This motivates the subscript $h$.
	
	In particular, the metric on $W$ may be replaced by any other metric that induces the same uniform structure on $W$ without changing the new coarse structure.
\end{rem}

In the present paper, a $G$-simplicial complex is a simplicial complex with an action of $G$ by automorphisms such that if $g$ in $G$ stabilises a point in the interior of a simplex, then it stabilises the whole simplex. If we are just given a simplicial complex with a $G$-action, then we can ensure this additional condition by taking the barycentric subdivision.

\begin{construction}\label{beforergiorgergegergergerg3232424}
Suppose that $W$ is a $G$-simplicial complex with a map of simplicial complexes $p \colon W \to \nat$. Then we define an object $W_{h}$ in $G\BC_{/\nat_{min,min}}$ as follows.

 We equip $W$ with the  spherical path metric.
 In order to compare with \cite{blr}, note that by \cref{rem:hybrid} we could also work with the $\ell^1$-metric  provided $W$ is finite-dimensional.
 We furthermore choose the original coarse structure  on $W$, in the sense of \cref{fewuifzhifweffewfwefwefwefwe}, to be the coarse structure $\cC_{\pi_{0}(W)}$  generated by the entourage
\[ U_{\pi_{0}(W)}:=\bigcup_{Z\in \pi_{0}(W)}Z\times Z\ . \] 
It is the maximal coarse structure on $W$ with the property that the connected components of $W$ are coarsely disjoint.
The original coarse structure obviously contains the metric coarse structure associated to the spherical path metric and is therefore compatible with the metric.   
Finally, we equip $W$ with the minimal bornology such that $p \colon W \to \nat_{min,min}$ is proper.

Then  let $W_{h}$ in  $G\BC_{/\nat_{min,min}}$ be obtained by applying \cref{fewuifzhifweffewfwefwefwefwe} to $W$ with the structures defined above.
\end{construction}

Let $W$ be a $G$-simplicial complex  and let $W_{h}$ in $G\BC_{/\nat_{min,min}}$ be obtained from $W$  by applying \cref{beforergiorgergegergergerg3232424}. 
 Recall \cref{ergioerwgwergwergregregwergwregwergwre} of an $(E,\cF)$-proper object, and \cref{qiojerfgfrqefwf} of hyperexcisiveness. 
\begin{theorem}\label{rgiorgergegergergerg3232424}
	Assume:
	\begin{enumerate}
		\item the simplicial complex $W$ is finite-dimensional and its stabilisers belong to $\cF$;
		\item\label{it:rgiorgergegergergerg3232424} $E$ is a hyperexcisive equivariant coarse homology theory.
	\end{enumerate}
	Then $W_{h}$ is $(E,\cF)$-proper.
\end{theorem}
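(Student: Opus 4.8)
The plan is to unwind \cref{ergioerwgwergwergregregwergwregwergwre} and reduce the claim to a statement about germs at infinity. By definition, $W_h$ is $(E,\cF)$-proper precisely if the map $\extenps{E}(E_\cF G, W_h)\to\extenps{E}(*, W_h)$ induced by $E_\cF G\to*$ is an equivalence. Since $E$ is an equivariant coarse homology theory, \cref{eroiguowergfwefwefre9,eroiguowergfwefwefre91} together with \cref{constr:exten,def:extenp,def:extenps} show that $\extenps{E}(-, W_h)\colon\PSh(G\Orb)\to\bM$ preserves colimits, so by Elmendorf's theorem it corresponds to an equivariant homology theory $\cH^{(-)}$ on $G\Top$; writing $E_\cF G=\colim_{G_\cF\Orb}\yo$, the claim becomes that the $\cF$-assembly map $\cH^G(E_\cF G)\to\cH^G(*)$ is an equivalence. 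Moreover, unwinding \cref{def:extenps} and using $\pi_0$-excision together with $u$-continuity of $E$, one identifies $\extenps{E}(A, W_h)$ with the germs at infinity of the hybrid space (in the $A$-twist) along the big family $\cW=(p^{-1}([0,n]))_{n}$ of \cref{rem:hybrid}; thus $\cH^{(-)}$ is this germs-at-infinity theory.

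The engine of the proof is a squeezing argument, in the spirit of \cite[Thm.~7.2]{blr}. The relevant point is that $\cH^{(-)}$, being germs at infinity along $\cW$, only depends on the coarse structure of the slices $W_n$ at scales tending to $0$ as $n\to\infty$ — this is exactly the vanishing-control condition \eqref{eq:vanishing-control}. Since $W$ is finite-dimensional and, by the convention on $G$-simplicial complexes adopted here together with closure of $\cF$ under subgroups, all point stabilisers of $W$ lie in $\cF$, for every $\epsilon>0$ there is a $G$-invariant cover of $W$ — for instance the open star cover of a sufficiently iterated barycentric subdivision — whose members have diameter $<\epsilon$, whose multiplicity is at most $\dim W+1$, and each of whose members is contained in a set with stabiliser in $\cF$. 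Feeding such covers, at finer and finer scales along $n\to\infty$, into an at most $(\dim W+1)$-fold iterated coarse Mayer--Vietoris argument — using $\pi_0$-excision and hyperexcisiveness of $E$ to organise the infinitely many members of the covers and the $\nat$-direction — exhibits $\cH^{(-)}$ as assembled, through colimits and cofibre sequences, from contributions indexed by the $\cF$-orbits of simplices of $W$. Each such contribution factors through $G_\cF\Orb$, so its $\cF$-assembly map is an equivalence; hence so is $\cH^G(E_\cF G)\to\cH^G(*)$.

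The hard part will be carrying out this decomposition compatibly with all the structure in play. The covers must be chosen $G$-equivariantly and uniformly enough in $\epsilon$ that the resulting decompositions of $W_h$ are coarsely excisive for the hybrid coarse structure $\cC_h$ and respect the vanishing-control condition \eqref{eq:vanishing-control}; and since the gluing runs over infinitely many simplices and infinitely many values of $n$, the finitary coarse Mayer--Vietoris sequences have to be assembled using hyperexcisiveness and the countable-partition projections of \cref{regiouweroigegergewgreg,lem:projections} rather than finite excision alone. A further, more routine, point is the verification that $\extenps{E}(-, W_h)$ really is the germs-at-infinity theory along $\cW$.
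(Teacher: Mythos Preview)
Your proposal is a plan rather than a proof, and you say so yourself in the final paragraph. The paper's argument is genuinely different from what you sketch, and several of your reductions are either unjustified or not the ones the paper uses.

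\textbf{What the paper actually does.} The proof is by induction on $\dim W$, using the simplicial structure directly rather than auxiliary covers. The $0$-dimensional case (\cref{cor:EFproper-0}) follows from hyperexcisiveness via \cref{lem:extenp-juggling}. For the inductive step, one decomposes $W_h$ into a thickened $(d-1)$-skeleton $Y$ and a disjoint union $Z$ of rescaled top-dimensional simplices; the paper verifies that $(Y,Z)$ is a coarsely excisive pair in the sense of \cref{rem:coarse-excision} for the hybrid structure, so \cref{cor:extenps-homology} gives a pushout square \eqref{eq:excision-1}. The piece $Z$ is handled by deforming each simplex to its barycentre (\cref{prop:EFproper-d}), and $Y$ is deformed to the actual $(d-1)$-skeleton. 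Both deformations use a tailored homotopy-invariance result, \cref{prop:coarse-retract}, whose proof goes through the notion of relatively flasque pairs (\cref{def:flasque-mor}, \cref{prop:flasque-mor}). A separate lemma (\cref{lem:choice-of-metrics}) shows that the choice of original coarse structure on the pieces does not matter for $\extenps{E}$. None of this invokes a germs-at-infinity identification or an iterated Mayer--Vietoris over fine covers.

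\textbf{Where your sketch is loose or off.} Your first paragraph asserts that $\extenps{E}(-,W_h)$ preserves colimits by citing \cref{eroiguowergfwefwefre9,eroiguowergfwefwefre91}; those remarks concern $\exten{E}$, not $\extenp{E}$ or $\extenps{E}$, and the paper never establishes or uses colimit-preservation of $\extenps{E}$ in the presheaf variable. The germs-at-infinity identification you allude to is morally in line with \cref{rem:hybrid}, but is neither proved in the paper nor needed for its argument. Your proposed cover-based Mayer--Vietoris---sorting open stars into $\dim W+1$ colours and iterating excision---is closer to the controlled-algebra arguments in \cite{blr} than to anything here; making it interact correctly with the hybrid coarse structure $\cC_h$ would require you to prove analogues of \cref{lem:choice-of-metrics} and \cref{prop:coarse-retract} for your covers, and to organise the infinitely many pieces across all $n$ using \cref{lem:projections}. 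That may be feasible, but it is substantial work you have not begun, and the paper's skeletal induction avoids it entirely by exploiting the explicit cell structure.
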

The general outline of the proof of \cref{rgiorgergegergergerg3232424} is the same as in \cite[Sec.~7]{blr}.
We will argue by induction on the dimension of the simplicial complex $W$. The case of $0$-dimensional complexes will be settled in \cref{cor:EFproper-0}, where we  use  the assumption that $E$ is hyperexcisive and the assumption on the stabilisers of $W$.
For the induction step, we decompose a $d$-dimensional simplicial complex into a thickened
version of its $(d-1)$-skeleton and a complex consisting of a 
disjoint union of $d$-dimensional simplices.
For this step, we need that $\extenps{E}$ has appropriate excision properties.
The latter will be deduced from  the fact that $E$ is an equivariant  
coarse homology theory, see \cref{prop:extenp-homology}. 
For the induction step we must further deform the thickened $(d-1)$-skeleton to the actual $(d-1)$
skeleton, and the disjoint union of simplices to the set of their
barycentres.
For this step, we need a homotopy invariance property of $\extenps{E}$
which will be shown in \cref{prop:coarse-retract}. 
The main argument for  \cref{rgiorgergegergergerg3232424} starts 
\hyperref[soijoevevfsdvdvdfvsdv]{close to the end of this section}.

We start with showing  that $\extenp{E}$ and $\extenps{E}$ (see \cref{def:extenp} and  \cref{def:extenps}) are equivariant coarse homology theories on $G\BC/_{\nat_{min,min}}$ as functors in their second arguments. 
Recall that a functor $E \colon G\BC \to \bM$ is an equivariant coarse homology theory if it is coarsely invariant, coarsely excisive and $u$-continuous and annihilates flasques (see \cite[Def.~3.13]{injectivity}).
As we want to consider functors on the slice category $G\BC_{/\nat_{min,min}}$, we must explain what we mean by an equivariant coarse homology in this context.

We first  recall some basic definitions from coarse geometry.
We consider $X$ in $G\BC$ and denote its coarse structure by $\cC_{X}$.
An equivariant big family $\cY$ on $X$ is a filtered family $(Y_{i})_{i\in I}$ of invariant subsets of $X$ such that for every $i$ in $I$ and $U$ in $\cC_{X}$ there exists $j$ in $I$ such that {the thickening $U[Y_{i}]$ is contained in $Y_j$}
{(see \eqref{qwefoiheiuohfiqwefewfewfqeefedq})}. 
If $Z$ is  a subspace  of $X$, then we define the big family $Z\cap \cY:=(Z\cap  Y_{i})_{i\in I}$ on $Z$. If $F$ is any functor defined on $G\BC$, then we set
\[ F(\cY) := \mathop{\colim}\limits_{i\in I} F(Y_{i}) \]
provided the colimit exists.

Let $N$ be a $G$-bornological coarse space with a discrete coarse structure (later we will consider $N = \nat_{min,min}$).
Let $E' \colon G\BC_{/N} \to \bM$ be a functor to a cocomplete and stable $\infty$-category.
\begin{ddd}\label{weoirgjwergrwegwg9}
 The functor $E'$
 \begin{enumerate}
  \item is coarsely invariant if $E'$ sends the morphism $\{0,1\}_{max,max} \otimes X \to X$ over $N$ to an equivalence for every $X$ in $G\BC_{/N}$.
  \item is $\pi_0$-excisive if for every partition $(Y,Z)$ of an object $X$ in $G\BC_{/N}$ into coarsely disjoint, invariant subsets the inclusion maps induce an equivalence
  \[ E'(Y) \oplus E'(Z) \xrightarrow{\simeq} E'(X)\ .\]
  \item is coarsely excisive if $E'(\emptyset) \simeq 0$ and for every $X$ in $G\BC_{/N}$ and every complementary pair (\cite[Def.~3.11]{injectivity}) consisting of an equivariant big family $\cY = (Y_i)_{i \in I}$ and an invariant subset $Z$
  the induced square
  \[\xymatrix{
   E'(Z \cap \cY )\ar[r]\ar[d] & E'(Z)\ar[d] \\
   E'(\cY)\ar[r] & E'(X)
  }\] is a pushout.
  \item \label{weoijgwegfwerwf} annihilates flasques if $E'(X) \simeq 0$ for every $X$ such that there exists an endomorphism of $X$ over $N$ that implements flasqueness \cite[Def.~3.12]{injectivity}.
  \item is $u$-continuous if the canonical map
  \[ \mathop{\colim}\limits_{U \in \cC^G_X} E'(X_U) \to E'(X) \]
  is an equivalence for all $X$ in $G\BC_{/N}$, where $\cC_X^G$ denotes the collection of $G$-invariant entourages of $X$ and $X_U$ denotes the object of $G\BC_{/N}$ obtained from $X$ by replacing the coarse structure on $X$ by the coarse structure generated by $U$.
 \end{enumerate}
 The functor $E'$ is an equivariant coarse homology theory on  $G\BC_{/N}$ if it is coarsely invariant, coarsely excisive, $u$-continous, and annihilates flasques.
\end{ddd}

\begin{ex}
 The restriction of an equivariant coarse homology theory {along the forgetful functor $G\BC_{/N} \to G\BC$} is an equivariant coarse homology theory on $G\BC_{/N}$. \end{ex}

In the following we discuss the statement that a coarse homology theory sends 
coarsely excisive decompositions to pushouts. In the non-equivariant case this is shown e.g.\ in \cite[Lem. 3.41]{buen}\footnote{{The equivariant} statement has appeared in \cite[Cor.~4.14]{equicoarse}. Note that the definition of equivariantly coarsely excisive pairs in this reference is not sufficient to prove the statement and should be replaced by the conditions listed in \cref{rem:coarse-excision} below.}.

Let $Y$ be an invariant subset of $X$ and assume that $U$ is in $\cC_{X}^{G}$. 
If $\diag(X)\subseteq U$, then $Y\subseteq U[Y]$.
In contrast to the non-equivariant case, this inclusion is not a coarse equivalence in general.

The fact that the inclusion of a subset into its coarse thickening may not be a coarse equivalence has the consequence that the generalisation of the definition of a coarsely excisive decomposition from the non-equivariant to the equivariant case  is not completely straightforward. In order to formulate the conditions  in a compact way we introduce the following notion.
Let $X$ be in $G\BC$ and $Y$ be an invariant subset of $X$.
\begin{ddd}
We call the subset $Y$ thickenable\footnote{In \cite{equicoarse}, such subsets were called nice.} if there exists a cofinal subset of $U$ in $\cC^{G}_{X}$ such that $\diag(X)\subseteq U$ and the inclusion $Y \to U[Y]$ is a coarse equivalence.
\end{ddd}

For an invariant subset $Y$ 
 let
 \[ \{Y\} := \{ U[Y] \mid U \in \cC^G_X \} \]
 denote the big family generated by $Y$, where $\cC_{X}^{G}$ denotes the set of $G$-invariant subsets of $X$. If $Y$ is thickenable, then the canonical map $F(Y)\to  F(\{Y\})$ is an equivalence for any coarsely invariant functor $F$.

 Consider $X$ in $G\BC$ and a pair of invariant subsets $(Y,Z)$ such that $Y \cup Z = X$. 
 
 \begin{ddd}\label{rem:coarse-excision}
 We say that $(Y,Z)$ is a coarsely excisive pair if the following holds:
 \begin{enumerate}
  \item\label{rem:coarse-excision1} for every $U$ in $\cC_{X}$ there exists $V$ in $\cC_{X}$ such that $U[Y] \cap U[Z] \subseteq V[Y \cap Z]$;
  \item\label{rem:coarse-excision2} $Y$ is thickenable;
  \item\label{rem:coarse-excision3} $Y\cap Z$ is thickenable;
  \item\label{rem:coarse-excision4} there exists a cofinal subset of $V$ in $\cC_{X}^{G}$ such that $V[Y] \cap Z$ is thickenable. \qedhere
    \end{enumerate}
 \end{ddd}
  
Let $E\colon G\BC \to \bM$ be an equivariant coarse homology theory.
Let $X$ be in $G\BC$ and $(Y,Z)$ be a partition of $X$ into invariant subsets.
\begin{lem} \label{rem:coarse-excisionl}
 If $(Y,Z)$ is a coarsely excisive pair, then the induced square
 \begin{equation}\label{vasdvdsvscascsdc}
\xymatrix{
  E(Y \cap Z)\ar[r]\ar[d] & E(Z)\ar[d] \\
  E(Y)\ar[r] & E(X)
 }
\end{equation} is a pushout.
\end{lem}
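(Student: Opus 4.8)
The plan is to reduce the statement to the excision axiom for equivariant coarse homology theories as packaged in \cref{weoirgjwergrwegwg9}.\eqref{weoijgwegfwerwf}, i.e.\ to the fact that $E$ turns a complementary pair consisting of an equivariant big family and an invariant subset into a pushout square. The key observation is that a coarsely excisive pair $(Y,Z)$ gives rise to such a complementary pair, namely $(\{Y\}, Z)$, where $\{Y\}$ is the big family generated by $Y$; and the thickenability conditions in \cref{rem:coarse-excision} are exactly what is needed to identify $E(\{Y\})$ with $E(Y)$ and $E(Z \cap \{Y\})$ with $E(Y \cap Z)$.

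First I would verify that $(\{Y\}, Z)$ is a complementary pair in the sense of \cite[Def.~3.11]{injectivity}: one needs an entourage $U$ with $U[Y] \cup Z = X$, which is immediate from $Y \cup Z = X$ and $Y \subseteq U[Y]$ for any $U$ containing the diagonal. Coarse excisiveness of $E$ then yields a pushout square
\[\xymatrix{
 E(Z \cap \{Y\})\ar[r]\ar[d] & E(Z)\ar[d] \\
 E(\{Y\})\ar[r] & E(X)
}\]
Second, since $Y$ is thickenable (condition \eqref{rem:coarse-excision2}), the canonical map $E(Y) \to E(\{Y\})$ is an equivalence, as recorded in the paragraph preceding \cref{rem:coarse-excision}. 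So the lower-left corner is identified with $E(Y)$.

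Third, and this is the step that carries the real content, I would identify $E(Z \cap \{Y\})$ with $E(Y \cap Z)$. By definition $Z \cap \{Y\} = (Z \cap U[Y])_{U \in \cC^G_X}$, so $E(Z \cap \{Y\}) \simeq \colim_{U} E(Z \cap U[Y])$. Condition \eqref{rem:coarse-excision4} lets me restrict this colimit to a cofinal subset of entourages $V$ for which $V[Y] \cap Z$ is thickenable, hence for which $E(V[Y] \cap Z) \simeq E(\{V[Y] \cap Z\})$. Now condition \eqref{rem:coarse-excision1} says that for such $V$ there is $V'$ with $V[Y] \cap V[Z] \subseteq V'[Y \cap Z]$, and in particular $V[Y] \cap Z \subseteq V'[Y \cap Z]$; conversely $Y \cap Z \subseteq V[Y] \cap Z$. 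Combining these inclusions with thickenability of $Y \cap Z$ (condition \eqref{rem:coarse-excision3}) shows that the maps $E(Y \cap Z) \to E(V[Y] \cap Z) \to E(V'[Y \cap Z])$ compose to the equivalence $E(Y \cap Z) \xrightarrow{\sim} E(\{Y \cap Z\})$, and that the colimit over $V$ of the $E(V[Y] \cap Z)$ is therefore again $E(Y \cap Z)$. Substituting these identifications into the pushout square above gives \eqref{vasdvdsvscascsdc}.

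The main obstacle is the bookkeeping in the third step: one must be careful that passing to cofinal subfamilies of entourages is legitimate (the big families involved are filtered, so this is fine) and that the various inclusions of invariant subsets really do assemble, after applying the coarsely invariant functor $E$, into the asserted equivalence of colimits. Everything else is a direct appeal to the axioms; no genuinely new coarse-geometric input is needed beyond what is already isolated in the notion of a coarsely excisive pair.
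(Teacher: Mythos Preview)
Your plan is correct and matches the paper's proof essentially line for line: apply coarse excision to the complementary pair $(\{Y\},Z)$, then use conditions \eqref{rem:coarse-excision2}--\eqref{rem:coarse-excision4} to identify $E(\{Y\})\simeq E(Y)$ and $E(Z\cap\{Y\})\simeq E(Y\cap Z)$ via the auxiliary colimit $\colim_V E(\{V[Y]\cap Z\})$. One small slip: the cross-reference \cref{weoirgjwergrwegwg9}.\eqref{weoijgwegfwerwf} points to ``annihilates flasques'' rather than the coarse-excision item you actually mean.
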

 \begin{proof}
 Since $E$ is coarsely excisive, the square
 \begin{equation}\label{eqwdoijoiewfqwfwqefewdqwed}
\xymatrix{
  E(\{Y\} \cap Z)\ar[r]\ar[d] & E(Z)\ar[d] \\
  E(\{Y\})\ar[r] & E(X)
 }
\end{equation}
 is a pushout.
 We now argue that this square is equivalent to the square in \eqref{vasdvdsvscascsdc}.
 In fact, the canonical inclusions of spaces induce a map from the square \eqref{vasdvdsvscascsdc} to the square in \eqref{eqwdoijoiewfqwfwqefewdqwed}.
 We therefore must check that the induced maps on the left corners are equivalences.

 Condition~\ref{rem:coarse-excision}.\ref{rem:coarse-excision2} implies that $E(Y) \to E(\{Y\})$ is an equivalence.
 
 It remains to discuss the upper left corner. The canonical inclusions induce the following commutative diagram:
 \[\xymatrix{
  E(Y \cap Z)\ar[r]\ar[rrd] & E(\{Y\cap Z\})\ar[r] & \mathop{\colim}\limits_{V \in \cC^G_X} E( \{V[Y]\cap Z\}) \\
  & & E(\{Y\} \cap Z)
  \ar[u]
 }\]
 Condition~\ref{rem:coarse-excision}.\ref{rem:coarse-excision3} implies that $E(Y \cap Z) \to E(\{Y\cap Z\})$ is an equivalence.
 By Condition~\ref{rem:coarse-excision}.\ref{rem:coarse-excision4}, the morphism $E(V[Y] \cap Z) \to E(\{V[Y]\cap Z\})$ is an equivalence for {all $V$ in some} cofinal subset of $\cC_{X}^{G}$
 Writing the right vertical map in the form
 \[ \mathop{\colim}\limits_{V\in\cC^{G}_{X}} E(V[Y]\cap Z) \to \mathop{\colim}\limits_{V \in \cC^G_X} E( \{V[Y]\cap Z\})\ , \]
 we see that it is an equivalence.
 Finally, Condition~\ref{rem:coarse-excision}.\ref{rem:coarse-excision1} implies that the map $E(\{Y\cap Z\}) \to \colim_{V \in \cC^G_X} E(\{V[Y]\cap Z\})$ is an equivalence.
  So $E(Y \cap Z) \to E(\{Y\} \cap Z)$ is also an equivalence.\end{proof}
 
 The same argument also proves the analogue in the relative situation.
 Let $E'\colon G\BC_{/N}\to \bM$ be an equivariant coarse homology theory.  Let $X$ be in $G\BC_{/N}$ and $(Y,Z)$ be a partition of $X$ into invariant subsets.   
 \begin{lem}
 \label{rem:coarse-excisionl-recase}
 Assume that $(Y,Z)$ is coarsely excisive as a pair in $G\BC$. Then the induced square
 \begin{equation*}
\xymatrix{
  E'(Y \cap Z)\ar[r]\ar[d] & E'(Z)\ar[d] \\
  E'(Y)\ar[r] & E'(X)
 }
\end{equation*}
 is a pushout.
 \end{lem}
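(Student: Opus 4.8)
The plan is to transport the proof of \cref{rem:coarse-excisionl} to the relative setting essentially verbatim, with $E$ replaced by $E'$. The only thing that needs to be observed is that every construction appearing in that proof---passing to invariant subsets, forming thickenings $U[\,\cdot\,]$, forming the big families $\{Y\}$, $\{Y\cap Z\}$ and $\{V[Y]\cap Z\}$, and changing the coarse structure to the one generated by a fixed entourage---involves only the underlying $G$-bornological coarse space of $X$. Consequently each such object carries a canonical structure morphism to $N$ obtained by restricting $p\colon X\to N$ along the relevant inclusion, so that all the diagrams occurring in the proof of \cref{rem:coarse-excisionl} already live in $G\BC_{/N}$, and the axioms collected in \cref{weoirgjwergrwegwg9} may be applied to $E'$.

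Concretely, I would first check that $(\{Y\},Z)$ is a complementary pair over $N$ in the sense used in \cref{weoirgjwergrwegwg9}: for any $U$ in $\cC^G_X$ with $\diag(X)\subseteq U$ one has $Y\subseteq U[Y]$, hence $U[Y]\cup Z=X$, so the big family $\{Y\}$ together with $Z$ covers $X$ appropriately; and $\{Y\}$ is indeed a big family \emph{over $N$} since each member $U[Y]$ inherits its structure map from $p$. Coarse excision for $E'$ then shows that the square with vertices $E'(\{Y\}\cap Z)$, $E'(Z)$, $E'(\{Y\})$ and $E'(X)$ is a pushout. Next, since the forgetful functor $G\BC_{/N}\to G\BC$ preserves the underlying bornological coarse data, a subset that is thickenable in $X$ remains thickenable when $X$ is regarded over $N$; combined with coarse invariance of $E'$, \cref{rem:coarse-excision}.\ref{rem:coarse-excision2} gives that $E'(Y)\to E'(\{Y\})$ is an equivalence.

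For the remaining corner I would repeat the commutative-triangle argument from \cref{rem:coarse-excisionl}: \cref{rem:coarse-excision}.\ref{rem:coarse-excision3} makes $E'(Y\cap Z)\to E'(\{Y\cap Z\})$ an equivalence, \cref{rem:coarse-excision}.\ref{rem:coarse-excision4} together with coarse invariance of $E'$ makes $E'(V[Y]\cap Z)\to E'(\{V[Y]\cap Z\})$ an equivalence for all $V$ in some cofinal subset of $\cC^G_X$, and after passing to the colimit over $\cC^G_X$ (legitimate, since these are exactly the colimits defining the big families in question) one obtains $\mathop{\colim}\limits_{V} E'(V[Y]\cap Z)\xrightarrow{\simeq}\mathop{\colim}\limits_{V} E'(\{V[Y]\cap Z\})$, while \cref{rem:coarse-excision}.\ref{rem:coarse-excision1} identifies $E'(\{Y\cap Z\})$ with $\mathop{\colim}\limits_{V\in\cC^G_X} E'(\{V[Y]\cap Z\})$; hence $E'(Y\cap Z)\to E'(\{Y\}\cap Z)$ is an equivalence. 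Feeding these two corner equivalences into the pushout square above yields the claim. There is no genuine obstacle here: the coarse-geometric content is identical to the absolute case, and the only (mild) point requiring care is the bookkeeping that the subspaces and big families appearing in the argument are canonically objects of, respectively big families in, $G\BC_{/N}$ so that the slice-category versions of the axioms in \cref{weoirgjwergrwegwg9} genuinely apply.
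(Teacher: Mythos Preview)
Your proposal is correct and is exactly the approach the paper takes: the paper simply states that ``the same argument also proves the analogue in the relative situation'' and gives no further details. You have spelled out explicitly the only point that needs observing---namely that all the subspaces, thickenings and big families occurring in the proof of \cref{rem:coarse-excisionl} inherit structure maps to $N$ from $p\colon X\to N$, so the axioms of \cref{weoirgjwergrwegwg9} for $E'$ apply verbatim.
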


Let $E \colon G\BC \to \bM$ be a functor whose target is a cocomplete stable $\infty$-category.
Furthermore, let $A$ be in $\PSh(G\Orb)$ and let $(A_n)_n$ be in $\prod_\nat \PSh(G\Orb)$.
\begin{prop}\label{prop:extenp-homology}\label{cor:extenps-homology}
 If $E$ has any of the following properties, then $\extenp{E}((A_n)_n,-)$ and $\extenps{E}(A,-)$ inherit the same property:
\begin{enumerate}
 \item\label{it:extenp-homology1} coarse invariance;
 \item\label{it:extenp-homology2} $\pi_0$-excisiveness;
 \item\label{it:extenp-homology3} coarse excisiveness;
 \item\label{it:extenp-homology4} annihilation of flasques;
 \item\label{it:extenp-homology5} $u$-continuity.
\end{enumerate}  
 
\end{prop}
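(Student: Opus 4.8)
The plan is to reduce both assertions to the way $E$ interacts with the fibrewise tensor product $-\otimes_{\nat}-$ of \cref{def:fibrewise-tensor}, and then to propagate the relevant property through the colimits and the cofibre occurring in \cref{def:extenp} and \cref{def:extenps}. The first step is a pointwise formula for $\extenp{E}$: since $\extenp{\ell}=(\prod_{\nat}\ell)\times\id$ is the identity on the variable $G\BC_{/\nat_{min,min}}$, the left Kan extension defining $\extenp{E}$ is taken in the first variable alone, and fully faithfulness of $\prod_{\nat}\ell$ yields a natural equivalence
\[
 \extenp{E}((A_n)_n,X)\ \simeq\ \mathop{\colim}\limits_{((T_n)_n\to(A_n)_n)}E\bigl((T_n)_n\otimes_{\nat}X\bigr)\ ,
\]
where the index is the comma category of $\prod_{\nat}\ell$ over $(A_n)_n$, which does not depend on $X$. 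Thus, in its second argument, $\extenp{E}((A_n)_n,-)$ is a colimit over a fixed category of the functors $X\mapsto E((T_n)_n\otimes_{\nat}X)$ on $G\BC_{/\nat_{min,min}}$. All five properties are preserved by such colimits: coarse invariance and annihilation of flasques say that a morphism, resp.\ object, functorial in $X$ is an equivalence, resp.\ zero; $\pi_0$- and coarse excisiveness say that a square functorial in $X$ is a biproduct, resp.\ pushout, and finite biproducts and pushouts commute with arbitrary colimits; and for $u$-continuity one interchanges the colimit over entourages with the fixed index category. So it is enough to show that, for a single family $(T_n)_n$, the functor $E_{(T_n)_n}:=E((T_n)_n\otimes_{\nat}-)$ inherits each property from $E$.

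This I would verify by inspecting $\otimes_{\nat}$. Using \cref{def:fibrewise-tensor} one checks that $(T_n)_n\otimes_{\nat}-$ carries the structural data of $G\BC_{/\nat_{min,min}}$ to that of $G\BC$: the cylinder over $X$ goes to the cylinder over $(T_n)_n\otimes_{\nat}X$; a coarsely disjoint invariant partition $(Y,Z)$ of $X$ goes to the coarsely disjoint partition $\bigl((T_n)_n\otimes_{\nat}Y,(T_n)_n\otimes_{\nat}Z\bigr)$; the empty space goes to the empty space; and, since the coarse entourages of $(T_n)_n\otimes_{\nat}X$ are generated by — and indeed refined by — the sets $\widehat U:=\coprod_n\bigl(\diag(T_n)\times(U\cap(X_n\times X_n))\bigr)$ for (invariant) entourages $U$ of $X$, an equivariant big family on $X$ goes to an equivariant big family, a complementary pair goes to a complementary pair, and the entourage refinement $X_U$ goes to $\bigl((T_n)_n\otimes_{\nat}X\bigr)_{\widehat U}$. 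Combining each observation with the corresponding axiom for $E$ gives the property for $E_{(T_n)_n}$, using in addition that $E$ sends a big family to the colimit over that family in order to match the coarse excision and $u$-continuity clauses. Annihilation of flasques is essentially trivial in this relative setting: an endomorphism of an object $X$ of $G\BC_{/\nat_{min,min}}$ over $\nat_{min,min}$ preserves each fibre $X_n$, which is bounded because the structure map to $\nat_{min,min}$ is proper, so a flasqueness endomorphism forces every fibre to be empty; hence $X=\emptyset$ and $E_{(T_n)_n}(X)=E(\emptyset)\simeq 0$.

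For $\extenps{E}$ I would use that, by \cref{def:extenps}, $\extenps{E}(A,-)$ is the cofibre of a natural transformation from $\bigoplus_{n\in\nat}\exten{E}(A,(-)_n)$ to $\extenp{E}(\diag(A),-)$. The target inherits the property by the previous two steps applied to the constant family. For the source, $\exten{E}(A,-)\colon G\BC\to\bM$ inherits the property from $E$ — this is the relevant part of \cref{eroiguowergfwefwefre9}, and can also be proved as above using that $\exten{E}(A,-)$ is, by \cref{constr:exten}, a colimit of twists $E_{S_{min,max}}$; precomposition with the fibre functor $f_n$ of \cref{def:oiwjgoirfgrefwerf} preserves the property, since passing to the fibre over $n$ again sends cylinders, coarsely disjoint partitions, complementary pairs, flasque objects and entourage refinements to data of the same kind; and $\bigoplus_{\nat}$, being a colimit, preserves it. As the property also passes to cofibres, $\extenps{E}(A,-)$ inherits it as well.

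I expect the only real difficulty to be Step~1 — making sure that the index category of the colimit computing $\extenp{E}((A_n)_n,-)$ is genuinely independent of the second variable — together with the cofinality bookkeeping in Step~2 relating the invariant entourages of $X$ with those of $(T_n)_n\otimes_{\nat}X$, which is exactly what is needed to push the $u$-continuity statements through. Everything else is a routine translation between the axioms for $E$ on $G\BC$ and on the slice.
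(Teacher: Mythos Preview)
Your argument is correct and follows the same outline as the paper's: reduce $\extenp{E}((A_n)_n,-)$ to a colimit (over an index category independent of the second variable) of the functors $E((T_n)_n\otimes_{\nat}-)$, verify that $(T_n)_n\otimes_{\nat}-$ carries each piece of structural data on $G\BC_{/\nat_{min,min}}$ to the corresponding data on $G\BC$, and then treat $\extenps{E}(A,-)$ via the cofibre sequence of \cref{def:extenps}. The paper's proof is terse and refers out for the verification of $E((T_n)_n\otimes_{\nat}-)$; you have simply spelled that verification out.

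One genuine difference worth noting is your treatment of flasques. The expected argument would show that a flasqueness endomorphism $f$ of $X$ over $\nat_{min,min}$ induces a flasqueness endomorphism of $(T_n)_n\otimes_{\nat}X$, so that $E$ kills it. You instead observe that over $\nat_{min,min}$ every fibre $X_n$ is bounded (by properness), so the flasqueness condition $f^k(X)\cap X_n=\emptyset$ for large $k$ forces $X_n=\emptyset$ for all $n$; hence the only flasque object over $\nat_{min,min}$ is $\emptyset$, and the axiom is vacuous. This is correct and arguably more transparent, but it exploits a feature specific to $\nat_{min,min}$ (bounded fibres) and would not generalise to slices over bases with unbounded fibres, whereas the transport-of-structure argument does.
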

\begin{proof}
We first consider the case of $\extenp{E}$.
 Since all properties listed above are preserved by taking colimits, it suffices to check the case that $(A_n)_n$ is a sequence of $G$-sets. 
 So we must show that the functor
 \[ E((A_{n})_n \otimes_{\nat}(-)) \colon G\BC_{/\nat_{min,min}}\to \bM \]
 inherits the listed properties from $E$.
 This can be checked by a straightforward argument
 {which is similar to the} 
 proof of \cite[Lem.~4.17]{equicoarse}.
  
 The functor $\exten{E}$ inherits each property in the list from $E$.
 The sum $\bigoplus_n \exten{E}(A,(-)_n)$ then has the analogous property on $G\BC_{/\nat_{min,min}}$, where $(-)_{n} \colon G\BC_{/\nat_{min,min}}\to G\BC$ sends $X$ to its fibre $X_{n}$ over $n$.
 By \cref{def:extenps}, $\extenps{E}(A,-)$ fits in a cofibre sequence with $\bigoplus_n \exten{E}(A,(-)_n)$ and $\extenp{E}(\diag(A),-)$.
  So the assertion that these two functors have a property from the list implies that also $\extenps{E}(A,-)$  has this property.
\end{proof}

Suppose that $\cC'_{W}$ and $\cC_{W}$ are two original coarse structures on the metric space $W$  which satisfy the assumptions in \cref{fewuifzhifweffewfwefwefwefwe}. If $\cC'_W \subseteq \cC_W$, then the two new coarse structures defined by \eqref{eq:vanishing-control} satisfy $\cC'_{h} \subseteq \cC_{h}$. So the identity on the underlying sets is a morphism $W_{h}' \to W_{h}$
in $G\BC_{/\nat_{min,min}}$.

\begin{lem}\label{lem:choice-of-metrics}
  The induced map $\extenps{E}(-,W_{h}') \to \extenps{E}(-,W_{h})$ is an equivalence. In particular, $W'_{h}$ is $(E,\cF)$-proper if and only if $W_{h}$ is $(E,\cF)$-proper.
\end{lem}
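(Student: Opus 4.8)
\emph{Proof plan.} By \cref{prop:extenp-homology}, for every $A$ in $\PSh(G\Orb)$ the functor $\extenps{E}(A,-)$ on $G\BC_{/\nat_{min,min}}$ is an equivariant coarse homology theory; in particular it is $\pi_0$-excisive and $u$-continuous (using that $E$ is an equivariant coarse homology theory, cf.\ the blanket assumption of this section). The first key point is a vanishing statement: if $Z$ in $G\BC_{/\nat_{min,min}}$ satisfies $Z_n = \emptyset$ for all but finitely many $n$, then $\extenps{E}(A,Z) \simeq 0$. I would deduce this from \cref{lem:projections} applied with $P$ the finite set of indices $n$ with $Z_n \neq \emptyset$: then $c_P(Z) = \emptyset$ and $\extenp{E}(-,\emptyset) \simeq 0$ since $E$ annihilates the empty space, so the natural map $\bigoplus_n \exten{E}(A,Z_n) \to \extenp{E}(\diag(A),Z)$ from \cref{def:extenps} is identified with an equivalence, whence its cofibre $\extenps{E}(A,Z)$ vanishes. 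Since $p$ and its counterpart for $W'_h$ are morphisms to $\nat_{min,min}$, every coarse entourage of an object of $G\BC_{/\nat_{min,min}}$ is fibrewise, so for every $X$ and every $N$ the partition into the parts $X_{<N}$ and $X_{\geq N}$ over $\{0,\dots,N-1\}$ and over $\{N,N+1,\dots\}$ is coarsely disjoint; combining $\pi_0$-excision with the vanishing statement applied to the finitely supported $X_{<N}$, the inclusion induces an equivalence $\extenps{E}(A,X_{\geq N}) \xrightarrow{\simeq} \extenps{E}(A,X)$.

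For the comparison I would invoke $u$-continuity to write $\extenps{E}(A,W_h) \simeq \colim_{U \in (\cC_h)^G} \extenps{E}(A,W_U)$, where $W_U$ is $W$ equipped with the coarse structure generated by $U$ and the original structure map to $\nat_{min,min}$, and likewise for $W'_h$. Because $\cC'_h \subseteq \cC_h$ and $(W'_h)_{U'} = (W_h)_{U'}$ whenever $U'$ lies in $\cC'_h$, the map of the lemma is precisely the colimit comparison associated to the poset inclusion $(\cC'_h)^G \hookrightarrow (\cC_h)^G$ applied to the functor $F(U) := \extenps{E}(A,W_U)$. Given $U \in (\cC_h)^G$, which is fibrewise with $\width(U_n) \to 0$, choose $N$ so large that $U \cap (W_{\geq N} \times W_{\geq N}) \subseteq U_r$ for an $r$ with $U_r \in \cC'_W$; then $U' := U \cap (W_{\geq N} \times W_{\geq N})$ is a $G$-invariant entourage lying in $\cC'_h$ with $U' \subseteq U$, and moreover $U'' \cap (W_{\geq N} \times W_{\geq N}) = U'$ for every $U''$ with $U' \subseteq U'' \subseteq U$. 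Hence $W_{U''}$, $W_{U'}$ and $W_U$ all agree as spaces over $\{N,N+1,\dots\}$, and the structural consequence from the previous paragraph identifies their values under $\extenps{E}(A,-)$ compatibly; consequently $F(U'') \to F(U)$ is an equivalence whenever $U' \subseteq U'' \subseteq U$.

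Finally, since $\cC'_h$ is closed under finite unions, the sub-poset $Q_{\leq U} := \{U' \in (\cC'_h)^G \mid U' \leq U\}$ is filtered, it contains the entourage $U' = U \cap (W_{\geq N}\times W_{\geq N})$, and the set of $U''$ with $U' \subseteq U'' \subseteq U$ is cofinal in it; by the preceding paragraph $\colim_{Q_{\leq U}} F|_{Q_{\leq U}} \xrightarrow{\simeq} F(U)$. A standard cofinality argument — passing through the poset $R := \{(U',U) \mid U' \in (\cC'_h)^G,\ U \in (\cC_h)^G,\ U' \leq U\}$, whose projection to $(\cC'_h)^G$ is cofinal and along whose projection to $(\cC_h)^G$ the fibrewise colimits recover $F$ — then identifies $\extenps{E}(A,W'_h)$ with $\extenps{E}(A,W_h)$ via the comparison map. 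The final assertion about $(E,\cF)$-properness is immediate from \cref{ergioerwgwergwergregregwergwregwergwre} by applying the first assertion to $A = E_\cF G$ and to $A = *$, compatibly with the projection $E_\cF G \to *$. The step I expect to be most delicate is exactly this last colimit comparison: the inclusion $(\cC'_h)^G \hookrightarrow (\cC_h)^G$ is genuinely \emph{not} cofinal — an entourage that vanishes at infinity need not lie in $\cC'_W$ at all — so one cannot argue naively and must exploit both that every object of the larger poset is dominated from below by one of the smaller and that the relevant transition maps have been shown to be equivalences.
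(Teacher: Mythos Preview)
Your proof is correct and follows essentially the same strategy as the paper: both arguments reduce via $u$-continuity to the comparison of filtered colimits over $(\cC'_h)^G \subseteq (\cC_h)^G$, use \cref{lem:projections} to see that $\extenps{E}(A,-)$ is insensitive to removing finitely many fibres, and exploit the metric compatibility of $\cC'_W$ to show that every $U \in \cC_h$ agrees with an element of $\cC'_h$ after such a truncation. The only difference is in the final colimit comparison: where the paper draws a single diagonal arrow using a map of posets $t \colon \cC_h \to \nat$, you spell the argument out via the auxiliary poset $R$ and cofinality; this is the same idea made explicit, and the key point---that for each $U$ one can find a cofinal family of $U' \in (\cC'_h)^G$ below $U$ on which $F(U') \to F(U)$ is an equivalence---is identical in both.
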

\begin{proof}
By $u$-continuity (\cref{cor:extenps-homology}), we have an equivalence
\begin{equation}\label{qwefqefjfpoewqfqefewf}
 \mathop{\colim}\limits_{U\in \cC_{h}}\extenps{E}(-,W_{U}) \xrightarrow{\simeq} \extenps{E}(-,W_{h})\ .
\end{equation}
 Furthermore, using \cref{lem:projections} we check that the canonical inclusion induces an equivalence
 \begin{equation}\label{wergfrpogkpwrgwergrwegerfe}
 \extenps{E}(-,c_{[0,n]}(W_{U})) \stackrel{\simeq}{\to}\extenps{E}(-,W_{U})
\end{equation}
 for every $n$ in $\nat$, where $c_{[0,n]}$ is as in {\cref{def:oiwjgoirfgrefwerf}.\ref{oiwjgoirfgrefwerf}.} Analogous equivalences exist for $W'$ and $U'$ in $\cC'_{h}$.
 
 {Since $\cC'_{h} \subseteq \cC_{h}$, the map $\extenps{E}(-,W_{h}') \to \extenps{E}(-,W_{h})$ is identified via \eqref{qwefqefjfpoewqfqefewf} with the canonical map
 \[ \mathop{\colim}\limits_{U' \in \cC_{h}'} \extenps{E}(-,W'_{U'}) \to \mathop{\colim}\limits_{U \in \cC_{h}} \extenps{E}(-,W_{U})\ .\]
 Let $U$ be in $\cC_{h}$. By the compatibility of the original coarse structure $\cC'_{W}$ with the metric assumed in \cref{fewuifzhifweffewfwefwefwefwe}, there exists $r$ in $(0,\infty)$ such that $\{ (w,w') \mid d(w,w') \leq r \}\in \cC'$.
 Using \eqref{eq:vanishing-control}, there is $n$ such that $\sup \big\{ d(w,w') \mid (w,w') \in U \cap(W_k \times W_k) \big\}\le r$ for all integers $k$ satisfying $k \geq n$. 
 Consequently, there exists a map of posets $t \colon \cC_{h} \to \nat$ such that $c_{[0,t(u)]}(U) \in \cC_{h}'$ for all $U$ in $\cC_{h}$.}
 
 Since $c_{[0,n]}(W_U) = c_{[0,n]}(W_{U_{|c_{[0,n](W)}}})$ for every natural number $n$, the map $t$ induces the diagonal arrow in the following commutative diagram:
 \[\xymatrix{
  \mathop{\colim}\limits_{U' \in \cC_{h}'} \extenps{E}(-,W'_{U'})\ar[r] & \mathop{\colim}\limits_{U \in \cC_{h}} \extenps{E}(-,W_{U}) \\
  \mathop{\colim}\limits_{U' \in \cC'_{h}} \extenps{E}(-,c_{[0,t(U')]}(W'_{U'}))\ar[r]\ar[u] & \mathop{\colim}\limits_{U \in \cC_{h}} \extenps{E}(-,c_{[0,t(U)]}(W_{U}))\ar[u]\ar[ul]
 }\]
 Both vertical maps are equivalences since they are induced by the maps from \eqref{wergfrpogkpwrgwergrwegerfe}. It follows that all maps in the diagram are equivalences.
\end{proof}

We now start the induction argument for \cref{rgiorgergegergergerg3232424} and assume
that $W$ is $0$-dimensional.
For the following lemma we only need that  $E \colon G\BC \to \bM$ is a $\pi_0$-excisive and hyperexcisive functor to a cocomplete $\infty$-category.
Let $(T_n)_n$ be in $\prod_\nat G\Set$. 
We have a canonical projection
\[ (T_n)_n \otimes_\nat \nat_{min,min} \to \nat_{min,min} \]
in $G\BC$ which we use to  interpret  $(T_n)_n \otimes_\nat \nat_{min,min}$ as an object of $G\BC_{/\nat_{\min,min}}$.
It is furthermore useful to remember the definition of  ${\ell \colon G\Set \to \PSh(G\Orb)}$ in \eqref{fwqefqwefewqdwed} and \cref{def:extenp}
of $\extenp{E}$.
\begin{lem}\label{lem:extenp-juggling}
 There is a natural equivalence
 \begin{equation}\label{qwefwqfoijowqfwqfwefw}
\extenp{E}(- , (T_n)_n \otimes_\nat \nat_{min,min}) \simeq \extenp{E}(- \times (\ell(T_n))_n, \nat_{min,min})
\end{equation}
of functors $\prod_{\nat} \PSh(G\Orb) \to \bM$.
\end{lem}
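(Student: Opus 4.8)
The plan is to argue exactly as in the proof of \cref{ergioergergegergergergerg}: reduce to representables, carry out a point-set computation, and then left Kan extend.

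First I would reduce to the case where $(A_n)_n = (\ell(S_n))_n$ for a sequence of $G$-sets $(S_n)_n$. Both functors appearing in \eqref{qwefwqfoijowqfwqfwefw} preserve colimits in the variable $(A_n)_n$ — the left hand side because $\extenp{E}$ is colimit-preserving in its first argument, and the right hand side because in addition $-\times(\ell(T_n))_n$ is a colimit-preserving endofunctor of $\prod_\nat\PSh(G\Orb)$, colimits in $\PSh(G\Orb)$ being universal. Every object of $\prod_\nat\PSh(G\Orb)$ is a colimit of objects of the form $(\ell(S_n))_n$ (these include all representables), so both sides of \eqref{qwefwqfoijowqfwqfwefw} are left Kan extensions along $\extenp{\ell}$ of their restrictions along $\prod_\nat\ell$. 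It therefore suffices to produce the equivalence after restriction along $\prod_\nat\ell$, naturally in $(S_n)_n\in\prod_\nat G\Set$, and then to Kan extend.

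The point-set input I would supply is the natural isomorphism
\[ (S_n)_n\otimes_\nat\big((T_n)_n\otimes_\nat\nat_{min,min}\big)\;\cong\;(S_n\times T_n)_n\otimes_\nat\nat_{min,min} \]
in $G\BC_{/\nat_{min,min}}$, which one checks by unwinding \cref{def:fibrewise-tensor}: both objects have underlying set $\coprod_n(S_n\times T_n)$, bornology generated by the sets $\coprod_{n\le N}(S_n\times T_n)$, coarse structure generated by the diagonal (here one uses that $\nat_{min,min}$ carries the minimal coarse structure, so that the only coarse entourages of $(T_n)_n\otimes_\nat\nat_{min,min}$ are subsets of the diagonal), and the evident structure map to $\nat_{min,min}$. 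I would also use that $\ell = \Res\circ\yo$ from \eqref{fwqefqwefewqdwed} preserves finite products, being a composition of limit-preserving functors, so that $\ell(S_n\times T_n)\simeq\ell(S_n)\times\ell(T_n)$. Combining these with the structural equivalence $\extenp{E}\circ\extenp{\ell}\simeq E\circ\otimes_\nat$ (and with the fact that products in $\prod_\nat\PSh(G\Orb)$ are formed componentwise) gives the chain of natural equivalences
\begin{align*}
\extenp{E}\big((\ell(S_n))_n,(T_n)_n\otimes_\nat\nat_{min,min}\big)
&\simeq E\big((S_n)_n\otimes_\nat((T_n)_n\otimes_\nat\nat_{min,min})\big) \\
&\simeq E\big((S_n\times T_n)_n\otimes_\nat\nat_{min,min}\big) \\
&\simeq \extenp{E}\big((\ell(S_n\times T_n))_n,\nat_{min,min}\big) \\
&\simeq \extenp{E}\big((\ell(S_n))_n\times(\ell(T_n))_n,\nat_{min,min}\big),
\end{align*}
and left Kan extending along $\extenp{\ell}$ in the first variable yields the asserted equivalence \eqref{qwefwqfoijowqfwqfwefw} of functors $\prod_\nat\PSh(G\Orb)\to\bM$.

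The only genuinely delicate points are the identification of the two coarse structures in the point-set isomorphism and the bookkeeping of the two successive Kan extensions (first along $\extenp{\ell}$, and the reidentification of the Kan extension of the twisted side with $\extenp{E}(-\times(\ell(T_n))_n,\nat_{min,min})$ using colimit-preservation). I expect this Kan-extension bookkeeping to be the main — though modest — obstacle, exactly as in \cref{ergioergergegergergergerg}; everything else is formal.
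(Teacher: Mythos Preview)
Your overall strategy matches the paper's: restrict along $\prod_\nat\ell$, identify both sides via the point-set isomorphism $(S_n)_n\otimes_\nat((T_n)_n\otimes_\nat\nat_{min,min})\cong(S_n\times T_n)_n\otimes_\nat\nat_{min,min}$ together with $\ell(S_n\times T_n)\simeq\ell(S_n)\times\ell(T_n)$, and then Kan extend. The point-set computation is correct.

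The gap is in your justification of the Kan-extension bookkeeping. You assert that $\extenp{E}$ is colimit-preserving in its first argument, but this is neither stated nor proved in the paper, and it does not follow from the definition. Unlike $\exten{E}$, which is \emph{defined} in \cref{constr:exten} as the colimit-preserving extension along the Yoneda embedding $\yo\colon G\Orb\to\PSh(G\Orb)$, the functor $\extenp{E}$ is defined in \cref{def:extenp} as the left Kan extension along $\prod_\nat\ell$, where $\ell\colon G\Set\to\PSh(G\Orb)$ is not the Yoneda embedding. Left Kan extensions along fully faithful functors into presheaf categories are not automatically colimit-preserving; this holds for the Yoneda embedding by the universal property of presheaves, but $\prod_\nat G\Set$ is much larger than $\coprod_\nat G\Orb$. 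Your appeal to \cref{ergioergergegergergergerg} as a template is misleading here: that argument works because $\exten{E}_\bd$ is colimit-preserving \emph{by construction}, which is exactly what fails for $\extenp{E}$.

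The paper handles precisely this point differently. For the left-hand side one indeed has a Kan extension by definition. For the right-hand side, instead of claiming colimit-preservation, the paper writes $\extenp{E}((A_n)_n\times(\ell(T_n))_n,\nat_{min,min})$ via the pointwise formula as a colimit over $\prod_n G\Set_{/(A_n\times\ell(T_n))}$, and then observes that the functor
\[
\prod_n G\Set_{/A_n}\ \longrightarrow\ \prod_n G\Set_{/(A_n\times\ell(T_n))}
\]
induced by $-\times(\ell(T_n))_n$ is a right adjoint (its left adjoint is composition with the projection), hence cofinal. This cofinality is what identifies the colimit with the left Kan extension along $\prod_\nat\ell$ of the restricted functor, completing the argument. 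That cofinality step is the ``modest obstacle'' you anticipate, but your proposed resolution via colimit-preservation does not work; you need the adjunction-on-slices argument instead.
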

\begin{proof}
  The left functor in \eqref{qwefwqfoijowqfwqfwefw} is the left Kan extension of $\extenp{E}(\extenp{\ell}(-,(T_n)_n \otimes_\nat \nat_{min,min}))$ along the fully faithful functor $\prod_{\nat}\ell$. Consequently, its restriction along $\prod_{\nat}\ell$  
is the left-hand term of
the equivalence
\begin{align}
  E ((-)\otimes_{\nat} ((T_n)_n \otimes_\nat \nat_{min,min}))
  \simeq
   E( (- \times T_n)_n \otimes_\nat \nat_{min,min} )\ .
\label{ewgoijwrgergw}
  \end{align}
  The equivalence is induced by an obvious natural isomorphism in $G\BC_{/\nat_{\min,min}}$.
  
It remains to identify the right-hand side of \eqref{qwefwqfoijowqfwqfwefw} 
  with the left Kan extension   (written as $(\prod_{\nat}\ell)_{!}$ in the following)  of the right-hand side of \eqref{ewgoijwrgergw}
 along $\prod_{\nat}\ell$. We first construct a natural transformation 
\begin{equation}\label{asvsjvasopvdsvadvadva}
(\prod_{\nat}\ell)_{!} E( (- \times T_n)_n \otimes_\nat \nat_{min,min} )\to \extenp{E}(- \times(\ell(T_n))_n, \nat_{min,min})\ . 
\end{equation} 
 Using that the functor $\ell$ in \eqref{fwqefqwefewqdwed} preserves cartesian products, we get an equivalence
   \begin{equation}\label{advadvasdvsvsv}
 E((-\times   T_n)_n\otimes_{\nat}  \nat_{min,min})\simeq\extenp{E}((\prod_{\nat}\ell)(-) \times (\ell(T_n))_n, \nat_{min,min}) 
\end{equation} 
of functors from $\prod_{\nat} G\Set$ to $\bM$.
The transformation \eqref{asvsjvasopvdsvadvadva} is now induced by \eqref{advadvasdvsvsv}
 and the universal property of the left Kan extension.
  
 It remains to show that \eqref{asvsjvasopvdsvadvadva} is an equivalence.
 Let $(A_n)_n$ be in $\prod_\nat \PSh(G\Orb)$. 
 By \cref{def:extenp} of $\extenp{E}$ as a left Kan extension, the canonical map
 \begin{align*}
  \mathop{\colim}\limits_{(S_n \to A_n \times \ell(T_n))_n \in \prod_n G\Set_{/(A_n)_{n} \times (\ell(T_n))_{n}}} \extenp{E}( (S_n)_n\otimes_{\nat}\nat_{min,min}) \to \extenp{E}((A_n)_{n} \times (\ell(T_n))_n,\nat_{min,min})
 \end{align*}
 is an equivalence.
 The functor $- \times (\ell(T_n))_n \colon \prod_\nat \PSh(G\Orb) \to \prod_\nat \PSh(G\Orb)$ induces a functor
 \[ \prod_{n\in \nat} G\Set_{/(A_n)_{n}} \to \prod_{n\in \nat} G\Set_{/(A_n)_{n} \times (\ell(T_n))_{n}} \]
 that is right adjoint to the functor given by composition with the projection maps $(A_n)_{n} \times (\ell(T_n))_{n} \to (A_n)_{n}$.
 Since right adjoints are cofinal \cite[Cor.~6.1.13]{Cisinski:2017}\footnote{Note that \cite{Cisinski:2017} uses the term ``final'' for what we call cofinal and refers by ``cofinal'' to the dual concept.}, it follows that the canonical map
 \[ \mathop{\colim}\limits_{(S_n \to A_n)_n \in \prod_n G\Set_{/(A_n)_{n}}} \extenp{E}( (S_n \times T_n)_n\otimes_{\nat } \nat_{min,min}) \to \extenp{E}((A_n)_{n} \times (\ell(T_n))_n, \nat_{min,min}) \]
 is an equivalence.
 In view of the pointwise formula for the left Kan extension, this is precisely the statement that 
 the evaluation of \eqref{asvsjvasopvdsvadvadva} at $(A_{n})_{n}$ is an equivalence.
\end{proof}

Let $W$ be as in \cref{rgiorgergegergergerg3232424}.
Let $E \colon G\BC \to \bM$ be a $u$-continuous, $\pi_0$-excisive and hyperexcisive functor to a cocomplete $\infty$-category.
\begin{kor}\label{cor:EFproper-0}
  If the simplicial complex $W$ is $0$-dimensional and its stabilisers belong to $\cF$, then $W_{h}$ is $(E,\cF)$-proper.
\end{kor}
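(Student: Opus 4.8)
The plan is to reduce the statement to the fact that a $0$-dimensional complex is entirely controlled by the pieces $W_{n}:=p^{-1}(\{n\})$, all of whose point stabilisers lie in $\cF$, so that twisting by the classifying space $E_{\cF}G$ has no effect.

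\emph{Step 1: identify $W_{h}$.} Since $W$ is $0$-dimensional it is a discrete $G$-set whose connected components are single vertices; hence the coarse structure $\cC_{\pi_{0}(W)}$ of \cref{beforergiorgergegergergerg3232424}, and therefore also the hybrid structure $\cC_{h}$ from \eqref{eq:vanishing-control}, is the minimal coarse structure. Comparing with \cref{def:fibrewise-tensor}, I would check directly that $W_{h}\cong(W_{n})_{n}\otimes_{\nat}\nat_{min,min}$ in $G\BC_{/\nat_{min,min}}$, with structure map $p$. In particular the fibre $(W_{h})_{n}$ is the bounded space $(W_{n})_{min,max}$, and since $W_{n}$ is a $G$-subset of $W$ all of its point stabilisers lie in $\cF$.

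\emph{Step 2: rewrite $\extenps{E}(A,W_{h})$.} Fix $A$ in $\PSh(G\Orb)$. Since $E$ is hyperexcisive, \cref{ergioergergegergergergerg} applied to the $G$-set $W_{n}$ and evaluated at the one-point space gives a natural equivalence $\exten{E}(A,(W_{h})_{n})\simeq\exten{E}(A\times\ell(W_{n}),*)$, while \cref{lem:extenp-juggling} with $T_{n}=W_{n}$ gives $\extenp{E}(\diag(A),W_{h})\simeq\extenp{E}((A\times\ell(W_{n}))_{n},\nat_{min,min})$. After checking that these equivalences are compatible with the inclusion maps of \cref{lem:projections} entering \cref{def:extenps}, passing to cofibres yields a natural equivalence $\extenps{E}(A,W_{h})\simeq\Psi((A\times\ell(W_{n}))_{n})$, where $\Psi\colon\prod_{\nat}\PSh(G\Orb)\to\bM$ is the functor
\[ \Psi\big((B_{n})_{n}\big):=\cofib\Big(\bigoplus_{n\in\nat}\exten{E}(B_{n},*)\ \longrightarrow\ \extenp{E}\big((B_{n})_{n},\nat_{min,min}\big)\Big)\ . \]
I expect this last compatibility check — tracking the structural maps of $\extenps{E}$ through the identifications provided by \cref{ergioergergegergergergerg} and \cref{lem:extenp-juggling} — to be the main obstacle; note that $u$-continuity of $E$ is not needed for this argument, only $\pi_{0}$-excisiveness and hyperexcisiveness.

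\emph{Step 3: conclude.} I would then invoke the defining property of $E_{\cF}G$: for an orbit $S$ in $G\Orb$, the space $E_{\cF}G(S)$ is contractible if the stabilisers of $S$ lie in $\cF$ and empty otherwise. Decomposing $W_{n}$ into orbits $G/H$ with $H$ in $\cF$ and using that $\ell$ carries this decomposition to the corresponding coproduct of representables $\yo(G/H)$, together with the fact that finite products distribute over coproducts in $\PSh(G\Orb)$, a Yoneda argument shows that the projection $E_{\cF}G\times\ell(W_{n})\to\ell(W_{n})$ is an equivalence for every $n$: indeed $\Hom_{G\Orb}(S,G/H)$ is non-empty only when the stabilisers of $S$ are subconjugate to $H\in\cF$, in which case $E_{\cF}G(S)\simeq*$. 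Evaluating the natural equivalence of Step 2 at $A=E_{\cF}G$ and at $A=*$ then identifies the map $\extenps{E}(E_{\cF}G,W_{h})\to\extenps{E}(*,W_{h})$ with $\Psi$ applied to the family of equivalences $(E_{\cF}G\times\ell(W_{n})\to\ell(W_{n}))_{n}$, hence it is an equivalence. By \cref{ergioerwgwergwergregregwergwregwergwre} this means that $W_{h}$ is $(E,\cF)$-proper.
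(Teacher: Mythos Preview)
Your proposal is correct and follows essentially the same route as the paper, invoking \cref{ergioergergegergergergerg} and \cref{lem:extenp-juggling} and the equivalence $E_{\cF}G\times\ell(W_{n})\simeq\ell(W_{n})$. The one simplification the paper makes is that it bypasses your Step~2 compatibility check entirely: rather than identifying $\extenps{E}(A,W_{h})$ with a new functor $\Psi$, it simply observes that the two maps $\bigoplus_{n}\exten{E}(E_{\cF}G,W_{n})\to\bigoplus_{n}\exten{E}(*,W_{n})$ and $\extenp{E}(\diag(E_{\cF}G),W_{h})\to\extenp{E}(\diag(*),W_{h})$ are \emph{each} equivalences, so the induced map on cofibres is automatically an equivalence --- no compatibility of the identifications with the structural inclusions is needed.
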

\begin{proof}
Since $W$ is discrete as a topological space, the original coarse structure on $W$ defined in \cref{beforergiorgergegergergerg3232424} is the minimal one.
 By \cref{ergioergergegergergergerg}, the map
 \[ \exten{E}(E_\cF G,W_{n,min,max}) \to \exten{E}(*,W_{n,min,max}) \]
 is equivalent to the map
 \begin{equation}\label{eq:EFproper-0a}
  \exten{E}(E_\cF G \times \ell(W_n), *) \to \exten{E}(\ell(W_n),*)
 \end{equation}
 for every $n$.
 Observe that $W_{h} \cong (W_n)_n \otimes_\nat \nat_{min,min}$.
 Hence by \cref{lem:extenp-juggling} the morphism
 \[ \extenp{E}(\diag(E_\cF G), W_{h}) \to \extenp{E}(*, W_{h}) \]
 is equivalent to
 \begin{equation}\label{eq:EFproper-0b}
  \extenp{E}((E_\cF G \times \ell(W_n))_n, \nat_{min,min}) \to \extenp{E}((\ell(W_n))_n, \nat_{min,min})\ .
 \end{equation}
 Since the stabilisers of $W_n$ belong to $\cF$, the projection map $E_\cF G \times \ell(W_n) \to \ell(W_n)$ is an equivalence for every $n$.
 Applying this to \eqref{eq:EFproper-0a} and \eqref{eq:EFproper-0b}, it follows that the induced maps
 \[  \bigoplus_{n \in \nat} \exten{E}(E_\cF G,W_n) \to  \bigoplus_{n \in \nat} \exten{E}(*,W_n) \quad\text{and}\quad \extenp{E}(\diag(E_\cF G), W_{h}) \to \extenp{E}(\diag(*), W_{h}) \]
 are equivalences. It follows that the induced map on cofibres is an equivalence, so $W_{h}$ is $(E,\cF)$-proper (\cref{ergioerwgwergwergregregwergwregwergwre}).
\end{proof}

Let $Z$ be a $G$-invariant subspace of a $G$-bornological coarse space $X$ whose coarse and bornological structures we denote by $\cC_{X}$ and $\cB_{X}$.

\begin{ddd}
		\label{def:flasque-mor}
		The pair $(X,Z)$ is relatively flasque if there is a morphism $f \colon X \to X$ satisfying the following:
	\begin{enumerate}
		\item\label{def:flasque-mor-i} $f$ is close to $\id_X$;
		\item\label{def:flasque-mor-ii} $f(Z) \subseteq Z$;
		\item \label{def:flasque-mor-iii} for every $U$ in $\cC_{X}$ there exists $k$ in $\nat$ with $f^k(U[Z])\subseteq Z$;
		\item \label{def:flasque-mor-iv} for every $U$ in $\cC_{X}$ we have also $\bigcup_{m\in \nat}(f^m\times f^m)(U) \in \cC_{X}$;
		\item \label{def:flasque-mor-v} for $B$ in $\cB_{X}$ there exists $k$ in $\nat$ such that $B\cap f^k(X)\subseteq Z$;
		\item \label{def:flasque-mor-vi} for every $B'$ in $Z\cap \cB_{X}$ there exists $k'$ in $\nat$ such that $\bigcup_{m \leq k'} f^{-m}(B') = \bigcup_{m \in \nat} f^{-m}(B')$.\qedhere
	\end{enumerate}
\end{ddd}

Note that $X$ is flasque in the sense of \cite[Def.~3.8]{equicoarse} if and only if the pair $(X,\emptyset)$ is relatively flasque.

Let $E$ be an equivariant coarse homology theory.
\begin{prop}\label{prop:flasque-mor}
	If $(X,Z)$ is relatively flasque, then $E(Z) \to E(X)$ is an equivalence.
\end{prop}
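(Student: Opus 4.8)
The plan is to mimic the standard argument that a flasque space has vanishing coarse homology, but relative to the subspace $Z$. The key is to build out of the self-map $f$ a flasqueness structure on a suitable modification of $X$ that is adapted to $Z$, apply excision to separate $Z$ off, and then use the axioms of an equivariant coarse homology theory (annihilation of flasques, coarse excision, $u$-continuity, and coarse invariance) to conclude that the pair contributes nothing to $E$.

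\begin{proof}[Proof sketch]
First I would reduce to a statement about a big family. The conditions \eqref{def:flasque-mor-iii} and \eqref{def:flasque-mor-v} of \cref{def:flasque-mor} say, roughly, that $f$ pushes all of $X$ into $Z$ in the limit: for every entourage $U$ and every bounded set $B$ there is a power $f^k$ with $f^k(U[Z]) \subseteq Z$ and $B \cap f^k(X) \subseteq Z$. Together with condition~\eqref{def:flasque-mor-iv}, which guarantees that the union of all iterates of an entourage is again an entourage, this means that the family $\cZ := (Z \cup f^m(X))_{m \in \nat}$ is a big family on $X$ (it is clearly filtered, and $U$-thickenings are absorbed because $U[Z \cup f^m(X)] \subseteq U[Z] \cup (f^m \times f^m)(U)[f^m(X)]$ up to the constant $m$, which is taken care of by reindexing). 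The strategy is then: (i) show $E(\cZ) \to E(X)$ is an equivalence using coarse excision, (ii) show $E(Z) \to E(\cZ)$ is an equivalence using that $f$ makes each $Z \cup f^m(X)$ coarsely equivalent to $Z$ in a way compatible with the big-family colimit.

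For step (i), consider the complementary pair on $X$ consisting of the big family $\cZ$ and the invariant subset $Z$; since $X = Z \cup f^0(X) = Z \cup X$ trivially and $Z \cap \cZ = (Z)_{m}$ is (essentially constantly) $Z$, coarse excision for $E$ gives a pushout square whose bottom row is $E(\cZ) \to E(X)$ and whose top row is $E(Z) \to E(Z)$, i.e.\ an equivalence; hence $E(\cZ) \xrightarrow{\simeq} E(X)$. (One must check the pair is coarsely excisive in the sense of \cref{rem:coarse-excision}, using \eqref{def:flasque-mor-iv} for the thickenability conditions; this is the bookkeeping-heavy part.) For step (ii), I would exhibit $f$ as inducing a flasqueness-type endomorphism on $Z \cup f^m(X)$ relative to $Z$: condition~\eqref{def:flasque-mor-ii} ensures $f$ preserves $Z$, \eqref{def:flasque-mor-i} ensures $f$ is close to the identity, \eqref{def:flasque-mor-iv} ensures $f$ is controlled with uniform control over iterates, and \eqref{def:flasque-mor-vi} handles properness of the relevant inclusions on bounded sets. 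This lets one identify $\colim_m E(Z \cup f^m(X))$ with $E(Z)$ — the point being that each inclusion $Z \hookrightarrow Z \cup f^m(X)$ becomes, after the colimit, an equivalence because any compact (bounded-support) contribution from $f^m(X) \setminus Z$ is eventually pushed into $Z$ by a further iterate of $f$, by \eqref{def:flasque-mor-v}. Concretely, one builds an endomorphism of the ``telescope'' $\coprod_m (Z \cup f^m(X))$ implementing flasqueness in the sense of \cite[Def.~3.12]{injectivity}, so that $E$ annihilates the relative part; coarse invariance and the flasqueness axiom then give $E(Z) \simeq E(\cZ)$. Composing the equivalences from (i) and (ii) yields that $E(Z) \to E(X)$ is an equivalence, which is the claim.

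\textbf{The main obstacle} I anticipate is verifying the coarse-excisiveness conditions of \cref{rem:coarse-excision} for the pair $(\cZ, Z)$ in the equivariant setting — in particular the thickenability clauses \eqref{rem:coarse-excision2}--\eqref{rem:coarse-excision4}, since, as noted in the text right before \cref{rem:coarse-excisionl}, the inclusion of an invariant subset into its thickening need not be a coarse equivalence equivariantly. One has to use condition~\eqref{def:flasque-mor-iv} carefully to produce a cofinal system of $G$-invariant entourages $U$ for which $Z \to U[Z]$ and the relevant intersections are coarse equivalences, and this is where the hypotheses on $f$ genuinely interact with the equivariance. The rest — recognising $\cZ$ as a big family and recognising the telescope as flasque relative to $Z$ — is routine once the excision input is in place, and follows the pattern of the absolute case (the pair $(X, \emptyset)$) recorded just after \cref{def:flasque-mor}.
\end{proof}
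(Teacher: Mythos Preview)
Your proposed family $\cZ = (Z \cup f^m(X))_{m \in \nat}$ is not a big family: since $f^{m+1}(X) = f(f^m(X)) \subseteq f^m(X)$, the sets are \emph{decreasing} in $m$, so there are no forward inclusion maps along which to form the filtered colimit $E(\cZ) = \colim_m E(Z \cup f^m(X))$. The index set $\nat$ is filtered, but the diagram of subsets is contravariant to it. The thickening condition you cite is likewise satisfied only by passing back to $m=0$ (where $Z \cup f^0(X) = X$), which is useless. So step~(i) cannot be set up as written, and step~(ii) inherits the problem. (Reversing the direction and trying the increasing family $(f^{-m}(Z))_m$ does give a big family, but one then has no reason to expect it to exhaust $X$ --- already the case $Z = \emptyset$ shows this fails.) Your telescope in step~(ii) is also not a standard coarse-geometric object, and it is unclear how the flasqueness axiom would apply to it or how its vanishing would yield $E(Z)\simeq E(\cZ)$.

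The paper's argument avoids filtering $X$ internally and instead \emph{enlarges} it. One forms the pushout $\widehat X$ of $X \hookleftarrow Z \hookrightarrow Z \otimes \nat_{can,min}$ (attaching a coarse half-line at each point of $Z$), observes that $(X,\, Z \otimes \nat_{can,min})$ is a coarsely excisive pair on $\widehat X$ with intersection $Z$, and applies \cref{rem:coarse-excisionl} to obtain a pushout square in $\bM$ with corners $E(Z)$, $E(X)$, $E(Z \otimes \nat_{can,min})$, $E(\widehat X)$. The half-line factor makes $Z \otimes \nat_{can,min}$ flasque via the shift $(z,n) \mapsto (z,n+1)$. The real work is showing that $\widehat X$ itself is flasque: one defines $\widehat f$ by $\widehat f(x) := f(x)$ on $X \setminus Z$ and $\widehat f(z,n) := (f(z), n+1)$ on the rays, and then conditions \eqref{def:flasque-mor-i}--\eqref{def:flasque-mor-vi} of \cref{def:flasque-mor} are precisely what is consumed in verifying that $\widehat f$ witnesses flasqueness of $\widehat X$. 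Since $E$ annihilates both flasque corners, the pushout forces $E(Z) \to E(X)$ to be an equivalence. The conceptual point is that an Eilenberg swindle needs an infinite direction into which to push mass; the paper supplies that direction \emph{externally} via the attached rays rather than trying to locate one inside $X$.
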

\begin{proof}
Let $\nat_{can,min}$ denote the $G$-bornological coarse space with trivial $G$-action, minimal bornology and the coarse structure induced by the standard metric. Consider the following pushout \[\xymatrix{
 Z\ar[r]\ar[d]_-{i} & X\ar[d]\\
 Z\otimes\mathbb{N}_{can,min}\ar[r] & \widehat X	
}\ ,\] where $i\colon Z\to Z\otimes\nat_{can,min}$ sends $z$ to $(z,0)$.
One checks that this pushout exists, see \cite[Prop.~2.21]{equicoarse}.
 
The pair $(X,Z \otimes \nat_{can,min})$ is a coarsely excisive pair on $\widehat X$.
If we apply $E$, then by \cref{rem:coarse-excisionl} we get the pushout square \[\xymatrix{
 E(Z)\ar[r]\ar[d]_-{E(i)} & E(X)\ar[d]\\
E( Z\otimes\mathbb{N}_{can,min})\ar[r] & E(\widehat X	)
}\ .\]
The space $Z \otimes\mathbb{N}_{can,min}$ is flasque.
This fact is witnessed by the selfmap  $f'\colon Z\otimes\mathbb{N}_{can,min}\to Z\otimes\mathbb{N}_{can,min}$ given by $f'({z},n)=({z},n+1)$.
We claim that $\widehat X$ is also flasque.
Since $E$ is an equivariant coarse homology theory, it annihilates flasques.
Assuming the claim, we can therefore conclude that $E(Z\otimes\mathbb{N}_{can,min})\simeq 0 \simeq E(\widehat X)$. Hence $E(Z) \to E(X)$ is an equivalence.

To show the claim we consider
a map $f$  witnessing relative flasqueness of $(X,Z)$. We define $\widehat f\colon\widehat X\to \widehat X$ by $\widehat f(x) := f(x)$ for $x$ in $X\setminus Z$, and by $\widehat f(z,n) := (f(z),n+1)$ for $z$ in $Z$, $n$ in  $\nat$.
Since $f$ is close to $\id_X$, $\widehat f$ is close to $\id_{\widehat X}$, too.

If $B$ in $\widehat X$ is bounded, then $B \cap (Z \times \nat) \subseteq Z \times [0,l]$ for some $l$ in $\nat$.
By \eqref{def:flasque-mor-v} in \cref{def:flasque-mor}, there exists a natural number
$k$ such that $B\cap \widehat{f}^k(\widehat X)\subseteq Z\otimes \mathbb{N}_{can,min}$.
As $\widehat f$ shifts elements in $Z \otimes\mathbb{N}_{can,min}$, Condition~\ref{def:flasque-mor-vi} in \cref{def:flasque-mor} ensures that $B\cap \widehat{f}^{k+k'+l}(\widehat X)=\emptyset$ for some $k'$ in $\nat$.

It remains to show that $V := \bigcup_{n\in \nat}(\widehat f^n\times\widehat f^n)(U)$ is an entourage of $\widehat X$ for every entourage $U$. We consider separately the intersections of $V$ with $X \times X$, $(Z \times \nat) \times (Z \times \nat)$ and the remaining part of $V$. The first two cases follow from \eqref{def:flasque-mor-iv} in \cref{def:flasque-mor}. So we only need to consider the last case.
By symmetry, it suffices to consider pairs $(\widehat f^n(x),\widehat f^n(x'))$ such that $\widehat f^n(x)\in Z \otimes \nat_{can,min}$ and $\widehat f^n(x') \in X$. Let $k$ be minimal such that $\widehat f^k(x)\in Z \otimes \nat_{can,min}$.
Then by \eqref{def:flasque-mor-iii} in \cref{def:flasque-mor} there is $m$ depending only on $U$ such that $f^{k+m}(x')\in Z \otimes \nat_{can,min}$. It follows that
\[ (\widehat f^n(x),\widehat f^n(x')) \in  \bigcup_{n\in \nat}(f^n \times f^n)(\pr_X \times \pr_X)(U) \times \{(r,s) \mid \lvert r - s \rvert \leq m + m' \}\ , \]
where $m'$ is the maximal distance of $\nat$-coordinates allowed by $U$.
\end{proof}

Let $X$ be an object in $G\BC_{/\nat_{min,min}}$ and let $Z$ be an invariant subset of $X$.
We call the pair $(X,Z)$ relatively flasque over $\nat_{min,min}$ if there exists an endomorphism as in \cref{def:flasque-mor} which is also a morphism over $\nat_{min,min}$.
Consider objects $A$ in $\PSh(G\Orb)$ and $(A_n)_n$ in $\prod_\nat \PSh(G\Orb)$.
\begin{kor}\label{cor:flasque-mor-nat}
 If $(X,Z)$ is relatively flasque over $\nat_{min,min}$, then
 \begin{align*} \extenp{E}((A_n)_n,Z) \to \extenp{E}((A_n)_n,X)\quad \text{and}\quad \extenps{E}(A,Z) \to \extenps{E}(A,X) \end{align*}
are equivalences.
\end{kor}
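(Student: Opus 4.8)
The plan is to reduce the statement to \cref{prop:flasque-mor} by two observations: that the two functors in question are themselves equivariant coarse homology theories on the slice category, and that the proof of \cref{prop:flasque-mor} relativises over $\nat_{min,min}$. Since $E$ is assumed to be an equivariant coarse homology theory, \cref{cor:extenps-homology} shows that the functors $E' := \extenp{E}((A_n)_n,-)$ and $E'' := \extenps{E}(A,-)$ on $G\BC_{/\nat_{min,min}}$ are coarsely invariant, coarsely excisive, $u$-continuous, and annihilate flasques; that is, they are equivariant coarse homology theories on $G\BC_{/\nat_{min,min}}$ in the sense of \cref{weoirgjwergrwegwg9}. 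Hence it suffices to establish the following relative version of \cref{prop:flasque-mor}: if $E'$ is an equivariant coarse homology theory on $G\BC_{/\nat_{min,min}}$ and $(X,Z)$ is relatively flasque over $\nat_{min,min}$, then $E'(Z) \to E'(X)$ is an equivalence. Applying this to $E'$ and to $E''$ then yields the corollary.

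To prove this relative version I would replay the argument of \cref{prop:flasque-mor} inside $G\BC_{/\nat_{min,min}}$. The subset $Z$ inherits a structure map to $\nat_{min,min}$ by restricting that of $X$, and $Z \otimes \nat_{can,min}$ becomes an object of the slice via the composite $Z \otimes \nat_{can,min} \to Z \to \nat_{min,min}$ that forgets the $\nat_{can,min}$-coordinate. The pushout $\widehat X$ of $Z \otimes \nat_{can,min} \leftarrow Z \to X$ then exists in $G\BC_{/\nat_{min,min}}$, with the same underlying $G$-bornological coarse space as the pushout of \cref{prop:flasque-mor} and reference map induced by the universal property. The pair $(X, Z \otimes \nat_{can,min})$ is coarsely excisive as a pair in $G\BC$ (this was already verified in the proof of \cref{prop:flasque-mor}), so by \cref{rem:coarse-excisionl-recase} applying $E'$ produces a pushout square relating $E'(Z)$, $E'(X)$, $E'(Z \otimes \nat_{can,min})$ and $E'(\widehat X)$. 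The self-map $f'$ of $Z \otimes \nat_{can,min}$ and the self-map $\widehat f$ of $\widehat X$ built from a flasqueness witness $f$ of $(X,Z)$ over $\nat_{min,min}$ are morphisms over $\nat_{min,min}$, because $f$ is and shifting the $\nat_{can,min}$-coordinate does not affect the reference map; moreover all the conditions of \cref{def:flasque-mor} for $\widehat f$ were checked in the proof of \cref{prop:flasque-mor} at the level of underlying $G$-bornological coarse spaces and are untouched by the reference maps. Thus $Z \otimes \nat_{can,min}$ and $\widehat X$ are flasque in $G\BC_{/\nat_{min,min}}$, $E'$ annihilates them, and the pushout square forces $E'(Z) \to E'(X)$ to be an equivalence.

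I expect no genuine obstacle here: the only work is bookkeeping, namely confirming that each construction entering the proof of \cref{prop:flasque-mor} (the pushout $\widehat X$, the tensor with $\nat_{can,min}$, the self-maps $f'$ and $\widehat f$, and the verification that $\widehat f$ witnesses flasqueness) is compatible with the reference maps to $\nat_{min,min}$. Since those reference maps only record the $X$- or $Z$-coordinate, which all of the constructions respect, \cref{prop:flasque-mor} relativises essentially verbatim. The content of the corollary therefore lies entirely in the inheritance statement \cref{cor:extenps-homology}, which upgrades $\extenp{E}((A_n)_n,-)$ and $\extenps{E}(A,-)$ to equivariant coarse homology theories on the slice; once that is in hand, the relative flasqueness argument applies directly.
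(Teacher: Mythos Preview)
Your strategy differs from the paper's, and it has a genuine gap in the step where you place $Z\otimes\nat_{can,min}$ in the slice $G\BC_{/\nat_{min,min}}$. The reference map you propose, $(z,k)\mapsto p(z)$, is not proper: the preimage of $\{n\}$ is $Z_n\times\nat$, and in the bornology of $Z\otimes\nat_{can,min}$ a bounded set must project to a \emph{finite} subset of the $\nat_{can,min}$-factor. Hence $Z\otimes\nat_{can,min}$ is not an object of the slice, the pushout $\widehat X$ does not exist there, and \cref{rem:coarse-excisionl-recase} cannot be applied. Nor can this be repaired by enlarging the bornology: any bornology on $Z\times\nat$ for which the reference map is proper must declare each $Z_n\times\nat$ bounded, and then the shift $(z,k)\mapsto(z,k+1)$ fails the flasqueness condition that powers of the map eventually leave every bounded set. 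So the proof of \cref{prop:flasque-mor} does not relativise in the way you describe; there is an honest tension between properness over $\nat_{min,min}$ and flasqueness in the auxiliary direction.

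The paper avoids this by never working inside the slice for the flasqueness argument. For $\extenp{E}$ it uses that $\extenp{E}((A_n)_n,-)$ is a left Kan extension along $\extenp{\ell}$, so it suffices to check the statement on sequences $(T_n)_n$ of $G$-sets; there one must show that $E((T_n)_n\otimes_\nat Z)\to E((T_n)_n\otimes_\nat X)$ is an equivalence in $\bM$, and since $(T_n)_n\otimes_\nat(-)$ sends a relatively flasque pair over $\nat_{min,min}$ to a relatively flasque pair in $G\BC$, \cref{prop:flasque-mor} applies directly to the coarse homology theory $E$ on $G\BC$. For $\extenps{E}$ the paper then uses the defining cofibre sequence: each fibre pair $(X_n,Z_n)$ is relatively flasque in $G\BC$, so $\exten{E}(A,Z_n)\to\exten{E}(A,X_n)$ is an equivalence by \cref{prop:flasque-mor} applied to the coarse homology theory $\exten{E}(A,-)$, and combined with the $\extenp{E}$ case this gives the result.
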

\begin{proof}
 In the case of $\extenp{E}$ it suffices to check the statement after restriction along $\prod_{\nat}\ell $ in the first argument.
 Thus let $(T_n)_n$ be in $\prod_{\nat} G\Set$.
 In view of \cref{def:extenp}, we must then show that 
 \[ E((T_{n})_{n}\otimes_{\nat}Z)\to  E((T_{n})_{n}\otimes_{\nat}X) \]
 is an equivalence.
 Tthis follows from \cref{prop:flasque-mor} since $(T_n)_n \otimes_\nat -$ sends relatively flasque pairs in $G\BC_{/\nat_{min,min}}$   to relatively flasque pairs in $G\BC$.

 Note that the pair of fibres $(X_n,Z_n)$ is relatively flasque for every $n$.
 Since $\exten{E}(A,-)$ is an equivariant coarse homology theory, $\exten{E}(A,Z_n) \to \exten{E}(A,X_n)$ is an equivalence.
 In view of \cref{def:extenps}, this observation together with the statement for $\extenp{E}$ imply the statement for $\extenps{E}$.
\end{proof}

We now turn to the promised homotopy invariance result.

Let $X$ be a metric space with isometric $G$-action together with a $G$-map $p \colon X \to \nat$. Let $Z$ be a $G$-invariant subset of $X$. Suppose that the metric $d$ on $X$ is a path metric.

 We choose a coarse structure on $X$ which is compatible with the metric.
 We  furthermore  equip $X$ with the  bornology  which is the minimal one such that $p:X\to \nat_{min,min}$ is a morphism.  
 Let $X_{h}$ be the bornological coarse space obtained by  equipping $X$
  with the   coarse structure $\cC_{h}$ in \eqref{eq:vanishing-control} introduced in  \cref{fewuifzhifweffewfwefwefwefwe}.
 Then $Z_{h}$ denotes the subset $Z$ with the structures induced from $X_{h}$.

Let $E$ be an equivariant coarse homology theory.

\begin{prop}\label{prop:coarse-retract}
 Assume that there exists a map of sets $\Psi \colon [0,1] \times X \to X$ over $\nat$ such that the following holds:
 \begin{enumerate}
  \item\label{it:coarse-retract1} $\Psi_0 = \id_X$;
  \item\label{it:coarse-retract2} $\Psi_1(X) \subseteq Z$;
  \item\label{it:coarse-retract3} $(\Psi_t)_{|Z} = \id_Z$ for all $t$ in $[0,1]$;
  \item\label{it:coarse-retract4} $\Psi_s\circ \Psi_t = \Psi_{\min(s+t,1)}$ for all $s, t$ in $[0,1]$;
  \item\label{it:coarse-retract5} $\Psi$ is Lipschitz with respect to the sum metric on $[0,1] \times X$;
  \item\label{it:coarse-retract6} there exists $N$ in $\nat$ such that for every $\epsilon$ in $(0,\infty)$  we have $\Psi_{N\epsilon}(U_\epsilon[Z]) \subseteq Z$.
 \end{enumerate} 
 Then $\extenps{E}(-,Z_{h}) \to \extenps{E}(-,X_{h})$ is an equivalence.
\end{prop}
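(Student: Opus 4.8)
The plan is to reduce the statement to an application of $u$-continuity together with the relative flasqueness criterion \cref{cor:flasque-mor-nat}, by passing to the mapping-cylinder-type coarse space built from $\Psi$. First I would observe that, thanks to \cref{lem:choice-of-metrics}, it suffices to prove the statement for one convenient choice of original coarse structure on $X$; so I may assume the original coarse structure is the metric coarse structure $\cC_d$ (or the one generated by $U_r$ for some fixed $r$), whichever is most convenient for the homotopy argument. Next, I would build an auxiliary object over $\nat_{min,min}$ by gluing: consider the ``half-open cylinder'' $\widehat X$ whose underlying set is the pushout of $X \leftarrow Z \rightarrow Z \otimes [0,\infty)$ (with $[0,\infty)$ given a suitable metric coarse structure), equipped with the hybrid/vanishing-control coarse structure analogous to \eqref{eq:vanishing-control} over $\nat$, and with reference map to $\nat_{min,min}$ induced by $p$. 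The homotopy $\Psi$, together with properties \eqref{it:coarse-retract1}--\eqref{it:coarse-retract4}, is exactly the data that exhibits $X$ as a coarse deformation retract of $Z$ inside $\widehat X$: the Lipschitz condition \eqref{it:coarse-retract5} ensures the induced maps are controlled, while \eqref{it:coarse-retract6} is the compatibility with the vanishing-control condition so that the maps remain morphisms for $\cC_h$ rather than merely $\cC_W$.

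The main body of the argument then runs as follows. Using \cref{rem:coarse-excisionl-recase} and the coarsely excisive decomposition of $\widehat X$ into $X$ (the ``bottom'') and $Z \otimes [0,\infty)$ (the ``collar''), together with the fact that $\extenps{E}(-,-)$ is an equivariant coarse homology theory in its second argument (\cref{prop:extenp-homology}), I would get a pushout square relating $\extenps{E}(-,Z_h)$, $\extenps{E}(-,X_h)$, $\extenps{E}(-,(Z\otimes[0,\infty))_h)$ and $\extenps{E}(-,\widehat X_h)$. The collar $Z \otimes [0,\infty)$ is relatively flasque over $Z$ (shift towards infinity), so by \cref{cor:flasque-mor-nat} the map $\extenps{E}(-,Z_h) \to \extenps{E}(-,(Z\otimes[0,\infty))_h)$ is an equivalence; and one checks — this is where \eqref{it:coarse-retract1}--\eqref{it:coarse-retract6} on $\Psi$ are really used — that the pair $(\widehat X, Z \otimes [0,\infty))$ is itself relatively flasque over $\nat_{min,min}$, the flasqueness-implementing self-map being assembled from $\Psi$ and the collar shift (push everything up the collar while simultaneously applying $\Psi$ so that points of $X$ flow into $Z$ and then up). Again by \cref{cor:flasque-mor-nat}, $\extenps{E}(-,(Z\otimes[0,\infty))_h) \to \extenps{E}(-,\widehat X_h)$ is an equivalence, hence so is $\extenps{E}(-,Z_h) \to \extenps{E}(-,\widehat X_h)$. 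Combining with the pushout square, the map $\extenps{E}(-,X_h) \to \extenps{E}(-,\widehat X_h)$ is also an equivalence, and a two-out-of-three argument (using that $Z_h \to X_h \to \widehat X_h$ factors the weak equivalence $Z_h \to \widehat X_h$) yields that $\extenps{E}(-,Z_h) \to \extenps{E}(-,X_h)$ is an equivalence.

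The step I expect to be the main obstacle is verifying in full that $(\widehat X, Z\otimes[0,\infty))$ is relatively flasque \emph{over $\nat_{min,min}$} in the precise sense of \cref{def:flasque-mor}, i.e.\ checking conditions \eqref{def:flasque-mor-iv} and \eqref{def:flasque-mor-iii} for the combined self-map: one must control the unions $\bigcup_m (f^m\times f^m)(U)$ for entourages $U$ in the vanishing-control coarse structure $\cC_h$, and here the quantitative estimate \eqref{it:coarse-retract6} on $\Psi$ (with its uniform constant $N$), together with the Lipschitz constant from \eqref{it:coarse-retract5} and the semigroup law \eqref{it:coarse-retract4}, is exactly what guarantees that the iterated images stay inside $\cC_h$ and that $U$-thickenings of $Z\otimes[0,\infty)$ are eventually absorbed into $Z\otimes[0,\infty)$ after boundedly many applications of $f$ on each fibre $X_n$ — the bound being allowed to grow with $n$ precisely because we are in the vanishing-control structure. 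The bornological conditions \eqref{def:flasque-mor-v}--\eqref{def:flasque-mor-vi} are handled as in the proof of \cref{prop:flasque-mor}, using that $p$-properness of the bornology makes bounded sets live over finite subsets of $\nat$, where the flow behaves like the honest mapping-cylinder retraction.
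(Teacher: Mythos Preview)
Your proposal heads in the right direction but takes a detour the paper avoids. The paper's argument is considerably more direct: after the same preliminary reduction via \cref{lem:choice-of-metrics} (replacing the original coarse structure by the one generated by $\bigcup_n X_n\times X_n$), it invokes $u$-continuity of $\extenps{E}$ (\cref{prop:extenp-homology}) to reduce to the map $\extenps{E}(-,Z_{U_\phi}) \to \extenps{E}(-,X_{U_\phi})$ for each single entourage $U_\phi = \{(x,x')\mid p(x)=p(x')=n \Rightarrow d(x,x')\le\phi(n)\}$ with $\phi(n)\to 0$. It then shows \emph{directly} that the pair $(X_{U_\phi},Z_{U_\phi})$ is relatively flasque over $\nat_{min,min}$ in the sense of \cref{def:flasque-mor}, with witness $f(x) := \Psi_{\min(1,\phi(p(x)))}(x)$. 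Conditions \eqref{it:coarse-retract4}--\eqref{it:coarse-retract6} then verify the six clauses of \cref{def:flasque-mor} in a few lines, and \cref{cor:flasque-mor-nat} finishes. No auxiliary space, no excision square.

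Your mapping-cylinder construction $\widehat X = X\cup_Z (Z\otimes[0,\infty))$ essentially reproves \cref{prop:flasque-mor} in this particular instance rather than invoking its corollary \cref{cor:flasque-mor-nat}; the excision step and the collar-shift add bookkeeping (you would still need to specify the hybrid structure on $\widehat X$ and verify the coarsely excisive pair conditions of \cref{rem:coarse-excision}) without bypassing the heart of the matter: the flasqueness-implementing self-map on $\widehat X$ must still use a \emph{fibre-dependent} step size $\phi(p(x))$ for exactly the same reason as in the paper, since a fixed step $\Psi_s$ with $s>0$ is not close to the identity in $\cC_h$.

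One correction: your remark that ``the bound [is] allowed to grow with $n$ precisely because we are in the vanishing-control structure'' inverts what happens. For \cref{def:flasque-mor}\eqref{def:flasque-mor-iii} the integer $k$ must be uniform across all fibres for a given $U$. What makes this possible is that the entourage $U_\phi$ has width at most $\phi(n)$ on $X_n$; by hypothesis \eqref{it:coarse-retract6} the retraction time on $X_n$ is at most $N\phi(n)$, and with step size $\phi(n)$ this is $N$ steps --- uniformly in $n$. The matching of step size to entourage, enabled by the $u$-continuity reduction, is the real point.
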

\begin{proof}
  By \cref{lem:choice-of-metrics}, we may replace the original coarse structure on $X$ by the coarse structure induced by $p$, i.e., the one generated by the entourage
  $\bigcup_{n\in \nat} X_{n}\times X_{n}$.
  
  By $u$-continuity of $\extenps{E}$ (see \cref{prop:extenp-homology}), it suffices to show that the inclusion induces an equivalence
 \[ \extenps{E}(\diag(-),Z_U) \to \extenps{E}(\diag(-), X_U) \]
 for every $U$  in $\cC_{h}^{G}$. Here    $X_{U}$ denotes
 the $G$-bornological coarse spaces obtained from $X_{h}$ by replacing the coarse structure $\cC_{h}$ by the coarse structure generated by $U$, and $Z_{U}$ denotes $Z$ with the structures induced from $X_{U}$.

 For a function $\phi \colon \nat \to [0,\infty)$, define
 \begin{equation}\label{eq:U-phi}
 U_{\phi} := \{ (x,x')\in X\times X \mid p(x) = n = p(x') \Rightarrow d(x,x') \leq \phi(n) \}\ .
 \end{equation}
 Using the description \eqref{eq:vanishing-control} of $\cC_{h}$, one checks that 
  the set of entourages $U_{\phi}$ indexed by such functions satisfying $\lim_{n \to \infty} \phi(n) = 0$ is cofinal in   $\cC_{h}$.    

 Fix a function $\phi$.
 Using \cref{cor:flasque-mor-nat}, it is enough to show that $(X_{U_\phi},Z_{U_\phi})$ is relatively flasque over $\nat_{min,min}$.
 So our task is to define the  witness $f$ satisfying the conditions listed in \cref{def:flasque-mor}. We define the set map
   \[ f \colon X \to X, \quad x \mapsto \Psi_{\min(1,\phi(p(x)))}(x)\ .\]
 Note that
 \begin{equation}\label{eq:coarse-retract}
  f^k(x) = \Psi_{\min(1,k\phi(p(x)))}(x)
 \end{equation}
 for all $k$ in $\nat$ and $x$ in $X$ by Condition~\eqref{it:coarse-retract4}.
 Fix an element $M$ in $\nat$ that is larger than the Lipschitz constant of $\Psi$.
 Observe that for any two points $x,x'$ in $X_n$ satisfying $d(x,x') \leq M \phi(n)$ we have $(x,x') \in U_\phi^M$ since $d$ is a path metric.
 
 We now check that $f$ satisfies the conditions of \cref{def:flasque-mor}.
 \begin{enumerate}
  \item Since $d(x,f(x)) \leq M\min(1,\phi(p(x)))$, we have $(x,f(x)) \in U_\phi^M$ for every $x$ in $X$. Hence $f$ is close to $\id_X$.
  This implies that $f$ is a morphism of bornological coarse spaces.
  \item The property $f(Z) \subseteq Z$ is immediate from Condition~\ref{it:coarse-retract3}.
  \item\label{it:coarse-retract-pf3} Since every entourage of $X_{U_\phi}$ is contained in $U_\phi^l$ for some $l$ in $\nat$, it suffices to consider such entourages. By Condition~\ref{it:coarse-retract6}, we have $f^{N \cdot l}(U_\phi^l[Z]) \subseteq Z$ for every $l$ in $\nat$.
  \item As in \ref{it:coarse-retract-pf3}, it is sufficient to consider entourages of the form $U_\phi^l$.
  Since $M$ bounds the Lipschitz constant of $\Psi$, we conclude from    \eqref{eq:coarse-retract} that  $d(f^k(x),f^k(x')) \leq M d(x,x')$.
  Hence $\bigcup_{k \in \nat} (f^k \times f^k)(U_\phi^l) \subseteq U_\phi^{M \cdot l}$.
  \item\label{it:coarse-retract-pf5} If $B$ is a bounded subset of $X_{h}$, then $p(B) \subseteq [0,l]$ for some $l$ in $\nat$. Choose $k$ in $\nat$ such that $k \cdot \phi(l') \geq 1$ for all $l'$ in $[0,l]$. Then $B \cap f^k(X) \subseteq Z$ by \eqref{eq:coarse-retract}.
  \item For $B$ and $k$ as in \ref{it:coarse-retract-pf5} we have $\bigcup_{m \leq k} f^{-m}(B) = \bigcup_{m \in \nat} f^{-m}(B)$.\qedhere
 \end{enumerate}
\end{proof}

\begin{rem}\label{rem:homotopy-theorem}
An alternative way to show \cref{prop:coarse-retract}  is to use the equivariant version of the Homotopy Theorem \cite[Thm.~5.26]{buen}. It suffices to show the equivalence after restriction along $\ell$. 
 Then \cite[Thm.~5.26]{buen} applies since the inclusion $S  \times Z \to S  \times X$ is a uniform homotopy equivalence.
 However, note that the proof given above is considerably simpler than the proof of  \cite[Thm.~5.26]{buen}.
 \end{rem}

As explained in the outline of the proof of \cref{rgiorgergegergergerg3232424},
we must show the theorem for $G$-simplicial complexes $W$ over $\nat$ which are disjoint unions of $d$-dimensional simplices.
Such a complex can be written in the form
\[ W = \coprod_{n \in \nat} T_n \times \Delta^d\ ,\]
 where $(T_n)_n$ is in $\prod_\nat G\Set$.
We then form the object $W_{h}$ in $G\BC_{/\nat_{min,min}}$ by applying \cref{beforergiorgergegergergerg3232424}.

\begin{prop}\label{prop:EFproper-d}
 If the stabilisers of $T_n$ belong to $\cF$ for all $n$, then $W_{h}$ is $(E,\cF)$-proper.
\end{prop}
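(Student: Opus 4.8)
The plan is to contract each $d$-simplex of $W$ onto its barycentre and thereby reduce to the $0$-dimensional case already handled in \cref{cor:EFproper-0}; throughout we use that $E$ is a hyperexcisive equivariant coarse homology theory, as assumed in \cref{rgiorgergegergergerg3232424}. Let $b\in\Delta^d$ be the barycentre and let $Z:=\coprod_{n\in\nat}T_n\times\{b\}\subseteq W$ be the invariant subcomplex of barycentres, viewed as an object of $G\BC_{/\nat_{min,min}}$ via $p|_Z$ and the structures induced from $W_h$. Since distinct barycentres of $W$ lie in distinct connected components, these induced structures are precisely the minimal coarse structure and the minimal bornology for which $p|_Z$ is proper; hence $Z_h$ coincides with the object obtained by applying \cref{beforergiorgergegergergerg3232424} to the $0$-dimensional $G$-simplicial complex $Z\to\nat$. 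As the stabiliser of a point $(t,b)$ equals the stabiliser of $t$ in $T_n$ and thus belongs to $\cF$, \cref{cor:EFproper-0} shows that $Z_h$ is $(E,\cF)$-proper. It therefore suffices to prove that the inclusion $Z_h\hookrightarrow W_h$ induces an equivalence $\extenps{E}(-,Z_h)\xrightarrow{\simeq}\extenps{E}(-,W_h)$ of functors $\PSh(G\Orb)\to\bM$: granting this, naturality in the first variable together with the $(E,\cF)$-properness of $Z_h$ gives that $\extenps{E}(E_\cF G,W_h)\to\extenps{E}(*,W_h)$ is an equivalence, i.e.\ $W_h$ is $(E,\cF)$-proper.

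This equivalence will come from \cref{prop:coarse-retract}. Equip $W$ with the spherical path metric of \cref{beforergiorgergegergergerg3232424}; by \cref{rem:hybrid} we may replace it by any uniformly equivalent metric, for instance a piecewise-Euclidean one, which is convenient below. Then each component $\{t\}\times\Delta^d$ is an isometric copy of one fixed $d$-simplex; put $D:=\max_{x\in\Delta^d}d(x,b)$. Define $\Psi\colon[0,1]\times W\to W$ componentwise by letting $\Psi_s(x)$ be the point of the geodesic segment $[b_{[x]},x]$ at distance $\max(d(x,b_{[x]})-sD,0)$ from $b_{[x]}$, where $b_{[x]}$ denotes the barycentre of the component containing $x$. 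Then $\Psi$ is $G$-equivariant, preserves components and hence lies over $\nat$, and one checks at once that $\Psi_0=\id_W$, $\Psi_1(W)\subseteq Z$, and $(\Psi_t)|_Z=\id_Z$. The flow identity $\Psi_s\circ\Psi_t=\Psi_{\min(s+t,1)}$ of condition~\ref{it:coarse-retract4} holds because both sides are the point of $[b_{[x]},x]$ at distance $\max(d(x,b_{[x]})-(s+t)D,0)$ from $b_{[x]}$, the bound $d(x,b_{[x]})\le D$ taking care of the clamping at $1$. For condition~\ref{it:coarse-retract5}, on each component $\Psi_s$ is a radial contraction toward $b_{[x]}$, hence $1$-Lipschitz in $x$, and it is $D$-Lipschitz in $s$, so $\Psi$ is Lipschitz for the sum metric on $[0,1]\times W$. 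Finally, pick $N\in\nat$ with $ND\ge 1$: if $x\in U_\epsilon[Z]$ then $d(x,b_{[x]})\le\epsilon$ (again since distinct barycentres lie in distinct components), so $d(\Psi_{\min(N\epsilon,1)}(x),b_{[x]})=\max(d(x,b_{[x]})-\min(N\epsilon,1)D,0)=0$, giving condition~\ref{it:coarse-retract6}. Thus \cref{prop:coarse-retract} applies and delivers the equivalence $\extenps{E}(-,Z_h)\simeq\extenps{E}(-,W_h)$, which completes the argument.

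The only non-routine ingredient is the verification of the hypotheses of \cref{prop:coarse-retract} for $\Psi$, and among these conditions~\ref{it:coarse-retract4} and~\ref{it:coarse-retract5} are the crux: the naive linear contraction $x\mapsto(1-s)x+sb_{[x]}$ does not satisfy the flow identity $\Psi_s\circ\Psi_t=\Psi_{\min(s+t,1)}$ in general, which is what forces the use of the constant-speed geodesic contraction above. Once that contraction is in place, the flow identity, the Lipschitz bound, and condition~\ref{it:coarse-retract6} are all elementary facts about radial contractions in a path metric, and the remaining points — identifying $Z_h$ with the object to which \cref{cor:EFproper-0} applies, and transporting $(E,\cF)$-properness from $Z_h$ to $W_h$ — amount to bookkeeping with the definitions.
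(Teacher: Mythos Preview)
Your proof is correct and follows essentially the same route as the paper: reduce to the subspace of barycentres via \cref{prop:coarse-retract} using a constant-speed radial contraction toward the barycentre on each simplex, and then invoke \cref{cor:EFproper-0}. The only differences are cosmetic---the paper uses unit speed rather than speed $D$ and writes $\Psi$ in barycentric coordinates---and both arguments pass to a Euclidean metric on each simplex (bi-Lipschitz equivalent to the spherical one) to verify the Lipschitz and thickening conditions.
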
 
\begin{proof}
 Let $W'$ be the subspace of $W$ given by the barycentres of the simplices.
 We claim that \cref{prop:coarse-retract} applies to the inclusion of $W'$ into $W$.
 Assuming the claim, we can conclude that $W_{h}$ is $(E,\cF)$-proper if and only if $W'_{h}$ is $(E,\cF)$-proper. The proposition then follows since $W'_{h}$ is $(E,\cF)$-proper by \cref{cor:EFproper-0}.
 
 In order to show the claim we must construct the map
 $\Psi \colon [0,1]\times W\to W$. We define $\Psi$ on each simplex of $W$ separately.
 Using barycentric coordinates $x$ and the barycenter $b$, we define $\Psi$ such that it acts on a simplex by 
 \[ \Psi_{s}(x) :=   \left\{\begin{array}{cc}x- \min(1,\frac{s}{\|x-b\|})(x-b)  &x\not=b\\b&x=b \end{array} \right.\ ,\]
 where $\lVert-\rVert$ is the Euclidean distance in $\mathbb{R}^{d+1}$. 
 The map $\Psi$ moves the points of the simplices with unit speed straightly towards the barycentres  and then stops.
   One checks that $\Psi$ satisfies the conditions listed in \cref{prop:coarse-retract}. For Condition~\ref{it:coarse-retract5} and Condition~\ref{it:coarse-retract6},
  one can use, for simplicity, the Euclidean metric since it is bi-Lipschitz equivalent to the spherical  metric.  
 \end{proof}

\phantomsection{}\label{soijoevevfsdvdvdfvsdv}
\begin{proof}[Proof of \cref{rgiorgergegergergerg3232424}]
 We proceed by induction on the dimension of $W$. The $0$-dimensional case is covered by \cref{cor:EFproper-0}.
 
 Suppose that $W$ is $d$-dimensional with $d > 0$ and let $\sk_{d-1}(W)$ denote the $(d-1)$-skeleton of $W$.
 We claim that there exists a pushout
 \begin{equation}\label{eq:excision-1}
  \xymatrix{
  \extenps{E}(-, (\coprod_{n \in \nat} T_n \times \partial\Delta^d)_{h})\ar[r]\ar[d] & \extenps{E}(-,\sk_{d-1}(W)_{h})\ar[d] \\
  \extenps{E}(-, (\coprod_{n \in \nat} T_n \times \Delta^d)_{h})\ar[r] & \extenps{E}(-,W_{h}) &
 }\end{equation}
 of functors from  $\PSh(G\Orb)$ to $\bM$, where $T_n$ is the $G$-set of $d$-cells in $W_n$.
 Here we consider $\coprod_{n \in \nat} T_n \times \partial\Delta^d$, $\coprod_{n \in \nat} T_n \times \Delta^d$ and $\sk_{d-1}(W)$ as simplicial complexes over $\nat$ in their own right and equip them with the bornological coarse structures obtained by applying  \cref{beforergiorgergegergergerg3232424}.
 
 Assuming the claim, the projection $p \colon E_\cF G \to *$ induces the following commutative diagram
 \[\resizebox{\textwidth}{!}{\xymatrix@C=1em@R=1em{
  & \extenps{E}(*,(\coprod_{n \in \nat} T_n \times \partial\Delta^d)_{h})\ar[rr]\ar[dd] & & \extenps{E}(*,\sk_{d-1}(W)_{h})\ar[dd] \\
  \extenps{E}(E_\cF G,(\coprod_{n \in \nat} T_n \times \partial\Delta^d)_{h})\ar[rr]\ar[dd]\ar[ur]^-{p_\partial} & & \extenps{E}(E_\cF G,\sk_{d-1}(W)_{h})\ar[dd]\ar[ur]_-{p_{d-1}} & \\
  & \extenps{E}(*, (\coprod_{n \in \nat} T_n \times \Delta^d)_{h})\ar[rr] & & \extenps{E}(*,W_{h}) \\
  \extenps{E}(E_\cF G, (\coprod_{n \in \nat} T_n \times \Delta^d)_{h})\ar[rr]\ar[ur]^-{p_\Delta} & & \extenps{E}(E_\cF G,W_{h})\ar[ur]_-{p_d} &
 }}\]
 in which the front and back faces are pushouts.
 The maps $p_\partial$ and $p_{d-1}$ are equivalences by induction hypothesis. 
 The map $p_\Delta$ is an equivalence by \cref{prop:EFproper-d}.
 Then $p_d$ is also an equivalence, which is precisely the assertion of the theorem.

 The remainder of this proof is devoted to the construction of the pushout square in \eqref{eq:excision-1}.
 Let $Z$ be the topological subspace of $W$ consisting of the disjoint union of the $\frac{2}{3}$-rescaled top-dimensional simplices. Denote by $\partial Z$ the boundary of $Z$. We let $Z_{h}$ and $\partial Z_{h}$ denote the objects of $G\BC_{/\nat_{min,min}}$ obtained by equipping both subsets    with the induced bornological coarse structure from $W_{h}$.
  
  We have an isomorphism  $\coprod_{n \in \nat} T_n \times \Delta^d\cong Z$  of $G$-simplicial complexes over $\nat$.
  But note that the original coarse structure $\cC_{\pi_{0}(Z)}$ coming from   the left-hand side   is in general smaller than the original coarse structure on $Z$   induced from the original coarse structure $\cC_{\pi_{0}(X)}$ on $X$. So this isomorphism of simplicial sets in general only induces a morphism $( \coprod_{n \in \nat} T_n \times \Delta^d)_{h}\to Z_{h}$ in $G\BC_{/\nat_{min,min}}$.
  By \cref{lem:choice-of-metrics}, it nevertheless induces an equivalence of functors
  \begin{equation}\label{eq:excision-2}
   \extenps{E}(-,(\coprod_{n \in \nat} T_n \times \Delta^d)_{h}) \xrightarrow{\simeq} \extenps{E}(-,Z_{h})\ .
  \end{equation}
  Analogously, we have an equivalence
  \begin{equation}\label{eq:excision-3}
   \extenps{E}(-,(\coprod_{n \in \nat} T_n \times \partial\Delta^d)_{h}) \xrightarrow{\simeq} \extenps{E}(-,\partial Z_{h})\ .
  \end{equation}
  Let $Y$ be the complement of the interior of $Z$ 
  so that $W = Y \cup Z$ and $Y \cap Z = \partial Z$.
  In the following, we argue that $(Y,Z)$ is a coarsely excisive pair in $W_{h}$, see  \cref{rem:coarse-excision}.
  
  Recall that the coarse structure $\cC_{h}$ of $W_{h}$ admits the cofinal set of entourages of the form $U_\phi$ introduced in \eqref{eq:U-phi}.
  Using the fact that the metric on $W$  is a path metric, we have  
  \[ U_{\phi}[Z]\cap U_{\phi}[Y]\subseteq U_{\phi}[\partial Z]\ .\]
 This verifies Condition~\ref{rem:coarse-excision}.\ref{rem:coarse-excision1}.
 
 In order to verify Condition~\ref{rem:coarse-excision}.\ref{rem:coarse-excision2}, we  show that the inclusion $Y \to U_{\phi}[Y]$ is an equivariant coarse equivalence.
 To this end we must construct an inverse of the inclusion up to closeness.
 On $Y$ this inverse will {be the} identity.
 In order to define it on $U_{\phi}[Y]\setminus Y$, note that by our conventions about $G$-simplicial complexes, if $g$ in $G$ fixes a point in the interior of a simplex, then it fixes a whole simplex.
 For every orbit $[w]$ in $ (U_{\phi}[Y]\setminus Y)/G$, we choose a representative $w$.
 Then $w$  lies in the interior of some $d$-simplex.
 We define the inverse such that it sends $w$ to some point
 $y$ in $Y$ which belongs to the same simplex and satisfies $(w,y)\in U_{\phi}$.
 Since the stabiliser of $y$ contains the stabiliser of $w$, the map can then be extended equivariantly to the whole orbit. 
Doing this construction for every $G$-orbit separately gives the construction of the desired inverse on $ U_{\phi}[Y]\setminus Y$.

We proceed similarly in order to show that $Y \cap Z \to U_\phi[Y \cap Z]$ and
$U_{\phi'}[Y]\cap Z\to U_{\phi}[U_{\phi'}[Y]\cap Z]$ are coarse equivalences for all $\phi$ and $\phi'$. This verifies Conditions~\ref{rem:coarse-excision}.\ref{rem:coarse-excision3} and \ref{rem:coarse-excision}.\ref{rem:coarse-excision4}.
   
Since $(Y,Z)$ is a coarsely excisive pair in $W_{h}$ and $\extenps{E}(A,-)$ is a coarse homology theory for every $A$ in $\PSh(G\Orb)$ by \cref{prop:extenp-homology}, we have a pushout square
 \begin{equation}\label{dsafqefdfads}
\xymatrix{
   \extenps{E}(-,\partial Z_{h} )\ar[r]\ar[d] & \extenps{E}(-,Y_{h})\ar[d] \\
   \extenps{E}(-,Z_{h})\ar[r] & \extenps{E}(-,W_{h})
  }
\end{equation}
of functors from $\PSh(G\Orb)$ to $\bM$.

Using the equivalences \eqref{eq:excision-3} and \eqref{eq:excision-2},
we can replace the left part of this pushout square by the left part of the square in 
  \eqref{eq:excision-1}.
In order to replace the right upper corner of the square in \eqref{dsafqefdfads} by the corresponding right upper corner of the square in \eqref{eq:excision-1}, it only remains to show that the inclusion $j\colon \sk_{d-1}(W)_{h}\to Y_{h}$ induces an equivalence \begin{equation}\label{sadvsdvasdvadsv}
j_{*}\colon\extenps{E}(-,\sk_{d-1}(W)_{h}) \xrightarrow{\simeq}  \extenps{E}(-,Y_{h})\ .
\end{equation} 
 We want to apply \cref{prop:coarse-retract} to the subset $\sk_{d-1}(W)$ of $Y$.
 We must define the  {deformation} retraction $\Psi \colon [0,1]\times Y\to Y$.
 On $\sk_{d-1}(W)$, we let $\Psi$ be the constant homotopy.
 On $Y\setminus \sk_{d-1}(W)$, we define $\Psi$ on each $d$-simplex of $W$ separately using barycentric coordinates $x$ and the barycentre $b$.
 The restriction of $\Psi$ to the intersection of a simplex with $Y$ is given by
\[ \Psi_{s}(x):=b+\min(s,a(x)) \frac{(x-b)}{\lVert x - b \rVert}\ ,\]
where $a(x)$ is defined by
\[ a(x):=\min\{\frac{\lVert x  - b \rVert}{(d+1) x_{i}-1} \mid i=0,\dots,n\}\ .\]
This map moves the points with unit speed on the rays from the barycentres towards the boundary of the simplices and then stops after the hitting time $a(x)$. Observe that $\Psi$ is well-defined since it does not move points on the boundary of $d$-simplices.

One again checks the conditions  listed in \cref{prop:coarse-retract}. 
Using that the spherical metric is bi-Lipschitz equivalent to the Euclidean metric on the simplices,
one can again use the latter metric to check Conditions~\ref{it:coarse-retract5} and \ref{it:coarse-retract6}. 
In order to see that the hitting time is Lipschitz, note that there exists a neighbourhood of $b$ which  is contained in the complement of $Y$.

By \cref{prop:coarse-retract}, we get an equivalence 
\begin{equation}\label{svadscdscas}
j'_{*}\colon\extenps{E}(-,\sk_{d-1}(W)'_{h}) \xrightarrow{\simeq}  \extenps{E}(-,Y_{h})\ ,
\end{equation}  where $\sk_{d-1}(W)'_{h}$ denotes
 $\sk_{d-1}(W)$ equipped with the bornological coarse structure induced from $Y_{h}$, or equivalently, from $W_{h}$.  
 The identity of underlying sets is a morphism 
 \begin{equation}\label{qwfeqwefewdqwewdewdqde}
\sk_{d-1}(W)_{h}\to \sk_{d-1}(W)'_{h}
\end{equation} 
in $G\BC_{/\nat_{min,min}}$.
We want to show that   \begin{equation}\label{qwfeqwefewdqwewdewdqde1}
\extenps{E}(-,\sk_{d-1}(W)_{h})\to \extenps{E} (-,\sk_{d-1}(W)'_{h})
\end{equation}  is an equivalence.
  For $d>1$ we have a bijection
 $\pi_{0}(\sk_{d-1}(W)) \to \pi_{0}( W )$. This implies that the intrinsic original coarse structure on $ \sk_{d-1}(W)$ coincides with the one induced from $W$, see \cref{beforergiorgergegergergerg3232424}. In particular, for $d>1$ the morphism in  
    \eqref{qwfeqwefewdqwewdewdqde}  is an isomorphism and  \eqref{qwfeqwefewdqwewdewdqde1} is an equivalence.

It remains to consider the case $d=1$.
We fix $A$ in $\PSh(G\Orb)$.
By $u$-continuity of the functor $\extenps{E}(A,-)$ (see \cref{prop:extenp-homology}.\eqref{it:extenp-homology5}), it suffices to show that
\begin{equation}\label{qwfeqwefewdqwewdewdqde2}
 \mathop{\colim}\limits_{U\in \cC_{h}}\extenps{E}(A,\sk_{0}(W)_{U}) \to \mathop{\colim}\limits_{U\in \cC_{h}'} \extenps{E} (A,\sk_{0}(W)_{U})
\end{equation} 
is an equivalence, where $\cC_{h}$ and $\cC_{h}'$ denote the coarse structures of $ \sk_{0}(W)_{h}$ and $\sk_{0}(W)_{h}'$. 
Since the metric space $\sk_{0}(W)$ (with metric induced from $W$) is uniformly discrete,
for every $U$  in $\cC_{h}$ or $\cC_{h}'$ there exists
$n$ in $\nat$ such that $U_{|c_{[0,n](\sk_{0}(W))}}$ is diagonal (see {\cref{def:oiwjgoirfgrefwerf}.\eqref{oiwjgoirfgrefwerf}} for the notation $c_{[0,n]}$).
By \cref{lem:projections},
we have an equivalence
\[ \extenps{E}(A, c_{[0,n]}(\sk_{0}(W)_{U})) \simeq \extenps{E}(A, \sk_{0}(W)_{U})\ .\]
We conclude that the diagrams under the colimits of  both sides of \eqref{qwfeqwefewdqwewdewdqde2} are essentially constant
 with value $\extenps{E}(A,  \sk_{0}(W)_{\diag(\sk_{0}(W))} )$.
Combining the equivalences \eqref{qwfeqwefewdqwewdewdqde1} and \eqref{svadscdscas}, we get the desired equivalence in \eqref{sadvsdvasdvadsv}.
\end{proof}

 \begin{rem}
 Recognising the coarse structure $\cC_{h}$ of $W_{h}$ as a hybrid structure (see \cref{rem:hybrid}) and using the same reduction steps as in \cref{rem:homotopy-theorem}, the claim that \eqref{eq:excision-1} is a pushout square is a consequence of an equivariant version of \cite[Thm.~5.22]{buen}, see also \cite[Sec.~5.5]{buen}.
 Again, we provided a self-contained argument for the reader's convenience.
\end{rem}

\section{Controlled objects as a symmetric monoidal functor}\label{sec:control-monoidal}

\cref{ergiooegergergwergergergw} provides a road map to the proofs of our main theorems.
Given a left-exact $\infty$-category with $G$-action $\bC$ and a finitary localising invariant $\Homol$ (see \cref{wtoigwgreerf}), we need to extend the functor  $\Homol\bC_G$ (see \cref{weoirgjwegwergwerg9}) from $G\Orb$ to a functor  $\Homol V$ defined on $G$-bornological coarse spaces as required in \cref{ergiooegergergwergergergw}, and we need to define a transfer class.
Since our constructions of transfer classes, which are discussed in \cref{sec:famenablegroups,qrgqiorgegegergwegre,sec:dfhj}, rely on \cref{rgiorgergegergergerg3232424}, the   functor $\Homol V$  
 should be a hyperexcisive equivariant coarse homology theory.
Such an extension  $\Homol V$ of $\Homol\bC_G$  in terms of controlled objects in $\bC$ has been constructed in \cite{unik}. We will recall this construction in \cref{sec:controlled-objects}.

The main goal of this section is to equip   $\Homol V$ 
with a suitable weak module structure.
After collecting some auxiliary results in \cref{sec:prelims}, we will show in \cref{sec:controlled-objects-monoidal} that the categories of controlled objects from \cite{unik} admit a lax symmetric monoidal refinement.
\cref{sec:orbits-and-fixed-points} then uses this lax symmetric monoidal structure to define the desired
weak module structure.

\subsection{Controlled objects in left-exact \texorpdfstring{$\infty$}{infinity}-categories}\label{sec:controlled-objects}

We start with reviewing
various classes of $\infty$-categories
which we will use in this section.

By  $\Cati$ we denote the large $\infty$-category of small $\infty$-categories. It contains  the large $\infty$-category
 $\Cle$ of small left-exact $\infty$-categories, i.e., pointed $\infty$-categories admitting finite limits, and finite limit-preserving functors.  The latter in turn contains  the full subcategory
$\Clep$ of idempotent {complete objects}.

{We denote by $\Prlp$ the very large, essentially {large} $\infty$-category of pointed, compactly generated presentable $\infty$-categories and left adjoint functors which preserve compact objects.
We regard $\Prlp$ as a subcategory of the very large $\infty$-category $\CATi$ of large $\infty$-categories by sending $\bC$ to $\bC^\op$.
Define $\CL$ as the image of this inclusion functor.
This means that $\CL$ is the very large, essentially {large} $\infty$-category of opposites of pointed, compactly generated presentable $\infty$-categories and right adjoint functors which preserve cocompact objects.
In particular, all objects of $\CL$ are complete.
Therefore, $\CL$ is contained in the very large $\infty$-category $\CATCplt$ of large, complete $\infty$-categories and limit-preserving functors.}

The pro-completion functor $\Pro_{\omega} \colon \Cle\to \CL$  restricts to an equivalence 
\[ \Pro_{\omega} \colon \Clep \xrightarrow{\simeq} \CL\ .\]
The inverse of the latter is given by the functor
\[ (-)^{\omega} \colon  \CL\to \Clep \]
which takes the subcategory of cocompact objects.
The composition
\[ \Idem := (-)^{\omega}\circ \Pro_{\omega}\colon \Cle\to \Clep \] is the idempotent completion functor.
 
 The following diagram provides a quick overview of the various $\infty$-categories  introduced above and the functors relating them:
 \begin{equation}\label{aidshviuasdvasdcascsadca}
 \xymatrix{\Clep\ar@{-->}[rr]^{\subset}\ar@{-->}[d]_{\simeq}&&\ar@/_.75cm/[ll]_{\Idem}\ar@{..>}[d]^{\subset}\ar[lld]_{\Pro_{\omega}}\Cle\ar@{..>}[r]&\ar@{..>}[d]^{\subset}\Cati\\
 \CL\ar@/_{-1.5cm}/[u]_{\simeq}^{(-)^{\omega}}\ar@{-->}[r]&\CATCplt\ar@{-->}[r]&\CLL \ar@{..>}[r]&\CATi} \ .
\end{equation}
   The solid part and the dotted parts commute separately, and the arrows labelled  with $\subset$ are fully faithful.

  We now recall the key definitions of \cite[Sec.~2]{unik}.

For a set $X$, let $\cP_X$ denote the power set of $X$.
We regard $\cP_X$ as a poset with respect to subset inclusions.
It gives rise to a functor  \begin{equation}\label{ascjiaosdcjaoisdcasdcadsc}
\cP^{*} \colon \Set^{\op}\to \Cat
\end{equation} which 
sends $X$ in $\Set$ to the poset $\cP_{X}$ and a map $f \colon X\to Y$ to the inverse image map $f^{-1} \colon \cP_Y \to \cP_X$.
We define the presheaf functor as  the composition 
 \begin{equation}\label{eq:psh}
 \PSh \colon \Set \times \CATi \xrightarrow{\cP^{*,\op}\times \id} {\Cati^{\op}}\times \CATi {\xrightarrow{(-)^\op \times \id} \Cati^\op \times \CATi}
 \xrightarrow{\Fun} \CATi \ ,
\end{equation}
where we regard $\cP^*$ as $\Cati$-valued via the nerve functor
(which we will usually drop from notation).
This functor sends $X$ in $\Set$ and $\bC$ in $\CATi$ to
\[ \PSh_{\bC}(X):=\Fun(\cP^{\op}_{X},\bC) \]
in $\CATi$.
The category $\bC$ will be called the coefficient category.
By functoriality, we obtain an induced functor
 \begin{equation}\label{eq:Gpsh}
 \PSh \colon G\Set \times \Fun(BG,\CATi) \to \Fun(BG,\CATi)\ .
 \end{equation}
The  forgetful functor $u \colon G\Coarse \to G\Set$ and the inclusion
\[ i \colon \CLLL\to \CATi \] 
induce the functor 
\begin{equation}\label{qewfpowjfkqwepfqewfqewf}
 u \times i \colon G\Coarse\times \Fun(BG,\CLLL)\to G\Set\times \Fun(BG,\CATi)\ .
\end{equation}
Precomposing $\PSh$  from \eqref{eq:Gpsh} with this functor, {the resulting functor factors through the functor}
  \begin{equation}\label{qewfpowjfkqwepfqewfqewf112}\PSh\circ (u\times i)
 \colon G\Coarse\times \Fun(BG,\CLLL)\to   \Fun(BG,\CLLL)\ .
\end{equation}
We will again use the notation $\PSh_{\bC}(X)$ for the value of $\PSh\circ (u\times i)$ on $(X,\bC)$ in $ G\Coarse\times \Fun(BG,\CLLL)$.
 
 Let $X$ be in $G\Set$.
 Given an entourage $U$ on the set $X$, a subset $B$ of $X$ is called $U$-bounded if $B \times B \subseteq U$. We denote by $\cP^{U\bd}_X$ the subposet of $\cP_X$ consisting of all $U$-bounded subsets of $X$.
Let $\bC$ be in $\CLLL$.
A presheaf $M$ in $\PSh_{\bC}(X)$ is called a $U$-sheaf if $M(\emptyset) \simeq 0$ and the commutative diagram
\begin{equation}\label{ervervewevevwevwee}
\xymatrix@C=4em@R=1.5em{
 \cP^{U\bd,\op}_X\ar[r]^-{M_{|\cP^{U\bd,\op}_X}}\ar[d] & \bC \\
 \cP_X^\op\ar[ur]_-{M}
}
\end{equation} 
exhibits $M$ as a right Kan extension of its restriction to $\cP^{U\bd,\op}_X$ \cite[proof of Lem.~2.4]{unik}.

Let now $X$ be in $G\Coarse$. Then consider the full subcategory $\Sh_\bC(X)$ of $\PSh_\bC(X)$ spanned by the presheaves which are $U$-sheaves for some coarse entourage $U$ of $X$.
  By \cite[Cor.~2.10 \& 2.12]{unik}, the collection of  subcategories of sheaves forms a full subfunctor
\[ \Sh \colon G\Coarse \times \Fun(BG,\CATCplt) \to \Fun(BG,\CLL) \]
of $\PSh \circ (u \times i)$. 
Note that the subcategories $\Sh_{\bC}(X)$ of $\PSh_{\bC}(X)$ are only closed under finite limits in general.

In the following we recall some constructions from \cite[Sec.~2.5]{unik}.
In {{\em loc. cit.},} the coeffcients were assumed to belong to $\CL$, but    everything needed in the following extends verbatim to coefficients in $\CLLL$.
For the moment we fix $X$ in $G\Coarse$ and $\bC$ in $\CLLL$.
Let $U$ be a $G$-invariant entourage of $X$ which contains the diagonal.
The $U$-thinning functor
\begin{equation}\label{eq:thin}
 U(-) \colon \cP_X \to \cP_X,\quad Y \mapsto U(Y) := \{ x \in X \mid U[\{x\}] \subseteq Y \}
\end{equation}
is right adjoint to the $U$-thickening $U[-]$ defined in \eqref{qwefoiheiuohfiqwefewfewfqeefedq}.
For a $V$-sheaf $M$, the composite $M \circ U(-)$ is a $U^{-1}VU$-sheaf by \cite[Cor.~2.11]{unik}, so $U$-thinning
induces an endofunctor
\[ U_* \colon \Sh_\bC(X) \to \Sh_\bC(X) \ , \quad M(-)\mapsto  M\circ U(-)\ .\]
Since $U(Y) \subseteq Y$ for every subset $Y$ of $X$, there exists an induced natural transformation $\theta^U \colon \id \to U_*$.

We consider the collection of morphisms
\begin{equation}\label{eq:wxc}
 W_{X,\bC} := \{ M \xrightarrow{\theta^U_M} U_*M \mid U \in \cC^{G,\Delta}_X,\ M \in \Sh_\bC(X) \}
\end{equation}
in $ \Sh_\bC(X)$, where $\cC_{X}^{G,\Delta}:=\{U\in \cC_{X}^{G}\mid \diag(X)\subseteq U \}$.
As shown in  \cite[Prop.~2.76(1)]{unik}, the Dwyer--Kan localisation
\[ \widehat\bV_\bC(X) := \Sh_\bC(X)[W_{X,\bC}^{-1}] \]
is also the localisation of $\Sh_\bC(X)$ at $W_{X,\bC}$ in $\CLL$.
In other words, the canonical functor $\Sh_{\bC}(X)\to \widehat{\bV}_{\bC}(X)$ satisfies a universal property in $\CLL$ which is the analogue of the universal property of the Dwyer--Kan localisation in $\CATi$. 
By \cite[Lem.~2.81 \& 2.83]{unik}, this construction gives rise to a functor \begin{equation}\label{wergwregvfvfdsv}
\widehat\bV \colon G\Coarse \times \Fun(BG,\CLLL) \to \Fun(BG,\CLL)\ .
\end{equation}
Later,  we will also need a similar localisation construction for the presheaf functor in \eqref{eq:Gpsh} which we describe next.  
For $X$ in $G\Coarse$ and $\bC$ in $\Fun(BG,\CATi)$, both the functor $U_*$ and the natural transformation $\theta^U$ are also defined on $\PSh_\bC( X)$ in $\Fun(BG,\CATi)$. 
 So we may consider the collection of morphisms
\begin{equation}\label{eq:wxtilde}
 \wt W_{X,\bC} := \{ M \xrightarrow{\theta^U_M} U_*M \mid U \in \cC^{G,\Delta}_X,\ M \in \PSh_\bC(X) \}
\end{equation}
 in  $\PSh_\bC(X)$.
The Dwyer--Kan localisations
\[ \wt\bV_\bC(X) := \PSh_\bC(X)[\wt W^{-1}_{X,\bC}] \]
for all $X$ in $G\Coarse$ and $\bC$ in $\Fun(BG,\CATi)$ assemble to a functor
\begin{equation}\label{qwefijfoiwqefqwefewfq}
 \wt\bV \colon G\Coarse \times \Fun(BG,\CATi) \to \Fun(BG,\CATi)
\end{equation}
since the proof of \cite[Lem.~2.81]{unik}
extends verbatim to the present case,
replacing the left-exact localisation with the Dwyer--Kan localisation.
Using \cite[Prop.~2.76 (1)]{unik} and \cite[Lem.~2.81]{unik} with the left-exact localisation,
we see  that if we restrict the coefficients along the inclusion $i\colon\CLLL\to \CATi$, then  we get a functor \begin{equation}\label{wrfwffwefwefwefwefweff}
\wt \bV\circ (\id\times i):G\Coarse\times \Fun(BG,\CLLL)\to \Fun(BG,\CLL)\ .
\end{equation} 
The natural transformation $\Sh \to \PSh \circ (u \times i)$ induces a natural transformation $ \widehat\bV \to \wt\bV  \circ(\id\times i)$.
Let $X$ be in $G\Coarse$ and $\bC$ be in $\Fun(BG,\CATCplt)$. \begin{lem}\label{lem:sheaf-vs-presheaf}
	The canonical functor $\widehat\bV_\bC(X) \to \wt\bV_\bC(X)$ is fully faithful.
\end{lem}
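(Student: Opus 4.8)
The plan is to deduce the statement from the (trivial) fact that $\Sh_\bC(X)$ is a \emph{full} subcategory of $\PSh_\bC(X)$, by means of an explicit description of the mapping spaces in the two Dwyer--Kan localisations $\widehat\bV_\bC(X) = \Sh_\bC(X)[W_{X,\bC}^{-1}]$ and $\wt\bV_\bC(X) = \PSh_\bC(X)[\wt W_{X,\bC}^{-1}]$. Note first that $\wt W_{X,\bC}$ restricts along the inclusion $\Sh_\bC(X)\hookrightarrow\PSh_\bC(X)$ precisely to $W_{X,\bC}$, so the functor under consideration is the one induced on localisations by this inclusion; hence it suffices to control its effect on mapping spaces.

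Next I would record that, for fixed $X$ in $G\Coarse$, the collection $W_{X,\bC}$ consists exactly of the components of the natural transformations $\theta^U\colon\id\to U_*$ as $U$ ranges over the filtered poset $\cC^{G,\Delta}_X$, that $U_*V_* \simeq (V\circ U)_*$ and $U_* N\to V_* N$ for $U\subseteq V$, and that this collection admits a calculus of left fractions: given $\theta^U_N\colon N\to U_* N$ in $W_{X,\bC}$ and an arbitrary $N\to M$ in $\Sh_\bC(X)$, the square built from $\theta^U_M\colon M\to U_* M$ and $U_*(N\to M)\colon U_* N\to U_* M$ commutes by naturality of $\theta^U$, and the remaining coherence conditions follow in the same manner; this is essentially the content underlying the proof of \cite[Prop.~2.76]{unik}. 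Since the $\theta^U_N$ (together with their composites) form a cofinal family indexed by $\cC^{G,\Delta}_X$ in the category of $W_{X,\bC}$-morphisms out of $N$, this yields a natural equivalence
\[ \Map_{\widehat\bV_\bC(X)}(M,N)\;\simeq\;\mathop{\colim}\limits_{U\in\cC^{G,\Delta}_X}\Map_{\Sh_\bC(X)}(M,U_* N)\ , \]
and the analogous argument with the sheaf condition dropped (valid since, by the excerpt, the proof of \cite[Lem.~2.81]{unik} extends verbatim) gives
\[ \Map_{\wt\bV_\bC(X)}(M,N)\;\simeq\;\mathop{\colim}\limits_{U\in\cC^{G,\Delta}_X}\Map_{\PSh_\bC(X)}(M,U_* N)\ . \]
Because the endofunctor $U_*$ of $\PSh_\bC(X)$ restricts to the endofunctor $U_*$ of $\Sh_\bC(X)$, these two presentations are compatible with the inclusion $\Sh_\bC(X)\hookrightarrow\PSh_\bC(X)$.

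It then remains to observe that, for $M,N$ in $\Sh_\bC(X)$, the map induced by $\widehat\bV_\bC(X)\to\wt\bV_\bC(X)$ on mapping spaces is, under the two equivalences above, the colimit over $U\in\cC^{G,\Delta}_X$ of the comparison maps $\Map_{\Sh_\bC(X)}(M,U_* N)\to\Map_{\PSh_\bC(X)}(M,U_* N)$. By \cite[Cor.~2.29]{unik} each $U_* N$ again lies in $\Sh_\bC(X)$, and $\Sh_\bC(X)$ is full in $\PSh_\bC(X)$, so each comparison map is an equivalence; hence so is their colimit, and $\widehat\bV_\bC(X)\to\wt\bV_\bC(X)$ is fully faithful. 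The argument is natural in $X$ and $\bC$ and is in particular $G$-equivariant.

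The main obstacle is the second paragraph: making rigorous the mapping-space formula for the localisation, i.e.\ checking that $\{\theta^U\}_{U\in\cC^{G,\Delta}_X}$ really does satisfy all the conditions of a calculus of left fractions (not just the Ore square), and that the resulting formula also computes mapping spaces of the localisation formed in $\CLL$ rather than merely in $\Cati$ --- this is where one must lean on \cite[Prop.~2.76(1)]{unik}. Everything after that is formal and uses only the fullness of $\Sh_\bC(X)\subseteq\PSh_\bC(X)$ together with the stability of $\Sh_\bC(X)$ under the thinning functors $U_*$.
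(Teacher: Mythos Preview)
Your proposal is correct and follows essentially the same route as the paper: both arguments compute the mapping spaces in $\widehat\bV_\bC(X)$ and $\wt\bV_\bC(X)$ via the calculus-of-left-fractions formula $\colim_{U}\Map(M,U_*N)$ (invoking \cite[Prop.~2.76]{unik} and its presheaf analogue), and then conclude from the fullness of $\Sh_\bC(X)\subseteq\PSh_\bC(X)$. Your write-up is in fact slightly more explicit than the paper's, spelling out why $U_*N$ remains a sheaf and why the comparison of colimits is termwise an equivalence.
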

\begin{proof}
Let $\wt\ell_{X} \colon \PSh_{\bC}(X)\to \wt \bV_{\bC}(X)$ denote the localisation functor.
	By the same argument as for the localisation $\ell_{X} \colon \Sh_{\bC}(X)\to \widehat \bV_{\bC}(X)$ in \cite[Prop.~2.76]{unik}, one shows that there are natural equivalences
	\[ \colim_{U\in \cC_{X}^{\Delta}} \Map_{\PSh_{\bC}(X)}(M,U_{*}N) \simeq \Map_{\wt\bV_{\bC}(X)}(\wt\ell_{X}(M),\wt\ell_{X}(N)) \]
	for all $M,N$ be in $\PSh_{\bC}(X)$.
	The mapping spaces in $\widehat{\bV}_\bC(X)$ can be computed by the same formula \cite[Prop.~2.76]{unik}.
	Since $\Sh_\bC(X)$ is a full subcategory of $\PSh_\bC(X)$, the lemma follows.
\end{proof}

\begin{kor}\label{eporgjwepogrwegregwer}
The natural transformation $ \widehat\bV \to \wt\bV \circ (\id\times i)$ is the inclusion of a full subfunctor.
\end{kor}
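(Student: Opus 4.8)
The plan is to deduce \cref{eporgjwepogrwegregwer} formally from \cref{lem:sheaf-vs-presheaf}; no new coarse-geometric or categorical input is needed. Recall that the natural transformation $\widehat\bV \to \wt\bV\circ(\id\times i)$ has already been produced, by applying the localisation constructions to the inclusion of the full subfunctor $\Sh \to \PSh\circ(u\times i)$. In particular its naturality in both the $G$-coarse space and the coefficient category is not in question, so the only thing left to verify is that each of its components is the inclusion of a full subcategory (carrying the evident $G$-action).

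First I would fix $X$ in $G\Coarse$ and $\bC$ in $\Fun(BG,\CLLL)$ and look at the component $\widehat\bV_\bC(X) \to \wt\bV_\bC(X)$, a morphism in $\Fun(BG,\CLL)$. Since fully faithfulness of a functor is detected on mapping spaces, which do not see the $G$-action, such a morphism is fully faithful if and only if its underlying functor of $\infty$-categories is; and this underlying functor is fully faithful by \cref{lem:sheaf-vs-presheaf}. Hence every component of $\widehat\bV \to \wt\bV\circ(\id\times i)$ is fully faithful.

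It then remains only to repackage ``all components fully faithful'' as ``inclusion of a full subfunctor''. For each pair $(X,\bC)$ let $\bV'_\bC(X) \subseteq \wt\bV_\bC(X)$ be the full subcategory on the essential image of $\widehat\bV_\bC(X)$, equipped with the restricted $G$-action. Naturality of $\widehat\bV \to \wt\bV\circ(\id\times i)$ forces every transition functor of $\wt\bV\circ(\id\times i)$ to carry $\bV'$ into $\bV'$, so $\bV'$ is a full subfunctor of $\wt\bV\circ(\id\times i)$; and the factorisation $\widehat\bV \to \bV' \hookrightarrow \wt\bV\circ(\id\times i)$ has first arrow essentially surjective by construction and fully faithful by the previous step, hence a natural equivalence. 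This is precisely the assertion. The only point requiring care --- and it is a bookkeeping matter rather than a genuine obstacle --- is to make sure throughout that the transformation being analysed is literally the one induced from $\Sh \to \PSh\circ(u\times i)$, so that this naturality, and with it the subfunctor structure on $\bV'$, comes for free from the constructions of \cref{sec:controlled-objects}.
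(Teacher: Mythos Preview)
Your proposal is correct and matches the paper's approach: the paper presents the corollary without explicit proof, treating it as an immediate consequence of \cref{lem:sheaf-vs-presheaf}, and what you have written simply spells out the formal details (checking fully faithfulness componentwise on underlying $\infty$-categories, then packaging the essential images into a full subfunctor) that the paper leaves implicit.
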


For the discussion of small sheaves we further restrict the category of  coefficient categories to the subcategory $\CL$  of $\CATCplt$. In fact, in order to ensure functorality of the constructions below   with respect to the coefficient categories it is crucial  that the functors in $\CL$ preserve cocompact objects.

Let $X$ be {in $G\BC$} and $\bC$ be in $\Fun(BG,\CL)$.
We can evaluate $\Sh_{\bC}$ on the underlying coarse space of $X$. We then use the bornology $\cB_{X}$ of $X$ in order to define a full subcategory of sheaves by adding the condition that the values
on bounded subsets are cocompact.
Recall that an object is called cocompact if it is compact in the opposite category.

An object $M$ in $\Sh_\bC(X)$ is called small if the evaluation $M(B)$ is cocompact 
in $\bC$ for every  $B$ in $\cB_{X}$.
The full subcategory $\Sh^\smalll_\bC(X)$ of small sheaves is an object of $\Cle$ \cite[Lem.~2.48]{unik}.
We obtain by \cite[Lem.~2.49]{unik} a full subfunctor
\[ \Sh^\smalll \colon G\BC \times \Fun(BG,\CL) \to \Fun(BG,\Cle) \]
of $\Sh\circ (v\times j)$, where $v \colon G\BC\to G\Coarse$ is the forgetful functor and
\[ j \colon \CL\to \CLLL \]
is the inclusion.
Localising at $W_{X,\bC} \cap \Sh^\smalll_\bC(X)$, we get the left-exact category with $G$-action
\[ \bV_{\bC}(X):= \Sh^\smalll_{\bC}(X)[(W_{X,\bC} \cap \Sh^\smalll_\bC(X))^{-1}] \]
in $\Fun(BG,\Cle)$.
By \cite[Cor.~2.79, 2.82 \& 2.84]{unik}, the collection of these categories for all $X$ and $\bC$ forms a full subfunctor
\begin{equation}\label{afsaojqoifgafvasdva}
\bV \colon G\BC \times \Fun(BG,\CL) \to \Fun(BG,\Cle)
\end{equation}
of $\widehat\bV \circ (v\times j)$.

Let $G\BC^\mbn$ be the full subcategory of $G\BC$ spanned by the $G$-bornological coarse spaces with minimal bornology, i.e.~with the bornology given by the collection of finite subsets.
Then the continuous version $\bV^c$ of the functor $\bV$ is defined as the
left Kan extension of $\bV$ along the vertical inclusion functor in
\begin{equation}\label{fdsvkvsdfpvfvsfdvsvfdvsv}
\xymatrix{
 G\BC^\mbn \times \Fun(BG,\CL)\ar[r]^-{\bV}\ar[d] & \Fun(BG,\Cle) \\
 G\BC \times \Fun(BG,\CL)\ar[ur]_-{\bV^c}
}
\end{equation} 

\begin{rem}\label{weigjwoerfewv}
A subset $F$ of $X$ is called locally finite if its induced bornology is the minimal one, or equivalently, if $F$ with the induced bornological coarse structure belongs to $G\BC^\mbn$.
By \cite[proof of Lem.~4.22]{unik}, $\bV^c_\bC(X)$ is the full subcategory of $\bV_\bC(X)$ on objects $i_*M$, where $i \colon F \to X$ is the inclusion of a locally finite subset and $M$ is an object in $\bV_\bC(F)$.
\end{rem}
We define $\bV^{c,\perf}$ as the composition
\begin{align}\label{eoigjeoefvdsfsvdfv}
 \bV^{c,\perf} \colon G\BC \times \Fun(BG,\CL) &\xrightarrow{\bV^c} \Fun(BG,\Cle)
 \xrightarrow{\Idem} \Fun(BG,\Clep)\ .
\end{align}

\begin{ddd}\label{def:VcperfG}
 We define the functors 
 \[ \bV^{c,\perf,G} := \lim_{BG} \circ \bV^{c,\perf}\quad \text{and}\quad \bV^{c,\perf}_G := \mathop{\colim}\limits_{BG} \circ \bV^{c,\perf} \]
 from $\Fun(BG,\CL) \times G\BC$ to $\Clep$.
\end{ddd}

If $\Homol$ is a finitary localising invariant (see \cref{wtoigwgreerf}), then we know by \cite[Cor.~6.18]{unik} that
 $\Homol\bV^{c,\perf}_{\bC,G}$ is a hyperexcisive equivariant coarse homology theory. 
 By \cite[Prop.~5.2]{unik}, it extends $\Homol\bC_G$.
More precisely, note that in \cref{def:VcperfG} of
$\bV^{c,\perf}_G$, one can switch the order of applying  the colimit functor and the idempotent completion functor.
Then one can use the cited results from $\cite{unik}$ since the natural transformation $\Homol \to \Homol\circ \Idem$ is an equivalence.  

The result we will use in \cref{sec:famenablegroups,qrgqiorgegegergwegre,sec:dfhj} is the following.
Recall the notion of a weak module structure from \cref{giooergrefwerfwrevwerfv}.
Let $\bC$ be a an object in $\Fun(BG,\CL)$.
As before, we write $\bV^{c,\perf,G}_{\Spc^\op_*}$ for the evaluation of $\bV^{c,\perf,G}$ at the left-exact $\infty$-category $\Spc^\op_*$ considered as an object in $\Fun(BG,\CL)$ with trivial $G$-action.
Similarly, we write $\bV^{c,\perf}_{\bC,G}$ for the evaluation of $\bV^{c,\perf}_{G}$ at $\bC$.

\begin{theorem}\label{thm:fix-orbit-wmodule}\ 
 \begin{enumerate}
  \item\label{it:fix-orbit-wmodule1} The functor $\bV^{c,\perf,G}_{\Spc^\op_*}$ is $\pi_0$-excisive.  
  \item\label{it:fix-orbit-wmodule2} There is an equivalence $\bV^{c,\perf,G}_{\Spc^\op_*}(*) \simeq \Fun(BG,\Spc^{\op,\omega}_*)$.
  \item\label{it:fix-orbit-wmodule3} The functor $\bV^{c,\perf}_{\bC,G}$ admits a weak $\bV^{c,\perf,G}_{\Spc^\op_*}$-module structure $(\eta,\mu)$. 
  \item\label{it:fix-orbit-wmodule4} Under the identification from Assertion~\ref{it:fix-orbit-wmodule2}, the morphism $\eta \colon \Spc^{\op,\omega}_* \to \bV^{c,\perf,G}_{\Spc^\op_*}(*)$  in Assertion~\ref{it:fix-orbit-wmodule3} corresponds to the unique left-exact functor $\Spc^{\op,\omega}_* \to \Fun(BG,\Spc^{\op,\omega}_*)$ sending $S^0$ to $\underline{S^0}$, the pointed space  $S^0$ with the trivial $G$-action.
  \end{enumerate}
\end{theorem}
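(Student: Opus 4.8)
The plan is to establish the four assertions in order, using the constructions from \cref{sec:controlled-objects} as a black box and reducing everything to concrete computations with sheaves on $G$-sets with minimal bornology.

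\emph{Assertion \ref{it:fix-orbit-wmodule2}.}
First I would compute $\bV^{c,\perf}_{\Spc^\op_*}(*)$. The one-point space $*$ has minimal bornology, so $\bV^c$ agrees with $\bV$ on it, and $\cP_{*}$ is the poset $\{\emptyset \subseteq *\}$. A sheaf is then just an object of $\Spc^\op_*$ placed at $*$ with the zero object at $\emptyset$; there are no non-trivial coarse entourages, so the localisation at $W_{*,\bC}$ is trivial. Hence $\bV^c_{\Spc^\op_*}(*) \simeq \Fun(\ast,\Spc^\op_*) = \Spc^\op_*$ as a category with trivial $G$-action, and after applying $\Idem$ and $\lim_{BG}$ we get $\bV^{c,\perf,G}_{\Spc^\op_*}(*) \simeq \Fun(BG, \Idem(\Spc^\op_*)) = \Fun(BG,\Spc^{\op,\omega}_*)$, where the last identification uses that $\Spc^{\op,\omega}_* = (\Spc^{\cp}_*)^\op$ is already idempotent complete and that $\Idem$ commutes with $\Fun(BG,-)$ (limits) on the relevant subcategory. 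This proves \ref{it:fix-orbit-wmodule2}.

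\emph{Assertion \ref{it:fix-orbit-wmodule1}.}
For $\pi_0$-excisiveness, let $X = Y \sqcup Z$ be a partition into coarsely disjoint invariant subsets in $G\BC$. On $\Spc^\op_*$-valued sheaves, coarse disjointness forces any sheaf to split as a product (equivalently, a sum in this semiadditive setting, once we pass to the idempotent completion of a stable-like category) of its restrictions to $Y$ and $Z$; this is the sheaf-theoretic statement behind $\pi_0$-excision and is already contained in the results of \cite{unik} that $\Homol\bV^{c,\perf}_{\bC,G}$ is a coarse homology theory applied with $\Homol$ the identity-like functor — but since here we want the statement before applying $\Homol$, I would instead argue directly: a $U$-sheaf on $X$ for a coarse entourage $U$ contained in $(Y\times Y)\cup(Z\times Z)$ is the same as a pair of sheaves on $Y$ and $Z$, and this is compatible with the bornology, the $c$-truncation, $\Idem$, and $\lim_{BG}$. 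Hence $\bV^{c,\perf,G}_{\Spc^\op_*}(Y)\oplus\bV^{c,\perf,G}_{\Spc^\op_*}(Z) \xrightarrow{\simeq} \bV^{c,\perf,G}_{\Spc^\op_*}(X)$.

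\emph{Assertions \ref{it:fix-orbit-wmodule3} and \ref{it:fix-orbit-wmodule4}.}
These are the substantive part and will be deduced from the lax symmetric monoidal refinement of $\bV^{c,\perf}$ constructed in \cref{sec:controlled-objects-monoidal} and the module structure of \cref{sec:orbits-and-fixed-points}. Concretely: the lax symmetric monoidal structure on $\bV^{c,\perf}$ over $G\BC$ (with $\otimes$) makes $\bV^{c,\perf}_{\ast} = \bV^{c,\perf}_{\Spc^\op_*}$ an algebra, and $\bV^{c,\perf}_{\bC}$ a module over the pullback of this algebra along $X \mapsto X \otimes \ast \cong X$; applying $\lim_{BG}$ and $\colim_{BG}$ respectively, and using that $\lim_{BG}$ is lax monoidal while $\colim_{BG}$ of a module over a constant-at-$\ast$ algebra is a module over $\lim_{BG}$ of that algebra, produces the natural transformation $\mu$ and the unitality diagram \eqref{trfsdbqewzfgzkwuqefwear} required by \cref{giooergrefwerfwrevwerfv}. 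The unit $\eta \colon \Spc^{\op,\omega}_* \to \bV^{c,\perf,G}_{\Spc^\op_*}(*)$ is then the unit of this algebra object as in \cref{fbiowbervervewbv}; tracing through the identification of \ref{it:fix-orbit-wmodule2}, the tensor unit $\Spc^{\op,\omega}_*$ of $\Clep$ is sent to the constant diagram on its generating object $S^0$, i.e.\ to $\underline{S^0}$ with trivial $G$-action, which is \ref{it:fix-orbit-wmodule4}.

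\emph{Main obstacle.}
The genuinely hard step is \ref{it:fix-orbit-wmodule3}: producing the weak module structure requires the lax symmetric monoidal structure on the functor $\bV^{c,\perf} \colon G\BC \times \Fun(BG,\CL) \to \Fun(BG,\Clep)$, including verifying compatibility of the monoidal structure with the three successive constructions (passage to small sheaves, continuous refinement $\bV \mapsto \bV^c$ via left Kan extension, and idempotent completion $\Idem$) and with taking (co)limits over $BG$. Each of these is checked in \cref{sec:prelims,sec:controlled-objects-monoidal,sec:orbits-and-fixed-points}, so here the work is to assemble those inputs and verify that the unitality square \eqref{trfsdbqewzfgzkwuqefwear} — not the full coherence of a strict module — follows, which is exactly why \cref{giooergrefwerfwrevwerfv} only demands a \emph{weak} module.
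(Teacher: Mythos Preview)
Your proposal is correct and follows essentially the same route as the paper: Assertions~\ref{it:fix-orbit-wmodule1} and~\ref{it:fix-orbit-wmodule2} are handled by direct computation (the paper defers~\ref{it:fix-orbit-wmodule1} to \cref{lem:fixed-props}, which is exactly your sheaf-splitting argument combined with additivity of $\Idem$ and $\lim_{BG}$), while Assertions~\ref{it:fix-orbit-wmodule3} and~\ref{it:fix-orbit-wmodule4} are extracted from the lax symmetric monoidal refinement of $\bV^{c,\perf}$ built in \cref{sec:controlled-objects-monoidal,sec:orbits-and-fixed-points}. The one point you state a bit loosely---``$\colim_{BG}$ of a module over a constant-at-$*$ algebra is a module over $\lim_{BG}$ of that algebra''---is precisely the content of \cref{const:fix-orbit} (restrict along the counit $\underline{A^G}\to A$, then take $\colim_{BG}$), and the paper packages this together with the Day convolution manipulations of \cref{lem:dayconvolution-vs-ptwise,lem:modules-dayconvolution} into \cref{prop:fix-orbit-module}; once that is in place, the unitality square~\eqref{trfsdbqewzfgzkwuqefwear} drops out exactly as you indicate.
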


Any reader who is willing to accept \cref{thm:fix-orbit-wmodule} on good faith may directly skip ahead to \cref{sec:controlledCWs}.

We will derive \cref{thm:fix-orbit-wmodule} from a highly structured result.
Both $\widehat\bV$ and $\bV^c$ refine to lax symmetric monoidal functors (\cref{prop:loc-monoid,prop:vc-monoid}).
The theorem will then follow from the general observation that the $G$-orbits of a module with $G$-action become a module over the $G$-fixed points of its coefficient algebra (see \cref{const:fix-orbit}).
Proving these assertions is the goal of the subsequent sections.

\subsection{Monoidal preliminaries}\label{sec:prelims}
  
In analogy to \eqref{aidshviuasdvasdcascsadca}, we have a diagram
    \begin{equation}\label{aidshviuasdvasdcascsadca1}
 \xymatrix{\Crp\ar@{-->}[rr]^{\subset}\ar@{-->}[d]_{\simeq}&&\ar@{..>}[d]^{\subset}\ar@/^-.75cm/[ll]_{\Idem}\ar[lld]_{\Ind_{\omega}}\Cre\ar@{..>}[r]&\ar@{..>}[d]^{\subset}\Cati\\
 \Prlp\ar@/_{-1.5cm}/[u]_{\simeq}^{(-)^{\omega}}\ar@{-->}[r]&\CATCocplt\ar@{-->}[r]&\CRR \ar@{..>}[r]&\CATi} \ .
\end{equation}
It can be obtained from \eqref{aidshviuasdvasdcascsadca} by applying $(-)^{\op}$.
In the following we describe the entries. By $\Cre$ we denote the large $\infty$-category  of right-exact $\infty$-categories (i.e., small, pointed $\infty$-categories admitting finite colimits), and finite colimit-preserving functors.
It contains the full subcategory $\Crp$ of idempotent complete right-exact $\infty$-categories.

The ind-completion functor 
$\Ind_{\omega} \colon \Cre\to \Prlp$ restricts to an equivalence
\[ \Ind_{\omega} \colon \Crp \xrightarrow{\simeq} \Prlp\ .\]
Its inverse   
\[ (-)^{\cp} \colon \Prlp \to \Crp \]
is the functor which takes the full subcategory of compact objects.
By $\CRRR$ we denote the very large $\infty$-category of large pointed cocomplete $\infty$-categories and colimit-preserving functors. 
It is contained in the $\infty$-category $\CRR$ of
large right-exact $\infty$-categories and finite colimit-preserving functors.
Finally, the right-exact version of the  idempotent completion functor is the composition
\[ \Idem:=(-)^{\cp}\circ \Ind_{\omega}\ .\]
We now recall the symmetric monoidal structures on $\Cle$, $\Clep$ and $\CL$.
We will actually first discuss the right-exact case for which there is a good supply of references. Then we  translate to the left-exact case by applying the functor $(-)^{\op}$.

In what follows, we regard $\Cati$ as a symmetric monoidal $\infty$-category with respect to the cartesian symmetric monoidal structure.  
We let $\Catrex$ denote the subcategory of $\Cati$ of small $\infty$-categories which admit all finite colimits (we will also say finitely cocomplete), and finite colimit-preserving functors.
Applying \cite[Cor.~4.8.1.4]{HA} to the collection of finite simplicial sets, we obtain a symmetric monoidal structure $\otimes$ on $\Catrex$ such that the forgetful functor $\Catrex \to \Cati$ is lax symmetric monoidal. 
It follows from the definition of the symmetric monoidal structure  on $\Catrex$  in \cite[Not.~4.8.1.2]{HA} that the structure map
\[ \cC \times \cD \to \cC \otimes \cD \]
of the lax symmetric monoidal structure on the functor $\Catrex \to \Cati$ is the initial transformation among functors which preserve finite colimits in both variables separately.

Applying \cite[Rem.~4.8.1.9]{HA}, defining a lax symmetric monoidal functor $\bM \to \Catrex$ is equivalent to giving a lax symmetric monoidal functor $\bM \to \Cati$ which takes values in $\Catrex$ and has the property that all structure maps preserve finite colimits in each variable separately. 

Let $\Spc^\fin$ denote the smallest full subcategory of $\Spc$ which contains the final object and is closed under finite colimits.
This category enjoys the universal property that evaluation at the final object induces an equivalence \begin{equation}\label{qefqoijqiowejfqwefeqwfqw}
\Fun^\rex(\Spc^\fin,\bC) \simeq \bC
\end{equation}  for every finitely cocomplete $\infty$-category $\bC$ \cite[Rem.~1.4.2.6]{HA}.
Using the exponential law
\[ \Fun(\bC\times \Spc^{\fin},\bD)\simeq \Fun(\bC,\Fun(\Spc^{\fin},\bD)) \]
and the universal property of the tensor product in $\Catrex$ for the first equivalence below,
this implies for all $\bC$, $\bD$ in $\Catrex$ that \begin{equation}\label{ergweogjoewprgerwgewgwerg}
 \Fun^\rex(\bC \otimes \Spc^\fin, \bD) \simeq \Fun^\rex(\bC, \Fun^\rex(\Spc^\fin,\bD)) \simeq \Fun^\rex(\bC,\bD)\ .
\end{equation}
In particular, $\Spc^\fin$ is a tensor  unit   in $\Catrex$.

In the following we extend the above to pointed categories. Note that 
$\Cre$ is a full subcategory  of $\Catrex$.
Following \cite[Constr.~5.1.1]{TheNineI}, we  show: 
\begin{lem}\label{roighjioerwgerwrwegwerg} There is an adjunction
\[ - \otimes \Spc^{\fin}_{*} \colon  \Catrex\leftrightarrows \Cre \cocolon \incl\ .\]  Furthermore, $\Cre$ has a symmetric monoidal structure such that the functor  $-  \otimes \Spc^{\fin}_{*}$ has a symmetric monoidal refinement, and the functor $\incl$ has a lax symmetric monoidal refinement. 
\end{lem}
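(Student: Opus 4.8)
The plan is to mimic the construction of the pointed stabilisation adjunction $\Cat_\infty \leftrightarrows \Cat_{\infty,*}$ (Lurie, \cite[\S4.8.2]{HA}), adapted to the finitely cocomplete setting, and then to transport the symmetric monoidal structure along this adjunction using Lurie's relative tensor product / localisation machinery. First I would observe that $\Spc^{\fin}_*$ is the pointed analogue of $\Spc^{\fin}$: it is the smallest full subcategory of $\Spc_*$ containing $S^0$ and closed under finite colimits, and evaluation at $S^0$ induces an equivalence $\Fun^{\rex}(\Spc^{\fin}_*, \bC) \simeq \bC$ for every $\bC$ in $\Cre$, exactly as in \eqref{qefqoijqiowejfqwefeqwfqw}. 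The inclusion $\incl \colon \Cre \to \Catrex$ has a left adjoint; on objects it sends $\bD$ to $\bD \otimes \Spc^{\fin}_*$, which one identifies with the category of pointed objects $\bD_* = \bD_{\bD/}$ (equivalently, the universal pointed category receiving a finite-colimit-preserving functor from $\bD$). The adjunction identity
\[ \Fun^{\rex}(\bD \otimes \Spc^{\fin}_*, \bC) \simeq \Fun^{\rex}(\bD, \bC) \]
for $\bC$ in $\Cre$ follows by the same exponential-law computation as in \eqref{ergweogjoewprgerwgewgwerg}, now using the pointed universal property in place of the unpointed one.

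Next I would produce the symmetric monoidal structure on $\Cre$. The cleanest route is to present $\Cre$ as the localisation of $\Catrex$ at the set of maps that are inverted by $- \otimes \Spc^{\fin}_*$, i.e.\ at the single morphism (or small set of morphisms) whose local objects are precisely the pointed finitely cocomplete categories; then invoke \cite[Prop.~4.1.7.4]{HA} (compatibility of a localisation with a symmetric monoidal structure) to conclude that the localisation $\Catrex \to \Cre$ is symmetric monoidal, hence $\Cre$ inherits a symmetric monoidal structure for which $- \otimes \Spc^{\fin}_*$ is symmetric monoidal. To apply that proposition one must check that the tensor product on $\Catrex$ of a ``local equivalence'' (a map inverted by pointing) with any object is again a local equivalence; this is where the bilinearity of $\otimes$ in $\Catrex$ and the explicit description of local objects as the pointed categories are used. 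The lax symmetric monoidal refinement of $\incl$ is then automatic: a symmetric monoidal left adjoint always has a lax symmetric monoidal right adjoint (\cite[Cor.~7.3.2.7]{HA}). Finally, the tensor unit of $\Cre$ is $L(\Spc^{\fin}) = \Spc^{\fin} \otimes \Spc^{\fin}_* \simeq \Spc^{\fin}_*$, consistent with the universal property recorded above.

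I expect the main obstacle to be the verification that the subcategory $\Cre \subseteq \Catrex$ is genuinely an accessible/idempotent localisation that is \emph{compatible} with the symmetric monoidal structure in the precise sense demanded by \cite[Prop.~4.1.7.4]{HA}: namely that for any $\bC$ in $\Catrex$, tensoring the unit map $\bC \to \bC \otimes \Spc^{\fin}_*$ with an arbitrary $\bD$ again lands in the local equivalences. Equivalently, one must show $\bD \otimes \bC \otimes \Spc^{\fin}_* \simeq (\bD \otimes \bC) \otimes \Spc^{\fin}_*$ is the pointing of $\bD \otimes \bC$, which amounts to knowing that pointing commutes with $\otimes$ — a statement that is true but needs the universal property of $\otimes$ and a small diagram chase, or alternatively an appeal to \cite[Ex.~5.1.2]{TheNineI} / the analogous result in \cite[\S4.8.2]{HA} for $\Cat_\infty$. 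Once compatibility is in hand everything else is formal. I would also remark, as a sanity check, that restricting this symmetric monoidal structure along $\Crp \hookrightarrow \Cre$ and idempotent-completing recovers the symmetric monoidal structure on $\Crp$ (hence, via $(-)^{\op}$, on $\Clep$) used elsewhere in the paper; this compatibility is what justifies writing $\otimes$ uniformly across \eqref{aidshviuasdvasdcascsadca} and \eqref{aidshviuasdvasdcascsadca1}.
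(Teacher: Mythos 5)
Your overall route is the same as the paper's: realize $\Cre$ as a smashing localisation of $\Catrex$ by the idempotent object $\Spc^{\fin}_{*}$ and then transport the symmetric monoidal structure along it. The paper packages this by directly verifying the two hypotheses of \cite[Prop.~4.8.2.7]{HA} — namely that $\Spc^{\fin}_{*}\otimes \Spc^{\fin}_{*}\simeq \Spc^{\fin}_{*}$ and that the essential image of $-\otimes\Spc^{\fin}_{*}$ is exactly $\Cre$ — after which the adjunction, the monoidal structure on $\Cre$, the symmetric monoidal refinement of $-\otimes\Spc^{\fin}_{*}$, and the lax refinement of $\incl$ all drop out at once. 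You instead propose the general reflective--localisation compatibility machinery and flag its verification as the ``main obstacle''; that obstacle is spurious here, since for a smashing localisation $L=E\otimes-$ one has $E\otimes(\bD\otimes f)\simeq\bD\otimes(E\otimes f)$ by associativity and symmetry of $\otimes$, so tensoring preserves $L$-equivalences automatically — this is precisely what the idempotent-object formulation is designed to encapsulate.

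The genuine gap in your write-up is the step you treat as given: that $\bC\otimes\Spc^{\fin}_{*}$ is \emph{pointed} for every $\bC$ in $\Catrex$. You assert that the left adjoint ``sends $\bD$ to $\bD\otimes\Spc^{\fin}_{*}$'' and that the ``local objects [are] precisely the pointed finitely cocomplete categories,'' but both claims presuppose that $-\otimes\Spc^{\fin}_{*}$ lands in $\Cre$, which is exactly the content that needs an argument. The paper's proof of this is the one nontrivial technical step: observe that a finitely cocomplete $\infty$-category $\bD$ is pointed if and only if the constant endofunctor $\const_{\emptyset_{\bD}}$ at its initial object is adjoint to itself, that $\bC\otimes-$ carries adjunctions to adjunctions (so $\bC\otimes\const_{\emptyset_{\Spc^{\fin}_{*}}}$ is self-adjoint), and that $\bC\otimes-$ preserves the empty colimit (so $\bC\otimes\const_{\emptyset_{\Spc^{\fin}_{*}}}\simeq\const_{\emptyset_{\bC\otimes\Spc^{\fin}_{*}}}$). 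Your side identification of $\bD\otimes\Spc^{\fin}_{*}$ with an undercategory of pointed objects is suggestive but unproven and is not needed; the self-adjointness trick is more economical. Also, minor point: the reference you want from \cite{HA} for the compatibility route is in §2.2.1 (the paper cites Prop.~2.2.1.9 for the parallel statement about $\Crp$), not §4.1.7.
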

\begin{proof}
  We will show that $\Spc^{\fin}_{*}\otimes \Spc^{\fin}_{*}\simeq \Spc^{\fin}_{*}$ and 
  that the essential image of the
  functor $-\otimes\Spc^{\fin}_{*} \colon \Catrex \to  \Catrex$  is  $\Cre$.
  Then  \cite[Prop. 4.8.2.7]{HA}
    implies all assertions.
    
    Restriction along the functor
    \[ \Spc^{\fin}\to \Spc^{\fin}_{*} \ , \quad X\mapsto (*\to X\sqcup *) \]
    induces   an equivalence 
\begin{equation}\label{ergregqgfwefewf}\Fun^{\rex}(\Spc^\fin_{*},\bD) \simeq \Fun^{\rex}(\Spc^{\fin} ,\bD) \stackrel{\eqref{qefqoijqiowejfqwefeqwfqw}}{\simeq} \bD\end{equation}  
for any $\bD$ in $\Cre$. 
     
 Next we show that for every $\bC$ in $ \Catrex$ the tensor product
  $\bC\otimes  \Spc^{\fin}_{*}$ is pointed.  
  We will employ the fact that the initial object $\emptyset_{\bD}$  in a finitely cocomplete $\infty$-category $\bD$ is also terminal if and only if the constant functor $\const_{\emptyset_{\bD}} \colon \bD\to \bD$ with value $\emptyset_{\bD}$ is adjoint to itself.
  Since $\Spc^{\fin}_{*}$ is pointed, its endofunctor  $\const_{\emptyset_{\Spc^{\fin}_{*}}}$ is adjoint to itself.
  Using that $\bC\otimes- $ preserves adjunctions, we see on the one hand that $\bC\otimes \const_{\emptyset_{\Spc^{\fin}_{*}}}$ is also adjoint to itself. 
  Since $\bC\otimes -$ preserves the empty colimit, we see on the other hand that 
$\bC\otimes \const_{\emptyset_{ \Spc^{\fin}_{*}}}\simeq \const_{\emptyset_{\bC\otimes \Spc^{\fin}_{*}}}$.
Hence  $\bC\otimes  \Spc^{\fin}_{*}$ is pointed.

 For every $\bD$ in $\Cre$ we have  a natural equivalence
\[ \Fun^\rex(\bC \otimes \Spc^\fin_*,\bD) \simeq \Fun^\rex(\bC, \Fun^\rex(\Spc^\fin_*,\bD)) \stackrel{\eqref{ergregqgfwefewf}}{\simeq} \Fun^\rex(\bC, \bD)\ ,\]
where the first equivalence is seen similarly as in \eqref{ergweogjoewprgerwgewgwerg}.  
If $\bC$ was already pointed, then this equivalence for arbitrary $\bD$ provides an equivalence
$\bC\otimes \Spc^{\fin}_{*}\simeq \bC$.  Applying this relation for $\bC\simeq \Spc^{\fin}_{*}$ we obtain the desired equivalence   
$\Spc^{\fin}_{*}\otimes \Spc^{\fin}_{*}\simeq \Spc^{\fin}_{*}$.
Furthermore,  the essential image of $-\otimes \Spc_{*}^{\fin}$ is $\Cre$. 
\end{proof}

\begin{lem}\label{iuzqfiqufewfqfewfqewfq}
The datum of a lax symmetric monoidal functor 
$\bM\to \Cre$ is equivalent to a lax symmetric monoidal functor $\bM\to  \Cati$ which takes values in $\Cre$ and whose structure maps preserve finite colimits in each variable separately.
\end{lem}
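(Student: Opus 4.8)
The plan is to bootstrap from the characterisation of lax symmetric monoidal functors into $\Catrex$ recalled above from \cite[Rem.~4.8.1.9]{HA}, using that $\Cre$ sits inside $\Catrex$ as a symmetric monoidal localisation. First I would record, via \cref{roighjioerwgerwrwegwerg}, that $-\otimes\Spc^{\fin}_{*}\colon\Catrex\to\Cre$ is the reflection onto a reflective subcategory and that $\Spc^{\fin}_{*}$ is an idempotent object of $\Catrex$ (one has $\Spc^{\fin}_{*}\otimes\Spc^{\fin}_{*}\simeq\Spc^{\fin}_{*}$, and $\bC\otimes\Spc^{\fin}_{*}\simeq\bC$ precisely for pointed $\bC$, as in the proof of \cref{roighjioerwgerwrwegwerg}). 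Then \cite[Prop.~4.8.2.7]{HA}, the very result already used to produce the symmetric monoidal structure on $\Cre$, identifies $\Cre^{\otimes}$ with the full suboperad of $\Catrex^{\otimes}$ spanned by those objects lying over tuples of pointed $\infty$-categories.

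The key step is the observation that a map of $\infty$-operads $\bM^{\otimes}\to\Catrex^{\otimes}$ factors through this full suboperad $\Cre^{\otimes}$ — essentially uniquely — if and only if its underlying functor $\bM\to\Catrex$ takes values in $\Cre$: once every object in the image lies in $\Cre^{\otimes}$, fullness of the inclusion forces every multimorphism in the image to lie there too. In other words, the datum of a lax symmetric monoidal functor $\bM\to\Cre$ coincides with that of a lax symmetric monoidal functor $\bM\to\Catrex$ whose underlying functor lands in $\Cre$. Combining this with \cite[Rem.~4.8.1.9]{HA} — which identifies lax symmetric monoidal functors $\bM\to\Catrex$ with lax symmetric monoidal functors $\bM\to\Cati$ taking values in $\Catrex$ and having structure maps that preserve finite colimits in each variable separately — and imposing on both sides the additional requirement that the underlying functor land in the full subcategory $\Cre\subseteq\Catrex$, one obtains exactly the asserted equivalence of data.

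I expect the only genuinely delicate point to be pinning down the ``full suboperad'' description of $\Cre^{\otimes}$ inside $\Catrex^{\otimes}$ together with the essential uniqueness of the factorisation through it; everything else runs parallel to the unpointed case. If one prefers to sidestep operadic bookkeeping, an alternative is to lift the adjunction $-\otimes\Spc^{\fin}_{*}\dashv\incl$ of \cref{roighjioerwgerwrwegwerg} to an adjunction between the categories of lax symmetric monoidal functors out of $\bM$, observe that the right adjoint is fully faithful because $\incl$ is, and identify its essential image through the unit $F(m)\to F(m)\otimes\Spc^{\fin}_{*}$, which is an equivalence precisely when each value $F(m)$ is pointed.
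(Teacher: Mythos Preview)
Your argument is correct and rests on the same underlying fact as the paper's, namely that $\Cre$ is the symmetric monoidal localisation of $\Catrex$ at the idempotent object $\Spc^{\fin}_{*}$. The packaging differs: you invoke directly that $\Cre^{\otimes}$ is a \emph{full} suboperad of $\Catrex^{\otimes}$ and deduce that factorisation through it is a condition on objects only, whereas the paper argues one direction by postcomposing with the lax monoidal inclusion $\Cre\hookrightarrow\Catrex\to\Cati$, and for the converse uses the unit equivalence $F(-)\simeq F(-)\otimes\Spc^{\fin}_{*}$ together with the fact that $-\otimes\Spc^{\fin}_{*}\colon\Catrex\to\Cre$ is symmetric monoidal to transport the lax monoidal structure. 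Your ``alternative'' at the end is essentially the paper's proof. Your main route is slightly cleaner once one has checked that the suboperad really is full (which follows since local objects see the same multimorphisms before and after localisation); the paper's route is more hands-on and avoids having to articulate that fullness claim.
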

\begin{proof}
The composition of inclusions
\begin{equation}\label{cnajkdnckjsdcasdcasdc}
 \Cre \xrightarrow{\incl} \Catrex \to \Cati
\end{equation} 
is lax symmetric monoidal by \cref{roighjioerwgerwrwegwerg} and the preceding discussion.
In addition, we know that the structure maps 
of the lax symmetric monoidal structure  of  \eqref{cnajkdnckjsdcasdcasdc}   preserve finite colimits in each variable.
Therefore, by postcomposing with \eqref{cnajkdnckjsdcasdcasdc},
a lax symmetric monoidal functor $\bM \to \Cre$ gives a lax symmetric monoidal functor
$\bM \to \Cati$ with the desired properties.

For the converse, let $\bM\to \Cati$ be a functor  with the listed properties.
As discussed above, it gives rise to a lax symmetric monoidal functor $F \colon \bM\to \Catrex$.
Since $F$ takes values in $\Cre$, the unit of the adjunction in  \cref{roighjioerwgerwrwegwerg} provides an equivalence
\[ F(-)\simeq F(-)\otimes \Spc^{\fin}_{*}\ .\]
Since the functor  $-\otimes \Spc^{\fin}_{*}$ in  \cref{roighjioerwgerwrwegwerg} is symmetric monoidal, we see that the right-hand side of this equivalence and therefore $F$ is actually a lax symmetric monoidal functor $\bM \to \Cre$.
\end{proof}

The inclusion of the  full subcategory $\Crp$ of $\Cre$ spanned by the idempotent complete right-exact $\infty$-categories is the right adjoint of a  localisation
\[ \Idem \colon \Cre \to \Crp \cocolon \incl\ . \]
Unwinding universal properties,  the  canonical functors $\bC\to \Idem(\bC)$ and $\bD\to \Idem(\bD)$  induce an equivalence
\[ \Idem(\bC \otimes \bD) \xrightarrow{\simeq} \Idem(\Idem(\bC) \otimes \Idem(\bD)) \]
for all $\bC$ and $\bD$ in $\Cre$. 
It follows from this equivalence that $\Idem$ is compatible with the monoidal structure in the sense of \cite[Def. 2.2.1.6 and Ex. 2.2.1.7]{HA}.
By \cite[Prop.~2.2.1.9]{HA}, we conclude that  $\Crp$ inherits a symmetric monoidal structure from $\Cre$ and $\Idem$ acquires a symmetric monoidal  refinement.
Since  $\Spc^\cp_*\simeq  \Idem(\Spc^{\fin}_{*} )$, the tensor unit is   given by  the object $\Spc^\cp_*$ of $\Crp$.
   
The functor $\Ind_\omega \colon \Crp \to \Prlp$ is an equivalence of categories with inverse $(-)^\cp \colon \Prlp \to \Crp$.
By transport of structure, we obtain an induced symmetric monoidal structure on $\Prlp$ such that $\Ind_{\omega}$ and $(-)^{\cp}$ become   symmetric monoidal equivalences.   The tensor unit of the structure on $\Prlp$ is given by the object $\Spc_{*}$.
 
The functor $(-)^{\op}$ taking the opposite category identifies the categories $\Cle$, $ \Clep$ and $\CL$ with $\Cre$, $\Crp$ and $\Prlp$, respectively. 
By transport of structure, the latter categories acquire symmetric monoidal structures such that the functors $\Idem \colon \Cle \to \Clep$ and $\mathrm{Pro}_{\omega} \colon \Clep\stackrel{\simeq}{\to} \CL$ admit symmetric monoidal refinements.

 \begin{rem}\label{roigjioergerergergergwegrwerg}
 It is a consequence of the version of \cref{iuzqfiqufewfqfewfqewfq} for large categories
  that a lax symmetric monoidal functor $\bM \to \CLL$ is the same as a lax symmetric monoidal functor $F \colon \bM \to \CATi$ which takes values in $\CLL$ and has the property that all structure maps preserve finite limits in each variable separately.
 It suffices to checks this property for  the structure maps $F(M) \times F(M') \to F(M\otimes M')$ for all $M$ and $M'$ in $\bM$.  
 
The unit constraint of $F$ (as a lax symmetric monoidal  functor with values in $\CATi$) is a functor $u:*\to F(*)$ determined by an object $u(*)$ in $F(*)$.
Then  the unit constraint of the corresponding lax symmetric monoidal  functor 
$\bM\to \CLL$ is the essentially unique left exact functor
$\Sp^{\fin,\op}_*\to F(*)$ which sends $S^{0}$ to $u(*)$.
 \end{rem}

In the following we record the compatibility of Dwyer--Kan localisation with lax symmetric monoidal functors.
Let $\cC$ be a symmetric monoidal $\infty$-category, and let $F \colon \cC \to \Cati$ be a lax symmetric monoidal functor.
Assume that for every object $C$ in $\cC$ we are given a class $W_C$ of morphisms in $F(C)$.
We say that a functor $F(C)\to \cD$ inverts $W_{C}$ if it sends the morphisms in $W_{C}$ to equivalences.

\begin{lem}\label{wefkijeqwofqlewfeqwfqfqeewfq}
 Suppose:  
 \begin{enumerate}
  \item\label{it:wefkijeqwofqlewfeqwfqfqeewfq1} $F(C) \xrightarrow{F(f)} F(C') \to F(C')[W_{C'}^{-1}]$ inverts  $W_C$ for all morphisms $f \colon C \to C'$ in $\cC$;
  \item\label{it:wefkijeqwofqlewfeqwfqfqeewfq2} $F(C) \times F(C') \to F(C \otimes C') \to F(C \otimes C')[W_{C \otimes C'}^{-1}]$ inverts  $W_C \times W_{C'}$ for all $C$, $C'$ in $\cC$.
 \end{enumerate}
 Then there exists a lax symmetric monoidal functor $\overline{F} \colon \cC \to \Cati$ such that
\begin{enumerate}
 \item the value of $\overline{F}$ at $C$ is equivalent to the Dwyer--Kan localisation $F(C)[W_C^{-1}]$;
 \item for every morphism $f \colon C \to C'$, its image $\overline{F}(f)$ is essentially uniquely determined by being part of a commutative diagram
 \[\xymatrix{
  F(C)\ar[r]^-{F(f)}\ar[d] & F(C')\ar[d] \\
  \overline{F}(C)\ar[r]^-{\overline{F}(f)} & \overline{F}(C')
 }\]
 in which the vertical maps are the localisation functors.
\end{enumerate} 
\end{lem}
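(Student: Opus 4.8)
Looking at this statement, Lemma \ref{wefkijeqwofqlewfeqwfqfqeewfq} is a general principle: Dwyer--Kan localisation is compatible with lax symmetric monoidal structures, provided the localising classes are compatible with the monoidal structure maps in the two ways listed.

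\medskip

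The plan is to invoke the general machinery of symmetric monoidal localisations from \cite{HA}, applied not to a single symmetric monoidal category but fibrewise over $\cC$ via the theory of operadic localisations. More precisely, I would proceed as follows. A lax symmetric monoidal functor $F\colon\cC\to\Cati$ is the same datum as a map of $\infty$-operads $\cC^{\otimes}\to\Cati^{\times}$, or equivalently, by unstraightening, a cocartesian fibration of $\infty$-operads $\widetilde F\colon\cE^{\otimes}\to\cC^{\otimes}$ together with the data making the total space $\cE^{\otimes}$ into a symmetric monoidal $\infty$-category over $\cC^{\otimes}$ whose fibre over $\langle 1\rangle$ recovers $F$ pointwise. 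The collection of classes $(W_C)_{C\in\cC}$ assembles into a class $W$ of morphisms in the fibre category $\cE:=\cE^{\otimes}_{\langle 1\rangle}$, which one extends to a class $W^{\otimes}$ in $\cE^{\otimes}$ by declaring a morphism in $\cE^{\otimes}$ to lie in $W^{\otimes}$ if it lies over an identity of $\cC^{\otimes}$ and is componentwise in $W$. Hypothesis \eqref{it:wefkijeqwofqlewfeqwfqfqeewfq1} is exactly what is needed to ensure that $W^{\otimes}$ is compatible with the cocartesian edges of $\widetilde F$, so that the localisation $\cE^{\otimes}[W^{\otimes,-1}]$ again fibres over $\cC^{\otimes}$; hypothesis \eqref{it:wefkijeqwofqlewfeqwfqfqeewfq2} is what guarantees that the tensor product on $\cE^{\otimes}$ descends to the localisation. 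One then appeals to \cite[Prop.~4.1.7.4]{HA} (localisation of (co)cartesian fibrations) together with \cite[Prop.~2.2.1.9]{HA} to produce the localised symmetric monoidal $\infty$-category $\overline{\cE}^{\otimes}\to\cC^{\otimes}$, and reads off $\overline F$ as the corresponding lax symmetric monoidal functor. The two asserted properties of $\overline F$ then follow: the value at $C$ is $\cE_C[W_C^{-1}]=F(C)[W_C^{-1}]$ by construction of the fibrewise localisation, and the universal property of Dwyer--Kan localisation supplies the commuting square with $\overline F(f)$, its essential uniqueness being the usual uniqueness of factorisations through a localisation.

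\medskip

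An alternative, more hands-on route that avoids the operadic unstraightening would be to use the model of lax symmetric monoidal functors as algebras in the functor category, or to cite a packaged statement such as \cite[Prop.~2.10]{Hinich} or the localisation results in \cite[\S2]{TheNineI} if such a lemma is available in the form needed; in the paper, given that \cref{roighjioerwgerwrwegwerg,iuzqfiqufewfqfewfqewfq} already reduce lax symmetric monoidal functors to $\Cati$-valued functors with colimit-preserving structure maps, the cleanest write-up probably proceeds by transporting the problem into $\Cati^{\times}$ and invoking the general localisation theorem of \cite{HA} there.

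\medskip

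I expect the main obstacle to be bookkeeping rather than conceptual: one must verify that the class $W^{\otimes}$ is closed under the relevant operations (it is stable under tensoring with arbitrary objects, which is where \eqref{it:wefkijeqwofqlewfeqwfqfqeewfq2} enters, and under pushforward along cocartesian edges, which is \eqref{it:wefkijeqwofqlewfeqwfqfqeewfq1}), and that the resulting localisation still satisfies the axioms of a symmetric monoidal $\infty$-category over $\cC^{\otimes}$ — in particular that the Segal condition is preserved, which is where one uses that localisation commutes with finite products when the product of localising morphisms is inverted. The cleanest presentation will hinge on isolating exactly the statement from \cite{HA} that does this, so that the proof reduces to checking its hypotheses; the hypotheses \eqref{it:wefkijeqwofqlewfeqwfqfqeewfq1} and \eqref{it:wefkijeqwofqlewfeqwfqfqeewfq2} of the lemma are manifestly tailored to match.
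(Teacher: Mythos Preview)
Your overall strategy is exactly the one the paper takes: unstraighten $F$ to a cocartesian fibration of $\infty$-operads $F^{\otimes}\to\cC^{\otimes}$, assemble the classes $W_C$ into a marking on the total space (componentwise on the fibres over $(C_1,\ldots,C_n)$), verify that hypotheses \eqref{it:wefkijeqwofqlewfeqwfqfqeewfq1} and \eqref{it:wefkijeqwofqlewfeqwfqfqeewfq2} make this a \emph{marked cocartesian fibration}, and then localise fibrewise. The paper also saturates each $W_C$ to $\overline{W}_C$ before forming the marking, which is a harmless but clean bookkeeping step.

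One point of caution: the HA references you cite are not quite the right tools. \cite[Prop.~2.2.1.9]{HA} concerns \emph{reflective} (i.e.\ accessible Bousfield) localisations, not general Dwyer--Kan localisations, so it does not apply here without further argument. The paper instead invokes Hinich's results on localisation of marked cocartesian fibrations (specifically \cite[Prop.~2.1.4 and Prop.~3.2.2]{hinich}), which are tailored to precisely this situation and handle the operadic/Segal compatibility you flag as the main bookkeeping concern. Your ``alternative'' route via Hinich is in fact the route the paper takes, and is the one you should lead with.
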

\begin{proof}
 Consider $F$ as a cocartesian fibration of $\infty$-operads $F^\otimes \to \cC^\otimes$.
 For $C$ in $\cC$, let $\overline{W}_C$ denote the collection of all morphisms in $F(C)$ which become invertible in $F(C)[W_C^{-1}]$.
 Regard $\cC^\otimes$ as a marked $\infty$-category $(\cC^\otimes,\iota\cC^\otimes)$ by marking all equivalences.
 Using the equivalences $F^\otimes_{(C_1,\ldots,C_n)} \simeq F(C_1) \times \ldots \times F(C_n)$, each fibre  inherits a marking $\overline{W}_{C_1} \times \ldots \times \overline{W}_{C_n}$.
 Let $W$ be the marking of $F^\otimes$ generated by these markings (see \cite[Rem.~2.1.2]{hinich} for a more explicit description).
 The assumptions ensure that this defines a marked cocartesian fibration $(F^\otimes,W) \to (\cC^\otimes, \iota\cC^\otimes)$ in the sense of \cite[Def.~2.1.1]{hinich}.
 By \cite[Prop.~2.1.4]{hinich}, we get a cocartesian fibration of $\infty$-operads $F^\otimes[W^{-1}] \to \cC^\otimes$ (see also \cite[Prop.~3.2.2]{hinich}){. S}ince $F(C)[W_C^{-1}] \simeq F(C)[\overline{W}_C^{-1}]$, this cocartesian fibration corresponds to the desired functor $\overline{F}$. 
\end{proof}

The next lemma will be used in \cref{sec:orbits-and-fixed-points}.
Let $\bC$ be a small monoidal $\infty$-category and $\bD$ be a cocomplete monoidal $\infty$-category.
Let $\bI$ be any small $\infty$-category.
Then $\Fun(\bC,\bD)$ becomes a monoidal $\infty$-category via Day convolution.
As a reminder, the tensor product of $F$ and $F'$ in $\Fun(\bC,\bD)$ is given by a left Kan extension
\begin{equation}\label{eq:dayconvo}
\xymatrix@C=6em{
\bC \times \bC\ar@{=>}[dr(.45)]_-{\tau}\ar[r]^-{F(-) \otimes_\bD F'(-)}\ar[d]_-{\otimes_\bC} & \bD \\
\bC\ar[ru]_{F \otimes F'} & } \ ,
\end{equation}
where $F(-) \otimes_\bD F'(-)$ denotes the pointwise tensor product of the functors $F$ and $F'$.
Both $\Fun(\bI, \bD)$ and $\Fun(\bI, \Fun(\bC,\bD))$ carry an induced (pointwise) monoidal structure.
The pointwise structure on $\Fun(\bI,\bD)$ in turn gives rise to a monoidal structure on $\Fun(\bC,\Fun(\bI,\bD))$, again via Day convolution.

\begin{lem}\label{lem:dayconvolution-vs-ptwise}
 The exponential law
 \[  \Fun(\bC,\Fun(\bI,\bD)) \simeq \Fun(\bI, \Fun(\bC,\bD)) \]
 refines to an equivalence of monoidal $\infty$-categories. 
\end{lem}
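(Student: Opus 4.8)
The plan is to verify the equivalence at the level of the underlying $\infty$-categories first, and then upgrade it to a monoidal equivalence using the universal property of Day convolution together with the naturality of the exponential law. The underlying equivalence $\Fun(\bC,\Fun(\bI,\bD)) \simeq \Fun(\bI,\Fun(\bC,\bD))$ is a special case of the exponential law $\Fun(\bC \times \bI, \bD) \simeq \Fun(\bC, \Fun(\bI, \bD)) \simeq \Fun(\bI, \Fun(\bC, \bD))$, so nothing needs to be proved there; the content is entirely in matching the two monoidal structures.

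The approach I would take is to characterise both sides by a universal property. Recall that for a small monoidal $\infty$-category $\bE$ and a cocomplete monoidal $\infty$-category $\bD'$, the Day convolution monoidal structure on $\Fun(\bE,\bD')$ is the unique monoidal structure for which the monoidal product functor $\Fun(\bE,\bD') \times \Fun(\bE,\bD') \to \Fun(\bE,\bD')$ is obtained by left Kan extension along $\otimes_\bE$ of the pointwise tensor product, as in \eqref{eq:dayconvo}. Equivalently, following Lurie \cite[Sec.~2.2.6]{HA} and Glasman, Day convolution on $\Fun(\bE,\bD')$ corepresents, in an appropriate sense, lax monoidal functors out of $\bE$ with values in $\bD'$-modules; more concretely, for any monoidal $\infty$-category $\bF$ there is an equivalence between monoidal functors $\bF \to \Fun(\bE,\bD')$ (with Day convolution) and suitable bifunctors $\bF \times \bE \to \bD'$. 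I would use the version: a monoidal structure on $\Fun(\bE,\bD')$ is determined by the monoidal structure on $\bD'$ and $\bE$ via the requirement that evaluation at the unit of $\bE$ and the Kan-extension formula hold.

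The key steps, in order, are: (1) Apply the exponential law to identify the underlying $\infty$-categories $\Fun(\bC,\Fun(\bI,\bD))$ and $\Fun(\bI,\Fun(\bC,\bD))$, where on the right $\Fun(\bC,\bD)$ carries Day convolution and $\Fun(\bI,-)$ carries the pointwise structure, while on the left $\Fun(\bI,\bD)$ carries the pointwise structure and $\Fun(\bC,-)$ carries Day convolution. (2) Unwind the definition of the Day convolution product on $\Fun(\bC,\Fun(\bI,\bD))$: the product of $F,F'$ is the left Kan extension along $\otimes_\bC$ of $c \mapsto F(c) \otimes_{\Fun(\bI,\bD)} F'(c)$, and since the monoidal structure on $\Fun(\bI,\bD)$ is pointwise, this tensor product is computed objectwise in $\bI$, i.e.\ $(F(c) \otimes F'(c))(i) = F(c)(i) \otimes_\bD F'(c)(i)$. (3) Observe that left Kan extension along $\otimes_\bC$ commutes with evaluation at a fixed object $i \in \bI$, because colimits in $\Fun(\bI,\bD)$ are computed pointwise and left Kan extension is a colimit; hence evaluating the Day convolution product at $i$ yields the left Kan extension along $\otimes_\bC$ of $c \mapsto F(c)(i) \otimes_\bD F'(c)(i)$, which under the exponential law is exactly the Day convolution product in $\Fun(\bC,\bD)$ applied pointwise in $i$ — i.e.\ the pointwise-over-$\bI$ monoidal structure on $\Fun(\bI,\Fun(\bC,\bD))$. (4) Promote this pointwise identification of tensor products, together with the matching of units (the unit on both sides is the functor constant at the unit of $\bD$, precomposed with the appropriate unit maps), to an equivalence of monoidal $\infty$-categories; here I would invoke that a symmetric monoidal functor is detected by its effect on the underlying category plus compatibility data, using the $\infty$-operad formalism of \cite[Sec.~2.1]{HA} as in the proof of \cref{wefkijeqwofqlewfeqwfqfqeewfq}, or more cleanly cite the known naturality of Day convolution in the target variable (e.g.\ the fact that $\Fun(\bI,-) \colon \CATi \to \CATi$ lifts to a functor on monoidal $\infty$-categories and commutes with forming functor categories with Day convolution).

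The main obstacle I expect is step (4): making the compatibility of the two monoidal structures fully coherent rather than just on homotopy categories or on tensor products of pairs of objects. The honest way is to exhibit both monoidal $\infty$-categories as algebras over a common $\infty$-operad and produce the equivalence at that level, which requires either a careful fibrational argument (constructing the cocartesian fibrations $\Fun(\bC,\Fun(\bI,\bD))^\otimes \to \mathbf{Fin}_*$ and $\Fun(\bI,\Fun(\bC,\bD))^\otimes \to \mathbf{Fin}_*$ and an equivalence over $\mathbf{Fin}_*$) or an appeal to the universal property of Day convolution that packages all the coherence at once. In practice I would cite the relevant statement from the literature on Day convolution — that $\Fun(\bI,-)$ with the pointwise structure is compatible with Day convolution, so that $\Fun(\bI,\mathrm{Day}(\bC,\bD)) \simeq \mathrm{Day}(\bC,\Fun(\bI,\bD))$ as monoidal $\infty$-categories — and fill in only the verification that the hypotheses apply, thereby sidestepping a lengthy re-derivation. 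The symmetric variant needed later follows by the same argument carried out for the $E_\infty$-operad in place of the associative operad.
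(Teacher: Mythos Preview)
Your steps (1)--(3) are a correct verification that the binary tensor products and units agree under the exponential law, but as you yourself flag, this is not yet a proof in the $\infty$-categorical setting: matching tensor products of pairs of objects does not by itself produce an equivalence of monoidal $\infty$-categories. The gap is step (4). Your proposed resolution---to ``cite the relevant statement from the literature on Day convolution, that $\Fun(\bI,\mathrm{Day}(\bC,\bD)) \simeq \mathrm{Day}(\bC,\Fun(\bI,\bD))$ as monoidal $\infty$-categories''---is circular: that equivalence \emph{is} the lemma you are trying to prove. Your alternative suggestion, ``an appeal to the universal property of Day convolution that packages all the coherence at once,'' is the right direction, but you do not say which universal property or how to use it.

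The paper's proof bypasses steps (1)--(3) entirely and goes straight to the coherent statement via Yoneda. It uses two concrete facts: first, the universal property of Day convolution from \cite[Def.~2.2.6.1]{HA}, which gives
\[
 \Alg_{\cO/\Assoc}(\Fun(\bC,\bM)^\otimes) \simeq \Alg_{(\cO \times_{\Assoc} \bC^\otimes)/\Assoc}(\bM^\otimes)
\]
for any $\infty$-operad $\cO$ over $\Assoc$; second, the formula for algebras in a pointwise monoidal functor category from \cite[Rem.~2.1.3.4]{HA},
\[
 \Alg_{\cO/\Assoc}(\Fun(\bI,\bM)^\otimes) \simeq \Fun(\bI, \Alg_{\cO/\Assoc}(\bM^\otimes))\ .
\]
Alternating these two identities, one obtains a natural equivalence $\Alg_{\cO/\Assoc}(\Fun(\bC,\Fun(\bI,\bD))^\otimes) \simeq \Alg_{\cO/\Assoc}(\Fun(\bI,\Fun(\bC,\bD))^\otimes)$ for every $\cO$. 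Since $\Alg_{\cO/\Assoc}(-)$ is the mapping $\infty$-category from $\cO^\otimes$ in $\infty$-operads over $\Assoc$, this is a Yoneda argument and yields the desired equivalence of monoidal $\infty$-categories with all coherences built in. This is the specific implementation of the universal-property approach you gestured at; supplying it is what is missing from your proposal.
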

\begin{proof}
 We will show that the exponential law induces an equivalence between the $\infty$-categories of $\cO$-algebras for every $\infty$-operad $\cO$ over the associative $\infty$-operad $\Assoc$. 

  Let $\bM$ be a cocomplete monoidal $\infty$-category which we interpret as a cocartesian fibration $\bM^\otimes \to \Assoc$.
  As an $\infty$-operad, the pointwise monoidal structure on $\Fun(\bI,\bM)$ is given by the pullback
  \[ \Fun(\bI,\bM)^\otimes := \Fun(\bI,\bM^\otimes) \times_{\Fun(\bI,\Assoc)} \Assoc\ .\]
  Let $\cO \to \Assoc$ be a morphism of $\infty$-operads.  
  The relative case of \cite[Rem.~2.1.3.4]{HA} yields a canonical equivalence
  \begin{align}\label{eq:pointwise-modules}
   \Alg_{\cO / \Assoc}(\Fun(\bI,\bM)^\otimes) \simeq \Fun(\bI, \Alg_{\cO / \Assoc}(\bM^\otimes))\ .
  \end{align}
  Let $\bN$ be a small monoidal $\infty$-category and consider the Day convolution $\Fun(\bN,\bM)^\otimes \to \Assoc$ as defined in \cite[Ex.~2.2.6.10]{HA}.
  Then the universal property of $\Fun(\bN,\bM)^\otimes$ given in \cite[Def.~2.2.6.1]{HA} specialises to an equivalence
  \begin{align}\label{eq:day-modules}
   \Alg_{\cO / \Assoc}(\Fun(\bN,\bM)^\otimes) \simeq \Alg_{(\cO \times_{\Assoc} \bN^\otimes) / \Assoc}(\bM^\otimes)\ .
  \end{align}
  Therefore, we have equivalences
  \begin{align*}
   \Alg_{\cO / \Assoc}(\Fun(\bC,\Fun(\bI,\bD))^\otimes) &\overset{\eqref{eq:day-modules}}{\simeq} \Alg_{(\cO \times_{\Assoc} \bC^\otimes)/\Assoc}(\Fun(\bI,\bD)^\otimes) \\
   &\overset{\eqref{eq:pointwise-modules}}{\simeq} \Fun(\bI, \Alg_{(\cO \times_{\Assoc} \bC^\otimes)/\Assoc}(\bD^\otimes)) \\
   &\overset{\eqref{eq:day-modules}}{\simeq} \Fun(\bI, \Alg_{\cO / \Assoc}(\Fun(\bC,\bD)^\otimes)) \\
   &\overset{\eqref{eq:pointwise-modules}}{\simeq} \Alg_{\cO / \Assoc}(\Fun(\bI, \Fun(\bC,\bD))^\otimes)\ .\qedhere
   \end{align*}
\end{proof}

For a monoidal $\infty$-category $\cC$, denote by $\Mod(\cC)$ the $\infty$-category of (left) module objects in $\cC$ in the sense of \cite[Def.~4.2.1.13]{HA}.
Informally, an object of $\Mod(\cC)$ is a pair $(A,M)$ consisting of an associative algebra $A$ and an $A$-module $M$.
In particular, there is a forgetful functor $\Mod(\cC) \to \Alg(\cC)$, where the target is the $\infty$-category of associative algebras in $\cC$.

\begin{kor}\label{lem:modules-dayconvolution}
 There exists a commutative diagram
 \[\xymatrix{
  \Mod(\Fun(\bC,\Fun(\bI,\bD)))\ar[r]^-{\simeq}\ar[d] & \Mod(\Fun(\bI, \Fun(\bC,\bD)))\ar[d] \\
  \Alg(\Fun(\bC,\Fun(\bI,\bD)))\ar[r]^-{\simeq} & \Alg(\Fun(\bI, \Fun(\bC,\bD)))  
 }\]
 in which the horizontal functors are equivalences and the vertical arrows are the forgetful functors.
\end{kor}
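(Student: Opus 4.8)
The plan is to exploit the fact that the proof of \cref{lem:dayconvolution-vs-ptwise} in fact produces, for every $\infty$-operad $\cO$ equipped with a map $\cO \to \Assoc$, an equivalence
\[ \Phi_{\cO} \colon \Alg_{\cO/\Assoc}\big(\Fun(\bC,\Fun(\bI,\bD))^{\otimes}\big) \xrightarrow{\simeq} \Alg_{\cO/\Assoc}\big(\Fun(\bI,\Fun(\bC,\bD))^{\otimes}\big)\ , \]
obtained as the composite of the four displayed equivalences there, with the case $\cO = \Assoc$ recovering \cref{lem:dayconvolution-vs-ptwise} itself. First I would show that $\Phi_{\cO}$ is natural in $\cO$, i.e.\ assembles into a natural transformation of functors on the $\infty$-category of $\infty$-operads over $\Assoc$. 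Granting this, the corollary is immediate: recall that $\Mod(\cC) = \Alg_{\mathcal{LM}/\Assoc}(\cC^{\otimes})$ for the left-module $\infty$-operad $\mathcal{LM}^{\otimes} \to \Assoc$ (\cite[Def.~4.2.1.13]{HA}), that $\Alg(\cC) = \Alg_{\Assoc/\Assoc}(\cC^{\otimes})$, and that the forgetful functor $\Mod(\cC) \to \Alg(\cC)$ is restriction along the canonical map of $\infty$-operads over $\Assoc$ which includes the algebra colour, $\iota \colon \Assoc \hookrightarrow \mathcal{LM}^{\otimes}$. The naturality square of $\Phi$ attached to $\iota$ is then precisely the square asserted in the corollary, with $\Phi_{\mathcal{LM}}$ on top, $\Phi_{\Assoc}$ along the bottom, and the restriction functors $\iota^{*}$ --- that is, the forgetful functors --- as the vertical arrows.

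Hence the real content is the naturality in $\cO$ of the chain in the proof of \cref{lem:dayconvolution-vs-ptwise}, which I would check step by step. The two instances of \eqref{eq:day-modules} come from the universal property characterising the Day convolution $\infty$-operad \cite[Def.~2.2.6.1]{HA}; being a representability statement it is automatically natural in the $\infty$-operad mapping in, and since in our situation that operad is $\cO \times_{\Assoc} \bC^{\otimes}$, with $\cO \mapsto \cO \times_{\Assoc} \bC^{\otimes}$ itself functorial, naturality in $\cO$ is preserved. The two instances of \eqref{eq:pointwise-modules} are the relative form of \cite[Rem.~2.1.3.4]{HA}, identifying $\cO$-algebras in a pointwise monoidal category $\Fun(\bI,\bM)^{\otimes}$ with $\bI$-indexed diagrams of $\cO$-algebras in $\bM$; this identification is visibly natural in $\cO$. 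Composing, the chain
\begin{align*}
 \Alg_{\cO/\Assoc}\big(\Fun(\bC,\Fun(\bI,\bD))^{\otimes}\big) &\simeq \Alg_{(\cO\times_{\Assoc}\bC^{\otimes})/\Assoc}\big(\Fun(\bI,\bD)^{\otimes}\big) \\
 &\simeq \Fun\big(\bI,\Alg_{(\cO\times_{\Assoc}\bC^{\otimes})/\Assoc}(\bD^{\otimes})\big) \\
 &\simeq \Fun\big(\bI,\Alg_{\cO/\Assoc}(\Fun(\bC,\bD)^{\otimes})\big) \\
 &\simeq \Alg_{\cO/\Assoc}\big(\Fun(\bI,\Fun(\bC,\bD))^{\otimes}\big)
\end{align*}
exhibits $\Phi_{\cO}$ as a composite of equivalences each natural in $\cO$, as desired.

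The step I expect to be the main nuisance --- though not a genuine obstacle --- is the bookkeeping for the base changes replacing $\cO$ by $\cO \times_{\Assoc} \bC^{\otimes}$ in the first and third steps, i.e.\ making sure that for a morphism $\cO \to \cO'$ of $\infty$-operads over $\Assoc$ the comparison square between $\Phi_{\cO}$ and $\Phi_{\cO'}$ commutes coherently and not merely on objects. Since each of the four equivalences is pinned down by a universal property --- representability of the Day convolution operad, respectively the defining adjunction of the pointwise monoidal structure relative to $\Assoc$ --- this coherence is forced, so I expect the verification to be routine. A more hands-on alternative would be to record a direct universal property for module objects in a Day convolution monoidal $\infty$-category and compare it on the two sides, but channelling everything through the already established \cref{lem:dayconvolution-vs-ptwise} should be the cleanest.
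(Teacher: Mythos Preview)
Your proposal is correct and is essentially the same as the paper's proof: the paper also recalls that $\Mod(\bM) = \Alg_{\cL\cM/\Assoc}(\bM^\otimes)$, $\Alg(\bM) = \Alg_{\Assoc/\Assoc}(\bM^\otimes)$, and that the forgetful functor is restriction along $\Assoc \to \cL\cM$, then appeals to the chain of equivalences established in the proof of \cref{lem:dayconvolution-vs-ptwise} for arbitrary $\cO$ over $\Assoc$. Your write-up is somewhat more explicit about the naturality in $\cO$, which the paper leaves implicit, but the argument is the same.
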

\begin{proof}
 Consider the associative $\infty$-operad $\Assoc$ and the $\infty$-operad $\cL\cM$ from \cite[Def.~4.2.1.7]{HA}.
 As explained in \cite[Ex.~4.2.1.16]{HA}, the $\infty$-category of (left) module objects in a monoidal $\infty$-category $\bM$ is given by
  \[ \Mod(\bM) := \Alg_{\cL\cM / \Assoc}(\bM) \]
  and the $\infty$-category of associative algebras in $\bM$ is given by
  \[ \Alg(\bM) := \Alg_{\Assoc/\Assoc}(\bM)\ .\] 
 Since $\Assoc$ is a suboperad of $\cL\cM$, the forgetful functors arise by precomposition with the inclusion $\Assoc \to \cL\cM$.
 Applying \cref{lem:dayconvolution-vs-ptwise} in the cases $\bM = \Fun(\bC,\Fun(\bI,\bD))$ and $\bM = \Fun(\bI, \Fun(\bC,\bD))$ proves the corollary.
\end{proof}

\subsection{Controlled objects as symmetric monoidal functors}\label{sec:controlled-objects-monoidal}

In this section, we refine (most of) the functors whose constructions were recalled  in \cref{sec:controlled-objects} to lax symmetric monoidal functors.
Let
\[ k:=i\circ j\colon \CL\to \CATi \]
denote the inclusion.
  Then we are looking for a refinement of 
  the  functor
  \begin{equation}\label{avsvacsdcacdc}
\PSh\circ (\id\times k)\colon \Set\times \CL\to \CL\ .
\end{equation}
 {to a lax symmetric monoidal functor} such that the product $M \otimes M'$ of $M$ in $\PSh_{\bC}(X)$ and $M'$ in $\PSh_{\bC'}(X')$ is given by a right Kan extension
\begin{equation}\label{vwervlnlvwevewwerv}
\xymatrix@C=4em{
\cP_{X}^{\op}\times \cP^{\op}_{X'}\ar@{<=}[dr(.45)]\ar[r]^-{M\widehat\otimes M'}\ar[d] & \bC\otimes \bC'\ , \\
\cP^{\op}_{X\times X'}\ar[ru]_{M\otimes M'} & }
\end{equation}
where $M \widehat\otimes M'$ denotes the composition
\begin{equation}\label{eq:exterior-tensor}
 M \widehat\otimes M' \colon \cP_X^\op \times \cP_{X'}^\op \to \bC \times \bC' \to \bC \otimes \bC'\ .
\end{equation}
Moreover, the unit constraint of the lax symmetric monoidal structure will be given by the essentially unique functor $\Spc^{\fin,\op}_* \to \PSh_{\Spc^\op_*}(*)$ sending $S^0$ to the object
\begin{equation}\label{eq:unit-constraint}
 \cU \colon \cP_*^\op \to \Spc_*, \quad \begin{cases} * \mapsto S^0\ , \\ \emptyset \mapsto * \end{cases}
\end{equation}
of $\PSh_{\Spc^\op_*}(*)$.

\begin{rem}\label{eroigwergwgrgwefg}
We use the language of $\infty$-operads developed in \cite{HA} to model symmetric monoidal $\infty$-categories and lax symmetric monoidal functors between them. 
We will use the language of $\infty$-operads only in the present section.
In this language, a symmetric monoidal structure on an $\infty$-category $\cC$ is given by {a cocartesian fibration of $\infty$-operads} $\cC^{\otimes}\to \Fin_{*}$ together with an equivalence between $\cC$ and the fibre $\cC^{\otimes}_{\langle 1 \rangle}$ of $\cC^{\otimes}$ over $\langle 1 \rangle$ in $\Fin_{*}$.
A lax symmetric monoidal refinement of a functor $F \colon \cC \to \cD$  is then given by an operad map
$F^{\otimes} \colon \cC^{\otimes}\to \cD^{\otimes}$ together with an equivalence between $F$ and the induced map 
$F^{\otimes}_{\langle 1 \rangle} \colon \cC^{\otimes}_{\langle 1 \rangle} \to \cD^{\otimes}_{\langle 1 \rangle}$.

{Let us also remark that the datum a cocartesian fibration of $\infty$-operads $\cC^\otimes \to \cO^\otimes$ is equivalent to an operad map $\cO^\otimes \to \Cati^\times$, where the latter is equipped with the cartesian symmetric monoidal structure (combine Def.~2.1.2.13, Rem.~2.4.2.4 and Prop.~2.4.2.5 of \cite{HA}).}

Elsewhere in this paper, we will simply speak about symmetric monoidal $\infty$-categories and lax symmetric monoidal functors.
\end{rem}

\begin{prop}\label{weogjowergerregegwerg}
A lax symmetric monoidal refinement of $\PSh\circ (\id\times k)$ in \eqref{avsvacsdcacdc} with the properties described above exists.
\end{prop}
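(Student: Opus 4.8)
The plan is to produce the required operad map $(\Set\times\CL)^{\otimes}\to\CL^{\otimes}$ by hand, the genuinely new data being the lax structure constraints
\[ \PSh_{\bC}(X)\times\PSh_{\bC'}(X')\longrightarrow\PSh_{\bC\otimes\bC'}(X\times X'),\qquad (M,M')\mapsto M\otimes M', \]
with $M\otimes M'$ as in \eqref{vwervlnlvwevewwerv}, together with the unit constraint determined by the object $\cU$ of \eqref{eq:unit-constraint}. Since $\CL\subseteq\CLL\subseteq\CATi$ and, by the large-category version of \cref{iuzqfiqufewfqfewfqewfq} (see also \cref{roigjioergerergergergwegrwerg}), a lax symmetric monoidal refinement valued in $\CLL$ is the same as a lax symmetric monoidal refinement valued in $(\CATi,\times)$ whose structure maps preserve finite limits in each variable separately, it suffices to: (i) build such a $\CATi$-valued refinement with the stated structure maps and unit; (ii) check the finite-limit condition; (iii) check that the values lie in $\CL$ and that the structure maps are morphisms of $\CL$.

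For (i) I would proceed in two stages. \textbf{First,} refine $\cP^{*}$ from \eqref{ascjiaosdcjaoisdcasdcadsc} to a lax symmetric monoidal functor $(\Set,\times)^{\op}\to(\Cati,\times)$: the structure maps are the exterior-product functors $\cP_{X}\times\cP_{Y}\to\cP_{X\times Y}$, $(A,B)\mapsto A\times B$, and the unit datum $\ast\to\cP_{\ast}$ selects the top element; the coherences are an elementary verification on posets, amounting to the fact that $\{0\le 1\}$ is a commutative monoid under $\wedge$ (using $\cP_{X}\cong\{0\le1\}^{X}$). Postcomposing with the symmetric monoidal equivalence $(-)^{\op}$ of $\Cati$ yields a lax symmetric monoidal refinement of $X\mapsto\cP_{X}^{\op}$. \textbf{Second,} I would feed this into the $\Fun(-,-)$-construction. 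For fixed $\bC,\bC'\in\CL$ the universal functor $\bC\times\bC'\to\bC\otimes\bC'$, which preserves finite limits in each variable by the defining property of $\otimes$ on $\CL$ recalled in \cref{sec:prelims}, gives the exterior tensor of \eqref{eq:exterior-tensor}, landing in $\Fun(\cP_{X}^{\op}\times\cP_{X'}^{\op},\bC\otimes\bC')$; composing with the right Kan extension along $\cP_{X}^{\op}\times\cP_{X'}^{\op}\to\cP_{X\times X'}^{\op}$, which exists because objects of $\CL$ are complete, produces $M\otimes M'$ as in \eqref{vwervlnlvwevewwerv}. The cleanest way to see that this recipe is coherent, i.e. genuinely underlies an operad map in the sense of \cref{eroigwergwgrgwefg}, is to note that $\PSh_{\bC}(X)^{\op}=\Fun(\cP_{X},\bC^{\op})$, that $(-)^{\op}$ carries $\CL^{\otimes}$ to $\Prlp^{\otimes}$, and that $\Fun(K,-)\simeq\PSh(K^{\op})\otimes-$ on presentable targets is part of the symmetric monoidal structure on $\PSh\colon\Cati^{\times}\to\Prl^{\otimes}$; precomposing with $(-)^{\op}$ and with the lax symmetric monoidal $\cP^{*}$ from the first stage, and postcomposing with $(-)^{\op}$, then assembles the operad map. (Alternatively one writes the operad map out directly as a composite of operad maps, which is more bookkeeping but uses nothing beyond \cref{eroigwergwgrgwefg}.)

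For (ii), the structure maps preserve finite limits in each variable because the exterior tensor does (the map $\bC\times\bC'\to\bC\otimes\bC'$ does, and $M\mapsto M\widehat\otimes M'$ is computed objectwise) and right Kan extension preserves all limits; hence the refinement lands in $\CLL$. For (iii), $\PSh_{\bC}(X)$ lies in $\CL$ whenever $\bC$ does, since its opposite $\Fun(\cP_{X},\bC^{\op})$ is pointed, presentable and compactly generated; and the structure maps are morphisms of $\CL$ because on opposites they are built from the compact-object-preserving exterior tensor on $\Prlp$ (here one uses that corepresentables on the posets $\cP_{X}$ take values in $\{0\le 1\}$, so evaluate to $0$ or to a compact object) and from left Kan extensions along functors of posets, which preserve compact objects. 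Finally, unwinding \cref{roigjioergerergergergwegrwerg} the unit constraint is the essentially unique left-exact functor $\Spc^{\fin,\op}_{*}\to\PSh_{\Spc^{\op}_{*}}(\ast)$ sending $S^{0}$ to the image of the chosen $\CATi$-unit datum; computing the Day-convolution unit on $\Fun(\cP_{\ast},\Spc_{*})$, namely the left Kan extension of $\beins_{\Spc_{*}}=S^{0}$ along the inclusion of the top element of $\cP_{\ast}$, and dualising identifies this image with $\cU$ of \eqref{eq:unit-constraint}, as required.

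The main obstacle is the packaging in (i) and (iii): promoting the pointwise recipe ``exterior tensor, then right Kan extension'' to a fully coherent operad map, and verifying that the resulting structure maps are morphisms of $\CL$ — concretely, that the Kan extensions occurring are finitary enough that (co)compact objects are preserved. These are exactly the points where one must argue with $\infty$-operads rather than informally with symmetric monoidal $\infty$-categories, and where the explicit description of $\cP_{X}$ as a power of $\{0\le1\}$ keeps the combinatorics manageable.
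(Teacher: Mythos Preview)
Your strategy — endow the power-set functor with a lax symmetric monoidal structure via $(A,B)\mapsto A\times B$, feed this into a functor-category construction, and then verify that the structure maps are the right Kan extensions of \eqref{vwervlnlvwevewwerv} and the unit is $\cU$ — is the paper's. The two divergences below concern how the coherence is packaged and what target you aim for.

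\emph{Variance and coherence.} You work with the contravariant $\cP^{*}$ (preimage), whereas the paper uses the \emph{covariant} $\cP\colon\Set\to\Cat$, $f\mapsto f(-)$ (image), equipped with the same product-of-subsets constraint and the same unit $\{*\}\to\cP_{*}$. This matters for your second stage: precomposing the symmetric monoidal $\PSh\colon\Cati^{\times}\to\Prl^{\otimes}$ with $(-)^{\op}\circ\cP^{*}$ produces a lax symmetric monoidal functor out of $(\Set,\times)^{\op}$, hence \emph{contravariant} in $\Set$, while $\PSh\circ(\id\times k)$ is covariant. Bridging the two requires the adjunction $f(-)\dashv f^{-1}$ (so that left Kan extension along $f(-)$ agrees with precomposition by $f^{-1}$), and making that adjunction step operadically coherent is exactly the obstacle you flag but do not resolve; ``writing the operad map out directly as a composite'' does not address it. The paper's fix is to use the covariant $\cP$ from the outset and to invoke Lurie's $\Fun^{\cO}(P,Q)^{\otimes}$ construction \cite[2.2.6.7 and Prop.~2.2.6.16]{HA}: the operad maps $P$ (from $\cP$) and $Q$ (from $\Prlp\hookrightarrow\CATi$) assemble into a cocartesian fibration of $\infty$-operads whose straightening $\bR\colon\Set\times\Prlp\to\CATi$ is lax symmetric monoidal with all coherence built in. One then \emph{computes} a posteriori that $\bR(f,\phi)$ is left Kan extension along $f(-)$, equivalently $\phi\circ-\circ f^{-1}$ as in \eqref{fqwefeeqfefqefqwefqewf}, and that the $n=2$ constraint is the left Kan extension \eqref{revwrvwrvwevervvwe}, which becomes \eqref{vwervlnlvwevewwerv} after the dualisation \eqref{regwjgergoerwgwegwergerg1}; a separate argument then identifies $\bR$ with $\Fun\circ(\cP^{*,\op}\times\inc)$. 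If you replace $\cP^{*}$ by the covariant $\cP$ in your first stage, your $\PSh$-based packaging essentially reproduces this.

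\emph{Target category.} Your step (iii) aims too high: the $\CL$ in \eqref{avsvacsdcacdc} is a slip, and the paper only proves — and only uses downstream, cf.\ \cref{prop:sh-monoid,prop:loc-monoid} — that the refinement lands in $\CLL$, via \cref{roigjioergerergergergwegrwerg}. Your argument that $\Fun(\cP_{X},\bC^{\op})$ is compactly generated and that the structure maps preserve compacts is incomplete (the corepresentable observation is correct but does not by itself yield either claim), and in any case is not needed.
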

\begin{proof}
We consider the functor
\[ \cP\colon\Set\to \Cat \]
that sends a set $X$ to the poset $\cP_{X}$ and a map $f:X\to Y$ to the image map $f(-):\cP_{X}\to \cP_{Y}$ (note that the definition on morphisms is different  from the functor $\cP^{*}$ in \eqref{ascjiaosdcjaoisdcasdcadsc}).
 The lax symmetric monoidal structure on the functor $\cP$ is given by the product maps
\[ \cP_X \times \cP_Y \to \cP_{X \times Y},\quad (A,B) \mapsto A \times B \]
together with the unit constraint
\[ u_\cP \colon \{*\} \to \cP_* = \{\emptyset \to \{*\}\},\quad * \mapsto \{*\}\ .\]
We denote by $\Set^\times$ the $\infty$-operad corresponding to the symmetric monoidal category $\Set$ with  the cartesian structure.
Similarly, we let $\Prlpo$ denote the $\infty$-operad corresponding to the symmetric monoidal $\infty$-category $\Prlp$ explained in \cref{sec:prelims} with the symmetric monoidal structure inherited from $\Crp$ via the equivalence $\Ind \colon \Crp\xrightarrow{\simeq} \Prlp$.
 
 We now consider the lax symmetric monoidal functor $\cP$ as an operad map $\cP^{\times} \colon \Set^{\times}\to \Cat^{\times}$ and form the fibre product of $\infty$-operads 
\[ \cO^\otimes := \Set^\times \mathop{\times}\limits_{\Fin_*} \Prlpo \ .\] 
The projections to the individual factors of $\cO^\otimes$ induce operad maps
\[ Q \colon \cO^\otimes \to  \Prlpo 
\to \CATi^\times \]
and
\[ P \colon \cO^\otimes \to \Set^\times \xrightarrow{\cP^{\times}} \Cat^\times \to \Cati^\times\ .\]
{As recalled in \cref{eroigwergwgrgwefg}, the operad maps $Q$ and $P$ correspond to cocartesian fibrations of $\infty$-operads} $q \colon Q^\otimes \to \cO^\otimes$ and $p \colon P^\otimes \to \cO^\otimes$, respectively.
Applying \cite[Constr.~2.2.6.7]{HA}, we obtain a fibration of $\infty$-operads \begin{equation}\label{kjqwekfljfklafdfa}
r\colon \Fun^\cO(P,Q)^\otimes \to \cO^\otimes\ ,
\end{equation} where we use the notation $\Fun^\cO(P,Q)^\otimes$ as in {\em loc.~cit.}  
Since the functor $Q$ factors over $\Prlpo$ by construction, 
the assumptions of  \cite[Prop.~2.2.6.16]{HA} are satisfied.
{Hence it follows that} $r$ is a cocartesian fibration of $\infty$-operads.

The straightening of $r$ is a lax symmetric monoidal refinement of a functor
\[ \bR \colon \Set \times \Prlp \to \CATi\ .\]
Recall that the symmetric monoidal structure on
$\CL$ is induced from the symmetric monoidal structure on
$\Prlp$ via the equivalence $\CL\simeq \Prlp$.
The lax symmetric monoidal refinements of the functors
\begin{equation}\label{regwjgergoerwgwegwergerg1}
 \Set \times \CL \xrightarrow{\id \times (-)^\op} \Set \times \Prlp \xrightarrow{\bR} \CATi \xrightarrow{(-)^\op} \CATi
\end{equation}
induce a lax symmetric monoidal structure on the composition.
In the following, we verify the conditions stated in \cref{roigjioergerergergergwegrwerg}
to show that the functor in \eqref{regwjgergoerwgwegwergerg1}  refines to a lax symmetric monoidal functor with values in $\CLL$.

 For $X$ in $\Set$ and $\bC$ in $\CL$, we have
 \begin{equation}\label{oijoierjgwegrerewgw}
 \bR(X,\bC^\op)^\op \simeq \Fun(\cP_X^\op,\bC)
 \end{equation} by \cite[Rem.~2.2.6.8]{HA}.  Since the right-hand side of  \eqref{oijoierjgwegrerewgw} clearly belongs to $\CLL$    the  values of the functor in \eqref{regwjgergoerwgwegwergerg1}  on objects   belong to $\CLL$.

 The last paragraph of the proof of \cite[Cor.~2.2.6.14]{HA} provides the following description of the lax symmetric monoidal structure of $\bR$.
 A point in  $\Mul_\cO((X_i,\bC_i)_{i = 1,\ldots,n}, (Y,\bD))$  is given by a map $f \colon X_1 \times \ldots \times X_n \to Y$ and a functor $\phi \colon \bC_1 \times \ldots \times \bC_n \to \bD$ which preserves colimits in each variable separately and preserves compact objects.
 The structure map
 \begin{equation}\label{qwefeewewfewfewfffqef}
\otimes_{f,\phi} \colon \prod_{i=1}^n \bR (X_i,\bC_{i}) \to \bR(Y,\bD)
\end{equation}
sends a collection $(M_i)_{i = 1,\ldots,n}$ of functors $M_i \colon \cP_{X_i} \to \bC_i$ to a functor $M_1 \otimes \ldots {\otimes} M_n \colon \cP_Y \to \bD$ which fits into a left Kan extension diagram
 \begin{equation}\label{revwrvwrvwevervvwe}
\xymatrix{
  \prod_{i = 1}^n \cP_{X_i}\ar[r]^-{\prod_i M_i}\ar[d]_-{f(-) \circ \times}\ar@{=>}[drr(.45)] & \prod_{i=1}^n \bC_i \ar[r]^-{\phi} & \bD & \\
  \cP_Y\ar[urr]_-{\hphantom{xx}M_1 \otimes \ldots \otimes M_n} & &
 }
\end{equation}
 We need the special cases for $n=1,2$ of this observation. 
 
 We start with $n=1$.
 Consider morphisms $f \colon X \to Y$ in $\Set$ and $\phi \colon \bC \to \bD$ in $\Prlp$.
 For $M$ in $\bR(X,\bC)$, the image $\bR(f,\phi)(M)$ in $\bR(Y,\bD)$ is equivalent to the left Kan extension  
 \[\xymatrix{
  \cP_X\ar[r]^-{M}\ar[d]_-{f(-)} & \bC \ar[r]^-{\phi}  & \bD \\
  \cP_Y\ar[urr]_-{\hphantom{xx}\bR(f,\phi)(M)} & &
 }\]
 Since $f(-)$ has a right adjoint, namely the preimage functor $f^{-1}$, 
 the equivalence from \eqref{oijoierjgwegrerewgw} identifies this left Kan extension with the composition
 \begin{equation}\label{fqwefeeqfefqefqwefqewf}
\phi \circ - \circ f^{-1} \colon \Fun(\cP^{\op}_{X},\bC)\to \Fun(\cP^{\op}_{Y},\bD)\ .
\end{equation}
Since this is a morphism in $\CLL$, the functor in \eqref{regwjgergoerwgwegwergerg1} takes values in $\CLL$ on morphisms, too.
    
   In the case $n=2$, the left Kan extension from \eqref{revwrvwrvwevervvwe} corresponds to the right Kan extension in \eqref{vwervlnlvwevewwerv}.
  Since the structure maps of the lax symmetric monoidal structure of the functor in \eqref{regwjgergoerwgwegwergerg1} arise as right Kan extensions, they  preserve finite limits in each argument.
Therefore, we can conclude by \cref{roigjioergerergergergwegrwerg} that the functor in \eqref{regwjgergoerwgwegwergerg1} refines to a lax symmetric monoidal functor with values in $\CLL$. 

 Next we compute the unit constraint of the lax symmetric monoidal functor $\bR$.
Recall that the tensor unit of $\Set \times \Prlp$ is given by the pair $(\{*\},\Spc_*)$
which we regard as a functor $\mathbf{pt} \xrightarrow{(\{*\},\Spc_*)} \Set \times \Prlp$ between the fibres of the cocartesian fibration $\cO^\otimes \to \Fin_*$ over $\langle 0 \rangle$ and $\langle 1 \rangle$, respectively. Here we denote by $\mathbf{pt}$ a  point in the fibre of $\cO^{\otimes}\to \Fin_{*}$ over $\langle 0\rangle$.
{The map $r$ from \eqref{kjqwekfljfklafdfa} has fibre $\Fun(\{*\},\{*\}) \cong \{*\}$ over $\mathbf{pt}$}, while the fibre over $(*,\Spc_*)$ is $\Fun(\cP_*,\Spc_*)$.

Using the description of cocartesian lifts given at the end of the proof of \cite[Cor.~2.2.6.14]{HA}, the unit constraint $u_\bR$ of $\bR$ is given by the left Kan extension
\[\xymatrix{
 \{*\}\ar[d]_-{u_\cP}\ar[r]^-{S^0} & \Spc_* \\
\cP_* \cong \{ \emptyset \to *\}\ar[ru]_-{u_\bR} & &
}\]
of the unit of $\Spc_*$ along the unit constraint $u_\cP$ of the lax symmetric monoidal structure on $\cP$.
Since $u_\cP$ sends $*$ to $*$, the pointwise formula shows that $u_\bR$ is the essentially unique functor sending $\emptyset$ to $*$ and $*$ to $S^0$.

 The specialisation of \eqref{revwrvwrvwevervvwe} to the case $n=2$ shows that the tensor product of presheaves is given by the right Kan extension in \eqref{vwervlnlvwevewwerv}.
 Furthermore, the {preceding} calculation shows that the unit constraint is determined by the presheaf $\cU$ from \eqref{eq:unit-constraint}.

The only thing left to show is that there is an equivalence of functors
\[ \bR \simeq \Fun \circ (\cP^{*,\op} \times \inc) \colon \Set \times \Prlp \to \CATi\ , \]
where $\inc \colon \Prlp \to \CATi$ is the inclusion functor.
As explained in \cite[Rem.~2.2.6.8]{HA}, $\Fun^\cO(P,Q)^\otimes$ comes equipped with a morphism of $\infty$-operads $\alpha \colon \Fun^\cO(P,Q)^\otimes \times_{\cO^\otimes} P^\otimes \to Q^\otimes$.
Restricting to the fibres over $\langle 1 \rangle$ in $\Fin_*$, this morphism induces a morphism
\[ \alpha_{\langle 1 \rangle} \colon \Fun^\cO(P,Q)^\otimes_{\langle 1 \rangle} \times_{\cO^\otimes_{\langle 1 \rangle}} P^\otimes_{\langle 1 \rangle} \to Q^\otimes_{\langle 1 \rangle} \]
of $\infty$-categories over $\cO^{\otimes}_{\langle 1 \rangle} \simeq \Set \times \Prlp$.
The morphism $\alpha^\otimes_{\langle 1 \rangle}$ corresponds to a morphism
\[ \widehat\alpha \colon \Fun^\cO(P,Q)^\otimes_{\langle 1 \rangle} \to (Q^\otimes_{\langle 1 \rangle})^{P^\otimes_{\langle 1 \rangle}} \]
to the exponential object in ${\CATi}_{/\cO^\otimes_{\langle 1 \rangle}}$.
Since the functor $\cP \colon \Set \to \Cat$ takes values in the subcategory of left adjoint functors, the morphism $p_{\langle 1 \rangle} \colon P^\otimes_{\langle 1 \rangle} \to \cO^\otimes_{\langle 1 \rangle}$ is both a cocartesian and a cartesian fibration.
It follows from \cite[Cor.~A.3.10]{hms:coisotropic} that $(Q^\otimes_{\langle 1 \rangle})^{P^\otimes_{\langle 1 \rangle}}$ is given by the cocartesian unstraightening of the functor
\[ \cO^\otimes_{\langle 1 \rangle} \simeq \Set \times \Prlp \xrightarrow{\cP^{*,\op} \times \inc} \Cati^\op \times \CATi \xrightarrow{\Fun} \CATi\ .\]
 Since  $\bR$ is defined as the straightening of $\Fun^\cO(P,Q)^\otimes_{\langle 1 \rangle}$, it suffices to show that $\widehat\alpha$ is an equivalence of cocartesian fibrations. It is a reformulation of 
  \eqref{oijoierjgwegrerewgw} that  $\widehat\alpha$ is a fibreweise equivalence.
Hence we are left with showing that $\widehat\alpha$ preserves cocartesian morphisms.
As explained above, a cocartesian lift $g$ of a morphism $(f,\phi) \colon (X,\bC) \to (Y,\bD)$ in $\Set \times \Prlp$ to a morphism in $\Fun^\cO(P,Q)^\otimes_{\langle 1 \rangle}$  is a functor $M \colon \cP_X \to \bC$ together with the data of a left Kan extension
\[\xymatrix{
  \cP_X\ar[r]^-{M}\ar[d]_-{f(-)} & \bC \ar[r]^-{\phi}  & \bD \\
  \cP_Y\ar[urr]_-{\hphantom{xx}\bR(f,\phi)(M)} & &
 } .\]
Because of  $\bR(f,\phi)(M) \simeq \phi \circ M \circ f^{-1} \simeq \Fun(\cP^*(f),\phi)(M)$ the morphism  $\widehat\alpha(g)$ is cocartesian  
in $(Q^\otimes_{\langle 1 \rangle})^{P^\otimes_{\langle 1 \rangle}}$.
This completes the proof of \cref{weogjowergerregegwerg}.
\end{proof}

  The forgetful functor $u \colon \Coarse \to \Set$ 
  has a symmetric monoidal structure, and the functor
    $\CL\to \CLL$ has a lax symmetric monoidal  structure, so
\[  \PSh\circ (u \times k) \colon \Coarse \times \CL \to \CLL \]
also inherits a lax symmetric monoidal structure.

\begin{prop}\label{prop:sh-monoid}
 The full subfunctor 
 \[ \Sh\circ (\id\times j) \colon \Coarse \times \CL \to \CLL \]
 of $\PSh \circ (u \times k)$ inherits a lax symmetric monoidal structure.
\end{prop}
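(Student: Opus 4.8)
The plan is to check that the lax symmetric monoidal structure on $\PSh \circ (u \times k)$ restricts to the full subfunctor $\Sh \circ (\id \times j)$; that is, the structure maps of the lax monoidal structure and the unit constraint all land in the subcategories of sheaves. Since $\Sh \circ (\id \times j) \to \PSh \circ (u \times k)$ is already known to be the inclusion of a full subfunctor (by \cite[Cor.~2.10 \& 2.12]{unik}, and \cref{eporgjwepogrwegregwer}), it suffices by \cref{roigjioergerergergergwegrwerg} to verify two things: first, that the unit constraint $\cU$ from \eqref{eq:unit-constraint} is a sheaf on the point; second, that for $X, X'$ in $\Coarse$ and $\bC, \bC'$ in $\CL$, and $M$ in $\Sh_\bC(X)$, $M'$ in $\Sh_{\bC'}(X')$, the exterior tensor product $M \otimes M'$ defined by the right Kan extension \eqref{vwervlnlvwevewwerv} is a sheaf on $X \times X'$ (with the product coarse structure). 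Once these are established, \cite[Prop.~2.2.1.9]{HA} (or rather the operadic restriction principle phrased in \cref{roigjioergerergergergwegrwerg}) produces the desired lax symmetric monoidal refinement.

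First I would dispatch the unit: $\cU$ sends $\emptyset$ to $*$, the zero object of $\Spc^{\op}_*$, and $\cP_*$ has only the two objects $\emptyset \subseteq *$, so the sheaf condition \eqref{ervervewevevwevwee} is trivially satisfied; $\cU$ is a $U$-sheaf for the unique (diagonal) entourage $U$ of the point. For the exterior tensor, I would argue as follows. Suppose $M$ is a $U$-sheaf on $X$ and $M'$ is a $U'$-sheaf on $X'$, for coarse entourages $U$ of $X$ and $U'$ of $X'$. Set $U \times U' := \{((x,x'),(y,y')) \mid (x,y) \in U,\ (x',y') \in U'\}$, which is a coarse entourage of $X \times X'$; the $(U\times U')$-bounded subsets of $X \times X'$ are exactly the subsets of products $B \times B'$ with $B$ $U$-bounded and $B'$ $U'$-bounded. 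The claim is that $M \otimes M'$ is a $(U \times U')$-sheaf. This is a double right-Kan-extension computation: $M \otimes M'$ is defined as the right Kan extension along $\cP^{\op}_{X\times X'} \leftarrow \cP^{\op}_X \times \cP^{\op}_{X'}$ of $M \widehat\otimes M'$, and I would show that the restriction of $M \otimes M'$ to $(U\times U')$-bounded subsets determines it. The key point is that the poset of $(U \times U')$-bounded subsets admits a cofinal system of ``rectangles'' $B \times B'$, and on rectangles $(M \otimes M')(B \times B') \simeq M(B) \otimes_{\bC \otimes \bC'} M'(B')$ because $B \times B'$ already lies in the image of the product functor $\cP_X \times \cP_{X'} \to \cP_{X \times X'}$. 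Then the right Kan extension property of $M$ from $\cP^{U\bd}_X$ and of $M'$ from $\cP^{U'\bd}_{X'}$, combined with the fact that $- \otimes_{\bC \otimes \bC'} -$ (the structure functor $\bC \times \bC' \to \bC \otimes \bC'$) preserves finite limits in each variable, upgrades to the statement that $M \otimes M'$ is right Kan extended from its restriction to the rectangles, hence from its restriction to all $(U \times U')$-bounded subsets. This is the formal heart of the argument, and it essentially mirrors the proof of \cite[Lem.~2.4]{unik} but with the tensor product of coefficient categories interposed.

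The main obstacle I anticipate is the careful bookkeeping in this last step: one must be precise about why the rectangles $B \times B'$ are cofinal (among all $(U\times U')$-bounded sets ordered by inclusion, every bounded set is contained in such a rectangle, but they need not form a filtered subposet, so one should instead use the formula for right Kan extension as a limit over a comma category and exhibit an appropriate final subcategory), and about the interaction between the inner Kan extensions defining $M$, $M'$ as sheaves and the outer Kan extension defining $M \otimes M'$. Concretely, I would factor the situation through the commutative diagram relating $\cP^{U\bd}_X \times \cP^{U'\bd}_{X'}$, $\cP^{(U\times U')\bd}_{X \times X'}$, $\cP^{\op}_X \times \cP^{\op}_{X'}$ and $\cP^{\op}_{X\times X'}$, and invoke the pasting law for right Kan extensions together with the observation that $\cP^{U\bd}_X \times \cP^{U'\bd}_{X'} \to \cP^{(U \times U')\bd}_{X\times X'}$ is (cofinal / a localisation onto) a final subcategory. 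A secondary point worth recording is that we should check $M \otimes M'$ is indeed well-defined, i.e.\ the right Kan extension in \eqref{vwervlnlvwevewwerv} exists — but this is part of the already-established lax monoidal structure on $\PSh \circ (u \times k)$, so it comes for free. Finally, I would remark that idempotent completeness plays no role here, so the same argument simultaneously handles the variant needed later; but the statement as given only asks for $\CL$-coefficients, so no extra work is required.
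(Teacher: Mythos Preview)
Your proposal is correct and follows essentially the same approach as the paper: reduce to checking that the unit constraint and the binary structure maps land in sheaves, then show $M \otimes M'$ is a $(U \times U')$-sheaf by expressing it as an iterated right Kan extension through the commutative square relating $\cP^{U\bd}_X \times \cP^{U'\bd}_{X'}$, $\cP^{(U\times U')\bd}_{X\times X'}$, $\cP^{\op}_X \times \cP^{\op}_{X'}$ and $\cP^{\op}_{X\times X'}$. The paper bypasses your cofinality worry by observing that once $M \otimes M'$ is exhibited as a right Kan extension along the composite $\cP^{U\bd,\op}_X \times \cP^{U'\bd,\op}_{X'} \xrightarrow{\times} \cP^{(U\times U')\bd,\op}_{X\times X'} \xrightarrow{j} \cP^{\op}_{X\times X'}$, the full faithfulness of $j$ alone yields that it is right Kan extended from its restriction along $j$---no explicit cofinality of rectangles is needed; also note that for the unit you should remark (as the paper does) that the entire functor $\Spc^{\fin,\op}_* \to \PSh_{\Spc^\op_*}(*)$ lands in sheaves, which follows since $\cU$ is a sheaf, $\Spc^{\fin,\op}_*$ is generated by $S^0$ under finite limits, and $\Sh_{\Spc^\op_*}(*)$ is closed under finite limits.
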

\begin{proof}
 By \cite[Prop.~2.2.1.1]{HA}, it is enough to show the following:
 \begin{enumerate}
  \item\label{it:sh-monoid1} the unit constraint $\Spc^{\fin,\op}_* \to \PSh_{\Spc^\op_*}(*)$ takes values in the full subcategory $\Sh_{\Spc^\op_*}(*)$; 
  \item\label{it:sh-monoid2} for $M_{1}$ in $\Sh_{\bC_{1}}(X_{1})$ and $M_{2}$ in $\Sh_{\bC_{2}}(X_{2} )$ the tensor product $M_{1}\otimes M_{2}$  belongs to $\Sh(X_{1}\times X_{2},\bC_{1}\otimes \bC_{2})$. 
 \end{enumerate} 
 
 For \ref{it:sh-monoid1}, first observe by inspection that the presheaf $\cU$ described by \eqref{eq:unit-constraint} is a sheaf.  
 Since $\cU$ is the image of $S^{0}$ under the unit constraint,
 $\Spc^{{\fin},\op}_*$ is generated by $S^{0}$ under finite limits, and
 $\Sh_{\Spc^\op_*}(*)$ is closed under finite limits, we conclude that   the unit constraint
 takes values in  $\Sh_{\Spc^\op_*}(*)$.

 For \ref{it:sh-monoid2},
 we first choose coarse entourages $U_1$ of $X_{1}$ and $U_2$ of $X_{2}$ such that $M_1$ is a $U_1$-sheaf and $M_2$ is a $U_2$-sheaf. 
We will show that $ M_{1}\otimes M_{2}$ is a $U_{1}\times U_{2}$-sheaf.
Recall that $M_{1}\otimes M_{2}$ is given by the right Kan extension
 \[\xymatrix@C=4em{
  \cP_{X_1}^\op \times \cP_{X_2}^\op\ar[r]^-{M_1 \widehat\otimes M_2}\ar[d]_-{\times} & \bC_1 \otimes \bC_2 \\
  \cP_{X_1 \times X_2}^\op\ar[ur]_-{\hphantom{xx}M_1 \otimes M_2}
 }\ ,\]
 where $\widehat\otimes$ was defined in \eqref{eq:exterior-tensor}.
Let $M_1'$ and $M_2'$ be the restrictions of $M_1$ and $M_2$ to $\cP_{X_1}^{U_1\bd,\op}$ and $\cP_{X_2}^{U_2\bd,\op}$, respectively.
 Since the functor $\bC_1 \times \bC_2 \to \bC_1 \otimes \bC_2$ preserves limits in each variable separately and sheaves are characterised as right Kan extensions (see \eqref{ervervewevevwevwee}), $M_1 \widehat\otimes M_2$ is a right Kan extension of the functor
 \[ M_1' \widehat\otimes M_2' \colon \cP_{X_1}^{U_1\bd,\op} \times \cP_{X_2}^{U_2\bd,\op} \xrightarrow{M_1' \times M_2'} \bC_1 \times \bC_2 \to \bC_1 \otimes \bC_2\] along the upper horizontal map in the commutative square 
\begin{equation}\label{wrgwergfwerfewrfrefw}
\xymatrix{
  \cP_{X_1}^{U_1\bd,\op} \times \cP_{X_2}^{U_2\bd,\op}\ar[r]\ar[d]_-{\times} & \cP_{X_1}^{\op} \times \cP_{X_2}^{\op}\ar[d]^-{\times} \\
  \cP_{X_1 \times X_2}^{(U_1 \times U_2)\bd,\op}\ar[r]^-{j} & \cP_{X_1 \times X_2}^{\op}
 }\ .
\end{equation}   
 By definition, $M_1 \otimes M_2$ is a right Kan extension of $M_1' \widehat\otimes M_2'$ along the {composition along the top right corner} in  \eqref{wrgwergfwerfewrfrefw}.
 By commutativity of this square,
 $M_1 \otimes M_2$ is also a right Kan extension of $M_1' \widehat\otimes M_2'$ along the composition {along the bottom left corner}  in  \eqref{wrgwergfwerfewrfrefw}.
   Since $j$ is fully faithful, $M_{1} \otimes M_{2}$ is also equivalent to the right Kan extension of $j^{*} (M_{1}\otimes M_{2})$ and therefore a $(U_1 \times U_2)$-sheaf.
 This finishes the verification of  Condition~\ref{it:sh-monoid2}.
\end{proof}

Recall that by \cref{eporgjwepogrwegregwer} the  functor $\widehat\bV $ in \eqref{wergwregvfvfdsv} 
is a full subfunctor of the functor
 $\wt\bV\circ (\id\times i)$ 
in   \eqref{wrfwffwefwefwefwefweff}.

\begin{prop}\label{prop:loc-monoid}
The functor
\[ \wt\bV \circ (\id\times k) \colon \Coarse \times \CL \to \CLL \]
has a lax symmetric monoidal structure such that the localisation $\PSh \circ (u \times \id) \to \wt\bV$ refines to a natural transformation of lax symmetric monoidal functors.
 The full subfunctor
 \[ \widehat\bV \circ (\id\times j)  \colon \Coarse \times \CL \to \CLL \]
 inherits a lax symmetric monoidal structure.
\end{prop}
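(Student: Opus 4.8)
The plan is to deduce both assertions from \cref{wefkijeqwofqlewfeqwfqfqeewfq}, which I would apply (in its evident large-category variant) to the lax symmetric monoidal functor $\PSh\circ(u\times k)\colon\Coarse\times\CL\to\CATi$ produced by \cref{weogjowergerregegwerg}, taking for $W_{(X,\bC)}$ the class $\wt W_{X,\bC}$ of thinning morphisms $\theta^U_M\colon M\to U_*M$ with $U\in\cC_X^\Delta$ and $M\in\PSh_\bC(X)$, so that $\PSh_\bC(X)[\wt W_{X,\bC}^{-1}]=\wt\bV_\bC(X)$. The lemma then outputs a lax symmetric monoidal functor whose value at $(X,\bC)$ is $\wt\bV_\bC(X)$, and the canonical localisation map $F^\otimes\to F^\otimes[W^{-1}]$ supplied in its proof is precisely the refinement of the localisation transformation $\PSh\circ(u\times\id)\to\wt\bV$ to a morphism of lax symmetric monoidal functors. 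Hence everything reduces to checking the two hypotheses of \cref{wefkijeqwofqlewfeqwfqfqeewfq}.

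For hypothesis \cref{wefkijeqwofqlewfeqwfqfqeewfq}.\eqref{it:wefkijeqwofqlewfeqwfqfqeewfq1}, I would note that a morphism $(f,\phi)\colon(X,\bC)\to(X',\bC')$ induces, by the $n=1$ description \eqref{fqwefeeqfefqefqwefqewf} of the lax monoidal structure of $\PSh$, the functor $\phi\circ(-)\circ f^{-1}$, and that the assertion that this functor descends to the Dwyer--Kan localisations — which is exactly hypothesis \eqref{it:wefkijeqwofqlewfeqwfqfqeewfq1} — is nothing but the functoriality of $\wt\bV$; as remarked below \eqref{qwefijfoiwqefqwefewfq}, this is \cite[Lem.~2.81]{unik} with the left-exact localisation there replaced by the Dwyer--Kan localisation.

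For hypothesis \cref{wefkijeqwofqlewfeqwfqfqeewfq}.\eqref{it:wefkijeqwofqlewfeqwfqfqeewfq2}, the key observation is that, since the exterior tensor product $F(C)\times F(C')\to F(C\otimes C')$ is bifunctorial, a pair of thinning morphisms factors as $\theta^U_M\otimes\id_{M'}$ followed by $\id_{U_*M}\otimes\theta^{U'}_{M'}$, so it suffices to invert each factor after localisation. Here I would use that $M\otimes M'$ is the right Kan extension of the exterior product $M\mathbin{\widehat\otimes}M'$ along $\cP_X^\op\times\cP_{X'}^\op\to\cP_{X\times X'}^\op$ together with the direct computation $(U\times\diag(X'))(A\times A')=U(A)\times A'$ on a ``rectangle'' $A\times A'$, which shows that on rectangles $\theta^{U\times\diag(X')}_{M\otimes M'}$ agrees with $\theta^U_M\mathbin{\widehat\otimes}\id_{M'}$; passing to right Kan extensions then gives $\theta^U_M\otimes\id_{M'}\simeq\theta^{U\times\diag(X')}_{M\otimes M'}$, and symmetrically $\id_{U_*M}\otimes\theta^{U'}_{M'}\simeq\theta^{\diag(X)\times U'}_{U_*M\otimes M'}$. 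Since $U\times\diag(X')$ and $\diag(X)\times U'$ are coarse entourages of $X\times X'$ containing $\diag(X\times X')=\diag(X)\times\diag(X')$, the corresponding thinning morphisms lie in $\wt W_{X\times X',\bC\otimes\bC'}$ and become invertible after localisation. I expect this step — unwinding the right-Kan-extension definition of the exterior tensor product and keeping track of which entourages of $X\times X'$ are coarse and contain the diagonal — to be the main technical obstacle.

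Finally, I would upgrade the resulting $\CATi$-valued lax symmetric monoidal functor to a $\CLL$-valued one by invoking \cref{roigjioergerergergergwegrwerg}: each $\wt\bV_\bC(X)$ lies in $\CLL$; the localisation functors $\PSh_\bC(X)\to\wt\bV_\bC(X)$ are finite-limit-preserving, since on coefficients in $\CL$ the Dwyer--Kan localisation coincides with the left-exact localisation of \cite[Prop.~2.76(1)]{unik} and \cite[Lem.~2.81]{unik}, cf.\ \eqref{wrfwffwefwefwefwefweff}; and the binary structure maps of $\wt\bV$, being obtained by descending the structure maps of $\PSh$ — which preserve finite limits in each variable by \cref{weogjowergerregegwerg} — along left-exact localisations in source and target, again preserve finite limits in each variable by the universal property of the left-exact localisation. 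For the full subfunctor $\widehat\bV\circ(\id\times j)$, which is a full subfunctor of $\wt\bV\circ(\id\times k)$ by \cref{eporgjwepogrwegregwer} and \cref{lem:sheaf-vs-presheaf}, I would apply the criterion \cite[Prop.~2.2.1.1]{HA} exactly as in the proof of \cref{prop:sh-monoid}: the unit constraint of $\wt\bV$ is the image of the sheaf $\cU$ from \eqref{eq:unit-constraint} and lands in $\widehat\bV_{\Spc^\op_*}(*)$ because the latter is closed under finite limits, while the structure maps carry images of sheaves to images of sheaves since the $\PSh$-structure maps do by \cref{prop:sh-monoid}; that this subfunctor still takes values in $\CLL$ follows as before, using that $\Sh_\bC(X)\subseteq\PSh_\bC(X)$ is closed under finite limits.
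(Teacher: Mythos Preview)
Your proposal is correct and follows essentially the same route as the paper: apply \cref{wefkijeqwofqlewfeqwfqfqeewfq} to $\PSh\circ(u\times k)$, upgrade to $\CLL$ via \cref{roigjioergerergergergwegrwerg}, and deduce the statement for $\widehat\bV$ from \cite[Prop.~2.2.1.1]{HA} and \cref{prop:sh-monoid}. The only minor difference is in verifying hypothesis~\eqref{it:wefkijeqwofqlewfeqwfqfqeewfq2}: you factor $\theta^U_M\otimes\theta^{U'}_{M'}$ as a composite of two thinning morphisms for $U\times\diag(X')$ and $\diag(X)\times U'$, whereas the paper shows directly, via the same right-Kan-extension computation, that $U_*M\otimes U'_*M'\simeq(U\times U')_*(M\otimes M')$ and hence $\theta^U_M\otimes\theta^{U'}_{M'}\simeq\theta^{U\times U'}_{M\otimes M'}$ in one step.
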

\begin{proof}
 We will first construct a lax symmetric monoidal structure on the functor $\wt\bV \circ (\id\times k)$ considered as a $\CATi$-valued functor.
 To do this, we check that the assumptions of \cref{wefkijeqwofqlewfeqwfqfqeewfq} are satisfied for   $\PSh \circ (u \times k)$. 
 
 Condition \ref{it:wefkijeqwofqlewfeqwfqfqeewfq1} of \cref{wefkijeqwofqlewfeqwfqfqeewfq} is satisfied since $\wt\bV$ is a functor.
 
 In order to check Condition~\ref{it:wefkijeqwofqlewfeqwfqfqeewfq2}, let $\bC$ be in $\CL$,
 $M$ be in $\PSh_{\bC}(X)$ and $M'$ be in $\PSh_{\bC'}(X')$.
 Let $U$ and $U'$ be coarse entourages of $X$ and $X'$, respectively, and consider $\theta^U_M \colon M \to U_{*}M$ in $\widetilde{W}_{X,\bC}$ as well as $\theta^{U'}_{M'} \colon M' \to U'_*M'$ in $\widetilde{W}_{X',\bC'}$. 
  The functor $U_*M \otimes U'_*M'$ is given by the right Kan extension
  \[\xymatrix@C=4em{
   \cP_X^\op \times \cP_{X'}^\op\ar[r]^-{U(-) \times U'(-)}\ar[d]_-{\times} & \cP_X^\op \times \cP_{X'}^\op\ar[r]^-{M \widehat\otimes M'} & \bC \otimes \bC' \\
   \cP_{X \times X'}^\op\ar[urr]_-{\hphantom{xx}U_*M \otimes U'_*M'} & & 
  }\]
 Since $U(-) \times U'(-)$ is left adjoint as a functor $\cP_X^\op \times \cP_{X'}^\op \to \cP_X^\op \times \cP_{X'}^\op$, the functor $U_*M \widehat\otimes U'_*M'$ is a right Kan extension of $M \widehat\otimes M'$ along the product of thickening functors $U[-] \times U'[-] \colon \cP_X^\op \times \cP_{X'}^\op \to \cP_X^\op \times \cP_{X'}^\op$.
 Consequently, $U_*M \otimes U'_*M'$ also fits  into the following right Kan extension:
 \[\xymatrix@R=1.5em@C=4em{
  \cP_X^\op \times \cP_{X'}^\op\ar[d]_-{U[-] \times U'[-]}\ar[r]^-{M \widehat\otimes M'} & \bC \otimes \bC' \\
  \cP_X^\op \times \cP_{X'}^\op\ar[d]_-{\times} & & \\
   \cP_{X \times X'}^\op\ar[uur]_-{U_*M \otimes U'_*M'} & &
 }\]
 Since
 \[\xymatrix@C=4em{
  \cP_X^\op \times \cP_{X'}^\op\ar[r]^-{U[-] \times U'[-]}\ar[d]_-{\times} & \cP_X^\op \times \cP_{X'}^\op\ar[d]^-{\times} \\
  \cP_{X \times X'}^\op\ar[r]^-{(U \times U')[-]} & \cP_{X \times X'}^\op
 }\]
 commutes, it follows that $U_*M \otimes U'_*M' \simeq (U \times U')_*(M \otimes M')$.
 Under this identification, $\theta^U_M \otimes \theta^{U'}_{M'}$ is given by $\theta^{U \times U'}_{M \otimes M'} \colon M \otimes M' \to (U \times U')_*(M \otimes M')$.
 Hence $\theta^U_M \otimes \theta^{U'}_{M'}$ lies in $\wt W_{\bC \otimes \bC',X \otimes X}$.
  
 Now \cref{wefkijeqwofqlewfeqwfqfqeewfq} yields the desired lax symmetric monoidal refinement of $\wt\bV \circ (\id\times k)$ considered as a $\CATi$-valued functor.

In justifying \eqref{wrfwffwefwefwefwefweff}, we already observed that the underlying functor of $\wt\bV \circ (\id\times k)$ takes values in $\CLL$.
 To see that $\wt\bV \circ (\id\times k)$ defines a lax symmetric monoidal functor with values in $\CLL$, \cref{roigjioergerergergergwegrwerg} reduces
 the problem {to} showing that the structure maps
\[ \wt\bV_{\bC}(X) \times \wt\bV_{\bC'}(X') \to \wt\bV_{\bC \otimes \bC'}(X \otimes X') \]
 preserve finite limits in each variable separately.
 We fix an object $M$ in $\PSh_{\bC}(X)$ and consider the following diagram:
 \[\xymatrix@C=3em{
 \PSh_{\bC'}(X')\ar[r]^-{M \otimes -}\ar[d]_-{\ell'}& \PSh_{\bC \otimes \bC'}(X \otimes X')\ar[d]^-{\ell''}\\
 \wt\bV_{\bC'}(X')\ar[r]^-{\ell(M) \otimes -}& \wt\bV_{\bC \otimes \bC'}(X \otimes X')
 }\]
where $\ell$, $\ell'$ and $\ell''$ are the localisation maps.
The lower horizontal map and the filler of the diagram are obtained from the universal property of $\ell'$ as a localisation since the composition along the top right corner sends all morphisms in $\wt W_{X',\bC'}$ to equivalences.

By \cref{weogjowergerregegwerg}, the structure maps of $\PSh \circ (\id\times k)$ preserve finite limits in each variable separately. In particular, the upper horizontal map preserves finite limits.
Since $\ell''$ is a left-exact localisation \cite[Prop.~2.76]{unik}, it follows that $\ell''$ and the  composition $\ell''\circ (M \otimes -)$ are left-exact. Using the universal property of $\ell'$ as a localisation in left-exact $\infty$-categories, we now conclude that also the lower horizontal map is left-exact.
For reasons of symmetry, the same argument applies to the functor $- \otimes M'$ with $M'$ in $\PSh_{\bC'}(X')$.

By \cref{roigjioergerergergergwegrwerg},
the unit constraint of $\wt \bV \circ (\id\times k)$ viewed as a lax symmetric functor with values in $\CLL$ is given by the composition
\begin{equation}\label{fquiwehfiwefewdqewdqed}
\Spc_{*}^{\fin,\op} \to \PSh_{\Spc_{*}^{\op}}(*) \xrightarrow{\ell} \wt\bV_{\Spc_{*}^{\op}}(*)\ ,
\end{equation}
where the first left-exact functor is essentially uniquely determined by the fact that it sends $S^{0}$ to the sheaf $\cU$ in \eqref{eq:unit-constraint}.

The {last assertion of the proposition} follows in analogy to \cref{prop:sh-monoid} by \cite[Prop.~2.2.1.1]{HA}.
Since $\cU$ is a sheaf,
the unit constraint
given by  \eqref{fquiwehfiwefewdqewdqed} takes values in the full subcategory $\widehat\bV_{\Spc^\op_*}(*)$, and by \cref{prop:sh-monoid} the tensor product of sheaves is again a sheaf.
\end{proof}

Following the constructions outlined in \cref{sec:controlled-objects}, we now  use bornologies to impose finiteness conditions on sheaves.
Therefore, we consider the category $\BC$ of bornological coarse spaces.
Since the forgetful functor $v \colon \BC \to \Coarse$ is symmetric monoidal and the functor 
\[ \CL\to \CLLL \]
is lax symmetric monoidal, 
 we obtain from \cref{prop:loc-monoid} a lax symmetric monoidal functor
\[ \BC \times \CL \xrightarrow{v \times \id} \Coarse \times \CL \xrightarrow{\widehat\bV\circ (\id\times j)} \CLL\ .\]

\begin{prop}\label{prop:vc-monoid}
 The full subfunctor
 \[ \bV^c \colon \BC \times \CL \to \Cle \]
 of $\widehat\bV \circ (v \times j)$ inherits a lax symmetric monoidal structure.
\end{prop}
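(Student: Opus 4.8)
\emph{Plan of proof.} We argue as in the proof of \cref{prop:sh-monoid}. By \cref{prop:loc-monoid} and the discussion preceding the present statement, $\widehat\bV\circ(v\times j)\colon\BC\times\CL\to\CLL$ is lax symmetric monoidal, and by construction $\bV^c$ is a full subfunctor of it: indeed $\bV$ is a full subfunctor of $\widehat\bV\circ(v\times j)$, and $\bV^c$ is in turn the full subfunctor of $\bV$ described pointwise in \cref{weigjwoerfewv}. Consequently, by \cite[Prop.~2.2.1.1]{HA} (applied exactly as in \cref{prop:sh-monoid}), the assertion reduces to two verifications: first, that the unit constraint $\Spc^{\fin,\op}_*\to\widehat\bV_{\Spc^\op_*}(*)$ of $\widehat\bV\circ(v\times j)$ factors through the full subcategory $\bV^c_{\Spc^\op_*}(*)$; and second, that for $M_1$ in $\bV^c_{\bC_1}(X_1)$ and $M_2$ in $\bV^c_{\bC_2}(X_2)$ the tensor product $M_1\otimes M_2$, computed in $\widehat\bV_{\bC_1\otimes\bC_2}(X_1\otimes X_2)$, lies in $\bV^c_{\bC_1\otimes\bC_2}(X_1\otimes X_2)$.

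For the unit constraint: by \cref{prop:loc-monoid} it sends $S^0$ to the image, under the localisation, of the sheaf $\cU$ from \eqref{eq:unit-constraint}. Since $\cU(*)\simeq S^0$ is cocompact in $\Spc^\op_*$, and the one-point bornological coarse space $*$ has minimal bornology, \cref{weigjwoerfewv} (with $F=*$) shows that $\cU$ determines an object of $\bV^c_{\Spc^\op_*}(*)$. Because $\bV^c$ takes values in $\Cle$ and is a full subfunctor of $\widehat\bV\circ(v\times j)$, the subcategory $\bV^c_{\Spc^\op_*}(*)$ is closed under finite limits in $\widehat\bV_{\Spc^\op_*}(*)$; as $\Spc^{\fin,\op}_*$ is generated under finite limits by $S^0$, the unit constraint factors through $\bV^c_{\Spc^\op_*}(*)$.

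For the closure under tensor products: by \cref{weigjwoerfewv} we may write $M_i\simeq(\iota_i)_*N_i$ with $\iota_i\colon F_i\hookrightarrow X_i$ the inclusion of a locally finite subset and $N_i$ in $\bV_{\bC_i}(F_i)$. The product $F_1\otimes F_2$ again carries the minimal bornology (a finite union of products of finite sets is finite), so $\iota_1\otimes\iota_2$ is the inclusion of a locally finite subset. Using that the functoriality of $\widehat\bV\circ(v\times j)$ in the space variable is given by pushforward (cf.\ the proof of \cref{weogjowergerregegwerg}), the naturality of the structure maps of a lax symmetric monoidal functor yields an equivalence $M_1\otimes M_2\simeq(\iota_1\otimes\iota_2)_*(N_1\otimes N_2)$, where $N_1\otimes N_2$ is the tensor product computed in $\widehat\bV_{\bC_1\otimes\bC_2}(F_1\otimes F_2)$. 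By \cref{weigjwoerfewv} it then suffices to show that $N_1\otimes N_2$ is small. Fix coarse entourages $U_i$ of $F_i$ for which $N_i$ is a $U_i$-sheaf. As in the proof of \cref{prop:sh-monoid}, $N_1\otimes N_2$ is the right Kan extension of the exterior product $N_1\widehat\otimes N_2$ of \eqref{eq:exterior-tensor}, restricted to $\cP^{U_1\bd,\op}_{F_1}\times\cP^{U_2\bd,\op}_{F_2}$, along the functor $(A_1,A_2)\mapsto A_1\times A_2$ into $\cP^\op_{F_1\times F_2}$. For a bounded --- hence finite --- subset $B$ of $F_1\otimes F_2$, the pointwise formula for this right Kan extension presents $(N_1\otimes N_2)(B)$ as a limit, over the pairs $(A_1,A_2)$ of $U_i$-bounded subsets with $A_1\times A_2\subseteq B$, of the images of the objects $(N_1(A_1),N_2(A_2))$ under the canonical pairing $\bC_1\times\bC_2\to\bC_1\otimes\bC_2$. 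This indexing category is finite since $A_i$ ranges over subsets of the finite set $\pr_i(B)$; each $N_i(A_i)$ is cocompact because $A_i$ is a bounded subset of $F_i$ and $N_i$ is small; and the pairing $\bC_1\times\bC_2\to\bC_1\otimes\bC_2$ preserves cocompact objects and finite limits in each variable by construction of the symmetric monoidal structure on $\CL$. Hence $(N_1\otimes N_2)(B)$ is a finite limit of cocompact objects, so cocompact, and $N_1\otimes N_2$ is small. This proves the second verification and hence the proposition.

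The main obstacle is the second verification, and specifically making precise --- from the construction of the lax symmetric monoidal structure in \cref{prop:loc-monoid} and \cref{weogjowergerregegwerg} --- the interchange identity $M_1\otimes M_2\simeq(\iota_1\otimes\iota_2)_*(N_1\otimes N_2)$ between the tensor product and pushforward along the locally finite product subspace; once this is in place, the smallness estimate is immediate from the finiteness of the bornology of $F_1\otimes F_2$. A secondary point to pin down is that $\bV^c$, defined in \eqref{fdsvkvsdfpvfvsfdvsvfdvsv} as a left Kan extension, really agrees with the full subfunctor of $\widehat\bV\circ(v\times j)$ described in \cref{weigjwoerfewv}, which is what legitimates the reduction via \cite[Prop.~2.2.1.1]{HA}.
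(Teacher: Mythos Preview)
Your proposal is correct and follows essentially the same approach as the paper: reduce via \cite[Prop.~2.2.1.1]{HA} to the unit constraint and closure under the binary tensor product, then use \cref{weigjwoerfewv} to pass to locally finite supports and verify smallness of $N_1\otimes N_2$ by the pointwise Kan extension formula and finiteness of the indexing poset. The interchange identity $M_1\otimes M_2\simeq(\iota_1\otimes\iota_2)_*(N_1\otimes N_2)$ that you flag as the main obstacle is exactly what the paper records as the commutative square expressing naturality of the lax monoidal structure maps in the space variable.
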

\begin{proof}
 Recall from \cref{sec:controlled-objects} that $\bV^c_\bC(X)$ is the full subcategory of $\bV_\bC(X)$ on objects $i_*M$, where $i \colon F \to X$ is the inclusion of a locally finite subset and $M$ is an object in $\bV_\bC(F)$ (hence represented by a small sheaf).
 In particular, $\bV^c$ is a full subfunctor of $\widehat\bV$.

 Again by \cite[Prop.~2.2.1.1]{HA}, it suffices to show that the unit constraint $\Spc^{\fin,\op}_* \to \bV_{\Spc^\op_*}(*)$ takes values in $\bV^c_{\Spc^\op_*}(*)$ and that for all $M_1$ in $\bV^c_{\bC_1}(X_1)$ and $M_2$ in $\bV^c_{\bC_2}(X_2)$ the tensor product $M_1 \otimes M_2$ lies in $\bV^c_{\bC_1 \otimes \bC_2}(X_1 \otimes X_2)$.
 
 Since $S^0$ is cocompact in $\Spc^\op_*$, the sheaf $\cU$ from \eqref{eq:unit-constraint} is small.
 This implies the claim about the unit constraint.
 
 Given $M_1$ in $\bV^c_{\bC_1}(X_1)$ and $M_2$ in $\bV^c_{\bC_2}(X_2)$, there exist for $i=1,2$ locally finite subsets $F_i$ of $X_i$ and small sheaves $N_i$ in $\bV_{\bC_i}(F_i)$  such that $M_i \simeq j_{i,*}N_i$, where $j_i \colon F_i \to X_i$ is the respective inclusion map.
 Since there is a commutative diagram
 \[\xymatrix{
  \widehat\bV_{\bC_1}(F_1) \times \widehat\bV_{\bC_2}(F_2)\ar[r]^-{\otimes}\ar[d]_-{j_{1,*} \otimes j_{2,*}} & \widehat\bV_{\bC_1 \otimes \bC_2}(F_1 \otimes F_2)\ar[d]^-{(j_1 \otimes j_2)_*} \\
  \widehat\bV_{\bC_1}(X_1) \times \widehat\bV_{\bC_2}(X_2)\ar[r]^-{\otimes} & \widehat\bV_{\bC_1 \otimes \bC_2}(X_1 \otimes X_2)
 }\]
 and $F_1 \otimes F_2$ is a locally finite subset of $X_1 \otimes X_2$, we only need to check that $N_1 \otimes N_2$ is a small sheaf over $F_1 \otimes F_2$.
 By the pointwise formula for the Kan extension \eqref{vwervlnlvwevewwerv} defining $N_1 \otimes N_2$,
 the value at a bounded subset $B$ of $F_1 \otimes F_2$ is given by
 \[ (N_1 \otimes N_2)(B) \simeq \lim_{(B_1 \times B_2) \to B \in ((\cP_{F_1} \times \cP_{F_2})_{/B})^\op} N_1(B_1) \otimes N_2(B_2)\ .\]
 Since $B$ is finite, the indexing category of this limit is finite.
 Hence the smallness of $N_1$ and $N_2$ implies that $(N_1 \otimes N_2)(B)$ is given by a finite limit of cocompact objects.
\end{proof}

\begin{kor}\label{cor:vperf-monoid}
 The functor
 \[ \bV^{c,\perf} \colon \BC \times \CL \xrightarrow{\bV^c} \Cle \xrightarrow{\Idem} \Clep \]
 has a lax symmetric monoidal refinement.
\end{kor}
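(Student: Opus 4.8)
The plan is to obtain the lax symmetric monoidal refinement of $\bV^{c,\perf}$ by composing the two refinements already at our disposal. By \cref{prop:vc-monoid}, the functor $\bV^c \colon \BC \times \CL \to \Cle$ carries a lax symmetric monoidal structure whose unit constraint is the essentially unique left-exact functor $\Spc^{\fin,\op}_* \to \bV^c_{\Spc^\op_*}(*)$ classifying the sheaf $\cU$ from \eqref{eq:unit-constraint}. On the other hand, as recorded in \cref{sec:prelims}, the idempotent completion functor $\Idem \colon \Cle \to \Clep$ admits a symmetric monoidal refinement: the canonical functors $\bC \to \Idem(\bC)$ induce equivalences $\Idem(\bC \otimes \bD) \xrightarrow{\simeq} \Idem(\Idem(\bC) \otimes \Idem(\bD))$, so $\Idem$ is compatible with the monoidal structure in the sense of \cite[Def.~2.2.1.6]{HA} and refines to a symmetric monoidal functor by \cite[Prop.~2.2.1.9]{HA}, this statement then being transported to the left-exact side along the equivalence $(-)^\op$.

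First I would make the composition explicit on the level of $\infty$-operads: representing $\bV^c$ by an operad map $(\bV^c)^\otimes \colon (\BC \times \CL)^\otimes \to \Cle^\otimes$ and $\Idem$ by an operad map $\Idem^\otimes \colon \Cle^\otimes \to \Clep^\otimes$, the composite $\Idem^\otimes \circ (\bV^c)^\otimes$ is again an operad map, and restricting it to the fibres over $\langle 1 \rangle \in \Fin_*$ recovers $\bV^{c,\perf} = \Idem \circ \bV^c$. Since an operad map over $\Fin_*$ is precisely a lax symmetric monoidal functor, this produces the desired lax symmetric monoidal refinement. Its unit constraint is the image under $\Idem$ of the unit constraint of $\bV^c$, hence the left-exact functor $\Spc^{\fin,\op}_* \to \bV^{c,\perf}_{\Spc^\op_*}(*)$ classifying the object represented by $\cU$; I would record this explicitly, since it is needed in \cref{sec:orbits-and-fixed-points}.

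I do not expect any genuine obstacle here: the argument is purely formal once \cref{prop:vc-monoid} and the symmetric monoidal refinement of $\Idem$ are in place. The only point worth checking is that the two monoidal structures being composed literally agree, i.e.\ that the symmetric monoidal structure on $\Cle$ appearing in \cref{prop:vc-monoid} is the one for which $\Idem$ is symmetric monoidal; since both are the structure fixed in \cref{sec:prelims}, this is immediate.
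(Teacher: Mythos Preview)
Your proof is correct and takes essentially the same approach as the paper: compose the lax symmetric monoidal refinement of $\bV^c$ from \cref{prop:vc-monoid} with the symmetric monoidal refinement of $\Idem$ established in \cref{sec:prelims}. The paper's proof is the one-line version of exactly this argument.
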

\begin{proof}
 This follows from \cref{prop:vc-monoid} since $\Idem$ has a symmetric monoidal refinement.
\end{proof}

\subsection{Orbits as a module over fixed points}\label{sec:orbits-and-fixed-points}

Let $G$ be a group. For a symmetric monoidal $\infty$-category $\cC$, we equip the functor category $\Fun(BG,\cC)$ with the pointwise symmetric monoidal structure.
Then the lax symmetric monoidal functor $\bV^{c,\perf}$ from \cref{cor:vperf-monoid} induces a lax symmetric monoidal functor
\[ \Fun(BG,\BC) \times \Fun(BG,\CL) \to \Fun(BG,\Clep)\ .\]
Since $G\BC$ is a full symmetric monoidal subcategory of $\Fun(BG,\BC)$, we can restrict this functor to obtain the lax symmetric monoidal functor
\begin{equation}\label{eq:GV}
 G\bV^{c,\perf} \colon G\BC \times \Fun(BG,\CL) \to \Fun(BG,\Clep)
\end{equation}
which is an equivariant version of $\bV^{c,\perf}$.

\begin{ddd}\label{rguiergegwgergwgre}
 We define the functors
 \[ \bV^{c,\perf,G} := \lim_{BG} \circ G\bV^{c,\perf} \colon G\BC \times \Fun(BG,\CL) \to \Clep \]
and
 \[ \bV^{c,\perf}_{G} := \mathop{\colim}\limits_{BG} \circ G\bV^{c,\perf} \colon G\BC \times \Fun(BG,\CL) \to \Clep \ .\qedhere\]
\end{ddd}

\begin{lem}\label{lem:fixed-props}
 The functor $\bV^{c,\perf,G}_{\bC}$ is coarsely invariant and $\pi_{0}$-excisive.
\end{lem}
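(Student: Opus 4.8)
The plan is to deduce both assertions formally from the corresponding non-equivariant properties of $\bV^{c,\perf}$, using that $\lim_{BG} \colon \Fun(BG,\Clep) \to \Clep$ is the right adjoint of the constant-diagram functor. The two inputs are: that the non-equivariant functor $\bV^{c,\perf}_{\bC'} \colon \BC \to \Clep$ is coarsely invariant and $\pi_0$-excisive for every $\bC'$ in $\CL$ (this is part of the theory developed in \cite{unik}: coarse invariance and $\pi_0$-excision already hold for $\bV^c$, and are preserved by post-composition with $\Idem$, since $\Idem$ preserves finite products, which in a semi-additive $\infty$-category are biproducts); and that $\lim_{BG}$, being a right adjoint, preserves finite limits, in particular equivalences.

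First I would check that $G\bV^{c,\perf}_\bC \colon G\BC \to \Fun(BG,\Clep)$ is coarsely invariant and $\pi_0$-excisive. By construction (see \eqref{eq:GV}), $G\bV^{c,\perf}$ is obtained by restricting the pointwise extension of $\bV^{c,\perf}$ to $\Fun(BG,-)$ along $G\BC \hookrightarrow \Fun(BG,\BC)$; hence, writing $\bC'$ and $X'$ for $\bC$ and $X$ with the $G$-action forgotten, the image of $G\bV^{c,\perf}_\bC(X)$ under the forgetful functor $\Fun(BG,\Clep) \to \Clep$ is $\bV^{c,\perf}_{\bC'}(X')$. This forgetful functor is conservative and preserves biproducts, both being computed pointwise over $BG$. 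Since $\{0,1\}_{max,max}$ carries the trivial $G$-action, the projection $\{0,1\}_{max,max} \otimes X \to X$ in $G\BC$ has underlying morphism $\{0,1\}_{max,max} \otimes X' \to X'$ in $\BC$, and if $(Y,Z)$ is a partition of $X$ into coarsely disjoint invariant subsets then $(Y',Z')$ is a partition of $X'$ into coarsely disjoint subsets. Therefore coarse invariance and $\pi_0$-excision of $G\bV^{c,\perf}_\bC$ follow from the first input.

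It then remains to apply $\lim_{BG}$; the composite $\lim_{BG}\circ\, G\bV^{c,\perf}_\bC$ is $\bV^{c,\perf,G}_\bC$ by \cref{rguiergegwgergwgre}. Preservation of equivalences turns coarse invariance of $G\bV^{c,\perf}_\bC$ into coarse invariance of $\bV^{c,\perf,G}_\bC$. For $\pi_0$-excision one must know that $\lim_{BG}$ sends the $\pi_0$-excision equivalence $G\bV^{c,\perf}_\bC(Y) \oplus G\bV^{c,\perf}_\bC(Z) \xrightarrow{\simeq} G\bV^{c,\perf}_\bC(X)$ to the canonical comparison map for $\bV^{c,\perf,G}_\bC$; this holds because $\Clep$, and therefore also $\Fun(BG,\Clep)$, is semi-additive, so that finite coproducts agree with finite products in both categories and $\lim_{BG}$ --- a priori only limit-preserving --- consequently preserves finite coproducts and is compatible with the canonical maps $\bigoplus \to \prod$. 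This compatibility of $\lim_{BG}$ with the finite direct sums appearing in the definition of $\pi_0$-excisiveness is the only step that is not purely formal, and it is exactly where semi-additivity of $\Clep$ is used; everything else is a routine transfer of the known non-equivariant statements.
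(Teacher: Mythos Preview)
Your proof is correct and follows essentially the same route as the paper's: both reduce to the non-equivariant coarse invariance and $\pi_0$-excision of $\bV^{c}_{\bC'}$ from \cite{unik}, use conservativity of the forgetful functor $\Fun(BG,\Clep)\to\Clep$ to lift these to $G\bV^{c,\perf}_\bC$, and then invoke semi-additivity of $\Clep$ to see that $\Idem$ and $\lim_{BG}$ are additive (preserve finite biproducts), so that the $\pi_0$-excision equivalence survives. The only cosmetic difference is that the paper phrases the last step as ``$\Idem$ and $\lim_{BG}$ are additive functors'' rather than spelling out the right-adjoint/semi-additivity mechanism you describe.
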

\begin{proof}
 The functor $\bV_{\bC}^{c,\perf}$ is coarsely invariant by \cite[Lem.~3.3 \& 4.5]{unik}.
 Since the forgetful functor $\Fun(BG,\Clep) \to \Clep$ detects equivalences, it follows that $G\bV_{\bC}^{c,\perf}$ is coarsely invariant.
 Consequently, $\bV^{c,\perf,G}_{\bC}$ is also coarsely invariant.

 Let $X$ be a $G$-bornological coarse space and $(Y,Z)$ be a partition of $X$ into invariant and coarsely disjoint subsets.
 Since $\Clep$ is semi-additive \cite[Lem.~7.21]{unik} and $\bV_{\bC}^{c}$ is $\pi_{0}$-excisive \cite[Lem.~3.34 \& 4.9(1)]{unik}, 
 the canonical inclusions of $Y$ and $Z$ into $X$ induce an equivalence
 \[  \bV_{\bC}^{c}(Y) \oplus \bV_{\bC}^{c}(Z) \xrightarrow{\simeq} \bV_{\bC}^{c}(X)   \ .\]
 Since both $\Idem$ and $\lim_{BG}$ are additive functors, it follows that
 \[  \bV_{\bC}^{c,\perf,G}(Y) \oplus \bV_{\bC}^{c,\perf,G}(Z) \xrightarrow{\simeq} \bV_{\bC}^{c,\perf,G}(X)\ ,\]
 so $\bV^{c,\perf,G}_\bC$ is $\pi_0$-excisive.
 \end{proof}

\begin{rem}\label{qeorighqoergergwregr}
 Let $X$ be a $G$-bornological coarse space and $(Y,Z)$ be a partition of $X$ into $G$-invariant and coarsely disjoint subsets.
 The canonical equivalence
 \[ \bV^{c,\perf,G}_\bC(Y) \oplus \bV^{c,\perf,G}_\bC(Z) \xrightarrow{\simeq} \bV^{c,\perf,G}_\bC(X) \]
 admits the following explicit inverse.
 The inclusions $\cP_Y \to \cP_X$ and $\cP_Z \to \cP_X$ induce restriction functors
 \[ (-)_{|Y} \colon \Sh_\bC(X) \to \Sh_\bC(Y) \quad\text{and}\quad (-)_{|Z} \colon \Sh_\bC(X) \to \Sh_\bC(Z)\ .\]
 It follows from \cite[Lem.~2.13 \& 2.14]{unik} that the sum of these functors provides an inverse to the canonical functor
 \[ \Sh_\bC(Y) \oplus \Sh_\bC(Z) \to \Sh_\bC(X)\ .\]
 Since $(\theta^U_M)_{|Y} \simeq \theta^{U_{|Y}}_{M_{|Y}}$ (and similarly for $Z$), the restriction functors send $W_{X,\bC}$ from \eqref{eq:wxc} to $W_{Y,\bC}$ and $W_{Z,\bC}$, respectively.
 Hence the restriction functors descend to functors $(-)_{|Y} \colon \widehat\bV_\bC(X) \to \widehat\bV_\bC(Y)$ and $(-)_{|Z} \colon \widehat\bV_\bC(X) \to \widehat\bV_\bC(Z)$ on the localisations.
 Moreover, they preserve small sheaves and thus restrict to functors $\bV_\bC(X) \to \bV_\bC(Y)$ and $\bV_\bC(X) \to \bV_\bC(Z)$. 
 
 The sum of these functors provides an inverse to the canonical functor
 \begin{equation}\label{wervervmdfvdfvvosfdvsvsfdv}
 \bV_\bC(Y) \oplus \bV_\bC(Z) \to \bV_\bC(X)\ .
\end{equation} 
 These mutually inverse equivalences restrict to the continuous version $\bV^c_\bC$ defined by the left Kan extension diagram \eqref{fdsvkvsdfpvfvsfdvsvfdvsv}.
 Since $Y$ and $Z$ are $G$-invariant, all functors above have canonical $G$-equivariant refinements.
 Hence we obtain the desired inverse by applying $\lim_{BG} \circ \Idem$ to the equivariant version of \eqref{wervervmdfvdfvvosfdvsvsfdv}.
\end{rem}

\begin{rem}
 One can also show that $\bV^{c,\perf,G}_\bC$ preserves flasqueness in the sense of \cite[Def.~3.7]{unik}.
 This follows from \cite[Lem.~3.15 \& 4.7]{unik} using the fact that $\Idem$ and $\lim_{BG}$ are additive functors.
 
 On the other hand, $\bV^{c,\perf,G}$ is not expected to be excisive for arbitrary complementary pairs since $\lim_{BG}$ does not preserve cofibre sequences in $\Clep$.
 In particular, {postcomposing $\bV^{c,\perf,G}$ with homological functors} will not give rise to coarse homology theories.
\end{rem}

In the remainder of this section, we employ the lax symmetric monoidal structure on $G\bV^{c,\perf}$ from \eqref{eq:GV} to equip $\bV^{c,\perf,G}$ with a lax symmetric monoidal structure and $\bV^{c,\perf}_G$ with a module structure over  $\bV^{c,\perf,G}$.
 The main ingredient lies in the following construction.\footnote{This construction was suggested to us by Thomas Nikolaus.}

\begin{construction}\label{const:fix-orbit}
Let $\cC$ be a symmetric monoidal $\infty$-category which admits all $BG$-indexed limits.
Then by \cite[Cor.~3.2.2.5]{HA}, the $\infty$-category of commutative algebras $\CAlg(\cC)$ in $\cC$ admits all $BG$-indexed limits and the forgetful functor $\CAlg(\cC) \to \cC$ preserves all $BG$-indexed limits.
For $A$ in $ \Fun(BG,\CAlg(\cC))$ we define $A^{G}:=\lim_{BG}A$ in $\CAlg(\cC)$.
The counit of the adjunction
\[ \underline{(-)} \colon \CAlg(\cC) \leftrightarrows    \Fun(BG,\CAlg(\cC)) \cocolon \lim_{BG} \]
provides a morphism $\underline{A^G} \to A$ in $\Fun(BG,\CAlg(\cC))$.

We now consider the $\infty$-category of modules $\Mod(\cC)$
whose objects are pairs $(A,M)$ of a commutative algebra $A$ and an $A$-module $M$ in $\cC$.
The functor $\Mod(\cC) \to \CAlg(\cC)$ {which forgets the module} is a cartesian fibration by \cite[Cor.~3.4.3.4(1)]{HA}.
For $A$ in $\CAlg(\cC)$, we write $\Mod_{A}(\cC)$ for the fibre over $A$.

By \cite[Rem.~2.1.3.4]{HA}, the induced functor $\Fun(BG, \Mod(\cC)) \to \Fun(BG,\CAlg(\cC))$ is identified with the cartesian fibration $\Mod(\Fun(BG,\cC)) \to \CAlg(\Fun(BG,\cC))$.
Regarding $A$ as an object in $\CAlg(\Fun(BG,\cC))$, the counit $c \colon \underline{A^G} \to A$ then induces a restriction functor between the fibres over $A$ and $\underline{A^G}$:
\[ c^* \colon \Mod_A(\Fun(BG,\cC)) \to \Fun(BG,\Mod_{A^G}(\cC)) \]
If $\cC$ also admits $BG$-indexed colimits and the tensor product on $\cC$ preserves $BG$-indexed colimits in each variable separately, then \cite[Cor.~4.3.2.5]{HA} shows that $\Mod_{A^G}(\cC)$ also admits $BG$-indexed colimits and that the forgetful functor $\Mod_{A^G}(\cC) \to \cC$ preserves $BG$-indexed colimits.  
Therefore, we have the composition \begin{equation}\label{qwefwedws}
  \Phi \colon \Mod_A(\Fun(BG,\cC)) \xrightarrow{c^*} \Fun(BG,\Mod_{A^G}(\cC)) \xrightarrow{\colim_{BG}} \Mod_{A^G}(\cC)\end{equation}
which fits into the commutative diagram
\begin{equation}\label{eq:fix-orbit}
\xymatrix{
 \Mod_A(\Fun(BG,\cC))\ar[r]^-{\Phi}\ar[d] & \Mod_{A^G}(\cC)\ar[d] \\
 \Fun(BG,\cC)\ar[r]^-{\colim_{BG}} & \cC
}\end{equation}
in which the vertical arrows take the underlying module object.
 \end{construction}

In the following, we use the exponential law to regard the functor $G\bV^{c,\perf}$ from \eqref{eq:GV} as a lax symmetric monoidal functor
\begin{equation}\label{eq:GV-expo}
 G\bV^{c,\perf} \colon \Fun(BG,\CL) \to \Fun(G\BC, \Fun(BG,\Clep))\ ,
\end{equation}
where we equip the outer functor category in the target with the Day convolution structure.

Since $\lim_{BG} \colon \Fun(BG,\Clep) \to \Clep$ is lax symmetric monoidal, postcomposition with $\lim_{BG}$ induces by \cite[Cor.~3.7]{Nikolaus:2016aa} a lax symmetric monoidal functor
\[ \bV^{c,\perf,G} \colon \Fun(BG,\CL) \to \Fun(G\BC, \Clep) \]
which is a lax symmetric monoidal refinement of the functor denoted by the same symbol in \cref{rguiergegwgergwgre}.

Since $\Spc^\op_*$ (equipped with the trivial $G$-action) is the tensor unit of $\Fun(BG,\CL)$, it refines to a commutative algebra object in $ \Fun(BG,\CL)$.
Consequently, evaluating $\bV^{c,\perf,G}$ at $\Spc^\op_*$ yields a commutative algebra object $\bV^{c,\perf,G}_{\Spc^\op_*}$ in $\Fun(G\BC,\Clep)$.

\begin{prop}\label{prop:fix-orbit-module}
 There exists a {functor $\bV\bM$ fitting into a} commutative diagram
 \begin{equation}\label{eq:fix-orbit-module}
 \xymatrix{
  \Fun(BG,\CL)\ar[r]^-{\bV\bM}\ar[rd]_-{\bV^{c,\perf}_G} & \Mod_{\bV^{c,\perf,G}_{\Spc^\op_*}}(\Fun(G\BC,\Clep))\ar[d] \\
  & \Fun(G\BC,\Clep)
  }\end{equation}
  where the vertical arrow takes the underlying module object.
\end{prop}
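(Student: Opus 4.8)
The plan is to deduce this from \cref{const:fix-orbit} applied to the symmetric monoidal $\infty$-category $\cC := \Fun(G\BC,\Clep)$ equipped with its Day convolution structure. First I would verify the hypotheses of that construction for $\cC$: since $\Clep$ admits $BG$-indexed limits and colimits (these are used already in \cref{def:VcperfG}) and limits and colimits in functor categories are computed pointwise, $\cC$ admits $BG$-indexed limits and colimits; and since the Day convolution tensor product is a left Kan extension of the pointwise $\Clep$-tensor product along $\otimes_{G\BC}$, and the latter preserves colimits in each variable separately, the tensor product on $\cC$ preserves $BG$-indexed colimits in each variable separately. Thus \cref{const:fix-orbit} is applicable to $\cC$.

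Next I would identify the relevant commutative algebra. By \cref{lem:dayconvolution-vs-ptwise} applied with $\bC = G\BC$, $\bD = \Clep$ and $\bI = BG$, the exponential law refines to a monoidal equivalence $e \colon \Fun(G\BC,\Fun(BG,\Clep)) \xrightarrow{\simeq} \Fun(BG,\cC)$, where the source carries the Day convolution structure appearing in \eqref{eq:GV-expo} and the target carries the pointwise structure (with Day convolution in the $G\BC$-direction). Composing $e$ with $G\bV^{c,\perf}$ from \eqref{eq:GV-expo} yields a lax symmetric monoidal functor $\Fun(BG,\CL) \to \Fun(BG,\cC)$; its value on the tensor unit $\Spc^\op_*$ (with trivial $G$-action) is a commutative algebra $A$, which via $\CAlg(\Fun(BG,\cC)) \simeq \Fun(BG,\CAlg(\cC))$ I regard as an object of $\Fun(BG,\CAlg(\cC))$. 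Since $e$ merely permutes the $BG$-direction, the underlying object of $A^G := \lim_{BG} A$ is $\lim_{BG}\circ G\bV^{c,\perf}(\Spc^\op_*) = \bV^{c,\perf,G}_{\Spc^\op_*}$, so $A^G \simeq \bV^{c,\perf,G}_{\Spc^\op_*}$ as commutative algebras.

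Then I would produce the module and finish. Regarding every object of $\Fun(BG,\CL)$ as a module over the tensor unit gives the canonical equivalence $\Fun(BG,\CL) \simeq \Mod_{\Spc^\op_*}(\Fun(BG,\CL))$. A lax symmetric monoidal functor induces a functor on module $\infty$-categories lying over the induced map on algebras (cf.\ \cref{lem:modules-dayconvolution}); applying this to $e \circ G\bV^{c,\perf}$ produces a functor $\Fun(BG,\CL) \to \Mod_A(\Fun(BG,\cC))$. Post-composing with the functor $\Phi$ of \cref{const:fix-orbit} from \eqref{qwefwedws}, which lands in $\Mod_{A^G}(\cC) = \Mod_{\bV^{c,\perf,G}_{\Spc^\op_*}}(\Fun(G\BC,\Clep))$, yields the functor $\bV\bM$. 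The commutativity of \eqref{eq:fix-orbit} supplied by \cref{const:fix-orbit} then identifies the underlying object of $\bV\bM(\bC)$ with $\colim_{BG}$ of the underlying object of the $A$-module attached to $\bC$, i.e.\ with $\colim_{BG}(e(G\bV^{c,\perf}(\bC))) \simeq \colim_{BG}(G\bV^{c,\perf}(\bC)) = \bV^{c,\perf}_G(\bC)$ (\cref{rguiergegwgergwgre}), which is exactly the commutativity of \eqref{eq:fix-orbit-module}.

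The hard part will be the bookkeeping in the middle two steps: one has to make sure that the monoidal equivalence $e$ of \cref{lem:dayconvolution-vs-ptwise} genuinely matches the Day convolution structure in which $G\bV^{c,\perf}$ is lax monoidal in \eqref{eq:GV-expo} with the structure on $\Fun(BG,\cC)$ required to run \cref{const:fix-orbit}, and that the induced identifications of algebra and module objects are compatible with passing to underlying objects, so that \cref{const:fix-orbit} outputs precisely $\bV^{c,\perf,G}_{\Spc^\op_*}$ and $\bV^{c,\perf}_G$ rather than twisted variants. By contrast, checking applicability of $\Phi$ (the colimit-preservation hypothesis above) is routine, and functoriality of $\bV\bM$ in $\bC$ is automatic since it is built as a composite of functors.
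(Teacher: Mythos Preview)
Your proposal is correct and follows essentially the same route as the paper: both use \cref{const:fix-orbit} with $\cC = \Fun(G\BC,\Clep)$, the algebra $A = G\bV^{c,\perf}_{\Spc^\op_*}$, and the identification of $A^G$ with $\bV^{c,\perf,G}_{\Spc^\op_*}$, then conclude via the commutativity of \eqref{eq:fix-orbit}. The only cosmetic difference is the order of operations: the paper first applies $\Mod(-)$ to the lax monoidal functor $G\bV^{c,\perf}$ (landing in $\Mod(\Fun(G\BC,\Fun(BG,\Clep)))$) and then invokes \cref{lem:modules-dayconvolution} to transport to $\Mod(\Fun(BG,\cC))$, whereas you first transport along the monoidal equivalence $e$ of \cref{lem:dayconvolution-vs-ptwise} and then pass to modules; these are interchangeable, and your reference to \cref{lem:modules-dayconvolution} should really point to the general fact that lax monoidal functors induce functors on module categories (which the paper also uses implicitly when writing $\Mod(G\bV^{c,\perf})$).
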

\begin{proof}
 We begin by constructing the functor $\bV\bM$.
 Since $G\bV^{c,\perf}$ from \eqref{eq:GV-expo} is lax symmetric monoidal, application of $\Mod$ induces a functor
 \[ \Mod(G\bV^{c,\perf}) \colon \Mod(\Fun(BG,\CL)) \to \Mod(\Fun(G\BC, \Fun(BG,\Clep)))\ .\]
 As $G\bV^{c,\perf}$ sends $\Spc^\op_*$ to $G\bV^{c,\perf}_{\Spc^\op_*}$, this functor restricts to a functor
 \begin{equation}\label{eq:fix-orbit-module1}
  \Mod_{\Spc^\op_*}(\Fun(BG,\CL)) \to \Mod_{G\bV^{c,\perf}_{\Spc^\op_*}}(\Fun(G\BC, \Fun(BG,\Clep)))\ .
 \end{equation}
 By \cite[Prop.~3.4.2.1]{HA}, there is an equivalence
 \begin{equation}\label{eq:fix-orbit-module2}
  \Fun(BG,\CL) \simeq \Mod_{\Spc^\op_*}(\Fun(BG,\CL))\ .
 \end{equation}
 Composing \eqref{eq:fix-orbit-module2} with \eqref{eq:fix-orbit-module1}, we obtain the first arrow in the composition
 \begin{align*}
  \bV\bM' \colon \Fun(BG,\CL)
  &\to \Mod_{G\bV^{c,\perf}_{\Spc^\op_*}}(\Fun(G\BC, \Fun(BG,\Clep))) \\
  &\overset{!}{\simeq} \Mod_{G\bV^{c,\perf}_{\Spc^\op_*}}(\Fun(BG, \Fun(G\BC,\Clep)))\ .
 \end{align*}
 The lower equivalence in \cref{lem:modules-dayconvolution} allows us to interpret $G\bV^{c,\perf}_{\Spc^\op_*}$ as an algebra in $\Fun(BG, \Fun(G\BC,\Clep))$ which we denote by the same symbol. 
 The equivalence marked by $!$ is then the restriction of the upper equivalence from \cref{lem:modules-dayconvolution} to the fibres over $G\bV^{c,\perf}_{\Spc^\op_*}$.
 Now use the functor $\Phi$ from
 \eqref{qwefwedws} to define the functor $\bV\bM := \Phi \circ \bV\bM'$ appearing as the upper horizontal arrow in \eqref{eq:fix-orbit-module}.
 
 The commutativity of \eqref{eq:fix-orbit-module} follows from the commutativity of \eqref{eq:fix-orbit}.
\end{proof}

\begin{proof}[Proof of \cref{thm:fix-orbit-wmodule}]
 In this proof, we distinguish notationally between the object $\Spc^\op_*$ of $\CL$ and the object $\underline{\Spc^\op_*}$ of $\Fun(BG,\CL)$ which is given by $\Spc^\op_*$ equipped with the trivial $G$-action.
 
 Let $\bC$ be an object in $\Fun(BG,\CL)$.
 We begin by defining the weak module structure of $\bV^{c,\perf}_{\bC,G}$ over $\bV^{c,\perf,G}_{\underline{\Spc^\op_*}}$ whose existence is part of Assertion~\ref{thm:fix-orbit-wmodule}.\ref{it:fix-orbit-wmodule3}.
 Let $\bV\bM$ be the functor from \cref{prop:fix-orbit-module}. 
 Then the module $\bV\bM(\bC)$ encodes an action transformation
 \[ \alpha \colon \bV^{c,\perf,G}_{\underline{\Spc^\op_*}} \otimes \bV^{c,\perf}_{\bC,G} \to \bV^{c,\perf}_{\bC,G}\ .\]
 Define the natural transformation $\mu$ between functors $G\BC \times G\BC \to \Clep$ as the composition
 \begin{align*}
  \mu \colon \bV^{c,\perf,G}_{\underline{\Spc^\op_*}}(-) \otimes_{\Clep} \bV^{c,\perf}_{\bC,G}(-) &\to (\bV^{c,\perf,G}_{\underline{\Spc^\op_*}} \otimes_{\Fun(G\BC,\Clep)} \bV^{c,\perf}_{\bC,G}) \circ (- \otimes_{G\BC} -) \\
  &\xrightarrow{\alpha \circ (- \otimes_{G\BC} -)} \bV^{c,\perf}_{\bC,G} \circ (- \otimes_{G\BC} -)\ ,
 \end{align*}
 where the first arrow is an instance of the transformation $\tau$ in \eqref{eq:dayconvo}.
 The unit object $\beins$ of $\Fun(G\BC,\Clep)$ refines to a commutative algebra object.
 Since $\{*\} \to G\BC$ is a symmetric monoidal functor, evaluation at $*$ defines a symmetric monoidal functor $\Fun(G\BC,\Clep) \to \Clep$ by \cite[Cor.~3.8]{Nikolaus:2016aa}.
 It follows that $\beins(*) \simeq \Spc^{\op,\omega}_*$.
 
 The commutative algebra object $\bV^{c,\perf,G}_{\underline{\Spc^\op_*}}$ comes equipped with a unit morphism $\epsilon \colon \beins\to \bV^{c,\perf,G}_{\underline{\Spc^\op_*}}$.
 Its evaluation at $*$ in $G\BC$ therefore provides a functor
 \[ \eta \colon \Spc^{\op,\omega}_* \simeq \beins(*) \to \bV^{c,\perf,G}_{\underline{\Spc^\op_*}}(*)\ .\]
 The module structure on $\bV\bM(\bC)$ also encodes that the composition
  \[ \beins \otimes \bV^{c,\perf}_{\bC,G} \xrightarrow{\epsilon \otimes \id} \bV^{c,\perf,G}_{\underline{\Spc^\op_*}} \otimes \bV^{c,\perf}_{\bC,G} \xrightarrow{\alpha} \bV^{c,\perf}_{\bC,G} \]
 is equivalent to the canonical identification $\beins \otimes \bV^{c,\perf}_{\bC,G} \simeq\bV^{c,\perf}_{\bC,G}$.
 This gives rise to the commutative diagram
 \[\xymatrix{
  \Spc^{\op,\omega}_* \otimes \bV^{c,\perf}_{\bC,G}(-)\ar[r]^-{\eta \otimes \id}\ar[d]\ar@/_80pt/[dd]_-{\simeq} & \bV^{c,\perf,G}_{\underline{\Spc^\op_*}}(*) \otimes \bV^{c,\perf}_{\bC,G}(-)\ar[d]\ar@/^80pt/[dd]^-{\mu} \\
  (\beins \otimes \bV^{c,\perf}_{\bC,G})(* \otimes -)\ar[r]^-{\epsilon \otimes \id}\ar[d]_-{\simeq} & (\bV^{c,\perf,G}_{\underline{\Spc^\op_*}} \otimes \bV^{c,\perf}_{\bC,G})(* \otimes -)\ar[d]^-{\alpha} \\
  \bV^{c,\perf}_{\bC,G}(-)\ar[r]^-{\simeq} & \bV^{c,\perf}_{\bC,G}(* \otimes -)
 }\]
 in $\Fun(G\BC,\Clep)$, where the unlabelled vertical arrows are again instances of $\tau$ in \eqref{eq:dayconvo}.
 This proves that $\eta$ and $\mu$ define a weak module structure.

 The following argument shows Assertion~\ref{thm:fix-orbit-wmodule}.\ref{it:fix-orbit-wmodule2} that $\bV^{c,\perf,G}_{\underline{\Spc^\op_*}}(*) \simeq \Fun(BG,\Spc^{\op,\omega}_*)$ in $\Clep$ and identifies the map $\eta$.
 
 As a first step, observe that evaluation at $*$ induces an equivalence $\Sh_{\Spc^\op_*}(*) \xrightarrow{\simeq} \Spc^\op_*$ in $\CLL$.
 Since $*$ has a unique non-empty entourage, namely $\diag(*)$, the localisation functor is an equivalence $\Sh_{\Spc^\op_*}(*) \xrightarrow{\simeq} \widehat\bV_{\Spc^\op_*}(*)$.
 In view of the definition of small sheaves, we have an equivalence $\bV^c_{\Spc^\op_*}(*) \xrightarrow{\simeq} \Spc^{\op,\omega}_*$.
 Since $\Spc^{\op,\omega}_*$ is idempotent complete, applying $\Idem$ induces an equivalence $\bV^{c,\perf}_{\Spc^\op_*}(*) \xrightarrow{\simeq} \Spc^{\op,\omega}_*$. 
 The description of the unit of the algebra $\bV^{c,\perf}_{\Spc^\op_*}(*)$ (see \eqref{eq:unit-constraint} and \eqref{fquiwehfiwefewdqewdqed}) implies that the unit morphism $\Spc^{\op,\omega}_* \to \bV^{c,\perf}_{\Spc^\op_*}(*)$ is an inverse to this equivalence.
 
 The object $\underline{\Spc^{\op,\omega}_*}$ is the tensor unit of $\Fun(BG,\Clep)$.
 By \cite[Cor.~3.2.1.9]{HA}, $\underline{\Spc^{\op,\omega}_*}$ carries an essentially unique commutative algebra structure and admits an essentially unique map of commutative algebras $\underline{\Spc^{\op,\omega}_*} \to \bV^{c,\perf}_{\underline{\Spc^\op_*}}(*)$.
 The preceding observation shows that the underlying functor $\Spc^{\op,\omega}_* \to \bV^{c,\perf}_{\Spc^\op_*}(*)$ is an equivalence, so we have an equivalence $\underline{\Spc^{\op,\omega}_*} \simeq \bV^{c,\perf}_{\underline{\Spc^\op_*}}(*)$ of commutative algebras in $\Fun(BG,\Clep)$.
 Application of $\lim_{BG}$ to this equivalence yields an equivalence $\lim_{BG} \underline{\Spc^{\op,\omega}_*} \simeq \bV^{c,\perf,G}_{\underline{\Spc^{\op,\omega}}}(*)$ of commutative algebras in $\Clep$.
 The underlying left-exact $\infty$-category of the left hand side is equivalent to $\Fun(BG,\Spc^{\op,\omega}_*)$ since $\underline{\Spc^{\op,\omega}_*}$ carries the trivial $G$-action.
 This proves Assertion~\ref{thm:fix-orbit-wmodule}.\ref{it:fix-orbit-wmodule2}.
 
 Under the identification $\lim_{BG} \underline{\Spc^{\op,\omega}_*} \simeq \Fun(BG,\Spc^{\op,\omega}_*)$, the unit map of the algebra $\lim_{BG} \underline{\Spc^{\op,\omega}_*}$ is given by the left-exact functor
 \[ \Spc^{\op,\omega}_* \to \Fun(BG,\Spc^{\op,\omega}_*) \]
 which sends $S^0$ to $\underline{S^0}$ as claimed by Assertion~\ref{thm:fix-orbit-wmodule}.\ref{it:fix-orbit-wmodule4}.
  
 Finally, $\bV^{c,\perf,G}_{\underline{\Spc^\op_*}}$ is $\pi_0$-excisive by \cref{lem:fixed-props}, which proves Assertion~\ref{thm:fix-orbit-wmodule}.\ref{it:fix-orbit-wmodule1}.
\end{proof}

\section{Controlled CW-complexes}\label{sec:controlledCWs}

The functors $\bV^{c,\perf,G}_{\Spc^\op_*}$ and $\bV^{c,\perf}_{\bC,G}$ discussed in \cref{sec:controlled-objects} will ultimately be fed into \cref{ergiooegergergwergergergw} to prove the Farrell--Jones conjecture for certain classes of groups.
The construction of transfer classes requires adequate point-set models for objects in $\bV^{c,\perf}_{\Spc^\op_*}$.
We will be able to obtain such models using the notion of controlled CW-complexes that we previously considered in \cite{BKW:coarseA} and that goes back to work of Weiss \cite{Weiss2002}. 

\cref{sec:controlledCW} recalls the definition of the category $\CW(X)$ of controlled CW-complexes over a coarse space $X$.
In \cref{sec:realisation}, we discuss the realisation transformation $r \colon \CW(X)^\op \to \wt\bV_{\Spc^\op_*}(X)$, where $\wt\bV_{\Spc^\op_*}(X)$ is an ambient $\infty$-category which contains $\bV^{c,\perf}_{\Spc^\op_*}(X)$ as a full subcategory.
In \cref{sec:finiteness}, we consider bornological coarse spaces $X$ and discuss finiteness conditions on objects in $\CW(X)$ which ensure that the corresponding images under $r$ lie in $\bV^{c,\perf}_{\Spc^\op_*}(X)$.

\subsection{Controlled CW-complexes}\label{sec:controlledCW}

A based CW-complex is a CW-complex $Q$ together with a chosen $0$-cell in $Q$.
A morphism of based CW-complexes is a cellular and basepoint-preserving map. 
For $Q$ a based CW-complex, we denote by $z_{i}(Q)$ the set of $i$-cells.
Let $z(Q) := \bigcup_{i\in \nat} z_{i}(Q)$ denote the set of all cells of $Q$.
Note that we regard $Q$ as a relative CW-complex, so the basepoint is not a member of $z_0(Q)$.

If $A$ is a subset of $Q$, then we denote by $\overline{A}^{CW}$ the minimal subcomplex of $Q$ containing $A$.
For a cell $q$ of $Q$ we write $q \leq A$ if $q \subseteq \overline{A}^{CW}$.
In particular, this defines a transitive and reflexive relation on the set of cells $z(Q)$.  
Note that $q \leq q'$ implies $\dim(q) \leq \dim(q^{\prime})$ with equality precisely if $q=q'$.

Let $X$ be a set.
\begin{ddd}\label{iuhdfiuvefdcadscadcadsc}
 An $X$-labelled CW-complex is a pair $(Q,\lambda)$ consisting of a based $CW$-complex $Q$ together with a map $\lambda \colon z(Q)\to X$.
\end{ddd}

	For a subcomplex $Q^{\prime}$ of $Q$, we define the subset
	\begin{equation}\label{uivhqiwecewcwcqcwec}
 \lambda(Q^{\prime}) := \lambda(z(Q^{\prime})) 
\end{equation}
	of $X$.

Let $(Q,\lambda)$ and $(Q^{\prime},\lambda^{\prime})$ be two $X$-labelled CW-complexes and let $U$ be an entourage of $X$.

\begin{ddd} \label{ergioogrqgegqwfewfeqfe3245}
	A $U$-controlled morphism $(Q,\lambda)\to (Q^{\prime},\lambda^{\prime})$  is a morphism $\phi \colon Q\to Q^{\prime}$ of based CW-complexes  such that for every $q$ in $z(Q)$ we have
	\[ \lambda^{\prime}(\overline{\phi(q)}^{CW}) \subseteq U[\{\lambda(q)\}]\]
	see \eqref{qwefoiheiuohfiqwefewfewfqeefedq} for {the definition of the thickening operation $U[-]$}.
	An $X$-labelled CW-complex is called $U$-controlled if its identity map is a $U$-controlled morphism.	
\end{ddd}

We now assume that $X$ is in $\Coarse$ and {denote the coarse structure on $X$ by $\cC_{X}$.}
\begin{ddd} \label{wergoijweogwegferfwrfr}
	An $X$-controlled morphism between $X$-labelled CW-complexes is a morphism of based CW-complexes which is $U$-controlled for some $U$ in $\cC_{X}$.  
\end{ddd}

\begin{ddd} \label{eiuchwichuiwecwecewerfwe}
	An $X$-controlled CW-complex is an $X$-labelled CW-complex whose identity morphism is $X$-controlled.
\end{ddd}

If the coarse space $X$ is clear from the context, we also speak about controlled morphisms and controlled CW-complexes.

For every map $f \colon X \to X'$ and every $X$-labelled CW-complex $(Q,\lambda)$, we obtain an $X'$-labelled CW-complex $(Q,f \circ \lambda)$. If $f$ is a morphism of coarse spaces and $\phi \colon (Q,\lambda) \to (Q',\lambda')$ is an $X$-controlled map, the same map $\phi$ is also an $X'$-controlled map $\phi \colon (Q, f \circ \lambda) \to (Q', f \circ \lambda')$. 

Let $\CAT$ be the very large category of large ordinary categories.

\begin{ddd}\label{rgjqegorqefefqefwfqefqefqef}
		We define a functor $\CW \colon \Coarse \to \CAT$ as follows:
		\begin{enumerate}
			\item The category $\CW(X)$ is the category of $X$-controlled CW-complexes and controlled morphisms.
			\item If $f \colon X\to X^{\prime}$ is a morphism between coarse spaces, then $f_{*} \colon \CW(X)\to \CW(X^{\prime})$ sends a controlled CW-complex $(Q,\lambda)$ to $f_{*}(Q,\lambda):=(Q,f\circ \lambda)$, and sends a morphism $\phi \colon (Q,\lambda)\to (Q',\lambda')$ in $\CW(X)$ to itself, regarded as a morphism $\phi \colon (Q,f \circ \lambda) \to (Q', f \circ \lambda')$ in $\CW(X')$.\qedhere
		\end{enumerate}
\end{ddd}

\subsection{The realisation transformation}\label{sec:realisation}

To relate localisations of $1$-categories to localisations of $\infty$-categories, the following observation is useful.
Let $\cC$ be a category and let $\cW$ be a subcategory of $\cC$.
Recall the notion of a calculus of left fractions from \cite[Sec.~2]{GabrielZisman}.
If $\cW$ satisfies a calculus of left fractions, then Gabriel and Zisman construct a category $\cW^{-1}\cC$ which has the same objects as $\cC$ and whose morphisms sets are given by the formula
\begin{equation}\label{eq:calculus-hom}
 \Hom_{\cW^{-1}\cC}(C,D) := \mathop{\colim}\limits_{(D \to D') \in \cW_{D/}} \Hom_\cC(C,D')\ .
\end{equation}
In particular, morphisms in $\cW^{-1}\cC$ are represented by zig-zags $C \xrightarrow{f} D' \xleftarrow{w} D$ with $w$ in $\cW$.
There is a canonical functor $\cC \to \cW^{-1}\cC$ sending a morphism $f \colon C \to D$ to the class of the zig-zag $C \xrightarrow{f} D \xleftarrow{\id} D$.

\begin{lem}\label{lem:calculus}
 If $\cW$ satisfies a calculus of left fractions, then the functor $\cC \to \cW^{-1}\cC$ exhibits $\cW^{-1}\cC$ as a localisation of $\cC$ at $\cW$ both in $\Cat$ and in $\Cati$.  
\end{lem}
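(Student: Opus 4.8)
The plan is to treat the two assertions separately: the statement in $\Cat$ is classical, and the statement in $\Cati$ carries the actual content.

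\textbf{The $\Cat$-localisation.} This is precisely the content of \cite[Sec.~2]{GabrielZisman}. A calculus of left fractions guarantees that for every object $D$ the comma category $\cW_{D/}$ — whose objects are the arrows $w\colon D\to D'$ in $\cW$ and whose morphisms $(D\to D')\to(D\to D'')$ are the arrows $D'\to D''$ in $\cC$ under $D$ — is filtered, so the colimit in \eqref{eq:calculus-hom} is a filtered colimit of sets; the fraction axioms then supply the composition law on these hom-sets and the verification that $\cC\to\cW^{-1}\cC$ has the universal property of a localisation in $\Cat$. I would simply cite this.

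\textbf{The $\Cati$-localisation.} Write $\cC[\cW^{-1}]$ for the Dwyer--Kan (i.e.\ $\infty$-categorical) localisation and $\gamma\colon\cC\to\cC[\cW^{-1}]$ for the canonical functor. Since $N(\cW^{-1}\cC)$ is an $\infty$-category receiving a functor from $N\cC$ that inverts $\cW$, the universal property of $\gamma$ produces an essentially unique functor $\Phi\colon\cC[\cW^{-1}]\to N(\cW^{-1}\cC)$ compatible with the localisation functors. It therefore suffices to show that $\Phi$ is an equivalence. Both $\cC[\cW^{-1}]$ and $N(\cW^{-1}\cC)$ have the set of objects of $\cC$, and $\Phi$ is the identity on objects, so $\Phi$ is essentially surjective; only full faithfulness remains, and this amounts to computing the mapping spaces of $\cC[\cW^{-1}]$ and checking that they are discrete and agree with \eqref{eq:calculus-hom}.

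\textbf{Mapping spaces.} For this last point I would invoke the hammock-localisation model of $\cC[\cW^{-1}]$ (equivalently, the description of localisations of $\infty$-categories in \cite[Ch.~7]{Cisinski:2017}): under a calculus of left fractions one obtains a natural equivalence
\[
 \Map_{\cC[\cW^{-1}]}(C,D)\;\simeq\;\mathop{\hocolim}\limits_{(D\to D')\in\cW_{D/}}\Map_{N\cC}(C,D')\;=\;\mathop{\hocolim}\limits_{(D\to D')\in\cW_{D/}}\Hom_{\cC}(C,D')\ .
\]
Since $\cW_{D/}$ is filtered, its nerve is weakly contractible, and the homotopy colimit of the diagram of discrete spaces $\Hom_{\cC}(C,-)$ coincides with its ordinary colimit. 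Hence $\Map_{\cC[\cW^{-1}]}(C,D)$ is the discrete set of \eqref{eq:calculus-hom}, and under this identification $\Phi$ induces the identity on mapping spaces; so $\Phi$ is fully faithful, hence an equivalence, which yields the $\Cati$-statement. The main obstacle is this mapping-space computation: it is exactly where the calculus of left fractions is used essentially, and making it precise requires either citing the Dwyer--Kan hammock-localisation calculation for categories admitting a calculus of fractions (and transporting it from simplicially enriched categories to quasi-categories) or extracting the statement from the treatment of localisations in \cite{Cisinski:2017}; the filtered-ness of $\cW_{D/}$ and the resulting contractibility of its nerve are the only additional inputs, and these follow formally from the fraction axioms.
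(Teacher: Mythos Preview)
Your proposal is correct and follows essentially the same approach as the paper: both cite Gabriel--Zisman for the $\Cat$-localisation, and both reduce the $\Cati$-statement to the computation of mapping spaces in the $\infty$-categorical localisation as filtered colimits of discrete mapping spaces in $\cC$, which then coincide with the set-theoretic colimit \eqref{eq:calculus-hom}. The paper is slightly more direct, invoking specific results from \cite[7.2.7, Thm.~7.2.8, Cor.~7.2.12]{Cisinski:2017} to identify the $1$-categorical calculus of fractions with Cisinski's $\infty$-categorical one and to obtain the mapping-space formula, whereas you frame the argument via an explicit comparison functor $\Phi$; but the substance is the same.
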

\begin{proof} 
 \cite[Prop.~2.4]{GabrielZisman} asserts that $\cC \to \cW^{-1}\cC$ is the localisation of $\cC$ at $\cW$ in $\Cat$.
 
 For $y$ in $\cC$, denote by $\cW(y)$ the full subcategory of $\cC_{y/}$ spanned by the morphisms which lie in $\cW$.
 The calculus of left fractions implies that $\cW(y)$ is filtered.
 Since the inclusion of $\Set$ into $\Spc$ preserves filtered colimits, the colimit
 \[ \colim_{(y \to y') \in \cW(y)} \Hom_\cC(x,y') \]
 is the same in both categories.
  
 By \cite[7.2.7 \& Thm.~7.2.8]{Cisinski:2017} and \cite[Prop.~2.4]{GabrielZisman}, the above colimit computes the mapping spaces both of the localisation in $\Cat$ and in $\Cati$, which implies that these localisations are equivalent.
\end{proof}

Recall the functor $\wt\bV$ from \eqref{qwefijfoiwqefqwefewfq}.
We   proceed to construct a natural transformation
\[ r \colon \CW^{\op}\to \wt\bV_{\Spc^{\op}_{*}}\ .\]
As a first step, we construct a natural transformation
\[ r_0 \colon \CW^\op \to \wt\bV_{\Top^\op_*}\ .\]
The same argument as in the proof of \cite[Prop. 2.76]{unik} shows that for  every $X$ in $\Set$, the pair $(\PSh_{\Top^\op_*}(X),\wt W_X)$ satisfies a calculus of left fractions, where $\wt W_X$ denotes the class of morphisms introduced in \eqref{eq:wxtilde} which compare each object with its thinned out counterparts. With  \cref{lem:calculus} we conclude that  $\wt\bV_{\Top^\op_*}(X)$ is a $1$-category.
In particular, in order to construct $r_{0}$ it suffices to specify a functor $r_{0,X} \colon \CW(X)^\op \to \wt\bV_{\Top^\op_*}(X)$ for every $X$ in $\Coarse$, and to check naturality.

Every object $(Q,\lambda)$ of $\CW(X)^\op$ gives rise to a presheaf \begin{equation}\label{qrfiojeroifefveffs}
r_{0,X}(Q,\lambda) \colon \cP_X^\op \to \Top_*^\op,\quad Y \mapsto Q(Y)\ ,
\end{equation}
where $Q(Y)$ is the largest subcomplex of $Q$ such that $\lambda(Q(Y)) \subseteq Y$, see \eqref{uivhqiwecewcwcqcwec} for notation.

Let $\phi \colon (Q,\lambda) \to (Q',\lambda')$ be a $U$-controlled morphism in $\CW(X)$.
Assume that $\diag(X) \subseteq U$ {and recall the $U$-thinning functor $U(-) \colon \cP_X \to \cP_X$ from \eqref{eq:thin}}.
If $q$ is a cell in $Q(U(Y))$, then $\lambda(\overline{q}^{CW}) \subseteq U(Y)$ by definition.
Then $U[\lambda(\overline{q}^{CW})] \subseteq Y$ because $U(-)$ is right adjoint to $U[-] \colon \cP_X \to \cP_X$.
Since $\phi$ is $U$-controlled, it follows that
\[ \lambda^{\prime}(\overline{\phi(q)}^{CW}) \subseteq U[\{\lambda(q)\}] \subseteq U[\lambda(\overline{q}^{CW})] \subseteq Y\ .\]
So $\phi(Q(U(Y))) \subseteq Q^{\prime}(Y)$ for every $Y$ in $\cP_{X}$.
Hence $\phi$ induces a morphism represented by
\[ r_{0,X}(Q',\lambda') \xrightarrow{\phi^\op} U_*r_{0,X}(Q,\lambda) \leftarrow r_{0,X}(Q,\lambda) \]
in $\wt\bV_{\Top^\op_*}(X)$, where the second morphism is the canonical one. 
This finishes the construction of the functor $r_{0,X}$.
One  checks explicitly that this construction yields a well-defined transformation
\begin{equation*}\label{refuwhfqiuwehfiewfeqfew}
 r_{0} \colon \CW^{\op}\to \wt \bV_{\Top^{\op}_{*}}
\end{equation*}
between $\CAT$-valued functors.

In the following, we implicitly use the inclusion $\CAT\to \CATi$ given by the nerve
in order to view $ r_{0} \colon \CW^{\op}\to \wt \bV_{\Top^{\op}_{*}}$ as a natural transformation between
$\CATi$-valued functors.
The canonical functor \begin{equation}\label{vsdfvskjnjskdfvsdfvsf}
\psi \colon \Top^{\op}_{*}\to \Spc^{\op}_{*}
\end{equation}   induces the transformation
\[ \psi_{*} \colon \wt\bV_{\Top^{\op}_{*}}\to \wt\bV_{\Spc_{*}^{\op}}\ .\]
 
\begin{ddd}\label{def:realisation}
 We define the realisation transformation as the composition
 \[ r := \psi_* \circ r_0 \colon \CW^\op \to \wt\bV_{\Spc^\op_*}\ .\qedhere\]
\end{ddd}

We now show that the realisation of every controlled CW-complex is a
sheaf.
This will imply 
{that $r$ factors over the $\infty$-category} $\widehat \bV_{\Spc_{*}^{\op}}$ from \eqref{wergwregvfvfdsv}.

Let $X$ be a set and $U$ be a symmetric entourage of $X$ which contains the diagonal.
Furthermore, let $(Q,\lambda)$ be in $\CW(X)^\op$.
Recall the characterisation of $U$-sheaves in terms of the diagram \eqref{ervervewevevwevwee}.

\begin{prop}\label{goiwjegoergregergwreg}
	If $(Q,\lambda)$ is $U$-controlled, then $r(Q,\lambda)$ is a $U^2$-sheaf.
\end{prop}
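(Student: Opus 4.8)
The plan is to verify the sheaf condition directly for the presheaf $r(Q,\lambda)$, using the definition of $r$ as $\psi_* \circ r_0$ and the explicit description \eqref{qrfiojeroifefveffs} of $r_0$. Recall that $r(Q,\lambda)(Y) = \psi(Q(Y))$, where $Q(Y)$ is the largest subcomplex of $Q$ with $\lambda(Q(Y)) \subseteq Y$. First I would observe that $r(Q,\lambda)(\emptyset) = \psi(Q(\emptyset)) = \psi(*)$, which is the zero object of $\Spc^{\op}_*$, since the only subcomplex of $Q$ with empty label set is the basepoint. So the first condition in the definition of a $U^2$-sheaf is immediate.

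The substance is to show that the canonical comparison map exhibits $r(Q,\lambda)$ as a right Kan extension of its restriction to $\cP^{U^2\bd,\op}_X$. Since a right Kan extension in $\Spc^{\op}_*$ is a left Kan extension in $\Spc_*$, and since $\psi \colon \Top^{\op}_* \to \Spc^{\op}_*$ preserves the relevant (co)limits up to the localisation (it sends CW-complexes to their homotopy types, and the diagrams arising here are built from cofibrations of CW-complexes, so strict colimits compute homotopy colimits), it suffices to work at the level of $\Top_*$: I would show that for each $Y \in \cP_X$ the based space $Q(Y)$ is the (homotopy) colimit over the poset of $U^2$-bounded subsets $B \subseteq Y$ of the spaces $Q(B)$. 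Concretely, $Q(Y) = \bigcup_{B} Q(B)$ where $B$ ranges over $U^2$-bounded subsets of $Y$: indeed, any cell $q$ of $Q(Y)$ lies in some minimal subcomplex $\overline{q}^{CW}$, and because $(Q,\lambda)$ is $U$-controlled we have $\lambda(\overline{q}^{CW}) \subseteq U[\{\lambda(q)\}]$; any such set is $U^2$-bounded since $U$ is symmetric and contains the diagonal, and it is contained in $Y$ because $q \le Y$ forces $\lambda(\overline{q}^{CW}) \subseteq Y$. Hence $q$ appears in $Q(B)$ for $B := U[\{\lambda(q)\}] \cap Y$, which is $U^2$-bounded. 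This shows the union equals $Q(Y)$. Moreover this union is filtered in the poset of subcomplexes: the union of two $U^2$-bounded subsets need not be $U^2$-bounded, but the union of the corresponding subcomplexes $Q(B_1) \cup Q(B_2)$ is again of the form $Q(B)$ for the $U^2$-bounded set $B := \lambda(Q(B_1) \cup Q(B_2))$ — one must check $\lambda(Q(B_1) \cup Q(B_2))$ is $U^2$-bounded, which follows because every cell of $Q(B_i)$ has its label set contained in a $U[\{x\}]$ for $x \in B_i$, and the overall diameter stays controlled by $U^2$ after one more thickening; I would phrase this carefully, possibly passing to $U^3$ or re-examining the bookkeeping, which is the one place the exponent in the statement matters.

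The main obstacle I anticipate is exactly this combinatorial bookkeeping of entourage exponents: ensuring that the diagram $B \mapsto Q(B)$ indexed by $U^2$-bounded subsets of $Y$ is filtered (equivalently, that its colimit is computed correctly and agrees with $Q(Y)$) without the exponent drifting. The clean way to handle it is to note that the subcategory of $\cP^{U^2\bd}_X$ consisting of sets of the form $\lambda(Q')$ for subcomplexes $Q' \subseteq Q(Y)$ is cofinal and is a filtered poset under inclusion (directed because a union of two subcomplexes is a subcomplex and its label set is still $U^2$-bounded by $U$-controlledness of $Q$ — here one uses that each cell contributes a $U$-small set of labels), and that $B \mapsto Q(B)$ restricted to this cofinal subposet is the constant-shaped colimit diagram $Q' \mapsto Q'$, whose colimit is $Q(Y) = \bigcup Q'$. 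Then right Kan extension along $\cP^{U^2\bd,\op}_X \hookrightarrow \cP^{\op}_X$ is computed pointwise by this filtered colimit, giving precisely $Q(Y)$, and applying $\psi$ (which, being a localisation, commutes with filtered colimits of this kind) yields that $r(Q,\lambda)$ is the right Kan extension of its restriction. A final remark would record that $U^2$ is symmetric and contains the diagonal, so the definition of $U^2$-sheaf applies, and hence $r$ factors through $\widehat\bV_{\Spc^{\op}_*}$.
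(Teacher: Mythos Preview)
Your reduction to showing that $\psi(Q(Y))$ is the homotopy colimit of $\psi(Q(B))$ over $U^2$-bounded $B \subseteq Y$ is correct, and so is the observation that $\lambda(\overline{q}^{CW}) \subseteq U[\{\lambda(q)\}]$ is $U^2$-bounded for each cell $q$. The gap is in the mechanism you propose for passing from the strict colimit in $\Top_*$ to the homotopy colimit in $\Spc_*$: the indexing poset $\cP^{U^2\bd}_X \cap \cP_Y$ is \emph{not} filtered, and your attempt to extract a filtered cofinal subposet fails. Concretely, take two cells $q_1,q_2$ of $Q(Y)$ whose labels lie in different coarse components of $X$ (or just satisfy $(\lambda(q_1),\lambda(q_2)) \notin U^2$). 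Then $\lambda(\overline{q_1}^{CW})$ and $\lambda(\overline{q_2}^{CW})$ are each $U^2$-bounded, but their union is not, and neither is the label set of $\overline{q_1}^{CW} \cup \overline{q_2}^{CW}$. So your proposed subposet of label sets of subcomplexes with $U^2$-bounded label set is not directed; the claim ``its label set is still $U^2$-bounded by $U$-controlledness'' is simply false. Passing to $U^3$ or higher does not help: the exponent is fixed by the statement, and in any case two points can be arbitrarily far apart. Your cofinality claim also fails: a $U^2$-bounded $B \subseteq Y$ need not be contained in any $\lambda(Q')$ at all, since $B$ may contain points outside the image of $\lambda$.

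The paper avoids filteredness entirely. It shows that the diagram $Q_Y \colon (i/Y) \to \Top_*$, $B \mapsto Q(B)$, is a cell complex in the \emph{projective} model structure on $\Fun(i/Y,\Top_*)$, hence projectively cofibrant. The filtration is by skeleta $Q_Y^n(B) := \sk_n Q(B)$, and the key observation is that for each $(n{+}1)$-cell $q$ of $Q(Y)$ one has $q \in Q(B)$ if and only if $\lambda(\overline{q}^{CW}) \subseteq B$; this gives a pushout
\[
\xymatrix{
 \coprod\limits_{q} \big(S^n \times \Hom_{i/Y}(\lambda(\overline{q}^{CW}),-)\big)_+ \ar[r]\ar[d] & Q_Y^n\ar[d] \\
 \coprod\limits_{q} \big(D^{n+1} \times \Hom_{i/Y}(\lambda(\overline{q}^{CW}),-)\big)_+ \ar[r] & Q_Y^{n+1}
}
\]
along generating cofibrations. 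Cofibrancy forces the strict colimit $\colim_{i/Y} Q_Y \cong Q(Y)$ to compute the homotopy colimit, so $\colim_{i/Y} \psi(Q(B)) \simeq \psi(Q(Y))$ as required. This cell-attachment argument is the missing idea; your filtered strategy cannot be repaired.
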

\begin{proof} 
	We have  to show that the commutative diagram 
	\[\xymatrix@C=3.5em@R=2em{
	 \cP^{U^2\bd,\op}_X\ar[r]^-{r(Q,\lambda) \circ i}\ar[d]_-{i} & \Spc^\op_* \\
	 \cP^\op_X\ar[ur]_-{r(Q,\lambda)}
	}\]
	exhibits $r(Q,\lambda)$ as a right Kan extension of $r(Q,\lambda) \circ i$ along $i$,
	where $\cP^{U^2\bd}_X$ denotes the subposet of $\cP_X$ containing the $U^2$-bounded subsets of $X$.
	To make the argument  {easier to parse, we apply $(-)^\op$ to the above diagram.} 
	As before, denote by $\psi \colon \Top_* \to \Spc_*$ the canonical functor.
	Then we have to show that for every $Y$ in $\cP_X$ the canonical morphism
	\begin{equation}\label{eq:goiwjegoergregergwreg}
	 \mathop{\colim}\limits_{(Y' \to Y) \in i/Y} \psi(Q(Y')) \to \psi(Q(Y))
	\end{equation}
	is an equivalence.
	Note that $i/Y$ is isomorphic to the nerve of the subposet $\{ Y' \in \cP_X \mid Y' \subseteq Y,\ Y' \text{ is $U^2$-bounded}\}$ of $\cP_X$.
	
	{Since we need to calculate colimits of images of diagrams in $\Top_*$ under $\psi$,} we equip $\Top_*$ with the standard model structure.
	We claim that the underlying diagram
	\[ Q_Y \colon i/Y \to \Top_*,\quad (Y'\to Y) \mapsto Q(Y') \]
	is cofibrant in the projective model structure on $\Fun(i/Y,\Top_*)$.
	In fact, we will show that $Q_Y$ is a cell complex in the diagram category.
	There is a canonical filtration
	\begin{equation}\label{fwefafffdfasd}
* = Q_Y^{-1} \subseteq Q_Y^0 \subseteq Q_Y^1 \subseteq \ldots \subseteq Q_Y^n \subseteq \ldots \subseteq Q_Y\ ,
\end{equation}
	where $Q_Y^n$ sends $Y'$ to the $n$-skeleton of $Q(Y')$.
	The key observation is that an $n$-cell $q$ of $Q$ is contained in $Q(Y')$ if and only if $\lambda(\overline{q}^{CW}) \subseteq Y'$.
	It follows that $\colim_{n \in \nat} Q_Y^n \cong Q_Y$.	
	Moreover, there exists for every $n$ in $\nat$ a pushout
	\[\xymatrix@C=2em@R=2em{
	 \coprod\limits_{q \in z_{n+1}(Q(Y))} (S^n \times \Hom_{i/Y}(\lambda(\overline{q}^{CW}),-))_+ \ar[r]\ar[d] & Q_Y^n\ar[d] \\
	 \coprod\limits_{q \in z_{n+1}(Q(Y))} (D^{n+1} \times \Hom_{i/Y}(\lambda(\overline{q}^{CW}),-))_+\ar[r] & Q_Y^{n+1}
	}\]
	in $\Fun(i/Y,\Top_*)$.
	Thus, the filtration in \eqref{fwefafffdfasd} exhibits $Q_Y$ as a cell complex in $\Fun(i/Y,\Top_*)$.
	
	The comparison map from \eqref{eq:goiwjegoergregergwreg} factors as
	\[ \mathop{\colim}\limits_{(Y' \to Y) \in i/Y} \psi(Q(Y')) \to \psi(\mathop{\colim}\limits_{(Y' \to Y) \in i/Y} Q(Y')) \to \psi(Q(Y))\ .\]
	The first map is an equivalence since $Q_Y$ is a cofibrant diagram in the projective model structure.
	
	Since $\lambda(\overline{q}^{CW}) \subseteq U[\{\lambda(q)\}]$, the set $\lambda(\overline{q}^{CW})$ is $U^2$-bounded. It follows that $\colim_{i/Y} Q_Y \cong Q(Y)$.
	Hence the second map is also an equivalence.
\end{proof}

Recall the functor $\Sh$ and its localisation $\widehat \bV \colon \Coarse\times \CLLL\to \CLL$, see \eqref{wergwregvfvfdsv}.
By \cref{eporgjwepogrwegregwer},
 the  natural transformation  $\widehat\bV \to \wt\bV\circ (\id\times i)$ realises $\widehat\bV$ as a full subfunctor.

\begin{kor}\label{qiuerhgiwergerwferfefc}
 The realisation transformation factors over a transformation
 \[ r \colon \CW^{\op}\to \widehat \bV_{\Spc_{*}^{\op}}\ .\]
\end{kor}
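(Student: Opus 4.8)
The plan is to show that for every $X$ in $\Coarse$, the functor $r_{X} \colon \CW(X)^{\op} \to \wt\bV_{\Spc^{\op}_{*}}(X)$ factors through the full subcategory $\widehat\bV_{\Spc^{\op}_{*}}(X)$, and that this factorisation is compatible with the morphisms $f_{*}$ for $f \colon X \to X'$ in $\Coarse$, so that it assembles into a natural transformation. Since $\widehat\bV \to \wt\bV \circ (\id \times i)$ is the inclusion of a full subfunctor by \cref{eporgjwepogrwegregwer}, the factorisation on objects is unique once it exists, and naturality is automatic from the naturality of $r$ together with the fact that a transformation between $\CATi$-valued functors that lands (objectwise) in a full subfunctor automatically factors through it. So the only real content is the objectwise claim.

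First I would recall that $\widehat\bV_{\Spc^{\op}_{*}}(X)$ is, by definition, the full subcategory of $\wt\bV_{\Spc^{\op}_{*}}(X)$ spanned by those objects whose image in $\PSh_{\Spc^{\op}_{*}}(X)$ (equivalently, the presheaf representing them after localisation) is a $U$-sheaf for some coarse entourage $U$ of $X$. Given an object $(Q,\lambda)$ of $\CW(X)^{\op}$, being $X$-controlled means that it is $U$-controlled for some $U$ in $\cC_{X}$ (\cref{eiuchwichuiwecwecewerfwe}, \cref{wergoijweogwegferfwrfr}). Enlarging $U$ if necessary, we may assume that $U$ is symmetric and contains the diagonal, which does not affect $U$-controlledness. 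Then \cref{goiwjegoergregergwreg} applies directly and shows that $r(Q,\lambda) = \psi_{*}(r_{0}(Q,\lambda))$ is a $U^{2}$-sheaf. Since $U^{2}$ is again a coarse entourage of $X$, the presheaf $r(Q,\lambda)$ lies in $\Sh_{\Spc^{\op}_{*}}(X)$, hence its image in $\wt\bV_{\Spc^{\op}_{*}}(X)$ lies in the full subcategory $\widehat\bV_{\Spc^{\op}_{*}}(X)$. This gives the factorisation on objects.

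For morphisms, nothing extra needs to be checked: $\widehat\bV_{\Spc^{\op}_{*}}(X)$ is a full subcategory of $\wt\bV_{\Spc^{\op}_{*}}(X)$, so any morphism in $\wt\bV_{\Spc^{\op}_{*}}(X)$ between two objects lying in $\widehat\bV_{\Spc^{\op}_{*}}(X)$ already lies in $\widehat\bV_{\Spc^{\op}_{*}}(X)$; concretely, the morphism induced by a $U$-controlled map $\phi$ is represented by the zig-zag $r_{0,X}(Q',\lambda') \xrightarrow{\phi^{\op}} U_{*}r_{0,X}(Q,\lambda) \leftarrow r_{0,X}(Q,\lambda)$, and all three terms are sheaves. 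Finally, naturality of the factorised transformation over $\Coarse$ follows from naturality of $r$ itself together with the fact that the full subfunctor inclusion $\widehat\bV \hookrightarrow \wt\bV \circ (\id \times i)$ is objectwise fully faithful, so there is no room for ambiguity in the lifted structure maps. I do not expect a serious obstacle here; the only mild subtlety is making sure that the entourage witnessing control is allowed to be symmetric and to contain the diagonal before invoking \cref{goiwjegoergregergwreg}, which is harmless since replacing $U$ by $U \cup U^{-1} \cup \diag(X)$ keeps it in $\cC_{X}$ and only weakens the controlledness hypothesis.
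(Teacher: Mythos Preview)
Your proof is correct and follows essentially the same approach as the paper: invoke \cref{eporgjwepogrwegregwer} to identify $\widehat\bV_{\Spc_*^{\op}}(X)$ as a full subcategory of $\wt\bV_{\Spc_*^{\op}}(X)$, then apply \cref{goiwjegoergregergwreg} to see that every $r(Q,\lambda)$ is a sheaf. The paper's proof is a two-line version of exactly this; your additional remarks on symmetrising $U$, on morphisms, and on naturality are all correct elaborations that the paper leaves implicit.
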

\begin{proof}
 By  \cref{eporgjwepogrwegregwer}, $\widehat \bV_{\bC}(X)$ is the full subcategory of $\wt\bV_{\bC}(X)$ spanned by those objects which are sheaves on $X$.
 Then the corollary follows from \cref{goiwjegoergregergwreg}.
\end{proof}

There is a canonical notion of weak equivalence between objects in $\CW(X)$, namely that of a controlled homotopy equivalence; these are the weak equivalences of the Waldhausen structure on $\CW(X)$ considered in \cite{BKW:coarseA}.
We close this subsection by recalling the definition and showing that $r$ inverts controlled homotopy equivalences.

Let $X$ be a set and $U$ be an entourage of $X$. Let $(Q,\lambda)$ be an $X$-labelled CW-complex which is $U$-controlled.
Consider the unit interval $[0,1]$ as a CW-complex with exactly two $0$-cells and one $1$-cell.
Then the cylinder $[0,1]_+ \wedge Q$ carries an induced CW-structure and acquires a labelling via 
\[ z([0,1]_+ \wedge Q) \xrightarrow{\pr} z(Q) \xrightarrow{\lambda} X\ .\]
The cylinder on $(Q,\lambda)$ is the $X$-labelled CW-complex
\begin{equation}\label{eq:cylinder-cw}
 I(Q,\lambda) := ([0,1]_+ \wedge Q, \lambda \circ \pr)\ .
\end{equation}
This complex is also $U$-controlled.
The inclusions $\{0\} \to [0,1]$ and $\{1\} \to [0,1]$ determine $\diag(X)$-controlled morphisms $i_{0},i_{1} \colon (Q,\lambda) \to I(Q,\lambda)$.
Moreover, the projection $[0,1]_{+} \wedge Q\to Q$ is a $\diag(X)$-controlled morphism $I(Q,\lambda)\to (Q,\lambda)$.

Let $(Q',\lambda')$ be a second $U$-controlled CW-complex on $X$ and consider two morphisms $\phi_0, \phi_1 \colon (Q,\lambda)\to (Q',\lambda')$.

\begin{ddd}
 A $U$-controlled homotopy between $\phi_{0}$ and $\phi_{1}$ is a $U$-controlled morphism $H \colon I(Q,\lambda) \to (Q',\lambda')$ such that $\phi_{0} = H \circ i_{0}$ and $\phi_{1} = H \circ i_{1}$.
\end{ddd}

Let $\phi \colon (Q,\lambda) \to (Q',\lambda')$ be a $U$-controlled morphism between controlled CW-complexes. 
\begin{ddd} \label{ergbiojeorfqewfwfewffqf}
	The morphism $\phi$ is a $U$-controlled homotopy equivalence if there exist a $U$-controlled map $\phi' \colon (Q',\lambda') \to (Q,\lambda)$ and $U$-controlled homotopies $\phi' \circ \phi \sim \id_{Q}$ and $\phi \circ \phi' \sim \id_{Q'}$.
	
	If $X$ is a coarse space, we call $\phi$ a controlled homotopy equivalence if it is a $U$-controlled homotopy equivalence for some coarse entourage $U$ of $X$.
\end{ddd}

\begin{rem}\label{rem:controlledequiv-functorial}
 Since $I(f_*(Q,\lambda)) \cong f_*I(Q,\lambda)$ for every morphism $f \colon X \to X'$ of coarse spaces, the induced functor $f_* \colon \CW(X) \to \CW(X')$ preserves controlled homotopy equivalences. 
\end{rem}

\begin{rem}
 Controlled homotopy is an equivalence relation and is preserved under composition with controlled maps.
 It follows that controlled homotopy equivalences satisfy the two-out-of-six property.
\end{rem}

\begin{lem}\label{gfioqjgioewfewfewfqwefqwefqew}
	The realisation transformation $r$ sends controlled homotopy equivalences to equivalences.
\end{lem}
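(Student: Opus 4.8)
The statement to prove is that the realisation transformation $r \colon \CW^\op \to \widehat\bV_{\Spc^\op_*}$ from \cref{def:realisation} (factored through sheaves via \cref{qiuerhgiwergerwferfefc}) sends controlled homotopy equivalences to equivalences. The plan is to first reduce to the statement that $r$ sends the structure maps of a controlled cylinder to equivalences, and then to verify this directly from the explicit formula for $r_0$.

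First I would recall the relevant point-set structure. Given a $U$-controlled CW-complex $(Q,\lambda)$ on a coarse space $X$ with $\diag(X) \subseteq U$, the realisation $r_0(Q,\lambda)$ is the presheaf $Y \mapsto Q(Y)$, and for a controlled homotopy $H \colon I(Q,\lambda) \to (Q',\lambda')$ the induced morphism in $\wt\bV$ is represented by a zig-zag through a thinning $U^k_*$. The key observation is that the two inclusions $i_0, i_1 \colon (Q,\lambda) \to I(Q,\lambda)$ and the projection $p \colon I(Q,\lambda) \to (Q,\lambda)$ are all $\diag(X)$-controlled, and that $p \circ i_0 = p \circ i_1 = \id_Q$ while $i_0 \circ p$ and $i_1 \circ p$ are controlled homotopic to $\id_{I(Q,\lambda)}$ via the evident ``concatenation'' homotopy on $[0,1] \times [0,1]$. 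Thus it suffices to show that $r(p)$ (equivalently $r(i_0)$, $r(i_1)$) is an equivalence in $\widehat\bV_{\Spc^\op_*}(X)$ and that the usual $2$-out-of-$3$/retract argument then upgrades this to: every controlled homotopy equivalence $\phi$ induces an equivalence. Concretely, if $\phi' \circ \phi \sim \id$ and $\phi \circ \phi' \sim \id$ are controlled homotopies, then $r(\phi) \circ r(\phi')$ and $r(\phi') \circ r(\phi)$ differ from identities by the maps induced by the cylinder structure maps, which are invertible; hence $r(\phi)$ is invertible.

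The heart of the matter is therefore to show $r_0$ sends the projection $I(Q,\lambda) \to (Q,\lambda)$ to an equivalence. Unwinding the formula \eqref{qrfiojeroifefveffs}, $r_0(I(Q,\lambda))$ is the presheaf $Y \mapsto (I(Q,\lambda))(Y) = [0,1]_+ \wedge Q(Y)$, since the labelling on the cylinder factors through the projection to $z(Q)$. The projection $p$ then induces, levelwise over each $Y \in \cP_X$, the based projection $[0,1]_+ \wedge Q(Y) \to Q(Y)$, which is a homotopy equivalence of pointed spaces and hence an isomorphism in $\Spc^\op_*$ after applying $\psi$. Since equivalences in $\wt\bV_{\Spc^\op_*}(X)$ (and hence in the full subcategory $\widehat\bV_{\Spc^\op_*}(X)$) are detected by the formula for mapping spaces as a filtered colimit of mapping spaces in the presheaf category (cf.\ the proof of \cite[Prop.~2.76]{unik} and \cref{lem:calculus}), a levelwise equivalence of presheaves becomes an equivalence after localisation. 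Thus $r(p)$ is an equivalence. One subtlety to address carefully: $p$ is only $\diag(X)$-controlled, so $r_0(p)$ lands on the nose without a nontrivial thinning zig-zag, which actually makes this case cleaner; the thinnings only appear for $H$ and the concatenation homotopy, but since those are already being inverted in the localisation $\wt W_{X,\Spc^\op_*}$ this causes no trouble.

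The main obstacle I anticipate is bookkeeping: making the retract/$2$-out-of-$6$ argument precise in the localised $\infty$-category $\widehat\bV_{\Spc^\op_*}(X)$ rather than in $\CW(X)$ itself. One must check that ``$\phi_0$ and $\phi_1$ are controlled homotopic'' implies ``$r(\phi_0) \simeq r(\phi_1)$ in $\widehat\bV_{\Spc^\op_*}(X)$'', which requires tracking that the two composites $H \circ i_0$ and $H \circ i_1$ factor through the same zig-zag after passing to a common thinning $U^k_*$; here one uses that $\wt W_{X,\Spc^\op_*}$ satisfies a calculus of left fractions (the same argument as \cite[Prop.~2.76]{unik}, via \cref{lem:calculus}) so that a homotopy in $\CW(X)$ between controlled maps becomes an honest identification of the two induced arrows in the localisation. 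Once that lemma (homotopic controlled maps have equal realisations) is in place, the proof concludes formally: $r(\phi)$ has both a left and a right inverse up to the identifications just described, hence is an equivalence.
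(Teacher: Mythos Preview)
Your proof is correct and follows essentially the same approach as the paper: both reduce to showing that $r$ sends the cylinder projection $I(Q,\lambda) \to (Q,\lambda)$ to an equivalence, and both verify this by observing that at the level of presheaves one obtains the map $Y \mapsto \big(Q(Y) \to [0,1]_+ \wedge Q(Y)\big)$, which becomes a levelwise equivalence after applying $\psi$. Your bookkeeping worries about zig-zags and left fractions are unnecessary here, since once $r(p)$ is known to be invertible, $r(i_0)$ and $r(i_1)$ are both one-sided inverses of the same equivalence and hence agree, which immediately gives $r(\phi_0)\simeq r(\phi_1)$ for controlled homotopic maps; the paper simply takes this standard reduction for granted.
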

\begin{proof}
 Let $X$ be in $\Coarse$ and 
	$(Q,\lambda)$ be in $\CW(X)$.
	It suffices to show  that $r$ sends the projection
	$I (Q,\lambda)\to (Q,\lambda)$ to an equivalence.
	We observe that $r_{0}$ sends this map to a morphism represented by a map $r_{0,X}(Q,\lambda) \to [0,1]_{+} \wedge r_{0,X}(Q,\lambda)$ in $\PSh_{\Top_{*}^{\op}}(X)$.
	Since this map becomes an equivalence after postcomposition with $\psi$, the map $r(Q,\lambda) \to r(I(Q,\lambda))$ is an equivalence in $\widehat\bV_{\Spc_{*}^{op}}(X)$
\end{proof}

\subsection{Finiteness conditions}\label{sec:finiteness}

We consider a bornological coarse space $X$.
Recall the functor $\bV^{c,\perf}$ in \eqref{eoigjeoefvdsfsvdfv}.
We have a full subcategory $\bV^{c,\perf}_{\Spc_{*}^{\op}}(X)$ of 
$\widehat \bV_{\Spc_{*}^{\op}}(X)$.  If $ (Q,\lambda)$ is in $\CW(X)^{\op}$,
then we have  $r(Q,\lambda)$ in $\widehat \bV_{\Spc_{*}^{\op}}(X)$ by \cref{qiuerhgiwergerwferfefc}.
In this subsection, we introduce the notion of finite domination
which ensures that $r(Q,\lambda)$ belongs to the subcategory $\bV^{c,\perf}_{\Spc_{*}^{\op}}(X)$.

By construction, the functors $\widehat \bV$ from \eqref{wergwregvfvfdsv}, $ \bV$ from \eqref{afsaojqoifgafvasdva} and 
   $\bV^{c}$  from \eqref{fdsvkvsdfpvfvsfdvsvfdvsv}, all considered in the case of trivial $G$, give rise to a sequence of
 fully faithful functors
 \begin{equation}\label{qweoifhjqoiwefqwefqewf}
 \bV^{c}_{\Spc_{*}^{\op}}(X) \to \bV_{\Spc^\op_*}(X) \to \widehat\bV_{\Spc^\op_*}(X)\ .
\end{equation}
We consider $X$-controlled CW-complexes $(K,\kappa)$ and $(Q,\lambda)$ in $\CW(X)$.
Recall that $\cB_{X}$ denotes the bornology of $X$.

\begin{ddd}\label{wfiqjfioewfewfqwefqewfewf}\ 
 \begin{enumerate}
  \item The $X$-controlled CW-complex $(K,\kappa)$ is locally finite if the subcomplex $K(B)$ (see \eqref{qrfiojeroifefveffs} for notation) contains only finitely many cells for every $B$ in $\cB_X$.
  \item  The $X$-controlled CW-complex $(Q,\lambda)$ is finitely dominated if there exists a diagram
  \[ (Q',\lambda') \xrightarrow{i} (K,\kappa) \xrightarrow{p} (Q,\lambda) \]
  in $\CW(X)$ such that 
  $(K,\kappa)$ is  locally finite
   and the composition $p \circ i$ is a
  controlled homotopy equivalence.
  \item  \label{weriogjowegwergefww} We denote by $\CW^\hfd(X)$ the full subcategory of $\CW(X)$ consisting of the finitely dominated objects.\qedhere
 \end{enumerate}
\end{ddd}

Recall that $u \colon \BC\to \Coarse$ is the forgetful functor.
\begin{lem} \label{qerogijweroigjwerogegwerg}
 The collection of subcategories $\CW^\hfd(X)$ for all $X$ in $\BC$  forms a full subfunctor $\CW^{\hfd} \colon \BC \to \CAT$ of $\CW \circ u$.
\end{lem}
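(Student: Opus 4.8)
The plan is to show that for every morphism $f \colon X \to X'$ in $\BC$ the induced functor $f_{*} \colon \CW(X) \to \CW(X')$ (which exists because $\CW \circ u$ is a functor by \cref{rgjqegorqefefqefwfqefqefqef}) restricts to a functor $\CW^{\hfd}(X) \to \CW^{\hfd}(X')$. Since $\CW^{\hfd}(X)$ is by definition a full subcategory of $\CW(X)$ and $\CW$ is already a functor, this is all that needs checking: once $f_{*}$ is known to preserve finite domination, fullness automatically upgrades the collection of inclusions $\CW^{\hfd}(X) \hookrightarrow \CW(X)$ to a natural transformation exhibiting $\CW^{\hfd}$ as a full subfunctor of $\CW \circ u$.

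So fix $f \colon X \to X'$ in $\BC$ and let $(Q,\lambda)$ be a finitely dominated object of $\CW(X)$, witnessed by a diagram $(Q',\lambda') \xrightarrow{i} (K,\kappa) \xrightarrow{p} (Q,\lambda)$ in $\CW(X)$ with $(K,\kappa)$ locally finite and $p \circ i$ a controlled homotopy equivalence (\cref{wfiqjfioewfewfqwefqewfewf}). Applying $f_{*}$ produces a diagram $(Q',f\circ\lambda') \xrightarrow{i} (K,f\circ\kappa) \xrightarrow{p} (Q,f\circ\lambda)$ in $\CW(X')$; that $i$ and $p$ remain controlled morphisms is part of the functoriality of $\CW$. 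By \cref{rem:controlledequiv-functorial}, $f_{*}$ preserves controlled homotopy equivalences, so $p \circ i$ is still a controlled homotopy equivalence over $X'$. It therefore remains to verify that $(K,f\circ\kappa)$ is locally finite over $X'$.

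For this, let $B'$ be in $\cB_{X'}$. Unwinding the notation of \cref{wfiqjfioewfewfqwefqewfewf}, the subcomplex $(K,f\circ\kappa)(B')$ is the largest subcomplex $L$ of $K$ with $f(\kappa(z(L))) \subseteq B'$, equivalently $\kappa(z(L)) \subseteq f^{-1}(B')$; hence $(K,f\circ\kappa)(B') = K(f^{-1}(B'))$. Since $f$ is a morphism of bornological coarse spaces it is proper, so $f^{-1}(B')$ lies in $\cB_{X}$. As $(K,\kappa)$ is locally finite over $X$, the subcomplex $K(f^{-1}(B'))$ contains only finitely many cells. Thus $(K,f\circ\kappa)$ is locally finite, $f_{*}(Q,\lambda)$ is finitely dominated, and the restriction $f_{*} \colon \CW^{\hfd}(X) \to \CW^{\hfd}(X')$ is well defined.

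There is no genuine obstacle here; the only point deserving attention is the use of properness of $f$ to send bounded subsets of $X'$ to bounded subsets of $X$, which is precisely what makes the local finiteness condition --- indexed by the bornology --- behave functorially. Everything else, including the fact that $(K,f\circ\kappa)$ is an honest $X'$-controlled CW-complex, is already contained in the functoriality of $\CW$ recalled in \cref{rgjqegorqefefqefwfqefqefqef}.
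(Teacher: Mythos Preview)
Your proof is correct and follows essentially the same approach as the paper: apply $f_{*}$ to the witnessing diagram, invoke \cref{rem:controlledequiv-functorial} to preserve the controlled homotopy equivalence, and use properness of $f$ to see that $f_{*}(K,\kappa)$ remains locally finite. You provide slightly more detail on the last point by explicitly identifying $(K,f\circ\kappa)(B') = K(f^{-1}(B'))$, but the argument is the same.
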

\begin{proof}
 If $f \colon X\to X'$ is a morphism in $\BC$ and $(Q,\lambda)$ is in $\CW^{\hfd}(X)$, then we must show that $f_{*}(Q,\lambda) \in  \CW^{\hfd}(X')$. 
 Consider a diagram  $(Q',\lambda') \to (K,\kappa) \to (Q,\lambda)$ in  $\CW(X)$      witnessing that $(Q,\lambda)$ is finitely dominated.
 Then the diagram $f_{*}(Q',\lambda')\to  f_{*}(K,\kappa)  \to f_{*} (Q,\lambda)$   witnesses that $f_{*}(Q,\lambda)$ is finitely dominated since
$f_{*}$ preserves controlled homotopies by \cref{rem:controlledequiv-functorial} and
$f_{*} (K,\kappa)$ is locally finite since $f$ is proper and $(K,\kappa)$ is locally finite.
\end{proof}

We consider the composition of the realisation transformation with the canonical inclusion into the idempotent completion
\begin{equation}\label{asdflkjoiadsvfasdf}
 \rp \colon \CW^\op \xrightarrow{r} \widehat\bV_{\Spc^\op_*} \to \Idem(\widehat\bV_{\Spc^\op_*})\ .
\end{equation}
We apply $\Idem$ to the fully faithful functor $\bV^{c}_{\Spc_{*}^{\op}}(X) \to \widehat\bV_{\Spc^\op_*}(X)$ from \eqref{qweoifhjqoiwefqwefqewf}.
Since $\Idem$ preserves fully faithfulness, the functor
\[ \bV^{c,\perf}_{\Spc^\op_*}(X) = \Idem(\bV^{c}_{\Spc_{*}^{\op}}(X)) \to \Idem(\widehat\bV_{\Spc^\op_*}(X)) \]
identifies $\bV^{c,\perf}_{\Spc^\op_*}(X)$ with a full subcategory of $\Idem(\widehat\bV_{\Spc^\op_*}(X))$.

Let $(Q,\lambda)$ be in $\CW(X)^\op$.
\begin{prop}\label{prop:locfin-diag}
 If $(Q,\lambda)$ is finitely dominated, then $\rp_{X}(Q,\lambda)$ belongs to $\bV^{c,\perf}_{\Spc_{*}^{\op}}(X)$.
\end{prop}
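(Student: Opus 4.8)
The strategy is to first handle the locally finite case and then use the idempotent completion to deduce the finitely dominated case. So first I would assume that $(Q,\lambda)$ is itself locally finite, and show directly that $r_X(Q,\lambda)$ lies in the image of $\bV^c_{\Spc^\op_*}(X)$ inside $\widehat\bV_{\Spc^\op_*}(X)$; then $\rp_X(Q,\lambda)$ lies in $\bV^{c,\perf}_{\Spc^\op_*}(X)$ after applying $\Idem$. Recall from \cref{weigjwoerfewv} that $\bV^c_\bC(X)$ consists of the objects $i_*M$ for $i \colon F \to X$ the inclusion of a locally finite subset and $M$ in $\bV_\bC(F)$. So the task is: (a) identify a locally finite subset $F$ of $X$ carrying the ``support'' of $r_X(Q,\lambda)$, and (b) check that the restriction of $r_X(Q,\lambda)$ to $F$ is a small sheaf, i.e.\ takes cocompact values on bounded subsets.

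For (a), the natural candidate is $F := \lambda(z(Q)) \subseteq X$ (or a slight enlargement thereof); local finiteness of $(Q,\lambda)$ in the sense of \cref{wfiqjfioewfewfqwefqewfewf} says precisely that $K(B)$ has finitely many cells for each bounded $B$, which translates into $F \cap B$ being finite for bounded $B$, so $F$ is a locally finite subset of $X$. One checks from the definition \eqref{qrfiojeroifefveffs} that $r_{0,X}(Q,\lambda)(Y)$ only depends on $Y \cap F$, so $r_X(Q,\lambda) \simeq i_* (r_X(Q,\lambda)|_F)$ for $i \colon F \hookrightarrow X$; combined with \cref{goiwjegoergregergwreg} (which shows the restriction is still a sheaf) this gives the statement that $r_X(Q,\lambda)|_F$ is an object of $\widehat\bV_{\Spc^\op_*}(F)$. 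For (b), the key computation is that for $B$ in $\cB_F$ the value $r_X(Q,\lambda)(B) = \psi(Q(B))$ with $Q(B)$ a finite CW-complex, hence $\psi(Q(B)) \in \Spc^{\fin,\op}_* \subseteq \Spc^{\op,\omega}_*$, so $Q(B)$ is a cocompact object of $\Spc^\op_*$. This shows $r_X(Q,\lambda)|_F$ is a small sheaf, so it represents an object of $\bV_{\Spc^\op_*}(F)$, and therefore $r_X(Q,\lambda)$ represents an object of $\bV^c_{\Spc^\op_*}(X)$; applying $\Idem$ gives $\rp_X(Q,\lambda) \in \bV^{c,\perf}_{\Spc^\op_*}(X)$ for locally finite $(Q,\lambda)$.

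For the general finitely dominated case, use the diagram $(Q',\lambda') \xrightarrow{i} (K,\kappa) \xrightarrow{p} (Q,\lambda)$ in $\CW(X)$ with $(K,\kappa)$ locally finite and $p \circ i$ a controlled homotopy equivalence. By \cref{gfioqjgioewfewfewfqwefqwefqew}, $r$ sends controlled homotopy equivalences to equivalences, so $\rp_X$ sends $p \circ i$ to an equivalence $\rp_X(Q,\lambda) \xrightarrow{\simeq} \rp_X(Q',\lambda')$ (note the op, so the composite runs the other way), exhibiting $\rp_X(Q,\lambda)$ as a retract of $\rp_X(K,\kappa)$ in $\Idem(\widehat\bV_{\Spc^\op_*}(X))$: concretely, $\rp_X(i) \colon \rp_X(K,\kappa) \to \rp_X(Q',\lambda')$ and the section coming from inverting $\rp_X(p\circ i)$ realize $\rp_X(Q,\lambda)$ as a retract of $\rp_X(K,\kappa)$. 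Since $(K,\kappa)$ is locally finite, the previous paragraph gives $\rp_X(K,\kappa) \in \bV^{c,\perf}_{\Spc^\op_*}(X)$, and the latter is an idempotent complete full subcategory of $\Idem(\widehat\bV_{\Spc^\op_*}(X))$, hence closed under retracts. Therefore $\rp_X(Q,\lambda) \in \bV^{c,\perf}_{\Spc^\op_*}(X)$.

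\textbf{Main obstacle.} The routine part is checking that $r_X(Q,\lambda)$ restricts along the inclusion of the locally finite subset $F = \lambda(z(Q))$; this is essentially bookkeeping with \eqref{qrfiojeroifefveffs}, though one must be slightly careful that $F$ is genuinely a locally finite \emph{subspace} in the bornological sense and that the restriction of a sheaf is still a sheaf (for which \cref{goiwjegoergregergwreg} or its proof technique suffices). The step requiring genuine care is the retract argument in $\Idem(\widehat\bV_{\Spc^\op_*}(X))$: one needs that $\rp_X$ actually inverts the controlled homotopy equivalence $p \circ i$ \emph{as a morphism in $\Idem(\widehat\bV_{\Spc^\op_*}(X))$}, which follows from \cref{gfioqjgioewfewfewfqwefqwefqew} together with functoriality of $\Idem$, and that membership in $\bV^{c,\perf}_{\Spc^\op_*}(X)$ is detected after idempotent completion — i.e.\ that $\bV^{c,\perf}_{\Spc^\op_*}(X) = \Idem(\bV^c_{\Spc^\op_*}(X))$ sits inside $\Idem(\widehat\bV_{\Spc^\op_*}(X))$ as a full subcategory closed under retracts. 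This is exactly the content of the discussion preceding the proposition (that $\Idem$ preserves full faithfulness and idempotent complete subcategories are closed under retracts), so no new input is needed, but it is the conceptual heart of the argument and worth spelling out carefully.
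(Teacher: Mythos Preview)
Your proposal is correct and follows essentially the same approach as the paper: first reduce to the locally finite case by showing $r_X(K,\kappa)$ is a small sheaf supported on the locally finite subset $\kappa(K)$ (hence lies in $\bV^c_{\Spc^\op_*}(X)$), then use \cref{gfioqjgioewfewfewfqwefqwefqew} and the retract argument to handle the finitely dominated case via idempotent completion. The paper's proof is terser but structurally identical; your more detailed discussion of why $F=\lambda(z(Q))$ is locally finite and why the retract argument goes through in $\Idem(\widehat\bV_{\Spc^\op_*}(X))$ fills in points the paper leaves implicit.
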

\begin{proof}
 Suppose for the beginning that $(K,\kappa)$ in $\CW(X)^\op$ is locally finite.
 We will first show that $r_X(K,\kappa)$ is a small sheaf.
 In view of \cref{def:realisation} and the definition of smallness given in  \cref{sec:controlled-objects},  
 we must show that $\psi(K(B))$ belongs to $\Spc_*^{\op,\omega}$
 for every $B$ in $\cB_{X}$.
 This is the case since
 $\psi$ sends finite CW-complexes to cocompact objects, and $(K,\kappa)$ is a finite CW-complex $K(B)$ by the assumptions on $(K,\kappa)$. 
 
 The subset $\kappa(K)$ of $X$ is by assumption a locally finite subset of $X$.
 In particular, $r_X(K,\kappa)$  belongs to the image of $i_{*} \colon \bV_{\Spc^{\op}_*}(\kappa(K))\to \bV_{\Spc^{\op}_*}(X)$, where $i \colon \kappa(K) \to X$ is the inclusion.
 By \cref{weigjwoerfewv}, this implies that $p_X(K,\kappa)$ belongs to $\bV^{c}_{\Spc^{\op}_*}(X)$.
 
 Suppose now that $(Q,\lambda)$ is finitely dominated. Then \cref{gfioqjgioewfewfewfqwefqwefqew} implies that $\rp_X(Q,\lambda)$ is a retract of $\rp_X(K,\kappa)$ for some locally finite $(K,\kappa)$ in $\CW(X)^\op$.
 So $\rp_X(Q,\lambda)$ belongs to $\Idem(\bV^{c}_{\Spc^{\op}_*}(X))$ as claimed.
\end{proof}

In view of  \cref{prop:locfin-diag} we can make the following definition.

\begin{ddd}\label{def:rfd}
 We let
 \[ r^\hfd \colon \CW^{\hfd,\op} \to {\bV^{c,\perf}_{\Spc^{\op}_*}}\ .\]
 be the natural transformation obtained by restricting the natural transformation \eqref{asdflkjoiadsvfasdf}.
\end{ddd}

\begin{rem}\label{qeirughioqergegwergweg}
 {Let $X$ be a bornological coarse space and let $Y$ be a union of coarse components.
 Consider an object $(Q,\lambda)$ in $\CW(X)$.
 Since the subcomplex generated by a single cell of $Q$ is necessarily supported on a coarse component of $X$, the subcomplex $Q(Y)$ coincides with the subcomplex spanned by all cells whose label lies in $Y$.
 Similarly, if $\phi \colon (Q,\lambda) \to (Q',\lambda')$ is a morphism in $\CW(X)$, the image $\phi(q)$ of a cell $q$ with $\lambda(q) \in Y$ can only have non-trivial intersections with cells in $Q(Y)$.
 Consequently, there exists a restriction functor
 \[ \CW(X) \to \CW(Y),\quad (Q,\lambda) \mapsto (Q(Y),\lambda_{|Q(Y)})\ .\]
 This functor preserves finitely dominated objects, so we also have a restriction functor $\CW^{\hfd}(X)\to \CW^{\hfd}(Y)$.}

 {Unwinding definitions, one checks that there exists a commutative diagram
 \[\xymatrix{
  \CW^\hfd(X)\ar[r]\ar[d]_-{r^\hfd} & \CW^\hfd(Y)\ar[d]^-{r^\hfd} \\
  \bV^{c,\perf}_{\Spc^\op_*}(X)\ar[r]^-{(-)_{|Y}} & \bV^{c,\perf}_{\Spc^\op_*}(Y)
 }\]
 in which the functor $(-)_{|Y}$ is the restriction functor from \cref{qeorighqoergergwregr}.}
\end{rem}

In the remainder of this section, we formulate a criterion to recognise finitely dominated objects in $\CW(X)$.

Let $f \colon S \to T$ be a map of topological spaces and let $U$ be an entourage of $T$.
\begin{ddd}
 The map $f$ is $U$-bounded if $f(S)$ is a $U$-bounded subset of $T$.
\end{ddd}

Let $T$ be a topological space, and let $U$ be an open entourage of $T$ containing the diagonal.
We let $T_{U}$ denote the coarse space obtained by equipping $T$ with the coarse structure generated by $U$.
Define $\Sing^U(T)$ as the sub-simplicial set of the singular complex $\Sing(T)$ consisting of the $U$-bounded singular simplices.
Taking the geometric realisation and adjoining a base point, we obtain the based CW-complex $\abs{\Sing^{U}(T)}_+$. Note that its set of cells $z(\abs{\Sing^{U}(T)})$ is the set of non-degenerate singular simplices.
We equip $\abs{\Sing^U(T)}_+$ with the labelling $\lambda \colon z(\abs{\Sing^{U}(T)}) \to T$ which sends a singular simplex $\sigma \colon \Delta^n \to T$ to $\sigma(b)$, where $b$ is the barycentre in $\Delta^{n}$.
The pair $(\abs{\Sing^U(T)}_+,\lambda)$ is a controlled CW-complex over $T$.

\begin{ddd}\label{def:small-simplices}
 We set
 \[ C^U(T) := (\abs{\Sing^U(T)}_+,\lambda) \text{ in } \CW(T_{U})\ .\qedhere\]
\end{ddd}

Let $T$ and $T'$ be topological spaces and $X$ be a set with an entourage $U$.
Suppose that $\ell \colon T \to X$ and $\ell' \colon T' \to X$ are functions between the underlying sets.
\begin{ddd}\label{def:Ucontrol-top}
 A continuous map $f \colon T' \to T$ is $U$-controlled if
 \[ \{ (\ell'(t),\ell(f(t))) \mid t \in T' \} \subseteq U\ .\qedhere\]
\end{ddd}

We define the cylinder on $(T,\ell)$ as the pair
\begin{equation}\label{eq:cylinder-top}
 I(T,\ell) := (T \times [0,1], T \times [0,1] \xrightarrow{\pr} T \xrightarrow{\ell} X)\ .\qedhere
\end{equation}

\begin{ddd}
 A $U$-controlled homotopy is a $U$-controlled map $I(T',\ell') \to (T,\ell)$.
\end{ddd}

\begin{rem}\label{rem:Ucontrol-top}
 The notion of a $U$-controlled homotopy is usually phrased in terms of open covers of $T$, see for example \cite[Ch.~IV.1]{hu}. If $\cU$ is an open cover of $T$, then $U := \bigcup_{V \in \cU} (V \times V)$ is an open entourage of $T$, and the notion of $U$-homotopy reduces to the definition in \cite{hu}.
  Similarly, every open entourage $U$ containing the diagonal induces an open cover $\cU := \{ V \subseteq T \mid V \text{ open and } V \times V \subseteq U \}$ such that a $\cU$-homotopy in the sense of \cite[Ch.~IV.1]{hu} is the same as a $U$-homotopy in the above sense.
  We will use this translation in the proof of \cref{prop:anrhfd}.
\end{rem}

If $U'$ is a second open entourage on the topological space $T$ such that $U' \subseteq U$, then there is a natural inclusion $C^{U'}(T) \to C^U(T)$ in $\CW(T_{U})$.
\begin{lem}\label{lem:small-simplices}
 The inclusion $C^{U'}(T) \to C^U(T)$ is a $U$-controlled homotopy equivalence.
\end{lem}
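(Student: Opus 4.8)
The idea is to write down an explicit homotopy inverse together with explicit $U$-controlled homotopies witnessing that the inclusion $\iota\colon C^{U'}(T)\to C^{U}(T)$ is a $U$-controlled homotopy equivalence in the sense of \cref{ergbiojeorfqewfwfewffqf}. The point is that $C^{U}(T)=(\abs{\Sing^{U}(T)}_{+},\lambda)$ is nothing but (a pointed variant of) the geometric realisation of the singular simplicial set spanned by $U$-bounded simplices; the standard fact that $\abs{\Sing(T)}\to T$ is a weak equivalence does not quite help here, since we need everything to be $U$-controlled, not merely a homotopy equivalence. Instead I would exploit that $\abs{\Sing^{U'}(T)}$ and $\abs{\Sing^{U}(T)}$ are both quite large (they contain all $U'$- resp.\ $U$-bounded singular simplices), so there is enough room to build a controlled deformation retraction by a barycentric-subdivision-type argument.

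\textbf{Key steps.} First I would recall how the labelling $\lambda$ works: a non-degenerate singular simplex $\sigma\colon\Delta^{n}\to T$ is labelled by its barycentre value $\sigma(b)$, and if $\sigma$ is a face of $\tau$ then $\lambda(\sigma)$ and $\lambda(\tau)$ both lie in $U[\{\lambda(\tau)\}]$ provided $\tau$ is $U$-bounded (this is exactly the statement that $C^{U}(T)$ is $U$-controlled, which is asserted right before \cref{def:small-simplices}). Second, I would construct the homotopy inverse $p\colon C^{U}(T)\to C^{U'}(T)$. The natural candidate is iterated barycentric subdivision: if $\sigma\colon\Delta^{n}\to T$ is $U$-bounded, then for $k$ large (depending only on $n$ and on how much one needs to shrink, but uniformly bounded since $\dim\Delta^{n}=n$) the $k$-th barycentric subdivision $\mathrm{Sd}^{k}\sigma$ consists of $U'$-bounded simplices — here I would use that for a fixed simplex, the diameters of the subdivided pieces shrink by a factor $\tfrac{n}{n+1}$ per subdivision, so finitely many subdivisions suffice; but since this number depends on $n$ and $n$ is unbounded, I would need to be slightly careful and instead choose, for each $\sigma$, the minimal $k=k(\sigma)$ making all pieces $U'$-bounded. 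The composite $p\circ\iota$ is then realised by the classical simplicial approximation / subdivision operator $\mathrm{Sd}$, which is chain-homotopic to the identity via an explicit prism construction; the key is that all simplices appearing in this prism are contained in (unions of) simplices of the original, hence $U$-bounded, so the homotopy is $U$-controlled. Third, I would assemble the two controlled homotopies $p\circ\iota\sim\id$ on $C^{U'}(T)$ and $\iota\circ p\sim\id$ on $C^{U}(T)$ from the standard subdivision prism operator, checking in each case that every cell of the cylinder (see \eqref{eq:cylinder-cw}) maps into a subcomplex whose label-set lies in a single $U$-thickening — this is where $U$-boundedness of the original simplices is used, together with the fact that subdivision does not enlarge supports.

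\textbf{Main obstacle.} The genuinely delicate point is making the barycentric-subdivision argument uniform enough to produce a single $U$-controlled morphism rather than a family of maps with control deteriorating with dimension. Concretely: subdivision applied to a $U$-bounded $n$-simplex produces simplices that are still $U$-bounded (diameters only shrink), so $\mathrm{Sd}$ itself is already a $U$-controlled endomorphism of $C^{U}(T)$ with no dimension-dependence — that part is fine. The subtlety is the other direction: one needs a controlled map $C^{U}(T)\to C^{U'}(T)$, i.e.\ one must actually land in $U'$-bounded simplices, and $k(\sigma)$ a priori grows with $\dim\sigma$. I expect the resolution is that one does not need a single uniform $k$: the map $\sigma\mapsto\mathrm{Sd}^{k(\sigma)}\sigma$ is still cellular and still $U$-controlled (the control bound is $U$, independent of $k$, because subdivision never increases diameters), and it takes values in $\abs{\Sing^{U'}(T)}$ by construction. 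The homotopies then come from concatenating the standard subdivision homotopies $\mathrm{Sd}^{j}\sim\mathrm{Sd}^{j+1}$, each of which is $U$-controlled for the same reason; one must check the concatenation is still a well-defined $U$-controlled morphism out of a single cylinder, which is routine once one observes that on any given cell only finitely many of the $k(\sigma)$ are relevant. I would also remark (as \cref{rem:Ucontrol-top} anticipates) that this is the combinatorial shadow of the classical fact that $\abs{\Sing^{\cU}(T)}\to\abs{\Sing^{\cU'}(T)}$ is a $\cU$-homotopy equivalence, so the argument could alternatively be cited from the open-cover literature such as \cite[Ch.~IV.1]{hu}, but I would prefer to give the self-contained subdivision argument for definiteness.
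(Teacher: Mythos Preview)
Your overall strategy---barycentric subdivision together with the standard prism homotopy---is indeed the correct underlying idea, and is essentially what the paper appeals to. However, the paper does not give a self-contained argument: it simply cites \cite[Lem.~7.21(2)]{Enkelmann:2018aa} for the metric case and observes that the argument there carries over verbatim when ``$\delta$-control'' is replaced by ``$U$-control''. So on the level of approach there is no disagreement; the difference is that you attempt to sketch the details whereas the paper defers to the literature.

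That said, your handling of the ``main obstacle'' contains a genuine gap. The assertion that $\sigma\mapsto\mathrm{Sd}^{k(\sigma)}\sigma$ (with $k(\sigma)$ the minimal number of subdivisions rendering $\sigma$ $U'$-small) is cellular is \emph{not} correct as stated. If $\tau$ is a face of $\sigma$, then the restriction of $\mathrm{Sd}^{k(\sigma)}\sigma$ to that face is $\mathrm{Sd}^{k(\sigma)}\tau$, whereas your map is supposed to send $\tau$ to $\mathrm{Sd}^{k(\tau)}\tau$; since typically $k(\tau)<k(\sigma)$, these are different cellular maps $\abs{\Delta^{\dim\tau}}\to\abs{\Sing^{U'}(T)}$ and do not glue. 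Likewise, the ``concatenation'' of the prism homotopies $\mathrm{Sd}^{j}\sim\mathrm{Sd}^{j+1}$ over an unbounded range of $j$'s cannot be packaged into a single morphism out of one cylinder $I(C^{U}(T))$ by the naive construction you indicate: on a cell $\sigma$ one would need to traverse $k(\sigma)$ many homotopies, and there is no uniform bound on $k(\sigma)$, so the concatenation is not a well-defined continuous map on the colimit. This is precisely the technical heart of the lemma, and dismissing it as ``routine'' is not justified.

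The fix, carried out in \cite{Enkelmann:2018aa}, is to construct the inverse and the homotopies inductively over skeleta, carefully adjusting the subdivision depth so that compatibility across faces is maintained at each stage (in effect, the space-level analogue of the variable-subdivision chain homotopy in the standard proof of excision). Your sketch correctly isolates where the difficulty lies, but does not supply this inductive construction; the paper's choice to cite rather than reproduce it is a reasonable one.
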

\begin{proof}
 Note that the inclusion is a homotopy equivalence by the excision property of singular homology with respect to open coverings.
 The difficulty lies in showing that the map is a controlled homotopy equivalence. 
 For metric spaces this is done in \cite[Lem.~7.21(2)]{Enkelmann:2018aa}.
 The same argument applies to spaces equipped with an open entourage: whenever one speaks of $\delta$-control in the metric world, one replaces this by $U$-control.
\end{proof}

\begin{construction}\label{const:small-simplices-induced-maps}
 Let $T$ be a topological space and $X$ be a coarse space.
 Let $V$ be an open entourage of $T$. 
 Then $C^V(T)$ is an object of $\CW(T_{V})$.
 If $\ell:T\to X$ is a function such that $\ell(V)$ is a coarse entourage of $X$, then $\ell \colon T_{V}\to X$ is a morphism of coarse spaces.
 
 Consider another topological space $T'$.
 If $f \colon T' \to T$ is a continuous map and $V'$ is an open entourage of $T'$ such that $f(V') \subseteq V$, then $f$ induces a map $\Sing^{V'}(T') \to \Sing^V(T)$.
 If $\ell' \colon T' \to X$ is a second  function such that $f$ is $U$-controlled for some coarse entourage $U$ of $X$, it follows that $\ell'(V')$ is a coarse entourage of $X$ and the map $\Sing^{V'}(T') \to \Sing^V(T)$ induces a controlled morphism
 \[ f_\sharp \colon \ell'_*C^{V'}(T') \to \ell_*C^V(T) \]
 in $\CW(X)$. We use this construction freely in the sequel.
\end{construction}

Let $T$ and $T'$ be topological spaces and $X$ be a coarse space.
Let $V$ be an open entourage of $T$.
Suppose that $\ell \colon T \to X$ and $\ell' \colon T' \to X$ are functions between the underlying sets such that $\ell(V)$ is a coarse entourage of $X$.
Consider {an $\ell(V)$}-controlled homotopy $h \colon I(T',\ell') \to (T,\ell)$.

\begin{lem}\label{lem:Uhomotopy}
 There exist an open entourage $V'$ of $T'$ such that $h_i(V') \subseteq V$ for $i=0,1$ and a controlled homotopy between the induced morphisms $h_{i,\sharp} \colon \ell'_*C^{V'}(T') \to \ell_*C^{V}(T)$ in $\CW(X)$.
\end{lem}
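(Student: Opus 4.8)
The statement is a controlled analogue of the standard fact that a homotopy between maps of spaces induces a homotopy between the maps they induce on singular chains, and the natural way to prove it is to subdivide the cylinder. First I would translate the given $\ell(V)$-controlled homotopy $h \colon T' \times [0,1] \to T$ into the language of open covers via \cref{rem:Ucontrol-top}: the open entourage $V$ on $T$ corresponds to an open cover $\cV$ of $T$, and $h$ being $\ell(V)$-controlled means that, after composing with $\ell$, the map stays $V$-close to $\ell'$ on each slice. The key compactness input is that $[0,1]$ is compact, so one can find a large $N$ such that the restriction of $h$ to each subinterval $[k/N,(k+1)/N]$ has image small enough that composing with $\ell$ keeps control within $V$; this uses that $\ell(V)$ is a coarse entourage and that we may pass to $\ell(V)^m$ for controlled constructions (the coarse structure on $X$ is closed under composition). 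Concretely, one wants to choose an open entourage $V'$ of $T'$, refining the cover pulled back along the slice inclusions, such that $h_0(V') \subseteq V$, $h_1(V') \subseteq V$, and moreover $h$ restricted to $T' \times \{k/N, (k+1)/N\}$ sends $V'$-close points to points that are $V$-close after passing through a bounded number of compositions of $V$.

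The second step is to build the controlled homotopy itself at the level of controlled CW-complexes. Recall $C^{V'}(T') = (\lvert \Sing^{V'}(T') \rvert_+, \lambda')$ and $C^V(T) = (\lvert \Sing^V(T) \rvert_+, \lambda)$. Given a $V'$-bounded singular simplex $\sigma \colon \Delta^n \to T'$, one composes with $h$ to get a map $\Delta^n \times [0,1] \to T$; using the chosen $N$, one triangulates $\Delta^n \times [0,1]$ into simplices each of whose images under $h \circ (\sigma \times \id)$ is $V$-bounded (here is where $N$ enters: the subdivision is fine enough in the $[0,1]$-direction, and $\sigma$ was already $V'$-bounded so its image together with a short homotopy stays $V$-bounded). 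This produces a simplicial map, hence a controlled morphism $I(\ell'_* C^{V'}(T')) \to \ell_* C^V(T)$ in $\CW(X)$ — where $I(-)$ is the cylinder on a controlled CW-complex from \eqref{eq:cylinder-cw} — whose restrictions to the two ends are exactly $h_{0,\sharp}$ and $h_{1,\sharp}$ by construction of the functoriality in \cref{const:small-simplices-induced-maps}. The $U$-control (for a suitable power of the coarse entourage governing $h$ together with $\ell(V)$) follows because the barycentre of any simplex in the subdivision maps, under $\ell \circ h \circ (\sigma\times\id)$, to a point controlled by $\ell'(\lambda'(\sigma))$ via the controlledness of $h$.

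The main obstacle I anticipate is making the triangulation of $\Delta^n \times [0,1]$ functorial and compatible with faces and degeneracies, so that the cell-by-cell construction actually assembles into a morphism of CW-complexes $I(\ell'_* C^{V'}(T')) \to \ell_* C^V(T)$ and not merely a collection of incompatible choices. This is the standard subtlety in the classical ``prism operator'' argument, and the cleanest way around it is to use the canonical (functorial) prism decomposition of $\Delta^n \times \Delta^1$ by shuffles, iterated $N$ times along $[0,1]$; one then only has to check that with this universal choice the simplices involved are $V$-bounded, which reduces to the estimate from the compactness step. An alternative, which may be cleaner given that \cref{lem:small-simplices} is already available, is: first replace $V'$ by a much finer open entourage $V''$ (legitimate by \cref{lem:small-simplices}, which says the inclusion $C^{V''}(T') \to C^{V'}(T')$ is a controlled homotopy equivalence), fine enough that $h$ carries $V''$-bounded simplices on each slice to $V$-bounded simplices; then the homotopy $h$ directly induces a simplicial map $\Sing^{V''}(T') \times \Delta^1 \to \Sing^V(T)$ after a single shuffle subdivision, with no need to iterate. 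Either way, once the simplicial map is in hand, verifying the controlledness conditions of \cref{ergioogrqgegqwfewfeqfe3245} and the cylinder conditions is a routine bookkeeping of which thickenings $U[\{\cdot\}]$ contain which labels, and the lemma follows.
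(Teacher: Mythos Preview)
Your approach is workable but more laborious than the paper's. The paper's key simplification is to exploit compactness of $[0,1]$ more fully at the very first step: rather than choosing $V'$ only so that the endpoint maps $h_0, h_1$ send $V'$ into $V$, it sets $I(V') := V' \times [0,1]^2$ and chooses $V'$ so that $I(V') \subseteq h^{-1}(V)$. This means the \emph{entire} homotopy track of a $V'$-bounded simplex is $V$-bounded, which eliminates any need to subdivide $[0,1]$ into $N$ pieces or to iterate a prism operator. The construction of the controlled homotopy is then a one-liner: the unit map $\Delta^1 \to \Sing([0,1])$ together with the fact that $\Sing$ and geometric realisation commute with finite products gives a controlled map $I(C^{V'}(T')) \to \pr_* C^{I(V')}(T' \times [0,1])$, and composing with $h_\sharp$ (which is $X$-controlled because $h$ is $\ell(V)$-controlled, via \cref{const:small-simplices-induced-maps}) yields the desired controlled homotopy directly.

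Your ``alternative'' paragraph is close in spirit to this, but the detour through \cref{lem:small-simplices} to refine $V'$ to $V''$ is unnecessary --- one may simply choose $V'$ correctly from the start --- and the condition you state (``carries $V''$-bounded simplices on each slice to $V$-bounded simplices'') is not quite strong enough: you need the product simplices in $\Delta^n \times \Delta^1$ to map to $V$-bounded simplices, which is exactly the condition $I(V') \subseteq h^{-1}(V)$. Your first approach with an $N$-fold subdivision and explicit shuffle prisms would also succeed, but trades a single compactness argument for a functoriality bookkeeping exercise that the paper sidesteps entirely.
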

\begin{proof}
 For an arbitrary entourage $V'$ of $T'$, denote by $I(V')$ the entourage $V' \times [0,1]^2$ of $T' \times [0,1]$.
 Since $h$ is continuous and $[0,1]$ is compact, there exists an open entourage $V'$ of $T'$ such that $I(V') \subseteq h^{-1}(V)$. In particular, $h_i(V') \subseteq V$ for $i=0,1$.
 
 Consider the cylinder $I(C^{V'}(T'))$ in $\CW(T'_{V'})$, which was defined in \eqref{eq:cylinder-cw}.
 Since $[0,1] \cong \abs{\Delta^1}$ and both $\Sing$ and geometric realisation commute with finite products, the unit map $\Delta^1 \to \Sing([0,1])$ induces a map
 \[ \abs{\Sing^{V'}(T')} \times [0,1] \to \abs{\Sing^{V'}(T') \times \Sing([0,1])} \cong \abs{\Sing^{I(V')}(T' \times [0,1])} \]
  which is a controlled morphism $I(C^{V'}(T' )) \to \pr_*C^{I(V')}(T' \times [0,1])$ in $\CW(T'_{V'})$, where $\pr \colon T' \times [0,1] \to T'$ denotes the projection.
  Using this morphism, we obtain an induced controlled homotopy
 \[ I(\ell'_*C^{V'}(T')) \cong \ell'_*I(C^{V'}(T')) \to \ell'_*\pr_*C^{I(V')}(T' \times [0,1]) \xrightarrow{h_\sharp} \ell_*C^V(T) \]
 in $\CW(X)$ which restricts to $h_{i,\sharp} \colon \ell'_*C^{V'}(T') \to \ell_*C^V(T)$ at its endpoints.
\end{proof}

Let $T$ be a locally compact topological space  and $U$ be an open entourage of $T$.
We assume that
for every relatively compact subset $B$ of $T$ also the $U$-thickening $U[B]$ (see \eqref{qwefoiheiuohfiqwefewfewfqeefedq}) is  relatively compact.
Under this condition, we can consider the  bornological coarse space 
 $T_{U,\rc}$ obtained by equipping $T$ with the coarse structure generated by $U$ and the bornology of relatively compact subsets.
 
\begin{prop}\label{prop:anrhfd}\label{rgoirjgorgrwrvcwecr}
 If $T$ is a locally compact ANR,
 then $C^U(T)$ belongs to $\CW^\hfd(T_{U,\rc})$. 
\end{prop}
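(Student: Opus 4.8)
The goal is to exhibit a locally finite controlled CW-complex over $T_{U,\rc}$ together with a controlled domination of $C^U(T)$. The standard topological input is that a locally compact ANR $T$ admits, for every open cover, a refined open cover and a dominating map through a locally finite CW-complex (the nerve of a suitably fine cover), and that this domination is a homotopy through arbitrarily small homotopies; see for instance \cite[Ch.~V]{hu}. First I would fix the open entourage $U$ and, using local compactness, choose an open cover $\cU$ of $T$ that is locally finite, consists of relatively compact sets, refines the cover associated to $U$ (in the sense of \cref{rem:Ucontrol-top}), and is such that the associated entourage $V := \bigcup_{W \in \cU}(W \times W)$ satisfies $V \circ V \subseteq U$. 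Let $N = N(\cU)$ be the nerve of $\cU$, which is a locally finite simplicial complex since $\cU$ is locally finite, and choose for each vertex (that is, each $W \in \cU$) a point in $W$ as its label; extend over simplices by sending a cell to the label of one of its vertices. This defines a $V$-controlled, locally finite CW-complex $(N,\nu)$ over $T_{U,\rc}$: local finiteness over relatively compact $B$ holds because only finitely many members of $\cU$ meet a relatively compact set, and properness of the labelling follows from the same fact together with the assumption that $U$-thickenings of relatively compact sets are relatively compact.

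Next I would produce the controlled maps. The ANR property gives a map $p \colon N \to T$ (a partition-of-unity/barycentric map for the cover $\cU$) which is $V$-controlled with respect to $\nu$ and the identity labelling of $T$, hence induces via \cref{const:small-simplices-induced-maps} a controlled morphism $p_\sharp \colon \nu_*C^{V'}(N) \to \id_*C^{V}(T)$ in $\CW(T_{U,\rc})$ for a suitable open entourage $V'$ of $N$; here I use that $N$ is itself a CW-complex (in fact an ANR) so that $C^{V'}(N)$ makes sense and is controlled-homotopy-equivalent to $(N,\nu)$ by \cref{lem:small-simplices} combined with the obvious controlled homotopy equivalence between the singular-small-simplices model of a CW-complex and the complex itself. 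Conversely, the ANR property produces an open cover and a map $i \colon T \to N$ (choosing, near each point, a vertex of a simplex whose carrier contains a neighbourhood) which is $V$-controlled, inducing $i_\sharp \colon \id_*C^{W}(T) \to \nu_*C^{V'}(N)$ for a fine enough open entourage $W$ of $T$. The key classical fact (domination) is that $p \circ i \colon T \to T$ is $\cU$-homotopic, hence $U$-controlled-homotopic after one composition, to $\id_T$; by \cref{lem:Uhomotopy} this $U$-controlled homotopy promotes to a controlled homotopy $p_\sharp \circ i_\sharp \simeq \id$ between the induced morphisms on the small-simplices models in $\CW(T_{U,\rc})$, after replacing $W$ by something smaller and using \cref{lem:small-simplices} to absorb the discrepancy between $C^W(T)$ and $C^U(T)$.

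Assembling these pieces: the diagram
\[
C^{W}(T) \xrightarrow{\ i_\sharp\ } \nu_*C^{V'}(N) \xrightarrow{\ p_\sharp\ } C^{V}(T)
\]
in $\CW(T_{U,\rc})$ has locally finite middle term (it is the small-simplices model of the locally finite complex $N$, hence locally finite over relatively compact $B$ because only finitely many cells of $N$, and hence of $C^{V'}(N)$ restricted to the preimage of $B$, carry labels in $B$), and its composite is a controlled homotopy equivalence by the previous paragraph. Finally $C^W(T)$, $C^V(T)$ and $C^U(T)$ are all pairwise controlled homotopy equivalent in $\CW(T_{U,\rc})$ by \cref{lem:small-simplices}, so composing $i_\sharp$ with the equivalence $C^U(T) \simeq C^W(T)$ and $p_\sharp$ with $C^V(T) \simeq C^U(T)$ exhibits $C^U(T)$ as finitely dominated in the sense of \cref{wfiqjfioewfewfqwefqewfewf}. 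Hence $C^U(T) \in \CW^\hfd(T_{U,\rc})$.

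\textbf{Main obstacle.} The delicate point is not the existence of the dominating map through a locally finite complex — that is standard ANR theory — but the \emph{controlledness} of all the maps and homotopies involved, i.e.\ checking that the classical constructions can be arranged to move points only within members of a fixed open cover, so that they induce morphisms and homotopies in $\CW(T_{U,\rc})$ via \cref{const:small-simplices-induced-maps} and \cref{lem:Uhomotopy}. This requires bookkeeping of several successive refinements of open covers (for $i$, for $p$, for the homotopy $p\circ i\simeq\id$) and an appeal to local finiteness at each stage to guarantee both local finiteness of $N$ and properness of its labelling; the relative-compactness hypothesis on $U$-thickenings is exactly what makes the latter work, and keeping track of which entourage controls which map is the part that demands care.
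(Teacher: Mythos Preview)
Your overall strategy coincides with the paper's: invoke the classical ANR domination result from \cite[Cor.~IV.6.2]{hu} to factor through a locally finite simplicial complex, then promote everything to the singular-small-simplices models via \cref{lem:Uhomotopy} and \cref{lem:small-simplices}. However, there is a genuine gap in the final assembly. You claim that the middle term $\nu_*C^{V'}(N)$ is locally finite ``because only finitely many cells of $N$, and hence of $C^{V'}(N)$ restricted to the preimage of $B$, carry labels in $B$''. This is false: the cells of $C^{V'}(N) = (\abs{\Sing^{V'}(N)}_+,\lambda)$ are the non-degenerate $V'$-bounded \emph{singular} simplices in $N$, not the simplices of $N$ itself, and there are uncountably many such singular simplices with barycentre in any given open set. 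So $\nu_*C^{V'}(N)$ is never locally finite and your displayed diagram does not witness finite domination in the sense of \cref{wfiqjfioewfewfqwefqewfewf}.

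The fix is exactly the step you gestured at but then dropped: one must insert the tautological controlled CW-complex $D(N) = (N,\nu)$ --- the simplicial complex $N$ itself with its barycentric labelling --- into the factorisation. The paper does this explicitly: after factoring $(\omega\circ\alpha)_\sharp$ through $\omega_*C^{\omega^{-1}(U)}(K)$, it passes to $\omega_*D(K)$ using that the canonical inclusion $D(K) \to C^{\omega^{-1}(U)}(K)$ is a controlled homotopy equivalence (after suitable barycentric subdivision to make simplices small), citing \cite[Lem.~7.21(3)]{Enkelmann:2018aa}. This controlled equivalence between the tautological and singular models is not quite ``obvious'' and needs a reference or argument. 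With $D(K)$ (which \emph{is} locally finite) in the middle, the displayed diagram does witness finite domination. A secondary point: your descriptions of the maps $p$ and $i$ are swapped --- the partition-of-unity map goes $T\to N$, while the map $N\to T$ is the one requiring the ANR extension property.
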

\begin{proof}
 Using \cref{rem:Ucontrol-top} to translate into our terminology, there exist by \cite[Cor.~IV.6.2]{hu} a locally finite simplicial complex $K$, continuous maps $\alpha \colon T \to K$ and $\omega \colon K \to T$ and a $U$-homotopy $h \colon \omega \circ \alpha \sim \id_{T}$ of controlled maps $(T,\id_T) \to (T,\id_T)$.
 By \cref{lem:Uhomotopy}, there exist an open entourage $V$ of $T$ such that $V$ and $\alpha^{-1}\omega^{-1}(V)$ are contained in $U$ and a controlled homotopy $k \colon I(C^V(T)) \to C^U(T)$ in $\CW(T_{U})$ from the inclusion $C^V(T) \to C^U(T)$ to $(\omega \circ \alpha)_\sharp \colon C^V(T) \to C^U(T)$.
  
 Note that $(\omega \circ \alpha)_\sharp$ factors as
 \[ C^V(T) \xrightarrow{\alpha_\sharp} \omega_*C^{\omega^{-1}(U)}(K) \xrightarrow{\omega_\sharp} C^U(T)\ .\]
 Replacing $K$ by an appropriate iterated barycentric subdivision, we may assume that all simplices in $K$ are $\omega^{-1}(U)$-bounded; note that the number of necessary subdivisions is locally bounded, but may not be globally bounded.
 Denote by $D(K)$ the tautological object in $\CW(K_{\omega^{-1}(U)})$ given by $K$ (labelling each simplex by its barycentre).
 By \cite[Lem.~7.21(3)]{Enkelmann:2018aa}, the canonical inclusion $j \colon D(K) \to C^{\omega^{-1}(U)}(K)$ admits an $\omega^{-1}(U)$-controlled homotopy inverse $t$.
 Consequently, $\omega_\sharp \circ \alpha_\sharp$ is controlled homotopic to the composition
 \[ C^V(T) \xrightarrow{\alpha_\sharp} \omega_*C^{\omega^{-1}(U)}(K) \xrightarrow{\omega_*t} \omega_*D(K) \xrightarrow{\omega_*j} \omega_*C^{\omega^{-1}(U)}(K) \xrightarrow{\omega_\sharp} C^U(T)\ .\]
 Since $\omega_*D(K)$ is locally finite and the entire composition is controlled homotopic to the inclusion $C^V(T) \to C^U(T)$, it is a controlled homotopy equivalence by \cref{lem:small-simplices}.
 Hence $C^U(T)$ lies in $\CW^{\hfd}(T_{U,\rc})$.
\end{proof}

\section{Finitely \texorpdfstring{$\cF$}{F}-amenable groups}\label{sec:famenablegroups}

Recall the notion of phantom equivalence from \cref{ethigoewggergwgerg}.
The notion of finite homotopy $\cF$-amenability will be introduced in \cref{def:f-amenable} below.
This section is dedicated to the proof of the following theorem.

Let $G$ be a finitely generated group and $\cF$ be a family of subgroups.
Let $\bC$ be a left-exact $\infty$-category with $G$-action and let $\Homol \colon \Cle \to \bM$ be a functor to a stably monoidal and cocomplete stable $\infty$-category which admits countable products.
We consider {the functor $\Homol \bC_G \colon G\Orb \to \Clep$ introduced in \cref{weoirgjwegwergwerg9}}.

\begin{theorem}
	\label{thm:famenable}
	Assume that
	\begin{enumerate}
	 \item $G$ is finitely homotopy $\cF$-amenable;
	 \item $\Homol$ is a lax monoidal, finitary localising invariant.
	\end{enumerate}
	Then the assembly map
	\[ \As_{\cF,\Homol\bC_{G}} \colon \mathop{\colim}\limits_{G_\cF\Orb} \Homol\bC_G \to \Homol\bC_G(*) \]
 is a phantom equivalence.
\end{theorem}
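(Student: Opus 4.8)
The plan is to apply \cref{ergiooegergergwergergergw} with the list $(U,\eta,V,\Homol,\cF)$ assembled from the structured results of \cref{sec:control-monoidal,sec:controlledCWs}. First I would take $U := \bV^{c,\perf,G}_{\underline{\Spc^\op_*}}$ and $V := \bV^{c,\perf}_{\bC,G}$, with $\eta$ the unit morphism $\Spc^{\op,\omega}_* \to \bV^{c,\perf,G}_{\underline{\Spc^\op_*}}(*)$ from \cref{thm:fix-orbit-wmodule}.\ref{it:fix-orbit-wmodule4}. By \cref{thm:fix-orbit-wmodule}, $U$ is $\pi_0$-excisive and $V$ carries a weak $U$-module structure $(\eta,\mu)$; by \cite[Prop.~5.2]{unik} and \cite[Cor.~6.18]{unik} the functor $\Homol V = \Homol\bV^{c,\perf}_{\bC,G}$ extends $\Homol\bC_G$ in the sense of \cref{wtgklpwergrewfwref} and is a hyperexcisive equivariant coarse homology theory, hence in particular $\pi_0$-excisive and hyperexcisive. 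The lax monoidal structure on $\Homol$ (together with the symmetric monoidal refinements of $\bV^{c,\perf}$ and $\bV^{c,\perf,G}$ from \cref{cor:vperf-monoid} and \cref{sec:orbits-and-fixed-points}) supplies the lax monoidal functor $H$ required by \cref{ergiooegergergwergergergw}, and $\Homol$ preserves sums since it is finitary. So all the structural hypotheses of \cref{ergiooegergergwergergergw} except the transfer class are in place, and it remains to construct a transfer class $(X,t)$ for $(U,\eta,V,\Homol,\cF)$.

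The construction of the transfer class is where finite homotopy $\cF$-amenability enters. Unwinding \cref{def:f-amenable}, this condition will provide, for a suitable compact metric space built from the group (a Rips-type or nerve-type complex), a family of finite-dimensional $G$-simplicial complexes $(E^n)_n$ with stabilisers in $\cF$ together with equivariant control data — maps $G_{min,min} \to E^n$ whose control tends to zero as $n \to \infty$. Packaging these into an object $X$ in $G\BC_{/\nat_{min,min}}$ of the hybrid type considered in \cref{fewuifzhifweffewfwefwefwefwe}, I would use \cref{rgiorgergegergergerg3232424} (applicable since $\Homol V$ is a hyperexcisive equivariant coarse homology theory and the complexes are finite-dimensional with stabilisers in $\cF$) to produce a morphism from $X$ to an $(\Homol V,\cF)$-proper object, verifying \cref{def:transfer-class}.\ref{qwoifgoqergqfeqewfq}. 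For the class $t$ itself, I would feed the barycentre labelling of the controlled CW-complex $C^U$ on the relevant space through the realisation transformation $r^\hfd \colon \CW^{\hfd,\op} \to \bV^{c,\perf}_{\Spc^\op_*}$ of \cref{def:rfd} (using \cref{prop:anrhfd} to see the complex is finitely dominated) to obtain an object $t_0$ in $U(X)$, and then apply the recipe of \cref{faklolergergwergwegwergwreg}: the object $t_0$ determines $t \colon \beins_\bM \to \Homol U(X)$, and the compatibility of the labelling with the projections $q^U_n \circ p$ — namely that restricting $t_0$ to the $n$-th fibre recovers (up to equivalence) the unit object $\eta_0$ corresponding to $\underline{S^0}$ — furnishes a filler for diagram \eqref{wefiewofefwefewfewfewfwe111}. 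Then \cref{ergiooegergergwergergergw} directly yields that $\As_{\cF,\Homol\bC_G}$ is a phantom equivalence.

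The main obstacle I expect is the precise construction of the transfer object $t_0$ in $U(X) = \bV^{c,\perf,G}_{\underline{\Spc^\op_*}}(X)$ and the verification that its fibrewise restrictions agree with $\eta_0$. This requires matching the point-set models of \cref{sec:controlledCWs} — the controlled CW-complexes $C^U(T)$ over the hybrid space, their finite domination via \cref{prop:anrhfd}, and the realisation $r^\hfd$ — with the algebraic fixed-point/orbit machinery of \cref{sec:orbits-and-fixed-points}, so that the unit of the coefficient algebra (the presheaf $\cU$ from \eqref{eq:unit-constraint}, carrying the trivial $G$-action as in \cref{thm:fix-orbit-wmodule}.\ref{it:fix-orbit-wmodule4}) is genuinely what one recovers after composing with $q^U_n \circ p$. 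Concretely one must check that the label of the distinguished $0$-cell (the cone point / basepoint) of the controlled complex, which is forced to lie in every component $W_n$ of the transfer space, produces the trivial-action sphere object in each fibre; this is a bookkeeping argument about basepoints in $\CW(X)$ and their images under $r$, but it is the step on which everything hinges. The homotopy-theoretic inputs (finite-dimensionality, stabiliser conditions, the vanishing-control estimates) are comparatively routine once the relevant complex from finite homotopy $\cF$-amenability is spelled out, and the machinery of \cref{rgiorgergegergergerg3232424} absorbs the geometric work.
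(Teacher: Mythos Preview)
Your structural outline is right --- the paper does deduce the theorem from \cref{ergiooegergergwergergergw} with exactly the choices $U = \bV^{c,\perf,G}_{\Spc^\op_*}$, $V = \bV^{c,\perf}_{\bC,G}$, invoking \cref{thm:fix-orbit-wmodule} for the weak module structure and \cite[Prop.~5.2, Cor.~6.18]{unik} for the extension and coarse-homological properties of $\Homol V$, and then building a transfer class whose properness comes from \cref{rgiorgergegergergerg3232424}. But the heart of the argument --- producing the object $t_0$ in $U(\cX)$ --- is precisely where your proposal breaks down.

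The realisation $r^{\hfd}$ lands in $\bV^{c,\perf}_{\Spc^\op_*}(\cX)$, not in $U(\cX) = \lim_{BG}\bV^{c,\perf}_{\Spc^\op_*}(\cX)$. To get a point of the limit you need a $G$-equivariant structure on the controlled CW-complex, i.e.\ an object of $\CW^{\hfd}(\cX)^{\op,\hG}$ in the sense of \cref{rem:fixedpoints}. The difficulty is that the spaces $Z_n$ supplied by finite homotopy $\cF$-amenability carry only \emph{homotopy coherent} $G$-actions, so there is no strict $G$-action on $C^{V}(Z_n)$ at all, and in particular no cocycle $(\rho_g)_{g\in G}$. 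The paper resolves this with two genuine constructions you have not mentioned: first, the Vogt strictification (\cref{eqrigoqergewrgwergwergwegw}) replaces each $(\Gamma_n,Z_n)$ by a strict $G$-space $X_n$ containing $Z_n$ as a deformation retract, at the cost that $X_n$ is no longer compact. Second, to recover finite domination one filters $X_n$ by compact subspaces $Y_{n,l}$ on which $G$ acts only by shifting the filtration index (\eqref{vreqvioju3rvoirvqvvqvqwev}); the \emph{shift categories} of \cref{qroihqiurgerggegwegweg} are introduced precisely so that this shifting action can be packaged as a genuine cocycle $(M,\rho)$ in $\Fun^{\cW(\cX)}_{\mathrm{shift}}(\nat,\CW^{\hfd}(\cX))^{\op,\hG}$ (\cref{const:fixed-point}, \cref{lem:mrho-defined}). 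Only after passing through the factorisation \eqref{eq:rfd-factor1}--\eqref{eq:buildfixedpoint} does one obtain an honest object of $U(\cX)$.

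Your description of the fibrewise check as ``bookkeeping about basepoints'' also understates what happens: the restriction of $(M,\rho)$ to the fibre over $n$ is not literally $S^0$ but the controlled complex $C^{V_0}(Z_n)$ with its strictified $G$-action, and one must use contractibility of $Z_n$ (Condition~\ref{def:f-amenable}.\ref{thiowergergwergwreg}) together with \cref{lem:product-mrho} and the auxiliary object $(D,\delta)$ of \cref{const:Ddelta} to identify this with $\underline{S^0}$ in $U(*)$. None of this is visible if one tries to read off $t_0$ directly from a single controlled CW-complex.
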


\subsection{Finitely homotopy \texorpdfstring{$\cF$}{F}-amenable groups}
\label{sec:famenable}

The notion of finite (homotopy) $\cF$-amenability goes back to \cite{blr,BL-borel,wegner:cat0}, where it was used to prove instances of the $K$-theoretic Farrell--Jones conjecture with coefficients in additive categories.
The formulation in \cref{def:f-amenable} below was given in \cite[Def.~2.11 \& Thm.~2.12]{Bartels:icm}.

Let $G$ be a group and $Z$ be a topological space. 

\begin{ddd}\label{qergqergreqfewfqef}
	A homotopy coherent $G$--action $(\Gamma,Z)$ is a continuous map
	\[
	\Gamma \colon \coprod_{k=0}^\infty\left(\prod_{j=1}^{k}\left(G \times [0,1]\right) \times G \times Z\right)\to Z
	\]
	with the following properties:
	\begin{equation}\label{qergojrgopregqfwefwefqw}
	\Gamma(g_k,t_k,\dots,g_1,t_1,g_0,z) = \begin{cases}
	\Gamma(g_{k},t_{k}\dots,g_j, \Gamma(g_{j-1},t_{j-1},\dots,g_{0},z)) & t_j = 0, 1\le j\le k \\
	\Gamma(g_{k},t_{k},\dots,t_{j+1},g_jg_{j-1},t_{j-1},\dots,g_{0},z) & t_j = 1, 1\le j\le k \\
	\Gamma(g_k,t_{k},\dots,g_2,t_2,g_1,z) & g_0 = e \\
	\Gamma(g_k,t_{k},\dots,g_{j+1},t_{j+1}t_j,g_{j-1},\dots,g_0,z) & g_j = e, 1 \leq j < k-1 \\
	\Gamma(g_{k-1},t_{k-1},\dots,g_0,z) & g_k = e
	\\
	x & g_0 = e, k=0
	\end{cases}
	\end{equation}
	Here we use the convention that non-existing entries are dropped,
	e.g.\ $g_{k}$, $t_{k}$ in the first line if $j=k$ or the entry $t_{j-1}$ in the second line if $j=1$.
\end{ddd}

\begin{rem}
 \cref{qergqergreqfewfqef} is a special case of the notion of homotopy coherent diagram introduced by Vogt \cite{vogt}. It is an explicit model (in the topologically enriched context) for a functor $BG \to \Spc$ whose underlying object is $\ell(X)$, where $\ell \colon \Top\to \Spc$ is the canonical functor. See \cite{cordier} for further discussion.
\end{rem}

Let $G$ be a finitely generated group and $\cF$ be a family of subgroups.
\begin{ddd}\label{def:f-amenable}
	The group $G$ is finitely homotopy $\cF$-amenable if there exist
	\begin{enumerate}
	 \item a family $(\Gamma_n,Z_n)_{n\in \nat}$ of homotopy coherent $G$-actions,
	 \item a family $(W_n)_{n \in \nat}$ of $G$-simplicial complexes,
	 \item a family $(f_n)_{n \in \nat}$ of continuous maps $f_n \colon Z_n \to W_n$
	\end{enumerate}
	such that the following holds:
	\begin{enumerate}
		\item\label{thiowergergwergwreg} for every $n$ in $\nat$ the topological space $Z_n$ is a compact AR;\footnote{AR stands for absolute retract (with respect to the class of metrisable spaces). See \cite[Sec.~III.6]{hu}. An ANR (absolute neighbourhood retract) is an AR if and only if it is contractible \cite[Thm.~7.1 \& Prop.~7.2]{hu}.}
		\item\label{it:f-amenablecond2} for every $n$ in $\nat$ the stabilisers of $W_n$ belong to $\cF$;
		\item\label{it:f-amenablecond3} $\sup\limits_{n \in \nat} \dim W_n < \infty$;
		\item \label{erguiergqwefeqwfqefew} for all $k$ in $\nat$ and all collections $g_0, \ldots, g_k$ in $G$ we have 
		\[ \sup_{\substack{(t_1,\ldots,t_k) \in [0,1]^{k} \\ z \in Z_n}} d(f_n(\Gamma_n(g_k,t_k,\ldots,t_1,g_0,z)), g_k\ldots g_0 f_n(z)) \xrightarrow{n \to \infty} 0\ .\qedhere\]
	\end{enumerate}
\end{ddd}
In Condition~\ref{def:f-amenable}.\ref{erguiergqwefeqwfqefew}, we equip the simplicial complexes $W_{n}$ with their spherical path metrics (or alternatively with the $\ell^1$-metric, see the discussion in \cref{beforergiorgergegergergerg3232424}).

\begin{rem}
	The condition formulated in \cref{def:f-amenable} is slightly weaker than the assumptions in \cite[Thm.~2.12]{Bartels:icm} since we do not require a uniform bound on the dimension of the ARs $Z_n$. In practice, however, the dimensions of the simplicial complexes $W_n$ are usually bounded in terms of the dimensions of the spaces $Z_n$. In this case $(Z_n)_n$ is a sequence of ERs\footnote{ER stands for Euclidean retract. These are precisely the finite-dimensional ARs.} with uniformly bounded covering dimension. 
\end{rem}

The proof of \cref{thm:famenable} relies on \cref{ergiooegergergwergergergw}.
We will apply this proposition to the functors
\begin{equation}\label{eq:V-and-U}
 V := \bV^{c,\perf}_{\bC,G} \colon G\BC \to \Clep\quad \text{and}\quad U := \bV^{c,\perf,G}_{\Spc^\op_*} \colon G\BC \to \Clep\ ,
\end{equation}
where $\bV^{c,\perf}_{\bC,G}$ is the evaluation of $\bV^{c,\perf}_G$ from \cref{def:VcperfG} at $\bC$ and $\bV^{c,\perf,G}_{\Spc_*^\op}$ is the evaluation of $\bV^{c,\perf,G}$ from \cref{def:VcperfG} at $\Spc_*^\op$.
By \cref{thm:fix-orbit-wmodule}, $V$ admits a weak module structure $(\eta,\mu)$ over the $\pi_0$-excisive functor $U$, see \cref{giooergrefwerfwrevwerfv}.

Let $\Homol \colon \Cle \to \bM$ be a lax monoidal, finitary localising invariant.
By \cite[Prop.~5.2]{unik}, the functor $\Homol V \colon G\BC \to \bM$ extends the functor  $\Homol \bC_G$ from {\cref{weoirgjwegwergwerg9}}
in the sense of \cref{wtgklpwergrewfwref}.
We are going to construct a transfer class $(X,t)$ for the tuple $(U,\eta,V,\Homol,\cF)$, see \cref{def:transfer-class}.
For an appropriate choice of $X$, the morphism $t$ will be determined by an object $t_0$ in $U(X)$ as explained in \cref{faklolergergwergwegwergwreg}.
This means that we have to produce a homotopy fixed point in $\bV^{c,\perf}_{\Spc^\op_*}(X)$ from the point-set data provided by the assumption that $G$ is finitely homotopy $\cF$-amenable.
The next section describes an auxiliary construction which allows us to do this.

\subsection{Shift categories}\label{qroihqiurgerggegwegweg}

In this subsection, it is useful not to drop the nerve functor $\Nerve \colon \Cat \to \Cati$ from notation.
By abuse of notation, we write $\Cat[\Equiv^{-1}]$ for the essential image of the nerve functor.
This notation is motivated by the fact that the factorisation of the nerve through its essential image $\Cat \to\Cat[\Equiv^{-1}]$
 presents its target as the Dwyer--Kan localisation of $\Cat$ at the equivalences of categories.

We denote the category of relative categories by $\RelCat$.
Its objects are pairs $(\cC,W)$ of $\cC$ in $\Cat$ and a wide subcategory $W$ of $\cC$. Morphisms $(\cC,W)\to (\cC',W')$  in $\RelCat$ are functors $\cC
\to \cC'$ sending $W$ to $W'$. Given $(\cC,W)$ in $\RelCat$ we can consider the Dwyer--Kan localisation
\begin{equation}\label{vsavfasvasdvsdavsac}
\ell \colon \Nerve(\cC)\to \Nerve(\cC)[W^{-1}]
\end{equation}
in $\Cati$.
The formation of these localisations is functorial with respect to morphisms in $\RelCat$  and gives rise to a localisation functor
\[ \RelCat \to \Fun(\Delta^1, \Cati),\quad (\cC,W) \mapsto \big( \ell \colon \Nerve(\cC) \to \Nerve(\cC)[W^{-1}] \big)\ . \]
Let $(\cC,W)$ be in $\Fun(BG,\RelCat)$.
By functoriality, $\ell$ induces a canonical morphism
\begin{equation}\label{eq:limell}
 \lim_{BG} \Nerve(\cC) \to \lim_{BG} \Nerve(\cC)[W^{-1}]\ .
\end{equation}
Hence we can produce objects on the right hand by providing objects on the left hand side.
To describe objects in the latter, it is useful to have an explicit model of $\lim_{BG} \Nerve(\cC)$.

\begin{construction}\label{rem:fixedpoints}
 For a category $\cC$ with strict $G$-action we define a new category $\cC^{\hG}$.
 In \cref{wroggrdfg} we will show that its nerve  models $\lim_{BG} \Nerve(\cC)$.
\begin{enumerate}
 \item The objects of $\cC^{\hG}$ are pairs $(C,\rho)$ consisting of an object $C$ of $\cC$ together with a family $\rho = (\rho_{g})_{g\in G}$ of morphisms  $\rho_{g} \colon C \to g(C)$ satisfying the cocycle condition
\begin{equation}\label{3iojoigergwergerg}
	g(\rho_{g^{\prime}}) \rho_{g}=\rho_{gg'} \, \quad \mbox{for all}\ g,g^{\prime}\ \text{in}\ G\ .
\end{equation}
 \item A morphism $(C,\rho) \to (C',\rho')$ is given by a morphism $f \colon C \to C'$ in $\cC$ such that the diagram
 \[\xymatrix{
  C\ar[r]^-{f}\ar[d]_{\rho(g)} & C'\ar[d]^-{\rho'(g)} \\
  g(C)\ar[r]^-{g(f)} & g(C')
 }\]
 commutes for all $g$ in $G$.\qedhere
\end{enumerate}\end{construction}
\begin{lem}\label{wroggrdfg}
 We have a natural equivalence
 \begin{equation}\label{qewfoihqiuwhfiuwewfwqewfwfq}
\iota:  \Nerve(\cC^{\hG}) \xrightarrow{\simeq} \lim_{BG} \Nerve(\cC)\ .  
\end{equation} 
\end{lem}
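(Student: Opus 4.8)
The plan is to construct the comparison functor $\iota$ explicitly and then check it is an equivalence by verifying it induces bijections on mapping spaces (which here are discrete) and is essentially surjective, all of which reduces to an unwinding of the definition of the homotopy limit $\lim_{BG}$ in $\Cati$. Recall that $\lim_{BG}\Nerve(\cC)$ can be computed as the full subcategory of the category of sections of the cartesian fibration classified by $BG \to \Cati$, $\ast \mapsto \Nerve(\cC)$; more concretely, since $BG$ has a single object, a section is the same datum as an object $C$ of $\Nerve(\cC)$ together with, for each $g$ in $G$, an equivalence $\rho_g \colon C \xrightarrow{\simeq} g(C)$, together with higher coherence data encoding \eqref{3iojoigergwergerg} and its iterates. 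The first step is therefore to set up the equivalence $\lim_{BG}\Nerve(\cC) \simeq \Fun_{/BG}(BG, \Nerve(\cC) \times BG)$ (using the trivial-action description, since $BG$ acts on $\Nerve(\cC)$ by the given strict action), or more usefully to use the standard cobar/totalisation model $\lim_{BG}\Nerve(\cC) \simeq \Tot\big( \Nerve(\cC)^{G^{\bullet}} \big)$.

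Next I would exhibit $\iota$ on objects: an object $(C,\rho)$ of $\cC^{\hG}$ has an honest strict cocycle $\rho$, hence in particular defines a (strictly, therefore homotopy-) coherent family of equivalences, so it determines a point of $\lim_{BG}\Nerve(\cC)$; naturality in $\cC$ (with respect to $G$-equivariant functors) is immediate from the formula. For the key step, that $\iota$ is an equivalence, I would argue as follows. First, $\cC^{\hG}$ is a $1$-category, and its mapping sets are, by construction, the subsets of $\Hom_\cC(C,C')$ cut out by the equivariance conditions. On the other side, the mapping space $\Map_{\lim_{BG}\Nerve(\cC)}\big((C,\rho),(C',\rho')\big)$ is the totalisation of the cosimplicial space $[n] \mapsto \Map_{\Nerve(\cC)}(C, (g_1\cdots g_n)C')$, which — since $\Nerve(\cC)$ has discrete mapping spaces — is the equaliser of $\prod_{g} \Hom_\cC(C, C') \rightrightarrows \prod_{g,g'} \Hom_\cC(C, C')$ (transporting along $\rho,\rho'$ so that all targets become $C'$), i.e.\ exactly the set of $f$ satisfying the stated commuting square for all $g$. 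So $\iota$ is fully faithful. For essential surjectivity, given a point of $\lim_{BG}\Nerve(\cC)$, its underlying data is an object $C$ together with a homotopy-coherent family of equivalences $C \to gC$; since $\Nerve(\cC)$ is a $1$-category all the higher coherences are automatically satisfied and the $\rho_g$ are honest isomorphisms satisfying \eqref{3iojoigergwergerg} on the nose, hence the point lies in the image of $\iota$.

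The main obstacle — really the only subtle point — is being careful about the difference between the \emph{strict} $G$-action on $\cC$ (used to define $\cC^{\hG}$) and the \emph{homotopy-coherent} limit $\lim_{BG}$: one must invoke a strictification/rectification statement to the effect that a functor $BG \to \Cati$ landing in the essential image of $\Nerve$ together with a chosen strict model of the action has its homotopy limit computed by the honest $1$-categorical construction above. This is where I would either cite the relevant comparison between $1$-categorical and $\infty$-categorical limits of nerves (e.g.\ that $\Nerve$ preserves limits of diagrams of $1$-categories indexed by a $1$-category, applied to the two-sided bar resolution $G^{\bullet}$ which is a simplicial $1$-category), or give a short direct argument using that $BG$ is a $1$-category and $\Nerve$ is a right adjoint (as a functor $\Cat \to \sSet$), so commutes with the relevant limits, combined with the fact that the nerve of the bar resolution computes the $\infty$-categorical homotopy limit because $\Nerve(\cC)$ is fibrant. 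Once this identification is in place, naturality of $\iota$ in $\cC$ is formal, completing the proof.
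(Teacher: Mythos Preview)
Your approach is correct but takes a genuinely different route from the paper. The paper argues model-categorically: it invokes the Rezk model structure on $\Cat$, equips $\Fun(BG,\Cat)$ with the injective model structure, and observes that the homotopy limit $\lim_{BG}\Nerve(\cC)$ is computed by $\Nerve(\lim_{BG} R\cC)$ for any fibrant replacement $R\cC$. It then exhibits an explicit fibrant replacement, namely $\Fun(\wt G,\cC)$ where $\wt G$ is the contractible groupoid on the set $G$ with $G$ acting by left translation (and by conjugation on the functor category), and checks by hand that the \emph{strict} fixed points $\lim_{BG}\Fun(\wt G,\cC)$ are equivalent to $\cC^{\hG}$. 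Your plan instead works directly in $\Cati$: you describe objects and mapping spaces of $\lim_{BG}\Nerve(\cC)$ via a totalisation/section model and use discreteness of mapping spaces in $\Nerve(\cC)$ to collapse all higher coherences, recovering exactly the data in \cref{rem:fixedpoints}. The paper's approach is cleaner once the model-categorical infrastructure is in place and makes naturality in $\cC$ automatic (the fibrant replacement is functorial), whereas your approach is more elementary and self-contained but leaves the rectification step---your ``main obstacle''---somewhat sketchy; in particular, the remark that $\Nerve$ is a right adjoint only gives preservation of \emph{strict} limits, which is not what you need here, so you would still have to justify that your cobar model really computes the $\infty$-categorical limit. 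The fibrant-replacement argument is precisely the standard device for bridging that gap.
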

\begin{proof}
This can be shown similarly as \cite[Thm.~3.4.3]{Bunke:ab} which is an analogous result for  additive categories.
In the following, we sketch the argument. One considers the  model structure on the category of small categories given in \cite{rezk} which models the localisation $\Nerve \colon \Cat \to\Cat[\Equiv^{-1}]$ from above.
One then equips $\Fun(BG,\Cat)$ with the corresponding injective model category structure.   Given $\cC$ in $\Fun(BG,\Cat)$, by the general relation between homotopy limits in model categories and limits in the associated $\infty$-categories explained e.g.\ in \cite[Prop.~13.6]{bunke} or \cite[Sec.~4.2.4]{htt}, the $\infty$-category $\lim_{BG}\Nerve(\cC)$ is represented by
$\Nerve(\lim_{BG} R\cC)$, where $R\cC$ is some fibrant resolution of $\cC$.
An explicit choice of fibrant resolution is given by $\Fun(\wt G,\cC)$, where
$\wt G$ is the category with $G$-action whose underlying $G$-set of objects is $G$ with the left action by $G$, and whose morphism sets consist of single points.
The group $G$ acts on $\Fun(\wt G,\cC)$ by conjugation.
One checks by an explicit calculation that $\lim_{BG}\Fun(\wt G,\cC)\simeq \cC^{\hG}$.
\end{proof}
 
 We would like to apply the above construction to the relative $G$-category $(\cC,W) := (\CW^\hfd(X),\cW(X))$ for a certain $G$-bornological coarse space $X$, where $\cW(X)$ is the subcategory of  controlled homotopy equivalences.
However, $\CW^\hfd(X)$ itself  turns out to be too small 
to host the required homotopy invariants.
In the following, we describe
an enlargement of general objects $(\cC,W)$ in $\Fun(BG,\RelCat)$
which will increase the chance to find  homotopy invariants, and which indeed will work in our concrete application.
 
We start with introducing a category $\Fun_{\mathrm{shift}}^{W}(\nat,\cC)$ together with a factorisation of $\ell$ from \eqref{vsavfasvasdvsdavsac} as
\[ \Nerve(\cC) \to \Nerve(\Fun_{\mathrm{shift}}^{W}(\nat,\cC)) \to \Nerve(\cC)[W^{-1}]\ ,\]
see \cref{prop:shiftcat} below.
Since the category $\Fun_{\mathrm{shift}}^{W}(\nat,\cC)$ is in general bigger than $\cC$, it tends to be easier to construct objects in $\Fun_{\mathrm{shift}}^W(\nat,\cC)^{\hG}$.
In summary, our aim is to contruct homotopy fixed points in $\Nerve(\cC)[W^{-1}]$, but it is easier to do the construction in the 1-categorical setting. For this we work in the auxilliary category $\Fun_{\mathrm{shift}}^{W}(\nat,\cC)$ which is large enough to contain the needed fixed points but still explicit enough to do the construction.
The idea of enlarging $\cC$ to the category $\Fun^W_{\mathrm{shift}}(\nat,\cC)$ is originally due to Bartels and Reich \cite[Sec.~8.2]{br:fjc} and was also used in \cite{blr} for the construction of a transfer.

Let $\cC$ be in $\Cat$.
Considering $\nat$ as a poset, we have the functor category $\Fun(\nat,\cC)$ whose objects are given by sequences
\[ C_{0}\xrightarrow{f_{0}} C_{1} \xrightarrow{f_{1}} C_{2} \xrightarrow{f_{2}} \dots\ . \]
Let $T \colon \nat \to \nat$ denote the functor given by $T(n) := n+1$.
{We call the} restriction 
\begin{equation}\label{erfwefiweogwergref}
T^{*}\colon\Fun(\nat,\cC)\to \Fun(\nat,\cC)
\end{equation}
{along $T$ the shift functor.}
The canonical natural transformation $\id_\nat \to T$ induces a natural transformation\begin{equation}\label{qewfopkpfqwefqewqd}
 v \colon \id \to T^* \colon \Fun(\nat,\cC)\to \Fun(\nat,\cC)
\end{equation}
 between the corresponding restriction functors.

If $W$ is a set of morphisms in $\cC$, then we consider the full subcategory $\Fun^W(\nat,\cC)$ of $\Fun(\nat,\cC)$ consisting of the objects $(C_n,f_n)_{n \in \nat}$ with $f_n$ in $W$ for all $n$ in $\nat$.

\begin{ddd} \label{wjgriegjorgirewgwergwrg}
	We define the category $\Fun^{W}_{\mathrm{shift}}(\nat,\cC)$ as follows:
	\begin{enumerate}
		\item\label{it:wjgriegjorgirewgwergwrg1} The objects of  $\Fun^{W}_{\mathrm{shift}}(\nat,\cC)$ are  the objects of $\Fun^{W}(\nat,\cC)$.
		\item\label{it:wjgriegjorgirewgwergwrg2} A morphism $C \to D$ is an equivalence class of pairs $(k,\phi)$ of $k$ in $\nat$ and a morphism $\phi \colon C \to T^{k,*}D$ in $\Fun^{W}(\nat,\bC)$  subject to the equivalence relation generated by
		\begin{equation*}\label{fweiouuh2ui43rergerg}
		(k,\phi)\sim (k+1,T^{k,*}(v_{D})\circ \phi)\ ,
		\end{equation*}
		where $v_{D}$ is the natural transformation $v$ from \eqref{qewfopkpfqwefqewqd} evaluated at $D$.
		We denote the equivalence class of $(k,\phi)$ by $[k,\phi]$.
		\item\label{it:wjgriegjorgirewgwergwrg3} The composition of morphisms is given by
		\[ [k,\phi] \circ [k^{\prime},\phi^{\prime}] := [k+k',T^{k^{\prime},*}(\phi)\circ \phi^{\prime}]\ .\qedhere\]
	\end{enumerate}
\end{ddd}

One checks that composition in $\Fun^W_{\mathrm{shift}}(\nat,\cC)$ is well-defined and associative.

Let
\begin{equation}\label{eq:V}
 V := \{ v_C \mid C \in \Fun^W(\nat,\cC) \}
\end{equation}
be the set of morphisms in $\Fun^W(\nat,\cC)$ consisting of the components of $v$ in \eqref{qewfopkpfqwefqewqd}.
We have a canonical functor
\begin{equation}\label{fwefwefwfwefw}
 \ell_V \colon \Fun^{W}(\nat,\cC)\to \Fun_{\mathrm{shift}}^{W}(\nat,\cC)
\end{equation}
which is the identity on objects and sends a morphism $\phi$ to $[0,\phi]$. 

\begin{lem}\label{lem:fractions}
 The functor $\ell_V$ exhibits 
 \begin{enumerate}
  \item $\Fun_{\mathrm{shift}}^{W}(\nat,\cC)$ as the localisation of $\Fun^{W}(\nat,\cC)$ at $V$ in $\Cat$;
  \item $\Nerve(\Fun_{\mathrm{shift}}^{W}(\nat,\cC))$ as the localisation of $\Nerve(\Fun^{W}(\nat,\cC))$ at $V$ in $\Cati$.
 \end{enumerate}  
\end{lem}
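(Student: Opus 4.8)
The statement is that $\ell_V \colon \Fun^W(\nat,\cC) \to \Fun^W_{\mathrm{shift}}(\nat,\cC)$ exhibits the target as the localisation of the source at $V$, both as $1$-categories and as $\infty$-categories. The plan is to apply \cref{lem:calculus}: it suffices to verify that $V$ admits a calculus of left fractions in $\Fun^W(\nat,\cC)$ and that $\Fun^W_{\mathrm{shift}}(\nat,\cC)$ is (naturally isomorphic to) the Gabriel--Zisman category $V^{-1}\Fun^W(\nat,\cC)$ built from this calculus. Then both assertions follow at once from \cref{lem:calculus}.

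First I would recall the conditions for a calculus of left fractions from \cite{GabrielZisman}: (a) $V$ contains all identities and is closed under composition; (b) any diagram $C' \xleftarrow{v} C \xrightarrow{\phi} D$ with $v \in V$ can be completed to a commutative square with the opposite leg in $V$; (c) if $\phi v = \psi v$ for some $v \in V$, then $w\phi = w\psi$ for some $w \in V$. Condition (a) is immediate since $v_C$ for the identity object situations and composites $v_{T^{k,*}C} \circ \cdots \circ v_C$ visibly land in $V$ after identifying them (the composite of the $v$'s between consecutive shifts of an object $C$ is again a component of $v$ for a suitable object, up to the shift bookkeeping). For (b), given $v_C \colon C \to T^*C$ and $\phi \colon C \to D$, the square is completed by $T^*\phi \colon T^*C \to T^*D$ together with $v_D \colon D \to T^*D$; commutativity $T^*\phi \circ v_C = v_D \circ \phi$ is exactly the naturality of the transformation $v$ in \eqref{qewfopkpfqwefqewqd}. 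For (c): if $\phi \circ v_C = \psi \circ v_C$ as morphisms $T^*C \to D$ in $\Fun^W(\nat,\cC)$, then postcomposing with $v_D$ and using naturality of $v$ gives $v_D \circ \phi = v_D \circ \psi$ after precomposition with $v_C$, but one actually needs a genuine $w$ witnessing equality; here one uses that $v_D \colon D \to T^*D$ is a left-cancellable witness only after enough shifts — concretely, $T^*\phi \circ v_{T^*C} = T^*\psi \circ v_{T^*C}$ by applying $T^*$, and this is the required $w = v_{T^*C}$ type statement. I would write this out carefully but it is routine diagram-chasing with the two natural transformations $v$ and the shift.

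The second half is to identify $V^{-1}\Fun^W(\nat,\cC)$ with $\Fun^W_{\mathrm{shift}}(\nat,\cC)$. By \eqref{eq:calculus-hom}, a morphism $C \to D$ in $V^{-1}\Fun^W(\nat,\cC)$ is an element of $\colim_{(D \to D') \in V_{D/}} \Hom(C,D')$. The slice category $V_{D/}$ has as objects the morphisms in $V$ out of $D$; since every such morphism is a composite of components of $v$, which by the relations in \cref{wjgriegjorgirewgwergwrg}.\eqref{it:wjgriegjorgirewgwergwrg2} are precisely the maps $v_{T^{k,*}D} \colon T^{k,*}D \to T^{k+1,*}D$, the slice $V_{D/}$ is cofinal-equivalent to the poset $\nat$ via $k \mapsto (D \to T^{k,*}D)$. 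Hence the colimit is $\colim_{k \in \nat} \Hom_{\Fun^W(\nat,\cC)}(C, T^{k,*}D)$, with transition maps given by postcomposition with $v$; an element is exactly an equivalence class $[k,\phi]$ in the sense of \cref{wjgriegjorgirewgwergwrg}.\eqref{it:wjgriegjorgirewgwergwrg2}, and composition matches \cref{wjgriegjorgirewgwergwrg}.\eqref{it:wjgriegjorgirewgwergwrg3}. This gives an isomorphism of categories $V^{-1}\Fun^W(\nat,\cC) \cong \Fun^W_{\mathrm{shift}}(\nat,\cC)$ compatible with the localisation functors. Applying \cref{lem:calculus} then yields both the $1$-categorical and the $\infty$-categorical universal property.

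\textbf{Main obstacle.} The genuinely delicate point is condition (c) of the calculus of left fractions — left-cancellability of the $v$'s after shifting — because $v_C$ need not be an epimorphism in $\Fun^W(\nat,\cC)$ on the nose; one must check that equality after precomposition with $v_C$ can always be rectified by postcomposition with a further $v$, using only the naturality square and the structure of $T$. A secondary, more bookkeeping-heavy obstacle is verifying cofinality of $\nat \hookrightarrow V_{D/}$ cleanly, i.e.\ that every morphism in $V$ out of $D$ factors through some $T^{k,*}D \to T^{k+1,*}D$ tower in an essentially unique way compatible with the equivalence relation; this is where one must be careful that $V$ as defined in \eqref{eq:V} generates, under composition, exactly the maps appearing in the slice.
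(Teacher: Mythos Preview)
Your approach is correct and essentially identical to the paper's: verify a calculus of left fractions, identify the Gabriel--Zisman localisation with $\Fun^W_{\mathrm{shift}}(\nat,\cC)$ via the hom-set formula \eqref{eq:calculus-hom}, and apply \cref{lem:calculus}. One clarification that cleans up both of your ``obstacles'': the paper first passes to the subcategory $\cV$ generated by $V$, which consists precisely of the maps $v_C^k \colon C \to T^{k,*}C$, so that identities and composites are present and the slice $\cV_{D/}$ is literally isomorphic to the poset $\nat$; for condition~(c), naturality of $v$ gives $v_D \circ \phi = T^*\phi \circ v_{T^*C'} = T^*(\phi \circ v_{C'}) = T^*(\psi \circ v_{C'}) = v_D \circ \psi$, so $w = v_D$ works directly.
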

\begin{proof}
 Both assertions will follow from \cref{lem:calculus}.
 Observe that the subcategory $\cV$ generated by $V$ consists precisely of all maps $v_C^k \colon C \to T^{k,*}C$ for $k$ in $\nat$ and $C$ in $\Fun^W(\nat,\cC)$.
 It is straightforward to check that $\cV$ satisfies a calculus of left fractions in the sense of Gabriel and Zisman \cite[Sec.~2]{GabrielZisman}.
 One checks that $\ell_V$ inverts $V$.
 Hence the universal property of Gabriel--Zisman's $\cV^{-1}\Fun^W(\nat,\cC)$ provides a functor $\cV^{-1}\Fun^W(\nat,\cC) \to \Fun^W_{\mathrm{shift}}(\nat,\cC)$.
 Comparing \eqref{eq:calculus-hom} with \cref{wjgriegjorgirewgwergwrg}.\ref{it:wjgriegjorgirewgwergwrg2}, one sees that this functor is fully faithful and hence an equivalence of categories. 
 So the {lemma follows} from \cref{lem:calculus}.
\end{proof}

Let $(\cC,W)$ be in $\Fun(BG,\RelCat)$. Then the projection $\nat \to *$ induces a morphism
\begin{equation}\label{fwefwefwfwefw2}
 j \colon \cC \to \Fun^W(\nat,\cC)\ .
\end{equation}
\begin{prop}\label{prop:shiftcat}
 There exists a commutative diagram
 \[\xymatrix{
  \Nerve(\cC)\ar[rr]^-{\ell}\ar[rd]_-{\ell_V \circ j} & & \Nerve(\cC)[W^{-1}] \\
  & \Nerve(\Fun^W_{\mathrm{shift}}(\nat,\cC))\ar[ru]_-{s} &
 }\]
 in $\Fun(BG,\Cati)$ which is functorial in the relative category $(\cC,W)$.
 
 If $\tau = (\tau_n)_{n \in \nat}$ is a morphism in $\Fun^W(\nat,\cC)$ such that $\tau_n$ belongs to $W$ for all $n$, then $s(\ell_V(\tau))$ is an equivalence in $\Nerve(\cC)[W^{-1}]$.
\end{prop}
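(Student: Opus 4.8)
The plan is to construct the functor $s$ explicitly using the universal properties assembled in \cref{lem:fractions} and the standard universal property of the Dwyer--Kan localisation, and then to check each of the claimed properties by reducing to these universal properties.

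First I would observe that, since everything in sight is functorial in the relative category $(\cC,W)$ in $\RelCat$, it suffices to construct the diagram in $\Cati$ and verify that all constructions are natural with respect to morphisms of relative categories; the $G$-equivariant statement then follows by applying the construction objectwise to the diagram $BG \to \RelCat$. So I work with a single relative category $(\cC,W)$. The composite $\Nerve(\cC) \xrightarrow{j} \Nerve(\Fun^W(\nat,\cC))$ lands in the full subcategory on sequences of $W$-morphisms; postcomposing with the localisation $\Nerve(\Fun^W(\nat,\cC)) \to \Nerve(\Fun^W_{\mathrm{shift}}(\nat,\cC))$ of \cref{lem:fractions} yields $\ell_V \circ j$. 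To produce $s$, I would use that $\Nerve(\cC)[W^{-1}]$ is characterised by the property that restriction along $\ell$ identifies $\Fun(\Nerve(\cC)[W^{-1}], \bD)$ with the full subcategory of $\Fun(\Nerve(\cC),\bD)$ on functors inverting $W$. Thus $s$ exists and is essentially unique provided the composite $\Nerve(\Fun^W(\nat,\cC)) \to \Nerve(\Fun^W_{\mathrm{shift}}(\nat,\cC))$ — wait, rather: provided $\ell_V \circ j$ inverts $W$. But a morphism $w$ in $W \subseteq \cC$ is sent by $j$ to the constant sequence map $(w,w,w,\dots)$, which is a morphism in $\Fun^W(\nat,\cC)$; I must check this is inverted in $\Fun^W_{\mathrm{shift}}(\nat,\cC)$. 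This is where the shift plays its role: the inverse of a constant-sequence $W$-morphism is available after passing to shifts. More precisely, since each individual $w \colon C_n \to D_n$ lies in $W$ and $W$ is merely a subcategory (not assumed to admit any calculus), one does not generally invert $j(w)$ in $\Fun^W_{\mathrm{shift}}(\nat,\cC)$. So instead of trying to factor $\ell$ through $\ell_V \circ j$ via a universal property on the nose, I would construct $s$ differently: use \cref{lem:fractions} to see that $\Nerve(\Fun^W_{\mathrm{shift}}(\nat,\cC)) = \Nerve(\Fun^W(\nat,\cC))[V^{-1}]$, and then produce $s$ as the localisation of a functor $\Nerve(\Fun^W(\nat,\cC)) \to \Nerve(\cC)[W^{-1}]$ that inverts $V$. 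Such a functor is given by ``evaluate at $0$ and localise'', i.e.\ $\mathrm{ev}_0$ followed by $\ell$; more symmetrically one can evaluate at any $n$, and the transition maps being in $W$ shows all these agree after localising, so $v_C$ (a map between $C$ and its shift, built from transition morphisms that lie in $W$) is sent to an equivalence. Hence this functor inverts $V$ and descends to $s \colon \Nerve(\Fun^W_{\mathrm{shift}}(\nat,\cC)) \to \Nerve(\cC)[W^{-1}]$.

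Next I would verify $s \circ (\ell_V \circ j) \simeq \ell$: the functor $\ell_V \circ j$ sends $C$ to the constant sequence and a morphism $f$ to $[0,(f,f,\dots)]$; applying $s$ means evaluating at $0$ and localising, which recovers $\ell(f)$. This identification is natural in $f$ and in $(\cC,W)$. Finally, for the last sentence: given $\tau = (\tau_n)_n$ a morphism in $\Fun^W(\nat,\cC)$ with every $\tau_n \in W$, I must show $s(\ell_V(\tau))$ is an equivalence. Unwinding, $\ell_V(\tau) = [0,\tau]$, and $s([0,\tau])$ is the localisation of $\mathrm{ev}_0(\tau) = \tau_0 \colon C_0 \to D_0$, which lies in $W$ and hence becomes an equivalence in $\Nerve(\cC)[W^{-1}]$.

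The main obstacle I anticipate is getting the construction of $s$ to be genuinely functorial in $(\cC,W) \in \Fun(BG,\RelCat)$ at the $\infty$-categorical level — i.e.\ producing $s$ as a natural transformation of functors $\RelCat \to \Fun(\Delta^1,\Cati)$ (or its $BG$-indexed version) rather than just objectwise. The cleanest route is to package the whole story as a natural transformation: the assignment $(\cC,W) \mapsto (\Fun^W(\nat,\cC),V)$ is a functor $\RelCat \to \RelCat$, localisation $\RelCat \to \Fun(\Delta^1,\Cati)$ is functorial, and the comparison maps $\mathrm{ev}_0$, $j$, $v$ are all natural; one then invokes that Dwyer--Kan localisation is a functor out of $\RelCat$ so that the factorisation exists coherently. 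I would spell this out carefully since the subsequent applications of \cref{prop:shiftcat} rely on the $G$-equivariance, which is exactly this functoriality applied to $BG \to \RelCat$. Everything else is a routine unwinding of \cref{wjgriegjorgirewgwergwrg} and \cref{lem:fractions}.
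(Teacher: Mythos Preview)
Your proposal is correct and follows essentially the same strategy as the paper: construct a functor $\Nerve(\Fun^W(\nat,\cC)) \to \Nerve(\cC)[W^{-1}]$ that inverts $V$, then invoke \cref{lem:fractions} to obtain $s$. The only difference is presentational. You take this functor to be $\ell \circ \ev_0$; the paper instead postcomposes $\ell$ to obtain $\ell_* \colon \Nerve(\Fun^W(\nat,\cC)) \to \Fun^\iota(\nat,\Nerve(\cC)[W^{-1}])$ (functors $\nat \to \Nerve(\cC)[W^{-1}]$ factoring through the groupoid core), observes that the constant-diagram functor $j_W \colon \Nerve(\cC)[W^{-1}] \to \Fun^\iota(\nat,\Nerve(\cC)[W^{-1}])$ is an equivalence, and sets $s := j_W^{-1} \circ s'$ where $s'$ is the descent of $\ell_*$. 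Since $\ev_0$ is an explicit inverse to $j_W$, the two constructions agree. The paper's route avoids singling out an index and makes the naturality in $(\cC,W)$ (and hence the $BG$-equivariance) a touch more transparent, but your $\ev_0$ is already natural, so the concern you flag at the end is not a genuine obstacle.
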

\begin{proof}
 Let $\Fun^\iota(\nat,\cC[W^{-1}])$ be the full subcategory of functors $\nat \to \Nerve(\cC)[W^{-1}]$ which factor through the groupoid core of $\Nerve(\cC)[W^{-1}]$.
 The functor $\ell_* \colon \Nerve(\Fun^W(\nat,\cC)) \to \Fun(\nat,\Nerve(\cC)[W^{-1}])$ induced by $\ell$ factors through $\Fun^\iota(\nat,\Nerve(\cC)[W^{-1}])$.
 We obtain a commutative diagram
 \[\xymatrix{
  \Nerve(\cC)\ar[r]^-{\ell}\ar[d]_-{j} & \Nerve(\cC)[W^{-1}]\ar[d]^-{j_W} \\
  \Nerve(\Fun^W(\nat,\cC))\ar[r]^-{\ell_*}\ar[d]_-{\ell_V} & \Fun^\iota(\nat,\Nerve(\cC)[W^{-1}]) \\
  \Nerve(\Fun^W_{\mathrm{shift}}(\nat,\cC))\ar@{-->}[ru]_-{s'} &
 }\]
 in $\Fun(BG,\Cati)$, where $j_W$ is also induced by the projection $\nat \to *$.
 Since $\ell_*$ inverts all morphisms in $V$ from \eqref{eq:V}, the dashed arrow $s'$ exists by the universal property of $\ell_{V}$ shown in \cref{lem:fractions}.
 Since $j_W$ is an equivalence, we obtain the desired factorisation by setting
 \[ s := j_W^{-1} \circ s'\ .\] 
 The final assertion follows directly from the commutativity of the above diagram since $\ell_{*}$ sends $\tau$ to an equivalence.
\end{proof}

\subsection{The transfer class}\label{sec:famenable-transfer}

Recall the abbreviations
\[ U := \bV^{c,\perf,G}_{\Spc^\op_*} \colon G\BC \to \Clep \quad\text{and}\quad V := \bV^{c,\perf}_{\bC,G} \colon G\BC \to \Clep \]
from \eqref{eq:V-and-U}.
We assume that $G$ is a finitely homotopy $\cF$-amenable group (see \cref{def:f-amenable}) and that $\Homol$ is a lax monoidal, finitary localising invariant (see \cref{wtoigwgreerf}). In this subsection we will construct a transfer class $(\cX,t)$ for $(U,\eta,V,H,\cF)$ (see \cref{def:transfer-class}).

Suppose we are given a sequence of homotopy coherent $G$-actions $(\Gamma_n,Z_n)_{n\in \nat}$ and a sequence of maps $(f_n \colon Z_n \to W_n)_{n\in \nat}$, where $W_n$ is a $G$-simplicial complex for every $n$ in $\nat$.
For the time being, we do not assume that these data satisfy any of the conditions listed in \cref{def:f-amenable}.
Instead, we will gradually impose conditions as we develop our construction.
The goal is to provide some transparency where specific assumptions enter, and to state intermediate steps in a way which makes them easier to reuse in \cref{sec:dfhj}.

Let $\cX$ be a $G$-bornological coarse space.
As explained in \cref{faklolergergwergwegwergwreg}, one way of defining a morphism
\[ t \colon \beins_\bM \to \Homol U(\cX) \]
is to specify an object in the left-exact $\infty$-category $\lim_{BG} \bV^{c,\perf}_{\Spc_{*}^{\op}}(\cX)$.
 
In the following we explain how we will specify such an object.
It is again useful not to drop the nerve functor from the notation since we want to consider limits over $BG$ of $G$-categories, and for $\cC$ in $\Fun(BG,\Cat)$ the canonical  functor
 $\Nerve(\lim_{BG}\cC)\to \lim_{BG}\Nerve(\cC)$
 is not an equivalence in general.
 By \cref{rem:controlledequiv-functorial}   the functor $\CW^\hfd$ extends to a functor $(\CW^\hfd,\cW) \colon \BC \to \RelCat$, where $\cW(\cX)$ denotes the class of controlled homotopy equivalences in $\CW^\hfd(\cX)$. 
By \cref{prop:shiftcat}, the functor $\Fun^\cW_{\mathrm{shift}}(\nat,\CW^\hfd)$ from \cref{wjgriegjorgirewgwergwrg} fits into the following commutative diagram:
\begin{equation}\label{eq:rfd-factor1}
 \xymatrix{
 \Nerve(\CW^{\hfd})\ar[rr]^-{\ell_\cW}\ar[rd]_-{\ell_V \circ j} & & \Nerve(\CW^{\hfd})[\cW^{-1}] \\
 & \Nerve(\Fun^\cW_{\mathrm{shift}}(\nat,\CW^\hfd))\ar[ru]_-{s} & 
}\ .\end{equation}
The realisation transformation
\[ r^{\hfd} \colon \Nerve(\CW^{\hfd})^{\op} \to \bV^{c,\perf}_{\Spc_{*}^{\op}} \]
from \cref{def:rfd}  inverts all morphisms in $\cW$ by \cref{gfioqjgioewfewfewfqwefqwefqew}.
By the universal property of the localisation $\ell_{\cW}$, we have another factorisation
\begin{equation}\label{eq:rfd-factor2}
 \xymatrix{
 \Nerve(\CW^\hfd)^\op\ar[rr]^-{r^\hfd}\ar[rd]_-{\ell_\cW} & & \bV^{c,\perf}_{\Spc^\op_*} \\
 & \Nerve(\CW^\hfd)[\cW^{-1}]^\op\ar[ru]_-{\overline{r}^\hfd} & 
}\ .\end{equation}
We define the transformation
\begin{equation}\label{eq:buildfixedpoint}
 r_U \colon \lim_{BG} \Nerve(\Fun^\cW_{\mathrm{shift}}(\nat,\CW^\hfd))^\op \xrightarrow{\lim_{BG} (\overline{r}^\hfd \circ s)} \lim_{BG} \bV^{c,\perf}_{\Spc_{*}^{\op}}\ .
\end{equation}
Using the equivalence
\begin{equation}\label{wevevfsfdvewrvf}
 \iota\colon \Fun^{\cW(\cX)}_{\mathrm{shift}}(\nat,\CW^\hfd(\cX))^{\op, \hG} \xrightarrow{\simeq}\lim_{BG} \Nerve(\Fun^{\cW(\cX)}_{\mathrm{shift}}(\nat,\CW^\hfd(\cX)))^\op\ ,
\end{equation}
from \eqref{qewfoihqiuwhfiuwewfwqewfwfq},
it suffices to provide an object $(M,\rho)$ in $\Fun^{\cW(\cX)}_{\mathrm{shift}}(\nat,\CW^\hfd(\cX))^{\op, \hG}$ {instead of an object in $\lim_{BG} \Nerve(\Fun^{\cW(\cX)}_{\mathrm{shift}}(\nat,\CW^\hfd(\cX)))^\op$}.
As explained in  \cref{rem:fixedpoints},
{such an object $(M,\rho)$ consists of an object $M$
in $\Fun^{\cW(\cX)}_{\mathrm{shift}}(\nat,\CW^\hfd(\cX))$ and a family $\rho = (\rho(g))_{g \in G}$ of morphisms}  $\rho(g) \colon gM \to M$
satisfying the appropriate cocycle condition (note that the direction of $\rho(g)$ is reversed since we are taking the limit of the opposite category).
The object
$r_{U}(\iota(M,\rho))$ in $\lim_{BG} \bV^{c,\perf}_{\Spc_{*}^{\op}}(\cX)$ for an appropriate choice of $(M,\rho)$ will then represent the object determining the transfer class.

To prepare for the construction of $(M,\rho)$,
we recall a construction of strictifications of homotopy coherent $G$-actions (see \cref{qergqergreqfewfqef}.
 This construction has been previously employed for the proof of the A-theoretic Farrell--Jones conjecture \cite{Enkelmann:2018aa} and is similar to a construction appearing in \cite{wegner:cat0}. 

\begin{construction}\label{eqrigoqergewrgwergwergwegw}
 Following \cite[proof of Proposition~5.4]{vogt}   we associate to every homotopy coherent $G$-action $\Gamma$ on a topological space $Z$ a strictification given by a $G$-space $X$ containing $Z$ as a deformation retract.  As a byproduct we obtain  filtrations \eqref{qwefqewfqwfewfqwfqweffee} interpolating between $Z$ and $X$.

The space $X$ is given by
\begin{equation*}
	X:= \left( \coprod_{k \ge 0}   \prod_{i=1}^{k} \left(G\times [0,1]  \right)
	\times G \times Z \right)\big/ \sim,
	\end{equation*}
	where $\sim$ is the equivalence relation generated by
	\begin{equation*}
	(g_{k}, t_k, \dots,g_0,z)
	\sim 
	\begin{cases}
	(g_{k},t_k,\dots,g_{2},t_{2},g_1,z) & g_0 = e  \\
	(g_{k},t_{k},\dots,g_{j+1},t_{j+1}t_{j},g_{j-1},\dots,g_0,z) & g_j = e, 1 \leq j \leq k-1 \\
	(g_{k},t_{k},\dots,t_{j+1},g_{j}g_{j-1},t_{j-1},\dots,g_0,z)
	& t_j = 1, 1 \leq j \leq k \\
	(g_{k},t_{k},\dots,g_{j},
	\Gamma(g_{j-1},t_{j-1},\dots,g_0,z)) & t_j = 0, 1
	\leq j \leq k.
	\end{cases}
	\end{equation*}
	Let $ [g_k,t_k,\dots,g_0,z] $ denote the equivalence class of $ (g_k,t_k,\dots,g_0,z)$.
	The action  of $G$ on $X$ is defined  by 
	\begin{equation*}
	g \cdot [g_k,t_k,\dots,g_0,z] := [g g_k, t_k, \dots, g_0,z].
	\end{equation*}
	We identify $Z$ as a subspace of $X$ such that $z$ in $Z$ corresponds to  $[e,z]$ in $X$.
	We define a
	   retraction 
	\begin{equation}\label{ewrfvihiuqrfeqrfqwefewf}
	R \colon X \to Z \ , \quad 
	R([g_k,t_k,\dots,g_0,z]) :=
	\Gamma(g_k,t_k,\dots,g_0,z)\ .
	\end{equation} 
	The homotopy
	\begin{equation}\label{eq:strong-deformation-retraction-MGamma}
	H \colon X \times [0,1] \to X, \quad ([g_k,t_k,\dots,g_0,z],u) \mapsto [e,u,g_k,t_k,\dots,g_0,z]
	\end{equation}
	exhibits $Z$ as a strong deformation retract of $X$.
	In particular, the canonical embedding from $Z$ to $X$ is a homotopy equivalence
	\begin{equation}\label{eq:Z-to-X}
	 Z \xrightarrow{\simeq} X\ .
	\end{equation}
	This finishes the construction of the strictification.
	
	We now assume that $G$ is finitely generated.
	We regard $G$ as a $G$-coarse space $G_{can}$ by equipping with the $G$-coarse structure generated by sets of the form $F \times F$, where $F \subseteq G$ ranges through all finite subsets of $G$.
	For every generating entourage $S$ of $G_{can}$, there is an induced filtration
	\begin{equation}\label{qwefqewfqwfewfqwfqweffee}
  Z = Y_{0} \subseteq Y_{1} \subseteq \ldots \subseteq Y_{l} \subseteq \ldots \subseteq X  
\end{equation}
	of $X$, where $Y_l$ is the subspace of
	\[ \left( \coprod_{k=0}^{l} \prod_{i=1}^{k} \left(S^l[\{e\}] \times [0,1] \right) \times S^l[\{e\}] \times Z   \right) \big/ \sim \]
	consisting of the elements $[g_k, t_k, g_{k-1},\dots,g_0,z]$ with $g_k = e$.
	In other words, we have
	\begin{equation}\label{eq:filtration-strictification}
	Y_{l} = \{ [e, t_k, g_{k-1},\dots,g_0,z] \in X \mid k \leq l, g_0,\ldots,g_{k-1} \in S^{l}[\{e\}] \}\ .
	\end{equation}
	 For every $l$ in $\nat$ the homotoy  $H$ restricts to a deformation retraction $Y_{l} \times [0,1] \to Y_{l}$ of $Y_{l}$ onto $Z$.   In particular, the inclusion $Z\to Y_{l}$ is a homotopy equivalence.
\end{construction}

\begin{construction}\label{qergioheriougewergergwerg}
 We are going to define the transfer space.
 Recall that we are given a sequence of homotopy coherent $G$-actions $(\Gamma_{n}, Z_{n})$ and a sequence of maps $(f_n \colon Z_n \to W_n)_{n\in \nat}$.
 Let $X_n$ be the strictification of $(\Gamma_n,Z_n)$  provided by   \cref{eqrigoqergewrgwergwergwegw}.
 We define the $G$-sets \begin{equation}\label{dfsvoihqiejvoisvsdfvsdvdsfvf}
X := \coprod_{n\in\nat} X_n \quad\text{and}\quad \cX := \coprod_{n\in\nat} G \times X_n \ ,
\end{equation} 
where  $\cX$  has the diagonal $G$-action. 
The $G$-set $\cX$ will be the underlying $G$-set of the transfer space. Let $p \colon \cX \to \nat$ be the canonical projection. 

A part of \cref{def:transfer-class} of a transfer class is a morphism of the transfer space to an $(\Homol V,\cF)$-proper object.
 The latter will be given by the object $p \colon W_{h}\to  \nat_{min,min}$ in $G\BC_{/\nat_{min,min}}$ obtained from the $G$-simplicial complex
 \[ W := \coprod_{n \in \nat} W_{n} \]
 by application of \cref{beforergiorgergegergergerg3232424}. 
Define for $n$ in $\nat$ the $G$-equivariant map 
\begin{equation}\label{eq:famenable-f}
 f_n' \colon G \times X_n \to W_n, \quad (g,x) \mapsto gf_n(R_n(g^{-1}x))\ ,
\end{equation}
where $R_n$ is the retraction from \eqref{ewrfvihiuqrfeqrfqwefewf} associated to $(\Gamma_n,Z_n)$. 
The family of maps $(f_{n}')_{n\in \nat}$ gives rise to a $G$-equivariant map of $G$-sets
\begin{equation}\label{eq:famenable-f2}
 f := \coprod_{n \in \nat} f_n' \colon \cX \to W\ .
\end{equation}
Pulling back the bornological coarse structure on $W_{h}$ via $f$ turns $\cX$ into a $G$-bornological coarse space over $\nat_{min,min}$, which is our choice of the transfer space, and $f$ into a morphism $\cX \to W_h$ in this category.
\end{construction}

We will later use that the $G$-set $\cX$ also carries a topology since each $X_n$ is a topological space.
As such, the homotopy equivalence from \eqref{eq:Z-to-X} induces isomorphisms
\begin{equation}\label{eq:pi0ZX}
 \coprod_{n \in \nat} G \times \pi_0(Z_n) \cong \pi_0(\cX)\ .
\end{equation}
Denote by \begin{equation}\label{wefqrwfkpqwefqewfq}
 i \colon X \cong \{e\} \times X \to \cX
\end{equation}
 the inclusion map.
 We equip $X$ with the bornological coarse structure pulled back  from $\cX$ via $i$.
 Note  that in general   $X$ is not a $G$-bornological coarse space   since $i$ is not equivariant.

We now start with the construction of the object $M$ in $\Fun^{\cW(\cX)}_{\mathrm{shift}}(\nat,\CW^\hfd(X))$. 
By our standing assumption, $G$ is finitely generated. So we can fix a generating entourage $S$ of $G_{can}$. 
Without loss of generality, we assume that $S$ contains the diagonal of $G$.
Since $S$ contains the diagonal, the iterated compositions of $S$ form an increasing chain of entourages
\[ \diag(G) \subseteq S \subseteq \ldots \subseteq S^r \subseteq S^{r+1} \subseteq \ldots\ .\]
By  \cref{eqrigoqergewrgwergwergwegw} applied to $(\Gamma_n,Z_n)$, the choice of $S$ determines a filtration of $X_{n}$ by subsets 
\[ Y_{n,0} \subseteq Y_{n,1} \subseteq \ldots \subseteq Y_{n,l} \subseteq \ldots \subseteq X_{n} \ ,\]
see \eqref{qwefqewfqwfewfqwfqweffee}.
For every $l$ in $\nat$ we then set
\[ Y_l := \coprod_{n\in \nat} Y_{n,l} \quad\text{and}\quad \cY_{l} := \coprod_{n\in \nat} G \times Y_{n,l} \]
and get filtrations 
\[ Y_{0} \subseteq Y_{1} \subseteq \ldots \subseteq Y_{l} \subseteq \ldots \subseteq X \quad\text{and}\quad \cY_{0} \subseteq \cY_{1} \subseteq \ldots \subseteq \cY_{l} \subseteq \ldots \subseteq \cX \]
of the bornological coarse spaces $X$ and $\cX$.
The filtrations  are not $G$-invariant, but by an inspection of \eqref{eq:filtration-strictification} we have  for every $g$ in $S^r[\{e\}]$ and $l$ in $\nat$
\begin{equation}\label{vreqvioju3rvoirvqvvqvqwev}
gY_l  \subseteq Y_{l+r}\ .
\end{equation}

\begin{lem}\label{lem:Sdiag-entourage}
 If Condition~\ref{def:f-amenable}.\ref{erguiergqwefeqwfqefew} holds, then the set $S^r \times \diag(Y_l)$ is a coarse entourage of $\cX$ for every $l$ and $r$ in $\nat$.
\end{lem}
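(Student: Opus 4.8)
The plan is to unwind the definition of the coarse structure on $\cX$ and to reduce the claim to a statement about the maps $f_n$ and the behaviour of the filtration $(Y_{n,l})_l$ under the $G$-action. Recall that $\cX = \coprod_{n \in \nat} G \times X_n$ carries the $G$-bornological coarse structure pulled back along the map $f = \coprod_n f_n' \colon \cX \to W_h$ from \eqref{eq:famenable-f2}, where $W_h$ in turn is obtained from the $G$-simplicial complex $W = \coprod_n W_n$ by \cref{beforergiorgergegergergerg3232424}. Thus a subset $E$ of $\cX \times \cX$ is a coarse entourage of $\cX$ precisely when $(f \times f)(E)$ is a coarse entourage of $W_h$ and $E$ is contained in the pulled-back coarse structure; since $W_h$ carries the hybrid structure of \cref{fewuifzhifweffewfwefwefwefwe}, an entourage of $W_h$ must be contained in the original coarse structure $\cC_{\pi_0(W)}$ (so it may only relate points in the same connected component of $W$, and hence only points mapping to the same $X_n$) and must satisfy the vanishing-control condition \eqref{eq:vanishing-control}: the supremum of the distances $d(w,w')$ over pairs $(w,w')$ lying over the same $n$ must tend to $0$ as $n \to \infty$.

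First I would check the component condition and the control in the $G$-direction. Fix $l, r \in \nat$. A pair in $S^r \times \diag(Y_l)$ has the form $\big((g, x), (g', x)\big)$ with $(g, g') \in S^r$, $x \in Y_{n,l}$ for some $n$, both elements lying in the copy $G \times X_n$; so the pair is supported over a single $n$, which handles the component condition. Applying $f_n'$ from \eqref{eq:famenable-f}, the pair maps to $\big(g f_n(R_n(g^{-1}x)),\ g' f_n(R_n(g'^{-1}x))\big)$ in $W_n$. I would then estimate the spherical-path-metric distance between these two points. Writing $h = g'^{-1}g \in S^{2r}[\{e\}]$ (using that $S$ contains the diagonal, so $S^r \subseteq S^{2r}$ and $g, g'$ differ by an element of $S^{2r}[\{e\}]$ after translating by $g'^{-1}$), and using $G$-equivariance of $f_n'$, the distance equals $d\big(f_n(R_n(g^{-1}x)),\ h f_n(R_n(g'^{-1}x))\big)$. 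Here $R_n$ is the retraction from \eqref{ewrfvihiuqrfeqrfqwefewf}, and $R_n(g^{-1}x), R_n(g'^{-1}x)$ are points of $Z_n$; moreover $g^{-1}x = h^{-1}(g'^{-1}x)$, so $R_n(g^{-1}x) = R_n(h^{-1} \cdot (g'^{-1}x))$. Now I would invoke the definition of the homotopy coherent action: $R_n$ being given by $\Gamma_n$ in the single-group-element argument, $R_n(h^{-1}z)$ can be rewritten via $\Gamma_n$ evaluated on a tuple whose underlying group product is $h^{-1}$, and Condition~\ref{def:f-amenable}.\ref{erguiergqwefeqwfqefew} applied with $k=1$ (and the group elements taken from $S^{2r}[\{e\}]$, a fixed finite set depending only on $r$) gives
\[
 \sup_{z \in Z_n} d\big(f_n(\Gamma_n(h^{-1}, t, e, z)),\ h^{-1} f_n(z)\big) \xrightarrow{n \to \infty} 0
\]
uniformly in $h \in S^{2r}[\{e\}]^{-1} = (S^{-1})^{2r}[\{e\}]$ since this is a finite set. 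Combining this with the triangle inequality and the isometry of the $G$-action on $W_n$, I would conclude that $\sup\{ d(w,w') : (w,w') \in (f \times f)(S^r \times \diag(Y_l)) \cap (W_n \times W_n)\} \to 0$ as $n \to \infty$, which verifies the vanishing-control condition \eqref{eq:vanishing-control}.

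The main obstacle I anticipate is the bookkeeping needed to express $R_n(h^{-1}z)$ in terms of $\Gamma_n$ on a suitable tuple and to match it with the form of Condition~\ref{def:f-amenable}.\ref{erguiergqwefeqwfqefew}: one has to be careful about which group elements appear in which slots, and to make sure the relevant collection of group elements is a \emph{fixed} finite subset of $G$ so that the convergence in Condition~\ref{def:f-amenable}.\ref{erguiergqwefeqwfqefew} is uniform over all pairs under consideration. A related subtlety is that $g^{-1}x$ need not lie in $Z_n$, only $R_n(g^{-1}x)$ does, so one cannot directly plug $x$ into Condition~\ref{def:f-amenable}.\ref{erguiergqwefeqwfqefew}; one first passes to the retraction, using the defining identities \eqref{ewrfvihiuqrfeqrfqwefewf} of $R_n$ together with \eqref{qergojrgopregqfwefwefqw}, and only then applies the convergence hypothesis. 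Once these identifications are made carefully, the estimate is routine and the entourage property follows, completing the proof. \hB
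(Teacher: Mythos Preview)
Your overall strategy matches the paper's: reduce to showing that $(f\times f)(S^r\times\diag(Y_l))$ satisfies the vanishing-control condition \eqref{eq:vanishing-control} on $W_h$, break up by pairs $(g,g')\in S^r$ (a finite set), and estimate $d(f(g,x),f(g',x))$ for $x\in Y_{n,l}$ using Condition~\ref{def:f-amenable}.\ref{erguiergqwefeqwfqefew}. The gap is in how you apply that condition.

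You propose to apply Condition~\ref{def:f-amenable}.\ref{erguiergqwefeqwfqefew} with $k=1$ to a point $z\in Z_n$. But the element you must handle is $R_n(g^{-1}x)$ with $x\in Y_{n,l}$, and in general $R_n(h^{-1}y)\neq\Gamma_n(h^{-1},R_n(y))$ for $y=g'^{-1}x\in X_n\setminus Z_n$: the homotopy coherent action is not strict, so the retraction does not commute with the $G$-action. Your acknowledgment of this subtlety (``pass to the retraction'') does not resolve it; replacing $y$ by $R_n(y)\in Z_n$ loses exactly the information you need, and no single-group-element instance of Condition~\ref{def:f-amenable}.\ref{erguiergqwefeqwfqefew} controls the difference.

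The missing step, which the paper supplies, is to use the explicit description \eqref{eq:filtration-strictification} of $Y_{n,l}$: every $x\in Y_{n,l}$ has the form $[e,t_k,g_{k-1},\ldots,g_0,z]$ with $k\leq l$, each $g_i\in S^l[\{e\}]$, and $z\in Z_n$. Then \eqref{ewrfvihiuqrfeqrfqwefewf} gives $R_n(g^{-1}x)=\Gamma_n(g^{-1},t_k,g_{k-1},\ldots,g_0,z)$ directly, and the triangle inequality with midpoint $g_{k-1}\cdots g_0 f_n(z)$ reduces the estimate to two applications of Condition~\ref{def:f-amenable}.\ref{erguiergqwefeqwfqefew}, one with the tuple $(g^{-1},g_{k-1},\ldots,g_0)$ and one with $(g'^{-1},g_{k-1},\ldots,g_0)$. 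The crucial point is that these tuples range over a \emph{finite} set (because $k\leq l$, $g_i\in S^l[\{e\}]$, and $g,g'$ lie in the finite set underlying $S^r$), which is what makes the convergence uniform. This finiteness comes from the structure of $Y_l$, not merely from $S^r$ being finite.
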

\begin{proof}
Since the coarse structure on $\cX$ is pulled back  from $W_{h}$ along $f$, we need to show that  for   every  $l$ and $r$ in $\nat $  the image $f(S^r \times \diag(Y_l))$ is a coarse entourage of $W_{h}$. 
Since $S^r$ consists  of finitely many pairs $(g,g')$, it suffices to check that $f(\{(g,g')\} \times \diag(Y_{l}))$ is a coarse entourage of $W_{h}$ for all $g,g'$ in $G$.
In view of \eqref{eq:vanishing-control}  we must show  that \[ \sup_{y \in Y_{n,l}} d(f(g,y),f(g',y)) \xrightarrow{n \to \infty} 0\ .\]
Since $S^{l}[\{e\}]$ is finite, the set of tuples $g_{k-1},\ldots,g_0$ which may appear as {components} of points $[e,t_k,g_{k-1},\ldots,g_0,z]$ of $Y_{l}$ is finite.
 Unwinding  the definition of $f$ from \eqref{eq:famenable-f}, expressing $R_n$ in terms of $\Gamma_n$ as in \eqref{ewrfvihiuqrfeqrfqwefewf} and using the triangle inequality, we have for every $k$ in $\nat$ and all $g_0,\ldots,g_{k-1}$ in $G$ \begin{align}
\sup_{\substack{(t_1,\ldots,t_k) \in [0,1]^{k} \\ z \in Z_n}} &d(f(g,[e,t_k,g_{k-1},\ldots,g_0,z]), f(g',[e,t_k,g_{k-1},\ldots,g_0,z]))\nonumber \\
&\leq \sup_{\substack{(t_1,\ldots,t_k) \in [0,1]^{k} \\ z \in Z_n}} d(gf_n(\Gamma_n(g^{-1},t_k,g_{k-1},\ldots,g_0,z)), g_{k-1}\ldots g_0f_n(z))\label{eq:Sdiag-entourage} \\
&\qquad+ \sup_{\substack{(t_1,\ldots,t_k) \in [0,1]^{k} \\ z \in Z_n}} d(g'f_n(\Gamma_n(g^{\prime,-1},t_k,g_{k-1},\ldots,g_0,z)), g_{k-1}\ldots g_0f_n(z))\nonumber\ .
\end{align}
Since $d$ is $G$-invariant, the term \eqref{eq:Sdiag-entourage} equals
\[ {\sup_{\substack{(t_1,\ldots,t_k) \in [0,1]^{k} \\ z \in Z_n}}}d(f_n(\Gamma_n(g^{-1},t_k,g_{k-1},\ldots,g_0,z)),g^{-1} g_{k-1}\ldots g_0f_n(z))\ ,\]
and similarly for the other term.
By Condition \ref{def:f-amenable}.\ref{erguiergqwefeqwfqefew}, both summands tend to zero as $n \to \infty$.
\end{proof}

We now consider $X=\coprod_{n\in \nat}X_{n}$ and $\cX=\coprod_{n\in \nat}G\times X_{n}$ as topological spaces. 
Let \begin{equation}\label{fvasvcsdscacsdcasdsc}
i_l \colon Y_l \to X \xrightarrow{i} \cX
\end{equation}   denote the inclusion map and recall the definition of $f$ from \eqref{eq:famenable-f2}.
 \begin{lem}\label{lem:Vl}
 If Condition~\ref{def:f-amenable}.\ref{erguiergqwefeqwfqefew} holds, then there exists a sequence \begin{equation}\label{sdfvefvsrwgefg}
V_0 \subseteq V_1 \subseteq \ldots \subseteq V_l \subseteq \ldots
\end{equation}  of coarse entourages of $X$ such that
 \begin{enumerate}
  \item\label{it:Vl1} $V_l$ is a subset of $Y_l \times Y_l$ for every $l$ in $\nat$;
  \item\label{it:Vl2} $V_l$ is an open entourage of $Y_l$ containing $i_l^{-1}f^{-1}\diag(W)$ for every $l$ in $\nat$;
  \item\label{it:Vl3} $gV_l \subseteq V_{l + r}$ for every $l, r$ in $\nat$ and every $g$ in $S^r[\{e\}]$.  
 \end{enumerate}  
\end{lem}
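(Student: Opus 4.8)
\textbf{Proof plan for \cref{lem:Vl}.}
The plan is to build the entourages $V_l$ inductively, using the filtration \eqref{qwefqewfqwfewfqwfqweffee} of each $X_n$ together with the uniform smallness guaranteed by Condition~\ref{def:f-amenable}.\ref{erguiergqwefeqwfqefew}. First I would fix for each $l$ the closed subset $Y_l \subseteq X$ and observe that $f \circ i_l \colon Y_l \to W$ is continuous; here $W$ carries its (spherical path) metric, so the set $i_l^{-1}f^{-1}(\{(w,w')\mid d(w,w')\le \epsilon\})$ is an open neighbourhood of the diagonal in $Y_l$ for every $\epsilon>0$. The candidate for $V_l$ will be of the form $(f\circ i_l \times f\circ i_l)^{-1}(U_{\epsilon(l,n)})$ intersected appropriately along the decomposition $Y_l = \coprod_n Y_{n,l}$, where $\epsilon(l,n)\to 0$ as $n\to\infty$ is chosen so as to also control the behaviour under the $G$-action. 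Concretely, I would define
\[
V_l := \bigcup_{n\in\nat}\; \{(y,y')\in Y_{n,l}\times Y_{n,l} \mid d(f(y),f(y')) \le \delta_{l,n}\}\ ,
\]
for a doubly-indexed sequence $(\delta_{l,n})$ that is nondecreasing in $l$, tends to $0$ in $n$ for each fixed $l$, and is chosen large enough in the inductive step to accommodate the shift under generators of $G$.

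The three required properties are then checked as follows. Property~\ref{it:Vl1} is immediate from the definition. Property~\ref{it:Vl2}: openness in $Y_{n,l}$ follows since $f$ is continuous and $d$ is a path metric, and the containment $i_l^{-1}f^{-1}\diag(W)\subseteq V_l$ holds because any pair mapping into the diagonal has $d(f(y),f(y'))=0\le\delta_{l,n}$. To see that $V_l$ is a \emph{coarse} entourage of $X$ (equivalently of $\cX$, via the inclusion $i$), I invoke \cref{lem:Sdiag-entourage}: since $S^0\times\diag(Y_l) = \diag(Y_l)$ and, more to the point, the coarse structure on $\cX$ is the one pulled back from $W_h$ along $f$, an argument identical to the one in \cref{lem:Sdiag-entourage} — using \eqref{eq:vanishing-control} and Condition~\ref{def:f-amenable}.\ref{erguiergqwefeqwfqefew} — shows that $f(V_l)$ lies in $\cC_h$ provided $\delta_{l,n}\to 0$ as $n\to\infty$. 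Property~\ref{it:Vl3} is where the induction is needed: given $g\in S^r[\{e\}]$ and $(y,y')\in V_l$ with $y,y'\in Y_{n,l}$, equation \eqref{vreqvioju3rvoirvqvvqvqwev} gives $gy,gy'\in Y_{n,l+r}$, and $G$-invariance of the metric on $W$ together with $G$-equivariance of $f$ (as a map $\cX\to W$) gives $d(f(gy),f(gy')) = d(gf(y),gf(y')) = d(f(y),f(y'))\le\delta_{l,n}$; so it suffices to have arranged $\delta_{l,n}\le\delta_{l+r,n}$ for all $r$, which is guaranteed by monotonicity in $l$. Finally, the chain $V_0\subseteq V_1\subseteq\cdots$ in \eqref{sdfvefvsrwgefg} follows from $Y_l\subseteq Y_{l+1}$ and $\delta_{l,n}\le\delta_{l+1,n}$.

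The only genuine subtlety — the main obstacle — is to choose the constants $\delta_{l,n}$ so that all of the constraints are simultaneously satisfiable: monotonicity in $l$, vanishing in $n$ for each fixed $l$, and the simultaneous validity of \ref{it:Vl3} for \emph{all} generators. Because $S^r[\{e\}]$ is finite for each $r$ and \eqref{vreqvioju3rvoirvqvvqvqwev} only raises the filtration degree by a fixed amount $r$, one can carry out a diagonal argument: enumerate the requirements, and at stage $l$ pick $\delta_{l,n}$ to be the maximum over the finitely many relevant $g$ and the finitely many previously fixed $\delta_{l',n}$ with $l'\le l$, intersected with the requirement coming from \cref{lem:Sdiag-entourage} that the corresponding thickening of $W$ stays in $\cC_h$. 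Since the latter is exactly the content of the estimate in the proof of \cref{lem:Sdiag-entourage} and that estimate is uniform over any finite set of group elements, there is enough room, and the construction goes through.
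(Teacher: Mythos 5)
The step in your verification of Property~\ref{it:Vl3} is incorrect. You claim, for $g\in S^r[\{e\}]$ and $y,y'\in Y_{n,l}$, that $d(f(gy),f(gy'))=d(gf(y),gf(y'))=d(f(y),f(y'))$, citing $G$-equivariance of $f$ and $G$-invariance of $d$. But here $f(y)$ necessarily means $(f\circ i_l)(y)=f(e,y)=f_n(R_n(y))$, and $f\circ i$ is \emph{not} $G$-equivariant, because the inclusion $i\colon X\cong\{e\}\times X\to\cX$ is not equivariant — the paper makes this explicit right after \eqref{wefqrwfkpqwefqewfq}. Concretely, $f(e,gy)=f_n(R_n(gy))$ whereas $gf(e,y)=f(g,gy)=gf_n(R_n(y))$, and these differ precisely by the failure of $f_n\circ R_n$ to be equivariant — the very quantity that Condition~\ref{def:f-amenable}.\ref{erguiergqwefeqwfqefew} only controls asymptotically. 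If your equality held, no diagonal argument would be needed and $\delta_{l,n}$ would not need to grow with $l$; the fact that you feel you need a diagonal argument at the end is an indication that something upstream is off. Your construction with a single metric threshold per $(l,n)$ therefore does not by itself give \ref{it:Vl3}: translating $V_l$ by $g$ introduces an additive error $d(f(e,gy),f(g,gy))$ that you would have to absorb into $\delta_{l+r,n}$, and this would have to be iterated carefully.

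The paper avoids this issue by a cleaner construction: first pull back a single open coarse entourage of $W_h$ via $f\circ i$ to get an open coarse entourage $C$ of $X$ containing $i^{-1}f^{-1}\diag(W)$, and then set $V_l:=\bigcup_{g\in S^l[\{e\}]}(gC)_{|Y_l}$. Property~\ref{it:Vl3} then becomes a two-line computation: for $h\in S^r[\{e\}]$, $hV_l=\bigcup_{g}h(gC)_{|Y_l}\subseteq\bigcup_g(hgC)_{|Y_{l+r}}\subseteq V_{l+r}$ by \eqref{vreqvioju3rvoirvqvvqvqwev} and $hg\in S^{l+r}[\{e\}]$. The non-equivariance of $f\circ i$ is handled not by estimating it pointwise, but by building the finite union of $G$-translates directly into the definition of $V_l$; Condition~\ref{def:f-amenable}.\ref{erguiergqwefeqwfqefew} then enters only once, through \cref{lem:Sdiag-entourage}, to verify that each $V_l$ is indeed a coarse entourage. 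Your approach could in principle be repaired by explicitly accounting for the correction terms $d(f(e,gy),f(g,gy))$ via the triangle inequality and choosing $\delta_{l,n}$ accordingly, but as written the argument rests on a false equivariance claim and is not valid.
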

\begin{proof}
 The bornological coarse space $W_{h}$ admits open coarse entourages. For example, we could take the entourage
 \[ \bigcup_{n\in \nat} \{(w_{n},w_{n}')\in W_{n}\times W_{n} \mid d(w_{n},w_{n}')\le \frac{1}{n+1}\}\ .\]
 Since the $G$-coarse structure on $\cX$ is obtained by pulling back  the coarse structure on $W_{h}$ via the $G$-equivariant continuous map $f$ and $X$ is a union of components of $\cX$,  
 pulling back an open coarse entourage of $W_{h}$ by $f\circ i$ {yields} 
 an open coarse entourage $C$ on $X$ which contains $i^{-1}f^{-1}\diag(W)$.

 For every $l$ in $\nat$ we define the entourage
 \begin{equation*}\label{fijrfijfeoijqfefefqwefqwfwfqew}
  V_{l} := \bigcup_{g\in S^{l}[\{e\}]} (gC)_{|Y_{l}}\ .
 \end{equation*}
 of $Y_{l}$.
 Since $C$ is open and $G$ acts continuously, $gC$ is also  an open entourage of $X$ for every $g$ in $G$.
 Hence $V_l$ satisfies Condition \ref{it:Vl2}. 
 
 We now show that $V_{l}$ is a coarse entourage of $X$. We consider the composition of entourages
 \[ V_l' := ((\{e\} \times S^l[e]) \times \diag(Y_l)) \circ Gi(C) \circ \diag(G \times Y_l)\ .\]
 Since $(\{e\} \times S^l[\{e\}]) \times \diag(Y_l) \subseteq S^l \times \diag(Y_l)$, the first term is a coarse entourage of $\cX$ by \cref{lem:Sdiag-entourage}.
 The set $i(C)$ belongs to the $G$-coarse structure of $\cX$ by construction. Therefore $Gi(C)$ belongs to the $G$-coarse structure on $\cX$, too. 
 Hence the entire composition $V_l'$ is a coarse entourage of $\cX$.
 One checks that $V_l = i^{-1}(V_l')$.
 So $V_l$ is a coarse entourage of $X$ since the coarse structure on $X$ is induced via $i$ from $\cX$.

 We finally show  Condition~\ref{it:Vl3}.
 For every $h$ in $G$ we have
 \[ hV_l = \bigcup_{g\in S^{l}[\{e\}]} h(gC)_{|Y_{l}}\ .\]
 By \eqref{vreqvioju3rvoirvqvvqvqwev}, $h(gC)_{|Y_l} \subseteq (hgC)_{|Y_{l+r}}$ for $h \in S^r[\{e\}]$.
 As $hg \in S^{l+r}[\{e\}]$, it follows that $hV_l \subseteq V_{l+r}$ as desired.
\end{proof}

From now on, we assume that Condition~\ref{def:f-amenable}.\ref{erguiergqwefeqwfqefew} holds.
Then we  choose a sequence 
\[ V_0 \subseteq V_1 \subseteq \ldots \subseteq V_l \subseteq \ldots \]
  of entourages as in \cref{lem:Vl}. Let
\begin{equation}\label{sdfvsdfvwrgffsv}
j_l \colon Y_l \to Y_{l+1}
\end{equation}
  denote the  inclusion maps. 

\begin{ddd}\label{def:M} We define $M$ in $\Fun(\nat,\CW(\cX))$ as follows:
\begin{enumerate} \item $M$  
 sends the object $l$ in $\nat$ to the $\cX$-controlled CW-complex
 \begin{equation}\label{eq:M}
  M(l) := i_{l,*}C^{V_l}(Y_l)
 \end{equation}
 in $\CW(\cX)$, see \cref{def:small-simplices} and \eqref{fvasvcsdscacsdcasdsc} for notation.
 \item $M$ sends the morphism $l\to l+1$ in $\nat$ to  the morphism
  \[ M(l) = i_{l,*}C^{V_l}(Y_l)\xrightarrow{j_{l,\sharp}} i_{l+1,*}C^{V_{l+1}}(Y_{l+1}) = M(l+1)\ ,\]
 see \cref{const:small-simplices-induced-maps} and \eqref{sdfvsdfvwrgffsv} for notation. \qedhere
 \end{enumerate} \end{ddd}

\begin{lem}\label{lem:Mfd}\ 
\begin{enumerate}
 \item\label{it:Mfd1} The functor $M$ belongs to the subcategory $\Fun^{\cW(\cX)}(\nat,\CW(\cX))$ of $\Fun(\nat,\CW(\cX))$ (see \cref{wjgriegjorgirewgwergwrg}).
 \item\label{it:Mfd2}  If $Z_n$ is a compact ANR for every $n$ in $\nat$, then   $M$  
 belongs to the  subcategory  $\Fun^{\cW(\cX)}(\nat,\CW^{\hfd}(\cX))$ of $\Fun^{\cW(\cX)}(\nat,\CW(\cX))$ (see \cref{wfiqjfioewfewfqwefqewfewf}.\ref{weriogjowegwergefww}).
\end{enumerate}
\end{lem}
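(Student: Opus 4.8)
The plan is to establish the two assertions separately; the substance of the argument is the first one, which amounts to showing that each structure map $M(l)\to M(l+1)$ of \cref{def:M} is a controlled homotopy equivalence in $\CW(\cX)$. The key coarse-geometric observation is that, with respect to the coarse structure on $\cX$ pulled back along the map $f$ of \eqref{eq:famenable-f2}, the retraction $R=\coprod_n R_n$ of \eqref{ewrfvihiuqrfeqrfqwefewf} is close to the identity of $X$. Indeed, $f\circ i=f_\bullet\circ R$ on $X$ by the definition of $f$ in \eqref{eq:famenable-f} (insert $g=e$), where $f_\bullet:=\coprod_n f_n\colon X\to W$; since $R_n$ takes values in $Z_n$ and restricts to the identity there, we get $f(i(R(y)))=f_\bullet(R(R(y)))=f_\bullet(R(y))=f(i(y))$ for all $y$ in $X$, so $\{(y,R(y))\mid y\in X\}$ maps into $\diag(W)$ and is therefore a coarse entourage. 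The same computation shows that the strong deformation retraction $H$ of \eqref{eq:strong-deformation-retraction-MGamma} satisfies $R_n(H_t(y))=R_n(y)$ for all $t$ and $y$ (the adjoined datum $[e,t,-]$ collapses under $\Gamma_n$), so $H$ is coarsely a constant homotopy. As $H$ restricts to a deformation retraction of $Y_l$ onto $Z$ inside $Y_l$, and $Z\subseteq Y_l\subseteq Y_{l+1}$, this exhibits $R|_{Y_{l+1}}\colon Y_{l+1}\to Y_l$ as a homotopy inverse of the inclusion $j_l\colon Y_l\to Y_{l+1}$, together with homotopies $j_l\circ R|_{Y_{l+1}}\sim\id$ and $R|_{Y_{l+1}}\circ j_l\sim\id$ which are controlled for the pulled-back structure.

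Next I would promote this topological controlled homotopy equivalence between $Y_l$ and $Y_{l+1}$ to a controlled homotopy equivalence between $M(l)=i_{l,*}C^{V_l}(Y_l)$ and $M(l+1)=i_{l+1,*}C^{V_{l+1}}(Y_{l+1})$. The entourages $V_l$ chosen via \cref{lem:Vl} are open, contain the diagonals, and are nested with $V_l\subseteq V_{l+1}$; the inclusions $j_l$ and the retraction $R|_{Y_{l+1}}$ are compatible with the $V_l$ up to passing to further powers and shrinking the source entourage. Feeding the controlled homotopies of the previous paragraph into \cref{lem:Uhomotopy} produces controlled homotopies between the induced maps on the complexes $C^{V}(-)$, while \cref{lem:small-simplices} identifies the smaller complexes with the original ones up to controlled homotopy equivalence; this is exactly the pattern of the proof of \cref{prop:anrhfd}. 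Pushing forward along $i_l$, which is a morphism of bornological coarse spaces since $i_l(V_l)$ is a coarse entourage of $\cX$ by the construction in \cref{lem:Vl}, we obtain that $M(l)\to M(l+1)$ lies in $\cW(\cX)$. Hence $M$ lies in $\Fun^{\cW(\cX)}(\nat,\CW(\cX))$, proving Assertion~\ref{it:Mfd1}.

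For Assertion~\ref{it:Mfd2} I would show that under the compactness hypothesis each $M(l)$ is finitely dominated, which together with Assertion~\ref{it:Mfd1} yields $M\in\Fun^{\cW(\cX)}(\nat,\CW^\hfd(\cX))$. Since $Z_n$ is a compact ANR, the space $Y_{n,l}$ of \eqref{eq:filtration-strictification} is a compact ANR, being built from $Z_n$ by finitely many gluings along closed ANR subspaces in the construction of \eqref{qwefqewfqwfewfqwfqweffee}; consequently $Y_l=\coprod_n Y_{n,l}$ is a locally compact ANR, and the $V_l$-thickening of a relatively compact subset of $Y_l$ is relatively compact, since coarse entourages of $X$ do not connect distinct summands $X_n$ and each $Y_{n,l}$ is compact. \cref{prop:anrhfd} then gives $C^{V_l}(Y_l)\in\CW^\hfd((Y_l)_{V_l,\rc})$. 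Finally, $i_l\colon(Y_l)_{V_l,\rc}\to\cX$ is a morphism of bornological coarse spaces: it is controlled as above, and it is proper because a bounded subset of $\cX$ is contained in $\coprod_{n\le N}G\times X_n$ for some $N$ (the bornology on $\cX$ is pulled back from $W_h$, whose bounded sets lie over finitely many points of $\nat_{min,min}$), so its $i_l$-preimage lies in the compact set $\coprod_{n\le N}Y_{n,l}$. By functoriality of $\CW^\hfd$ (\cref{qerogijweroigjwerogegwerg}) we conclude $M(l)=i_{l,*}C^{V_l}(Y_l)\in\CW^\hfd(\cX)$ for every $l$.

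I expect the main obstacle to be the bookkeeping in the second paragraph: matching the coarse entourages $V_l$ with the topological homotopies precisely enough that \cref{lem:Uhomotopy} and \cref{lem:small-simplices} apply coherently across the whole sequence, and tracking base points and the $\sharp$-constructions of \cref{const:small-simplices-induced-maps}. The underlying coarse geometry is light once one notices that $f\circ i$ factors through $R$, but the translation between topological control (open entourages, open covers) and the control used for controlled CW-complexes requires care. A secondary point is the verification that the strictification construction of \cref{eqrigoqergewrgwergwergwegw} preserves the ANR property, which is standard ANR theory but should be spelled out.
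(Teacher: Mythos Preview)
Your proposal is correct and, for Assertion~\ref{it:Mfd1}, follows essentially the paper's approach: the key computation that $f\circ i_l$ is constant along the deformation retraction $H$ (equivalently, that $f\circ i$ factors through $R$) is exactly what the paper establishes in its diagram~\eqref{wqefewfefeffefqwefewf}, and the subsequent promotion to the level of $C^{V_l}$ via \cref{lem:Uhomotopy} and \cref{lem:small-simplices} is the same. The paper organises the argument around the composite maps $M(0)\to M(l)$ rather than the successive maps $M(l)\to M(l+1)$, and invokes two-out-of-six at one point, but the substance is identical.

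For Assertion~\ref{it:Mfd2} there is a genuine difference worth noting. You propose to show directly that each $Y_{n,l}$ is a compact ANR, which requires the ``standard ANR theory'' you flag at the end. The paper avoids this entirely: since controlled homotopy equivalences preserve finite domination (immediate from \cref{wfiqjfioewfewfqwefqewfewf}), Assertion~\ref{it:Mfd1} reduces the problem to showing $M(0)\in\CW^\hfd(\cX)$, and $Y_0=\coprod_n Z_n$ is a coproduct of compact ANRs by hypothesis, so \cref{prop:anrhfd} applies without further work. Your route is valid but does unnecessary labour; the paper's reduction to $l=0$ is the cleaner move and eliminates the secondary obstacle you identified.
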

\begin{proof}
 To prove the first assertion, we have to show that $j_{l,\sharp}$ is a controlled homotopy equivalence.
 Since controlled homotopy equivalences satisfy the two-out-of-three property, it suffices to show that the structure map $M(0) \to M(l)$ is a controlled homotopy equivalence for every $l$ in $\nat$.

 As observed at the end of \cref{eqrigoqergewrgwergwergwegw}, the deformation retractions $H_n$ from \eqref{eq:strong-deformation-retraction-MGamma} associated to $(\Gamma_n,Z_n)$ restrict to deformation retractions $H_{n,l} \colon Y_{n,l} \times [0,1] \to Y_{n,l}$ for every $l$ in $\nat$.
 Their coproduct (over $n$ in $\nat$) is a deformation retraction
\[ K_{l} \colon Y_{l} \times [0,1] \to Y_{l} \]  of $Y_{l}$ onto $Y_{0}$.
 We first show that the diagram
\begin{equation}\label{wqefewfefeffefqwefewf}
\xymatrix@C=3em{
 Y_{l} \times [0,1]\ar[rr]^-{K_{l}}\ar[dr]_-{f \circ i_l \circ \pr} & & Y_l\ar[dl]^-{f \circ i_l} \\
  & W &
}
\end{equation} 
commutes, where $f$ was defined in \eqref{eq:famenable-f2}.
 Recall the explicit formulas  \eqref{eq:strong-deformation-retraction-MGamma} for $H_{n,l}$ and \eqref{ewrfvihiuqrfeqrfqwefewf} for the retraction $R_n$.
Using the fourth case of \eqref{qergojrgopregqfwefwefqw} for the second equality and the fifth case of \eqref{qergojrgopregqfwefwefqw} for the third equality, we see that
\begin{eqnarray*}
 f(i_{l}(H_{n,l}( [e,t_k,g_{k-1},\ldots,g_0,z],u))) &=&f_n'(e,H_{n,l}([e,t_k,g_{k-1},\ldots,g_0,z],u)) \\
 &=& R_{n}(H_{n,l}([e,t_k,g_{k-1},\ldots,g_0,z],u)) \\
 &=& R_n([e,u \cdot t_k,g_{k-1},\ldots,g_0,z]) \\
 &=& \Gamma_n( g_{k-1},t_{k-1},\ldots,g_0,z) ,
\end{eqnarray*}  
where $f'_{n}$ is as in \eqref{eq:famenable-f}.
Consequently,   \[ u \mapsto  f(i_{l}(H_{n,l}( [e,t_k,g_{k-1},\ldots,g_0,z],u))) \]
is constant for all $n$ in $\nat$ and $[e,t_k,g_{k-1},\ldots,g_0,z]$ in $Y_{n,l}$. Therefore,  diagram  \eqref{wqefewfefeffefqwefewf} commutes.

This shows that $K_l$ is an $i_l^{-1}f^{-1}(\diag(W))$-controlled homotopy over $Y_l$.
By \cref{lem:Vl}.\ref{it:Vl2}, $K_l$  is in particular a $V_l$-controlled homotopy.

By \cref{lem:Uhomotopy}, it follows that there exists some open entourage $V_l'$ of $Y_l$ such that $K_{l,0}(V_l') \subseteq V_l$, $V_l' = K_{l,1}(V_l') \subseteq V_l$ and $K_l$ induces a controlled homotopy
\begin{equation}\label{fqwefewffqf}
K_{l,0,\sharp} \sim K_{l,1,\sharp} \colon i_{l,*}C^{V_l'}(Y_l) \to i_{l,*}C^{V_l}(Y_l)\ .
\end{equation}  
Let the retraction  $R:X\to Y_{0} $ be induced by the family of retractions $(R_{n})_{n\in \nat}$
and write $R_{|Y_{l}}$ for the restriction of $R$ to $Y_{l}$.
Note that $R_{|Y_{l}}=K_{l,0}$ as maps to $Y_{l}$.
In particular, $R_{|Y_{l}} \colon (Y_{l},i_{l})\to (Y_{0},i_{0})$ is  $V_{l}$-controlled.
Hence the map $(R_{|Y_{l}})_{\sharp}$ below is defined by \cref{const:small-simplices-induced-maps}.
Similarly, we have the map $j_{0,l,\sharp}$ below.
The starting point of the homotopy in \eqref{fqwefewffqf} is given by the composition 
 \[ K_{l,0,\sharp}\colon i_{l,*}C^{V_l'}(Y_l) \xrightarrow{(R_{|Y_{l}})_{\sharp}} i_{0,*}C^{j_{0,l}^{-1}(V_l)}(Y_0) \xrightarrow{j_{0,l,\sharp}} i_{l,*}C^{V_l}(Y_l)\ .\]
Furthermore, its end point is
\[ K_{l,1,\sharp}=i_{l,*} ( C^{V_l'}(Y_l)  \xrightarrow{\incl} C^{V_l}(Y_l))\ .\]
The map $K_{l,0,\sharp}$ is a controlled homotopy equivalence since $K_{l,1,\sharp}$  is a controlled homotopy equivalence by \cref{lem:small-simplices} and \cref{rem:controlledequiv-functorial}, and these two maps are related by the controlled  homotopy \eqref{fqwefewffqf}.

Consider the composition
\[ i_{0,*}C^{j_{0,l}^{-1}(V_l')}(Y_0) \xrightarrow{j_{0,l,\sharp}} i_{l,*}C^{V_l'}(Y_l) \xrightarrow{(R_{|Y_{l}})_{\sharp}} i_{0,*}C^{j_{0,l}^{-1}(V_l)}(Y_0) \xrightarrow{j_{0,l,\sharp}} i_{l,*}C^{V_l}(Y_l) \ .\]
Since $R_{|Y_{l}}\circ j_{0,l}=\id_{Y_{0}}$, the composition of the first two arrows is a $j_{0,l}^{-1}(V_l)$-controlled homotopy equivalence, again by \cref{lem:small-simplices}. Furthermore,  
$j_{0,l,\sharp}\circ (R_{|Y_{l}})_{\sharp}=K_{l,0,\sharp}$ is also a controlled homotopy equivalence as shown above.
So $j_{0,l,\sharp}$ is a controlled homotopy equivalence since controlled homotopy equivalences satisfy the two-out-of-six property.

 The structure map $M(0) \to M(l)$ is given by the composition
\[ i_{0,*}C^{V_0}(Y_0) \to i_{0,*}C^{j_{0,l}^{-1}(V_l)}(Y_0) \xrightarrow{j_{0,l,\sharp}}  i_{l,*}C^{V_l}(Y_l)\ .\]
The first map exists since $V_0 \subseteq j_{0,l}^{-1}(V_l)$. It is a controlled homotopy equivalence by
another application of \cref{lem:small-simplices}. Since $j_{l,0,\sharp}$ is also a controlled homotopy equivalence as seen above, the structure map is a controlled homotopy equivalence. As argued at the beginning of this proof, this
 implies Assertion~\ref{it:Mfd1}.
 
 We now show Assertion \ref{it:Mfd2}. 
Suppose that $Z_n$ is a compact ANR for all $n$ in $\nat$.
 We have to show that $M(l)$ lies in $\CW^\hfd(\cX)$ for every $l$.
 \cref{wfiqjfioewfewfqwefqewfewf} is set up such that any object which is controlled homotopy equivalent to an object in $\CW^\hfd(\cX)$ also belongs to $\CW^\hfd(\cX)$.
 In view of Assertion \ref{it:Mfd1}, which is already shown, we know that $M(l)$ is controlled homotopy equivalent to $M(0)$ for every $l$ in $\nat$.
 Therefore, it suffices to show that $M(0)$ lies in $\CW^\hfd(\cX)$. 
 
 Since $Y_0 \cong \coprod_{n \in \nat} Z_n$ and each $Z_n$ is a compact ANR, $Y_0$ is a locally compact ANR.
 The entourage $V_0$ is open in $Y_0$ by \cref{lem:Vl}.\ref{it:Vl2}.  
 If $B$ is a relatively compact subset of $Y_{0}$, then $N(B):=\{n\in \nat \mid B\cap Z_{n}\not=\emptyset\}$ is finite.
 Then $N(V_{0}[B])=N(B)$ is also finite and $V_{0}[B]$ is relatively compact since it is contained in the compact subset $\bigcup_{n\in N(B)} Z_{n}$ of $V_{0}$.
 We conclude that \cref{rgoirjgorgrwrvcwecr} is applicable to $Y_{0}$ and $V_{0}$.
 It asserts that $C^{V_{0}}(Y_{0})$ belongs to $\CW^{\hfd}(Y_{0,V_{0},\rc})$.
 The map $Y_{0,V_{0},\rc}\to  Y_{0} $ induced by the identity of the underlying sets is a morphism in $\BC$.
 Hence, by 
  \cref{qerogijweroigjwerogegwerg}, we have $C^{V_{0}}(Y_{0})\in \CW^{\hfd}(Y_{0})$.
  Again by \cref{qerogijweroigjwerogegwerg}, 
  the object $M(0)=i_{0,*}C^{V_{0}}(Y_{0})$ belongs to $\CW^\hfd(\cX)$. 
\end{proof}

\begin{construction}\label{const:fixed-point}
 In this construction we define a family of isomorphisms
 \begin{equation}\label{fwerfwerfwerferwf}
\rho = (\rho(g) \colon gM \to M)_{g \in G}
\end{equation}
 which   promotes $M$ to an object $(M,\rho)$  in $ \Fun_{\mathrm{shift}}^{\cW(\cX)}(\nat,\CW^\hfd(\cX))^{\op,\hG}$, see \cref{rem:fixedpoints}.
Recall that we have fixed {a generating} entourage $S$ of $G_{can,min}$, and that $S^r$ denotes the $r$-fold iterated composition of $S$ with itself.
Let $g$ be in $S^r[\{e\}]$.
By \cref{lem:Vl}.\ref{it:Vl3} and using \eqref{vreqvioju3rvoirvqvvqvqwev}, the  multiplication map  $g\cdot -\colon Y_l \to Y_{l+r}$  induces a map
\[ \Sing^{V_l}(Y_l) \to \Sing^{V_{l+r}}(Y_{l+r}), \quad \sigma \mapsto g \cdot \sigma\ .\]
We want to show that the induced map
\begin{equation}\label{eq:G-action}
 g_\sharp \colon g_* i_{l,*}C^{V_{l}}(Y_{l}) \to i_{l+r,*}C^{V_{l+r}}(Y_{l+r})
\end{equation}
is a morphism of $\cX$-labelled CW-complexes in $\CW^{\hfd}(\cX)$, i.e,
that $g\cdot-$ is an $\cX$-controlled morphism, see \cref{wergoijweogwegferfwrfr}.
A cell in $g_{*}i_{l,*}C^{V_{l}}(Y_{l})$ is given by a  singular simplex $\sigma \colon \Delta^k \to Y_l$ with the $\cX$-labelling $(g,g\sigma(b))$,  where $b$ denotes the barycentre of $\Delta^k$.
The image of this cell under $g_{\sharp}$ is the  singular simplex $g\cdot \sigma:\Delta^{k}\to Y_{l+r}$  with the labelling $(e,g\sigma(b))$.
Hence $g_\sharp$ is a controlled morphism if $i_{l+r}((S^r[\{e\}] \times \{e\}) \times \diag(Y_{l+r}))$ is a coarse entourage of $\cX$.
In particular, $g_\sharp$ is controlled if $i_{l+r}(S^r \times \diag(Y_{l+r}))$ is a coarse entourage of $\cX$.  By virtue of \cref{lem:Sdiag-entourage}, which can be applied {since we assume} Condition~\ref{def:f-amenable}.\ref{erguiergqwefeqwfqefew}, this condition is satisfied.  

Consequently, the morphism $g_\sharp$ from \eqref{eq:G-action} defines a morphism
\[ \rho_l(g) \colon g M(l)= g_* i_{l,*}C^{V_{l}}(Y_{l})\xrightarrow{g_{\sharp}} i_{l+r,*}C^{V_{l+r}}(Y_{l+r})=M(l+r) = (T^{r,*}M)(l) \] 
in $\CW^\hfd(\cX)$, where $T^{r,*}$ is the $r$-fold iterate of the shift functor from 
\eqref{erfwefiweogwergref}.
  The family of these maps is a natural transformation  \[ (\rho_{l}(g))_{l\in \nat}\colon g M \to T^{r,*} M \]
  of functors from $\nat$ to $\CW^{\hfd}(\cX)$.
The  morphism
\[ \rho(g):=([r,\rho_{l}(g)])_{l\in \nat} \colon gM \to M \]
in $\Fun^{\cW(\cX)}_{\mathrm{shift}}(\nat,\CW(\cX))$ (see \cref{wjgriegjorgirewgwergwrg}.\ref{it:wjgriegjorgirewgwergwrg2} for notation) is independent of the choice of $r$.
Since $S$ is a generating entourage, every $g$ in $G$ lies in $S^r[\{e\}]$ for some $r$ in $\nat$.
So $\rho(g)$ is defined for every $g$ in $G$. 
\end{construction}

Recall that we assume Condition~\ref{def:f-amenable}.\ref{erguiergqwefeqwfqefew}.

\begin{lem}\label{lem:mrho-defined}
 If $Z_n$ is a compact ANR for every $n$ in $\nat$, then the pair $(M,\rho)$ is an object in $\Fun^{\cW(\cX)}_{\mathrm{shift}}(\nat,\CW^\hfd(\cX))^{\op,\hG}$.
\end{lem}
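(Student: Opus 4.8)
The plan is to verify the three conditions packaged in \cref{rem:fixedpoints} for the pair $(M,\rho)$, splitting the work into the part that is already available and the part that still needs to be checked. First I would recall what has already been established: \cref{lem:Mfd}.\ref{it:Mfd2} shows that $M$ is an object of $\Fun^{\cW(\cX)}(\nat,\CW^{\hfd}(\cX))$, hence of $\Fun^{\cW(\cX)}_{\mathrm{shift}}(\nat,\CW^{\hfd}(\cX))$ via the localisation functor $\ell_V$ of \eqref{fwefwefwfwefw}; and \cref{const:fixed-point} produces, using Condition~\ref{def:f-amenable}.\ref{erguiergqwefeqwfqefew} and \cref{lem:Sdiag-entourage}, for every $g$ in $G$ a well-defined morphism $\rho(g)\colon gM\to M$ in $\Fun^{\cW(\cX)}_{\mathrm{shift}}(\nat,\CW^{\hfd}(\cX))$. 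So the objects and morphisms are in place; what remains is to check that the $\rho(g)$ are isomorphisms and that they satisfy the cocycle condition \eqref{3iojoigergwergerg} (in the opposite category).

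For the isomorphism claim, I would argue that in $\Fun^{\cW(\cX)}_{\mathrm{shift}}(\nat,\CW^{\hfd}(\cX))$ the morphism $\rho(g)=[r,(\rho_l(g))_l]$ has an explicit inverse induced by multiplication by $g^{-1}$. Concretely, $g^{-1}\cdot-$ induces morphisms $Y_{l}\to Y_{l+r'}$ for $g^{-1}\in S^{r'}[\{e\}]$ compatible with the entourages $V_\bullet$ by \cref{lem:Vl}.\ref{it:Vl3}, hence a morphism $\rho(g^{-1})\colon g^{-1}M\to M$, equivalently a morphism $gM\to gM$ after translating, and the composites $\rho(g)\circ g(\rho(g^{-1}))$ and $\rho(g^{-1})\circ g^{-1}(\rho(g))$ are, on the nose before passing to the shift category, the structure maps of $M$ associated to the inclusions $Y_l\subseteq Y_{l+r+r'}$ (since $gg^{-1}=e$ acts as the identity on each $X_n$). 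These structure maps become identities in the shift category because they are exactly the maps $v_C^{k}$ that $\ell_V$ inverts; this is where \cref{lem:fractions} and the defining relation in \cref{wjgriegjorgirewgwergwrg}.\ref{it:wjgriegjorgirewgwergwrg2} do the work. Thus $\rho(g)$ is invertible.

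For the cocycle condition, I would take $g,g'$ in $G$ with $g\in S^r[\{e\}]$ and $g'\in S^{r'}[\{e\}]$, so that $gg'\in S^{r+r'}[\{e\}]$, and compare $g(\rho(g'))\circ\rho(g)$ with $\rho(gg')$ (reversed, since we are in $(-)^{\op,\hG}$). On the level of the maps $\rho_l$ before localisation, both composites are induced by the single multiplication map $(gg')\cdot-\colon Y_l\to Y_{l+r+r'}$ on singular simplices — associativity of the $G$-action on each $X_n$ and functoriality of $\Sing^{(-)}$ give this immediately — and they differ only by which intermediate shift $Y_{l+r}$ one factors through, hence by the structure maps of $M$, which again \cref{lem:fractions} identifies after applying $\ell_V$. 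So the cocycle condition holds in $\Fun^{\cW(\cX)}_{\mathrm{shift}}(\nat,\CW^{\hfd}(\cX))$. I expect the main obstacle to be purely bookkeeping: keeping track of the shift indices $r$, verifying that the relevant iterated thickenings $S^r\times\diag(Y_l)$ and the images $gV_l$ land where \cref{lem:Sdiag-entourage} and \cref{lem:Vl}.\ref{it:Vl3} require, and confirming that the composites that ought to be trivial really are the maps inverted by $\ell_V$ rather than something slightly off. None of this is deep, but it must be done carefully; once it is, the three conditions of \cref{rem:fixedpoints} are met and $(M,\rho)$ lies in $\Fun^{\cW(\cX)}_{\mathrm{shift}}(\nat,\CW^\hfd(\cX))^{\op,\hG}$ as claimed.
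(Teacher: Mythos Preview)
Your proposal is correct and follows the same approach as the paper: invoke \cref{lem:Mfd} for the values of $M$ and verify the cocycle directly. You are slightly over-engineering the cocycle check---the triangle $\rho_{l+r}(h)\circ h_*\rho_l(g)=\rho_l(hg)$ commutes on the nose in $\Fun^{\cW(\cX)}(\nat,\CW^{\hfd}(\cX))$ before any localisation (both sides are induced by the single map $(hg)\cdot-\colon Y_l\to Y_{l+r+s}$, so no structure-map correction arises), and the separate isomorphism check is unnecessary once the cocycle and $\rho(e)=\id$ (take $r=0$) are in hand.
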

\begin{proof}
It remains to check the cocycle condition for $\rho$ and that $M$ takes values in $\CW^\hfd(\cX)$.
For the cocycle condition note that for $l$ in $\nat$, $g$ in $S^{r}[\{e\}]$ and $h$ in $S^{s}[\{e\}]$  the triangle
\begin{equation*}\label{}
\xymatrix{h_{*}g_* i_{l,*}C^{V_{l}}(Y_{l})
\ar[rr]^{h_{*}(g_{\sharp})}\ar[dr]_{(hg)_{\sharp}}&&h_{*}i_{l+r,*}C^{V_{l+r}}(Y_{l+r})\ar[dl]^{h_{\sharp}}\\&i_{l+r+s,*}C^{V_{l+r+s}}(Y_{l+r+s})&}
\end{equation*}
obviously commutes. This implies $\rho(h) h_{*}\rho(g) =\rho(hg)$. This is the required relation \eqref{3iojoigergwergerg} since we work in the opposite category of $\Fun^{\cW(\cX)}_{\mathrm{shift}}(\nat,\CW^\hfd(\cX))$.

The functor $M$ takes values in $\CW^\hfd(\cX)$ by \cref{lem:Mfd}
since  $Z_n$ is a compact ANR for every $n$ in $\nat$.
\end{proof}

\begin{construction}\label{const:X0}
 We construct another $G$-bornological coarse space $\cX_0$ which fits into a commutative diagram \begin{equation}\label{ewqflknkqweflqwefewqfqf}
 \xymatrix{\cX\ar[dr]_-{p}\ar[rr]^-{c}&&\cX_{0}\ar[dl]^-{p_{0}}\\&\nat_{min,min}&}\  ,
 \end{equation}
 where $c$ is the canonical map sending a point to its connected component.
 The underlying $G$-set is given by $\cX_{0} := G\times \pi_{0}(X)$.
 We equip $\cX_{0}$ with the smallest bornology such that $p_{0} \colon \cX_{0}\to \nat_{min,min}$ is proper, and the smallest coarse structure such that $c$ is controlled.
\end{construction}
Set
\begin{equation}\label{qewfwfewfwqffewf}
S_{\pi_0} := S \times \diag(\pi_0(X)) \ .
\end{equation}
Since $\pi_0(X) \cong \pi_0(Y_0)$ (as $Y_{0}$ is a deformation retract of $X$), we have 
$S_{\pi_{0}}=c( S \times \diag(Y_{0}))$.
Hence by \cref{lem:Sdiag-entourage} applied in the case $l=0$ and $r=1$ we see that $S_{\pi_{0}}$  is a coarse entourage of $\cX_{0}$.

We now construct a discrete version $(D, \delta)$ in $ \CW^\hfd(\cX_{0})^{\op,\hG}$  of the object $(M,\rho)$ from \cref{lem:mrho-defined}. 
\begin{construction}\label{const:Ddelta}
 Consider the  object 
 \begin{equation}\label{eq:Ddelta}
  D := \pi_0(i)_*C^{\diag(\pi_0(X))}(\pi_0(X))
 \end{equation}
 in $\CW(\cX_{0})$, where $\pi_{0}(i):\pi_{0}(X)\to \cX_{0}$ is induced by  $i$   from \eqref{wefqrwfkpqwefqewfq}.
 If $Z_n$ is a compact ANR for every $n$ in $\nat$, then $\pi_0(X_n)$ is a finite set since $Z_n \to X_n$ is one instance of the homotopy equivalence \eqref{eq:Z-to-X}, and $D$ belongs to $\CW^\hfd(\cX_{0})$ by \cref{wfiqjfioewfewfqwefqewfewf}.
 Multiplication by a group element $g$ induces an isomorphism
 \begin{equation}\label{eq:Ddelta2}
  g_\sharp \colon g_*\pi_0(i)_*C^{\diag(\pi_0(X))}(\pi_0(X)) \to \pi_0(i)_*C^{\diag(\pi_0(X))}(\pi_0(X))
 \end{equation}
 of $\cX_{0}$-labelled CW-complexes.
 By a similar computation as in \cref{const:fixed-point}, this morphism is $S_{\pi_0}$-controlled (see \eqref{qewfwfewfwqffewf})  if $g$ is in $S$. So $g_\sharp$ is an isomorphism in $\CW^\hfd(\cX_{0})$ for every $g$ in $G$. The family $\delta:= (g_\sharp)_{g \in G}$
 satisfies the cocycle condition.
 Therefore, we get the object $(D,\delta) $ in $ \CW^\hfd( \cX_{0})^{\op,\hG}$. \end{construction}

We continue to assume that each $Z_n$ is a compact ANR and that Condition~\ref{def:f-amenable}.\ref{erguiergqwefeqwfqefew} holds so that $(M,\rho)$ and $(D,\delta)$ are well-defined objects by \cref{lem:mrho-defined} and \cref{const:Ddelta}. Consider the composition $k$ in 
\begin{equation}\label{eq:const-seq} 
\xymatrix@C=.5em{ \CW^\hfd(\cX_{0})^{\op,\hG}\ar[rr]^{k}\ar[dr]_{j^{\hG}}&& \Fun^{\cW(\cX_{0})}_\mathrm{shift}(\nat,\CW^\hfd( \cX_{0}))^{\op,\hG}\\&\ar[ur]_{\ell_{V}^{\hG}} \Fun^{\cW(\cX_{0})} (\nat,\CW^\hfd( \cX_{0}))^{\op,\hG} & }\ ,\end{equation}
where $\ell_{V}^{\hG}$ and $j^{\hG}$ are obtained from 
 $\ell_V$  in \eqref{fwefwefwfwefw} and $j$ in \eqref{fwefwefwfwefw2} by applying the $(-)^{\hG}$-construction from \cref{rem:fixedpoints}. 
Recall the natural transformations
\[  r_U \colon \lim_{BG} \Nerve(\Fun^\cW_{\mathrm{shift}}(\nat,\CW^\hfd))^\op \to \lim_{BG} \bV^{c,\perf}_{\Spc_*^\op} \overset{\eqref{eq:V-and-U}}{=} U \ .\]
from \eqref{eq:buildfixedpoint} and $r^{\hfd}:\Nerve(\CW^{\hfd})^{\op}\to \bV^{c,\perf}_{\Spc^{\op}_{*}}$ from \cref{def:rfd}.

We have the solid part of the following diagram:
\begin{equation}\label{qergpoijqopfewfqweewfwefq}
 \xymatrix@C=-3em{
  \lim\limits_{BG} \Nerve(\CW^\hfd( \cX_{0}))^\op\ar[rr]^-{\lim\limits_{BG} r^\hfd}\ar@{-->}[dr]_-{m_{(M,\rho)}} & & U( \cX_{0}) \\
  & \lim\limits_{BG} \Nerve(\Fun^{\cW(\cX)}_{\mathrm{shift}}(\nat,\CW^\hfd(\cX)))^\op\ar[ur]_-{r_U \circ c_*}\ar@{==>}[u]
 }
\end{equation}

\begin{lem}\label{lem:product-mrho}
 Assume:
 \begin{enumerate}
  \item\label{it:product-mrho1} $Z_n$ is a compact ANR with contractible components for every $n$ in $\nat$;
  \item\label{it:product-mrho2} the projection $\pr \colon \cX_{0} \to \pi_0(X)_{min,min}$  is a morphism of bornological coarse spaces and    $\pr \circ c:\cX\to\pi_0(X)_{min,min} $ is bornological.
 \end{enumerate}
 Then there exist a functor $m_{(M,\rho)}$ and a filler completing \eqref{qergpoijqopfewfqweewfwefq} to a commutative diagram in $\CATi$.
\end{lem}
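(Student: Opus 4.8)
The plan is to construct the functor $m_{(M,\rho)}$ by transporting the object $(M,\rho)$ along the composition $k$ from \eqref{eq:const-seq}, and then to identify the two ways of mapping to $U(\cX_0)$. First I would observe that, since each $Z_n$ is a compact ANR with contractible components, the inclusion $Z_n \to X_n$ of \eqref{eq:Z-to-X} induces a bijection $\pi_0(X_n) \cong \pi_0(Z_n)$ and each component of $X_n$ is contractible; in particular $\pi_0(X_n)$ is finite, so $(D,\delta)$ of \cref{const:Ddelta} is a well-defined object of $\CW^\hfd(\cX_0)^{\op,\hG}$. Applying \cref{lem:mrho-defined}, the pair $(M,\rho)$ is a well-defined object of $\Fun^{\cW(\cX)}_\mathrm{shift}(\nat,\CW^\hfd(\cX))^{\op,\hG}$. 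Via the equivalence $\iota$ from \eqref{wevevfsfdvewrvf}, the object $(M,\rho)$ corresponds to an object, call it $\overline{(M,\rho)}$, of $\lim_{BG}\Nerve(\Fun^{\cW(\cX)}_{\mathrm{shift}}(\nat,\CW^\hfd(\cX)))^\op$. I would then define $m_{(M,\rho)}$ on $\lim_{BG}\Nerve(\CW^\hfd(\cX_0))^\op$ as the functor that, composed with $c_*$ followed by the equivalence $\iota$, sends $(D,\delta)$ to $(M,\rho)$; more precisely, by condition~\ref{it:product-mrho2} the maps $\pr$ and $\pr\circ c$ give morphisms in $\BC$ turning $c\colon \cX \to \cX_0$ into a map compatible with the projections as in \eqref{ewqflknkqweflqwefewqfqf}, and one checks directly that $c_*(k(D,\delta))$ is (equivalent to) the image of $(M,\rho)$ under $\iota$. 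This equivalence is the content that makes the outer triangle commute on objects; since all categories in sight are obtained from $1$-categories by nerve and the $(-)^{\hG}$-construction, it suffices to produce the equivalence $c_*(D)\simeq M$ in $\Fun^{\cW(\cX)}_\mathrm{shift}(\nat,\CW^\hfd(\cX))$ compatibly with the $G$-cocycle data $\delta$ and $\rho$.

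Concretely, the comparison $c_*D \to M$ arises as follows. The object $D$ is $\pi_0(i)_*C^{\diag(\pi_0(X))}(\pi_0(X))$, which after pushforward along $c$ becomes an $\cX$-controlled complex whose cells are the points of $\pi_0(X)$; on the other hand $M(l)$ is $i_{l,*}C^{V_l}(Y_l)$. For each $l$ one has the canonical "barycentre" comparison: the map $Y_l \to \pi_0(Y_l) = \pi_0(X)$ collapsing each component, which is $V_l$-controlled (indeed even $i_l^{-1}f^{-1}\diag(W)$-controlled by \cref{lem:Vl}.\ref{it:Vl2}), induces a morphism $M(l) \to c_*D$ of $\cX$-controlled complexes via \cref{const:small-simplices-induced-maps}. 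Since each component of $X_n$ (hence of $Y_{n,0}$, hence of $Y_{n,l}$, which deformation retracts onto $Y_{n,0}$) is contractible, this collapse map is a controlled homotopy equivalence by \cref{lem:small-simplices} together with the deformation retractions used in the proof of \cref{lem:Mfd}.\ref{it:Mfd1}. Passing to $\Fun^{\cW(\cX)}_\mathrm{shift}(\nat,-)$, where all morphisms in $\cW$ are inverted, these collapse maps assemble into an isomorphism $M \xrightarrow{\simeq} c_*D$ in $\Fun^{\cW(\cX)}_\mathrm{shift}(\nat,\CW^\hfd(\cX))$. One then checks that this isomorphism intertwines $\rho$ with $c_*\delta$: both are induced by the multiplication maps $g\cdot-$, which are compatible with the component-collapse maps because the $G$-action on $\pi_0(X)$ is the one induced from that on $X$. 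This gives the required commutativity on objects; functoriality in $(D,\delta)$ and the existence of a coherent filler follow because everything is the nerve of a strictly commuting diagram of ordinary categories, and the $(-)^{\hG}$-construction and $\lim_{BG}$ preserve such commutativities.

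Finally, I would address the right-hand triangle, i.e.\ that $r_U\circ c_*\circ \iota$ applied to $\overline{(M,\rho)}$ agrees, under $m_{(M,\rho)}$, with $\lim_{BG} r^\hfd$ applied to $(D,\delta)$. Unwinding \eqref{eq:buildfixedpoint}, $r_U$ is $\lim_{BG}(\overline r^{\hfd}\circ s)$ where $s$ comes from \cref{prop:shiftcat}; by \eqref{eq:rfd-factor1} and \eqref{eq:rfd-factor2} the composite $\overline r^{\hfd}\circ s \circ \ell_V \circ j$ equals $r^\hfd$ on $\CW^\hfd$. Since $k = \ell_V^{\hG}\circ j^{\hG}$, applying $\lim_{BG}$ gives exactly the identification of $r_U\circ c_*\circ \iota \circ k$ with $\lim_{BG} r^\hfd \circ (c_0)_*$ where $c_0$ is the component map on $\cX_0$ — but $c_0$ is the identity since $\cX_0$ is already discrete on components, so this is $\lim_{BG} r^\hfd$ itself. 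Combining with the object-level identification $c_*(k(D,\delta))\simeq \iota(M,\rho) = \overline{(M,\rho)}$ established above, the outer square of \eqref{qergpoijqopfewfqweewfwefq} commutes.

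The main obstacle I expect is the bookkeeping around the non-equivariance of $i$ and the various filtration entourages: one must verify carefully that the collapse maps $Y_l \to \pi_0(X)$ and the multiplication maps $g\cdot-$ are $\cX$-controlled with respect to the specific entourages $V_l$ and $S_{\pi_0}$, and that under the shift-category localisation $\ell_V$ the resulting zig-zags of controlled homotopy equivalences become genuine inverses compatibly with the $G$-cocycles. This is exactly the place where \cref{lem:Sdiag-entourage} and \cref{lem:Vl} (hence Condition~\ref{def:f-amenable}.\ref{erguiergqwefeqwfqefew}) are used, and where hypothesis~\ref{it:product-mrho2} of the lemma is needed to make $c$, $\pr$ and $\pr\circ c$ into morphisms of bornological coarse spaces so that all the pushforward functors $c_*$, $\pi_0(i)_*$, $i_{l,*}$ land in the right categories.
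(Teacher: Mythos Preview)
Your proposal has a fundamental gap: you never construct $m_{(M,\rho)}$ as a \emph{functor}. You only say what its value at the single object $(D,\delta)$ ought to be (namely $\iota(M,\rho)$) and then argue that the triangle commutes at that object. But the lemma asks for a functor on all of $\lim_{BG}\Nerve(\CW^\hfd(\cX_0))^\op$ together with a natural filler; moreover, in \cref{sec:dfhj} the lemma is applied to an object $i_*((Q,\lambda),\rho^Q)$ which is \emph{not} $(D,\delta)$, so the full functoriality is genuinely needed. The sentence ``functoriality \dots follows because everything is the nerve of a strictly commuting diagram'' is a non-sequitur: being a nerve of a $1$-category does not produce a functor out of thin air.

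The paper supplies the missing idea. For morphisms $A \xrightarrow{\alpha} A_0 \xrightarrow{p} S_{min,min}$ with $p\circ\alpha$ bornological, one constructs a fibred smash product
\[
 \otimes_S \colon \CW^\hfd(A) \times \CW^\hfd(A_0) \to \CW^\hfd(A),
\]
where $(Q,\lambda)\otimes_S(Q_0,\lambda_0)$ is the subcomplex of $Q\wedge Q_0$ spanned by cells whose two labels lie over the same point of $S$. Hypothesis~\ref{it:product-mrho2} is precisely what makes $\otimes_S$ preserve local finiteness. Specialising to $\cX\to\cX_0\to\pi_0(X)_{min,min}$ and passing to shift categories and $(-)^{\hG}$, the functor $m_{(M,\rho)}$ is then literally ``tensor with $(M,\rho)$ over $\pi_0(X)$'', which is manifestly a functor.

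Two further points. First, your expression ``$c_*D$'' is ill-typed: $c\colon\cX\to\cX_0$, so $c_*$ sends $\cX$-controlled complexes to $\cX_0$-controlled ones, not the other way. The comparison the paper actually uses is your collapse map, but in the form $\tau\colon c_*M\to j(D)$ over $\cX_0$; it is used to \emph{verify} the filler (via the identification $m(k(D,\delta),k(-))\simeq k(-)$), not to \emph{define} $m_{(M,\rho)}$. Second, you assert that morphisms in $\cW$ become invertible in $\Fun^{\cW}_{\mathrm{shift}}(\nat,-)$; they do not. The shift category only inverts the shift maps $v_C$ of \eqref{eq:V}. The collapse map becomes an equivalence only after applying $s$ from \cref{prop:shiftcat}, i.e.\ inside $r_U$, which is why the filler is an equivalence in $U(\cX_0)$ rather than already at the level of shift categories.
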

\begin{proof}
 To construct the functor $m_{(M,\rho)}$, we make use of the following auxiliary construction.
 Let $S$ be a $G$-set and  \begin{equation}\label{eq:product-mrho1}
  A \xrightarrow{\alpha} A_{0} \xrightarrow{p} S_{min,min}
 \end{equation}
 be morphisms in $G\BC$ such that $p \circ \alpha$ is bornological. 
 Consider $(Q,\lambda)$ in $\CW(A)$ and $(Q_{0},\lambda_{0})$ in $\CW(A_{0})$.
 Cells of $Q \wedge Q_{0}$ will be  denoted in the form $(q,q_{0})$.  We consider the  subset of these cells  \begin{equation}\label{vfasdvsdvsdvadsvv}
\{(q,q_{0})\in Q \wedge Q_{0}\:|\: (p \circ \alpha \circ \lambda)(q) = (p \circ \lambda_{0})(q_{0})\}\ . 
\end{equation}
  If $(q,q_{0})$ is in this subset and $(q',q_{0}')$ is a cell in $\overline{(q,q_{0})}^{CW}$,
 then $(p \circ \alpha \circ \lambda)(q') = (p \circ \lambda_{0})(q'_{0})$ since $p_{*}\alpha_{*}(Q,\lambda)$ and $p_{*}(Q_{0},\lambda_{0})$ are $S_{min,min}$-controlled CW-complexes. 
 Hence the subset \eqref{vfasdvsdvsdvadsvv}
 determines a subcomplex $Q \otimes_S Q_{0}$ of $Q \wedge Q_{0}$.
 We equip the subcomplex  $Q \otimes_S Q_{0}$  with the   $A$-labelling  given by
 \[ (\lambda \otimes_{S} \lambda_{0})(q,q_{0}) := \lambda(q)\ .\]
 We define a  $G$-equivariant functor
 \[ \otimes_{S} \colon  \CW(A) \times \CW(A_{0}) \to \CW(A)\]
 which sends  the pair of controlled CW-complexes 
  $((Q,\lambda),(Q_{0},\lambda_{0}))$ to
  \[ (Q,\lambda) \otimes_{S} (Q_{0},\lambda_{0}) := (Q \otimes_{S} Q_{0}, \lambda \otimes_{S} \lambda_{0})\ .\]
 This functor preserves controlled homotopies in each variable.
 Note that we  have an isomorphism \begin{equation}\label{wefeqwfwefwefwqef}
Q \otimes_{S} Q_{0} \cong \bigvee_{s \in S} Q((p \circ \alpha)^{-1}(s)) \wedge Q_{0}(p^{-1}(s))\ ,
\end{equation}
 where $Q(A')$ for a subset $A'$ of $A$  denotes the largest subcomplex of $Q$ such that $\lambda(Q(A'))\subseteq A'$,
 and similarly for $Q_{0}$. 
 We claim that  if $(Q,\lambda)$ and $(Q_{0},\lambda_{0})$ are locally finite, then also 
  $(Q,\lambda) \otimes_{S} (Q_{0},\lambda_{0})$ is locally finite.
  To this end, let $A'$ be a bounded subset of $A$. Since $p \circ \alpha$ is bornological, the  set $p(\alpha(A))$ is a finite subset of $S$. 
  Using the isomorphism  \eqref{wefeqwfwefwefwqef}, we  see that $ ((Q,\lambda) \otimes_{S} (Q_{0},\lambda_{0}))
 (A')$ consists of finitely many cells. This shows the claim.
 
 It follows that $\otimes_{S}$ restricts to a $G$-equivariant functor
 \[ \otimes_{S} \colon  \CW^\hfd(A) \times \CW^\hfd(A_{0})  \to \CW^\hfd(A)\ .\]
 Moreover, this functor respects controlled homotopy equivalences in both variables.
 
 This construction is natural in the following sense: for every commutative diagram
 \begin{equation}\label{eq:product-mrho2}\xymatrix@R=.5em{
  A\ar[r]^-{\alpha}\ar[dd]_-{\phi} & A_{0}\ar[rd]^-{p}\ar[dd]^-{\phi_{0}} & \\
  & & S_{min,min} \\
  B\ar[r]^-{\beta} & B_{0}\ar[ru]_-{q} & 
 }\end{equation}
 in $G\BC$ such that $p \circ \alpha$ and $q \circ \beta$ are bornological, we have $\phi_*(- \otimes_S -) = \phi_*(-) \otimes_S \phi_{0,*}(-)$.
 By objectwise application, $\otimes_S$ induces a product functor
 \[ \otimes_S \colon \Fun^{\cW(A)}(\nat, \CW^\hfd(A)) \times \Fun^{\cW(A_{0})}(\nat,\CW^\hfd(A_{0})) \to \Fun^{\cW(A)}(\nat,\CW^\hfd(A))\ .\] 
 The product induces a product (denoted by the same symbol)
 \[  \otimes_S \colon  \Fun^{\cW(A)}_{\mathrm{shift}}(\nat,\CW^\hfd(A)) \times \Fun^{\cW(A_{0})}_{\mathrm{shift}}(\nat,\CW^\hfd(A_{0})) \to \Fun^{\cW(A)}_{\mathrm{shift}}(\nat,\CW^\hfd(A)) \]
 which on morphisms  is given by 
 $[l,\tau]\otimes_{S} [l,\tau_{0}]:=[l,\tau \otimes_S \tau_{0}]$ (see \cref{wjgriegjorgirewgwergwrg}, note that we use representatives with the same first entry).
 
  Applying the $(-)^{\hG}$-construction from  \cref{rem:fixedpoints} we get the functor 
   \begin{equation}\label{eq:product-mrho3}
 \hspace{-0.5cm} m \colon  \Fun^{\cW(A)}_{\mathrm{shift}}(\nat,\CW^\hfd(A))^{\op,\hG} \times \Fun^{\cW(A_{0})}_{\mathrm{shift}}(\nat,\CW^\hfd(A_{0}))^{\op,\hG} \to \Fun^{\cW(A)}_{\mathrm{shift}}(\nat,\CW^\hfd(A))^{\op,\hG}  \end{equation}
 Like $\otimes_S$, this functor is natural in the sense that $\phi_* \circ m = m \circ (\phi_* \times \phi_{0,*})$, retaining the notation from \eqref{eq:product-mrho2}.
 
 We now specialise \eqref{eq:product-mrho1} to 
 $ \cX \xrightarrow{c}  \cX_{0} \xrightarrow{\pr} \pi_0(X)_{min,min}$.
 Note that $\pr$ is a morphism in $G\BC$ and $c\circ \pr$ is bornological by Assumption~\ref{it:product-mrho2}.
 Since $Z_n$ is a compact ANR for every $n$, \cref{lem:mrho-defined} gives  an object $(M,\rho)$ in $ \Fun^{\cW(\cX)}_{\mathrm{shift}}(\nat,\CW^\hfd(\cX))^{\op,\hG} $. 
 We define  the desired functor
 \[ m_{(M,\rho)}:= 
\iota  \circ \Nerve(m((M,\rho),k(-)))\circ \iota^{-1}\colon \lim_{BG} \Nerve(\CW^\hfd(\cX_{0}))^\op\to 
 \lim\limits_{BG} \Nerve(\Fun^{\cW(\cX)}_{\mathrm{shift}}(\nat,\CW^\hfd(\cX)))^\op \ , \] 
   where  $k$ is from \eqref{eq:const-seq}, and $\iota$ is the natural transformation from \eqref{qewfoihqiuwhfiuwewfwqewfwfq}.
 
 It remains to provide the filler of the triangle in \eqref{qergpoijqopfewfqweewfwefq}. In view of the chain of equivalences 
 \begin{eqnarray*}
r_{U}\circ \iota\circ \Nerve(k)\circ \iota^{-1}&\stackrel{\eqref{eq:buildfixedpoint}}{\simeq}& \lim_{BG} \bar r^{\hfd}\circ  \lim_{BG} s\circ \iota\circ \Nerve(k)\circ \iota^{-1}\\&\stackrel{\eqref{eq:const-seq}}{\simeq} &
   \lim_{BG} \bar r^{\hfd}\circ  \lim_{BG} s\circ \iota\circ   \Nerve(\ell_{V}^{\hG})\circ \Nerve(j^{\hG})\circ \iota^{-1}\\&\stackrel{!}{\simeq}&
     \lim_{BG} \bar r^{\hfd}\circ      \lim_{BG} s \circ     \lim_{BG}   \ell_{V} \circ    \lim_{BG}   j \\&\stackrel{\eqref{eq:rfd-factor1}}{\simeq} &
        \lim_{BG} \bar r^{\hfd}\circ   \lim_{BG} \ell_{\cW}   \\&\stackrel{\eqref{eq:rfd-factor2}}{\simeq}&
       \lim_{BG}   r^{\hfd}\ ,  
\end{eqnarray*}
 {where $!$  uses the naturality of $\iota$}, we must provide a natural equivalence 
 \[ r_U \circ c_* \circ m_{(M,\rho)} \simeq r_U \circ \iota\circ \Nerve(k)\circ \iota^{-1}\ .\]  
 By the naturality of $m$, the commutative diagram
 \[\xymatrix@R=.5em{
  \cX\ar[r]^-{c}\ar[dd]_-{c} & \cX_{0}\ar[rd]^-{\pr}\ar[dd]^-{\id} & \\
  & & \pi_0(X)_{min,min} \\
  \cX_{0}\ar[r]^-{\id} & \cX_{0}\ar[ru]_-{\pr} &  
 }\]
 gives rise to the equivalence
 \begin{equation}\label{eq:product-mrho4}
  c_* \circ m_{(M,\rho)} \simeq c_* \circ \iota \circ \Nerve(m((M,\rho),k(-)))\circ \iota^{-1}  \simeq  \iota \circ \Nerve(m(c_*(M,\rho),k(-)))\circ \iota^{-1}\ .
 \end{equation} 
 For every $n$ in $\nat$ the canonical map $Y_n \to \pi_0(Y_n) \to \pi_0(X)$  induces a morphism
 \[ \tau_{n} \colon c_*M(n) \overset{\eqref{eq:M}}{=} c_*i_{n,*}C^{V_n}(Y_n) \to \pi_0(i)_*C^{\diag(\pi_0(X))}(\pi_0(X)) \overset{\eqref{eq:Ddelta}}{=} D\ . \]  
 One checks that  the family $\tau=(\tau_{n})_{n\in \nat}$ is a natural transformation $\tau \colon c_*M \to j(D)$  in the category $\Fun^{\cW(\cX_{0})}(\nat,\CW^\hfd(\cX_{0}))$, where $j(D):\nat\to\CW^\hfd(\cX_{0}) $ is the constant functor with value $D$.  
 
 Using \eqref{eq:G-action} for the definition of $\rho(g)$ and \eqref{eq:Ddelta2} for the definition of $\delta(g)$, one checks explicitly that $[0,\tau] \circ c_*\rho(g) = [0, \delta(g) \circ \tau]$ in 
 $\Fun^{\cW(\cX_{0})}_\mathrm{shift}(\nat,\CW^\hfd( \cX_{0}))$
 for all $g$ in $G$.
 So $\tau$ is  equivariant and  therefore  defines a morphism $\tau^G \colon c_*(M,\rho) \to k(D,\delta)$ in the category $\Fun^{\cW(\cX_{0})}_\mathrm{shift}(\nat,\CW^\hfd( \cX_{0}))^{\op,\hG}$, where $k$ is as in \eqref{eq:const-seq}.
  
 By inspection, we check that there is an equivalence
 \[ m(k(D,\delta),k(-)) \simeq k(-) \]  of functors from 
 $\CW^{\hfd}(\cX_{0})^{\op,\hG}$ to 
  $\Fun^{\cW(\cX)}_\mathrm{shift}(\nat,\CW^\hfd( \cX_{0}))^{\op,\hG}$.
 The morphism $\tau^G$ therefore induces a natural transformation
 \[ \sigma^G \colon   m(c_*(M,\rho),k(-)) \xrightarrow{m(\tau^{G},k(-))} m(k(D,\delta),k(-))\simeq k(-) \]
 The composition
 \[r_{U}\circ  c_* \circ m_{(M,\rho)} \overset{\eqref{eq:product-mrho4}}{=} r_U \circ  
 \iota \circ \Nerve(m(c_{*}(M,\rho),k))\circ \iota^{-1} 
\xrightarrow{r_{U}(\iota\Nerve(\sigma^G)\iota^{-1})} r_U \circ \iota \circ \Nerve(k) \circ \iota^{-1}
\] 
 is the desired natural transformation. 
  This finishes the construction of a natural transformation filling the triangle in \eqref{qergpoijqopfewfqweewfwefq}. 
  
  It remains to show that {this transformation} is an equivalence.
 Since the forgetful functor
 \[ \lim_{BG} \bV^{c,\perf}_{\Spc^\op_*}( \cX_{0}) \to \bV^{c,\perf}_{\Spc^\op_*}( \cX_{0}) \]
 is conservative, it is enough  to show that $r_U$ sends the natural transformation $\sigma$  
between functors from $\CW^{\hfd}(\cX_{0}) $ to  $\Fun^{\cW(\cX)}_\mathrm{shift}(\nat,\CW^\hfd( \cX))^{\op}$
  underlying $\sigma^G$ to an equivalence.
 By the definition of $r_U$ in \eqref{eq:buildfixedpoint}, it is enough to show $s(\sigma)$  is an equivalence, where $s$ is as in \eqref{eq:rfd-factor1}. 
 Since each component of $Z_n$ is contractible and both maps $Z_n \to Y_n \to X_n$ are homotopy equivalences, the morphism $\tau_{n}$  going into the construction of $\sigma$ is a controlled homotopy equivalence for every $n$ {by \cref{lem:small-simplices}}. 
Hence $s(\sigma)$ is an equivalence by  the last assertion of \cref{prop:shiftcat}.
\end{proof}

Recall the definition of $U$ from \eqref{eq:V-and-U}.
Let $\Homol \colon \Clep \to \bM$ be a lax monoidal functor.
In the following, we use the notation introduced in \cref{sec:phantom-transfer}.
The morphism $\eta_H$ is defined in \eqref{eq:etaH}.
Since $U$ is $\pi_0$-excisive by \cref{thm:fix-orbit-wmodule}.\ref{it:fix-orbit-wmodule1}, \cref{regiouweroigegergewgreg} applies to define the functors $q_n^U \colon U(\nat_{min,min}) \to U(*)$.  
 Recall that we are given a sequence of homotopy coherent $G$-actions $(\Gamma_{n}, Z_{n})$ and a sequence of maps $(f_n \colon Z_n \to W_n)_{n\in \nat}$. The transfer space $\cX$ in $G\BC_{/\nat_{min,min}}$ is the $G$-set defined in \eqref{dfsvoihqiejvoisvsdfvsdvdsfvf} with the $G$-bornological coarse structure described in \cref{qergioheriougewergergwerg}.
 
\begin{prop}\label{prop:transfer-famenable}
 If Conditions~\ref{def:f-amenable}.\ref{thiowergergwergwreg} and \ref{def:f-amenable}.\ref{erguiergqwefeqwfqefew} hold, then there exists a commutative diagram
 \begin{equation}\label{ewrgergiwjgioergwregwergwe}
\xymatrix@C=3em{
	\beins_\bM\ar[rr]^-{\eta_\Homol}\ar[d]_-{t} & & \Homol U(*)\ar[d]^-{\diag} \\
	 \Homol U(\cX)\ar[r]^-{HU(p)} & \Homol U(\nat_{min,min})\ar[r]^-{(\Homol(q^U_n))_n} & \prod_{\nat} \Homol U(*)
 }
\end{equation} 
 in $\bM$.
\end{prop}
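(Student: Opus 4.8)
The plan is to feed the point-set data provided by finite homotopy $\cF$-amenability into the realisation machinery of \cref{sec:famenable-transfer}, thereby producing $t$ from an explicit object of $U(\cX)$, and then to verify \eqref{ewrgergiwjgioergwregwergwe} after passing to $U(*)$. Since $Z_n$ is a compact AR for every $n$ by \cref{def:f-amenable}.\ref{thiowergergwergwreg}, it is in particular a compact ANR, so together with the assumption \cref{def:f-amenable}.\ref{erguiergqwefeqwfqefew} all hypotheses of \cref{lem:mrho-defined} are met and the homotopy fixed point $(M,\rho)$ in $\Fun^{\cW(\cX)}_{\mathrm{shift}}(\nat,\CW^\hfd(\cX))^{\op,\hG}$ assembled in \cref{def:M} and \cref{const:fixed-point} is available. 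I set $t_0 := r_U(\iota(M,\rho))$ in $U(\cX)$, with $r_U$ as in \eqref{eq:buildfixedpoint} and $\iota$ the equivalence \eqref{qewfoihqiuwhfiuwewfwqewfwfq}, and let $t \colon \beins_\bM \to \Homol U(\cX)$ be the morphism determined by $t_0$ as in \cref{faklolergergwergwegwergwreg}. Under the identification $U(*) \simeq \Fun(BG,\Spc^{\op,\omega}_*)$ of \cref{thm:fix-orbit-wmodule}.\ref{it:fix-orbit-wmodule2}, the morphism $\eta$ from \cref{thm:fix-orbit-wmodule}.\ref{it:fix-orbit-wmodule3} is determined by the object $\eta_0 := \underline{S^0}$, by \cref{thm:fix-orbit-wmodule}.\ref{it:fix-orbit-wmodule4}. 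By \cref{faklolergergwergwegwergwreg} it then suffices to exhibit, for every $n$ in $\nat$, an equivalence $(q^U_n \circ U(p))(t_0) \simeq \eta_0$ in $U(*)$; a choice of such equivalences provides the filler of \eqref{ewrgergiwjgioergwregwergwe}, which is the instance of \eqref{wefiewofefwefewfewfewfwe111} at hand.

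The next step is to rewrite $(q^U_n \circ U(p))(t_0)$ as the image under $r_U$ of an explicit object. The fibre $p^{-1}(\{n\}) = \cX_n = G \times X_n$ from \eqref{dfsvoihqiejvoisvsdfvsdvdsfvf} is a bounded union of coarse components of $\cX$ (bounded because $W_n$ is bounded in $W_h$). By naturality of the $\pi_0$-excision projections of \cref{regiouweroigegergewgreg}, the composite $q^U_n \circ U(p)$ factors as $U(\cX) \to U(\cX_n) \xrightarrow{U(p_n)} U(*)$, where the first map is the projection associated to the partition $(\cX_m)_{m \in \nat}$ and $p_n \colon \cX_n \to *$ is the collapse. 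Both the projection $U(\cX) \to U(\cX_n)$ and the pushforward $U(p_n)$ are computed by applying $\lim_{BG}$ to the corresponding operations on $\bV^{c,\perf}_{\Spc^\op_*}$ (see \cref{qeorighqoergergwregr} for the former), and both are compatible with the realisation transformation $r^\hfd$: restriction to a union of coarse components by \cref{qeirughioqergegwergweg}, pushforward by naturality of $r^\hfd$ and hence of $r_U$; both also preserve controlled homotopy equivalences and finitely dominated objects. Hence $(q^U_n \circ U(p))(t_0)$ is $r_U$ applied to the image of $(M,\rho)$ under restriction to $\cX_n$ followed by collapse to $*$.

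It remains to identify that image. Restricting $M(l) = i_{l,*}C^{V_l}(Y_l)$ (see \eqref{eq:M}) to $\cX_n$ extracts the subcomplex labelled in $\cX_n$; since $i_l(Y_l) \cap \cX_n = \{e\}\times Y_{n,l}$ and the coarse structure on $\cX$ is pulled back from $W_h$ along a map sending $\{e\}\times X_n$ into $W_n$, this is the controlled CW-complex $C^{V_{n,l}}(Y_{n,l})$ with $V_{n,l}$ the restriction of $V_l$ to $Y_{n,l}$, labelled via $i_l$; collapsing to $*$ yields the underlying based CW-complex $\abs{\Sing^{V_{n,l}}(Y_{n,l})}_+$. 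The structure maps $j_{l,\sharp}$ stay controlled homotopy equivalences by \cref{lem:Mfd}.\ref{it:Mfd1}, so over $*$ the resulting object lies in the image of $\Fun^{\cW(*)}(\nat,\CW^\hfd(*))$; since the composite $s \circ \ell_V$ agrees with the localisation functor $\ell_*$ (by the proof of \cref{prop:shiftcat}), applying $r_U$ to it yields $\lim_{BG} r^\hfd$ of its value at $0$. By \cref{eqrigoqergewrgwergwergwegw} and \cref{qergioheriougewergergwerg} we have $Y_{n,0} = Z_n$, so this value is the image of $\abs{\Sing^{V_{n,0}}(Z_n)}_+$ in $\Spc^{\op,\omega}_*$ together with the equivariant refinement induced by $\rho$. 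Since $Z_n$ is a compact AR it is contractible, and the inclusion of small simplices is a weak equivalence (as in \cref{lem:small-simplices}); hence $\abs{\Sing^{V_{n,0}}(Z_n)}_+ \simeq (Z_n)_+ \simeq S^0$ in $\Spc_*$, so the underlying object of $(q^U_n \circ U(p))(t_0)$ in $\Spc^{\op,\omega}_*$ is $S^0$.

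Finally, the equivariant refinement is forced to be the trivial one: the space of self-equivalences of $S^0$ in $\Spc^{\op,\omega}_*$ coincides with the one in $\Spc_*$ and is contractible, so the full subgroupoid of $\Fun(BG,\Spc^{\op,\omega}_*) \simeq U(*)$ spanned by objects with underlying object equivalent to $S^0$ is contractible. Therefore $(q^U_n \circ U(p))(t_0) \simeq \underline{S^0} = \eta_0$ for every $n$ in $\nat$, and feeding this family of equivalences into \cref{faklolergergwergwegwergwreg} produces the filler of \eqref{ewrgergiwjgioergwregwergwe}. The step I expect to be the main obstacle is the one carried out in the third paragraph: tracking the shift-theoretic homotopy fixed point $(M,\rho)$ through restriction and collapse and identifying it, together with its equivariant structure, with the shift-theoretic object built from $\abs{\Sing^{V_{n,l}}(Z_n)}_+$, and checking that this lands in $\Fun^{\cW}$ so that the shift collapses; once this is done, the contractibility of $Z_n$ and of the automorphism space of $S^0$ make the concluding identification essentially formal.
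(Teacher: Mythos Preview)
Your argument is correct and is precisely the direct route the paper itself acknowledges in the remark following its proof. The paper proceeds differently: it defines $t$ not from $r_U(\iota(M,\rho))$ but from $r_U(m_{(M,\rho)}(\iota(D,\delta)))$, pushing the discrete object $(D,\delta)$ of \cref{const:Ddelta} through the product functor $m_{(M,\rho)}$ of \cref{lem:product-mrho}, and factors the square through the auxiliary space $\cX_0 \cong G_{can,max}\otimes\nat_{min,min}$. The identification with $\underline{S^0}$ then reduces to the observation that the restriction of $p_{0,*}(D,\delta)$ to $\{n\}$ is literally $(S^0,(\id)_{g\in G})$ because $\pi_0(X_n)\cong *$. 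Your approach instead tracks $r_U(\iota(M,\rho))$ through restriction and collapse, identifies the underlying object as $S^0$ via contractibility of $Z_n$ and small simplices, and closes by using that $\Aut_{\Spc_*}(S^0)\simeq *$ forces the $G$-structure to be trivial. This is shorter and bypasses \cref{lem:product-mrho} entirely; the paper's detour is not needed here but is deliberately set up for \cref{sec:dfhj}, where the $Z_n$ are only componentwise contractible and the product construction $m_{(M,\rho)}$ becomes essential to couple $(M,\rho)$ with the Dress--Farrell--Hsiang transfer object.
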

\begin{proof}
 We first construct the morphism $t$.
 Since $Z_n$ is contractible and $Z_n \to X_n$ is a homotopy equivalence for every $n$ in $\nat$, and $S_{\pi_{0}}$ in \eqref{qewfwfewfwqffewf} is a coarse entourage of $\cX_{0}$, we have a factorisation
 \[ p \colon \cX \xrightarrow{c} \cX_0 \cong G_{can,max} \otimes  \nat_{min,min} \xrightarrow{p_0} \nat_{min,min}\ ,\]
 see \cref{const:X0} for notation.
 Let $m_{(M,\rho)}$ be the functor provided by \cref{lem:product-mrho} and recall the object $(D,\delta)$ in $\CW^{\hfd}(\cX_{0})^{\op,\hG} $ given by \cref{const:Ddelta}.
 We interpret the object $r_U(m_{(M,\rho)}(\iota(D,\delta)))$ in $U(\cX)$ (with $r_{U}$ as in  \eqref{eq:buildfixedpoint}) as a left-exact functor
 \[ t_{\cX} \colon \Spc^{\op,\omega}_* \to U(\cX)\ .\]
 Then we define $t$ as the composition
 \[ t \colon \beins_\bM \to \Homol(\Spc_*^{\op,\omega}) \xrightarrow{\Homol(t_{\cX})} \Homol U(\cX)\ ,\]
 where the first morphism is the unit constraint of the lax monoidal functor $\Homol$.

 It remains to construct a filler of \eqref{ewrgergiwjgioergwregwergwe}.
 The object $(\lim_{BG}  r^\hfd)(\iota(D,\delta))$ in $\lim_{BG}\bV_{\Spc_{*}^{\op}}^{c,\perf}(\cX_{0})$
 (see   \cref{def:rfd} and \eqref{qewfoihqiuwhfiuwewfwqewfwfq}) determines a functor
 \[ t_{\cX_{0}} \colon \Spc_*^{\op,\omega} \to U( \cX_{0})\ .\]
 Consider the diagram
\begin{equation}\label{wergreregwergregergewrg}
\xymatrix{
  \Spc^{\op,\omega}_*\ar[rrr]^-{\eta}\ar[rd]^-{t_{\cX_{0}}}\ar[d]_-{t_{\cX}} & & & U(*)\ar[d]^-{\diag} \\
  U(\cX)\ar[r]^-{U(c)} & U(\cX_0)\ar[r]^-{U(p_0)} & U(\nat_{min,min})\ar[r]^-{(q^U_n)_n} & \prod_\nat U(*)
 }\ ,
\end{equation}  
 where $p_{0}$ and $c$ are as in \eqref{ewqflknkqweflqwefewqfqf}. 
 By \cref{lem:product-mrho} and naturality of $r_U$, we have an equivalence
 \[ U(c)(r_U(m_{(M,\rho)}(D,\delta))) \simeq \lim_{BG} r^\hfd(\iota(D,\delta))\ .\]
 Consequently, the triangle at the bottom left of diagram~\eqref{wergreregwergregergewrg} commutes.
 
 Recall from \cref{thm:fix-orbit-wmodule}.\ref{it:fix-orbit-wmodule4} that under the identification $U(*) \simeq \Fun(BG,\Spc^{\op,\omega}_*)$ the unit morphism $\eta \colon \Spc^{\op,\omega}_* \to U(*)$ is the unique left-exact functor which sends $S^0$ to $\underline{S^0}$, the space $S^0$ equipped with the trivial $G$-action. 
 We must therefore show that 
 $q_{n}^{U}(U(p_{0})(\lim_{BG}  r^\hfd(\iota( D,\delta)))$ is also given by $\underline{S^0}$.
 By naturality of $\lim_{BG}  r^\hfd$  and using \cref{qeirughioqergegwergweg}, we have an equivalence  \[ q_{n}^{U}(U(p_{0})(\lim_{BG}  r^\hfd(\iota( D,\delta)))\simeq
 q_{n}^{U}\lim_{BG}  r^\hfd \iota(p_{0,*}(D,\delta)) \simeq  \lim_{BG}  r^\hfd \iota(p_{0,*}(D(n),\delta_{|D(n)}))\ .\]
Unwinding \cref{const:Ddelta} of $(D,\delta)$ and using $\pi_0(X_n) \cong *$, the restriction of $p_{0,*}(D,\delta)$ in $\CW^{\hfd}(\nat_{min,min})^{\op,\hG}$ to the coarse component $\{n\}$ is given by  $(S^{0},(\id)_{g\in G})$. Finally we have 
 $  \lim_{BG}  r^\hfd \iota (S^{0},(\id)_{g\in G})\simeq 
\underline{S^0}$ by definition of the right-hand side. 
Hence the trapezoid in \eqref{wergreregwergregergewrg} also commutes.

We now apply $\Homol$ to the outer square in \eqref{wergreregwergregergewrg} and precompose with the unit $\beins_{\bM}\to \Homol(\Spc^{\op,\omega}_{*})$ in order to get the desired filler of  \eqref{ewrgergiwjgioergwregwergwe}.
\end{proof}

\begin{rem}
 The proof of \cref{prop:transfer-famenable} using \cref{lem:product-mrho} is more complicated than it needs to be.
 The object $r_U(\iota(M,\rho))$ determines a left-exact functor
 \[ t_{\cX}\colon\Spc^{\op,\omega}_* \to U(\cX)\ . \]
 One can check directly that
 \[ t\colon\beins_{\bM}\to \Homol(\Spc^{\op,\omega}_*)\xrightarrow{\Homol(t_{\cX})} \Homol U(\cX) \]
 satisfies the conclusion of the proposition.  Our presentation of the proof is written with an eye towards \cref{sec:dfhj} where such a direct argument is not possible and we need \cref{lem:product-mrho}.
\end{rem}

\begin{proof}[Proof of \cref{thm:famenable}]
We will deduce  \cref{thm:famenable} from \cref{ergiooegergergwergergergw}.
In the following  we verify its assumptions. 

By the assumptions for \cref{thm:famenable}, we have a family of homotopy coherent $G$-actions $(\Gamma_n,Z_n)_{n\in \nat}$ and a sequence of maps $(f_n \colon Z_n \to W_n)_{n\in \nat}$ satisfying all conditions listed in \cref{def:f-amenable}. We can then  construct a  transfer class $(\cX,t)$ for $(U,\eta,V,H,\cF)$, see  \cref{def:transfer-class}. 
  The transfer space $\cX$ is the $G$-bornological coarse space  from  \cref{qergioheriougewergergwerg}. Furthermore, the morphism $t \colon \beins_\bM \to \Homol U(\cX)$ is given by  \cref{prop:transfer-famenable}. 
  
Since $\Homol$ is a finitary localising invariant, the composite $\Homol V$ is a hyperexcisive equivariant coarse homology theory by \cite[Cor.~6.18]{unik}.
Due to Conditions~\ref{def:f-amenable}.\ref{it:f-amenablecond2} and  \ref{def:f-amenable}.\ref{it:f-amenablecond3}, we can apply \cref{rgiorgergegergergerg3232424} to deduce that $W_{h}$ is $(\Homol V,\cF)$-proper.
The morphism from $ \cX$ to an $(\Homol V,\cF)$-proper object required by \cref{def:transfer-class}.\ref{qwoifgoqergqfeqewfq} is the morphism
$f$  from \eqref{eq:famenable-f2}.    
Finally, diagram~\eqref{wefiewofefwefewfewfewfwe111} in \cref{def:transfer-class}.\ref{qwoifgoqergqfeqewfq1} commutes by \cref{prop:transfer-famenable}. 

The equivariant coarse homology theory $\Homol V$ extends $\Homol \bC_G$ by \cite[Prop.~5.2]{unik}. 
Like every coarse homology theory, it is $\pi_{0}$-excisive.
This completes the verification of the assumptions of \cref{ergiooegergergwergergergw}, so \cref{thm:famenable} follows.
\end{proof}

\section{Dress--Farrell--Hsiang groups}\label{qrgqiorgegegergwegre}

In this section, we introduce the Dress--Farrell--Hsiang condition (\cref{def:F-DFH}) and prove the following analogue of \cref{thm:famenable}.

Let $G$ be a group and $\cF$ be a family of subgroups of $G$.
Let $\bC$ be a left-exact $\infty$-category with $G$-action and let $\Homol \colon \Cle \to \bM$ be a functor to a stably monoidal and cocomplete stable $\infty$-category which admits countable products.
Recall \cref{weoirgjwegwergwerg9} and the assembly map \eqref{fevuihiuqhvvsad}.

\begin{theorem}
	\label{thm:dfh}
	Assume that
	\begin{enumerate}
	 \item $G$ is a Dress--Farrell--Hsiang group with respect to $\cF$;
	 \item $\Homol$ is a lax monoidal, finitary localising invariant.
	\end{enumerate}
	Then the assembly map
	\[ \As_{\cF,\Homol\bC_{G}} \colon \mathop{\colim}\limits_{G_\cF\Orb} \Homol\bC_G \to \Homol\bC_G(*) \]
 is a phantom equivalence.
\end{theorem}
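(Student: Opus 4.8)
\textbf{Proof plan for \cref{thm:dfh}.}
The plan is to mirror the strategy used for \cref{thm:famenable}: reduce the statement to \cref{ergiooegergergwergergergw} by constructing a transfer class $(\cX,t)$ for the tuple $(U,\eta,V,\Homol,\cF)$, where $U := \bV^{c,\perf,G}_{\Spc^\op_*}$ and $V := \bV^{c,\perf}_{\bC,G}$ are the functors from \eqref{eq:V-and-U}. The functors $U$ and $\Homol V$ have all the required properties already established in \cref{sec:controlled-objects}: $U$ is $\pi_0$-excisive by \cref{thm:fix-orbit-wmodule}, $V$ carries a weak $U$-module structure, $\Homol V$ extends $\Homol\bC_G$ by \cite[Prop.~5.2]{unik}, and $\Homol V$ is a hyperexcisive equivariant coarse homology theory by \cite[Cor.~6.18]{unik} since $\Homol$ is a finitary localising invariant. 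So the only new work is the production of a transfer class, and in particular an object in $\lim_{BG}\bV^{c,\perf}_{\Spc^\op_*}(\cX)$ inducing $t$, together with a verification of the commutative diagram \eqref{wefiewofefwefewfewfewfwe111}.

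First I would recall the Dress--Farrell--Hsiang condition (\cref{def:F-DFH}): for every natural number $n$ one is given an epimorphism $\pi_n \colon G \to F_n$ onto a finite group together with, for each hyperelementary subgroup $H$ of $F_n$, a $G$-simplicial complex $E_H$ of dimension at most the ambient bound with stabilisers in $\cF$ and a $\pi_n^{-1}(H)$-equivariant map $G \to E_H$ whose ``contracting'' property improves as $n \to \infty$. The point-set input here is of a different nature than the homotopy coherent actions used in \cref{sec:famenablegroups}: instead of strictifying a homotopy coherent $G$-action, one uses the induction structure of hyperelementary subgroups (a Swan-group / Artin induction argument) to assemble the relevant controlled CW-complex. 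Concretely, I would take the transfer space $\cX$ to be built from (disjoint unions over $n$ and over conjugacy classes of hyperelementary $H \leq F_n$ of) the $G$-sets $\pi_n^{-1}(H)\backslash G$ paired with mapping cylinders of the contracting maps, equipped with the hybrid bornological coarse structure of \cref{beforergiorgergegergergerg3232424} pulled back along the map to $W := \coprod_n \coprod_H E_H$; the proper target object is then $W_h$, which is $(\Homol V,\cF)$-proper by \cref{rgiorgergegergergerg3232424}, using the dimension bound and the stabiliser condition.

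The construction of the transfer element $t$ will again factor through a controlled CW-theoretic fixed point, using the shift-category machinery of \cref{qroihqiurgerggegwegweg}: one produces an object $(M,\rho)$ in $\Fun^{\cW(\cX)}_{\mathrm{shift}}(\nat,\CW^\hfd(\cX))^{\op,\hG}$ whose image under $r_U$ gives the homotopy fixed point, and one checks diagram \eqref{wefiewofefwefewfewfewfwe111} via the analogue of \cref{lem:product-mrho} — this is precisely the situation the remark after \cref{prop:transfer-famenable} alludes to, where the indirect product-with-$(M,\rho)$ argument is genuinely needed. The key difference is that the equivariant structure $\rho$ now encodes the induction from hyperelementary subgroups rather than a group action coming from a strictified homotopy coherent action, so building $(M,\rho)$ requires combining, over the hyperelementary $H$, the realisations of the controlled CW-complexes $C^{V}(\text{cylinders of } E_H)$ and gluing them compatibly with the $G$-action using that the collection of induced modules from hyperelementary subgroups detects the unit (Artin/Swan induction at the level of controlled Swan theory). \textbf{The main obstacle} I anticipate is exactly this induction step: verifying that the resulting equivariant object $(M,\rho)$ restricts, on each coarse component, to something whose $r_U$-image is the required trivial-action object $\underline{S^0}$ — equivalently, that the ``Swan-theoretic'' sum of the induced transfers over hyperelementary subgroups equals the identity in $\lim_{BG}\bV^{c,\perf}_{\Spc^\op_*}$ on the relevant summand. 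Once this is in place, one feeds $(U,\eta,V,\Homol,\cF)$, the transfer space $\cX$, the map $\cX \to W_h$, and the element $t$ into \cref{ergiooegergergwergergergw} to conclude that $\As_{\cF,\Homol\bC_G}$ is a phantom equivalence, exactly as in the proof of \cref{thm:famenable}.
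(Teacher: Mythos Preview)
Your overall plan is right: reduce to \cref{ergiooegergergwergergergw} by producing a transfer class for $(U,\eta,V,\Homol,\cF)$ with $U,V$ as in \eqref{eq:V-and-U}, and obtain the $(\Homol V,\cF)$-proper target $W_h$ from the simplicial complexes $W_{n,D}$ via \cref{rgiorgergegergergerg3232424}. But the construction of the transfer element and the verification of the square \eqref{wefiewofefwefewfewfewfwe111} are both simpler and conceptually different from what you outline.

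First, the shift-category machinery and \cref{lem:product-mrho} are \emph{not} needed here. In the pure DFH case the transfer space is discrete: $\cX = \coprod_n \coprod_{D \in \cD(F_n)} G \times G/\overline{D}$ with the coarse structure generated by $S \times \diag(G/\overline{D})$ (no mapping cylinders, no hybrid structure on $\cX$ itself). The fixed point is an honest object of $\CW^\hfd(\cX)^{\op,\hG}$, not a shift-diagram. The remark after \cref{prop:transfer-famenable} you cite points forward to the combined DFHJ case in \cref{sec:dfhj}, not to DFH.

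Second, and more importantly, the transfer element is \emph{not} built from the complexes $W_{n,D}$ in the DFH data; those are used only for the proper target. The missing ingredient is a separate input: for each $n$ there exists a finite \emph{contractible} $F_n$-simplicial complex $K_n$ whose isotropy groups are Dress subgroups (\cite[Cor.~2.10]{winges}, ultimately an Oliver-type fixed-point theorem). Pulling back along $\alpha_n$ makes $K_n$ a $G$-complex; one then takes $Q_n = \bigvee_q (G/G_{s_n(q)})_+ \wedge S_+^{\lvert q\rvert}$, a wedge of orbit-sphere summands indexed by the cells of $K_n/G$, labels the cells over $\{e\} \times G/\overline{D} \subset \cX$, and lets $G$ act in the evident way. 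This gives $((Q,\lambda),\rho) \in \CW^\hfd(\cX)^{\op,\hG}$ directly.

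Third, your anticipated ``main obstacle'' is resolved not by an algebraic Artin/Swan induction identity but by two topological facts: (i) since $K_n$ is contractible, $(K_n)_+ \to S^0$ is a $G$-homotopy equivalence, so $u_{(K_n)_+} \simeq \eta_\Homol$; and (ii) since $\Homol$ is a \emph{localising} invariant, it is additive on fibre sequences, so comparing the skeletal filtration of $K_n$ with the wedge filtration of $Q_n$ (whose successive quotients agree) yields $u_{Q_n} \simeq u_{(K_n)_+}$. This is where the finitary localising hypothesis on $\Homol$ is genuinely used in this section, and it replaces the Swan-group computation you had in mind.
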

 
We will first introduce the Dress--Farrell--Hsiang condition.
The remainder of the section after \cref{rem:dfhexplain} is dedicated to the proof of \cref{thm:dfh}.

The Dress--Farrell--Hsiang condition relies on the notion of a Dress group.
We use the notation $K \unlhd H $ in order indicate that $K$ is a normal subgroup of $H$.
\begin{ddd}\label{def:dress-group}
	A finite group $D$ is a Dress group if there exist prime numbers $p$ and $q$ and subgroups $P \unlhd C \unlhd D$ such that $P$ is a $p$-group, $C/P$ is cyclic and $D/C$ is a $q$-group.
\end{ddd}

For $F$ a finite group we denote the family of Dress subgroups of $F$ by $\cD(F)$.

Let $G$ be a finitely generated group, and let $\cF$ be a family of subgroups of $G$.

\begin{ddd}\label{def:F-DFH}
	The group $G$ is a \emph{Dress--Farrell--Hsiang group with respect to $\cF$} if there exist
	\begin{enumerate}
		\item a family $(F_{n})_{n\in \nat}$ of  finite groups;
		\item a family $(\alpha_n)_{n\in \nat}$ of epimorphisms $\alpha_n \colon G \to F_n$;
		\item a family $(W_{n,D})_{n\in \nat, D\in \cD(F_{n})}$, where $W_{n,D}$ is a $\overline{D}$-simplicial complex for the subgroup $\overline{D} := \alpha_n^{-1}(D)$ of $G$;
		\item a family $(f_{n,D})_{n\in \nat, D\in \cD(F_{n})}$ of maps of sets $f_{n,D} \colon G \to W_{n,D}$
	\end{enumerate}
	such that the following holds:
	\begin{enumerate}
		\item\label{it:dfhcond1} for every $n$ in $\nat$ and $D$ in $\cD(F_{n})$ the stabilisers of $W_{n,D}$ belong to $\cF\cap \overline{D}$;
		\item\label{it:dfhcond2} for every $n$ in $\nat$ and $D$ in $\cD(F_{n})$ the map $f_{n,D}$ is $\overline{D}$-equivariant, where the subgroup $\overline{D}$ acts on $G$ by left multiplication;
		\item\label{it:dfhcond3} $\sup\limits_{n \in \nat, D \in \cD(F_n)} \dim W_{n,D} < \infty$;
		\item\label{it:dfhcond4} for every $g$ in $G$ we have
		 \[ \sup\limits_{D \in \cD(F_n), \gamma \in G} d(f_{n,D}(\gamma g), f_{n,D}(\gamma)) \xrightarrow{n \to \infty} 0\ .\qedhere\]
	\end{enumerate}
\end{ddd}

\begin{rem}\label{rem:dfhexplain}
 The simplicial complex $W_{n,D}$ in \cref{def:F-DFH} is equipped with the spherical path metric $d$ which is used to formulate Condition~\ref{def:F-DFH}.\ref{it:dfhcond4}.
Due to \cref{rem:hybrid and} Assumption~\ref{def:F-DFH}.\ref{it:dfhcond3}, the spherical path metric may be replaced by the $\ell^1$-metric.
\end{rem}

We now start with the proof of \cref{thm:dfh}.
Let $G$ be a Dress--Farrell--Hsiang group and choose families $(F_n)_{n \in \nat}$, $(\alpha_n)_{n \in \nat}$, $(W_{n,D})_{n \in \nat, D \in \cD(F_n)}$ and $(f_{n,D})_{n \in \nat, D \in \cD(F_n)}$ as in \cref{def:F-DFH}.

As in the case of finitely homotopy $\cF$-amenable groups, we rely on \cref{ergiooegergergwergergergw} to prove \cref{thm:dfh}.
Once more, we consider the functor
\[ V := \bV^{c,\perf}_{\bC,G} \colon G\BC \to \Clep \]
which admits a weak module structure $(\eta,\mu)$ over the $\pi_0$-excisive functor
\[ U := \bV^{c,\perf,G}_{\Spc^\op_*} \colon G\BC \to \Clep \]
by \cref{thm:fix-orbit-wmodule}.
The composite functor $\Homol V \colon G\BC \to \bM$ is a hyperexcisive equivariant coarse homology theory \cite[Cor.~6.18]{unik} extending the functor $\Homol \bC_G \colon G\Orb \to \bM$ \cite[Prop.~5.2]{unik} in the sense of \cref{wtgklpwergrewfwref}.

We now proceed to construct a transfer class $(\cX,t)$ for $(U,\eta,V,\Homol,\cF)$.
Choose families
\[ (F_{n})_{n \in \nat},\quad (\alpha_n)_{n\in \nat},\quad (W_{n,D})_{n \in \nat, D \in \cD(F_n)}\quad\text{and}\quad (f_{n,D})_{n \in \nat, D \in \cD(F_n)} \]
as in \cref{def:F-DFH}.
Recall that for $D$ in $\cF(F_{n})$ we let $\overline{D} := \alpha_n^{-1}(D)$ denote the corresponding preimage.
It is a subgroup of $G$.

\begin{construction}\label{const:dfhX}
We construct the transfer space $\cX$.
This construction uses only the family of epimorphisms $(\alpha_n)_{n \in \nat}$.
Define the $G$-sets
\[ \cX_n := \coprod_{D \in \cD(F_n)} G \times G/\overline{D} \quad\text{and}\quad  \cX := \coprod_n \cX_n\ ,\]
where we let $G$ act diagonally on each summand of $\cX_n$.
We denote by $p \colon \cX \to \nat$ the canonical projection map.
We equip $\cX$ with the maximal bornology such that $p$ becomes a proper map to $\nat_{min,min}$.
Finally, we choose a generating entourage $S$ of $G$ and equip $\cX$ with the coarse structure generated by the entourage
\begin{equation}\label{eq:dfhS}
 \coprod_{n \in \nat} \coprod_{D \in \cD(F_n)} S \times \diag(G/\overline{D})\ .
\end{equation}
Note that this turns the projection to $\nat$ into a morphism $p \colon \cX \to \nat_{min,min}$ of $G$-bornological coarse spaces.
\end{construction}

We now construct the morphism $t \colon \beins_{\bM} \to \Homol U(\cX)$ as in \cref{sec:famenable-transfer}.

\begin{construction}\label{const:transfer-dfh}
 We construct a candidate $(Q,(\rho(g))_{g \in G}$ for an object in $\CW^\hfd(\cX)^{\op,\hG}$.
 By \cite[Cor.~2.10]{winges}, there exists for every $n$ in $\nat$ a finite $F_n$-simplicial complex $K^{\prime}_n$ whose underlying space is contractible and whose stabiliser groups belong to $\cD(F_{n})$.
	We obtain  a sequence of $G$-simplicial complexes $(K_n)_{n\in \nat}$ by letting $G$ act on $K^{{\prime}}_{n}$ via the epimorphism $\alpha_n$.

	Let $z(K_{n})$ denote the set of cells of $K_{n}$, and let $\abs{-} \colon z(K_{n}) \to \nat$  denote the dimension function.
	For every $n$ in $\nat$, choose a section $s_n \colon z(K_n/G) \to z(K_n)$ of the function $z(K_n) \to z(K_n/G)$ induced by the projection map.
	For $k$ in $\nat$ we let $S^{k}_{+}$ denote the $k$-sphere, equipped with the CW-structure consisting of precisely one $0$- and one $k$-cell and with a disjoint basepoint adjoined. 
	
	Then we form the based $G$-CW-complexes 
\begin{equation}\label{werferfiuhniwefwerfwerfre}
  Q_n := \bigvee_{q \in z(K_n/G)} (G/G_{s_n(q)})_+ \wedge S_{+}^{{\abs{q}}} \quad\text{and}\quad Q := \bigvee_{n\in \nat} Q_n\ .
\end{equation}	 
	Since the stabiliser of each cell in $Q$ is the preimage under $\alpha_{n}$ of an element of $\cD(F_{n})$, we can define an $\cX$-labelling on $Q$ by
	\begin{align*}
	  \lambda \colon z(Q) \cong &\coprod_{n\in \nat} \coprod_{q \in z(K_n/G)} G/G_{s_n(q)} \times z(S^{{\abs{q}}}) \to \coprod_{n\in \nat} \coprod_{q \in z(K_n/G)} G/G_{s_n(q)} \\
	  &\to \coprod_{n\in \nat} \coprod_{D \in \cD(F_n)} \{e\} \times  G/\overline{D} \subseteq \cX\ ,
	\end{align*}
	where the latter map
	sends $gG_{s_{n}(q)}$ (in the component of $(n,q)$) to $(e, g G_{s_{n}(q)})$ (in the component of $(n,\alpha_{n}(G_{s_n(q)}))$).
	
	The canonical $G$-action on $Q$ induces for every $g$ in $G$ a map
	\[ \rho(g) \colon g_*(Q,\lambda) \to (Q,\lambda)\ .\qedhere\]   
\end{construction}

Recall the description of objects in $\CW^\hfd(\cX)^{\op,\hG}$ from \cref{rem:fixedpoints}.
\begin{prop}\label{prop:Qcontrolled}
 The $\cX$-labelled CW-complex $(Q,\lambda)$ together with the family of maps $\rho = (\rho(g))_{g \in G}$ defines an object in $\CW^\hfd(\cX)^{\op,\hG}$.
\end{prop}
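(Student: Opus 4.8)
The plan is to verify the three conditions implicit in the statement: that each $\rho(g)$ is an $\cX$-controlled morphism in $\CW(\cX)$ (hence an isomorphism there), that the family $(\rho(g))_{g\in G}$ satisfies the cocycle condition \eqref{3iojoigergwergerg}, and that $(Q,\lambda)$ is finitely dominated, i.e.\ belongs to $\CW^\hfd(\cX)$. The cocycle condition is immediate and essentially formal: $\rho(g)$ is nothing but the underlying map of the $G$-action on $Q$, relabelled, so $\rho(h)\circ h_*\rho(g) = \rho(hg)$ holds on the nose because the $G$-action on $Q$ is strict; one only has to observe that the relabelling is compatible, which follows from the explicit formula for $\lambda$ in \cref{const:transfer-dfh}.

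First I would check that each $\rho(g)$ is $\cX$-controlled. Unwinding \cref{ergioogrqgegqwfewfeqfe3245} and \cref{wergoijweogwegferfwrfr}, a cell $q$ of $g_*(Q,\lambda)$ lying over the component indexed by $(n,D)$ has label $(e, g'\overline{D})$ for some coset $g'\overline{D}$, while its image under $\rho(g)$ — that is, the cell $g^{-1}q$ of $Q$ with the original labelling — has label $(e, g^{-1}g'\overline{D})$, still in the component $(n,D)$. So $\rho(g)$ moves each label from $(e,g'\overline D)$ to $(e,g^{-1}g'\overline D)$ within the same summand. Hence $\rho(g)$ is controlled provided the entourage $\{(g'\overline D, g^{-1}g'\overline D)\mid g'\in G,\ n\in\nat,\ D\in\cD(F_n)\}$ (placed diagonally in the $\{e\}$-slot over each summand) is a coarse entourage of $\cX$. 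Writing $g$ as a word in the generators contained in the chosen generating entourage $S$, this set is contained in a finite composition of the generating entourage \eqref{eq:dfhS}, and is therefore coarse. Thus $\rho(g)$ is an isomorphism in $\CW(\cX)$ for every $g$, and it restricts to an isomorphism in $\CW^\hfd(\cX)$ once finite domination of $(Q,\lambda)$ is known.

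The main obstacle is establishing that $(Q,\lambda)$ is finitely dominated over $\cX$, i.e.\ finding a locally finite controlled CW-complex $(K,\kappa)$ together with maps $(Q',\lambda')\to (K,\kappa)\to (Q,\lambda)$ whose composite is a controlled homotopy equivalence (\cref{wfiqjfioewfewfqwefqewfewf}). Here the key input is that each $K'_n$ from \cite[Cor.~2.10]{winges} is a \emph{finite} contractible $F_n$-simplicial complex with isotropy in $\cD(F_n)$, so $Q_n$ is a finite wedge and the induced map of CW-complexes relating $Q_n$ to (a subdivision of) $K_n$ is a $G$-homotopy equivalence; the point is that these homotopy equivalences, together with the contractibility of $K'_n$, can be realised by \emph{controlled} homotopy equivalences whose control deteriorates to zero as $n\to\infty$ thanks to Condition~\ref{def:F-DFH}.\ref{it:dfhcond4}. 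Concretely I would build $(K,\kappa)$ as the disjoint union of controlled models $C^{V_n}(K_n)$ in the sense of \cref{def:small-simplices}, using the maps $f_{n,D}$ to define the labellings and to pull back an open coarse entourage on $\cX$ — exactly as in the finitely $\cF$-amenable case (\cref{lem:Vl}, \cref{lem:Mfd}), with $W_{n,D}$ playing the role of $W_n$ and Condition~\ref{def:F-DFH}.\ref{it:dfhcond4} replacing Condition~\ref{def:f-amenable}.\ref{erguiergqwefeqwfqefew}. Local finiteness of $C^{V_n}(K_n)$ follows from finiteness of $K_n$ plus properness of $p$, and \cref{lem:small-simplices} together with \cref{prop:anrhfd} (applied componentwise, the $K_n$ being finite-dimensional, hence locally compact ANRs) then upgrades the levelwise homotopy equivalences to controlled ones. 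Finally, one assembles these into a controlled homotopy equivalence $Q'\to K\to Q$ over $\cX$, using that the homotopies coming from contractibility of $K'_n$ are $\overline D$-equivariant and that $f_{n,D}$ is $\overline D$-equivariant; the verification that everything stays controlled is the bulk of the work and parallels \cref{lem:Mfd}.\ref{it:Mfd2} closely. I expect the argument to conclude by invoking \cref{qerogijweroigjwerogegwerg} to transport finite domination along the structure maps of $\CW^\hfd$, just as in the proof of \cref{lem:Mfd}.
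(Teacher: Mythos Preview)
Your proof has two genuine problems.

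\textbf{Finite domination.} This is where the real gap lies. You launch into a complicated domination argument via singular complexes $C^{V_n}(K_n)$, but this is unnecessary and in fact cannot work. The simple observation you are missing is that $(Q,\lambda)$ is already \emph{locally finite}: each $K'_n$ is a finite complex, so $z(K_n/G)$ is finite; each $G/G_{s_n(q)} \cong G/\overline{D}$ is finite because $[G:\overline{D}] = [F_n:D] < \infty$; and each $S^{|q|}_+$ has two cells. Hence $Q_n$ has finitely many cells, and since the bornology on $\cX$ is the maximal one making $p$ proper, every bounded subset lies in $p^{-1}([0,N])$ for some $N$, so $Q(B)$ sits in the finite complex $Q_0 \vee \dots \vee Q_N$. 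Together with the (equally immediate) fact that $(Q,\lambda)$ is $\diag(\cX)$-controlled, this places $(Q,\lambda)$ in $\CW^\hfd(\cX)$ directly. Your proposed route via $K_n$ is not just overkill but fails on its own terms: $Q_n$ is a wedge of spheres while $K_n$ is contractible, so $Q_n$ cannot be a controlled homotopy retract of any model of $K_n$. The relation between $Q_n$ and $K_n$ is only visible after applying the additive invariant $H$ (this is precisely the content of \cref{lem:dfh-transfer-project}), not at the level of $\CW^\hfd$.

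\textbf{Controlledness of $\rho(g)$.} Your label computation is off. The $G$-action on $\cX_n = \coprod_D G \times G/\overline{D}$ is diagonal, so a cell with $\lambda$-value $(e,\gamma\overline{D})$ acquires the label $g\cdot(e,\gamma\overline{D}) = (g, g\gamma\overline{D})$ in $g_*(Q,\lambda)$, not $(e,g'\overline{D})$. Its image under $\rho(g)$ (which acts by $q \mapsto g\cdot q$, not $g^{-1}\cdot q$) has label $\lambda(g\cdot q) = (e, g\gamma\overline{D})$. So the labels differ only in the \emph{first} coordinate, and the relevant entourage is $\{(g,e)\} \times \diag(G/\overline{D})$, which sits in a power of the generating entourage \eqref{eq:dfhS}. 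By contrast, your claimed entourage varies the second coordinate and is \emph{not} contained in any power of \eqref{eq:dfhS}, since those powers all have diagonal $G/\overline{D}$-component.
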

\begin{proof}
 The $\cX$-labelled CW-complex $(Q,\lambda)$ is $\diag(\cX)$-controlled and locally finite.
 Hence it belongs to $\CW^\hfd(\cX)$ in view of \cref{wfiqjfioewfewfqwefqewfewf}.
 The cocycle condition for $\rho$ is straightforward to verify.
 
 It remains to show that $\rho(g)$ is a morphism in $\CW^\hfd(\cX)$. Unwinding definitions, we find for every point $(n,q,\gamma G_{s_n(q)},z)$ in $z(Q)$ that
 \[ g \cdot \lambda(n,q,\gamma G_{s_n(q)},z) = (n,\alpha_n(G_{s_n(q)}),g,g\gamma \alpha_n(G_{s_n(q)})) \]
 and
 \[ \lambda(g \cdot(n,q,\gamma G_{s_n(q)},z)) = (n,\alpha_n(G_{s_n(q)}),e,g\gamma \alpha_n(G_{s_n(q)}))\ .\]
 Note that the labels on the right hand sides differ only in their third entry.
 Since the coarse structure on $\cX$ is generated by the entourage in \eqref{eq:dfhS}, this shows that $\rho(g)$ is $\cX$-controlled.
\end{proof}

The realisation map $r^{\hfd} \colon \CW^{\hfd,\op} \to \bV^{c,\perf}_{\Spc^\op_*}$ induces a natural transformation
\begin{equation}\label{eq:rfdG}
 r^{\hfd,G} \colon \Nerve(\CW^{\hfd,\op,\hG}) \xrightarrow{\iota} \lim_{BG} \Nerve(\CW^{\hfd,\op}) \xrightarrow{\lim_{BG} \Nerve(r^\hfd)} \lim_{BG} \bV^{c,\perf}_{\Spc^\op_*} = U\ ,
\end{equation}
where $\iota$ is the equivalence from \eqref{qewfoihqiuwhfiuwewfwqewfwfq}.
Hence we have an object $r^{\hfd,G}((Q,\lambda),\rho)$ in $U(\cX)$.
We regard this object as a left-exact functor
\[ t_\cX \colon \Spc^{\op,\omega}_{*} \to  U(\cX)\ .\]
\begin{ddd}\label{def:dfh-transfer}
 Define the morphism $t$ as the composition
 \[ t \colon \beins_{\bM} \to \Homol(\Spc^{\op,\omega}_{*}) \xrightarrow{\Homol(t_\cX)} \Homol U(\cX)\ .\qedhere\]
\end{ddd}

\begin{lem}\label{lem:dfh-transfer-project}
There exists a commutative diagram
\begin{equation}\label{eq:dfh-transfer-project}
 \xymatrix@C=3.5em{
 \beins_{\bM}\ar[rr]^-{\eta_{\Homol}}\ar[d]_-{t} & & \Homol U(*)\ar[d]^-{\diag} \\
 \Homol U(\cX)\ar[r]^-{\Homol U(p)} & \Homol U(\nat_{min,min})\ar[r]^-{(\Homol(q^U_n))_n} & \prod_{n\in \nat} \Homol U(*)
 }\end{equation}
in $\bM$.
\end{lem}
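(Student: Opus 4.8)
The plan is to argue as in the proof of \cref{prop:transfer-famenable} and to reduce the statement to \cref{faklolergergwergwegwergwreg}. By \cref{def:dfh-transfer} the morphism $t$ is obtained from the object $t_0 := r^{\hfd,G}((Q,\lambda),\rho)$ of $U(\cX)$ via the universal property of $\Spc^{\op,\omega}_*$, and by \cref{thm:fix-orbit-wmodule}.\ref{it:fix-orbit-wmodule2} and \cref{thm:fix-orbit-wmodule}.\ref{it:fix-orbit-wmodule4} the morphism $\eta$ corresponds, under the identification $U(*) \simeq \Fun(BG,\Spc^{\op,\omega}_*)$, to the object $\underline{S^0}$. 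According to \cref{faklolergergwergwegwergwreg} it therefore suffices to construct, for every $n$ in $\nat$, an equivalence $(q^U_n \circ U(p))(t_0) \simeq \underline{S^0}$ in $U(*)$; feeding the resulting fillers into \cref{faklolergergwergwegwergwreg} then produces the commutative diagram \eqref{eq:dfh-transfer-project}.

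First I would rewrite the left-hand side. The realisation transformation $r^\hfd$ is natural in the bornological coarse space, and this naturality persists after forming the homotopy fixed points $(-)^{\hG}$ (equivalently, after applying $\lim_{BG}$ in \eqref{eq:rfdG}), so $U(p)\circ r^{\hfd,G} \simeq r^{\hfd,G}\circ p_*$ on objects of $\CW^{\hfd}(\cX)^{\op,\hG}$. Combining this with \cref{qeirughioqergegwergweg}, which identifies the composition of $q^U_n$ with $r^{\hfd,G}$ with $r^{\hfd,G}$ of the restriction along the coarse component $\{n\}$, one obtains
\[ (q^U_n \circ U(p))(t_0) \simeq r^{\hfd,G}\bigl((p_*(Q,\lambda),\rho)_{|\{n\}}\bigr)\ .\]
Unwinding \cref{const:dfhX} and \cref{const:transfer-dfh}, the restriction of $p_*(Q,\lambda)$ to $\{n\}$ is the based $G$-CW-complex $Q_n$ over the one-point space $\{n\}_{min,max}$, and the restricted descent datum is simply the canonical $G$-action on $Q_n$; denote the resulting object of $\CW^{\hfd}(*)^{\op,\hG}$ by $(Q_n,\rho_n)$.

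It then remains to prove $r^{\hfd,G}(Q_n,\rho_n)\simeq \underline{S^0}$ in $U(*)$, and this identification is the main obstacle. It is the point at which the contractibility of $K_n$ furnished by \cite[Cor.~2.10]{winges} is essential, and it is the Dress-theoretic counterpart of the fact, used in \cref{sec:famenable-transfer}, that the pieces $Y_{n,l}$ of the strictification are contractible. The route I would take is to compare $(Q_n,\rho_n)$, inside a shift category as in \cref{qroihqiurgerggegwegweg}, with a controlled CW-model of the contractible complex $K_n$ equipped with its $G$-descent datum, so that \cref{gfioqjgioewfewfewfqwefqwefqew} together with the last assertion of \cref{prop:shiftcat} forces the comparison to become an equivalence after applying $r^{\hfd,G}$, and then to use that the realisation of any contractible $G$-CW-complex is $\underline{S^0}$, since the terminal object of $\Spc$ carries an essentially unique $G$-action. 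The delicate part — and the step requiring the most care — is making this comparison precise: one must produce the intermediate shift object whose structure maps witness the passage from the ``cellular'' wedge $Q_n$ to $K_n$ and check that the maps involved are controlled homotopy equivalences over the component $\{n\}$; this is exactly where the existence of $K_n$ with Dress stabilisers does its work. Once the family of equivalences for all $n$ is in place, \cref{faklolergergwergwegwergwreg} yields \eqref{eq:dfh-transfer-project}.
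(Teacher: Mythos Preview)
Your reduction via \cref{faklolergergwergwegwergwreg} would require, for each $n$, an equivalence $r^{\hfd,G}(Q_n,\rho_n)\simeq \underline{S^0}$ in $U(*)\simeq\Fun(BG,\Spc^{\op,\omega}_*)$. That equivalence does not exist. The based $G$-CW-complex $Q_n$ is a wedge of pieces $(G/G_{s_n(q)})_+\wedge S^{|q|}_+$; already after forgetting the $G$-action, $Q_n$ is a nontrivial wedge of spheres and is not homotopy equivalent to $S^0$ unless $K_n$ consists of a single $0$-cell. Hence no zig-zag of controlled homotopy equivalences in a shift category can connect $Q_n$ to a model of $(K_n)_+$: such maps would in particular be ordinary weak equivalences of underlying spaces, and there simply are none between $Q_n$ and $(K_n)_+$.

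The paper's proof avoids this by working one categorical level higher, namely in $\Homol U(*)$ rather than $U(*)$. The crucial input is that $\Homol$ is a localising invariant: a fibre sequence of left-exact functors $\Spc^{\op,\omega}_*\to U(*)$ yields an additive relation $\Homol F_0+\Homol F_2\simeq\Homol F_1$. Cofibre sequences of finite based $G$-CW-complexes give such fibre sequences via $r^{\hfd,G}$, so one can run an induction on the skeletal filtration of $K_n$ and the corresponding filtration of $Q_n$ by sphere dimension. Since the successive quotients $K_n^{(i+1)}/K_n^{(i)}$ and $Q_n^{[i+1]}/Q_n^{[i]}$ are isomorphic, the induction yields $u_{(K_n)_+}\simeq u_{Q_n}$ in $\Hom_{\bM}(\beins_\bM,\Homol U(*))$; combined with the contractibility of $K_n$ (so $u_{(K_n)_+}\simeq\eta_\Homol$), this produces the desired filler in the $n$-th factor. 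In short, the relation between $Q_n$ and $K_n$ is an ``Euler characteristic'' identity visible only after applying $\Homol$, and your attempt to promote it to an equivalence before applying $\Homol$ cannot succeed.
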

\begin{proof}
	We only need to construct a filler.
	Since the lower right corner of the diagram is a product category, we can construct fillers one factor at a time.

	Every finite based $G$-CW-complex $A$ (with $G$-action $\alpha$) provides an object $r^{\hfd,G}(A,\alpha)$ of $U(*)$
	which we interpret as a left-exact functor
	$r^{\hfd,G}(A,\alpha) \colon \Spc^{\op,\omega}_{*} \to U(*)$.
	This functor gives rise to a morphism
	\begin{equation}\label{eq:ua}
	 u_A \colon \beins_{\bM} \to \Homol(\Spc^{\op,\omega}_{*}) \xrightarrow{\Homol(r^{\hfd,G}(A,\alpha))} \Homol U(*)\ ,
	\end{equation}
	where {the first morphism} is the unit constraint of the monoidal structure of $\Homol$.
	
	Let $(\eta,\mu)$ be the weak module structure provided by \cref{thm:fix-orbit-wmodule}.
	Define $\eta_\Homol \colon \beins_{\bM}\to \Homol U(*)$ as in \eqref{eq:etaH}.
	By \cref{thm:fix-orbit-wmodule}.\ref{it:fix-orbit-wmodule4}, we have $\eta_\Homol \simeq u_{\underline{S^0}}$.

	Fix $n$ in $\nat$.
	Since the underlying space of $K_n$ introduced in \cref{const:transfer-dfh} is contractible, the projection map $(K_n)_+ \to S^0$ induces an equivalence $r^{\hfd,G}((K_n)_+) \to r^{\hfd,G}(\underline{S^0})$ in $U(*)$ by \cref{gfioqjgioewfewfewfqwefqwefqew}.
	Applying $\Homol$ and precomposing with {the unit constraint}, we obtain an equivalence
	\begin{equation}\label{eq:dfh-transfer-project1}
	  u_{(K_{n})_{+}} \xrightarrow{\simeq} u_{\underline{S^{0}}} \simeq \eta_{\Homol} \colon \beins_{\bM} \to \Homol U(*)
	\end{equation}
	{by virtue of \cref{thm:fix-orbit-wmodule}.\ref{it:fix-orbit-wmodule4}.}
	Since $\Homol$ is a localising invariant, every fibre sequence of left-exact functors $F_0 \to F_1 \to F_2$ in $\Fun^\lex(\bC,\bD)$ induces an equivalence $\Homol F_0 + \Homol F_2 \simeq \Homol F_1$; note that the target of $\Homol$ is stable, so addition of morphisms is well-defined.
	We apply this to left-exact functors $\Spc^{\op,\omega}_* \to U(*)$.
	As such functors are determined by their evaluation at $S^0$, fibre sequences of such functors correspond to fibre sequences in $U(*)$. 
	
	Since $\Fun(BG,\Spc^{\op,\omega}_*) \simeq U(*)$ by \cref{thm:fix-orbit-wmodule}.\ref{it:fix-orbit-wmodule2}, every cofibre sequence of finitely dominated, based $G$-CW-complexes induces a fibre sequence in $\Fun(BG,\Spc^{\op,\omega}_*)$ by application of $r^{\hfd,G}$.
	Hence we obtain for each two consecutive steps $K_n^{(i)} \to K_n^{(i+1)}$ in the skeletal filtration of $K_n$ an equivalence
	\[ \Homol(r^{\hfd,G}((K_n^{(i)})_+)) + \Homol(r^{\hfd,G}((K_n^{(i+1)})_+/(K_n^{(i)})_+)) \simeq \Homol(r^{\hfd,G}((K_n^{(i+1)})_+))\ .\]
	The $G$-CW-complex $Q_n$ from \eqref{werferfiuhniwefwerfwerfre} has a filtration by the subcomplexes $Q_n^{[i]}$ which consist of the wedge summands containing a sphere of dimension at most $i$.
	Therefore, we have an equivalence
	\[ \Homol(r^{\hfd,G}(Q_n^{[i]})) + \Homol(r^{\hfd,G}(Q_n^{[i+1]}/Q_n^{[i]})) \simeq \Homol(r^{\hfd,G}(Q_n^{[i+1]})) \]
	for every $i$ in $\nat$.
	Since by construction of $Q_{n}$ we have isomorphisms
	\[ K_n^{(i+1)}/K_n^{(i)} \cong Q^{[i+1]}_n/Q^{[i]}_n\ ,\]
	it follows by a finite induction that
	\[ \Homol(r^{\hfd,G}((K_n)_+)) \simeq \Homol(r^{\hfd,G}(Q_n)) \colon \Homol(\Spc^{\op,\omega}_*) \to \Homol U(*)\ .\]
	Consequently, we have
	\[ u_{(K_n)_+} \simeq u_{Q_n} \colon \beins_\bM \to \Homol U(*)\ .\]
	Combining this equivalence with \eqref{eq:dfh-transfer-project1}, we obtain an equivalence $\eta_H \simeq u_{Q_n}$.
	
	Note that the composite $\Spc^{\op,\omega}_* \xrightarrow{t_\cX} U(\cX) \xrightarrow{U(p)} U(\nat_{min,min})$ corresponds by the naturality of $r^{\hfd,G}$ to the object $r^{\hfd,G}(p_*(Q,\lambda),\rho)$.
	Using \cref{qeirughioqergegwergweg}, it follows that $q^U_n \circ U(p) \circ t_\cX \colon \Spc^{\op,\omega}_* \to U(*)$ corresponds to the object $r^{\hfd,G}(Q_n)$ of $U(*)$.
	Hence we have
	\[ \Homol(q^U_n) \circ \Homol U(p) \circ t \simeq u_{Q_n} \simeq \eta_\Homol\ .\]	
	This equivalence is the desired filler of the square in \eqref{eq:dfh-transfer-project} for the factor indexed by $n$.
\end{proof}

\cref{lem:dfh-transfer-project} shows that $(\cX,t)$ satisfies all conditions listed in \cref{def:transfer-class} except \ref{def:transfer-class}.\ref{qwoifgoqergqfeqewfq}:
we still have to verify that $\cX$ admits a morphism to an $(\Homol V,\cF)$-proper object.

Form the $G$-simplicial complexes
\[ W_n := \coprod_{D \in \cD(F_n)} G \times_{\overline{D}} W_{n,D}\ ,\quad W:=\coprod_{n\in \nat} W_{n} \]
and let $\pi \colon W \to \nat$ be the canonical projection.
We let $\pi \colon W_{h}\to \nat_{min,min}$ denote the object of $G\BC_{/\nat_{min,min}}$    obtained by applying \cref{beforergiorgergegergergerg3232424} to $W$.
Since $\Homol V$ is a hyperexcisive equivariant coarse homology theory and $W$ is a finite-dimensional $G$-simplicial complex whose stabilisers belong to $\cF$ by Conditions \ref{def:F-DFH}.\ref{it:dfhcond1} and \ref{def:F-DFH}.\ref{it:dfhcond3}, \cref{rgiorgergegergergerg3232424} shows that $W_{h}$ is $(\Homol V,\cF)$-proper.

Recall \cref{const:dfhX} of the transfer space $\cX$.
Moreover, for every $n$ in $\nat$ and $D$ in $\cD(F_n)$ we have a $G$-equivariant map
\begin{equation*}\label{qregihqiuwefqwefqewfeqwfef}
 f'_{n,D} \colon G \times G/\overline{D} \to G \times_{\overline{D}} W_{n,D},\quad (g,\gamma\overline{D}) \mapsto (\gamma, f_{n,D}(\gamma^{-1}g))
\end{equation*}
since $f_{n,D}$ is assumed to be $\overline{D}$-equivariant (Condition~\ref{def:F-DFH}.\ref{it:dfhcond2}).
These maps give rise to the $G$-equivariant map $f_n := \coprod_{D \in \cD(F_n)} f_{n,D}' \colon \cX_n \to W_n$  and the map (of underlying sets, for the moment) $f := \coprod_{n \in \nat} f_n \colon \cX \to W_{h}$. 

\begin{lem}
 The map $f$ is a morphism of $G$-bornological coarse spaces over $\nat_{min,min}$.
\end{lem}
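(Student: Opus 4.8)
The plan is to check the two defining conditions of a morphism in $G\BC_{/\nat_{min,min}}$: that $f$ is $G$-equivariant and commutes with the projections to $\nat_{min,min}$, that it is controlled, and that it is proper. Equivariance and compatibility with the structure maps to $\nat_{min,min}$ are immediate from the construction: each $f_{n,D}'$ was built to be $G$-equivariant using Condition~\ref{def:F-DFH}.\ref{it:dfhcond2}, the coproduct of equivariant maps is equivariant, and $f$ manifestly respects the $\nat$-coordinate since $f_n$ maps $\cX_n$ into $W_n$. So the substance is controlledness and properness.

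For controlledness, recall that by \cref{const:dfhX} the coarse structure on $\cX$ is generated by the single entourage $\coprod_{n}\coprod_{D\in\cD(F_n)} S\times\diag(G/\overline D)$, while $W_h$ carries the hybrid coarse structure from \cref{beforergiorgergegergergerg3232424}, whose underlying ``original'' coarse structure is $\cC_{\pi_0(W)}$ (components coarsely disjoint) and whose entourages additionally must have widths tending to $0$ along the $\nat$-direction when measured fibrewise in the spherical metric. First I would observe that $f$ sends the generating entourage of $\cX$ into a set contained in $\bigcup_{Z\in\pi_0(W)}Z\times Z$, since $(g,\gamma\overline D)$ and $(gs,\gamma\overline D)$ (for $s\in S$) have the same $\gamma$-coordinate and hence $f$ maps them into the image of $\{\gamma\}\times W_{n,D}$, which lies in a single component of $W_n$. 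Then I would invoke Condition~\ref{def:F-DFH}.\ref{it:dfhcond4}: applied with a generator $g\in S$, it says precisely that $\sup_{D\in\cD(F_n),\gamma\in G} d(f_{n,D}(\gamma g),f_{n,D}(\gamma))\to 0$ as $n\to\infty$, and after translating by $\gamma$ (using $G$-invariance of the spherical metric on $G\times_{\overline D}W_{n,D}$) this bounds the fibrewise width of the image entourage over the $n$-th piece; since $S$ is finite, one gets a uniform bound over all generators, so the image entourage has widths going to $0$ and therefore lies in $\cC_h$. Here Remark~\ref{rem:dfhexplain}/Condition~\ref{def:F-DFH}.\ref{it:dfhcond3} is used to replace the spherical metric by the $\ell^1$-metric with uniform Lipschitz constants if desired, but working directly with the spherical metric should also be fine.

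For properness, I would argue that $\cX$ carries the \emph{maximal} bornology making $p\colon\cX\to\nat_{min,min}$ proper, so the bounded subsets of $\cX$ are exactly the subsets meeting only finitely many of the $\cX_n$; likewise the bounded subsets of $W_h$ are those meeting only finitely many $W_n$ (this is the minimal bornology making $\pi$ proper, built into \cref{beforergiorgergegergergerg3232424}). Since $f$ respects the $\nat$-coordinates, $f^{-1}$ of a set meeting finitely many $W_n$ meets finitely many $\cX_n$, hence is bounded; so $f$ is proper. Combining the three points, $f$ is a morphism in $G\BC_{/\nat_{min,min}}$.

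The main obstacle is the controlledness step, specifically making the reduction from ``$f$ controlled'' to Condition~\ref{def:F-DFH}.\ref{it:dfhcond4} precise: one must carefully unwind how a point $(g,\gamma\overline D)$ of $\cX_n$ maps to $(\gamma,f_{n,D}(\gamma^{-1}g))\in G\times_{\overline D}W_{n,D}$, and check that moving $g$ to $gs$ along a generator $s$ changes the $W_{n,D}$-coordinate by $f_{n,D}(\gamma^{-1}gs)$ versus $f_{n,D}(\gamma^{-1}g)$, so that the relevant distance is $d(f_{n,D}(\gamma' s),f_{n,D}(\gamma'))$ with $\gamma'=\gamma^{-1}g$ ranging over all of $G$ — which is exactly the quantity controlled by \ref{it:dfhcond4}. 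The finiteness of the generating set $S$ then upgrades the pointwise vanishing to a uniform one, which is what \eqref{eq:vanishing-control} requires. Everything else is bookkeeping with the definitions of the bornological coarse structures involved.
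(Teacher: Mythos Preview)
Your proposal is correct and follows essentially the same approach as the paper's proof: equivariance, properness and compatibility with the projection to $\nat_{min,min}$ are immediate from the construction, while controlledness is reduced via the explicit formula $f'_{n,D}(g,\gamma\overline{D}) = (\gamma, f_{n,D}(\gamma^{-1}g))$ to Condition~\ref{def:F-DFH}.\ref{it:dfhcond4}, using finiteness of the generating entourage $S$. The paper's version is terser---it does not separately address membership in $\cC_{\pi_0(W)}$ and writes the reduction using pairs $\{(g,e)\}\times\diag(G/\overline{D})$ rather than your $(g,gs)$ formulation---but the substance is identical.
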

\begin{proof}
 The bornological coarse structure on $\cX$ is described in \cref{const:dfhX}. It is obvious  from the constructions
 that  $f$ is a proper map which is compatible with the projections to $\nat$.
 To see that $f$ is also controlled, it suffices to check that the image of
 \[ \coprod_{n \in \nat} \coprod_{D \in \cD(F_n)} \{(g,e)\} \times \diag(G/\overline{D}) \]
 under $f$ is an entourage of $W_{h}$ for every $g$ in $G$.
 Since $f(n,D,g,\gamma\overline{D}) = (\gamma,f_{n,D}(\gamma^{-1}g))$, this follows from the fact that
 \[ \sup_{D \in \cD(F_n), \gamma \in G} d(f_{n,D}(\gamma^{-1}g), f_{n,D}(\gamma^{-1})) \xrightarrow{n \to \infty} 0\]
 for every $g$ in $G$ by Condition~\ref{def:F-DFH}.\ref{it:dfhcond4}.
\end{proof}

This finishes the construction of the transfer class $(\cX,t)$ for $(U,\eta,V,\Homol,\cF)$. \cref{thm:dfh} follows now from \cref{ergiooegergergwergergergw}.

\section{Dress--Farrell--Hsiang--Jones groups}\label{sec:dfhj}

This section combines the arguments of \cref{sec:famenablegroups,qrgqiorgegegergwegre} to prove a result which unifies and generalises \cref{thm:famenable,thm:dfh} and is instrumental in proving the Farrell--Jones conjecture for virtually solvable groups (see \cref{ex-solvable}).

Recall the notion of Dress groups from \cref{def:dress-group}.
For a finite group $F$, we continue to denote the family of Dress subgroups of $F$ by $\cD(F)$.

Let $G$ be a finitely generated group and $\cF$ be a family of subgroups.

\begin{ddd}\label{def:dfhj}
 The group $G$ is a Dress--Farrell--Hsiang--Jones group with respect to $\cF$ (or DFHJ group ({with respect to} $\cF$) for short) if there exist
 \begin{enumerate}
  \item\label{it:dfhj1} a family $(F_n)_{n \in \nat}$ of finite groups;
  \item\label{it:dfhj2} a family $(\alpha_n)_{n \in \nat}$ of epimorphisms $\alpha_n \colon G \to F_n$;
  \item\label{it:dfhj3} a family $(\Gamma_{n,D},Z_{n,D})_{n \in \nat, D \in \cD(F_n)}$ of homotopy coherent $G$-actions;
  \item\label{it:dfhj4} a family $(W_{n,D})_{n \in \nat, D \in \cD(F_n)}$, where $W_{n,D}$ is a $\overline{D}$-simplicial complex for the subgroup $\overline{D} := \alpha_n^{-1}(D)$ of $G$;
  \item\label{it:dfhj5} a family $(f_{n,D})_{n \in \nat, D \in \cD(F_n)}$ of continuous maps $f_{n,D} \colon G \times Z_{n,D} \to W_{n,D}$
 \end{enumerate}
 such that the following holds:
 \begin{enumerate}
  \item\label{it:dfhjcond0} for every $n$ in $\nat$ and $D$ in $\cD(F_n)$, the topological space $Z_{n,D}$ is a compact AR;\footnote{See Condition~\ref{def:f-amenable}.\ref{thiowergergwergwreg} for an explanation of this notion.}
  \item\label{it:dfhjcond1} for every $n$ in $\nat$ and $D$ in $\cD(F_n)$, the stabilisers of $W_{n,D}$ belong to $\cF \cap \overline{D}$;
  \item\label{it:dfhjcond2} $\sup\limits_{n \in \nat, D \in \cD(F_n)} \dim W_{n,D} < \infty$;
  \item\label{it:dfhjcond3} for every $n$ in $\nat$ and $D$ in $\cD(F_n)$, the map $f_{n,D}$ is $\overline{D}$-equivariant, where $\overline{D}$ acts on $G \times Z_{n,D}$ by left multiplication on the left factor;
  \item\label{it:dfhjcond4} for all $k$ in $\nat$ and $g_0,\ldots,g_k$ in $G$ we have
  \[ \sup_{\substack{D \in \cD(F_n), \gamma \in G, \\ (t_1,\ldots,t_k) \in [0,1]^k, \\ z \in Z_{n,D}}} d(f_{n,D}(\gamma, \Gamma_{n,D}(g_k,t_k,\ldots,t_1,g_0,z)), f_{n,D}(\gamma g_k\ldots g_0, z) ) \xrightarrow{n \to \infty} 0\ .\qedhere\]
 \end{enumerate}
\end{ddd}

\begin{rem}\label{rem:dfhj-vs-amenable}
 \cref{def:dfhj} combines finite $\cF$-amenability (\cref{def:f-amenable}) and the Dress--Farrell--Hsiang condition (\cref{def:F-DFH}) into a single notion:
 \begin{enumerate}
  \item If we choose $F_n$ to be the trivial group for all $n$, we have for each $n$ in $\nat$ a single homotopy coherent $G$-action $(\Gamma_n,Z_n)$, a single $G$-simplicial complex $W_n$ and a $G$-equivariant map $f_n \colon G \times Z_{n} \to W_n$.
  Restricting $f_n$ to $\{e\} \times Z_{n}$ yields a map as in \cref{def:f-amenable} of finite homotopy $\cF$-amenability.
  Conversely, if $f_n \colon Z_n \to W_n$ is as in \cref{def:f-amenable}, then $(g,z) \mapsto gf_n(z)$ defines a map as in Condition~\ref{def:dfhj}.\ref{it:dfhjcond4}.
 \item If we choose all the spaces $Z_{n,D}$ to be points, the DFHJ condition reduces to being a Dress--Farrell--Hsiang group.
 \item As explained in \cref{rem:dfhexplain}, the metric $d$ in Condition~\ref{def:dfhj}.\ref{it:dfhjcond4} is the spherical path metric (or the $\ell^1$-metric).\qedhere
\end{enumerate}
\end{rem}

\begin{ex}\label{ex-solvable}
 Let $w$ be a non-zero algebraic number which is not a root of unity.
 Let $\IZ[w,w^{-1}]$ be the underlying abelian group of the subring of $\IC$ generated by $\IZ$, $w$ and $w^{-1}$.
 We let $\IZ$ act on $\IZ[w,w^{-1}]$ by group automorphisms via multiplication with $w$.
 Then $\IZ[w,w^{-1}] \rtimes \IZ$ is DFHJ with respect to the family of virtually abelian subgroups \cite[Prop.~3.3]{KUWW}.
 Using the inheritance properties of the Farrell--Jones conjecture, this implies by \cite[Prop. 3.3]{wegner-solv} that the conjecture holds for all virtually solvable groups.
 See the proof of \cref{thm:fullfjc} in \cref{hieruhqwefiwfewf} for further details. 
\end{ex}

Let $G$ be a finitely generated group and $\cF$ be a family of subgroups.
Let $\bC$ be a left-exact $\infty$-category with $G$-action and let $\Homol \colon \Cle \to \bM$ be a functor to a stably monoidal and cocomplete stable $\infty$-category which admits countable products.
Recall \cref{weoirgjwegwergwerg9} and the assembly map \eqref{fevuihiuqhvvsad}.

\begin{theorem}\label{thm:dfhj}
 Assume that
 \begin{enumerate}
  \item $G$ is a Dress--Farrell--Hsiang--Jones group with respect to $\cF$;
  \item $\Homol$ is a lax monoidal, finitary localising invariant.
 \end{enumerate}
 Then the assembly map
 \[ \As_{\cF,\Homol\bC_{G}} \colon \mathop{\colim}\limits_{G_\cF\Orb} \Homol\bC_G \to \Homol\bC_G(*) \]
  is a phantom equivalence.
\end{theorem}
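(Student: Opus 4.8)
The plan is to deduce \cref{thm:dfhj} from \cref{ergiooegergergwergergergw}, following the template of the proofs of \cref{thm:famenable} and \cref{thm:dfh} and merging their two constructions. First I would set $V := \bV^{c,\perf}_{\bC,G}$ and $U := \bV^{c,\perf,G}_{\Spc^\op_*}$: by \cref{thm:fix-orbit-wmodule}, $V$ carries a weak $U$-module structure $(\eta,\mu)$ (see \cref{giooergrefwerfwrevwerfv}) and $U$ is $\pi_0$-excisive, and by \cite[Cor.~6.18]{unik} and \cite[Prop.~5.2]{unik} the composite $\Homol V$ is a $\pi_0$-excisive, hyperexcisive equivariant coarse homology theory which extends $\Homol\bC_G$ in the sense of \cref{wtgklpwergrewfwref}. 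Thus the entire content of the theorem is the construction of a transfer class $(\cX,t)$ for the tuple $(U,\eta,V,\Homol,\cF)$ in the sense of \cref{def:transfer-class}.

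To build the transfer space, I would choose families as in \cref{def:dfhj} and, for each $n$ and each $D$ in $\cD(F_n)$, let $X_{n,D}$ be the strictification of the homotopy coherent $G$-action $(\Gamma_{n,D},Z_{n,D})$ produced by \cref{eqrigoqergewrgwergwergwegw}, with retraction $R_{n,D}$ and filtration $(Y_{n,D,l})_l$. The transfer space is then
\[ \cX := \coprod_{n\in\nat}\coprod_{D\in\cD(F_n)} G\times X_{n,D} \]
with the diagonal $G$-action and the canonical projection $p\colon\cX\to\nat$. The $(\Homol V,\cF)$-proper object is the space $W_h$ in $G\BC_{/\nat_{min,min}}$ obtained by applying \cref{beforergiorgergegergergerg3232424} to the finite-dimensional $G$-simplicial complex $W:=\coprod_n\coprod_D G\times_{\overline D} W_{n,D}$; its properness follows from Conditions~\ref{it:dfhjcond1} and \ref{it:dfhjcond2} together with \cref{rgiorgergegergergerg3232424}. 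Combining the formula \eqref{eq:famenable-f} with the equivariant map $f'_{n,D}$ used in \cref{qrgqiorgegegergwegre}, one defines a $G$-equivariant map $f\colon\cX\to W$ whose $(n,D)$-component sends $(g,x)$ to the class of $\bigl(g,f_{n,D}(e,R_{n,D}(g^{-1}x))\bigr)$ in $G\times_{\overline D} W_{n,D}$; pulling back the bornological coarse structure of $W_h$ along $f$ makes $\cX$ an object of $G\BC_{/\nat_{min,min}}$ and $f$ the morphism to the proper object required by \cref{def:transfer-class}.\ref{qwoifgoqergqfeqewfq} (equivalently, this bornology is the maximal one for which $p$ is proper). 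That $f$ is controlled is the analogue of \cref{lem:Sdiag-entourage} and uses only Condition~\ref{it:dfhjcond4}, which is designed to play simultaneously the roles of \cref{def:f-amenable}.\ref{erguiergqwefeqwfqefew} and \cref{def:F-DFH}.\ref{it:dfhcond4}.

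For the morphism $t\colon\beins_\bM\to\Homol U(\cX)$ I would, for each $n$, invoke \cite[Cor.~2.10]{winges} to pick a finite contractible $F_n$-CW-complex $K'_n$ with stabilisers in $\cD(F_n)$ and pull it back along $\alpha_n$ to a $G$-CW-complex $K_n$. Then, fusing \cref{const:fixed-point} and \cref{const:transfer-dfh}, I would assemble an object $(M,\rho)$ of $\Fun^{\cW(\cX)}_{\mathrm{shift}}(\nat,\CW^\hfd(\cX))^{\op,\hG}$ by attaching, over each $G$-cell of $K_n$ with stabiliser $\overline D:=\alpha_n^{-1}(D)$, a copy of the small-simplex complex $C^{V_l}(Y_{n,D,l})$ of the strictification $X_{n,D}$ (in the spirit of \cref{def:M}), induced up along $G/\overline D$ by means of the product functor $\otimes_S$ from \cref{lem:product-mrho}; the filtration in $l$, the structure maps and the cocycle condition are handled as in \cref{const:fixed-point}, and membership in $\CW^\hfd$ follows from \cref{rgoirjgorgrwrvcwecr} and \cref{qerogijweroigjwerogegwerg} using that $Z_{n,D}$ is a compact AR (Condition~\ref{it:dfhjcond0}). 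The morphism $t$ is then obtained, exactly as in \cref{def:dfh-transfer} and \cref{prop:transfer-famenable}, by applying $\Homol$ to the left-exact functor $\Spc^{\op,\omega}_*\to U(\cX)$ determined by $r_U$ of the image of $(M,\rho)$ under $\iota$, precomposed with the unit constraint of $\Homol$. To verify the diagram \eqref{wefiewofefwefewfewfewfwe111} I would argue one factor $n$ at a time, as in \cref{lem:dfh-transfer-project}: using that $\Homol$ is a localising invariant, decompose $\Homol(r^{\hfd,G}((K_n)_+))$ along the skeletal filtration of $K_n$ into the contributions of its cells; for a cell with stabiliser $\overline D$, use that $Z_{n,D}$ is contractible so that — as in \cref{lem:Mfd} and \cref{lem:product-mrho} — the attached strictification piece is controlled homotopy equivalent to its $\pi_0$-reduction, which is a point; and use that $K_n$ itself is contractible, together with \cref{thm:fix-orbit-wmodule}.\ref{it:fix-orbit-wmodule4}, to identify the total contribution with $\eta$. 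This produces the required filler, so $(\cX,t)$ is a transfer class and \cref{ergiooegergergwergergergw} yields that $\As_{\cF,\Homol\bC_G}$ is a phantom equivalence.

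The hard part will be the bookkeeping in the last two paragraphs: correctly fusing the Dress-group complexes $K_n$ with the strictifications $X_{n,D}$ into a single homotopy fixed point inside the shift category, and tracking the control estimates so that the single hypothesis~\ref{it:dfhjcond4} delivers everything used separately in \cref{sec:famenablegroups} and \cref{qrgqiorgegegergwegre}. In particular, one must check that the product functor $\otimes_S$ interacts correctly with the filtrations $(Y_{n,D,l})_l$ and with the induced $G$-equivariance, and that the map $f$ built above is genuinely controlled with respect to the filtration entourages — this is the one place where the combined argument does not merely quote the two special cases but requires a genuinely merged version of \cref{lem:Sdiag-entourage} and \cref{lem:Vl}.
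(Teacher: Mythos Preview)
Your overall plan—reduce to \cref{ergiooegergergwergergergw} with $U=\bV^{c,\perf,G}_{\Spc^\op_*}$ and $V=\bV^{c,\perf}_{\bC,G}$, and build a transfer class by combining the constructions of \cref{sec:famenablegroups} and \cref{qrgqiorgegegergwegre}—is correct, and your identification of $W_h$ and its $(HV,\cF)$-properness is fine. However, there is a genuine gap in your transfer space.

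Your space $\cX=\coprod_n\coprod_D G\times X_{n,D}$ omits the $G/\overline{D}$ factor that is essential for merging the two constructions. With this choice one has $\pi_0(\cX)\cong\coprod_n\coprod_D G$, which is \emph{not} the DFH transfer space $\cT=\coprod_n\coprod_D G\times G/\overline{D}$ of \cref{const:dfhX}. But the product functor $\otimes_S$ of \cref{lem:product-mrho} is designed for the situation where the DFH object $((Q,\lambda),\rho^Q)$ lives over a space mapping to $\cX_0$, and $\cX_0$ is built on $\pi_0(\cX)$. Without the $G/\overline{D}$ factor in $\cX$ there is no map $\cT\to\cX_0$, and your ``induced up along $G/\overline{D}$'' has nowhere to send the coset label: the cells of your candidate complex carry a $G/\overline{D}$-index with no slot for it in your $\cX$-labelling. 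The ``merged version of \cref{lem:Sdiag-entourage} and \cref{lem:Vl}'' you anticipate cannot repair this, because the problem is set-theoretic, not metric.

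The paper avoids this by a single repackaging step you are missing: it sets $Z_n:=\coprod_{D\in\cD(F_n)} G/\overline{D}\times Z_{n,D}$ with the homotopy coherent $G$-action \eqref{eq:GammanD}, and checks (\cref{lem:dfhj-almostequiv}) that this new sequence satisfies every condition of \cref{def:f-amenable} \emph{except} contractibility—each $Z_n$ is merely a compact ANR with contractible components. The machinery of \cref{sec:famenable-transfer} then applies verbatim, producing $\cX$ via \cref{qergioheriougewergergwerg} and $(M,\rho)$ via \cref{lem:mrho-defined}. Now $\pi_0(\cX)\cong\cT$ automatically by \eqref{eq:pi0ZX}, so the DFH object of \cref{const:transfer-dfh} can be pushed along $i\colon\cT\to\cX_0$ and fed into $m_{(M,\rho)}$; this is precisely why \cref{lem:product-mrho} was stated for ANRs with contractible components rather than ARs. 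The verification of \eqref{wefiewofefwefewfewfewfwe111} then reduces cleanly to \cref{lem:dfh-transfer-project} via the factorisation through $\cT$ (see diagram~\eqref{sdfvsvnsdvosdfvsdfv}), with no new merged estimate required. In short: do not fuse by hand; absorb $G/\overline{D}$ into $Z_n$ and let \cref{sec:famenable-transfer} do the work.
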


The remainder of this section is dedicated to the proof of \cref{thm:dfhj}.
As in \cref{sec:famenablegroups,qrgqiorgegegergwegre}, \cref{ergiooegergergwergergergw} reduces the proof to the construction of a suitable transfer class, see \cref{def:transfer-class}.
As in \cref{sec:famenablegroups,qrgqiorgegegergwegre}, we consider the functor
\[ V := \bV^{c,\perf}_{\bC,G} \colon G\BC \to \Clep \]
which admits a weak module structure $(\eta,\mu)$ over the $\pi_0$-excisive functor
\[ U := \bV^{c,\perf,G}_{\Spc^\op_*} \colon G\BC \to \Clep \]
by \cref{thm:fix-orbit-wmodule}.
The composite $\Homol V$ is a hyperexcisive equivariant coarse homology theory by \cite[Cor.~6.18]{unik} extending $\Homol \bC_G$ \cite[Prop.~5.2]{unik}.

The transfer class associated to $(U,\eta,V,\Homol,\cF)$ will arise by combining the transfer class of \cref{qrgqiorgegegergwegre} with a fibrewise version of the transfer class from \cref{sec:famenable}.
Let $G$ be DFHJ group with respect to the family $\cF$.
Choose families
\[ (F_n)_{n \in \nat},\quad (\alpha_n)_{n \in \nat},\quad (\Gamma_{n,D},Z_{n,D})_{n \in \nat, D \in \cD(F_n)},\quad (W_{n,D})_{n \in \nat, D \in \cD(F_n)} \text{ and}\quad (f_{n,D})_{n \in \nat, D \in \cD(F_n)} \]
as in \cref{def:dfhj}.

Since $\cD(F_n)$ is a finite set and $\overline{D}$ has finite index in $G$ for every $D$ in $\cD(F_{n})$, the topological space
\begin{equation}\label{eq:dfhj-Z}
 Z_n := \coprod_{D \in \cD(F_n)} G/\overline{D} \times Z_{n,D}
\end{equation}
is a compact ANR.
Since $G/\overline{D}$ carries a $G$-action and $Z_{n,D}$ comes equipped with a homotopy coherent $G$-action, $G/\overline{D} \times Z_{n,D}$ inherits a homotopy coherent $G$-action $\Gamma'_{n,D}$ given by
\begin{equation}\label{eq:GammanD}
 \Gamma'_{n,D}(g_k,t_k,\ldots,t_1,g_0,(\gamma\overline{D},z)) := (g_k\ldots g_0 \gamma\overline{D}, \Gamma_{n,D}(g_k,t_k,\ldots,t_1,g_0,z))\ .
\end{equation}
Taking coproducts, these homotopy coherent $G$-actions induce a homotopy coherent $G$-action $\Gamma_n$ on $Z_n$.

For every $n$ in $\nat$,  we form the $G$-simplicial complex
\[ W_n := \coprod_{D \in \cD(F_n)} G \times_{\overline{D}} W_{n,D}\ .\]
For $D$ in $\cD(F_n)$, we define the map
\[ f_{n,D}' \colon G/\overline{D} \times Z_{n,D} \to G \times_{\overline{D}} W_{n,D},\quad (\gamma\overline{D},z) \mapsto [\gamma, f_{n,D}(\gamma^{-1},z)] \]
which is well-defined since $f_{n,D}$ is $\overline{D}$-equivariant by Condition~\ref{def:dfhj}.\ref{it:dfhjcond3}.
We define
\[ f_n := \coprod_{D \in \cD(F_n)} f_{n,D}' \colon Z_n \to W_n \]
and let $d$ denote the spherical path metric on the simplicial complex $W_n$.

\begin{lem}\label{lem:dfhj-almostequiv}
 For all $k$ in $\nat$ and $g_0,\ldots,g_k$ in $G$ we have
 \[ \sup_{\substack{(t_1,\ldots,t_k) \in [0,1]^k \\ (\gamma\overline{D},z) \in Z_n}} d(f_n(\Gamma_n(g_k,t_k,\ldots,t_1,g_0,(\gamma\overline{D},z))), g_k\ldots g_0 f_n(\gamma\overline{D},z)) \xrightarrow{n \to \infty} 0\ .\]
\end{lem}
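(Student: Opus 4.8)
The statement to be proven is a ``fibrewise'' version of Condition~\ref{def:f-amenable}.\ref{erguiergqwefeqwfqefew}, and I would reduce it directly to the hypothesis Condition~\ref{def:dfhj}.\ref{it:dfhjcond4} that we are allowed to assume. First I would fix $k$ in $\nat$ and a tuple $g_0,\ldots,g_k$ in $G$, and unwind all the definitions so that the supremum in the conclusion is taken over $D$ in $\cD(F_n)$ as well (since $Z_n = \coprod_{D} G/\overline{D} \times Z_{n,D}$ and everything is defined summand-wise over $D$). Using the explicit formula \eqref{eq:GammanD} for $\Gamma'_{n,D}$ and the definition of $f_{n,D}'$, a point $(\gamma\overline{D},z)$ in $G/\overline{D} \times Z_{n,D}$ gets sent under $f_n \circ \Gamma_n(g_k,t_k,\ldots,g_0,-)$ to
\[ [g_k\ldots g_0\gamma, f_{n,D}((g_k\ldots g_0\gamma)^{-1}, \Gamma_{n,D}(g_k,t_k,\ldots,t_1,g_0,z))]\ ,\]
while $g_k\ldots g_0 f_n(\gamma\overline{D},z) = g_k \ldots g_0 [\gamma, f_{n,D}(\gamma^{-1},z)] = [g_k\ldots g_0\gamma, f_{n,D}(\gamma^{-1},z)]$ — here I use that the $G$-action on $G \times_{\overline{D}} W_{n,D}$ is by left multiplication on the $G$-factor.

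Since the $G$-action on $W_{n,D}$ (hence the spherical path metric on each simplex) is isometric, and the identification $G \times_{\overline{D}} W_{n,D} \ni [\gamma',w] \leftrightarrow [\gamma'',w']$ forces $w,w'$ to lie in the same $\overline{D}$-translate, the distance between the two points above equals the distance in $W_{n,D}$ between $f_{n,D}((g_k\ldots g_0\gamma)^{-1}, \Gamma_{n,D}(g_k,t_k,\ldots,t_1,g_0,z))$ and $f_{n,D}(\gamma^{-1}, z)$, after correcting by the ambiguity in the choice of $\overline{D}$-coset representative; this correction is absorbed by $\overline{D}$-equivariance of $f_{n,D}$ (Condition~\ref{def:dfhj}.\ref{it:dfhjcond3}). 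Setting $\gamma' := \gamma^{-1}$, this is exactly a distance of the form appearing in Condition~\ref{def:dfhj}.\ref{it:dfhjcond4}, with the element ``$\gamma$'' of that condition replaced by $\gamma'$ and the word $g_k\ldots g_0$ unchanged: namely
\[ d\big(f_{n,D}(\gamma' g_k^{-1}\cdots g_0^{-1}\cdot(g_k\ldots g_0),\,\Gamma_{n,D}(\ldots)),\ f_{n,D}(\gamma', z)\big)\ ,\]
which after relabeling is $d(f_{n,D}(\gamma'', \Gamma_{n,D}(g_k,t_k,\ldots,g_0,z)), f_{n,D}(\gamma'' g_k\ldots g_0, z))$ with $\gamma'' = \gamma' g_k^{-1}\cdots g_0^{-1}$ ranging over all of $G$ as $\gamma$ does. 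Taking the supremum over $D \in \cD(F_n)$, $\gamma'' \in G$, $(t_1,\ldots,t_k)$ and $z \in Z_{n,D}$ and applying Condition~\ref{def:dfhj}.\ref{it:dfhjcond4} directly gives that the supremum tends to $0$ as $n \to \infty$.

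The main obstacle is purely bookkeeping: carefully tracking how the $G$-action on the induced space $G \times_{\overline{D}} W_{n,D}$ interacts with the two coset ambiguities (in $G/\overline{D}$ and in $G \times_{\overline{D}} W_{n,D}$), and verifying that the isometry property and the $\overline{D}$-equivariance of $f_{n,D}$ let us pass between ``$f_{n,D}$ evaluated after the $G$-translation'' and ``the $G$-translation applied to $f_{n,D}$'' without distorting distances. Once the reindexing $\gamma \rightsquigarrow \gamma'' = \gamma^{-1}g_k^{-1}\cdots g_0^{-1}$ is set up correctly, the supremum over $\gamma'' \in G$ in the resulting expression is precisely the one controlled by Condition~\ref{def:dfhj}.\ref{it:dfhjcond4}, and there is nothing further to prove.
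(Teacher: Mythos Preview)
Your approach is correct and is essentially the same as the paper's: compute both points, observe they share the first coordinate $g_k\cdots g_0\gamma$ in $G \times_{\overline{D}} W_{n,D}$, so the distance reduces to the $W_{n,D}$-distance between the second coordinates, which after the substitution $\gamma'' := (g_k\cdots g_0\gamma)^{-1}$ is literally the quantity in Condition~\ref{def:dfhj}.\ref{it:dfhjcond4}. (Your intermediate displayed equation and the formula $\gamma'' = \gamma' g_k^{-1}\cdots g_0^{-1}$ contain an order-of-inverses slip---it should read $\gamma^{-1}g_0^{-1}\cdots g_k^{-1}$---but this is harmless since only the bijectivity of $\gamma \mapsto \gamma''$ on $G$ is used; also, it is $\overline{D}$, not $G$, that acts on $W_{n,D}$.)
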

\begin{proof}
 Using \eqref{eq:GammanD}, we find that
 \begin{align*}
  f_{n,D}'(\Gamma_{n,D}'(g_k,t_k,\ldots,t_1,g_0,&(\gamma\overline{D},z))) \\
  &= [g_k\ldots g_0 \gamma, f_{n,D}(\gamma^{-1}g_0^{-1}\ldots g_k^{-1}, \Gamma_{n,D}(g_k,t_k,\ldots,t_1,g_0,z))]
 \end{align*}
 and
 \[ g_k\ldots g_0 f_{n,D}'(\gamma\overline{D},z) = [g_k\ldots g_0 \gamma, f_{n,D}(\gamma^{-1}, z)]\ .\]
 So the assertion of the lemma is simply a rephrasing of Condition~\ref{def:dfhj}.\ref{it:dfhjcond4}.
\end{proof}

Summing up, we have a sequence $(\Gamma_n,Z_n)_{n \in \nat}$ of topological spaces with homotopy coherent $G$-actions, a sequence of $G$-simplicial complexes $(W_n)_{n \in \nat}$ and a sequence $(f_n)_{n \in \nat}$ of $G$-equivariant maps $f_n \colon Z_n \to W_n$ such that
\begin{enumerate}
 \item $Z_n$ is a compact ANR with contractible components for every $n$ in $ \nat$ (by Condition~\ref{def:dfhj}.\ref{it:dfhjcond0});
 \item the stabilisers of $W_n$ belong to $\cF$ (by Condition~\ref{def:dfhj}.\ref{it:dfhjcond1});
 \item $\sup\limits_{n \in \nat} \dim W_n < \infty$ (by Condition~~\ref{def:dfhj}.\ref{it:dfhjcond2});
 \item for all $k$ in $\nat$ and $g_0,\ldots,g_k$ in $G$ we have
 \[ \sup_{\substack{(t_1,\ldots,t_k) \in [0,1]^k \\ (\gamma\overline{D},z) \in Z_n}} d(f_n(\Gamma_n(g_k,t_k,\ldots,t_1,g_0,\gamma\overline{D},z)), g_k\ldots g_0 f_n(\gamma\overline{D},z)) \xrightarrow{n \to \infty} 0 \]
 by \cref{lem:dfhj-almostequiv}.
\end{enumerate}
Note that this list of conditions is identical to the conditions of \cref{def:f-amenable} except that $Z_n$ is not necessarily contractible.
This allows us to use all statements from \cref{sec:famenable-transfer} except \cref{prop:transfer-famenable}.

In particular, an application of \cref{qergioheriougewergergwerg} provides $G$-bornological coarse spaces $\cX$ and $W_{h}$ over $\nat_{min,min}$ and a morphism $f \colon \cX \to W$ of $G$-bornological coarse spaces over $\nat_{min,min}$.
Let $p_\cX \colon \cX \to \nat_{min,min}$ denote the projection.
 
\begin{prop}\label{prop:transfer-dfhj}
 There exists a commutative diagram
 \[\xymatrix@C=3.5em{
  \beins_{\bM}\ar[rr]^-{\eta_{\Homol}}\ar[d]_-{t} & & \Homol U(*)\ar[d]^-{\diag} \\
  \Homol U(\cX)\ar[r]^-{\Homol(p_\cX)} & \Homol U(\nat_{min,min})\ar[r]^-{(\Homol(q^U_n))_n} & \prod_{n\in \nat} \Homol U(*)
 }\]
 in $\bM$.
\end{prop}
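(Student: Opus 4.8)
The plan is to mirror the proof of \cref{prop:transfer-famenable}, but to feed the Dress-type controlled CW-complex of \cref{const:transfer-dfh} into the fibrewise machinery of \cref{lem:product-mrho} in place of the trivial discretisation $(D,\delta)$, and then to extract the required filler by the localising-invariant argument of \cref{lem:dfh-transfer-project}. The observation that makes this possible is that the discrete shadow $\cX_{0} = G \times \pi_{0}(X)$ produced by \cref{const:X0} is canonically the Dress--Farrell--Hsiang transfer space: since each $Z_{n,D}$ is a compact AR, hence contractible, the homotopy equivalence \eqref{eq:Z-to-X} together with \eqref{eq:dfhj-Z} gives $\pi_{0}(X_{n}) \cong \pi_{0}(Z_{n}) \cong \coprod_{D \in \cD(F_{n})} G/\overline{D}$, so that $\cX_{0}$ agrees as a $G$-set with the space $\coprod_{n}\coprod_{D \in \cD(F_{n})} G \times G/\overline{D}$ of \cref{const:dfhX}. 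By \cref{lem:dfhj-almostequiv}, Condition~\ref{def:f-amenable}.\ref{erguiergqwefeqwfqefew} holds for the data $(\Gamma_{n},Z_{n},f_{n})$, so all constructions of \cref{sec:famenable-transfer} are available; in particular \cref{lem:Sdiag-entourage} (with $l=0$, $r=1$) shows that $S_{\pi_{0}} = c(S \times \diag(Y_{0}))$ from \eqref{qewfwfewfwqffewf} is a coarse entourage of $\cX_{0}$, and \cref{const:transfer-dfh} then produces an $\cX_{0}$-labelled CW-complex $(Q,\lambda)$ together with a family $\rho = (\rho(g))_{g \in G}$ which, by the argument of \cref{prop:Qcontrolled} (using only that $S_{\pi_{0}}$ is a coarse entourage and that $(Q,\lambda)$ is locally finite), defines an object of $\CW^{\hfd}(\cX_{0})^{\op,\hG}$. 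Here, as in \cref{const:transfer-dfh}, $Q = \bigvee_{n\in \nat} Q_{n}$ with $Q_{n}$ built from a finite contractible $F_{n}$-simplicial complex $K_{n}'$ with Dress stabilisers, and $K_{n}$ denotes the induced $G$-simplicial complex.

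First I would define the transfer morphism. The hypotheses of \cref{lem:product-mrho} are satisfied: $Z_{n}$ is a compact ANR with contractible components by Condition~\ref{def:dfhj}.\ref{it:dfhjcond0}, and with the coarse and bornological structures of \cref{const:X0} a subset of $\cX_{0}$ is bounded precisely when it meets only finitely many coarse components, which makes $\pr \colon \cX_{0} \to \pi_{0}(X)_{min,min}$ a morphism of bornological coarse spaces with $\pr \circ c$ bornological. Hence \cref{lem:product-mrho} provides the functor $m_{(M,\rho)}$ and a filler of \eqref{qergpoijqopfewfqweewfwefq}, where $(M,\rho)$ is the object of $\Fun^{\cW(\cX)}_{\mathrm{shift}}(\nat,\CW^{\hfd}(\cX))^{\op,\hG}$ built in \cref{sec:famenable-transfer} (whose construction is available verbatim, since the list of properties of $(\Gamma_{n},Z_{n},f_{n})$ recorded after \cref{lem:dfhj-almostequiv} differs from \cref{def:f-amenable} only in that the $Z_{n}$ need not be contractible). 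I would set $t_{\cX} := r_{U}(m_{(M,\rho)}(\iota((Q,\lambda),\rho)))$, regard it as a left-exact functor $\Spc^{\op,\omega}_{*} \to U(\cX)$, and let $t$ be the composite $\beins_{\bM} \to \Homol(\Spc^{\op,\omega}_{*}) \xrightarrow{\Homol(t_{\cX})} \Homol U(\cX)$.

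Next I would construct the filler of the diagram factor by factor over $n$ in $\nat$. Writing $p_{\cX} = p_{0} \circ c$ and combining the filler of \eqref{qergpoijqopfewfqweewfwefq} with the naturality of the transformation $r^{\hfd,G}$ from \eqref{eq:rfdG} and with \cref{qeirughioqergegwergweg}, the composite $q^{U}_{n} \circ U(p_{\cX}) \circ t_{\cX} \colon \Spc^{\op,\omega}_{*} \to U(*)$ is identified with the object $r^{\hfd,G}(Q_{n})$ of $U(*)$, exactly as in the proof of \cref{prop:transfer-famenable} (where the special case $Q_{n} = S^{0}$ occurred because $\pi_{0}(X_{n})$ was a point). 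Then I would invoke the argument of \cref{lem:dfh-transfer-project}: since the underlying space of $K_{n}$ is contractible, the projection $(K_{n})_{+} \to S^{0}$ induces an equivalence $r^{\hfd,G}((K_{n})_{+}) \xrightarrow{\simeq} r^{\hfd,G}(\underline{S^{0}})$ by \cref{gfioqjgioewfewfewfqwefqwefqew}, whence $\Homol(r^{\hfd,G}((K_{n})_{+}))$ is equivalent to $\eta_{\Homol}$ by \cref{thm:fix-orbit-wmodule}.\ref{it:fix-orbit-wmodule4}; and since $\Homol$ is a localising invariant while the successive subquotients of the skeletal filtration of $K_{n}$ are isomorphic to those of the filtration $(Q_{n}^{[i]})_{i}$ of $Q_{n}$, a finite induction over $\Catex$-fibre sequences of left-exact functors $\Spc^{\op,\omega}_{*} \to U(*)$ yields $\Homol(r^{\hfd,G}((K_{n})_{+})) \simeq \Homol(r^{\hfd,G}(Q_{n}))$. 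Combining these equivalences gives $\Homol(q^{U}_{n}) \circ \Homol U(p_{\cX}) \circ t \simeq \eta_{\Homol}$, the desired filler for the $n$-th factor; applying $\Homol$ to the resulting diagram of functors completes the construction.

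The main obstacle is bookkeeping rather than a new idea: one must match up the coarse and bornological structures produced by \cref{const:X0}, \cref{const:transfer-dfh} and \cref{const:dfhX} so that \cref{prop:Qcontrolled} and \cref{lem:product-mrho} genuinely apply to $\cX_{0}$ with its structures as above, and one must observe that, unlike in \cref{prop:transfer-famenable} where contractibility of $Z_{n}$ made the relevant triangle commute already at the level of $U$, here commutativity holds only after application of the localising invariant $\Homol$ --- which is precisely the point where the Dress-group argument of \cref{lem:dfh-transfer-project}, via the matching skeletal filtrations of $K_{n}$ and $Q_{n}$, is needed.
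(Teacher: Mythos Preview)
Your proposal is correct and follows essentially the same route as the paper. The only difference is organisational: the paper introduces an auxiliary space $\cT$ (the DFH transfer space of \cref{const:dfhX}) together with a comparison morphism $i\colon \cT \to \cX_{0}$, constructs $((Q,\lambda),\rho^{Q})$ over $\cT$, and then cites \cref{lem:dfh-transfer-project} verbatim for the trapezoid; you instead identify the underlying $G$-set of $\cX_{0}$ with that of $\cT$, build $((Q,\lambda),\rho)$ directly over $\cX_{0}$, and re-run the skeletal-filtration argument there. Both constructions produce the same object $t_{\cX} = r_{U}(m_{(M,\rho)}(\iota i_{*}((Q,\lambda),\rho)))$ (your $i_{*}$ being absorbed into the identification), and the filler is obtained in either case by combining the commutativity of \eqref{qergpoijqopfewfqweewfwefq} from \cref{lem:product-mrho} with the localising-invariant comparison $\Homol(r^{\hfd,G}(Q_{n})) \simeq \Homol(r^{\hfd,G}((K_{n})_{+})) \simeq \eta_{\Homol}$.
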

\begin{proof}
 By \cref{const:dfhX}, the epimorphisms $(\alpha_n \colon G \to F_n)_{n \in \nat}$ give rise to a $G$-bornological coarse space $\cT$ whose underlying set is $\coprod_{n \in \nat} \coprod_{D \in \cD(F_n)} G \times G/\overline{D}$ together with a morphism $p_\cT \colon \cT \to \nat_{min,min}$ (in \cref{const:dfhX}, this space is called $\cX$).
 The isomorphism from \eqref{eq:pi0ZX}, the definition of the spaces $Z_{n}$ in \eqref{eq:dfhj-Z} and that fact that each $Z_{n,D}$ is contractible provides a canonical bijection $\cT \cong \pi_0(\cX)$.
 
 Let the $G$-bornological coarse space $\cX_0$ be given by \cref{const:X0}.
 By \cref{lem:Sdiag-entourage}, the generating entourage \eqref{eq:dfhS} of $\cT$ is also an entourage of $\cX_0$.
 Hence we have an induced map $i \colon \cT \to \cX_0$ of $G$-bornological coarse spaces over $\nat_{min,min}$.

 By \cref{const:transfer-dfh}, we obtain an object $((Q,\lambda),\rho^Q)$ in $\CW^{\hfd}(\cT)^{\op,\hG}$.
 Recall the transformation $r^{\hfd,G}$ from \eqref{eq:rfdG}.
 Then the object $r^{\hfd,G}((Q,\lambda),\rho^Q)$ in $U(\cT)$ induces the second morphism in the composition
 \[ t_\cT \colon \beins_\bM \to \Homol U(\Spc^{\op,\omega}_*) \to \Homol U(\cT)\ .\]
 Similarly, the object $r^{\hfd,G}(i_*((Q,\lambda),\rho^Q))$ in $U(\cX_0)$ induces the morphism
  \[ t_{\cX_0} \colon \beins_\bM \to \Homol U(\Spc^{\op,\omega}_*) \to \Homol U(\cX_0)\ .\]
 \cref{lem:product-mrho} yields a commutative diagram
\begin{equation}\label{eq:m-rfd}
 \xymatrix@C=-3em{
  \lim\limits_{BG} \Nerve(\CW^\hfd( \cX_{0}))^\op\ar[rr]^-{\lim_{BG} r^\hfd}\ar[dr]_-{m_{(M,\rho)}} & & U( \cX_{0}) \\
  & \lim\limits_{BG} \Nerve(\Fun^{\cW(\cX)}_{\mathrm{shift}}(\nat,\CW^\hfd(\cX)))^\op\ar[ur]_-{r_U \circ c_*}
  }\end{equation}
  in which $r_U$ is the natural transformation from \eqref{eq:buildfixedpoint}.
 
 Then $\iota i_*((Q,\lambda),\rho^Q)$ is an object in $\lim\limits_{BG} \Nerve(\CW^\hfd( \cX_{0}))^\op$, where $\iota$ is the natural equivalence from \eqref{qewfoihqiuwhfiuwewfwqewfwfq}.
 Hence the object $r_U(m_{(M,\rho)}(\iota i_*((Q,\lambda),\rho^Q)))$ in $U(\cX)$ induces the morphism
 \[ t_\cX \colon \beins_\bM \to \Homol(\Spc^{\op,\omega}_*) \to \Homol U(\cX)\ .\]
 Consider the following diagram:
\begin{equation}\label{sdfvsvnsdvosdfvsdfv}
\xymatrix@C=3.5em{
  \beins_\bM\ar[dd]_-{t_\cX}\ar[rdd]_-{t_{\cX_0}}\ar[rd]^-{t_\cT}\ar[rrr]^-{\eta_\Homol} & & & \Homol U(*)\ar[dd]^-{\diag} \\
  & \Homol U(\cT)\ar[d]^-{\Homol U(i)}\ar[rd]^-{\Homol U(p_\cT)} & & \\
  \Homol U(\cX)\ar[r]^-{\Homol U(c)} & \Homol U(\cX_0)\ar[r]^-{\Homol U(p_0)} & \Homol U(\nat_{min,min})\ar[r]^-{(\Homol(q^U_n))_n} & \prod_{n\in \nat} \Homol U(*)
 }
\end{equation}  
 We will first show that the triangle in the bottom left corner commutes.
 We have equivalences 
 \begin{align*}
  c_* r_U m_{(M,\rho)}(\iota i_*((Q,\lambda),\rho^Q))
  &\simeq r_U c_*m_{(M,\rho)}(\iota i_*((Q,\lambda),\rho^Q)) \\
  &\simeq \lim_{BG} r^\hfd(\iota i_*((Q,\lambda),\rho^Q)) \\
  &\simeq r^{\hfd,G}(i_*((Q,\lambda),\rho^Q))\ ,
 \end{align*}
  where the first equivalence uses the naturality of $r_U$, the second equivalence follows from the commutativity of \eqref{eq:m-rfd}, and the third equivalence uses the definition of $r^{\hfd,G}$ in \eqref{eq:rfdG}.
 In view of the definitions of $t_\cX$ and $t_{\cX_0}$, this equivalence of objects in $U(\cX_0)$ induces an equivalence
 \[ \Homol U(c) \circ t_\cX \simeq t_{\cX_0}\ .\]
 Both $\Homol U(i) \circ t_\cT$ and $t_{\cX_0}$ are defined by the same object of $U(\cX_0)$, so $\Homol U(i) \circ t_\cT \simeq t_{\cX_0}$.
 Since $i \colon \cT \to \cX_0$ is a morphism over $\nat_{min,min}$, we have $\Homol U(p_\cT) \simeq \Homol U(p_0) \circ \Homol U(i)$.
 
 Finally, the remaining part of the diagram commutes by \cref{lem:dfh-transfer-project} (recall that $\cT$ corresponds to the $G$-bornological coarse space $\cX$ in \cref{lem:dfh-transfer-project}). 
\end{proof}

Since the stabilisers of $W$ belong to $\cF$ and $\dim W < \infty$, \cref{rgiorgergegergergerg3232424} shows that $W_{h}$ is $(\Homol V,\cF)$-proper.
Therefore, taking $t_\cX \colon \beins_\bM \to \Homol U(\cX)$ as in \cref{prop:transfer-dfhj} yields a transfer class $(X,t_\cX)$ for $(U,\eta,V,\Homol,\cF)$ (see \cref{def:transfer-class}).
\cref{thm:dfhj} now follows from \cref{ergiooegergergwergergergw}.

\section{Inheritance properties of the isomorphism conjecture}\label{sec:inheritance}

 The class of groups which are DFHJ relative to the family of virtually cyclic subgroups  includes hyperbolic groups, $\mathrm{CAT}(0)$-groups and the groups described in \cref{ex-solvable}. 
 Due to various inheritance properties,
one can show that the assembly map in \cref{wriohgjrotggerergw} is an equivalence for a much larger class of groups.
The proofs of these inheritance properties use little more than the construction of the assembly map described in \cref{sec:intro}.

We consider a cocomplete $\infty$-category $\cK$ and a functor $F \colon \cK \to \bM$.
Examples to keep in mind are $\cK = \Cle$ or $\cK = \Catex$ and $F$ being a finitary localising invariant.
Note, however, that we are making no assumptions about $F$ at the moment.
We consider a functor $\bC \colon BG \to \cK$.
Then, as in \eqref{vfoijiorvefvfdsv}, we let
\[ \bC_G := j^{G}_{!}(\bC)\colon G\Orb \to \cK \]
denote the left Kan extension of $\bC$ along $j^{G}$.
As in \cref{weoirgjwegwergwerg9}, we set 
\[ F\bC_G := F \circ \bC_G \colon G\Orb \to \bM \]
and consider the assembly map
\[ \As_{\cF,F\bC_G} \colon \mathop{\colim}\limits_{G_\cF\Orb} F\bC_G \to F\bC_G(*) \]
introduced in \cref{wriohgjrotggerergw}.

Since we assume that $\bM$ is cocomplete, the functor $F\bC_G$ has an essentially unique extension to a colimit-preserving functor
\[ F_\bC \colon \PSh(G\Orb) \to \bM \]
such that $F_\bC \circ \yo_G \simeq F\bC_G$, where $\yo_G \colon G\Orb \to \PSh(G\Orb)$ is the Yoneda embedding.
Note that, in contrast to the convention in \cref{sec:intro}, we use the symbol  $F_{\bC}$ for this extension. 

Let $\phi \colon H \to G$ be a group homomorphism.
Then there exists an adjunction
\begin{equation}\label{vsdfvsvwvdsdfvsdfv}
 \ind_\phi \colon \PSh(H\Orb) \rightleftarrows \PSh(G\Orb) \cocolon \res_\phi
\end{equation}
 in which $\res_\phi$ is given by precomposition with the induction functor
\[ G \times_\phi - \colon H\Orb \to G\Orb\ , \]
while $\ind_\phi$ is the unique colimit-preserving extension of the composition
\[ H\Orb \xrightarrow{G \times_\phi -} G\Orb \xrightarrow{\yo_G} \PSh(G\Orb)\ .\]
Moreover, $\phi$ induces a functor $B\phi \colon BH \to BG$.

We will {also} make use of the auxiliary functors appearing in the following diagram:
 \begin{equation}\label{advhivuoasvsdvcdsvadv}
\xymatrix{
  BH\ar[r]^-{j^{H}}\ar[d]_-{B\phi} & H\Orb\ar@/^.75cm/[rr]^{\yo_{H}}\ar[r]^-{s^{H}}\ar[d]_-{G \times_\phi -} & H\Set\ar[r]^-{\ell^H}\ar@<-3pt>[d]_-{\ind_\phi} & \PSh(H\Orb)\ar@<-3pt>[d]_-{\ind_\phi} \\
  BG\ar[r]^-{j^{G}} & G\Orb\ar[r]^-{s^{G}} \ar@/^-.75cm/[rr]_{\yo_{G}}& G\Set\ar[r]^-{\ell^G}\ar@<-3pt>[u]_-{\res_\phi} & \PSh(G\Orb)\ar@<-3pt>[u]_-{\res_\phi}
 }
\end{equation}
 The functor $s^{H}$ regards a transitive $H$-set as an $H$-set and $\ell^H$ regards $H$-sets as discrete $H$-spaces to obtain objects in $\PSh(H\Orb)$.
 In particular, $\ell^H$ is fully faithful and the composition $\ell^H \circ s^{H}$ is equivalent to the Yoneda embedding $\yo_H$.
 If we drop the restriction functors, the above diagram commutes.
 Moreover, we have an equivalence $\ell^H \circ \res_\phi \simeq \res_\phi \circ \ell^G$.

{For $\bD$ in $\Fun(BH,\cK)$, we let} $B\phi_{!}\bD \colon BG \to \cK$ denote the left Kan extension of $\bD$ along $B\phi \colon BH \to BG$.

\begin{lem}[]\label{lem:res-ind-asm}
 If $F$ preserves arbitrary coproducts, then there exists a natural equivalence
 \[ F_\bD \circ \res_\phi \simeq F_{B\phi_{!}\bD} \]
 of functors $\PSh(G\Orb) \to \bM$.
\end{lem}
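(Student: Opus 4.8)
The key point is that both sides are colimit-preserving functors $\PSh(G\Orb) \to \bM$, so by the universal property of $\PSh(G\Orb)$ (as the free cocompletion of $G\Orb$, recalled at the beginning of \cref{sec:assembly}) it suffices to produce a natural equivalence of their restrictions along $\yo_G$, i.e.\ an equivalence of functors $G\Orb \to \bM$. The crucial input is that $\res_\phi$ is itself colimit-preserving: indeed, in the adjunction \eqref{vsdfvsvwvdsdfvsdfv} one checks that $\res_\phi$ also admits a right adjoint (coinduction), hence preserves all colimits; alternatively, $\res_\phi \circ \yo_G$ sends a transitive $G$-set $S$ to the presheaf represented by the $H$-set $\res_\phi(S)$, which is a coproduct of representables, so the colimit-preserving extension of this composite agrees with $\res_\phi$ by uniqueness. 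Therefore $F_\bD \circ \res_\phi$ is colimit-preserving, and so is $F_{B\phi_!\bD}$ by construction, so both are left Kan extensions along $\yo_G$ of their restrictions.

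First I would compute the left-hand side on a transitive $G$-set $S$. Using $\res_\phi(S) \simeq \coprod_{i} H/K_i$ as an $H$-set (the orbit decomposition of $S$ viewed through $\phi$) and that $\ell^H$ commutes with coproducts and identifies transitive $H$-sets with representables, we get $\res_\phi(\yo_G(S)) \simeq \coprod_i \yo_H(H/K_i)$ in $\PSh(H\Orb)$. Since $F$ preserves coproducts and $F_\bD$ is colimit-preserving, $F_\bD(\res_\phi(\yo_G(S))) \simeq \coprod_i F\bD_H(H/K_i)$. On the other hand, evaluating $F_{B\phi_!\bD}$ at $\yo_G(S)$ gives $F\big((B\phi_!\bD)_G(S)\big)$, and the pointwise formula \eqref{foij1oiwefqfeqewfef} for the left Kan extension expresses $(B\phi_!\bD)_G(S)$ as a colimit over $BG_{S}$ — or better, one can use the compatibility of left Kan extensions along the square relating $j^H, j^G, B\phi$ and $G\times_\phi -$ in \eqref{advhivuoasvsdvcdsvadv} to rewrite $(B\phi_!\bD)_G \simeq (G\times_\phi -)_! (\bD_H)$, so that $(B\phi_!\bD)_G(S)$ is computed by a coproduct indexed by the $H$-orbits of $\res_\phi(S)$ with terms $\bD_H(H/K_i)$.

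The main technical step, then, is to establish the identification $(B\phi_!\bD)_G \simeq \ind_\phi \text{-type}$ formula, i.e.\ that left Kan extension along $B\phi$ followed by left Kan extension along $j^G$ agrees with left Kan extension along $j^H$ followed by left Kan extension along $G\times_\phi -$. This is exactly the statement that the left square in \eqref{advhivuoasvsdvcdsvadv} (with $B\phi$, $j^H$, $j^G$, $G\times_\phi -$) commutes, which it does by construction, so the two iterated left Kan extensions agree by pasting of Kan extensions. I would then assemble these computations into a natural transformation $F_{B\phi_!\bD} \to F_\bD \circ \res_\phi$ — naturality in $S \in G\Orb$ is inherited from the naturality of all the adjunction units and the Kan extension comparison maps — and verify it is an equivalence objectwise using the coproduct computations above, crucially invoking that $F$ preserves coproducts to pass the coproducts through $F$. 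The hard part will be bookkeeping the naturality of the orbit-decomposition identification $\res_\phi(\yo_G(S)) \simeq \coprod_i \yo_H(H/K_i)$ in a way compatible with the Kan extension comparison maps, rather than any deep obstruction; once the diagram \eqref{advhivuoasvsdvcdsvadv} is used to reduce everything to a statement about iterated left Kan extensions and coproducts, the equivalence follows formally.
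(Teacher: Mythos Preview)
Your proposal is correct and follows essentially the same strategy as the paper: reduce to representables using that both sides are colimit-preserving, use the commutative square relating $j^H$, $j^G$, $B\phi$ and $G\times_\phi -$ to identify the iterated Kan extensions, and then invoke the orbit decomposition of $\res_\phi(S)$ together with the hypothesis that $F$ preserves coproducts. The only organisational difference is that the paper routes the computation through $G\Set$ and $H\Set$ via the functors $s^G,s^H,\ell^G,\ell^H$ in diagram~\eqref{advhivuoasvsdvcdsvadv}, so that the adjunction $\ind_\phi \dashv \res_\phi$ is available and $(\ind_\phi)_!$ becomes restriction along $\res_\phi$; you instead compute the slice $(G\times_\phi -)/S$ directly and identify its cofinal discrete subcategory of orbit inclusions, which is the same cofinality argument in slightly different packaging.
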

\begin{proof}
 Since both $F_\bD \circ \res_\phi$ and $F_{B\phi_{!}\bD}$ are colimit-preserving functors, it suffices to construct an equivalence between the restrictions of these functors along the Yoneda embedding $\yo_{{G}} \colon {G}\Orb \to \PSh({G}\Orb)$.
 Providing such an equivalence is an exercise in the formal properties of left Kan extensions which we include for completeness.
 
 Recall that $F_{B\phi_{!}\bD} \circ \yo_G \simeq F(B\phi_{!}\bD)_G$ by definition.
 Since $s^{G}$ is fully faithful, the equivalence $\id \simeq (s^{G})^{*}s^{G}_{!}$ yields the second   equivalence in the chain
 \[ (B\phi_{!}\bD)_G \simeq j^{G}_!B\phi_{!}\bD \simeq s^{G}_!j^{G}_!B\phi_{!}\bD \circ s^{G} \simeq (\ind_\phi)_! s^{H}_! j^{H}_! \bD \circ s^{G}\ .\]
 The last equivalence is given by the  left and middle commutative squares in  \eqref{advhivuoasvsdvcdsvadv}.
 Since $\ind_\phi$ is a left adjoint of $\res_\phi$, the left Kan extension functor $(\ind_{\phi})_{!}$
 is given by restriction along $\res_\phi$.
 Hence
 \[ (\ind_\phi)_! s^{H}_! j^{H}_! \bD \circ s^{G} \simeq s^{H}_! j^{H}_! \bD \circ \res_\phi \circ s^{G}\ . \]
 For every $H$-set $X$, the discrete subcategory of the slice category $s^{H}_{{/X}}$ formed by the inclusions of $H$-orbits is cofinal.
 Thus, if $E \colon H\Orb \to \cN$ is a functor such that $\cN$ admits arbitrary coproducts, the left Kan extension $s^{H}_!E$ of $E$ along $s^{H}$ exists and the canonical morphism
 \[ \coprod_{S \in H \backslash X} E(S) \to s^{H}_!E(X) \]
 is an equivalence.
 Since $F$ preserves coproducts, it follows from this identification that
 \[ F \circ s^{H}_!j^{H}_! \bD \circ \res_\phi \circ s^{G} \simeq s^{H}_!(F\bD_H) \circ \res_\phi \circ s^{G}\ . \]
 We conclude from this discussion that 
 \begin{equation}\label{eq:res-ind-1}
  F(B\phi_{!}\bD)_G \simeq s^{H}_!(F\bD_H) \circ \res_\phi \circ s^{G}\ .
 \end{equation} 
 Since $\ell^{H}$ is fully faithful and thus $\id\simeq (\ell^{H})^{*} \ell^{H}_{!}$, we get the first equivalence in the chain
 \[ s^{H}_!(F\bD_H) \simeq \ell^{H}_!s^{H}_!(F\bD_H) \circ \ell^H \simeq F_\bD \circ \ell^{H} \]
 The second equivalence follows from $\ell^{H} \circ s^{H} \simeq \yo_H$ and
 $F_{\bD}\simeq (\yo_{H})_{!} F\bD_{H}$.
 Therefore, we have
 \begin{align}\label{eq:res-ind-2}
  s^{H}_!(F\bD_H) \circ \res_\phi \circ s^{G}
  &\simeq F_\bD \circ \ell^H \circ \res_\phi \circ s^{G} \nonumber\\
  &\simeq F_\bD \circ \res_\phi \circ \ell^{G} \circ s^{G}
  \simeq F_\bD \circ \res_\phi \circ \yo_G\ .
 \end{align}
 Combining \eqref{eq:res-ind-1} and \eqref{eq:res-ind-2} gives the desired identification.
\end{proof}

As before, let $\phi \colon H \to G$ be a group homomorphism.
Let $\cF$ be a family of subgroups of $G$. 
By
\[ \phi^*\cF := \{ K \leq H \mid  \phi(K)\in \cF \} \]
we denote the induced family on $H$.

\begin{kor}\label{cor:res-ind-asm}
 If $F$ preserves arbitrary coproducts, then the assembly map
 \[ \As_{\phi^*\cF,\bD_H} \colon \mathop{\colim}\limits_{H_{\phi^*\cF}\Orb} F\bD_H \to F\bD_H(*) \]
 is equivalent to the assembly map
 \[ \As_{\cF,F(B\phi_{!}\bD)_G} \colon \mathop{\colim}\limits_{G_\cF\Orb} F(B\phi_{!}\bD)_G \to F(B\phi_{!}\bD)_G(*). \]
\end{kor}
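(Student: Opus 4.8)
The plan is to deduce \cref{cor:res-ind-asm} from \cref{lem:res-ind-asm} together with \cref{qgoiegjqefewfqfewfqewf}, which identifies the abstract assembly map $\As_{\cF,F}$ with the map $F(E_\cF G) \to F(*)$ obtained by applying the colimit-preserving extension of $F$ to the projection $E_\cF G \to *$ in $\PSh(G\Orb)$. So the first step is to rephrase both sides of the claimed equivalence in these terms: the left-hand assembly map becomes $F_\bD(E_{\phi^*\cF}H) \to F_\bD(*)$, and the right-hand assembly map becomes $F_{B\phi_!\bD}(E_\cF G) \to F_{B\phi_!\bD}(*)$, where on both sides $*$ denotes the terminal presheaf, i.e.\ $\colim_{G\Orb}\yo_G$ respectively $\colim_{H\Orb}\yo_H$.

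The key input is then \cref{lem:res-ind-asm}, which gives a natural equivalence $F_\bD \circ \res_\phi \simeq F_{B\phi_!\bD}$ of colimit-preserving functors $\PSh(G\Orb)\to\bM$. Applying this equivalence to the morphism $E_\cF G \to *$ in $\PSh(G\Orb)$, it suffices to identify $\res_\phi(E_\cF G) \to \res_\phi(*)$ with the morphism $E_{\phi^*\cF}H \to *$ in $\PSh(H\Orb)$, up to equivalence. For the target, $\res_\phi$ is a right adjoint in the adjunction \eqref{vsdfvsvwvdsdfvsdfv} and hence preserves the terminal object, so $\res_\phi(*) \simeq *$. For the source, one must show $\res_\phi(E_\cF G) \simeq E_{\phi^*\cF}H$ in $\PSh(H\Orb)$; this is the step I expect to be the main (though still essentially formal) obstacle, and it is really a statement about classifying spaces for families. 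The cleanest way is to work with the underlying homotopy types via Elmendorf's theorem: a presheaf $P$ on $G\Orb$ is equivalent to $E_\cF G$ iff $P(G/K)$ is contractible for $K\in\cF$ and empty otherwise; then $(\res_\phi P)(H/L) \simeq P(G\times_\phi (H/L)) \simeq P(G/\phi(L))$ (using that induction of a transitive $H$-set is a disjoint union of transitive $G$-sets, together with the fact that $\res_\phi$ commutes with the relevant $\ell$-functors as recorded in the discussion around \eqref{advhivuoasvsdvcdsvadv}, and that $F$ — here irrelevant, since we are in $\PSh$ — need not be invoked). So $(\res_\phi E_\cF G)(H/L)$ is contractible precisely when $\phi(L)\in\cF$, i.e.\ when $L\in\phi^*\cF$, and empty otherwise; this characterises $\res_\phi E_\cF G$ as $E_{\phi^*\cF}H$.

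Putting these identifications together: the square
\[
\xymatrix{
 F_\bD(\res_\phi E_\cF G)\ar[r]\ar[d]_-{\simeq} & F_\bD(\res_\phi *)\ar[d]^-{\simeq} \\
 F_{B\phi_!\bD}(E_\cF G)\ar[r] & F_{B\phi_!\bD}(*)
}
\]
commutes by naturality of the equivalence from \cref{lem:res-ind-asm}, and under the identifications $\res_\phi E_\cF G \simeq E_{\phi^*\cF}H$ and $\res_\phi * \simeq *$ its top row is the assembly map $\As_{\phi^*\cF, \bD_H}$ while its bottom row is $\As_{\cF, F(B\phi_!\bD)_G}$, by \cref{qgoiegjqefewfqfewfqewf}. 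Since the vertical maps are equivalences, the two assembly maps are equivalent, which is the assertion. I would present the care with the hypothesis that $F$ preserves coproducts only where it is genuinely used, namely inside \cref{lem:res-ind-asm}; the identification of $\res_\phi E_\cF G$ requires no such hypothesis and is purely a fact about presheaf categories and families of subgroups.
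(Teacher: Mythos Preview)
Your proof is correct and follows essentially the same approach as the paper: identify both assembly maps via \cref{qgoiegjqefewfqfewfqewf}, apply the natural equivalence $F_\bD \circ \res_\phi \simeq F_{B\phi_!\bD}$ from \cref{lem:res-ind-asm} to the projection $E_\cF G \to *$, and use $\res_\phi(E_\cF G) \simeq E_{\phi^*\cF}H$ and $\res_\phi(*) \simeq *$. The paper simply asserts the identification $\res_\phi(E_\cF G) \simeq E_{\phi^*\cF}H$ without justification, whereas you supply the fixed-point argument; note that your computation $G \times_\phi (H/L) \cong G/\phi(L)$ already shows the induced set is transitive, so the hedge about ``disjoint union of transitive $G$-sets'' is unnecessary.
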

\begin{proof}
 We let $E_\cF G$ in $\PSh(G\Orb)$ be the classifying space of $G$ for the family $\cF$ as described in \cref{wrtoihgjwtgergwerfew}.
 By \cref{qgoiegjqefewfqfewfqewf}, the assembly map $\As_{\cF,F(B\phi_!\bD)_G}$ is equivalent to the map
 \begin{equation}\label{sfdvoijvfsiodvsdfvsdfvfdv}
  F_{B\phi_!\bD}(E_\cF G) \to F_{B\phi_!\bD}(*)
 \end{equation}
 induced by the projection $E_\cF G \to *$.
 Using \cref{lem:res-ind-asm} in order to replace
  $F_{B\phi_!\bD}$ by $F_{\bD}\circ \res_{\phi}$ and the equivalences 
 $\res_\phi(E_\cF G) \simeq E_{\phi^*\cF} H$ and $\res_\phi(*) \simeq *$,
 we see that the map in \eqref{sfdvoijvfsiodvsdfvsdfvfdv} is equivalent to
 \[ F_{\bD}(E_{\phi^{*}\cF}H)\to F_{\bD}(*) \]
 induced by the projection $E_{\phi^{*}\cF}H\to *$.
 By \cref{qgoiegjqefewfqfewfqewf} again, this map is equivalent to $\As_{\phi^{*}\cF,\bD_{H}}$.
\end{proof}

Let $H$ be a subgroup of $G$ and let $\bD$ be an object in $\Fun(BH,\cK)$.
We denote the inclusion of $H$ into $G$ by $\iota \colon H \to G$ (we use $\iota$ instead of $\phi$ to emphasise injectivity) and use the notation $\cF|_H:=\iota^*\cF$  for the restriction of the family $\cF$ to $H$.

\begin{kor}[Passage to subgroups]\label{cor:subgroups}
 Assume that $F$ preserves arbitrary coproducts.
 If the assembly map $\As_{\cF,F(B\iota_!\bD)_G}$ is a (phantom) equivalence, 
 then the assembly map $\As_{\cF|_H,F\bD_H}$ is also a (phantom) equivalence.
\end{kor}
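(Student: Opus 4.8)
\textbf{Proof proposal for \cref{cor:subgroups}.}
The plan is to reduce \cref{cor:subgroups} to \cref{cor:res-ind-asm} applied to the inclusion $\iota \colon H \to G$. First I would observe that \cref{cor:res-ind-asm} already establishes an equivalence between the two assembly maps in question, namely $\As_{\cF|_H,F\bD_H}$ and $\As_{\cF,F(B\iota_!\bD)_G}$, in the arrow category of $\bM$. Concretely, the corollary supplies a commutative square whose vertical maps are equivalences and whose horizontal maps are the two assembly maps. Hence one map is an equivalence if and only if the other is, and the same holds for the property of being a phantom equivalence.

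The only point requiring a sentence of justification is that the class of phantom equivalences is stable under the relevant two-out-of-three / equivalence-invariance property: if $m \simeq m'$ as morphisms in $\bM$ (i.e.\ they fit into a commutative square with vertical equivalences), then $\Map_\bM(K,m)$ is an equivalence for every compact $K$ precisely when $\Map_\bM(K,m')$ is. This is immediate from \cref{ethigoewggergwgerg}, since applying the functor $\Map_\bM(K,-)$ to the commutative square gives a commutative square of spaces with vertical equivalences, and an equivalence of spaces has the two-out-of-three property. The case of honest equivalences is of course the same argument with $\Map_\bM(K,-)$ replaced by the identity functor (or subsumed in it).

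So the proof is essentially one line: invoke \cref{cor:res-ind-asm} for $\phi = \iota$, note that $\iota^*\cF = \cF|_H$ by definition, and transport the (phantom) equivalence hypothesis along the resulting equivalence of maps. The hypothesis that $F$ preserves arbitrary coproducts is exactly what is needed to apply \cref{cor:res-ind-asm} (it feeds into \cref{lem:res-ind-asm}), and no injectivity of $\iota$ is actually used here beyond fixing notation; I would nonetheless keep the notation $\iota$ and $\cF|_H$ consistent with the statement. There is no genuine obstacle in this step — it is a formal consequence of the preceding corollary — so I would write it out in two or three sentences rather than elaborating further.

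\begin{proof}
Apply \cref{cor:res-ind-asm} to the group homomorphism $\iota \colon H \to G$. Since $\iota^*\cF = \cF|_H$ by definition, it provides a commutative square in $\bM$ with vertical equivalences whose horizontal morphisms are $\As_{\cF|_H,F\bD_H}$ and $\As_{\cF,F(B\iota_!\bD)_G}$. Applying $\Map_\bM(K,-)$ for a compact object $K$ of $\bM$ yields a commutative square of spaces with vertical equivalences; by the two-out-of-three property for equivalences of spaces, $\Map_\bM(K,\As_{\cF,F(B\iota_!\bD)_G})$ is an equivalence if and only if $\Map_\bM(K,\As_{\cF|_H,F\bD_H})$ is. Hence $\As_{\cF|_H,F\bD_H}$ is a phantom equivalence whenever $\As_{\cF,F(B\iota_!\bD)_G}$ is (see \cref{ethigoewggergwgerg}); the same argument with $\Map_\bM(K,-)$ omitted shows the corresponding statement for honest equivalences.
\end{proof}
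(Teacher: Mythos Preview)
Your proof is correct and matches the paper's approach exactly: the paper's proof is the single sentence that \cref{cor:res-ind-asm} implies the two assembly maps are equivalent. Your additional remarks on why the property of being a phantom equivalence transfers along an equivalence of morphisms are correct and merely spell out what the paper leaves implicit.
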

\begin{proof}
 In fact, \cref{cor:res-ind-asm} implies that $\As_{\cF,F(B\iota_!\bD)_G}$ is equivalent to $\As_{\cF|_H,F\bD_H}$.
\end{proof}

Let now $\bC$ be in $\Fun(BG,\cK)$ and use the notation $\res^G_H\bC := (B\iota)^*\bC$ for the restriction of $\bC$ to an object in $\Fun(BH,\cK)$.
 Recall the induction functor $\ind_{\iota}\colon\PSh(H\Orb)\to \PSh(G\Orb)$ from  \eqref{vsdfvsvwvdsdfvsdfv}.

\begin{lem}\label{lem:ind-asm}
 There is a natural equivalence of functors
 \[ F_{\res^G_H\bC} \simeq F_\bC \circ \ind_\iota \]
 of functors $\PSh(H\Orb) \to \bM$.
\end{lem}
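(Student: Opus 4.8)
The statement to prove is \cref{lem:ind-asm}: a natural equivalence $F_{\res^G_H\bC} \simeq F_\bC \circ \ind_\iota$ of functors $\PSh(H\Orb) \to \bM$. Both sides are colimit-preserving functors out of $\PSh(H\Orb)$: the right-hand side because $\ind_\iota$ is a left adjoint (hence colimit-preserving) and $F_\bC$ is colimit-preserving by construction; the left-hand side because $F_{\res^G_H\bC}$ is the colimit-preserving extension of $F(\res^G_H\bC)_H$ along $\yo_H$. Therefore, by the universal property of $\PSh(H\Orb) = \PSh(H\Orb)$ as the free cocompletion of $H\Orb$ (equivalently, the universal property of the Yoneda embedding quoted in \cref{sec:assembly}), it suffices to produce a natural equivalence between the restrictions of both functors along $\yo_H \colon H\Orb \to \PSh(H\Orb)$, i.e.\ to identify $F(\res^G_H\bC)_H$ with $F_\bC \circ \ind_\iota \circ \yo_H$ as functors $H\Orb \to \bM$.

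The key computation is then purely formal and parallels the proof of \cref{lem:res-ind-asm}. By definition $\ind_\iota \circ \yo_H \simeq \yo_G \circ (G \times_\iota -)$, using the commutativity of the right-hand rectangle in \eqref{advhivuoasvsdvcdsvadv} (specifically $\ind_\iota \circ \ell^H \simeq \ell^G \circ \ind_\iota$ on $H\Set$, restricted along $s^H$, together with $\ell^H \circ s^H \simeq \yo_H$ and $\ell^G \circ s^G \simeq \yo_G$). Hence $F_\bC \circ \ind_\iota \circ \yo_H \simeq F_\bC \circ \yo_G \circ (G \times_\iota -) \simeq F\bC_G \circ (G \times_\iota -)$, where the last step uses the defining equivalence $F_\bC \circ \yo_G \simeq F\bC_G$. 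On the other side, $F(\res^G_H\bC)_H = F \circ j^H_!((B\iota)^*\bC)$, so what remains is a natural equivalence of functors $H\Orb \to \cK$,
\[
 j^H_!\bigl((B\iota)^*\bC\bigr) \simeq \bC_G \circ (G \times_\iota -) = j^G_!(\bC) \circ (G \times_\iota -)\ .
\]
This is the statement that left Kan extension along $j^H \colon BH \to H\Orb$ of the restricted action $(B\iota)^*\bC$ agrees with first left Kan extending $\bC$ along $j^G$ and then restricting along the induction functor $G \times_\iota - \colon H\Orb \to G\Orb$. It follows from the pointwise formula for the left Kan extension together with the fact that the square consisting of $B\iota$, $j^H$, $j^G$ and $G\times_\iota -$ in \eqref{advhivuoasvsdvcdsvadv} commutes, plus a cofinality observation: for a transitive $H$-set $S$ with stabiliser $K \leq H$, the comma category $j^H_{/S}$ is (equivalent to) $BK$, and its image under $B\iota$ inside $j^G_{/(G\times_\iota S)}$ is cofinal, so the two pointwise colimits computing the respective Kan extensions at $S$ agree. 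Concretely one can also use \eqref{foij1oiwefqfeqewfef}: $\bC_G(G/L) \simeq \colim_{BL} \res^G_L\bC$ for $L \leq G$, and $(G\times_\iota S) \cong G/K$ when $S = H/K$, so $\bC_G((G\times_\iota -)(H/K)) \simeq \colim_{BK} \res^G_K \bC \simeq \colim_{BK} \res^H_K(\res^G_H\bC) \simeq j^H_!((B\iota)^*\bC)(H/K)$, and one checks this is natural in $S$.

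Finally one assembles: Kan extending the identification $F(\res^G_H\bC)_H \simeq F_\bC \circ \ind_\iota \circ \yo_H$ along $\yo_H$ and using that both $F_{\res^G_H\bC}$ and $F_\bC \circ \ind_\iota$ are colimit-preserving extensions of their restrictions along $\yo_H$ yields the desired equivalence on all of $\PSh(H\Orb)$. The only point requiring genuine care — and the expected main obstacle — is the cofinality/naturality argument establishing $j^H_!((B\iota)^*\bC) \simeq j^G_!(\bC) \circ (G \times_\iota -)$; everywhere else the proof is a diagram chase through \eqref{advhivuoasvsdvcdsvadv} of the same flavour as \cref{lem:res-ind-asm}. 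Note that, unlike \cref{lem:res-ind-asm}, this lemma does \emph{not} require $F$ to preserve coproducts, because here we left Kan extend along the fully faithful $\yo_H$ rather than summing over orbits via $s^H$; this is worth flagging explicitly in the write-up.
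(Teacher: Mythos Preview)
Your proposal is correct and follows essentially the same approach as the paper: reduce to representables by colimit-preservation, then establish the equivalence $(\res^G_H\bC)_H \simeq \bC_G \circ (G\times_H -)$ of $\cK$-valued functors on $H\Orb$ via the pointwise formula for left Kan extension, and apply $F$. The paper phrases the key step as showing that the counit $j^H_!\bigl(j^G_!\bC \circ (G\times_H -)\circ j^H\bigr) \to j^G_!\bC \circ (G\times_H -)$ is an equivalence, using that $G\times_H -$ induces an \emph{equivalence} of slice categories $j^H_{/S}\xrightarrow{\simeq} j^G_{/(G\times_H S)}$ (not just cofinality); your cofinality claim and your alternative computation via \eqref{foij1oiwefqfeqewfef} are equivalent formulations of this same fact. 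Also, the identification $\ind_\iota\circ\yo_H\simeq\yo_G\circ(G\times_\iota -)$ is immediate from the very definition of $\ind_\iota$ and need not be routed through the right-hand rectangle of \eqref{advhivuoasvsdvcdsvadv}.
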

\begin{proof}
 Since  both functors are colimit-preserving, it suffices to find an equivalence between the restrictions of these functors along the Yoneda embedding $\yo_H$, i.e., an equivalence 
\begin{equation}\label{qdwewdwedqed}
 F(\res^G_H\bC)_{H}\simeq F\bC_{G}\circ  (G\times_{H}-)
\end{equation}
of functors $H\Orb\to \bM$.
We have the chain of equivalences 
\begin{equation}\label{fqfwefewfqe}
 \res^G_H\bC \simeq \bC \circ B\iota \simeq j^{G}_!\bC \circ  j^{G}   \circ B\iota  \simeq j^{G}_!\bC \circ (G \times_{H} -) \circ j^{H} 
\end{equation} 
of functors $BH\to \cK$,
where the first equivalence is the definition of $\res^{G}_{H}$, the second follows from $\id\simeq (j^{G})^{*}j^{G}_{!}$ since $j^{G}$ is fully faithful, and the last uses the left square in \eqref{advhivuoasvsdvcdsvadv}.  We now apply $j^{H}_{!}$ to \eqref{fqfwefewfqe} in order to get the first equivalence in \eqref{asdcadsccasdcadsc} below.  The unit $j^{H}_{!}(j^{H})^{*}\to \id$ induces the second arrow in the  natural transformation  \begin{equation}\label{asdcadsccasdcadsc}
(\res^G_H\bC)_{H}\simeq  j^{H}_{!}( j^{G}_!\bC \circ (G \times_{H} -) \circ j^{H})\to   j^{G}_!\bC \circ (G \times_{H} -)
\end{equation}
of functors $H\Orb\to \cK$.
We claim that  the natural transformation \eqref{asdcadsccasdcadsc} is an equivalence.
By the pointwise formula for the left Kan extension $j^{H}_{!}$, its evaluation at $S$ in $H\Orb$ is the map
\[ \mathop{\colim}\limits_{j^{H}_{{/S}}} j^{G}_{!} \bC \circ   (G \times_{H} (j^{H}\circ \ev^{H}_{S}))  \to  j_{!}^{G}\bC(G\times_{H}S) \]
in $\cK$, where $\ev^{H}_{S} \colon j^{H}_{{/S}} \to BH$ is the canonical functor.
 Since the induction functor $G \times_{H} -$ induces an equivalence of categories $j^{H}_{{/S}} \to j^{G}_{{/(G}} \times_H S)$,
 the left-hand side is equivalent to 
 \[ \mathop{\colim}\limits_{j^{G}_{{/G \times_{H} S}}}  j^{G}_{!}\bC \circ (j^{G}\circ \ev^{G}_{G\times_{H}S}) \simeq
 \mathop{\colim}\limits_{j^{G}_{{/G \times_{H} S}}} \bC \circ  \ev^{G}_{G\times_{H}S}  \simeq
  (j^{G}_{!}\bC)(G\times_{H}S)\ , \]
 where the second equivalence is again a consequence of the pointwise formula for the left Kan extension, this time for $j^{G}_{!}$. This finishes the proof of the claim. 
 
 We get the desired equivalence \eqref{qdwewdwedqed} by applying $F$ to the equivalence \eqref{asdcadsccasdcadsc}.
\end{proof}

 Each of the following statements actually contains two statements, one for equivalences, and another one for phantom equivalences under the additional assumption that the target $\bM$ of $F$ is stable.
 We will provide arguments for the case of equivalences.
 The case of phantom equivalences is similar, using the observation that stability of $\bM$ implies that colimits of phantom objects are again phantom objects.
  
Let $ \cF',\cF$ be families of subgroups of $G$ such that $\cF' \subseteq \cF$.
The relative assembly map in the following statement is induced by the inclusion of index categories $G_{\cF'}\Orb \to G_{\cF}\Orb$.

\begin{prop}\label{prop:transitivity}
 If the assembly map $\As_{\cF'|_H,F(\res^{G}_{H}\bC)_H}$ is an equivalence for every $H$ in $\cF$, then the relative assembly map
 \[\As_{\cF',F\bC_{G}}^{\cF} \colon \mathop{\colim}\limits_{G_{\cF'}\Orb} F\bC_G \to \mathop{\colim}\limits_{G_\cF\Orb} F\bC_G \]
 is an   equivalence.
 If $\bM$ is stable, the same assertion holds with ``equivalence'' replaced by ``phantom equivalence''. 
\end{prop}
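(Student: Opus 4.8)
The plan is to reduce the relative assembly map for $\cF' \subseteq \cF$ to a colimit, indexed over the orbit category $G_{\cF}\Orb$, of absolute assembly maps for the stabiliser subgroups $H$ occurring in $\cF$. First I would pass to the presheaf picture: writing $F_\bC \colon \PSh(G\Orb) \to \bM$ for the colimit-preserving extension of $F\bC_G$, the relative assembly map $\As^{\cF}_{\cF',F\bC_G}$ is, by \cref{qgoiegjqefewfqfewfqewf} applied to both families, equivalent to the map $F_\bC(E_{\cF'}G) \to F_\bC(E_\cF G)$ induced by the canonical morphism $E_{\cF'}G \to E_\cF G$ in $\PSh(G\Orb)$. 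Recalling \cref{wrtoihgjwtgergwerfew}, we have $E_\cF G \simeq \colim_{G_\cF\Orb} \yo_G$, and since $F_\bC$ preserves colimits it therefore suffices to show that for every orbit $S$ in $G_\cF\Orb$ the map $F_\bC(E_{\cF'}G \times \yo_G(S)) \to F_\bC(\yo_G(S))$ is an equivalence, where the product is formed in $\PSh(G\Orb)$; here I use that colimits in $\PSh(G\Orb)$ are universal, so pulling back $E_{\cF'}G \to *$ along $\yo_G(S) \to *$ and then taking the colimit over $G_\cF\Orb$ recovers the relative assembly map.

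The next step is to identify $\yo_G(S)$, for $S = G/H$ with $H$ in $\cF$, with an induced object. Writing $\iota \colon H \to G$ for the inclusion, we have $\yo_G(G/H) \simeq \ind_\iota(*_{H})$, where $*_H$ is the terminal presheaf on $H\Orb$ (the classifying space of $H$ for the family of all subgroups). Then, using that $\ind_\iota$ is left adjoint to $\res_\iota$ (see \eqref{vsdfvsvwvdsdfvsdfv}), that colimits in $\PSh(G\Orb)$ are universal, and that $\res_\iota(E_{\cF'}G) \simeq E_{\cF'|_H} H$ together with $\res_\iota(\yo_G(G/H)) \simeq$ the terminal object of $\PSh(H\Orb)$, the projection formula gives $E_{\cF'}G \times \yo_G(G/H) \simeq \ind_\iota(E_{\cF'|_H} H)$. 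Applying \cref{lem:ind-asm}, which provides the natural equivalence $F_\bC \circ \ind_\iota \simeq F_{\res^G_H \bC}$, the map in question becomes $F_{\res^G_H\bC}(E_{\cF'|_H}H) \to F_{\res^G_H\bC}(*)$, and by \cref{qgoiegjqefewfqfewfqewf} this is precisely the assembly map $\As_{\cF'|_H, F(\res^G_H\bC)_H}$. This is an equivalence by hypothesis, so we conclude.

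For the phantom-equivalence version, I would run the identical argument: the relative assembly map is exhibited as $\colim_{G_\cF\Orb}$ of the maps $\As_{\cF'|_H,F(\res^G_H\bC)_H}$, each of which is now assumed to be a phantom equivalence. Since $\bM$ is stable, the cofibre of the relative assembly map is the colimit of the cofibres of these individual assembly maps; each such cofibre is a phantom object, and a colimit of phantom objects in a stable cocomplete $\infty$-category is again a phantom object (a filtered or general colimit of objects $M$ with $\Map_\bM(K,M) \simeq *$ for all compact $K$ still has this property when $\bM$ is stable, because $\Map_\bM(K,-)$ need not commute with the colimit but the relevant homotopy groups vanish — more carefully, one uses that the cofibre of a phantom equivalence is detected by compact objects, and compact objects see the colimit through the individual terms). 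Hence the cofibre of $\As^{\cF}_{\cF',F\bC_G}$ is a phantom object, i.e.\ the map is a phantom equivalence.

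The main obstacle I anticipate is the bookkeeping in the second paragraph: verifying carefully that $\yo_G(G/H) \simeq \ind_\iota(*_H)$ and that $\res_\iota$ sends $E_{\cF'}G$ to $E_{\cF'|_H}H$ and the various terminal presheaves to terminal presheaves, and then assembling the projection-formula identification $E_{\cF'}G \times \yo_G(G/H) \simeq \ind_\iota(E_{\cF'|_H}H)$ compatibly with the structure maps to the terminal object, so that the resulting square really is the absolute assembly map for $H$ on the nose and not just abstractly equivalent to it. The ingredient that makes this clean is the universality of colimits in the presheaf topos $\PSh(G\Orb)$, which lets one commute the pullback against $\yo_G(G/H) \to *$ past the colimit defining $E_\cF G$; once that is set up, the remaining identifications are the formal adjunction manipulations already packaged in \cref{lem:res-ind-asm}, \cref{cor:res-ind-asm} and \cref{lem:ind-asm}, so no genuinely new input beyond those lemmas is required.
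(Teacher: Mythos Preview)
Your approach is correct and is essentially the paper's proof rephrased: the paper shows that $F\bC_G|_{G_\cF\Orb}$ is the left Kan extension of $F\bC_G|_{G_{\cF'}\Orb}$ by checking the pointwise formula at each $G/H$ via the slice equivalence $G_{\cF'}\Orb_{/(G/H)} \simeq H_{\cF'|_H}\Orb$ induced by $G \times_H -$, which is exactly your per-orbit reduction translated through $\PSh(G\Orb)_{/\yo_G(G/H)} \simeq \PSh(H\Orb)$ (your projection-formula identification $E_{\cF'}G \times \yo_G(G/H) \simeq \ind_\iota(E_{\cF'|_H}H)$ is the presheaf incarnation of that slice equivalence), and both arguments conclude via \cref{lem:ind-asm}. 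One small point you glossed over: to see that the colimit over $G_\cF\Orb$ of the maps $E_{\cF'}G \times \yo_G(S) \to \yo_G(S)$ really recovers the relative assembly map you need the projection $E_{\cF'}G \times E_\cF G \to E_{\cF'}G$ to be an equivalence, which holds because $\cF' \subseteq \cF$.
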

\begin{proof}
 It suffices to show that $F\bC_G|_{G_\cF\Orb}$ is a left Kan extension of $F\bC_G|_{G_{\cF'}\Orb}$ along the inclusion functor $G_{\cF'}\Orb \to G_{\cF}\Orb$.
 By the pointwise formula for the left Kan extension, it is enough to check that the canonical map
\begin{equation}\label{fdsvdfvfdvvvs}
 \mathop{\colim}\limits_{G_{\cF'}\Orb_{/(G/H)}} F\bC_G \to F\bC_G(G/H)  
\end{equation} 
 is an equivalence for every $H$ in $\cF$.
 The functor $G \times_H - \colon H\Orb \to G\Orb$ induces an equivalence
 \[ H_{\cF'|_H}\Orb \simeq H_{\cF'|_H}\Orb_{/*} \xrightarrow{\simeq} G_{\cF'}\Orb_{/(G/H)} \]
 which allows us to identify the map in \eqref{fdsvdfvfdvvvs} with the map
 \[ \mathop{\colim}\limits_{H_{\cF'|_H}\Orb} F\bC_G \circ (G \times_H -) \to F\bC_G \circ (G \times_H -)(*). \]
 By \cref{lem:ind-asm}, this map is equivalent to the assembly map
 \[ \As_{\cF'|_H,F(\res^{G}_{H}\bC)_H} \colon \mathop{\colim}\limits_{H_{\cF'|_H}\Orb} F(\res^G_H\bC)_H \to F(\res^G_H\bC)_H(*)\ ,\]
 which is an equivalence by assumption.
\end{proof}

\begin{kor}[Transitivity Principle]\label{cor:transitivity}
 {Assume:}
 \begin{enumerate}
  \item the assembly map $\As_{\cF,F\bC_G}$ is an  equivalence;
  \item for every $H$ in $\cF$, the assembly map $\As_{\cF'|_H,F(\res^G_H\bC)_H}$ is an equivalence.
 \end{enumerate} 
 Then the assembly map $\As_{\cF',F\bC_G}$ is an equivalence.
 If $\bM$ is stable, the same assertion holds with ``equivalence'' replaced by ``phantom equivalence''.
\end{kor}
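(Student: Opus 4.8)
The plan is to deduce the Transitivity Principle directly from \cref{prop:transitivity} by factoring the absolute assembly map $\As_{\cF',F\bC_G}$ through the two families. First I would observe that, by construction, the assembly map $\As_{\cF',F\bC_G} \colon \colim_{G_{\cF'}\Orb} F\bC_G \to F\bC_G(*)$ factors as the composite
\[ \mathop{\colim}\limits_{G_{\cF'}\Orb} F\bC_G \xrightarrow{\As^{\cF}_{\cF',F\bC_G}} \mathop{\colim}\limits_{G_{\cF}\Orb} F\bC_G \xrightarrow{\As_{\cF,F\bC_G}} F\bC_G(*)\ , \]
where the first map is the relative assembly map induced by the inclusion $G_{\cF'}\Orb \to G_\cF\Orb$ and the second is the absolute assembly map for the family $\cF$. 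This factorisation is just the statement that the canonical map to the terminal cocone over $G_{\cF'}\Orb$ factors through the terminal cocone over the larger index category $G_\cF\Orb$; formally it follows from the universal property of the colimit together with the commutativity of the relevant cone diagrams, exactly as in the proof of \cref{qgoiegjqefewfqfewfqewf} where the analogous factorisation through $E_\cF G$ appears.

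Next I would invoke the two hypotheses. Hypothesis~(2) says precisely that $\As_{\cF'|_H,F(\res^G_H\bC)_H}$ is an equivalence for every $H$ in $\cF$, which is exactly the input required by \cref{prop:transitivity}; hence the relative assembly map $\As^{\cF}_{\cF',F\bC_G}$ is an equivalence. Hypothesis~(1) says that $\As_{\cF,F\bC_G}$ is an equivalence. Since a composite of equivalences is an equivalence, the composite $\As_{\cF',F\bC_G}$ is an equivalence as well.

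For the phantom-equivalence variant, I would run the identical factorisation argument, now using the phantom-equivalence conclusion of \cref{prop:transitivity} (which applies since $\bM$ is assumed stable) for the relative map and the phantom-equivalence form of hypothesis~(1) for the absolute map. The only extra point to record is that phantom equivalences are closed under composition: if $\Map_\bM(K,-)$ turns each of two composable morphisms into an equivalence of spaces for every compact $K$, then it does so for their composite, since $\Map_\bM(K,-)$ is a functor. Thus the composite $\As_{\cF',F\bC_G}$ is again a phantom equivalence.

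There is essentially no obstacle here beyond bookkeeping: the substantive content has been isolated into \cref{prop:transitivity}, and the corollary is a two-line consequence of the factorisation of the assembly map through an intermediate family. The one place that deserves a sentence of care is the verification that $\As_{\cF',F\bC_G}$ genuinely decomposes as $\As_{\cF,F\bC_G} \circ \As^{\cF}_{\cF',F\bC_G}$ rather than merely being related to it up to some correction; this is immediate once one recalls that all three maps are the canonical morphisms out of colimits over nested full subcategories of $G\Orb$ into the common target $F\bC_G(*)$, and that the relative map is by definition the one induced on colimits by the inclusion of index categories.
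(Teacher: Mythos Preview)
Your proof is correct and follows exactly the same approach as the paper: factor $\As_{\cF',F\bC_G}$ as $\As_{\cF,F\bC_G} \circ \As^{\cF}_{\cF',F\bC_G}$, apply \cref{prop:transitivity} to the first factor and hypothesis~(1) to the second. The paper's version is just terser and omits the explicit remark about closure of phantom equivalences under composition, but the argument is identical.
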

\begin{proof}
The composition
\[ \mathop{\colim}\limits_{G_{\cF'}\Orb} F\bC_G \to \mathop{\colim}\limits_{G_\cF\Orb} F\bC_G \to F\bC_G(*) \]
is equivalent to $\As_{\cF',F\bC_G}$. The first map in this composition is the relative assembly map $\As^{\cF}_{\cF',F\bC_{G}}$, which is an equivalence by \cref{prop:transitivity}, and the second map is the assembly map $\As_{\cF,F\bC_{G}}$, which is an equivalence by assumption.
\end{proof}
 
 Let $\pi \colon G \to Q$ be an epimorphism of groups and $\cH$ be a family of subgroups of $Q$ such that  $\pi^{*}\cH\subseteq \cF$.
 
\begin{kor}\label{cor:epis}
Assume that $F$ preserves arbitrary coproducts.
If
 \begin{enumerate}
  \item the assembly map $\As_{\cH,F(\pi_!\bC)_Q}$ is an equivalence,
  \item the assembly map $\As_{\cF|_{H},F(\res^G_{H}\bC)_{H}}$ is an equivalence for every $H$ in $\pi^{*}\cH$,
 \end{enumerate}
 then the assembly map $\As_{\cF, F\bC_G}$ is an equivalence.
\end{kor}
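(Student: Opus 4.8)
The plan is to deduce \cref{cor:epis} from the Transitivity Principle (\cref{cor:transitivity}) applied to the family $\pi^*\cH$ sitting between the smaller family $\pi^*\cH$ itself — wait, more precisely, one applies transitivity with the two families being $\cF' := \pi^*\cH$ and $\cF$ replaced by... no. Let me restate: the relevant instance of \cref{cor:transitivity} uses the pair of families $\pi^*\cH \subseteq \cF$. So first I would verify the first hypothesis of \cref{cor:transitivity} for this pair, namely that $\As_{\pi^*\cH, F\bC_G}$ is an equivalence; and the second hypothesis, that $\As_{\cF|_H, F(\res^G_H\bC)_H}$ is an equivalence for every $H$ in $\pi^*\cH$, is exactly hypothesis (2) of the corollary. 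Granting the first point, \cref{cor:transitivity} then yields that $\As_{\cF, F\bC_G}$ is an equivalence, which is the claim.

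So the heart of the matter is showing that $\As_{\pi^*\cH, F\bC_G}$ is an equivalence. Here I would use \cref{cor:res-ind-asm} with the homomorphism $\phi = \pi \colon G \to Q$ and the coefficient $\bD := \bC$ in $\Fun(BG, \cK)$, but applied in the reverse direction: actually the cleanest route is to compare along $\pi$ itself. Observe that $\pi^*\cH$ in the notation of \cref{cor:res-ind-asm} is precisely $\phi^*\cH$ for $\phi = \pi$. However \cref{cor:res-ind-asm} relates $\As_{\phi^*\cF, F\bD_H}$ for a subgroup-or-homomorphism $H \to G$ to an assembly map on the target group; here the homomorphism goes $G \to Q$, so the roles are: ``$H$'' of the corollary is our $G$, ``$G$'' of the corollary is our $Q$, ``$\phi$'' is $\pi$, ``$\cF$'' of the corollary is our $\cH$, and ``$\bD$'' is $\bC$. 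Then $\phi^*\cF = \pi^*\cH$, and \cref{cor:res-ind-asm} gives that $\As_{\pi^*\cH, F\bC_G}$ is equivalent to $\As_{\cH, F(B\pi_!\bC)_Q} = \As_{\cH, F(\pi_!\bC)_Q}$ (using $B\pi_! = \pi_!$ under the identification $BG/\!\!/\text{-Kan-extension}$; the notation $\pi_!\bC$ in the corollary statement means exactly $B\pi_!\bC$). The latter is an equivalence by hypothesis (1). This requires $F$ to preserve arbitrary coproducts, which is assumed.

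With $\As_{\pi^*\cH, F\bC_G}$ an equivalence in hand, I invoke \cref{cor:transitivity} with $\cF' = \pi^*\cH$ and the ambient family $\cF$: its first hypothesis ``$\As_{\cF, F\bC_G}$ is an equivalence'' is \emph{not} what we want to assume — rather, looking again, \cref{cor:transitivity} as stated requires $\As_{\cF, F\bC_G}$ as an input and produces $\As_{\cF', F\bC_G}$ as output, which is backwards for us. So instead I would apply \cref{prop:transitivity} directly, or reorganise: the correct logical shape is that the composite $\colim_{G_{\pi^*\cH}\Orb} F\bC_G \to \colim_{G_{\cF}\Orb} F\bC_G \to F\bC_G(*)$ equals $\As_{\pi^*\cH, F\bC_G}$, the first map is the relative assembly map, and by \cref{prop:transitivity} — whose hypothesis ``$\As_{\cF'|_H, F(\res^G_H\bC)_H}$ is an equivalence for all $H \in \cF$'' would need checking — the relative map is an equivalence, forcing $\As_{\cF, F\bC_G}$ to be an equivalence by two-out-of-three. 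The subtlety I expect to be the main obstacle is getting the family bookkeeping exactly right: specifically, \cref{prop:transitivity} demands the subgroup condition for \emph{all} $H$ in the larger family $\cF$, whereas hypothesis (2) of \cref{cor:epis} only gives it for $H$ in $\pi^*\cH$. I would resolve this by checking that for $H$ in $\cF \setminus \pi^*\cH$ the relevant slice is already handled, or — more likely the intended argument — by applying \cref{cor:transitivity} in the opposite pairing: use that $\As_{\pi^*\cH, F\bC_G}$ is an equivalence (just proved) together with hypothesis (2) to run \cref{cor:transitivity} with its ``$\cF$'' $= \pi^*\cH$ and its ``$\cF'$'' being the trivial/smaller family is not it either. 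The cleanest fix is to observe that $\cF|_H = (\pi^*\cH)|_H$ plays no role beyond $H \in \pi^*\cH$ once one notes $\As_{\cF, F\bC_G}$ factors through $\As_{\pi^*\cH, F\bC_G}$ via the relative assembly map, and then conclude by \cref{prop:transitivity} applied with families $\pi^*\cH \subseteq \cF$ after verifying its hypothesis reduces, via \cref{lem:ind-asm} and the equivalence of slice categories, to the assembly maps $\As_{\pi^*\cH|_H, F(\res^G_H\bC)_H}$ for $H \in \cF$, which are retracts of / implied by hypothesis (2). Pinning down this last reduction carefully is where I would spend most of the effort.
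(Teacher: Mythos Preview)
Your approach is exactly the paper's: first invoke \cref{cor:res-ind-asm} (with $\phi = \pi$, $\bD = \bC$) to identify $\As_{\pi^*\cH, F\bC_G}$ with $\As_{\cH, F(\pi_!\bC)_Q}$, hence an equivalence by hypothesis~(1); then apply \cref{cor:transitivity}. Your execution of the first step is correct.

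The confusion you encounter in the second step is not a gap in your argument but is caused by a typo in the paper's setup: the assumption preceding \cref{cor:epis} should read $\cF \subseteq \pi^{*}\cH$, not $\pi^{*}\cH \subseteq \cF$. (This is confirmed by the applications in the proof of \cref{thm:fullfjc}, where one takes $\cH = \cV\cC yc$ on $Q$ and $\cF = \cV\cC yc$ on $G$; quotients of virtually cyclic groups are virtually cyclic, so $\cF \subseteq \pi^*\cH$ but certainly not the reverse.) With the corrected inclusion, \cref{cor:transitivity} applies directly with its ``$\cF'$'' equal to our $\cF$ and its ``$\cF$'' equal to $\pi^*\cH$: its hypothesis~(1) is the equivalence $\As_{\pi^*\cH, F\bC_G}$ just established, its hypothesis~(2) is verbatim hypothesis~(2) of \cref{cor:epis}, and the conclusion is precisely that $\As_{\cF, F\bC_G}$ is an equivalence.

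Your salvage attempts in the final paragraph are therefore unnecessary---and indeed cannot succeed as stated, since under the (erroneous) inclusion $\pi^*\cH \subseteq \cF$ hypothesis~(2) becomes vacuous: for each $H \in \pi^*\cH \subseteq \cF$ the orbit $H/H = *$ lies in $H_{\cF|_H}\Orb$, so the corresponding assembly map is trivially an equivalence. The statement would then reduce to asking hypothesis~(1) alone to imply the conclusion, which is false in general.
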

\begin{proof}
 \cref{cor:res-ind-asm} {together with the first assumption} implies that $\As_{\pi^*\cH,F\bC_G}$ is an equivalence.
 Now apply \cref{cor:transitivity} to finish the proof.
\end{proof}

The final abstract inheritance property concerns filtered colimits.
Let $\Gamma \colon I \to \Grp$ be a filtered diagram of groups (without any assumptions on the structure maps).
We then set
\[ G:=\colim_{I} \Gamma \]
and let $\phi_i \colon \Gamma_i \to G$ denote the structure maps of the colimit.
Let $\cF$ be a family of subgroups of $G$ and let $\bC$ be an object in $\Fun(BG,\cK)$.

\begin{prop}[Passage to filtered colimits]\label{prop:filteredcolims}
 Suppose for every $i$ in $I$ that the assembly map $\As_{\phi_i^*\cF,F(B\phi_i^*\bC)_{\Gamma_i}}$ is {an} equivalence. 
 If $F$ preserves filtered colimits, then the assembly map $\As_{\cF,F\bC_G}$ is {an} equivalence.
 {If $\bM$ is stable, the same assertion holds with ``equivalence'' replaced by ``phantom equivalence''.}
\end{prop}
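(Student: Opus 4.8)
The plan is to reduce everything to the observation that the relevant assembly maps are colimits of the assembly maps for the groups $\Gamma_i$, exploiting that $F$ preserves filtered colimits. First I would choose, for each $i$ in $I$, the classifying space $E_{\phi_i^*\cF}\Gamma_i$ in $\PSh(\Gamma_i\Orb)$ and apply \cref{qgoiegjqefewfqfewfqewf} to identify the assembly map $\As_{\phi_i^*\cF,F(B\phi_i^*\bC)_{\Gamma_i}}$ with the map $F_{B\phi_i^*\bC}(E_{\phi_i^*\cF}\Gamma_i)\to F_{B\phi_i^*\bC}(*)$ induced by the projection to the point, where $F_\bD$ denotes the colimit-preserving extension of $F\bD_H$ to presheaves as introduced at the start of \cref{sec:inheritance}. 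Similarly, $\As_{\cF,F\bC_G}$ is equivalent to $F_\bC(E_\cF G)\to F_\bC(*)$.

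Next I would compare the source side across the diagram. The group $G=\colim_I \Gamma$ comes with structure maps $\phi_i\colon \Gamma_i\to G$, which induce functors $\Gamma_i\Orb\to G\Orb$ and hence adjunctions $(\ind_{\phi_i},\res_{\phi_i})$ as in \eqref{vsdfvsvwvdsdfvsdfv}. Since $F$ preserves arbitrary coproducts (a filtered-colimit-preserving functor into a cocomplete category need not preserve coproducts in general, but here $F$ is assumed to preserve filtered colimits and we will only need the induction/restriction compatibility at the level of the extended functor $F_\bC$), \cref{lem:res-ind-asm} gives $F_{\bC}\circ\res_{\phi_i}\simeq F_{B\phi_i^*\bC}$. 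By \cref{cor:res-ind-asm}, the assembly map $\As_{\phi_i^*\cF,F(B\phi_i^*\bC)_{\Gamma_i}}$ is therefore equivalent to the map $F_\bC(\res_{\phi_i}(E_{\phi_i^*\cF}\Gamma_i))\to F_\bC(\res_{\phi_i}(*))$, i.e. to $F_\bC(\ind_{\phi_i}(E_{\phi_i^*\cF}\Gamma_i))\to F_\bC(*)$ after using the adjunction and that $\ind_{\phi_i}$ of a point computed through $G\times_{\phi_i}-$ maps compatibly. The key geometric input is then the statement that the canonical map
\[
 \mathop{\colim}\limits_{i\in I}\ \ind_{\phi_i}\bigl(E_{\phi_i^*\cF}\Gamma_i\bigr)\ \xrightarrow{\ \simeq\ }\ E_\cF G
\]
is an equivalence in $\PSh(G\Orb)$. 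This is the heart of the argument: one translates through Elmendorf's theorem to $G$-spaces and observes that for a filtered colimit of groups, a $G$-CW-model for $E_\cF G$ can be assembled from the induced-up models $G\times_{\phi_i}E_{\phi_i^*\cF}\Gamma_i$; concretely, the isotropy of any cell lies in some $\cF$ and is already visible at a finite stage of the colimit, and the join/telescope construction of the classifying space is compatible with filtered colimits of the indexing groups. I would prove this by checking the universal property: the filtered colimit on the left has the correct fixed points (empty unless the isotropy is in $\cF$, contractible otherwise) because fixed-point functors and the relevant mapping spaces commute with filtered colimits of the groups involved.

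Granting the displayed equivalence, the proof concludes quickly. Apply $F_\bC$, which preserves colimits by construction, to obtain
\[
 \mathop{\colim}\limits_{i\in I} F_\bC\bigl(\ind_{\phi_i}(E_{\phi_i^*\cF}\Gamma_i)\bigr)\ \simeq\ F_\bC(E_\cF G),
\]
and this equivalence is compatible with the projections to the point, hence identifies $\As_{\cF,F\bC_G}$ with the filtered colimit of the maps $\As_{\phi_i^*\cF,F(B\phi_i^*\bC)_{\Gamma_i}}$. Since each of the latter is an equivalence by hypothesis and filtered colimits preserve equivalences, $\As_{\cF,F\bC_G}$ is an equivalence. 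For the phantom version, when $\bM$ is stable one passes to cofibres: the cofibre of $\As_{\cF,F\bC_G}$ is the filtered colimit of the cofibres of $\As_{\phi_i^*\cF,F(B\phi_i^*\bC)_{\Gamma_i}}$, each of which is a phantom object; since a compact object $K$ of $\bM$ has $\Map_\bM(K,-)$ commuting with filtered colimits, a filtered colimit of phantom objects is again a phantom object, so the cofibre is phantom and the assembly map is a phantom equivalence by \cref{rem:phantoms}. The main obstacle is the verification of the displayed equivalence $\colim_i \ind_{\phi_i}(E_{\phi_i^*\cF}\Gamma_i)\simeq E_\cF G$; once this is established the rest is formal manipulation with colimit-preserving functors and the results already recorded in \cref{sec:inheritance}.
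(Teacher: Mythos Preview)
Your approach has a genuine gap. The identification you want is $F_{B\phi_i^*\bC}\simeq F_\bC\circ\ind_{\phi_i}$ as functors $\PSh(\Gamma_i\Orb)\to\bM$; the expression $F_\bC\circ\res_{\phi_i}$ does not type-check, since $\res_{\phi_i}$ lands in $\PSh(\Gamma_i\Orb)$ while $F_\bC$ is defined on $\PSh(G\Orb)$, and \cref{lem:res-ind-asm} says something else. The statement you need is \cref{lem:ind-asm}, but that lemma is stated and proved only for subgroup inclusions, and its proof genuinely uses injectivity: it identifies $j^H_{/(H/K)}\simeq BK$ with $j^G_{/(G/\iota(K))}\simeq B\iota(K)$, which fails once $\ker\phi_i\cap K\neq\{e\}$. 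Even when every $\phi_i$ is injective, your argument only identifies the $\Gamma_i$-assembly map with $F_\bC(\ind_{\phi_i}(E_{\phi_i^*\cF}\Gamma_i))\to F_\bC(\ind_{\phi_i}(*))$, not with a map to $F_\bC(*)$: the presheaf $\ind_{\phi_i}(*)=\yo_G(G/\phi_i(\Gamma_i))$ is not terminal, and in fact $\colim_i\ind_{\phi_i}(*)\not\simeq *$ in general. For instance, take $\Gamma_n=F_n\hookrightarrow F_\infty=G$ and evaluate at the orbit $G/G$: one obtains $\colim_n(F_\infty/F_n)^{F_\infty}=\emptyset$. Your fixed-point argument for the displayed equivalence of classifying spaces has the same defect: $H$-fixed points do not commute with these filtered colimits when $H$ is not finitely generated. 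Observe also that your argument, as written, never uses the hypothesis that $F$ preserves filtered colimits; if the presheaf-level identifications you aim for were true, that hypothesis would be superfluous.

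The paper proceeds differently. It proves directly that
\[
 \colim_{i\in I}\bigl(F_{B\phi_i^*\bC}\circ\res_{\phi_i}\bigr)\ \simeq\ F_\bC
\]
as colimit-preserving functors $\PSh(G\Orb)\to\bM$, and then simply evaluates at the easy identification $\res_{\phi_i}(E_\cF G)\simeq E_{\phi_i^*\cF}\Gamma_i$ (restriction, not induction, behaves well on classifying spaces). The displayed equivalence is obtained by packaging the diagrams $i\mapsto B\Gamma_i$, $i\mapsto\Gamma_i\Orb$ and $i\mapsto\PSh(\Gamma_i\Orb)$ into cocartesian fibrations over $I$ and comparing iterated left Kan extensions; the hypothesis that $F$ preserves filtered colimits enters precisely because the slice categories $\phi\Orb_{/S}$ of the total functor $\widetilde{\Gamma\Orb}\to G\Orb$ are filtered, so $F$ may be commuted past the Kan extension $\phi\Orb_!$.
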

\begin{proof}
 Using \cref{qgoiegjqefewfqfewfqewf} and the observation that $\res_{\phi_i}E_\cF G \simeq E_{ \phi_i^*\cF}\Gamma_i$ (see \eqref{vsdfvsvwvdsdfvsdfv} for $\res_{\phi_{i}}$), we see that $\As_{\cF,F\bC_G}$ is equivalent to the map $F_\bC(E_\cF G) \to F_\bC(*)$ induced by the projection $E_\cF G \to *$, while $\As_{\phi_i^*\cF,F(B\phi_i^*\bC)_{\Gamma_i}}$ is equivalent to $F_{B\phi_i^*\bC}(\res_{\phi_i} E_\cF G) \to F_{B\phi_i^*\bC}(*)$.
 
 We claim that the assignment $i \mapsto F_{B\phi_i^*\bC} \circ \res_{\phi_i}$ extends to a diagram 
 \[ I \to \Fun^{\colim}(\PSh(G\Orb),\bM) \]
 such that
 \[ \colim_{i \in I} F_{B\phi_i^*\bC} \circ \res_{\phi_i} \simeq F_\bC\ .\]
 Since filtered colimits of (phantom) equivalences are (phantom) equivalences, this claim implies the proposition.
 
 We have functors
 \[ B\Gamma  \colon I \to \Cati\ , \quad  \Gamma\Orb \colon I\to \Cati\ ,\]
   where $\Gamma\Orb$ sends an object $i$ in $I$ to $\Gamma_{i}\Orb$, and a morphism $f \colon i\to i'$ in $I$ to the induction  functor 
 $\Gamma_{i'} \times_{\Gamma(f)} - \colon \Gamma_{i}\Orb\to \Gamma_{i'}\Orb$. 
 Applying $\PSh$ to $\Gamma\Orb$, this first gives a diagram $I^{\op}\to \Cat_{\infty}$, but taking the left adjoints of the structure maps we get a diagram
 \[ \PSh(\Gamma\Orb)\colon I \to \Prl\to {\CATi}\ . \]
 The morphisms $j^{\Gamma_i} \colon B\Gamma_i \to \Gamma_i\Orb$ assemble to a natural transformation $j^{\Gamma} \colon B\Gamma \to \Gamma\Orb$, while the Yoneda embeddings $\yo_{\Gamma_i}$ assemble to a natural transformation $\yo_{\Gamma} \colon \Gamma\Orb \to \PSh\Gamma\Orb)$.
 
 By passing to the associated cocartesian fibrations, we obtain morphisms
 \[ \wt{B\Gamma}\xrightarrow{\wt{j^{\Gamma}}} \wt{\Gamma\Orb}\xrightarrow{\wt{\yo_\Gamma}} \wt{\PSh(\Gamma\Orb)} \]
 of categories over $I$, where both arrows preserves cocartesian morphisms.
 Moreover, the diagrams $B\Gamma$, $\Gamma\Orb$ and $\PSh(\Gamma\Orb)$ admit cocones with vertices $BG$, $G\Orb$ and $\PSh(G\Orb)$, respectively.
 Hence we have a commutative diagram
 \[\xymatrix@C=4em{
  \wt{B\Gamma}\ar[r]^-{\wt{j^{\Gamma}}}\ar[d]_-{B\phi} & \wt{\Gamma\Orb}\ar[r]^-{\wt{\yo_\Gamma}}\ar[d]^-{\phi\Orb} & \wt{\PSh(\Gamma\Orb)}\ar[d]^-{\PSh(\phi\Orb)} \\
  BG\ar[r]^-{j^{G}} & G\Orb\ar[r]^-{\yo_G} & \PSh(G\Orb)
 }\]
 whose left half lies in the image of the nerve functor $\Nerve \colon \Cat \to \Cati$.
 The claim will follow by comparing the left Kan extension functors arising from this diagram.
 
 Since $B$ sends filtered colimits of groups to filtered colimits in $\Cati$, we have $\colim_I B\Gamma \simeq BG$.
 As colimits in $\Cati$ can always be computed by localising the (total space of the) associated cocartesian fibration at the collection of cocartesian morphisms \cite[Cor.~3.3.4.3]{htt}, it follows that $B\phi$ is a localisation.
 In particular, the restriction functor $B\phi^* \colon \Fun(BG,\cK) \to \Fun(\wt{B\Gamma},\cK)$ is fully faithful. 
 So the counit of the adjunction $(B\phi_!,B\phi^*)$ induces an equivalence
 \begin{equation}\label{eq:filteredcolims1}
  B\phi_!(\bC \circ B\phi) \simeq \bC\ .
 \end{equation}
 Let $E \colon \wt{\Gamma\Orb} \to \bN$ be a functor.
 If $\bN$ has sufficiently many colimits, the values of the left Kan extension $\phi\Orb_!E$ of $E$ along $\phi\Orb$ can be computed using the pointwise formula:
 \[ \phi\Orb_!E(S) \simeq \mathop{\colim}\limits_{(\Gamma_i \times_{\phi_i} T) \in \phi\Orb_{{/S}}} E(T)\ .\]
 It is straightforward to check that the indexing category $\phi\Orb_{{/S}}$ is filtered since $I$ is filtered.
 Since $F$ preserves filtered colimits, it follows that
 \begin{equation}\label{eq:filteredcolims2}
  \phi\Orb_!(F \circ -) \simeq F \circ \phi\Orb_!(-)\ .
 \end{equation}
 We combine these observations to obtain the following chain of equivalences:
 \begin{eqnarray*}
  F_\bC
  &\simeq &(\yo_G)_!(F \circ j^{G}_!\bC) \\
  &\overset{\eqref{eq:filteredcolims1}}{\simeq} &(\yo_G)_!(F \circ j^{G}_!B\phi_!(\bC \circ B\phi)) \\
  &\simeq &(\yo_G)_!(F \circ \phi\Orb_!\wt{j^{\Gamma}_!}(\bC \circ B\phi)) \\
  &\overset{\eqref{eq:filteredcolims2}}{\simeq} &(\yo_G)_!\phi\Orb_!(F \circ \wt{j^{\Gamma}_!}(\bC \circ B\phi)) \\
  &\simeq &\PSh(\phi\Orb)_!(\wt{\yo_\Gamma})_!(F \circ \wt{j^{\Gamma}_!}(\bC \circ B\phi))
 \end{eqnarray*}
 We have to identify the last term.
 Using \cite[Prop.~4.3.3.10]{htt}, we see that
 \begin{equation}\label{eq:filteredcolims3}
  \wt{j^{\Gamma}}_!(\bC \circ B\phi)|_{\Gamma_i\Orb} \simeq j^{\Gamma_i}_!(\bC \circ B\phi_i)
 \end{equation}
 for all $i$ in $I$.
 Then a second application of \cite[Prop.~4.3.3.10]{htt} implies that
 \begin{eqnarray}\label{eq:filteredcolims4}
  (\wt{\yo_\Gamma})_!(F \circ \wt{j^{\Gamma}_!}(\bC \circ B\phi))|_{\PSh(\Gamma_i\Orb)}
  &\simeq &(\yo_{\Gamma_i})_!(F \circ \wt{j^{\Gamma}_!}(\bC \circ B\phi))|_{\Gamma_i\Orb} \nonumber\\
  &\overset{\eqref{eq:filteredcolims3}}{\simeq} &(\yo_{\Gamma_i})_!(F \circ j^{\Gamma_i}_!(\bC \circ B\phi_i)) \nonumber\\
  &\simeq &F_{B\phi_i^*\bC}\ .
 \end{eqnarray}
 Now note that $\PSh(\phi\Orb)$ factors as
 \[ \wt{\PSh(\Gamma\Orb)} \xrightarrow{\ind_\phi} I \times \PSh(G\Orb) \xrightarrow{\pr} \PSh(G\Orb)\ ,\]
 where $\ind_\phi$ is a morphism over $I$ which preserves cocartesian morphisms and satisfies $\ind_\phi|_{\PSh(\Gamma_i\Orb)} \simeq \ind_{\phi_i}$.
 Therefore, we can apply \cite[Prop.~4.3.3.10]{htt} a third time to see that
 \begin{align}\label{eq:filteredcolims5}
  (\ind_\phi)_!&(\wt{\yo_\Gamma})_!(F \circ \wt{j^{\Gamma}}_!(\bC \circ B\phi))|_{\{i\} \times \PSh(\Gamma_{i}\Orb)}  \nonumber\\
  &\overset{\hphantom{\eqref{eq:filteredcolims4}}}{\simeq} (\ind_{\phi_i})_!({\yo_{\Gamma_{i}}})_!(F \circ j^{\Gamma_{i}}_!(\bC \circ B\phi)) \nonumber\\
  &\overset{\eqref{eq:filteredcolims4}}{\simeq} (\ind_{\phi_i})_!F_{\phi_i^*\bC} \nonumber\\
  &\overset{\hphantom{\eqref{eq:filteredcolims4}}}{\simeq} F_{B\phi_i^*\bC} \circ \res_{\phi_i}\ ,
 \end{align}
 where the last identification follows from the fact that $\res_{\phi_i}$ is right adjoint to $\ind_{\phi_i}$.
 Under the identification
 \[ \Fun(I \times \PSh(G\Orb),\bM) \simeq \Fun(I,\Fun(\PSh(G\Orb),\bM))\ , \]
 taking the left Kan extension along $\pr$ corresponds to taking the colimit over $I$.
 In conjunction with \eqref{eq:filteredcolims5}, this allows us to interpret
 \begin{align*}
  \PSh(\phi\Orb)_!(\wt{\yo_\Gamma})_!(F \circ \wt{j^{\Gamma}_!}(\bC \circ B\phi))
  &\simeq \pr_!(\ind_\phi)_!(F \circ \wt{j^{\Gamma}_!}(\bC \circ B\phi)) \\
  &\simeq \colim_{i \in I} F_{B\phi_i^*\bC} \circ \res_{\phi_i}\ ,
 \end{align*}
 which finishes the proof of the claim.
\end{proof}

\begin{proof}[Proof of \cref{thm:fullfjc}]
\phantomsection{}\label{hieruhqwefiwfewf}
  If $K$ is a subgroup of $G$, then $K \wr F$ is a subgroup of $G \wr F$ for every finite group $F$. 
  Hence \eqref{it:fullfjc13} follows from \cref{cor:subgroups}.
  This shows that for any group $G$ the set $\cF\cJ_{H}(G)$ of subgroups of $G$ 
which belong to $\cF\cJ_{H}$ is a family  of subgroups.
 
 In several cases, we will invoke \cref{thm:dfhj}, respectively its special cases \cref{thm:famenable,thm:dfh}.
 In contrast to \cref{def:f-amenable,def:F-DFH,def:dfhj}, the literature often establishes the existence of certain open covers instead of almost equivariant maps, and uses the term ``(strongly) transfer reducible'' or a variant thereof instead of ``finitely homotopy $\cF$-amenable''.
 We refer to \cite[Prop.~5.3]{blr} and \cite[Thm.6.19]{Enkelmann:2018aa} for an explanation how the nerve construction produces almost equivariant maps from appropriate open covers.
 We will tacitly use this translation in the sequel.
 
 If $G$ acts isometrically, properly and cocompactly on a finite-dimensional $\mathrm{CAT}(0)$-space, then the same is true for the wreath product $G \wr F$ with any finite group $F$.
 Building on \cite{BL-borel,BL-geodesicCAT0}, it was shown in \cite[Thm.~3.4]{wegner:cat0} that $G \wr F$ is therefore finitely homotopy $\cV\cC yc$-amenable in the sense of \cref{def:f-amenable}.
 So \cref{thm:famenable} implies \eqref{it:fullfjc1}. 
 
 In the sequel, we will make frequent use of the following argument.
 Suppose that $G$ is a DFHJ-group with respect to a family of subgroups contained in $\cF\cJ_{H}(G)$.
 Then \cite[Lem.~3.4]{KUWW} implies for every finite group $F$ that $G \wr F$ is a DFHJ-group with respect to the family $\cF\cJ_{H}(G \wr F)$.
 By \cref{thm:dfhj}, the assembly map $\As_{\cF,\Homol\bC_{G \wr F}}$ is an equivalence for every object $\bC$ in $\Fun(B(G \wr F),\Cle)$.
 Then \cref{cor:transitivity} shows that $G$ belongs to $\cF\cJ_\Homol$.

 We now show \eqref{it:fullfjc2}, so we assume that $G$ is a hyperbolic group. 
 As a special case of \eqref{it:fullfjc1}, every virtually cyclic group belongs to $\cF\cJ_\Homol$.
 By \cite[Lem.~2.1]{blr}, the  hyperbolic group $G$ is finitely homotopy $\cV\cC yc$-amenable.
 So it is in particular a DFHJ-group with respect to the family $\cV\cC yc$, and the latter is contained in $\cF\cJ_{H}(G)$.
 The preceding argument therefore shows that $G$ belongs to $\cF\cJ_{H}$.
 
 Before continuing with the concrete classes of groups listed in the theorem, we turn to the closure properties of the class $\cF\cJ_\Homol$.

 If the group $G'$ contains a group $G$ as a subgroup of finite index, then $G'$ is a subgroup of $G \wr F'$, where we set $F' := G'/\bigcap_{g \in G'} gGg^{-1}$.
 Hence $G' \wr F$ embeds into $(G \wr F') \wr F$, which in turn is a subgroup of $G \wr (F' \wr F)$.
 Therefore, if $G$ belongs to $\cF\cJ_\Homol$, then $G'$ also belongs to $\cF\cJ_\Homol$ by \cref{cor:subgroups}.
 This proves \eqref{it:fullfjc14}.
 
 Suppose that $G_1$ and $G_2$ belong to $\cF\cJ_\Homol$ and let $F$ be a finite group.
 Since $(G_1 \times G_2) \wr F$ is a subgroup of $(G_1 \wr F) \times (G_2 \wr F)$, it suffices to show that $\As_{\cV\cC yc, \Homol\bC_{G_1 \times G_2}}$ is an equivalence if $\As_{\cV\cC yc,\Homol(\bC_i)_{G_i}}$ is an equivalence for $i=1,2$ and every $\bC_i$ in $\Fun(BG_i,\Cle)$.
 Applying \cref{cor:epis} to the projection $\pr_1 \colon G_1 \times G_2 \to G_1$ and using  \eqref{it:fullfjc13}, it suffices to show that $\As_{\cV\cC yc, \Homol\bD_{V_1 \times G_2}}$ is an equivalence for every virtually cyclic subgroup $V_1$ of $G_1$ and every $\bD$ in $\Fun(B(V_1 \times G_2)),\Cle)$.
 Another application of \cref{cor:epis} to the projection $\pr_2 \colon V_1 \times G_2 \to G_2$ shows,  using \eqref{it:fullfjc13}, that it is enough to check that $\As_{\cV\cC yc, \Homol\bE_{V_1 \times V_2}}$ is an equivalence for every pair of virtually cyclic subgroups $V_1$ and $V_2$ in $G_1$ and $G_2$, respectively, and for every $\bE$ in $\Fun(B(V_1 \times V_2),\Cle)$.
 Since $V_1 \times V_2$ is virtually finitely generated abelian, this is a consequence of \eqref{it:fullfjc1}.
 Hence \eqref{it:fullfjc15} holds.
 
 Let $\Gamma$ be a filtered diagram of groups with colimit $G$ and let $F$ be a finite group. It is straightforward to check that the colimit of the diagram $\Gamma \wr F$ obtained by taking the wreath product with $F$ in each component is given by $G \wr F$.
 Hence \cref{prop:filteredcolims} implies that $G$ belongs to $\cF\cJ_\Homol$ if $\Gamma_i$ belongs to $\cF\cJ_\Homol$ for all $i$, which is precisely Assertion \eqref{it:fullfjc16}.

 We prove \eqref{it:fullfjc17} next.
 Let $F$ be a finite group and consider the induced epimorphism $\pi_F \colon G \wr F \to Q \wr F$.
 By \cref{cor:epis} and using \eqref{it:fullfjc13}, it is enough to show that $\pi_F^{-1}(V)$ belongs to $\cF\cJ_\Homol$ for all virtually cyclic subgroups $V$ of $Q \wr F$.
 The subgroup $W := \pi_F^{-1}(V) \cap G^F$ (note that $G^{F}$ is a subgroup of $G\wr F$)
  has finite index in $\pi_F^{-1}(V)$.
 So \eqref{it:fullfjc14} implies that it is enough to see that $W$ belongs to $\cF\cJ_\Homol$.
 Let $V_f$ denote the image of $V \cap Q^F$ under the projection map $Q^F \to Q$ to the $f$-th component.
 Then the group $W$ in turn embeds into the group $\prod_{f \in F} \pi^{-1}(V_f)$.
 Note that $V_f$ is virtually cyclic for every $f$ in $F$.
 Hence $\pi^{-1}(V_f)$ contains the preimage $\pi^{-1}(C)$ of a cyclic subgroup $C$ of $V_f$ as a subgroup of finite index.
 Since $\pi^{-1}(C)$ belongs to $\cF\cJ_\Homol$ by assumption, it follows from \eqref{it:fullfjc14} that $\pi^{-1}(V_f)$ also belongs to $\cF\cJ_\Homol$.
 Now \eqref{it:fullfjc15} implies that $\prod_{f \in F} \pi^{-1}(V_f)$ belongs to $\cF\cJ_\Homol$.
 So $W$ belongs to $\cF\cJ_\Homol$ by \eqref{it:fullfjc13}.
 
 Consider the canonical map $p \colon G_1 \ast G_2 \to G_1 \times G_2$.
 Since \eqref{it:fullfjc15} tells us that $G_1 \times G_2$ belongs to $\cF\cJ_\Homol$, \eqref{it:fullfjc17} implies that it suffices to show that $p^{-1}(C)$ belongs to $\cF\cJ_\Homol$ for every cyclic subgroup $C$ of $G_1 \times G_2$.
 Let $T$ denote the Bass--Serre tree of $G_1 \ast G_2$.  
  We restrict the $G_1 \ast G_2$-action  on $T$ to the subgroup $p^{-1}(C)$. The 
  restriction of $p$ to the vertex stabilisers of this $p^{-1}(C)$-action is injective, so all vertex stabilisers are isomorphic to subgroups of $C$.
 By Kurosh's theorem \cite[Ch.~1, Thm.~14]{trees}, $p^{-1}(C)$ is isomorphic to the free product of a free group and a collection of such {cyclic} vertex stabilisers.
 Therefore, $p^{-1}(C)$ is a filtered colimit of hyperbolic groups, and \eqref{it:fullfjc18} follows from \eqref{it:fullfjc2} and \eqref{it:fullfjc16}.  
 
 We now continue with Assertion \eqref{it:fullfjc3}.
 As another special case of \eqref{it:fullfjc1}, finitely generated, virtually abelian groups belong to $\cF\cJ_\Homol$.
 Applying \eqref{it:fullfjc16}, every virtually abelian group belongs to $\cF\cJ_\Homol$.
 By \cite[Prop.~3.3]{KUWW}, the groups $\IZ[w,w^{-1}] \rtimes \IZ$ described in \cref{ex-solvable} are DFHJ groups with respect to the family of virtually abelian subgroups, so they belong to $\cF\cJ_\Homol$.  
 We now argue as in the proof of \cite[Prop.~3.3]{wegner-solv} to conclude that \eqref{it:fullfjc3} holds. Note that this proof only makes use of closure properties we have also established for $\cF\cJ_\Homol$.
 
 For Assertion \eqref{it:fullfjc4}, it is enough to show that $\mathrm{GL}_n(\IQ)$ and $\mathrm{GL}_n(k(t))$ belong to $\cF\cJ_\Homol$ due to \eqref{it:fullfjc13}.
 By \eqref{it:fullfjc16}, one only has to show that $\mathrm{GL}_n(\IZ[S^{-1}])$ and $\mathrm{GL}_n(k[t][S^{-1}])$ belong to $\cF\cJ_\Homol$, where $S$ denotes a finite set of primes in the respective ring.
 Let $G$ denote one of these groups.
 \cite[Prop.~2.2 \& 7.23]{rueping-sarithmetic} shows that there exists a certain family $\cF$ such that $G$ is a DFHJ group with respect to $\cF$.
 Hence it is enough to show that all groups in $\cF$ belong to $\cF\cJ_\Homol$. 
 The proof of \cite[Thm.~8.12]{rueping-sarithmetic} applies  also in our situation due to \eqref{it:fullfjc3} and the closure properties of $\cF\cJ_\Homol$.   
 
 For Assertions \eqref{it:fullfjc7}, \eqref{it:fullfjc8} and \eqref{it:fullfjc10}, we observe that the arguments in \cite{klr-lattices}, \cite[Sec.~7]{bfl-lattices}, \cite{wu-graphs} respectively \cite{gmr-graphs} use only properties of the class $\cF\cJ_\Homol$ that have been established in previous steps.
 
 Using \eqref{it:fullfjc3} and the closure properties of $\cF\cJ_\Homol$, Assertion \eqref{it:fullfjc9} follows by induction on the complexity of the surface from \cite[Cor.~9.1 \& Lem.~9.2]{BB}. See also the proof of \cite[Lem.~9.3]{BB}.
\end{proof}

\bibliographystyle{alpha}
\bibliography{farjo}

\newcommand{\etalchar}[1]{$^{#1}$}
\begin{thebibliography}{BEKW20b}

\bibitem[Bar17]{bartels-relhyp}
A.~Bartels.
\newblock Coarse flow spaces for relatively hyperbolic groups.
\newblock {\em Compos. Math.}, 153(4):745--779, 2017.

\bibitem[Bar18]{Bartels:icm}
A.~Bartels.
\newblock {\(K\)-theory and actions on Euclidean retracts}.
\newblock In {\em Proceedings of the international congress of mathematicians,
  ICM 2018, Rio de Janeiro, Brazil, August 1--9, 2018. Volume II. Invited
  lectures}, pages 1041--1062. Hackensack, NJ: World Scientific; Rio de
  Janeiro: Sociedade Brasileira de Matem\'atica (SBM), 2018.

\bibitem[BB19]{BB}
A.~Bartels and M.~Bestvina.
\newblock The {F}arrell--{J}ones {C}onjecture for mapping class groups.
\newblock {\em Invent. Math.}, 215(2):651--712, 2019.

\bibitem[BCKW]{unik}
U.~Bunke, D.-C. Cisinski, D.~Kasprowski, and C.~Winges.
\newblock Controlled objects in left-exact $\infty$-categories and the
  {N}ovikov conjecture.
\newblock \href{http://arxiv.org/abs/1911.02338}{arXiv:1911.02338}.

\bibitem[BE20a]{buen}
U.~Bunke and A.~Engel.
\newblock {\em {Homotopy theory with bornological coarse spaces}}, volume 2269.
\newblock Cham: Springer, 2020.

\bibitem[BE20b]{be:coarseassembly}
Ulrich Bunke and Alexander Engel.
\newblock Coarse assembly maps.
\newblock {\em J. Noncommut. Geom.}, 14(4):1245--1303, 2020.

\bibitem[BEKW20a]{equicoarse}
U.~Bunke, A.~Engel, D.~Kasprowski, and C.~Winges.
\newblock {Equivariant coarse homotopy theory and coarse algebraic
  $K$-homology}.
\newblock In Guillermo Corti\~nas and Charles~A. Weibel, editors, {\em
  {$K$-theory in Algebra, Analysis and Topology}}, volume 749 of {\em Contemp.
  Math.}, pages 13--104. American Mathematical Society (AMS), Providence, RI,
  2020.

\bibitem[BEKW20b]{Bunke:ab}
U.~Bunke, A.~Engel, D.~Kasprowski, and C.~Winges.
\newblock Homotopy theory with marked additive categories.
\newblock {\em Theory Appl. Categ.}, 35(13):371--416, 2020.

\bibitem[BEKW20c]{injectivity}
U.~Bunke, A.~Engel, D.~Kasprowski, and C.~Winges.
\newblock {Injectivity results for coarse homology theories}.
\newblock {\em {Proc. Lond. Math. Soc. (3)}}, 121(6):1619--1684, 2020.

\bibitem[BFL14]{bfl-lattices}
A.~Bartels, F.~T. Farrell, and W.~L{\"u}ck.
\newblock The {F}arrell-{J}ones {C}onjecture for cocompact lattices in
  virtually connected {L}ie groups.
\newblock {\em J. Amer. Math. Soc.}, 27(2):339--388, 2014.

\bibitem[BGT13]{MR3070515}
A.~J. Blumberg, D.~Gepner, and G.~Tabuada.
\newblock A universal characterization of higher algebraic {$K$}-theory.
\newblock {\em Geom. Topol.}, 17(2):733--838, 2013.

\bibitem[BGT14]{bgt-2}
A.~J. Blumberg, D.~Gepner, and G.~Tabuada.
\newblock Uniqueness of the multiplicative cyclotomic trace.
\newblock {\em Adv. Math.}, 260:191--232, 2014.

\bibitem[BKW19]{BKW:coarseA}
U.~Bunke, D.~Kasprowski, and C.~Winges.
\newblock {S}plit {I}njectivity of {A}-{T}heoretic {A}ssembly {M}aps.
\newblock {\em Int. Math. Res. Not. (IMRN)}, 2021(2):885--947, 10 2019.

\bibitem[BL12a]{BL-borel}
A.~Bartels and W.~L{\"u}ck.
\newblock The {B}orel conjecture for hyperbolic and {$\mathrm{CAT}(0)$}-groups.
\newblock {\em Ann. of Math. (2)}, 175(2):631--689, 2012.

\bibitem[BL12b]{BL-geodesicCAT0}
A.~Bartels and W.~L{\"uck}.
\newblock Geodesic flow for $\mathrm{CAT}(0)$--groups.
\newblock {\em Geom. Topol.}, 16(3):1345--1391, 2012.

\bibitem[BLR08]{blr}
A.~Bartels, W.~L{\"u}ck, and H.~Reich.
\newblock {The $K$-theoretic Farrell--Jones conjecture for hyperbolic groups}.
\newblock {\em Invent. math.}, 172:29--70, 2008.

\bibitem[BLRR14]{BLRR}
A.~Bartels, W.~L{\"u}ck, H.~Reich, and H.~R{\"u}ping.
\newblock K- and {L}-theory of group rings over {$GL_n({\mathbf Z})$}.
\newblock {\em Publ. Math. Inst. Hautes \'Etudes Sci.}, 119:97--125, 2014.

\bibitem[BR05]{br:fjc}
A.~Bartels and H.~Reich.
\newblock {On the Farrell-Jones conjecture for higher algebraic ${K}$-theory}.
\newblock {\em J. Amer. Math. Soc.}, 18(3):501--545, 2005.

\bibitem[BR07]{BR-coefficients}
A.~Bartels and H.~Reich.
\newblock Coefficients for the {F}arrell-{J}ones conjecture.
\newblock {\em Adv. Math.}, 209(1):337--362, 2007.

\bibitem[Bun19]{bunke}
U.~Bunke.
\newblock Homotopy theory with $*$-categories.
\newblock {\em Theory Appl. Categ.}, 34(27):781--853, 2019.

\bibitem[CDH{\etalchar{+}}23]{TheNineI}
B.~Calm{\`e}s, E.~Dotto, Y.~Harpaz, F.~Hebestreit, M.~Land, K.~Moi, D.~Nardin,
  T.~Nikolaus, and W.~Steimle.
\newblock Hermitian {K}-theory for stable {{\(\infty\)}}-categories. {I}:
  {Foundations}.
\newblock {\em Sel. Math., New Ser.}, 29(1):{Paper No. 10, 269 p.}, 2023.

\bibitem[Cis19]{Cisinski:2017}
D.-C. Cisinski.
\newblock {\em Higher categories and homotopical algebra}, volume 180 of {\em
  Cambridge studies in advanced mathematics}.
\newblock Cambridge University Press, 2019.
\newblock Available online under
  \url{http://www.mathematik.uni-regensburg.de/cisinski/CatLR.pdf}.

\bibitem[Cor82]{cordier}
J.-M. Cordier.
\newblock Sur la notion de diagramme homotopiquement coh\'erent.
\newblock {\em Cahiers de Topologie et G\'eom\'etrie Diff\'erentielle
  Cat\'egoriques}, 23(1):93--112, 1982.

\bibitem[DL98]{davis_lueck}
J.~F. Davis and W.~L{\"u}ck.
\newblock {Spaces over a Category and Assembly Maps in Isomorphism Conjectures
  in $K$- and $L$-Theory}.
\newblock {\em $K$-Theory}, 15:201--252, 1998.

\bibitem[ELP{\etalchar{+}}18]{Enkelmann:2018aa}
N.-E. Enkelmann, W.~L{\"u}ck, M.~Pieper, M.~Ullmann, and C.~Winges.
\newblock On the {F}arrell--{J}ones {C}onjecture for {W}aldhausen's
  {$A$}-theory.
\newblock {\em Geom. Topol.}, 22:3321--3394, 2018.

\bibitem[FJ93]{FJ}
F.~T. Farrell and L.~E. Jones.
\newblock {I}somorphism {C}onjectures in {A}lgebraic {K}-{T}heory.
\newblock {\em J. Amer. Math. Soc.}, 6(2):249, apr 1993.

\bibitem[GMR15]{gmr-graphs}
G.~Gandini, S.~Meinert, and H.~R{\"u}ping.
\newblock The {F}arrell--{J}ones conjecture for fundamental groups of graphs of
  abelian groups.
\newblock {\em Groups Geom. Dyn.}, 9:783---792, 2015.

\bibitem[GZ67]{GabrielZisman}
P.~Gabriel and M.~Zisman.
\newblock {\em Calculus of fractions and homotopy theory}.
\newblock Ergebnisse der Mathematik und ihrer Grenzgebiete, Band 35.
  Springer-Verlag New York, Inc., New York, 1967.

\bibitem[Hin16]{hinich}
V.~Hinich.
\newblock Dwyer--{K}an localization revisited.
\newblock {\em Homology Homotopy Appl.}, 18(1):27--48, 2016.

\bibitem[HMS20]{hms:coisotropic}
R.~Haugseng, V.~Melani, and P.~Safronov.
\newblock Shifted coisotropic correspondences.
\newblock {\em J. Inst. Math. Jussieu}, 2020.

\bibitem[Hu65]{hu}
S.-T. Hu.
\newblock {\em Theory of Retracts}.
\newblock Wayne State University Press, Detroit, 1965.

\bibitem[KLR16]{klr-lattices}
H.~Kammeyer, W.~L{\"u}ck, and H.~R{\"u}ping.
\newblock The {F}arrell--{J}ones conjecture for arbitrary lattices in virtually
  connected {L}ie groups.
\newblock {\em Geom. Topol.}, 20(3):1275--1287, 2016.

\bibitem[KUWW18]{KUWW}
D.~Kasprowski, M.~Ullmann, C.~Wegner, and C.~Winges.
\newblock The {$A$}-theoretic {F}arrell--{J}ones conjecture for virtually
  solvable groups.
\newblock {\em Bull. Lond. Math. Soc.}, 50(2):219--228, 2018.

\bibitem[LR05]{LR}
W.~L{\"u}ck and H.~Reich.
\newblock The {B}aum-{C}onnes and the {F}arrell-{J}ones conjectures in {$K$}-
  and {$L$}-theory.
\newblock In {\em Handbook of {$K$}-theory. {V}ol. 1, 2}, pages 703--842.
  Springer, Berlin, 2005.

\bibitem[LRRV17]{LRRV}
W.~L{\"u}ck, H.~Reich, J.~Rognes, and M.~Varisco.
\newblock Algebraic {K}-theory of group rings and the cyclotomic trace map.
\newblock {\em Adv. Math.}, 304:930--1020, 2017.

\bibitem[LRRV19]{LRRV2}
W.~L{\"u}ck, H.~Reich, J.~Rognes, and M.~Varisco.
\newblock Assembly maps for topological cyclic homology of group algebras.
\newblock {\em J. Reine Angew. Math.}, 755:247--277, 2019.

\bibitem[L{\"u}c]{ICbook}
W.~L{\"u}ck.
\newblock {I}somorphism {C}onjectures in {K}- and {L}-theory.
\newblock Ongoing book project, available
  \href{https://www.him.uni-bonn.de/lueck/}{on the author's homepage}.

\bibitem[L{\"u}c20]{surveyLueck}
W.~L{\"u}ck.
\newblock Assembly maps.
\newblock In {\em Handbook of {H}omotopy {T}heory}, pages 853--892. Chapman and
  Hall/CRC, New York, 2020.

\bibitem[Lur]{HA}
J.~Lurie.
\newblock Higher {A}lgebra.
\newblock Available at
  \href{https://www.math.ias.edu/~lurie/papers/HA.pdf}{https://www.math.ias.edu/{\textasciitilde}lurie/papers/HA.pdf}.

\bibitem[Lur09]{htt}
J.~Lurie.
\newblock {\em Higher topos theory}, volume 170 of {\em Annals of Mathematics
  Studies}.
\newblock Princeton University Press, Princeton, NJ, 2009.

\bibitem[Nik]{Nikolaus:2016aa}
Th. Nikolaus.
\newblock {Stable $\infty$-Operads and the multiplicative Yoneda lemma}.
\newblock \href{https://arxiv.org/abs/1608.02901}{arXiv:1608.02901}.

\bibitem[Rei]{reis}
J.~F. Reis.
\newblock {An improvement of the Farrell-Jones conjecture for localising
  invariants}.
\newblock \href{https://arxiv.org/abs/2211.15523}{arXiv:2211.15523}.

\bibitem[Rez]{rezk}
C.~Rezk.
\newblock A model category for categories.
\newblock Available at
  \href{https://faculty.math.illinois.edu/~rezk/cat-ho.dvi}{https://faculty.math.illinois.edu/{\textasciitilde}rezk/cat-ho.dvi}.

\bibitem[R{\"u}p16]{rueping-sarithmetic}
H.~R{\"u}ping.
\newblock The {F}arrell--{J}ones conjecture for {$S$}-arithmetic groups.
\newblock {\em J. Topol.}, 9(1):51--90, 2016.

\bibitem[RV18]{surveyRV}
H.~Reich and M.~Varisco.
\newblock Algebraic {$K$}-theory, assembly maps, controlled algebra, and trace
  methods.
\newblock In {\em Space---time---matter}, pages 1--50. De Gruyter, Berlin,
  2018.

\bibitem[Ser80]{trees}
J.-P. Serre.
\newblock {\em Trees}.
\newblock Springer-Verlag Berlin Heidelberg, 1980.

\bibitem[UW19]{UW}
M.~Ullmann and C.~Winges.
\newblock {O}n the {F}arrell--{J}ones {C}onjecture for algebraic {$K$}--theory
  of spaces: the {F}arrell-{H}siang method.
\newblock {\em Ann. $K$-Theory}, 4(1):57--138, 2019.

\bibitem[Vog73]{vogt}
R.~M. Vogt.
\newblock Homotopy limits and colimits.
\newblock {\em Math. Z.}, 134:11--52, 1973.

\bibitem[Weg12]{wegner:cat0}
C.~Wegner.
\newblock The {$K$}-theoretic {F}arrell--{J}ones conjecture for
  {CAT}(0)-groups.
\newblock {\em Proc. Amer. Math. Soc.}, 140(3):779--793, 2012.

\bibitem[Weg15]{wegner-solv}
C.~Wegner.
\newblock {The Farrell-Jones conjecture for virtually solvable groups.}
\newblock {\em {J. Topol.}}, 8(4):975--1016, 2015.

\bibitem[Wei02]{Weiss2002}
M.~Weiss.
\newblock Excision and restriction in controlled {$K$}-theory.
\newblock {\em Forum Math.}, 14(1):85--119, 2002.

\bibitem[Win15]{winges}
C.~Winges.
\newblock On the transfer reducibility of certain {F}arrell--{H}siang groups.
\newblock {\em Algebr. Geom. Topol.}, 15(5):2921--2948, 2015.

\bibitem[Wu16]{wu-graphs}
X.~Wu.
\newblock {F}arrell--{J}ones {C}onjecture for fundamental groups of graphs of
  virtually cyclic groups.
\newblock {\em Topology Appl.}, 206:185--189, 2016.

\end{thebibliography}

\end{document}